\numberwithin{equation}{section}
\titleformat{\subsection}[runin]{\normalfont\bfseries}{\thesubsection.}{.5em}{}[.]\titlespacing{\subsection}{0pt}{2ex plus .1ex minus .2ex}{.8em}
\titleformat{\subsubsection}[runin]{\normalfont\itshape}{\thesubsubsection.}{.3em}{}[.]\titlespacing{\subsubsection}{0pt}{1ex plus .1ex minus .2ex}{.5em}
\titleformat{\paragraph}[runin]{\normalfont\itshape}{\theparagraph.}{.3em}{}[.]\titlespacing{\paragraph}{0pt}{1ex plus .1ex minus .2ex}{.5em}
\setlist[enumerate]{leftmargin=2em}
\setlist[itemize]{leftmargin=1.5em,label={\tiny$\blacksquare$}}
\newlist{itemizetight}{itemize}{10}
\setlist[itemizetight]{leftmargin=1.5em,itemsep=0pt,label={\tiny$\blacksquare$}}
\def\SetFigFont#1#2#3#4#5{\small}
\newcommand{\pbb}[1]{\biggl({#1}\biggr)}
\def\cA{{\mathcal A}}
\def\cB{{\mathcal B}}
\def\cH{{\mathcal H}}
\def\cA{{\mathcal A}}
\def\sB{{\sf B}}
\newcommand{\fa}{{\frak a}}
\newcommand{\fb}{{\frak b}}
\newcommand{\fc}{{\frak c}}
\newcommand{\fd}{{\frak d}}
\newcommand{\fe}{{\frak e}}
\newcommand{\cR}{{\mathcal R}}
\newcommand{\cT}{{\mathcal T}}
\newcommand{\be}{\begin{equation}}
\newcommand{\ee}{\end{equation}}
\newcommand{\td}{\tilde}
\newcommand{\cE}{{\cal E}}
\newcommand{\cG}{{\cal G}}
\newcommand{\cC}{\mathbb S}
\newcommand{\bI}{\mathbb I}
\newcommand{\cY}{{\cal Y}}
\newcommand{\bC}{{\mathbb C}}
\newcommand{\T}{\mathbb T}
\newcommand{\bU}{ {\mathbb  U}}
\newcommand{\bT}{\T}
\newcommand{\bB}{{\mathbb B}}
\newcommand{\bX}{{\mathbb X}}
\newcommand{\bG}{{\mathbb G}}
\newcommand{\tbG}{{\tilde\bG}}
\newcommand{\bH}{{\mathbb H}}
\newcommand{\bV}{{\mathbb V}}
\newcommand{\bK}{\mathbb{K}}
\newcommand{\bY}{\mathbb{Y}}
\newcommand{\ii}{\mathrm{i}} 
\newcommand{\deq}{\mathrel{\mathop:}=}
\newcommand{\id}{\mspace{2mu}\mathrm{i}\mspace{-0.6mu}\mathrm{d}} 
\renewcommand{\epsilon}{\varepsilon}
\renewcommand{\leq}{\leqslant}
\renewcommand{\geq}{\geqslant}
\newcommand{\floor}[1] {\lfloor {#1} \rfloor}
\newcommand{\ind}[1]{{\bf 1} (#1)}
\newcommand{\inda}[1]{{\bf 1} \pa{#1}}
\renewcommand{\P}{\mathbb{P}}
\newcommand{\E}{\mathbb{E}}
\newcommand{\R}{\mathbb{R}}
\newcommand{\C}{\mathbb{C}}
\newcommand{\N}{\mathbb{N}}
\newcommand{\Z}{\mathbb{Z}}
\def\bC{{\mathbb C}}
\newcommand{\cX}{\cG_0}
\newcommand{\hcX}{\hcG_0}
\newcommand{\pB}[1]{\Bigl({#1}\Bigr)}
\newcommand{\pa}[1]{\left({#1}\right)}
\renewcommand{\qq}[1]{[\![{#1}]\!]}
\newcommand{\h}[1]{\{{#1}\}}
\newcommand{\ha}[1]{\left\{{#1}\right\}}
\newcommand{\absb}[1]{\bigl\lvert #1 \bigr\rvert}
\newcommand{\absa}[1]{\left\lvert #1 \right\rvert}
\newcommand{\avg}[1]{\langle #1 \rangle}
\DeclareMathOperator{\re}{Re}
\DeclareMathOperator{\im}{Im}
\newcommand{\del}{\partial}
\newcommand{\dist}{\mathrm{dist}}
\newcommand{\I}{Q}
\newcommand{\vE}{\vec{E}}
\newcommand{\bS}{\mathbb{S}}
\newcommand{\cGT}{\cal G^{(\T)}}
\newcommand{\tcG}{\tilde{\cal G}}
\newcommand{\hcG}{\hat{\cal G}}
\newcommand{\tcGT}{\tilde{\cal G}^{(\T)}}
\newcommand{\hcGT}{\hat{\cal G}^{(\T)}}
\newcommand{\GT}{G^{(\T)}}
\newcommand{\tG}{\tilde{G}}
\newcommand{\tGT}{\tilde{G}^{(\T)}}
\newcommand{\hG}{\hat{G}}
\newcommand{\hGT}{\hat{G}^{(\T)}}
\newcommand{\tP}{\tilde{P}}
\newcommand{\ta}{\tilde{a}}
\newcommand{\sG}{{\sf G}}
\newcommand{\sS}{{\sf S}}
\newcommand{\GNd}{\sG_{N,d}}
\newcommand{\GNdp}{\tilde \sG_{N,d}}
\newcommand{\Pp}{\tilde \P}
\newcommand{\n}{\omega}
\newcommand{\msc}{m_{sc}}
\newcommand{\md}{m_d}
\newcommand{\Cw}{C}
\renewcommand{\cal}{\mathcal}
\newcommand{\rn}[1]{%
       \lowercase\expandafter{\romannumeral#1}%
}
\renewcommand{\mod}{\text{ mod }}
\newcommand{\RN}[1]{%
       \uppercase\expandafter{\romannumeral#1}%
}
\renewcommand{\Im}{{\mathrm{Im}}}
\newcommand{\sumdots}{[\,\cdots]}
\theoremstyle{plain} 
\newtheorem{theorem}{Theorem}[section]
\newtheorem*{theorem*}{Theorem}
\newtheorem{lemma}[theorem]{Lemma}
\newtheorem*{lemma*}{Lemma}
\newtheorem{corollary}[theorem]{Corollary}
\newtheorem*{corollary*}{Corollary}
\newtheorem{proposition}[theorem]{Proposition}
\newtheorem*{proposition*}{Proposition}
\newtheorem*{assumption*}{Assumption}
\newtheorem{claim}[theorem]{Claim}
\newtheorem{definition}[theorem]{Definition}
\newtheorem*{definition*}{Definition}
\newtheorem*{example*}{Example}
\newtheorem{remark}[theorem]{Remark}
\newtheorem*{remark*}{Remark}
\newtheorem*{remarks*}{Remarks}
\theoremstyle{definition}
\newcommand{\TE}{\mathrm{TE}}
\begin{document}
\date{}

\title
{Local Kesten--McKay law for random regular graphs 
}
\author{Roland Bauerschmidt\footnote{University of Cambridge, Statistical Laboratory, DPMMS. E-mail: {\tt rb812@cam.ac.uk}.} \and
Jiaoyang Huang\footnote{Harvard University, Department of Mathematics. E-mail: {\tt jiaoyang@math.harvard.edu}.} \and
Horng-Tzer Yau\footnote{Harvard University, Department of Mathematics. E-mail: {\tt htyau@math.harvard.edu}, partially supported by NSF grant DMS-1307444, DMS-1606305 and a Simons Investigator award.}}

\maketitle

\begin{abstract}
We study the adjacency matrices of random $d$-regular graphs with large but fixed degree $d$.
In the bulk of the spectrum $[-2\sqrt{d-1}+\varepsilon, 2\sqrt{d-1}-\varepsilon]$ down to the optimal spectral scale,
we prove that  the 
Green's functions can be  approximated by those  of certain  infinite tree-like (few cycles) graphs  that
depend only on the local structure of the original graphs.
This  result implies that
the Kesten--McKay law holds for the spectral density  down to the smallest  scale
and  the complete delocalization of bulk eigenvectors.
Our method is based on estimating the Green's function of the adjacency matrices and 
a resampling of the boundary edges of large balls in the graphs.  
\end{abstract}

\tableofcontents

\newpage
\section{Main results I: Spectral density and eigenvectors}
\label{sec:intro}

\subsection{Introduction}
\label{sec:intro-intro}

Random regular graphs with fixed degree $d$ are fundamental 
models of sparse random graphs 
and they arise naturally in many different  contexts. %
The spectral properties of their adjacency matrices  are  of particular interest
in computer science,  combinatorics, and statistical physics. The relevant topics include 
the theory of expanders (see e.g.\ \cite{MR2072849}), quantum chaos (see e.g.\ \cite{MR3204183}),
and  graph $\zeta$-functions (see e.g.\ \cite{MR2768284}).
There has been significant progress in the understanding of
the spectra of (random and deterministic) regular graphs.
For \emph{fixed} degree these results generally concern properties of 
eigenvalues and eigenvectors near the macroscopic scale,
and their proofs use the local tree-like structure of these graphs
as an important input.
On the other hand, dense regular graphs belong
to the random matrix universality class and their spectral properties 
are known to resemble those of Wigner matrices.
In this paper, we introduce an approach that allows 
the Green's function method of random matrix theory to make use of 
the local tree-like structure of the random regular graph,
while it also captures  key random matrix behavior.

\smallskip

Throughout the paper, $A=A(\cG)$ is the adjacency matrix of a (uniform) random $d$-regular graph $\cG$ on $N$ vertices.
Thus $A$ is uniformly chosen among all symmetric $N\times N$ matrices with entries in $\{0,1\}$ with
$\sum_j A_{ij}=d$ and $A_{ii}=0$ for all $i$.
Note that $A$ has the \emph{trivial} constant eigenvector with eigenvalue $d$.
We also use the rescaled adjacency matrix $H=A/\sqrt{d-1}$,
and we denote the set of (simple) $d$-regular graphs on $N$ vertices by $\GNd$.

Below we first discuss some known consequences of the tree-like and of the random matrix-like structure.

\paragraph{Tree-like structure}
It is well known that most regular graphs of a fixed degree $d\geq 3$ are \emph{locally tree-like}
in the sense that: (i) for any fixed radius $R$ (and actually for $R=c\log_{d-1} N$),
the radius-$R$ 
neighborhoods of almost all vertices are the same as those in the infinite $d$-regular tree;
(ii) the $R$-neighborhoods of all vertices have bounded excess,
which is the smallest number of edges that must be removed to yield a tree;
see e.g.\ Proposition~\ref{prop:structure} below.
The tree-like structure is important for the following results,
valid in general for deterministic graphs and in some cases requiring randomness as well.
\begin{enumerate}
\item
For regular graphs with locally tree-like structure,
the macroscopic spectral density of $A$ converges to the Kesten--McKay law \cite{MR0109367,MR629617}, 
characterized by the density 
$\frac{d}{d^2-x^2}\frac{1}{2\pi}\sqrt{[4(d-1)-x^2]_+}$.
For random regular graphs, the Kesten--McKay law was established  on spectral scales $(\log N)^{-c}$ \cite{MR3025715,MR3433288,MR3322309} by 
using the fact that the locally tree-like structure holds with high probability in neighborhoods of  radius
$ \Omega( \log N)$.

\item
For regular graphs with locally tree-like structure,
the eigenvectors $v$ of $A$ are weakly delocalized:
their entries are uniformly bounded by $(\log N)^{-c}\|v\|_2$ \cite{MR3025715,MR3433288,MR3038543} and their $\ell^2$-mass cannot concentrate on a small set \cite{MR3038543}.
If, in addition, the graphs are expanders,  
the eigenvectors of $A$ also satisfy the quantum ergodicity property \cite{MR3322309,1505.03887,1512.06624}.

\item
For random regular graphs
using the locally tree-like structure  as important input,
for any fixed $\varepsilon>0$,
the nontrivial eigenvalues of $A$ are contained in $[-2\sqrt{d-1}-\varepsilon, 2\sqrt{d-1}+\varepsilon]$.
This was conjectured in \cite{MR875835} and proved in \cite{MR2437174}; see also \cite{MR3385636,Bord15} for recent alternative arguments.
It was also shown that the scale $\varepsilon$ can actually be taken to be  $(\log N)^{-c}$ in \cite{Bord15}.
\end{enumerate}

\paragraph{Random matrix-like structure} For random matrices of Wigner type, 
precise estimates on the spectral properties of these matrices were proved (see e.g.,  \cite{MR2810797,MR2784665,1504.03605,ErdYauBook}):
\begin{enumerate}
\item
The spectral density in the bulk is given by the semicircle law on all scales larger than $N^{-1}$.
\item
The eigenvectors are uniformly bounded in $\ell^\infty$-norm by $N^{-1/2}$ (up to logarithmic correction).
\item
The extremal eigenvalues are concentrated on scale $N^{-2/3}$.
\item Both bulk and edge universality holds; in particular, the  distributions 
of the extremal eigenvalues are the same as  those of Gaussian matrix ensembles
(Tracy--Widom distributions).
\end{enumerate}
The first three properties usually can be proved via estimates on the Green's function;
the proofs of   universality involve Dyson Brownian Motion or  other comparison methods
(see \cite{ErdYauBook} for a review).

For random $d$-regular graphs with $d\in [N^\varepsilon, N^{2/3-\varepsilon}]$, properties (i), (ii),
and also bulk universality
were proved in \cite{1503.08702-cpam,1505.06700-aop}
(the lower bound on $d$ can be relaxed to $(\log N)^4$ for properties (i) and  (ii)).
Simulations indicate that (i)-(iv)  hold for random regular graphs of fixed degree \cite{MR2433888,MR1691538,MR2647344,MR2218022}.

In this paper, we consider random regular graphs of large but fixed degree~$d$.
One of our key ideas to  prove the  properties (i) and (ii) is to use switchings to 
resample the boundaries of large balls (see Section~\ref{sec:switch}).
This operation preserves 
the local tree-like structure and it also captures sufficient global structure in random regular graphs.
This  resampling generalizes  and adds a geometric component to
the  local resampling method we introduced with A.\ Knowles in \cite{1503.08702-cpam}
for random regular graphs with $d \gg \log N$.
The idea of using some form of switchings in studying random  regular graphs 
goes back at least  to \cite{MR790916}, where it was used  in the enumeration of such graphs;
see also \cite{MR1725006} for further applications in enumeration. 
Finally, to analyze the propagation of the boundary  effect  to the interior of the ball in
the Green's function, we explicitly compute the Green's function of  the tree-like graphs.

\paragraph{Notation} \label{sec:asymp-notation}

For two quantities $X$ and $Y$ depending on $N$, we use the notations
$X=O_{\leq }(Y)$ if $Y$ is positive and $|X|\leq Y$;
$X=O(Y)$ if $X,Y$ are positive and there exists some universal constant $C$ such that $X\leq CY$; $X=o(Y)$,
$X\ll Y$ or $Y\gg X$ if $Y$ is positive and $\lim_{N\rightarrow \infty}X/Y=0$;
$X=\Omega(Y)$ if $X,Y$ are positive and $\liminf_{N\rightarrow \infty}X/Y>0$.
We write $\qq{a,b} = [a,b]\cap\Z$ and $\qq{N} = \qq{1,N}$.

\subsection{Spectral density and eigenvector delocalization}

Our main result, Theorem~\ref{thm:mr},
is a precise estimate  on the local profile of the Green's function down to the smallest possible spectral scales, with high probability. 
Its statement requires several definitions, and we therefore only state it in Section~\ref{sec:main}.
In the remainder of this section, we state some direct consequences of Theorem~\ref{thm:mr}, which can be stated in elementary terms.
The proofs of these corollaries are given in Section~\ref{sec:main-pfcor}.

\subsubsection{Spectral density}

With high probability, the spectral measure of
the rescaled adjacency matrix  $H=A/\sqrt{d-1}$ converges weakly to the rescaled Kesten--McKay law
with  density  given by
\begin{align}\label{kmlaw}
\rho_{d}(x)=\left(1+\frac{1}{d-1}-\frac{x^2}{d}\right)^{-1}\frac{\sqrt{[4-x^2]_+}}{2\pi}.
\end{align}
This convergence can be expressed as $m(z) = m_d(z) + o(1)$ for any $z \in \C_+$ independent of $N$,
where $m_d(z)$ is the Stieltjes transform of $\rho_d$,
and $m(z)$ is the Stieltjes transform of the empirical spectral measure of $H$,
\begin{equation}
  m_d(z) = \int \frac{1}{\lambda-z} \, \rho_d(\lambda) \, d\lambda, \qquad
  m(z) = \frac{1}{N} \sum_{j=1}^N \frac{1}{\lambda_j-z},
\end{equation}
and $\C_+ = \{z \in \C : \im [z] > 0\}$ is the upper half-plane.
The imaginary part of the spectral parameter $z \in \C_+$ determines the scale of the convergence.
In particular, the convergence $m(z) \to m_d(z)$ for all fixed $z$ corresponds to the convergence on the macroscopic scale,
i.e., for intervals containing order $N$ eigenvalues.
The following theorem gives the convergence on the optimal mesoscopic scale $\im[z] \gg 1/N$,
away from the spectral edges at $\pm 2$.

\begin{theorem}[Local Kesten--McKay Law] \label{thm:localmckay}
Fix $\alpha>4$, $\n\geq 8$ and $\sqrt{d-1}\geq (\n+1)2^{2\omega+45}$.
Then with probability $1-o(N^{-\n+8})$  with respect to the uniform measure on $\GNd$,
\begin{equation} \label{e:localmckay}
  \left|m(z) - \md(z)\right| = O(\log N)^{-\alpha} 
\end{equation}
uniformly for
\begin{equation}
\label{spectralD}
z \in \cal D \deq \ha{z\in \C_+:
\Im [z]\geq  \frac{(\log N)^{48\alpha+1}}{N},\quad |z\pm 2|\geq (\log N)^{-\alpha/2+1}}.
\end{equation}
\end{theorem}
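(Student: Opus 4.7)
The plan is to derive Theorem~\ref{thm:localmckay} from the pointwise Green's function estimate of Theorem~\ref{thm:mr} by averaging over the diagonal. Writing
\begin{equation}
m(z) = \frac{1}{N}\sum_{i=1}^N G_{ii}(z), \qquad G(z) = (H-z)^{-1},
\end{equation}
Theorem~\ref{thm:mr} provides, on the stated high-probability event, an approximation $G_{ii}(z) = g_i(z) + O((\log N)^{-\alpha})$ uniformly in $i \in \qq{N}$ and $z \in \cal D$, where $g_i(z)$ is the diagonal Green's function at the root of an explicit infinite tree-like graph $\wt\cG_i$ determined by the radius-$R$ ball $B_R(i)$ in $\cG$, with $R$ of order $\log_{d-1} N$.

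I partition $\qq{N} = \cV_{\r{tr}} \sqcup \cV_{\r{ex}}$, with $\cV_{\r{tr}}$ the set of vertices whose $B_R(i)$ is an exact $d$-regular tree of depth $R$. For $i \in \cV_{\r{tr}}$, the graph $\wt\cG_i$ is the full infinite $d$-regular tree, so $g_i(z) = \md(z)$ by the classical identification of the Kesten--McKay density with the spectral measure of the normalized Hecke adjacency operator at any vertex of this tree. For $i \in \cV_{\r{ex}}$, the standard locally-tree-like structure result (Proposition~\ref{prop:structure}, based on a switching/enumeration count), together with the fact that each short cycle ``contaminates'' at most $N^{c_0+o(1)}$ vertices, gives $|\cV_{\r{ex}}| \leq N^{2c_0}(\log N)^C$ on an event of probability $1 - o(N^{-\omega+8})$, where $c_0$ governs $R = c_0 \log_{d-1}N$. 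The spectral constraint $|z \pm 2| \geq (\log N)^{-\alpha/2+1}$ in $\cal D$, combined with the fact that $\wt\cG_i$ has bounded excess (so its spectrum is essentially contained in $[-2,2]$), yields $|g_i(z)| + |\md(z)| \leq (\log N)^{\alpha/2-1}$.

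Combining these,
\begin{equation}
|m(z) - \md(z)| \leq \frac{1}{N}\sum_i \absb{G_{ii}(z) - g_i(z)} + \frac{|\cV_{\r{ex}}|}{N}\sup_{i \in \cV_{\r{ex}}}\absb{g_i(z) - \md(z)} \leq (\log N)^{-\alpha} + N^{2c_0-1}(\log N)^{C+\alpha/2-1},
\end{equation}
and choosing $c_0 < 1/2$ sufficiently small (depending on $\alpha$) renders the second term smaller than $(\log N)^{-\alpha}$. The routine components of this argument are the trace averaging and the short-cycle count; the main obstacle is Theorem~\ref{thm:mr} itself---the uniform pointwise approximation of $G_{ii}(z)$ by the explicit tree-like $g_i(z)$ at the near-optimal scale $\Im[z] \sim (\log N)^{48\alpha+1}/N$---whose proof uses the boundary-edge resampling switching together with the explicit Green's function analysis on tree-like graphs outlined in the introduction.
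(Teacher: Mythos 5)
Your proposal is correct and follows essentially the same route as the paper: apply Theorem~\ref{thm:mr}, use that $P_{ii}(\cE_{r_*}(i,i,\cG))=\md$ exactly for the (at least $N-N^{\delta}$) vertices whose radius-$R$ neighbourhood is a tree, bound the contribution of the exceptional vertices crudely, and average over $i\in\qq{N}$. The only cosmetic difference is in how the exceptional vertices are handled: the paper invokes the uniform bound $|P_{ii}|\leq 3|\msc|/2\leq 3/2$ of Proposition~\ref{boundPij} (valid for all $z\in\C_+$), which is cleaner than your distance-to-spectrum argument, since the tree extension of a bounded-excess graph is not a priori guaranteed to have spectrum inside $[-2,2]$ — that containment is in effect a consequence of Proposition~\ref{boundPij} rather than an input to it.
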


While Theorem~\ref{thm:localmckay} shows that the spectral density (or its Stieltjes transform, which is the trace of the Green's function) does concentrate, 
the individual entries of the Green's function of the random regular graph with bounded degree do \emph{not} concentrate;
see also Remark~\ref{rk:Giinonconc} below.
This is different from the typical examples in random matrix theory,
and it is one of the reasons that  the fixed degree graphs require a more delicate analysis.
For example, the random regular graph contains a triangle with probability uniformly bounded from below.
For graphs with bounded degree, triangles and other short cycles have a strong local influence on the
elements of the Green's function, and thus the spectrum.

The spectral density of random regular graphs at scales much larger than the typical eigenvalue spacing
has been studied in \cite{MR2999215,MR3025715,MR3433288,MR3322309}. 
Results for  spectral density near the typical eigenvalue spacing only appeared very recently \cite{1503.08702-cpam}, 
where  the semicircle law down to the optimal mesoscopic scale was established 
for degree $d\in [\xi^4, N^{2/3}\xi^{-2/3}]$ with $\xi=(\log N)^2$.
The methods of the current paper could be extended from fixed $d$ to $d$ growing slowly with $N$,
for example to the range $d_0 \leq d \leq (\log N)^4$ beyond which the results of \cite{1503.08702-cpam} apply.
Thus the results of this paper complement those of \cite{1503.08702-cpam}.
For simplicity, we restrict this paper to the most interesting case of fixed degree $d$.

\subsubsection{Eigenvectors}

Theorem~\ref{thm:mr} implies delocalization estimates of the eigenvectors in the bulk of the spectrum.

\begin{theorem}[Eigenvector delocalization]\label{thm:delocalizationev}
Fix $\alpha>4$, $\n\geq 8$ and $\sqrt{d-1}\geq (\n+1)2^{2\omega+45}$.
Then, with probability $1-o(N^{-\n+8})$ with respect to the uniform measure on $\GNd$,
the eigenvectors $v$ of $H$ whose eigenvalue $\lambda$ obeys $|\lambda \pm 2| \geq (\log N)^{1-\alpha/2}$
are simultaneously delocalized:
\begin{equation}\label{evest}
  \|v\|_\infty \leq 
\frac{\sqrt{2}(\log N)^{24\alpha+1/2}}{\sqrt{N}} \|v\|_2.
\end{equation}
\end{theorem}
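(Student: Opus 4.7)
The plan is to deduce the delocalization bound directly from Theorem~\ref{thm:mr} via the standard spectral-decomposition identity for the Green's function. Writing $G(z) = (H-z)^{-1}$ and expanding in an orthonormal eigenbasis $\{v_k\}$ of $H$, one has
\[
\Im G_{ii}(E + i\eta) \;=\; \sum_{k=1}^{N} \frac{\eta\,|v_k(i)|^2}{(\lambda_k-E)^2+\eta^2}
\;\geq\; \frac{|v_k(i)|^2}{\eta}, \qquad \text{when } E = \lambda_k.
\]
Hence, for any eigenpair $(\lambda,v)$ of $H$ with $\|v\|_2 = 1$ and any vertex $i$,
\[
|v(i)|^2 \;\leq\; \eta\,\Im G_{ii}(\lambda + i\eta),
\]
so the entire problem is reduced to producing a uniform-in-$i$ upper bound on $\Im G_{ii}$ at the optimal scale $\eta = (\log N)^{48\alpha+1}/N$, which is precisely the lowest $\Im z$ permitted in the domain $\cal D$.

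The next step is to extract from Theorem~\ref{thm:mr} the inequality $\max_i \Im G_{ii}(z) \leq 2$ uniformly in $z \in \cal D$, on an event of probability $1-o(N^{-\omega+8})$. Although, as emphasized in Remark~\ref{rk:Giinonconc}, the individual entries $G_{ii}$ do \emph{not} concentrate to a common deterministic value, Theorem~\ref{thm:mr} still approximates each $G_{ii}(z)$ by the corresponding diagonal of the Green's function of a tree-like graph depending only on a large neighborhood of $i$. For the infinite $d$-regular tree this diagonal equals the Stieltjes transform $m_d$ of $\rho_d$, whose bulk imaginary part is $\pi\rho_d(x) \leq 1+o(1)$ under the large-degree hypothesis $\sqrt{d-1}\geq(\omega+1)2^{2\omega+45}$. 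The bounded number of extra cycles permitted inside the radius-$R$ ball (Proposition~\ref{prop:structure}) shifts the tree-like Green's function by at most a bounded amount, controllable by a Schur-complement/resolvent-expansion at the exceptional edges. Combined with the $O((\log N)^{-\alpha})$ approximation error of Theorem~\ref{thm:mr}, this yields $\Im G_{ii}(z) \leq 2$ uniformly in $i$ and in $z \in \cal D$.

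The argument then closes quickly. On the high-probability event of Theorem~\ref{thm:mr}, every eigenvalue $\lambda$ satisfying $|\lambda \pm 2| \geq (\log N)^{1-\alpha/2}$ places $z = \lambda + i\eta$ into $\cal D$, and so
\[
\|v\|_\infty^2 \;\leq\; \eta\,\max_i \Im G_{ii}(\lambda+i\eta) \;\leq\; 2\eta \;=\; \frac{2(\log N)^{48\alpha+1}}{N}.
\]
Taking square roots gives exactly the factor $\sqrt{2}\,(\log N)^{24\alpha+1/2}/\sqrt{N}$ appearing in the theorem, and the bound holds simultaneously for all bulk eigenvectors on the same event. Uniformity of the Green's function bound over the continuum $z\in\cal D$ is not a serious issue: a standard net argument combined with the deterministic Lipschitz bound $|\partial_z G_{ii}(z)| \leq (\Im z)^{-2}$ reduces to finitely many values of $z$, absorbed into the probability.

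The main obstacle is clearly the second step: establishing that the constant on the right-hand side of $\max_i \Im G_{ii} \leq 2$ is tight enough to deliver the explicit constant $\sqrt{2}$ (not merely a polylogarithmic factor) in the eigenvector bound. This demands both (a) that the explicit tree-like Green's functions arising in Theorem~\ref{thm:mr} have imaginary part bounded by a constant strictly less than $2$ across all of $\cal D$, uniformly over the bounded-excess local structures of Proposition~\ref{prop:structure}, and (b) that the error in Theorem~\ref{thm:mr} is $o(1)$ uniformly in $i$ and $z$. Once these two inputs are secured, the remainder of the argument is the one-line spectral-decomposition estimate above.
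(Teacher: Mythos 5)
Your proposal is correct and follows essentially the same route as the paper: bound $|v_i|^2\leq \eta\,\Im G_{ii}(\lambda+\ii\eta)$ by keeping one term of the spectral decomposition at $\eta=(\log N)^{48\alpha+1}/N$, and combine this with $\Im G_{ii}\leq |G_{ii}|\leq 2$ on $\cal D$, which follows from Theorem~\ref{thm:mr} together with the deterministic bound $|P_{ii}(\cE_{r_*}(i,i,\cG))|\leq 3|\msc|/2$ of Proposition~\ref{boundPij} (i.e.\ \eqref{e:intro-Pii-notree}). The step you single out as the remaining obstacle is thus already supplied by Proposition~\ref{boundPij}, and the uniformity in $z\in\cal D$ is part of the statement of Theorem~\ref{thm:mr}, so no additional net argument is needed.
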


Theorem~\ref{thm:delocalizationev} shows that with probability $1-o(N^{-\omega+8})$,
the eigenvectors are completely delocalized.
On the other hand, it is easy to see that, with probability $\Omega(N^{-d+2})$,
the random $d$-regular graph has a localized eigenvector (see Figure~\ref{fig:fork}).
In particular, \eqref{evest} cannot hold with probability higher than polynomial in $1/N$.
Moreover, the Erd\H{o}s--R\'enyi graph with finite average degree $d$ has localized
eigenvectors with probability $\Omega(1)$.
Thus \eqref{evest} with probability tending to $0$ is false
for the Erd\H{o}s--R\'enyi graph with finite average degree $d$.

\begin{figure}
\centering
\input{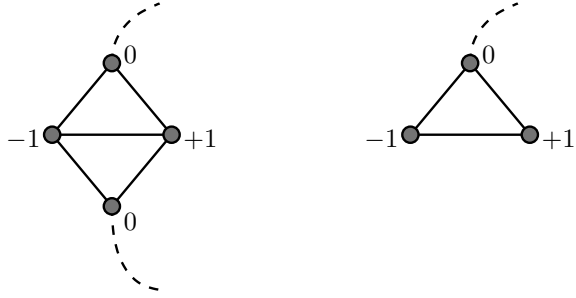}
\caption{
Theorem~\ref{thm:delocalizationev} shows
that a random $d$-regular graph has only completely delocalized eigenvectors with probability $1-o(N^{-\n+8})$.
On the other hand, it is not difficult to show that a random $d$-regular graph has localized eigenvectors with probability $\Omega(N^{-d+2})$. For example,
a random $3$-regular graph contains the subgraph shown on the left with probability $\Omega(N^{-1})$.
For comparison, also notice that an Erd\H{o}s--R\'enyi graph with finite average degree contains localized eigenvectors
with probability $\Omega(1)$; see the right figure.
\label{fig:fork}}
\end{figure}

The delocalization of eigenvectors of (random and deterministic) regular graphs has been studied in
\cite{MR2999215,MR3025715,MR3433288,MR3322309,MR3245884,MR3038543,1505.03887,1512.06624}
(see also \cite{1601.03678} for a survey of results on eigenvector delocalization in random matrices).
Our result implies the optimal bound of order $1/\sqrt{N}$
(up to logarithmic corrections) on the $\ell^\infty$-norms of the (bulk) eigenvectors
of random regular graphs.

For (deterministic) locally tree-like regular graphs,
it was previously proved that the eigenvectors $v$ are weakly delocalized in the sense that $\|v\|_\infty \leq (\log N)^{-c}\|v\|_2$ \cite{MR3025715,MR3433288,MR3038543},
and that eigenvectors cannot concentrate on a small set, in the sense that any vertex set $\bV \subset \qq{N}$ with $\sum_{i\in \bV} |v_i|^2 \geq \varepsilon \|v\|_2$
must have at least $N^{c(\varepsilon)}$ elements \cite{MR3038543}.
Moreover, for deterministic locally tree-like  regular expander graphs,
it was proved that the eigenvectors $v$ satisfy a quantum ergodicity property:
for all $a \in \R^N$ with $\|a\|_\infty\leq 1$ and $\sum_i a_i=0$,
averages of $|\sum_i a_i v_i^2|^2$ over many eigenvectors $v$
are close to $0$ \cite{MR3322309,1505.03887,1512.06624}. 

Theorem~\ref{thm:delocalizationev} and the exchangeability of the random
regular graph also imply the following
\emph{isotropic} version of Theorem~\ref{thm:delocalizationev},
implying that the eigenvectors are delocalized not only in the standard basis,
but in any deterministic orthonormal basis. In addition,  a
\emph{probabilistic} version of the quantum unique ergodicity property (QUE)
holds for these graphs. 
Note that estimates \eqref{e:QUE}, \eqref{e:flat} are not uniform
over \emph{all} $q$ or $\bX$.
Therefore $q$ and $\bX$ cannot be chosen depending on the random graph.

\begin{corollary}
\label{c:que}
Under the assumptions of Theorem~\ref{thm:delocalizationev}, 
the following estimates hold with probability $1-o(N^{-\n+8})$
with respect to the uniform measure on $\GNd$.
For any deterministic $q^1, \dots, q^M \in \R^N$ with $\|q^m\|_2 = 1$ and $\sum_i q^m_i = 0$
($M$ can depend on $N$),
and for all normalized eigenvectors $v^k$ whose eigenvalue $\lambda_k$
obeys $|\lambda_k \pm 2| \geq (\log N)^{1-\alpha/2}$,
we have:
\begin{enumerate}
\item
(Isotropic delocalization)
The eigenvectors are delocalized in directions $q^m$:
\begin{align}
  \max_{k,m}
  |\avg{q^m,v^k}| \leq 
  \frac{(\log N)^{24\alpha+1/2}(\log N+\log M)^2}{\sqrt{N}}.
\end{align}
\item
(Probabilistic QUE)
The eigenvector densities are flat with respect to the test vectors $q^m$:
\begin{equation} \label{e:QUE}
  \max_{k,m} \absa{\sum_{i = 1}^N q^m_i (v_i^k)^2}
  \leq \frac{(\log N)^{48\alpha+1}(\log N+\log M)^2}{N}
\end{equation}
In particular, with probability $1-o(N^{-\n+8})$,
simultaneously for any deterministic index sets $\bX^1, \dots, \bX^M \subset\qq{N}$,
and all eigenvectors $v^k$ with $|\lambda_k \pm 2| \geq (\log N)^{1-\alpha/2}$,
\begin{equation}\label{e:flat}
  \sum_{i\in \bX^m} (v_i^k)^2
  = \frac{|\bX^m|}{N}
  + O\left(\frac{(\log N)^{48\alpha+1} (\log N+\log M)^2\sqrt{|\bX^m|}}{N}\right).
\end{equation}
\end{enumerate}
\end{corollary}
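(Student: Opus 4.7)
The plan is to derive Corollary~\ref{c:que} from Theorem~\ref{thm:delocalizationev} using only the exchangeability of the uniform measure on $\GNd$, via a truncated moment method. Let $\mathcal{E}$ denote the event that every nontrivial eigenvector $v^k$ with $|\lambda_k\pm2|\geq(\log N)^{1-\alpha/2}$ satisfies $\|v^k\|_\infty\leq L/\sqrt N$, where $L\deq\sqrt2(\log N)^{24\alpha+1/2}$; by Theorem~\ref{thm:delocalizationev} we have $\P(\mathcal{E}^c)=o(N^{-\n+8})$. All estimates below are carried out on $\mathcal{E}$ by working with the truncated vectors $\tilde v^k\deq v^k\indb{\mathcal{E}}$, which remain exchangeable and, since each $v^k$ is orthogonal to the constant vector, satisfy $\sum_i\tilde v_i^k=0$ and $\E\tilde v_1^k=0$.

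For the isotropic estimate in part~(i), fix deterministic $q\in\R^N$ with $\|q\|_2=1$ and $\sum_iq_i=0$ and a bulk eigenvector $v=\tilde v^k$, and compute the $2p$-th moment of $X\deq\avg{q,v}$ via a partition expansion:
\begin{equation}
\E X^{2p}=\sum_{\pi\vdash\qq{2p}}\mu_\pi\,T_\pi(q),\qquad
\mu_\pi\deq\E\prod_{B\in\pi}v_{i_B}^{|B|},\qquad
T_\pi(q)\deq\sum_{\substack{i_B\in\qq{N}\\ \text{pairwise distinct}}}\prod_{B\in\pi}q_{i_B}^{|B|}.
\end{equation}
Exchangeability ensures $\mu_\pi$ depends only on the partition type. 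The key algebraic cancellation is that $\sum_iq_i=0$ forces the non-injective sum $\prod_B\sum_iq_i^{|B|}$ to vanish whenever $\pi$ has a singleton block, and a M\"obius-type inversion on the partition lattice shows that, after converting non-injective to injective sums, only partitions all of whose blocks have size $\geq2$ contribute non-trivially to $\E X^{2p}$. For such $\pi$, the pointwise bound $|v_i|\leq L/\sqrt N$ together with $\E v_1^2\leq1/N$ yields
\begin{equation}
|\mu_\pi|\leq L^{2p-2}N^{-p},\qquad|T_\pi(q)|\leq\prod_B\|q\|_{|B|}^{|B|}\leq\prod_B\|q\|_\infty^{|B|-2}\leq 1,
\end{equation}
the last step using $\|q\|_\infty\leq\|q\|_2=1$. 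Summing over the $(2p-1)!!$ matching partitions (which dominate) gives $\E X^{2p}\leq(Cp)^pL^{2p-2}N^{-p}$. Markov's inequality with $p=\lceil C_\n(\log N+\log M)\rceil$ produces $|X|\leq CL(\log N+\log M)^{1/2}/\sqrt N$ with probability at least $1-(NM)^{-\n-8}$, and a union bound over $k\in\qq N$ and $m\in\qq M$ (combined with $\P(\mathcal{E}^c)$) delivers~(i); the loose $(\log N+\log M)^2$ factor in the stated bound comfortably absorbs our Markov exponent.

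The QUE estimate~\eqref{e:QUE} of part~(ii) follows from the same machinery applied to the centered variables $w_i\deq v_i^2-1/N$, which by exchangeability satisfy $\sum_iw_i=0$ and $\E w_i=0$ and on $\mathcal{E}$ obey $|w_i|\leq2L^2/N$ and $\E w_1^2\leq L^2/N^2$. Since $\sum_iq_i=0$, we have $\sum_iq_iv_i^2=\sum_iq_iw_i$, and the partition expansion with these improved pointwise and second-moment bounds yields
\begin{equation}
\E\pB{\sum_iq_iw_i}^{2p}\leq(Cp)^pL^{4p-2}N^{-2p},
\end{equation}
from which Markov and the same union bound give~\eqref{e:QUE}. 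The index-set statement~\eqref{e:flat} then follows by applying~\eqref{e:QUE} to the unit vector $q^m_i=(\indb{i\in\bX^m}-|\bX^m|/N)/\sqrt{|\bX^m|(1-|\bX^m|/N)}$ and multiplying through by $\|q^m\|_2\leq\sqrt{|\bX^m|}$.

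The main technical obstacle is executing the singleton cancellation rigorously. A naive partition bound, ignoring $\sum_iq_i=0$, yields only the trivial estimate $|\avg{q,v}|\leq\|v\|_\infty\|q\|_1\leq L$, which is useless for either assertion. The M\"obius/inclusion--exclusion argument---equivalently, writing $v_i$ as a deviation from its vanishing mean $\bar v=0$ and iteratively absorbing the mean---is what converts each singleton block into an extra factor of $N^{-1/2}$, producing the desired $N^{-1/2}$ and $N^{-1}$ decay. A secondary bookkeeping issue is balancing the moment order $p=\Theta(\log N+\log M)$ with the combinatorial factor $(2p)!^{1/(2p)}=O(\sqrt p)$ so that Markov still furnishes a sufficiently small tail for the $MN$-fold union bound, and this choice is what determines the logarithmic exponent in the final bounds.
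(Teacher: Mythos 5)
Your overall strategy coincides with the paper's: condition on the $\ell^\infty$-delocalization event supplied by Theorem~\ref{thm:delocalizationev}, use exchangeability of the eigenvector entries under vertex relabeling to control high moments of $\sum_i q_i Y_i$ for a centered deterministic test vector $q$, and finish with Markov's inequality, a union bound over $k\in\qq{N}$ and $m\in\qq{M}$, and (for \eqref{e:flat}) the normalized centered indicator of $\bX^m$. The only substantive difference is that the paper imports the moment inequality \eqref{e:aY} for exchangeable vectors as a black box from the cited reference, whereas you attempt to re-derive a version of it by a partition expansion; your choices $Y_i=v_i^k$ and $Y_i=(v_i^k)^2-1/N$ and the final Markov/union-bound step match the paper's.

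The re-derivation is where your argument has a gap. In the injective expansion $\E X^{2p}=\sum_\pi\mu_\pi T_\pi(q)$ the terms with singleton blocks do \emph{not} vanish: it is the non-injective products $\hat T_\sigma(q)=\prod_B\sum_i q_i^{|B|}$ that vanish when $\sigma$ has a singleton block, and after the M\"obius inversion $\E X^{2p}=\sum_\sigma c_\sigma\,\hat T_\sigma(q)$ the coefficient $c_\sigma=\sum_{\pi\le\sigma}\mu(\pi,\sigma)\mu_\pi$ of a surviving (singleton-free) $\sigma$ is an alternating sum over \emph{all} refinements $\pi$ of $\sigma$, including refinements with many singleton blocks, for which your estimate $|\mu_\pi|\le L^{2p-2}N^{-p}$ is not available (the all-singleton refinement only gives $L^{2p}N^{-p}$ without a separate argument exploiting $\sum_i v_i=0$), and the M\"obius weights $\mu(\pi,\sigma)$ contribute additional factorial factors. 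Relatedly, the count ``$(2p-1)!!$ matching partitions dominate'' undercounts the surviving terms: all singleton-free partitions contribute, and their number is much larger than $(2p-1)!!$. Both defects cost only extra powers of $L$ and of $p$, i.e.\ polylogarithmic factors once $p=\Theta(\log N+\log M)$, so the conclusions of the corollary survive thanks to the generous $(\log N+\log M)^2$ in the statement; but the intermediate bound $\E X^{2p}\le (Cp)^pL^{2p-2}N^{-p}$ does not follow from the steps as you have written them. You should either carry out the inversion bookkeeping honestly, arriving at a bound of the shape $(O(p^2/\log p))^{2p}\,\E|Y_1|^{2p}$, or simply quote \eqref{e:aY} as the paper does.
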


The proof of Corollary~\ref{c:que}
makes strong use of the exchangeability of the random regular graph.
On the other hand, the proof of Theorem~\ref{thm:mr},
and its consequences Theorem~\ref{thm:localmckay} and Theorem~\ref{thm:delocalizationev},
do not exploit exchangeability
in a significant way, and we believe that the method could be extended, for example, to graphs with more general degree sequences.

\subsection{Related results}
\label{sec:intro-related}

Macroscopic eigenvalue statistics for random regular graphs of fixed degree have been studied
using the techniques of Poisson approximation of short cycles \cite{MR3078290,MR3315475} and (non-rigorously) using the replica method \cite{PhysRevE.90.052109}.
These results show that the macroscopic eigenvalue statistics for random regular graphs of fixed degree
are different from those of a Gaussian matrix.
However, this is not predicted to be the case for the \emph{local} eigenvalue statistics.
Spectral properties of regular directed graphs have also been studied recently \cite{1508.00208,Cook2015}.

The second largest eigenvalue $\lambda_2$ of regular graphs is of particular interest.
For the case of fixed degree, see in particular \cite{MR2437174,MR3385636,Bord15,FriedmanKahnSzemeredi,1510.06013}.
The conjecture that the distribution of the second largest eigenvalue on scale $N^{-2/3}$ is the
same as that of the largest eigenvalue of the Gaussian Orthogonal Ensemble \cite{MR2072849}
would imply that slightly more than half of all regular graphs are Ramanujan graphs,
namely $d$-regular graphs with $\lambda_2 \leq 2\sqrt{d-1}$
(for explicit and probabilistic constructions of sequences of Ramanujan graphs, see \cite{MR963118,MR939574,MR3374962}).
The spectrum of random regular graphs has also received interest from the study of $\zeta$-functions,
as it can be related by an exact relationship to the poles of the Ihara $\zeta$-function of regular graphs
\cite{MR0223463,MR1194071}; see also \cite{MR2768284,MR1967891}.

Another interesting direction related to the spectral properties of
random regular graphs concerns  the phase diagram of the Anderson model.
The model was originally defined on the square lattice $\Z^d$,
but only limited progress was made for the delocalization problem in this setting.
A simplified model
on the infinite regular tree (Bethe lattice) is  well-understood 
\cite{MR1492789,MR2215610,MR2257129,MR2259096,MR3405613,MR3055759,MR2905787,MR2885163,MR2329431,MR2241809};
see also \cite{MR3364516} for a review.
At large disorder, it is known that the Anderson model on the random regular graph exhibits Poisson statistics \cite{MR3369316}.
The eigenstates of the Anderson model on the random regular graph have also been studied in connection with
many-body localization \cite{PhysRevLett.113.046806,1401.0019}.

In random matrix theory, the local spectral statistics of the generalized Wigner matrices are  well
understood; see in particular
 \cite{MR1810949,MR2810797,MR2639734,MR2662426,MR2981427,MR2784665,MR3372074,MR3541852,MR2964770, ErdYauBook}.
Many results on local eigenvalue statistics also exist for Erd\H{o}s-R\'enyi random graphs, in particular \cite{MR3098073,MR2964770,1504.05170,1510.06390};
the latter results apply down to logarithmically small average degrees.
Similar results have also been proved for more general degree distributions \cite{1509.03368,AEK16}.
However, these types of results are false for the Erd\H{o}s--R\'enyi graph with \emph{bounded} average degree.
For a review of other results for discrete random matrices, see also \cite{MR2432537}.
For the eigenvectors of random regular graphs with $d \in [N^\varepsilon, N^{2/3-\varepsilon}]$,
the asymptotic normality was proved in \cite{1609.09022};
see also the prior results for generalized Wigner matrices \cite{MR3034787,MR2930379,BY2016}.
For random regular graphs of fixed degree,
a Gaussian wave correlation structure for the eigenvectors was predicted in \cite{0907.5065}
and partially confirmed in \cite{1607.04785}.

\section{Main results II: Local approximation of the Green's function}
\label{sec:main}

\subsection{Graphs}

The main result of this paper, Theorem~\ref{thm:mr} below, is a precise local approximation result of the Green's function,
which in particular implies the results stated in Section~\ref{sec:intro}.
To state the main result, we require several definitions, which we give below.

\paragraph{Graphs, adjacency matrices, Green's functions}

Throughout this paper, graphs $\cG$ are always simple (i.e., have no self-loops or multiple edges) and have vertex degrees at most $d$
(non-regular graphs are also used).
The geodesic distance (length of the shortest path between two vertices) in the graph $\cG$ is denoted by $\dist_{\cG}(\cdot, \cdot)$.
For any graph $\cG$, the \emph{adjacency matrix} is the (possibly infinite) symmetric matrix $A$ indexed by the vertices of the graph,
with $A_{ij}= A_{ji} = 1$ if there is an edge between $i$ and $j$, and $A_{ij}=0$  otherwise.
Throughout the paper, we denote the \emph{normalized adjacency matrix} by $H=A/\sqrt{d-1}$,
where the normalization by $1/\sqrt{d-1}$
is chosen independently of the actual degrees of the graph.
Moreover, we denote the (unnormalized) adjacency matrix of a directed edge $(i,j)$ by $e_{ij}$, i.e.\ $(e_{ij})_{kl} = \delta_{ik}\delta_{jl}$.
The \emph{Green's function} of a graph $\cG$ is the unique matrix $G = G(z)$ defined by $G(H-z)=I$ for $z \in \C_+$, where $\C_+$ is the upper half plane.

In Appendix~\ref{app:Green}, several well-known properties of Green's function are summarized;
they will be used throughout the paper.
The Green's function $G(z)$ encodes all spectral information of $H$ (and thus of $A$).
In particular, the \emph{spectral resolution} is given by $\eta = \im [z]$:
the \emph{macroscopic} behavior corresponds to $\eta$ of order $1$,
the \emph{mesoscopic} behavior to $1/N \ll \eta \ll 1$,
and the \emph{microscopic} behavior of individual eigenvalues corresponds
to $\eta$ below $1/N$.

\paragraph{Subsets and Subgraphs}

Let $\cG$ be a graph, and denote the set of its edges by the same symbol $\cG$ and its vertices by $\bG$.
More generally, throughout the paper, we use blackboard bold letters for set or subsets of vertices, and calligraphic letters for graphs or subgraphs.
For any subset $\bX\subset \bG$, we define the graph $\cG^{(\bX)}$ by removing the vertices $\bX$ and edges adjacent to $\bX$ from $\cG$,
i.e., the adjacency matrix of $\cG^{(\bX)}$ is the restriction of that of $\cG$ to $\bG \setminus \bX$.
We write $G^{(\bX)}$ for the Green's function of $\cG^{(\bX)}$.
For any subgraph $\cal X \subset \cG$, we denote by $\del \cal X=\{v\in \bG: \dist_{\cG}(v,\cal X)=1\}$
the {\it vertex boundary} of $\cal X$ in $\cG$,
and by $\partial_E \cal X = \{e\in \cG: \text{$e$ is adjacent to $\cal X$ but $e\not\in \cal X$}\}$ the {\it edge boundary} of $\cal X$ in $\cG$.
Moreover, for any subset $\bX \subset \bG$, we denote by $\del\bX$ and $\del_E \bX$ the vertex and edge boundaries
of the subgraph induced by $\cG$ on $\bX$.

\paragraph{Neighborhoods}

Given a subset $\bX$ of the vertex set of a graph $\cG$ and an integer $r>0$,
we denote the $r$-neighborhood of $\bX$ in $\cG$ by $\cB_r(\bX,\cG)$,
i.e., it is the subgraph induced by $\cG$ on the set $\bB_r(\bX,\cG) = \h{j \in \cal G: \dist_{\cal G}(\bX,j) \leq r}$.
In particular $\cB_r(i,\cG)$ is the radius-$r$ neighborhood of the vertex $i$.

Moreover, given vertices $i,j$ in $\cal G$ and $r >0$, we denote by $\cE_r(i,j,\cG)$ the smallest subgraph of $\cG$ 
that contains all paths of length at most $r$ between $i$ and $j$. Namely,
\begin{equation} \label{e:Erdef}
\cE_r(i,j,\cG):=\{e\in \cG: \text{there exists a path from $i$ to $j$ of length at most $r$ containing $e$} \}.
\end{equation}
Notice that $\cE_{2r}(i,j,\cal G)\subset \cB_{r}(i,\cal G)\cup \cB_{r}(j,\cal G)$.

\paragraph{Trees}

The infinite $d$-regular tree is the unique (up to isometry) infinite connected $d$-regular graph without cycles, and is denoted by $\cY$.
The rooted $d$-regular tree with root degree $c$ is the unique (up to isometry) infinite connected graph that is $d$-regular
at every vertex except for a distinguished root vertex $o$, which has degree $c$.

\subsection{Tree extension}
\label{sec:intro-TE}

The local approximation of the Green's function of a graph will be defined
in terms of the \emph{tree extension}, defined next.

\begin{definition}[deficit function]
Given a graph $\cG$ with vertex set $\bG$ and degrees bounded by $d$,
a \emph{deficit function} for $\cG$ is a function
$g: \bG \to \qq{0,d}$ satisfying $\deg_{\cG} (v) \leq d-g(v)$ for all vertices $v\in \bG$.
We call a vertex $v\in \bG$ \emph{extensible} if $\deg_\cG(v)<d-g(v)$.
\end{definition}

\begin{figure}[t]
\centering
\input{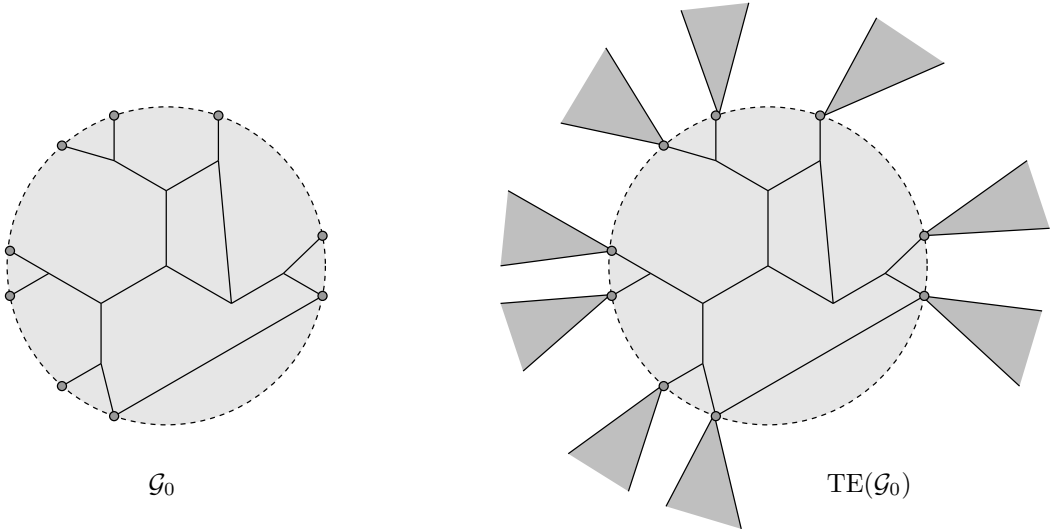}
\caption{The left figure illustrates a finite graph $\cG_0$; its extensible vertices are shown as grey circles.
The right figure shows the tree extension $\TE(\cG_0)$, in which a rooted tree (darkly shaded) is attached to every extensible vertex.
\label{fig:TE}}
\end{figure}

\begin{definition}[tree extension] \label{def:treeextension}
Let $\cG_0$ be a finite graph with deficit function $g$.
\begin{enumerate}
\item
The \emph{tree extension} (abbreviated $\TE$) of $\cG_0$ is the (possibly infinite) graph $\TE(\cG_0)$ defined
by attaching to any extensible vertex $v$ in $\cG_0$
a rooted $d$-regular tree with root degree $d-g(v)-\deg_{\cG_0}(v)$.
\item
The Green's function of $\cG_0$ \emph{with tree extension}, denoted $P(\cG_0)$,
is the Green's function of the (possibly infinite) graph $\TE(\cG_0)$.
\end{enumerate}
\end{definition}

See Figure~\ref{fig:TE} for an illustration of the tree extension.
In our main result, stated in Section~\ref{sec:intro-mr}, we approximate the Green's function of a regular graph at vertices $i,j$
by that of the tree extension of a neighbourhood of $i,j$. This requires specification
of a deficit function, which we will usually do using the following conventions for deficit functions,
assumed throughout the paper.

\paragraph{Conventions for deficit functions}

Throughout this paper, all graphs $\cG$ are equipped with a deficit function $g$.
The interpretation of the deficit function $g(v)$ is that it measures the difference to the desired degree of the vertex $v$.
We use the following conventions for deficit functions.
\begin{itemize}
\item
If the deficit function of $\cG$ is not specified explicitly, it is given by $g(v) = d-\deg_{\cG}(v)$.

Thus no vertex is extensible and the tree extension of $\cG$ is trivial: $\cG = \TE(\cG)$.
\item
If $\bX$ is a subset of the vertices of $\cG$, and $g$ is the deficit function of $\cG$,
then the deficit function $g'$ of $\cG^{(\bX)}$ is given by $g'(v) = g+\deg_{\cG}(v)-\deg_{\cG^{(\bX)}}(v)$,
unless specified explicitly.

Thus when removing the edges incident to $\bX$ from $\cG$,
these are also absent in the tree extension.
\item
If $\cH \subset \cG$ is a subgraph (which was not obtained as $\cG^{(\bX)}$),
then the deficit function of $\cH$ is given by the restriction of the deficit function of $\cG$ on $\cH$,
unless specified explicitly.

Thus any vertex $v$ in $\cH \subset \cG$ has the same degree in the tree extension $\TE(\cH)$ as in $\TE(\cG)$.
\end{itemize}
The above conventions are illustrated in Figure~\ref{fig:TE23}.
In particular, in the case that $\cG$ is a $d$-regular graph, the deficit function is always $g\equiv 0$,
so that $\TE(\cG) = \cG$.
Moreover, by our conventions, the tree extension of a subgraph $\cH \subset \cG$ is again a $d$-regular graph.

\begin{figure}[h]
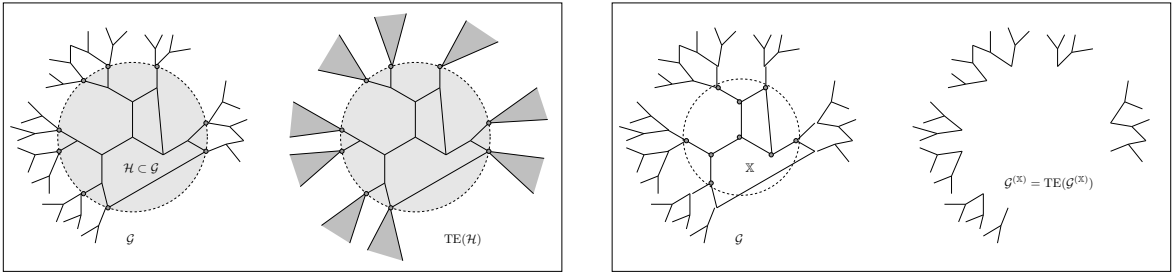

\centering
\scalebox{0.47}{\input{TE2.pspdftex}}
\hspace{1em}
\scalebox{0.47}{\input{TE3.pspdftex}}

\caption{
Given a graph $\cG$ (with the standard deficit function $g=d-\deg_{\cG}$),
the left figure illustrates a subgraph $\cH\subset \cG$, which by our conventions inherits its deficit function from $\cG$ by restriction.
Thus all vertices in $\cH$ have the same degrees in the tree extension $\TE(\cH)$ as in $\cG = \TE(\cG)$.
The right figure illustrates the graph $\cG^{(\bX)}$ obtained by removing a vertex set $\bX$.
By our convention on the deficit function, the tree extension of $\cG^{(\bX)}$ is then trivial.
\label{fig:TE23}}
\end{figure}

\begin{definition}
Given an integer $r>0$,
we call $P_{ij}(\cE_r(i,j,\cG))$ the \emph{localized Green's function} of $\cG$
at vertices $i,j$.
\end{definition}

Thus the localized Green's functions at $i,j$ is the Green's function of a graph that itself
depends on a small neighborhood of $i,j$. However, the dependence of the graph on $i,j$ is weak, in the sense that,
up to a small error, 
the graph $\cE_r(i,j,\cG)$ could be replaced by any neighborhood of $i,j$ that is not too small and not too large;
see Proposition~\ref{boundPij} and Remark~\ref{rk:princG0}.

In our main result, stated in Section~\ref{sec:intro-mr} below,
we will show that the Green's function $G_{ij}(\cG)$ can be approximated by the localized Green's function
$P_{ij}(\cE_r(i,j,\cG))$.
To interpret this result, we note the following elementary properties 
of the localized Green's function.
\begin{itemize}
\item
If $\dist(i,j) > r$, then $\cE_r(i,j,\cG)$ is the empty graph, and therefore $P_{ij}(\cE_r(i,j,\cG),z) = 0$.
\item
If $\cE_r(i,j,\cG)$ has no cycles (thus it is a tree), then $\TE(\cE_r(i,j,\cG))$ is an infinite tree.
In particular, if $\cG$ is $d$-regular, then $\TE(\cE_r(i,j,\cG))$ is the infinite $d$-regular tree $\cY$,
and therefore $P_{ij}(\cE_r(i,j,\cG),z) = G_{ij}(\cY,z)$. By a straightforward calculation (see Section~\ref{sec:TE}),
it then follows that 
\begin{equation} \label{e:Gtreemkm-bis}
  P_{ij}(\cE_r(i,j,\cG),z)
  = 
  G_{ij}(\cY, z)
  =
  \md(z)\left(-\frac{\msc(z)}{\sqrt{d-1}}\right)^{\dist_{\cY}(i,j)},
\end{equation}
where $\md$ and $\msc$ are the Stieltjes transforms of the Kesten--McKay and semicircle laws; see \eqref{e:mdmsc} below.
\item
If $\cE_r(i,j,\cG)$ has bounded excess, then upper bounds similar to the right-hand side of \eqref{e:Gtreemkm-bis} hold.
In particular, $P_{ij}(\cE(i,j,r),z)$ is uniformly bounded in $z \in \C_+$ and decays exponentially in the distance
with rate $\log ( |m_{sc}(z)|/\sqrt{d-1})$ (see Section~\ref{sec:TE}).
\end{itemize}

\paragraph{Kesten--McKay and semicircle law} 
Throughout this paper, the Stieltjes transforms of the Kesten--McKay law and that of the
closely related semicircle law play an important role.
Let $\rho_{d}(x)$ be the density of the (normalized) Kesten--McKay law \eqref{kmlaw}
and $\rho_{sc}(x):= \frac{1}{2\pi}\sqrt{[4-x^2]_+}$ that of Wigner's semicircle law.
We denote their Stieltjes transforms by 
\begin{equation} \label{e:mdmsc}
\md(z)=\int \frac{\rho_{d}(x)}{x-z} \, dx,
\quad
\msc(z):=\int \frac{\rho_{sc}(x)}{x-z} \, dx,
\quad z\in \C_+.
\end{equation}
Then $\md(z)$ is explicitly related to $\msc(z)$ by the equation (see also Proposition~\ref{greentree})
\begin{equation} \label{e:mdstieltjes}
\md(z)=\frac{1}{-z-d(d-1)^{-1}\msc(z)}=\frac{\msc(z)}{1-({d-1})^{-1}\msc^2(z)}.
\end{equation}
Moreover, it is well known that $\msc(z)$ is a holomorphic bijection from the upper half plane $\C_+$ to the upper half unit disk
${\mathbb D}_+:= \{z\in \C_+: |z|< 1\}$, and that it satisfies the algebraic equation
\begin{equation}
z=-\left(\msc(z)+\frac{1}{\msc(z)}\right),\quad z\in \C_+,
\end{equation} 
and in particular that $|\msc(z)| \leq 1$.

\subsection{Main result} \label{sec:intro-mr}

Recall that $\GNd$ denotes the set of simple $d$-regular graphs on the vertex set $\qq{N}$.
Throughout the paper, we control error estimates in terms of (large powers of) the parameter
\begin{equation}  \label{e:qdef}
  q(z) := \frac{|\msc(z)|}{\sqrt{d-1}} \leq \frac{1}{\sqrt{d-1}},
\end{equation}
where $z \in \C_+$.
We will often omit the parameter $z$ from the notation if it is clear from the context.

Our main result is the following theorem.

\begin{theorem}\label{thm:mr}
Fix $\alpha>4$, $\n\geq 8$ and $\sqrt{d-1}\geq (\n+1)2^{2\omega+45}$,
and set $\ell_*=\floor{\alpha \log_{d-1}\log N}$ and $r_*=2\ell_*+1$.
Then, for $\cG$  chosen uniformly from $\GNd$, 
the Green's function satisfies
\begin{align}\label{e:locallaw}
  \left|G_{ij}(\cG,z)-P_{ij}(\cE_{r_*}(i,j,\cG),z)\right|\leq |\msc(z)|q(z)^{r_*}, 
\end{align}
with probability $1-o(N^{-\n+8})$,
uniformly in $i,j\in\qq{N}$, and uniformly in $z\in \cal D$, where $\cal D$ is as in \eqref{spectralD}.
Here we assume that $N\geq N_0(\alpha,\omega,d)$ is large enough and that $Nd$ is even.
\end{theorem}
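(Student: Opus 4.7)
The plan is to prove \eqref{e:locallaw} by combining a Schur complement expansion on a large neighborhood of $\{i,j\}$, a switching-based resampling of the edges leaving that neighborhood, and a continuity argument in $\eta=\im[z]$. As a structural preliminary, the enumeration-type statement referenced as Proposition~\ref{prop:structure} shows that with probability $1-o(N^{-\omega+8})$ every ball $\cB_{\ell_*}(v,\cG)$ has bounded excess and $\cE_{r_*}(i,j,\cG)\subset \cB_{\ell_*}(i,\cG)\cup \cB_{\ell_*}(j,\cG)$. Condition on this event and set $\cal B=\cB_{\ell_*}(i,\cG)\cup \cB_{\ell_*}(j,\cG)$ with vertex set $\bB$, so that the target graph $\cE_{r_*}(i,j,\cG)$ and its tree extension embed canonically inside $\TE(\cal B)$.

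Next, I would derive parallel Schur complement identities for $G(\cG,z)$ and $P(\cE_{r_*}(i,j,\cG),z)$. Splitting the adjacency matrix according to $\bB$ versus $\qq{N}\setminus \bB$,
\[
G_{\bB\bB}(\cG,z) = \pa{H_{\bB\bB}-z-\Sigma(z)}^{-1},
\qquad
P_{\bB\bB}(\cE_{r_*}(i,j,\cG),z) = \pa{H_{\bB\bB}-z-\Sigma_\TE(z)}^{-1},
\]
where $\Sigma(z)_{uu'}=\sum_{v,v'\notin\bB}H_{uv}G^{(\bB)}_{vv'}(\cG,z)H_{v'u'}$ collects returns through the exterior and $\Sigma_\TE(z)$ is the analogous quantity for the tree extension. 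Since each component of the exterior of $\TE(\cE_{r_*}(i,j,\cG))$ is an infinite rooted $d$-regular tree, $\Sigma_\TE$ is diagonal in the boundary index with entries determined explicitly via \eqref{e:Gtreemkm-bis} in terms of $\msc(z)/\sqrt{d-1}$. Subtracting the two identities reduces \eqref{e:locallaw} to a quantitative estimate on $\Sigma(z)-\Sigma_\TE(z)$ restricted to boundary vertices of $\cal B$.

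The core step is then the switching of Section~\ref{sec:switch}. For each boundary edge $(u,v)\in \partial_E\cal B$ perform a switching that reassigns $v$ uniformly among vertices of $\cG^{(\bB)}$, preserving the induced subgraph on $\bB$ and the edge-boundary configuration. After averaging, the off-diagonal exterior Green's function entries $G^{(\bB)}_{vv'}(\cG,z)$ with $v\neq v'$ vanish in mean (the resampled $v$ is exchangeable with most exterior vertices), and the diagonal entries concentrate around $\tfrac{1}{N-|\bB|}\Tr G^{(\bB)}(\cG,z)$, which in turn concentrates around $\md(z)$ by the macroscopic Kesten--McKay law. The locally tree-like neighborhood around the resampled $v$ then converts $\md(z)$ into the root Green's function $-\msc(z)/\sqrt{d-1}$ via \eqref{e:mdstieltjes}. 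Residual fluctuations are controlled by $\omega$-th moment estimates of polynomial functionals of Green's function entries, yielding the probability $1-o(N^{-\omega+8})$ for a single $z$; a deterministic grid in $z$ together with Lipschitz estimates extends this uniformly to $\cal D$. To reach the optimal scale $\eta \asymp (\log N)^{48\alpha+1}/N$, I would bootstrap along a dyadic descent in $\eta$: the estimate is trivial at $\eta=1$ by $\|G\|,\|P\|\leq 1$, and at each step the stability of the algebraic equation \eqref{e:mdstieltjes} ensures propagation loses at most a polylogarithmic factor per scale, comfortably within the final target $|\msc|q^{r_*} \lesssim (\log N)^{-\alpha}$.

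The main obstacle, I expect, is the joint treatment of (a) the well-known non-concentration of individual Green's function entries noted after Theorem~\ref{thm:localmckay}, which forces the switching argument to act on the specific linear combinations encoded by $\Sigma$ rather than on $G_{ij}$ directly, and (b) the matching of the exact geometric exponent in the error $|\msc|q^{r_*}$, which requires iterating the resampling $r_*$ times with careful combinatorial bookkeeping of overlapping switchings on neighboring boundary edges. Ensuring that the accumulated polylogarithmic losses from the bootstrap and from the union bounds over $i,j\in\qq{N}$ remain strictly smaller than the geometric factor $q^{r_*}$ coming from tree propagation of the boundary effect into the interior of the ball is where the bulk of the technical difficulty lies.
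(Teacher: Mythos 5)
Your high-level architecture (Schur complement around a neighborhood of $\{i,j\}$, resampling of the boundary edges by switchings, continuity in $z$) matches the paper's, but two of your key steps would not go through as stated.

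First, the claim that after resampling the diagonal exterior entries $G^{(\bB)}_{vv}$ ``concentrate around $\frac{1}{N-|\bB|}\Tr G^{(\bB)}$, which in turn concentrates around $\md(z)$ by the macroscopic Kesten--McKay law'' is where the real difficulty lives, and as written it is circular. The macroscopic law only gives an $o(1)$ error, while you need an error of order $|\msc|q^{r_*}\asymp(\log N)^{-\alpha}$ at spectral scale $\eta\sim(\log N)^{48\alpha+1}/N$ --- i.e.\ precisely the local law being proved. The paper's resolution is to show (by Azuma, over the independent choices of switching edges) that the boundary average concentrates around the \emph{unknown} quantity $\I(\cG)=\frac{1}{Nd}\sum_{(i,j)\in\vec E}G^{(i)}_{jj}$, and then to close the loop with a \emph{self-consistent equation} $\I(\cG)-\msc=\frac{d-2}{d-1}\md\msc^{2\ell+1}(\I(\cG)-\msc)+\text{error}$, derived by summing the improved local estimates over all $N$ choices of center vertex. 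This step forces an extra complication you do not mention: the coefficient $1-\frac{d-2}{d-1}\md\msc^{2\ell+1}$ vanishes at certain $z$, so the radius $\ell$ must be varied over $\qq{\ell_*,2\ell_*}$ and the domain $\cal D$ covered by the corresponding good sets $\Lambda_\ell$. Relatedly, ``off-diagonal entries vanish in mean'' is not usable for a union bound over $N^2$ pairs and $N^{O(1)}$ grid points; the paper instead gets high-probability smallness of $G^{(\T)}_{vv'}$ for the resampled $v'$ from the Ward identity $\sum_j|G_{ij}|^2=\im G_{ii}/\eta$ combined with the near-uniformity of the resampled endpoint, which is what makes the ``Green's function distance'' and cell decomposition necessary.

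Second, your continuity argument cannot start at $\eta=1$ with ``$\|G\|,\|P\|\le 1$'' (that bounds $|G_{ij}|$ by $1$, not $|G_{ij}-P_{ij}|$ by $(\log N)^{-\alpha}$), and it cannot afford ``a polylogarithmic factor per scale'': over the $\asymp\log N$ dyadic scales this compounds to a super-polynomial loss. The paper's bootstrap is lossless: it starts from a genuine deterministic estimate at $|z|\ge 2d-1$ (proved via Combes--Thomas decay, not a norm bound), and at each step shows that on the event $\Omega(z,\ell)$ (constant $1$ in the bound) the strictly stronger event $\Omega^-(z,\ell)$ (constant $1/2$) holds up to negligible probability; Lipschitz continuity then converts $\Omega^-(z)$ into $\Omega(z')$ for $|z-z'|=N^{-3+o(1)}$ with no accumulation of constants. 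Finally, the improvement you obtain by resampling is an estimate for the \emph{switched} graph $T_{\bf S}(\cG)$, not for $\cG$; transferring it back to the uniform measure requires the fact that the switching map is a measure-preserving involution on the enlarged probability space (Proposition~\ref{prop:resample}), a step your proposal omits and which does not follow from exchangeability alone because only the admissible subset $W_{\bf S}$ of switchings is performed.
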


We emphasize that, for fixed $d$,
the right-hand side of \eqref{e:locallaw} converges to $0$, as $N\to\infty$,
uniformly in the spectral domain $\cal D$.
The constants in the statement of the theorem can be improved at the expense of
a longer proof and a more complicated statement. We do not pursue this.

\subsection{Interpretation of Theorem~\ref{thm:mr}; proofs of Theorems~\ref{thm:localmckay}, \ref{thm:delocalizationev}, and Corollary~\ref{c:que}}
\label{sec:main-pfcor}

Theorem~\ref{thm:mr}  states that, in $\cal D$, the Green's function $G_{ij}(\cG)$ is well approximated by
$P_{ij}(\cE_{r_*}(i,j,\cG))$, which is \emph{random}, but only depends on the \emph{local graph structure} of $\cG$ near the vertices $i$ and $j$.
Since the local structure of a random regular graph is well understood,
the theorem has a number of consequences.
Specifically, under the assumptions of the theorem, it is well known that 
there are $\kappa>0$ and $\delta>0$ such that, with $R= \floor{\kappa  \log_{d-1} N}$,
one can assume that the radius-$R$ neighborhoods
of all but $N^\delta$ many vertices of $\cG$ coincide with those of the infinite $d$-regular tree,
and that the $R$-neighborhoods of all other vertices have excess at most $\n$
(see e.g.\ Proposition~\ref{prop:structure}).
Moreover, for the vertices $i$ that have radius-$R$ tree neighborhoods, we have (see e.g.\ Proposition~\ref{greentree})
\begin{equation} \label{e:intro-Pii-tree}
  P_{ii}(\cE_{r_*}(i,i,\cG)) = \md.
\end{equation}
The vertices whose $R$-neighbourhood has bounded excess still satisfy (see e.g.\ Proposition~\ref{boundPij})
\begin{equation} \label{e:intro-Pii-notree}
  |P_{ii}(\cE_{r_*}(i,i,\cG))| \leq 3 |\msc|/2\leq 3/2.
\end{equation}
Together with this information on the local graph structure,
the result of Theorem~\ref{thm:mr} 
implies the results stated in Section~\ref{sec:intro}.

\begin{remark} \label{rk:Giinonconc}
The equation \eqref{e:locallaw} implies that the 
\emph{individual} entries of the Green's function do \emph{not} concentrate. For example,
\begin{align*}
  G_{ii}(z) = P_{ii}(\cE_{r_*}(i,i,\cG),z) + O(\log N)^{-\alpha}
\end{align*}
and the first term on the right-hand side can be easily seen to depend strongly on the local
graph structure. Its fluctuation is of order $1$.
\end{remark}

\begin{proof}[Proof of Theorem~\ref{thm:localmckay}]
\eqref{e:locallaw} and \eqref{e:intro-Pii-tree} imply that $G_{ii}(z) = \md(z) + O(|\msc(z)|q(z)^{r_*})$ for all $z \in \cal D$
and at least $N-N^{-\delta}$ vertices $i \in \qq{N}$.
For the remaining vertices, by \eqref{e:intro-Pii-notree}, we still have $|G_{ii}(z)| \leq 2$.
Thus
\begin{equation*}
  m(z)
  = \frac{1}{N} \sum_{i=1}^N G_{ii}(z)
  = \md(z) + O(|\msc(z)|q(z)^{r_*}) + O(N^{-1+\delta})
  = \md(z) + O(\log N)^{-\alpha},
\end{equation*}
as claimed.
\end{proof}

\begin{proof}[Proof of Theorem~\ref{thm:delocalizationev}]
\eqref{e:locallaw} and \eqref{e:intro-Pii-notree} imply that $|G_{ii}(z)| \leq 2$
for all $i \in \qq{N}$ and all $z\in \cal D$. 
Taking $z=\lambda+\ii(\log N)^{48\alpha+1}/N$, it follows that
\begin{align*}
2\geq \Im[G_{ii}(z)]\geq \frac{\Im[z] |v_i|^2}{|\lambda-z|^2}=(\log N)^{-1-48\alpha}N|v_i|^2,
\end{align*}
which implies the claim \eqref{evest}.
\end{proof}

\begin{proof}[Proof of Corollary~\ref{c:que}]
In \cite[Section 8]{1503.08702-cpam}, it is proved that any exchangeable random vector
$(Y_i)_{i = 1}^N$ satisfies, for any  (deterministic) $q \in \R^N$ with $\sum_i q_i =0$ and $\sum_i q_i^2=1$,
and for any $p \geq 1$,
\begin{equation} \label{e:aY}
\E\absa{\sum_{i=1}^N q_i Y_i}^p = \pa{O\pbb{\frac{p^2}{\log p}}}^p\E|Y_1|^p.
\end{equation}
Let $\phi$ be the indicator function of the event that
for all eigenvectors $v$ with $H$-eigenvalue $|\lambda\pm 2| \geq (\log N)^{1-\alpha/2}$
the estimate $\|v\|_\infty \leq \xi \|v\|_2$ holds,
where $\xi = \sqrt{2} (\log N)^{24\alpha+1/2}/\sqrt{N}$.
Let $v^k$ be the normalized eigenvector corresponding to the $k$-th largest $H$-eigenvalue $\lambda_k$,
and set $Y_i = v_i^k \phi \ind{|\lambda_k \pm 2| \geq (\log N)^{1-\alpha/2}}$.
The $(Y_i)_{i=1}^N$ are exchangeable, by the exchangeability of the random regular graph.
By \eqref{e:aY} with $p=\zeta (\log \zeta)^{1/4}$ and Markov's inequality, for $\zeta$ large enough,
\begin{equation*}
  \P\pa{\phi \inda{|\lambda_k \pm 2|\geq (\log N)^{1-\alpha/2}} \sum_{i=1}^N q_i^m v_i^k
    \leq \zeta^2 \xi} \geq 1-e^{-\zeta(\log \zeta)^{1/4}}.
\end{equation*}
By a union bound over $k \in \qq{N}$ and over $m \in \qq{M}$, it follows
\begin{equation*}
  \P\pa{\phi \max_{k,m}\sum_{i=1}^N q_i^m v_i^k \leq \zeta^2\xi} \geq 1- NM e^{-\zeta (\log \zeta)^{1/4}},
\end{equation*}
where the maximum over $k$ is over all $k$ with $|\lambda_k\pm 2| \geq (\log N)^{1-\alpha/2}$.
Since $\P(\phi=0) = o(N^{-\omega+8})$, by Theorem~\ref{thm:delocalizationev},
and choosing $\zeta = 2^{-1/4}(\log M+\log N)$, we have
\begin{equation*}
  \P\pa{\max_{k,m}\sum_{i=1}^N q_i^m v_i^k  \leq \frac{(\log N)^{24\alpha+1/2}(\log N+\log M)^2}{\sqrt{N}}} \geq 1- o(N^{-\omega+8}),
\end{equation*}
which implies the claim. The proof of \eqref{e:QUE} is analogous, using
$Y_i = (v_i^k)^2 \phi \ind{|\lambda \pm 2| \geq (\log N)^{1-\alpha/2}}$.
\end{proof}

\section{Proof outline and main ideas}
\label{sec:outline}

In this section, we give a high-level outline of the proof of Theorem~\ref{thm:mr}, whose details occupy the
remainder of the paper. The proof is based on the general principle that, for small distances,
a random regular graph behaves almost deterministically, while on the other hand, for large distances, it
behaves much like a random matrix.

\subsection{Parameters}
\label{sec:outline-parameters}

Throughout the paper,
we fix constants $\alpha>4$, $\omega\geq 8$, $0<\delta< 1/\n$, $0<\kappa <\delta/(2\n+2)$, $\sqrt{d-1}\geq (\n+1)2^{2\omega+45}$,
and set $\ell_*=\lceil \alpha \log_{d-1}\log N\rceil$ and $r_*=2\ell_*+1$.
We also set $R = \floor{\kappa \log_{d-1} N}$, and write $r=2\ell+1$, where $\ell$ is a parameter chosen such that
\begin{gather}  
\ell\in\qq{\ell_*,2\ell_*}.
\label{e:constchoice}
\end{gather}
We always assume that $Nd$ is even and sufficiently large (depending on the previous parameters).

\subsection{Structure of the proof}
\label{sec:outline-structure}

The proof consists of several sections, which we briefly describe in this section.
Here, we also define several subsets of $\GNd$,
namely the sets
\begin{equation*}
\Omega^-(z,\ell) \subset \Omega(z,\ell) \subset  \Omega_1^+(z,\ell) \subset \bar\Omega \subset \GNd,\quad \Omega_1'(z,\ell) \subset \bar\Omega \subset \GNd.
\end{equation*}
These sets depend on parameters $z \in \C_+$ and $\ell \in \N$
(and also on the previously fixed parameters). 

\paragraph{Small distance structure; the set $\bar\Omega$}

The small distance behavior is captured in terms of cycles in neighborhoods of radius $R$.
For any graph, we define the \emph{excess} to be the smallest number of edges that must be removed to yield a graph with no cycles
(a forest).
Then, with $R,\n,\delta$ as fixed above,
we define the set $\bar\Omega\subset \GNd$ to consist of graphs such that
\begin{itemize}
\item
the radius-$R$ neighborhood of any vertex has excess at most $\n$; 
\item
the number of vertices that have an $R$-neighborhood that contains a cycle is at most $N^\delta$.
\end{itemize}
The set $\bar\Omega$ provides rough a priori stability at small distances.
All regular graphs appearing throughout the paper will be members of $\bar\Omega$.
It is well-known that $\P(\bar\Omega) \geq 1-o(N^{-\n+\delta})$; see Proposition~\ref{prop:structure}.

\paragraph{Green's function approximation; the sets $\Omega(z,\ell)$ and $\Omega^-(z,\ell)$}

For $z\in\C_+$, we define the 
set $\Omega(z,\ell) \subset \bar\Omega$ be the set of graphs $\cG$
such that for any two vertices $i,j$ in $\qq{N}$, it holds that
\begin{align}\label{rigid0}
  \left|G_{ij}(z)-P_{ij}(\cE_r(i,j,\cG),z)\right|
  \leq |\msc|q^r.
\end{align}
Our main goal is to prove that $\Omega(z,\ell)$ has high probability uniformly in the spectral domain $z\in \cal D$.
That $\Omega(z,\ell)$ has high probability is not difficult to show if $|z|$ is large enough; see Section~\ref{sec:initial}.
To extend this estimate to smaller $z$, we define the set $\Omega^-(z,\ell) \subset \Omega(z,\ell)$ by the same conditions as $\Omega(z,\ell)$,
except that the right-hand side in \eqref{rigid0} is smaller by a factor $1/2$:
 \begin{align}\label{defOmega-}
  \left|G_{ij}(z)-P_{ij}(\cE_r(i,j,\cal G),z)\right|
  \leq \frac{1}{2}|\msc|q^r.
 \end{align}
Our main goal is to show that, for any $z\in \cal D\cap \Lambda_\ell$ (where the spectral domain is defined in \eqref{spectralD} and $\Lambda_\ell$ is defined in \eqref{lambdadef}), if $\Omega(z,\ell)$ has high probability, then the event $\Omega(z,\ell)\setminus \Omega^-(z,\ell)$ has very small probability,
so that $\Omega^-(z,\ell)$ still has high probability. 
Then, by the Lipschitz-continuity of the Green's function, it follows that $\Omega^-(z,\ell) \subset \Omega(z',\ell)$ for small $|z-z'|$,
and thus that $\Omega(z',\ell)$ also has high probability.
This can then be repeated to show that $\Omega(z,\ell)$ holds for all $z \in \cal D\cap \Lambda_\ell$ with high probability. Since these sets $\Lambda_\ell$ all together cover $\cal D$, it follows that $\Omega(z, \ell_*)$ holds for all $z\in \cal D$ with high probability.

\paragraph{Local resampling}

To show that $\Omega(z,\ell) \setminus \Omega^-(z,\ell)$ has small probability, we use the random matrix-like
structure of random regular graphs at large distances. To this end, we fix a vertex, 
without loss of generality chosen to be $1$, and abbreviate the $\ell$-neighborhood of $1$
(as a set of vertices in $\qq{N}$ and as a graph, respectively; see Section~\ref{sec:main} for our notational conventions) by
\begin{equation}
\T=\bB_\ell(1, \cG),\quad \cT=\cB_{\ell}(1,\cG).
\end{equation}

In Section~\ref{sec:switch}, we resample the boundary of the neighborhood $\cT$
by switching the boundary edges with uniformly chosen edges from the remainder of the graph.
The switched graph is often denoted by $\tcG$.
On the vertex set $\T$, it coincides with the unswitched graph $\cG$, but the boundary of $\cT$ in the switched graph $\tcG$ is now essentially
random compared to the original graph $\cG$.

Given $\cG$, the switching is specified by the resampling data $\bf S$, which consists of $\mu$
independently chosen oriented edges from $\cGT$.
The local resampling is implemented by switching a boundary edge of $\cT$ with one of the independently chosen edges encoded by $\bf S$.
In fact, in this operation, not all pairs of edges can be switched (are \emph{switchable}) while keeping the graph simple.
Therefore, given $\bf S$, we denote by $W_{\bf S} \subset \qq{1,\mu}$ the index set for switchable edges (see Section~\ref{sec:switch} for the definition),
whose switching leaves the uniform measure on $\GNd$ invariant.
For notational convenience, without loss of generality, we assume that $W_{\bf S}=\{1,2,3,\dots,\nu\}$ where $\nu\leq \mu$
throughout the paper (except in the definition in Section~\ref{sec:switch}).

\paragraph{Switching from $\cG$ to $\tcG$}

Throughout Sections~\ref{sec:dist}--\ref{sec:improved}, we
condition on a graph $\cG$ that satisfies certain estimates,
and only use the randomness of the switching that specifies how to modify $\cG$ to $\tcG$.
By our choice of $\ell$ and using $\cT$ has bounded excess (which we can and do assume), the number of edges
in the boundary of $\cT$ is about $(\log N)^{O(1)}$.
The randomness of these edges ultimately provides access to concentration estimates, which exhibit the
random matrix-like structure of the random regular graph at large distances.

Note that, if we remove the vertex set $\T$ from $\cG$,
our switchings have a simpler effect than in $\cG$: they only consist of removing the edges
$\{b_i,c_i\}$ and adding instead $\{a_i,b_i\}$, for $i \in W_{\bf S}$.
Therefore, instead of studying the change from $\cG$ to $\tcG$ at once, it will be convenient to 
analyze the effect of the switching in several steps.
For this, we define the following graphs (which need not be regular).
\begin{itemizetight}
\item $\cG$ is the original unswitched graph;
\item $\cGT$ is the unswitched graph with vertices $\T$ removed;
\item $\hcGT$ is the intermediate graph obtained from $\cGT$ by removing the edges $\{b_i,c_i\}$ with $i\in W_{\bf S}$;
\item $\tcGT$ is the switched graph obtained from $\hcGT$ by adding the edges $\{a_i,b_i\}$ with $i \in W_{\bf S}$; and
\item $\tcG$ is the switched graph $T_{\bf S}(\cG)$ (including vertices $\T$).
\end{itemizetight}
Following the conventions of Section~\ref{sec:intro-TE},
the deficit functions of these graphs are given by $d-\deg$,
where $\deg$ the degree function of the graph considered,
and we abbreviate their Green's functions by $G$, $\GT$, $\hGT$, $\tGT$, and $\tG$ respectively.

\paragraph{Distance estimates}

To use the local resampling, we require some estimates on the local distance structure of
graphs and some a priori estimates on their Green's functions. These are collected in Sections~\ref{sec:dist}--\ref{sec:distGF}.
In fact, we use both the usual graph distance (of the unswitched and switched graphs)
and a notion of ``distance'' that is defined in terms of the size of the  Green's function of the graph from which the set $\T$ is removed
(again for the unswitched and switched graph).

The need for the Green's function distance arises as follows.
While estimates that involve sums over the diagonal of the Green's function can be controlled
quite well using only the graph distance, estimates of sums of off-diagonal terms are more delicate because
the number terms is \emph{squared} compared to the diagonal terms.
By direct combinatorial arguments, it would be difficult to control large distances sufficiently precisely.
However, to understand spectral properties, it is the size of the Green's function rather than distances themselves
that is relevant;
and while the size of the Green's function between two vertices is directly related to the distance between them
if there are only few cycles, on a global scale (where many cycles could be present) cancellations can make the Green's function much smaller.
These cancellations are captured in terms of a Ward identity, which states that the Green's function of any symmetric matrix obeys
(see also Appendix~\ref{app:Green})
\begin{equation}
  \frac{1}{N} \sum_{i=1}^N |G_{ij}(z)|^2 = \frac{\im G_{ii}(z)}{\im z}.
\end{equation}

\paragraph{Removing the neighborhood $\T$
and stability under resampling; the sets $\Omega^+_1(z,\ell)$}

Our goal is to show that estimates on the Green's function of $\cG$ improve near the vertex $1$
under the above mentioned local resampling.
For this, we work with the Green's function
of the graph $\cGT$ obtained from $\cG$ by removing the vertex set $\T$ (on which the graph
does not change under switching).

As a preliminary step to showing that the estimates for the Green's function improve, we
show that they are stable under the operation of removing $\T$ and resampling, i.e.,
roughly that the estimates analogous to those assumed continue to hold.
More precisely, in Section~\ref{sec:stabilityT}, we show that if $\cG \in \Omega(z,\ell)$,
then the (non-regular) graph $\cGT$ obeys the analogous estimate
\begin{equation} \label{e:GTstab}
  \left|G_{ij}(\cG^{(\T)},z)-P_{ij}(\cE_r(i,j,\cG^{(\T)}),z)\right|
  \leq 2|\msc|q^r.
\end{equation}
We define the set $\Omega^+_1(z,\ell) \subset \bar\Omega$ similarly as the set $\Omega(z,\ell)$,
except that $\cG$ is replaced by the graph $\cGT$ (and with different constant),
i.e., $\Omega_1^+(z,\ell)$ is the set of $\cG \in \bar\Omega$ such that
\begin{align}\label{e:defOmega+}
  \left|G_{ij}(\cal G^{(\T)},z)-P_{ij}(\cE_r(i,j,\cal G^{(\T)}),z)\right|
  \leq 2^{10}|\msc|q^r.
\end{align}
Clearly, by \eqref{e:GTstab}, we have $\Omega(z,\ell)\subset \Omega_1^{+}(z,\ell)$.
In Section~\ref{sec:stability},
we show that if $\cGT$ obeys the (stronger) estimate \eqref{e:GTstab}, then with high probability
the resampled graph obeys $\cGT \in \Omega^+(z,\ell)$.

\paragraph{Locally improved Green's function approximation; the sets $\Omega_1'(z,\ell)$}

The set $\Omega'_1(z,\ell) \subset \bar\Omega$ is defined by the
improved estimates \eqref{G11bound}--\eqref{G1itreebound} near the vertex $1$, with constant $K=2^{10}$.
In Sections~\ref{sec:boundarydecay}--\ref{sec:improved}, it is proved that if we start with a graph $\cG\in \Omega_1^+(z,\ell)$,
with high probability with respect to the local resampling around vertex $1$, the switched graph $\tcG$ belongs to $\Omega_1'(z,\ell)$.

\paragraph{Involution}

To sum up, the argument outlined above shows that, for any graph $\cG$ in $\Omega(z,\ell)$,
with high probability with respect to the randomness of the local resampling,
the switched graph $\tcG$ is in the set $\Omega_1'(z,\ell)$.
However, our goal was to show that a uniform $d$-regular graph $\cG$
is in $\Omega_1'(z,\ell)$, except for an event
of small probability.
This follows from the statement we proved for $\tcG$ using that our switching
 acts as an involution on
the larger product probability space
(see Proposition~\ref{prop:resample}).

\paragraph{Self-consistent equation}

The sets $\Omega_1^+(z,\ell)$ and $\Omega'_1(z,\ell)$ depend on the choice of vertex $1$.
However, for any $i\in\qq{N}$, we can define $\Omega_i'(z,\ell)$ in the same way,
by replacing the vertex $1$ in the above definitions by vertex $i$ (or using symmetry).
By a union bound, then also the union of the events $\Omega_i'(z,\ell)$
over $i \in \qq{N}$ holds with high probability.
On the latter event, we derive
(in Section~\ref{sec:pfmr})
a \emph{self-consistent equation} for
the quantity
\begin{equation*}
  \I(\cG) = \frac{1}{Nd} \sum_{(i,j) \in \vec{E}} G^{(i)}_{jj}(\cG),
\end{equation*}
where the sum ranges over the set of oriented edges in $\cG$, and $G^{(i)}(\cG)$ 
is the Green's function of the graph $\cG$ with vertex $i$ removed.
On the infinite $d$-regular tree, it is straightforward computation 
to show that $G^{(i)}_{jj}(z) = \msc(z)$ 
holds for any directed edge $(i,j)$  (see Proposition~\ref{greentree}).
For the random regular graph, we will show that $Q(\cG)$ obeys (see \eqref{IGequation})
\begin{equation*}
 \I(\cal G)-\msc= \frac{d-2}{d-1}\md\msc^{2\ell+1}(\I(\cal G)-\msc)+ \text{error}.
\end{equation*}
The main result of Section~\ref{sec:pfmr}, proved using this self-consistent equation,
is that, for any $z\in \cal D\cap \Lambda_\ell$, 
\begin{equation*}
  \bigcap_{1\leq i\leq N} \Omega_i'(z,\ell)\subset \Omega^-(z,\ell),
\end{equation*}
where $\Lambda_\ell \subset \C_+$ is a domain on which the self-consistent equation is not singular
(see Section~\ref{sec:pfmr} for details).
In the final step, we will use different choices of $\ell$ to  cover the entire spectral domain $\cal D$.

\paragraph{Conclusion}

In summary, in Sections~\ref{sec:stabilityT}--\ref{sec:pfmr},
we show that the probability of $\Omega(z,\ell)\setminus \Omega^-(z,\ell)$ is negligible.
By the Lipschitz property of the Green's function,  $\Omega^-(z,\ell)\subset \Omega(z',\ell)$ given that $|z-z'|$ is small enough.
It follows that if $\Omega^-(z,\ell)$ holds with high probability, then $\Omega^-(z,\ell)\cap \Omega^-(z', \ell)$ holds with high probability. This can then be repeated to show that $\Omega(z,\ell)$ holds for all $z \in \cal D\cap \Lambda_\ell$ with high probability. 
The proof of Theorem~\ref{thm:mr} is then completed by showing that $\cal D\subset \cup_{\ell\in\qq{\ell_*, 2\ell_*}}\Lambda_\ell$ and thus $\Omega(z,\ell_*)$ holds for all $z\in \cal D$ with high probability.

\subsection{Random walk picture}
\label{sec:Gexpan}

We conclude this section with the following random walk heuristic for the Green's function.
The Green's function $G_{ij}(\cG,z)$ has a  \emph{formal} expansion in terms of walks  in $\cG$ from $i$ to $j$
with complex $z$-dependent weights: 
\begin{equation} \label{e:Gexpan}
  G_{ij}(\cG,z) = -\sum_{w: i\to j} (d-1)^{-|w|/2} z^{-|w|-1},
\end{equation}
where the sum ranges over all walks $w$ from $i$ to $j$ of length $|w|$. In several parts of the paper,
it might be useful to think about the Green's function in this picture, though we never use it directly.
However, the expansion  \eqref{e:Gexpan} is only absolutely convergent for $|z| > \sqrt{d-1}$,
where the formal sum is dominated by the shortest walks.
In our case of primary interest, $\im z \ll 1$ (with $\re z$ inside the spectrum of the adjacency matrix),
the expansion becomes highly
oscillatory and is not absolutely convergent.
Long walks become dominant and the Green's function can only remain bounded due to significant cancellations.

On the tree, it is easy to compute the Green's function exactly.
In particular, one finds that the Green's function is bounded for all $z\in \C_+$, and that, roughly speaking, each step of a walk contributes a factor $-\msc(z)/\sqrt{d-1}$.
A popular and very efficient method to exhibit the required cancellations that result from the tree structure is via \emph{nonbacktracking walks}.

Our main effort is not in exhibiting the cancellations resulting from the tree structure,
but it is rather in exhibiting the cancellations of \emph{very long walks},
where the tree structure ceases to be effective. To obtain these cancellations,
we exploit the randomness of the random regular graph in combination with a Ward identity.
Using a multiscale approach (implemented via a continuity argument), we successively prove that the Green's function remains bounded even for small $\im z$,
and moreover that it has good decay.
Such bounds, together with linear algebra (Schur complement formula, resolvent identity), allow to obtain
well-defined partially resummed versions of random walk identities, in which the crucial cancellations
are accounted for nonperturbatively.

\section{Structure of random and deterministic regular graphs}
\label{sec:rrg}

In this section, we collect some properties of random and deterministic regular graphs,
which we use in the remainder of the paper.

\paragraph{Excess of random regular graphs}

For any graph $\cG$, we define its \emph{excess} to be the smallest number of edges that must be removed to yield a graph with no cycles
(a forest). It is given by
\begin{align}\label{def:excess}
\text{excess}(\cG)\deq \#\text{edges}(\cG) -\#\text{vertices}(\cG) +\#\text{connected components}(\cG).
\end{align} 
There are different conventions for the normalization of the excess.
Our normalization is such that the excess of a tree or forest is $0$.
Note that if $\cH \subset \cG$ is a subgraph, then $\text{excess}(\cH) \leq \text{excess}(\cG)$.
We will use the following well-known estimates for the excess in random regular graphs.

\begin{proposition} \label{prop:structure}
Let $\delta > 0$ and $\n \geq 1$ be an integer. There is $\kappa>0$ such that, if $R=\floor{\kappa \log_{d-1} N}$,
then the following holds for a uniformly chosen random $d$-regular graph $\cG$ on $\qq{N}$,
with probability at least $1-o(N^{-\n+\delta})$
for $N \geq N_0(d,\n,\delta)$ large enough.
\begin{itemize}
\item All $R$-neighborhoods have excess at most $\omega$:
\begin{equation}
\label{e:structure1}
\text{for all $i \in \qq{N}$, the subgraph $\cB_R(i,\cG)$ has excess at most $\n$.}
\end{equation}
\item Most $R$-neighborhoods are trees:
\begin{equation}
\label{e:structure2}
|\{i \in \qq{N}: \text{the subgraph $\cB_R(i,\cG)$ contains a cycle}\}| \leq N^{\delta}.
\end{equation}
\end{itemize}
In fact, one can take $\kappa < \delta/(2\omega+2)$.
\end{proposition}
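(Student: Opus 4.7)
The plan is to prove both statements by first-moment arguments in the configuration model of random $d$-regular graphs, which for fixed $d$ is contiguous with the uniform model, so it suffices to work there. The core input is the combinatorial estimate
\begin{equation}\label{e:plankbnd}
\P\bigl(\mathrm{excess}(\cB_R(i,\cG))\geq k\bigr) \leq C(d,k)\,(d-1)^{2Rk}N^{-k}, \qquad k\geq 1,
\end{equation}
valid for every $i\in\qq{N}$. To derive \eqref{e:plankbnd}, I would argue that if $\cB_R(i,\cG)$ has excess $\geq k$ then $\cG$ contains a connected ``minimal bad'' subgraph $H\ni i$ of excess exactly $k$ in which every degree-one vertex is $i$. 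Contracting the degree-$2$ vertices of $H$ yields a topological graph $H^*$ whose size is bounded purely in terms of $k$: the degree lower bound $\deg_{H^*}(v)\geq 3$ for $v\neq i$, combined with the Euler relation $|E(H^*)|-|V(H^*)|+1=k$, forces $|V(H^*)|,|E(H^*)|=O(k)$. Each edge of $H^*$ subdivides into a path in $H$ of length $\leq 2R$ since both endpoints lie within $\cG$-distance $R$ of $i$. Applying the standard configuration-model first moment
\begin{equation*}
\E\bigl[\#\text{ copies of $H$ rooted at $i$}\bigr]
= O\bigl(N^{|V(H)|-1-|E(H)|}(d-1)^{|E(H)|}\bigr)
= O\bigl(N^{-k}(d-1)^{|E(H)|}\bigr),
\end{equation*}
and summing over the finitely many shapes of $H^*$ and over the subdivision lengths $\ell_j\in\qq{1,2R}$ (a geometric sum dominated by $(d-1)^{2R|E(H^*)|}$), produces \eqref{e:plankbnd}. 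An equivalent derivation runs a BFS from $i$: each ``cycle-closing'' pairing of a newly exposed half-edge to an already-visited vertex has probability $O(d^R/N)$, and placing $k$ such events among the $\lesssim d^{R+1}$ exposures contributes (up to $k$-dependent constants) a factor $(d^{2R}/N)^k$.

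Statement (i) then follows from \eqref{e:plankbnd} with $k=\omega+1$ and a union bound over $i$:
\begin{equation*}
\P\bigl(\exists\,i\colon\mathrm{excess}(\cB_R(i,\cG))\geq\omega+1\bigr)
\leq C(d,\omega+1)\,N\cdot N^{2\kappa(\omega+1)-(\omega+1)}
= O\bigl(N^{-\omega+(2\omega+2)\kappa}\bigr),
\end{equation*}
which is $o(N^{-\omega+\delta})$ precisely under the hypothesis $\kappa<\delta/(2\omega+2)$. For (ii), let $Y=|\{i\colon\cB_R(i,\cG)\text{ contains a cycle}\}|$; the case $k=1$ of \eqref{e:plankbnd} gives $\E[Y]=O(N^{2\kappa})$, which is already below $N^\delta$ in expectation. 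To promote this to the high-probability bound $\P(Y>N^\delta)=o(N^{-\omega+\delta})$, I would pass to a large but fixed $K$-th moment: using near-independence of cycle-closing events at vertices with disjoint $R$-neighborhoods (the conditional correlations in the configuration model are $O(d^R/N)$), one obtains $\E[Y^K]\leq C^K N^{2\kappa K}$, so Markov yields $\P(Y>N^\delta)\leq C^K N^{K(2\kappa-\delta)}$. Since $\kappa<\delta/(2\omega+2)\leq\delta/2$, the exponent $\delta-2\kappa$ is strictly positive, so $K>(\omega-\delta)/(\delta-2\kappa)$ is an admissible (finite) choice.

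The principal technical obstacle is establishing the $K$-th moment bound $\E[Y^K]\leq C^K N^{2\kappa K}$ required in (ii): the $K$-fold sum $\sum_{i_1,\dots,i_K}$ has to be decomposed by the overlap pattern of the neighborhoods $\cB_R(i_j,\cG)$, with the dominant contribution coming from tuples with pairwise disjoint neighborhoods (where cycle events are nearly independent in the configuration model) and corrections from overlapping tuples absorbed by a refinement of \eqref{e:plankbnd}. A conceptually cleaner alternative is to invoke the Poisson approximation for the number of short cycles together with the deterministic inequality $Y\leq d^R\cdot\#\{\text{cycles in $\cG$ of length}\leq 2R\}$ (each short cycle lies in the $R$-neighborhood of at most $d^R$ vertices), producing $Y=O(N^{3\kappa})=o(N^\delta)$ with super-polynomially high probability whenever $\kappa<\delta/3$, which is in particular implied by $\kappa<\delta/(2\omega+2)$. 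Either route is classical in the random regular graph literature, so no novel difficulty is anticipated.
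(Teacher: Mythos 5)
Your proposal is essentially correct, and for both parts it ultimately lands on the same strategy as the paper. For \eqref{e:structure1}, your bound $\P(\mathrm{excess}(\cB_R(i,\cG))\geq k)\lesssim (d-1)^{2Rk}N^{-k}$ followed by a union bound is exactly what the paper does; the paper simply imports this as a stochastic domination of the excess by a $\mathrm{Bin}(d(d-1)^R,\,d(d-1)^{R-1}/N)$ variable from the literature, whereas you rederive it via minimal bad subgraphs / BFS cycle-closing events. (Minor caveat there: the claim that each subdivided edge of $H^*$ has length $\leq 2R$ because both endpoints are within distance $R$ of $i$ is not literally forced, but routing the connecting paths through a BFS spanning tree of the ball gives total edge count $O(kR)$, which is all the geometric sum needs.) For \eqref{e:structure2}, your \emph{primary} route via a $K$-th moment bound on $Y$ is genuinely different from the paper and, as you note yourself, is where all the work would be: the overlap decomposition needed for $\E[Y^K]\leq C^K N^{2\kappa K}$ is not carried out and would be the hardest part of the whole argument. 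Your ``cleaner alternative,'' however, is precisely the paper's proof: bound $Y$ deterministically by $(d-1)^R$ times the number of edges on cycles of length $\leq 2R$, then apply a tail bound for that cycle count. The one thing to be careful about is that for cycle lengths growing like $\log N$ the needed tail bound (failure probability $e^{-cN^{2\kappa}}$, not merely a first moment, since Markov alone would only give a $1-O(1/R)$ guarantee) is a nontrivial concentration result for the configuration model — the paper cites it explicitly — rather than the elementary fixed-length Poisson approximation; with that citation in hand your alternative route closes the argument under $\kappa<\delta/(2\omega+2)\leq\delta/4$, exactly as in the paper.
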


\begin{proof}
The statements are well known;
for completeness, we sketch proofs in Appendix~\ref{app:structure}.
\end{proof}

\paragraph{Excess and the number of non-backtracking walks}

The next proposition bounds the number of non-back\-track\-ing walks (NBW) between two vertices in a graph in terms of
the excess of the graph.
Here a non-backtracking walk of length $n$ is a sequence of vertices $(i_0, \dots, i_n)$
such that the edge $\{i_{k-1},i_{k}\}$ is adjacent to $\{i_k,i_{k+1}\}$ and such that the walks makes no
steps backwards, i.e., $i_{k-1} \neq i_{k+1}$.

\begin{proposition}\label{prop:numberpath}
Let $\cG$ be a graph with excess at most $\n$. Then the following hold.
\begin{itemize}
\item
For any vertices $i,j\in \bG$, and any $k\geq 1$, we have
\begin{align}\label{pathnum}
|\{\text{NBW from $i$ to $j$ of length $\dist_{\cG}(i,j)+k-1$}\}| \leq 2^{\n k}.
\end{align}
\item
For any subgraph $\cH \subset \cG$ and two vertices $i,j$ in $\cH$
such that  $\cE_\ell(i,j,\cG) \subset \cH$, we have
\begin{align}\label{pathN}
|\{\text{NBW from $i$ to $j$ of length
$\ell+k$ which are not completely in $\cal H$}\}|
\leq 2^{\n(k+1)+1}.
\end{align}
\end{itemize}
The graph $\cG$ does not need to be regular or finite, and self-loops and multi-edges are allowed.
\end{proposition}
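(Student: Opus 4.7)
For part (i), the plan is to proceed by induction on the excess $\omega$ of $\cG$.

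When $\omega = 0$, the graph $\cG$ is a forest, so between any two vertices $i,j$ lying in the same connected component there is exactly one non-backtracking walk — namely the unique tree geodesic — and its length equals $\dist_\cG(i,j)$; vertices in different components admit no walk. Thus for $k = 1$ the count is at most $1 = 2^0$ and for $k \geq 2$ the count is $0$, so the bound $2^{\omega k} = 1$ holds.

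For the inductive step, assuming the statement for excess at most $\omega - 1$, I would select an edge $e = \{u,v\}$ that lies on some cycle of $\cG$ (which exists because $\omega \geq 1$), and set $\cG' = \cG \setminus \{e\}$, a graph of excess $\omega - 1$. I then partition the NBWs from $i$ to $j$ in $\cG$ according to the number $t \geq 0$ of times they traverse $e$. Walks with $t = 0$ lie entirely in $\cG'$; since $\dist_{\cG'}(i,j) \geq \dist_\cG(i,j)$, the inductive hypothesis applied to $\cG'$ bounds these by $2^{(\omega-1)k}$. Walks with $t \geq 1$ split into $t + 1$ consecutive pieces that are NBWs in $\cG'$, separated by $t$ traversals of $e$. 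For each of the $2^t$ sign patterns describing the direction of each $e$-traversal (either $u\to v$ or $v\to u$), the endpoints of the $t+1$ pieces are determined among $\{i,j,u,v\}$, and the inductive hypothesis applied in $\cG'$ bounds the number of admissible choices for each piece. Summing the contributions over $t$ and over the direction patterns, and combining with the $t = 0$ case, yields the target bound $2^{\omega k}$.

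For part (ii), I would combine part (i) with a first-exit decomposition. Let $w$ be a NBW of length $\ell + k$ from $i$ to $j$ that is not contained in $\cH$. Since $\cE_\ell(i,j,\cG) \subset \cH$, any NBW from $i$ to $j$ of length at most $\ell$ is contained in $\cH$, so $w$ necessarily uses at least one edge outside $\cH$. Letting $s$ be the first step at which $w$ uses an edge outside $\cH$, I decompose $w$ into a prefix $w_{[0,s-1]}$ which is a NBW in $\cH$ from $i$ to a vertex on the edge boundary of $\cH$, and a suffix $w_{[s-1,\ell+k]}$ which begins with an edge outside $\cH$ and ends at $j$ in $\cG$. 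Applying part (i) to $\cH$ (of excess at most $\omega$) for the prefix and to $\cG$ for the suffix, while tracking the distances, produces a product bound; summing over the choices of $s$, the boundary vertex, and the outgoing edge, and using that the combined length of prefix and suffix equals $\ell + k$, yields the stated bound $2^{\omega(k+1)+1}$.

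The main obstacle is the case $t \geq 1$ in the inductive step of part (i): the careful accounting for the $2^t$ sign patterns, together with the non-backtracking constraints at the $2t$ transitions and the constraint that the piece lengths sum to $\ell+k-t$, must be checked to avoid over-counting and to verify that the combined bound exactly fits within $2^{\omega k} - 2^{(\omega-1)k}$. A secondary point is matching the precise constant $2^{\omega(k+1)+1}$ in part (ii), where the extra factor of $2$ reflects the choice of direction in which the first exit edge is traversed.
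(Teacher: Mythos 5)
Your plan is genuinely different from the paper's for part (i) — the paper does not induct on the excess but instead performs a sequence of count-non-decreasing, excess-preserving surgeries (contracting edges off geodesics, merging sibling vertices, contracting multiplicity-one edges) that reduce $\cG$ to a canonical chain of multi-edges with self-loops, where the walks are counted explicitly. Unfortunately the inductive step you propose does not close, and the obstacle you flag is fatal rather than technical. Two concrete problems: first, the number of compositions of the piece lengths $l_0+\dots+l_t = \dist_\cG(i,j)+k-1-t$ is of order $\binom{\dist_\cG(i,j)+k-1}{t}$, which depends on $\dist_\cG(i,j)$ and hence on the graph, not only on $\n$ and $k$; the distance constraints $l_i\ge \dist_{\cG'}(x_i,y_i)$ do not remove this dependence in general. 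Second, even in the simplest nontrivial case the constant fails: take $\n=1$, $k=1$ (counting geodesics in a graph with one independent cycle, say an even cycle with $i,j$ antipodal). The $t=0$ term contributes up to $2^{(\n-1)k}=1$ and the $t=1$ term contributes up to $2$ (two directions of traversing $e$, each with uniquely determined tree pieces), so your upper bound is $3>2=2^{\n k}$ even though the true count is $2$. So the decomposition by the number of traversals of a single cycle edge over-counts and cannot deliver $2^{\n k}$ without a different splitting.

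For part (ii) your skeleton (decompose at the first step leaving $\cH$, bound prefix by part (i) in $\cH$ and suffix by part (i) in $\cG$, sum over the exit step $s\in\qq{1,k}$ using $\ell_1+\ell_2\ge\ell$) is the same as the paper's, but the plan is missing the key combinatorial input: a bound, in terms of $\n$ alone, on the number of oriented edges that can serve as the first exit edge. A priori the edge boundary of $\cH$ in $\cG$ can be arbitrarily large (e.g.\ a pendant tree attached to every vertex of $\cH$ costs no excess), so ``summing over the choices of the boundary vertex and the outgoing edge'' has unboundedly many terms unless you (a) discard exit edges leading into pendant trees, through which no non-backtracking walk can return to $j$, and (b) prove that the remaining candidates number at most $2(\n-\n_0)$, where $\n_0$ is the excess of $\cH$. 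The paper does this by classifying the candidate edges into three sets $E_1,E_2,E_3$ and proving $2|E_1|+|E_2|+|E_3|\le 2(\n-\n_0)$ via the Euler characteristic $\chi(\cal X)=\#\text{components}(\cal X)-\text{excess}(\cal X)$. This lemma, not the length bookkeeping, is what makes the constant $2^{\n(k+1)+1}$ come out, and your proposal does not supply it.
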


\begin{proof}
The statements are presumably also well known; lacking a reference, we include their proofs in
Appendix~\ref{app:numberpath}.
\end{proof}

\paragraph{Boundary of a neighborhood}

In the remainder of the paper, given a graph $\cG$ on $\qq{N}$,
we will often fix a vertex, chosen without loss of generality to be $1$,
and denote its $\ell$-neighborhood by $\cT=\cB_{\ell}(1,\cG)$,
with corresponding vertex set $\T=\bB_\ell(1, \cG)$.
We further enumerate $\partial_E \T$ as $\{e_1, e_2,\dots, e_\mu\}$, i.e., $e_i$ are the edges with one vertex in $\T$
and one in $\qq{N} \setminus \T$,
and correspondingly $\partial \T$ as $\{a_1, \dots, a_\mu\}$, where $a_i$ is the endpoint of $e_i$ not in $\T$.
We also write $\bT_i=\{v\in \cG: \dist_{\cG}(1,v)=i\}$ for $i=0,1,2,\dots, \ell$.

\begin{proposition}\label{GTstructure}
Let $\cal G$ be a $d$-regular graph on $\qq{N}$, assume
that $\cB_R(1,\cG)$ has excess at most $\n$, and that $\ell\ll R$.
Then the following hold.
\begin{itemize}
\item
After removing $\T$, most boundary vertices of $\cT$ are isolated from the other boundary vertices:
\begin{align}\label{fewclose}
|\{p\in \qq{1,\mu}: \exists q\in \qq{1,\mu} \setminus \{p\}, \dist_{\cGT}(a_p, a_q)\leq R/2\}|\leq 2\n.
\end{align}
\item
After removing $\T$, any vertex $x\in \qq{N} \setminus \T$ can only be close to few boundary vertices of $\bT$:
\begin{align}
\label{distfinitedega}
|\{p\in\qq{1,\mu}: \dist_{\cGT}(x,a_p)\leq R/2\}|\leq \n+1,
\\
\label{distfinitedegl}
 |\{v\in \T_\ell: \dist_{\cG \setminus \cT}(x,v)\leq R/2\}|\leq \n+1.
\end{align}
\end{itemize}
\end{proposition}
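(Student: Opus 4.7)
The plan is to prove all three statements by a common strategy: assume the bound fails, build an explicit subgraph $\cH^* \subset \cB_R(1,\cG)$ that witnesses the failure, and derive a contradiction from $\text{excess}(\cH^*) \leq \text{excess}(\cB_R(1,\cG)) \leq \n$. The containment $\cH^* \subset \cB_R(1,\cG)$ will always follow from $\ell \ll R$, since every relevant vertex lies within distance $\ell+1+R/2 \leq R$ of~$1$ (for $N$ large).

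For \eqref{fewclose}, denote the set on the left-hand side by $S$, set $k = |S|$, and for each $p \in S$ fix a witness $q(p) \in S \setminus \{p\}$ with $\dist_{\cGT}(a_p, a_{q(p)}) \leq R/2$ together with a shortest path $\pi_p$ in $\cGT$ realizing this distance. With $U := \bigcup_{p \in S} \pi_p \subset \cGT$ and
\[
\cH' := \cT \cup \{a_p, e_p : p \in S\} \cup U,
\]
the sets $V(\cT)$ and $V(U)$ are disjoint (the latter lies in $\cGT$), and the sets $E(\cT)$, $\{e_p\}$, $E(U)$ are pairwise disjoint (internal $\T$-edges, boundary edges, and edges in $\cGT$, respectively); moreover each component of $U$ attaches to the connected graph $\cT$ via some $e_p$, so $\cH'$ is connected. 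The formula $\text{excess} = E - V + C$ combined with $E(U) \geq V(U) - c_U$ (where $c_U$ is the number of components of $U$) gives
\[
\text{excess}(\cH') \geq \text{excess}(\cT) + k - c_U \geq k - c_U.
\]
The key combinatorial step I would establish is the bound $c_U \leq k/2$: the map $p \mapsto q(p)$ is a fixed-point-free self-map of $S$, and each path $\pi_p$ places $a_p$ and $a_{q(p)}$ in a common component of $U$; thus the components of $U$ inject into the weak components of the functional graph of $q$, each of which has at least two vertices. Consequently $\text{excess}(\cH') \geq k/2$, and $k \geq 2\n+1$ contradicts the excess bound.

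Parts \eqref{distfinitedega} and \eqref{distfinitedegl} follow the same template with lighter bookkeeping. For \eqref{distfinitedega}, I take shortest paths $\pi_p^x$ in $\cGT$ from $x$ to each $a_p$ in the set to be bounded, and set $U := \{x\} \cup \bigcup_p \pi_p^x$; since all paths share the endpoint $x$, the graph $U$ is connected, so $c_U = 1$ and the analogous computation applied to $\cH'' := \cT \cup \{a_p, e_p\} \cup U$ gives $\text{excess}(\cH'') \geq k - 1$, where $k$ is the cardinality in question, whence $k \leq \n + 1$.

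For \eqref{distfinitedegl}, the paths $\pi_v$ lie in $\cG \setminus \cT$ and may traverse $\T$-vertices through boundary edges, so $V(\cT)$ and $V(U)$ are no longer disjoint; however $E(\cT)$ and $E(U)$ remain disjoint, and $V(\cT) \cap V(U)$ contains at least the endpoints $v \in \bT_\ell$, hence has size at least $k$. A careful inclusion--exclusion in $V(\cH''') = V(\cT) \cup V(U)$, combined with the connectedness of $U$ through $x$, yields $\text{excess}(\cH''') \geq |V(\cT) \cap V(U)| - 1 \geq k - 1$. The main obstacle I anticipate is the functional-graph argument $c_U \leq k/2$ in part~\eqref{fewclose}; once the correct subgraph $\cH^*$ is identified in each case, the rest is routine counting.
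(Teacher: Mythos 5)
Your proof is correct, but it is organized differently from the paper's. The paper first partitions the boundary indices $\qq{1,\mu}$ into classes $A_1,A_2,\dots$ according to which connected component of the annulus $\cA=\cB_R(1,\cG)\setminus\T$ contains $a_i$, and proves a single structural lemma ($\alpha\le\n$ non-singleton classes, $\beta\le 2\n$ indices in them, $|A_j|\le\n+1$) by \emph{removing} the boundary edges $e_{i_j}$ from $\cB_R(1,\cG)$ and comparing the number of resulting components with the excess; all three claims then follow from the observation that $R/2$-closeness in $\cGT$ forces membership in the same component of $\cA$. You instead argue each claim by contradiction, \emph{building up} a witness subgraph $\cT\cup\{e_p\}\cup U$ inside $\cB_R(1,\cG)$ and bounding its excess from below. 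The two arguments rest on the same Euler-characteristic count, and your key step $c_U\le k/2$ (via the fixed-point-free map $p\mapsto q(p)$, noting that the component of $U$ containing $a_p$ is constant on weak components of the functional graph, each of size at least two) is the exact analogue of the paper's inequality $\beta\ge 2\alpha$. What the paper's organization buys is reusability: the partition $\{A_j\}$ and the bound $|A_j|\le\n+1$ are invoked again later (e.g.\ in Proposition~\ref{p:deficitbound}, in the proof of \eqref{distdega}, and in the proof of Proposition~\ref{prop:betacomp}), whereas your per-statement construction is more self-contained but produces no such reusable structure. Your treatment of \eqref{distfinitedegl} is also slightly more delicate than the paper's (which simply reruns the argument with $\ell$ replaced by $\ell-1$), but the inclusion--exclusion you sketch, $\text{excess}(\cT\cup U)\ge \text{excess}(\cT)+(|E(U)|-|V(U)|+1)+|V(\cT)\cap V(U)|-1\ge k-1$, is valid since the edge sets of $\cT$ and of $\cG\setminus\cT$ are disjoint and $U$ is connected through $x$.
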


Notice that the graph $\cG \setminus \cT$ is slightly larger than $\cG^{(\T)}$
because the edges between the vertices $\T_\ell$ and $\qq{N}\setminus \T$
are not removed.

\paragraph{Bound on deficit functions}

Finally, we have the following deterministic bound on the deficit functions for the
connected components of the subgraph obtained from $\cB_R(1,\cG)$ by removing a set of vertices $\bU$.

\begin{proposition} \label{p:deficitbound}
Let $\cal G$ be a $d$-regular graph on $\qq{N}$,  and assume
that $\cB:=\cB_R(1,\cG)$ has excess at most $\n$. Then the following hold.
\begin{itemize}
\item
Let $\cA$ be the annulus obtained by removing $\T$ from $\cB$.
Then the {sum of the} deficit function over any connected component of $\cA$ satisfies $\sum g(v)\leq \n+1$.
\item
Given $\bU\subset \bB_\ell(1,\cG)$, let $\cB^{(\bU)}$ be the subgraph given by removing the vertices $\bU$ from $\cB$.
Then the sum of the deficit function over any connected component of $\cB^{(\bU)}$ satisfies $\sum g(v)\leq \n+|\bU|$. 
\end{itemize}
For the above statements,
recall that we view $\cal A$ and $\cB^{(\bU)}$ as subgraphs of $\cB$ (which has zero deficit function)
and that their deficit functions are given by our conventions in Section~\ref{sec:intro-TE}.
\end{proposition}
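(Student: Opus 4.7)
Both parts reduce to bounding the number $k$ of $\cB$-edges between $C$ and the removed vertex set (call it $\bS$, so that $\bS = \T$ in part~(i) and $\bS = \bU$ in part~(ii)). Indeed, by the deficit-function conventions of Section~\ref{sec:intro-TE}, and because $\cG$ is $d$-regular so that $\cB$ inherits the zero deficit function by restriction, the deficit of any $v \in \cB^{(\bS)}$ is $g(v) = \deg_{\cB}(v) - \deg_{\cB^{(\bS)}}(v)$, i.e.\ the number of $\cB$-neighbors of $v$ in $\bS$. Summing over $v$ in a connected component $C$ of $\cB^{(\bS)}$ then gives exactly $k$, so the task is to show $k \leq \omega + 1$ in part~(i) and $k \leq \omega + |\bU|$ in part~(ii).

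The plan is to deduce both estimates from the single inequality $k \leq \omega + c'$, where $c'$ denotes the number of connected components of $\cB|_{\bS}$ that are adjacent to $C$ (with the convention $c' = 0$ when $k = 0$). The key step is monotonicity of excess applied to the vertex-induced subgraph $\cH = \cB|_{C \cup \bS}$: since $\cH \subseteq \cB$ one has $\text{excess}(\cH) \leq \omega$, and expanding the definition of excess gives
\begin{equation*}
  \text{excess}(\cH) = (e_C + e_{\bS} + k) - (|C| + |\bS|) + (1 + c_{\bS} - c'),
\end{equation*}
where $e_C, e_{\bS}$ are the edge counts of $\cB|_C, \cB|_{\bS}$ and $c_{\bS}$ is the total number of components of $\cB|_{\bS}$. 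The component count $1 + c_{\bS} - c'$ of $\cH$ reflects that $C$ (which is connected) merges with precisely the $c'$ components of $\cB|_{\bS}$ that it touches, while the remaining $c_{\bS} - c'$ components stay separate. Plugging in the elementary bounds $e_C \geq |C| - 1$ (using that $C$ is connected in $\cB$) and $e_{\bS} \geq |\bS| - c_{\bS}$ collapses the right-hand side to $k - c'$, which yields $k \leq \omega + c'$.

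The two parts then follow in one line each. For part~(i), $\T = \bB_\ell(1, \cG)$ is the vertex set of a ball in $\cG$ and is therefore connected, so $c_{\T} = 1$ and $c' \leq 1$, giving $k \leq \omega + 1$. For part~(ii), the set $\bU$ is arbitrary and only the trivial bound $c' \leq c_{\bU} \leq |\bU|$ is available, giving $k \leq \omega + |\bU|$. I do not anticipate any real obstacle: the only point that warrants a careful second look is the component count of $\cH$, which requires a brief case distinction between $k = 0$ (where $C$ is its own component of $\cH$) and $k \geq 1$ (where $C$ absorbs $c' \geq 1$ of the $\bS$-components); both cases fit the uniform formula $1 + c_{\bS} - c'$.
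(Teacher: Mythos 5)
Your argument is correct. The reduction of $\sum_{v\in C} g(v)$ to the number $k$ of $\cB$-edges between $C$ and the removed set $\bS$ is exactly right under the paper's conventions, the component count $1+c_{\bS}-c'$ of $\cH=\cB|_{C\cup\bS}$ is right in both the $k=0$ and $k\geq 1$ cases, and the two elementary spanning-tree bounds collapse the excess identity to $k-c'\leq\omega$ as you claim; the specializations $c'\leq 1$ (connectedness of the ball $\cT$) and $c'\leq|\bU|$ then give the two statements.

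Your route is genuinely different from the paper's, and more unified. The paper proves the first bullet as a corollary of Lemma~\ref{lem:beta2n}, which classifies the boundary vertices $a_i$ by the components of $\cA$ they fall into and derives $|A_j|\leq\omega+1$ from a $\chi=\#\text{vertices}-\#\text{edges}$ count on $\cB$ with $\beta$ boundary edges deleted; that lemma carries extra information ($\alpha\leq\omega$, $\beta\leq 2\omega$) needed elsewhere for Proposition~\ref{GTstructure}, which is why the paper routes the first bullet through it. For the second bullet the paper uses a separate trick: delete all but one edge from each $u_i$ to the component, observe the resulting graph is still connected, and invoke the fact that a connected graph of excess at most $\omega$ cannot survive the removal of $\omega+1$ edges. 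Your single inequality $k\leq\omega+c'$, obtained by computing the excess of the induced subgraph on $C\cup\bS$ directly, subsumes both cases and avoids the connectivity verification for the pruned graph $\cH$ in the paper's second argument (which requires a short rerouting argument the paper leaves implicit). What you lose is only the auxiliary structural information of Lemma~\ref{lem:beta2n}, which the proposition itself does not need.
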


In the remainder of this section, we prove Propositions~\ref{GTstructure} and \ref{p:deficitbound}.

\subsection{Proof of Proposition~\ref{GTstructure}}
\label{sec:GTstructurepf}

\begin{figure}[t]
\centering
\input{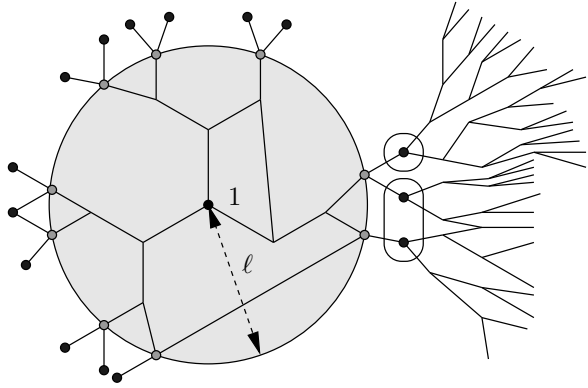}
\caption{The two vertices $a_i$ which are encircled together are close in the sense that they are
in the same connected component of the annulus $\cA$.
Proposition~\ref{GTstructure} shows that,
since $\cB$ has excess at most $\n$, this happens for at most $2\n$ of the $a_i$.
In addition, it shows that any vertex $x$ outside $\T$ can only be close to at most $\n+1$ of the $a_i$.
\label{fig:resample2}}
\end{figure}

Abbreviate $\cB = \cB_R(1,\cG)$.
By assumption the ball $\cB$ has excess at most $\n$. 
Let $\cA$ be the annulus obtained by removing $\T$ from $\cB$.
We partition $\qq{1,\mu}$ into sets  $\{A_1, A_2, A_3, \dots\}$,
such that $i$ and $j$ are in the same set $A_k$ if and only if $a_i$ and $a_j$ are in the same connected component of $\cA$.
We label the sets $A_k$ such that $|A_1|\geq |A_2|\geq \dots\geq |A_{\alpha}|>1=|A_{\alpha+1}|=\cdots$
and let $i_j$ be a labeling such that $A_1\cup \cdots\cup A_{\alpha}=\{{i_1}, {i_2},\dots, {i_\beta}\}$.

\begin{lemma} \label{lem:beta2n}
\begin{equation} \label{e:beta2n}
  \alpha \leq \n, \quad \beta \leq 2\n, \quad |A_j| \leq \omega+1 \quad \text{for all $j$}.
\end{equation}
\end{lemma}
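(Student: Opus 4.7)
The plan is to derive all three bounds from a single excess-accounting identity for the ball $\cB = \cB_R(1,\cG)$, decomposed into the inner ball $\cT$, the annulus $\cA$, and the $\mu$ boundary edges $e_1,\dots,e_\mu$. Using the formula $\text{excess}(\cH) = \#E(\cH) - \#V(\cH) + \#CC(\cH)$ from \eqref{def:excess}, and noting that $\cB$ and $\cT$ are each connected (being balls around $1$), a direct count of vertices, edges and components gives
\begin{equation*}
\text{excess}(\cB) \;=\; \text{excess}(\cT) \;+\; \text{excess}(\cA) \;+\; \mu \;-\; c,
\end{equation*}
where $c$ denotes the number of connected components of $\cA$.

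Next I would check that every connected component of $\cA$ contains at least one boundary vertex $a_j$. For any $v \in \bV(\cA)$, take a shortest path from $v$ to $1$ in $\cG$; all of its vertices lie in $\cB$, and since a shortest path cannot re-exit $\T$ once it has entered, there is a last vertex on the path in $\cA$, which is necessarily adjacent to $\T$ and hence equal to some $a_j$. This shows $c$ equals the number of classes of the partition $\{A_k\}$ and in particular $|A_k|\geq 1$ for every $k$, so the formula rewrites as
\begin{equation*}
\text{excess}(\cB) \;=\; \text{excess}(\cT) + \text{excess}(\cA) + \sum_{k}(|A_k|-1).
\end{equation*}

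Since $\text{excess}(\cT)$ and $\text{excess}(\cA)$ are nonnegative and $\text{excess}(\cB) \leq \omega$ by hypothesis, the key inequality $\sum_k(|A_k|-1) \leq \omega$ follows. The three conclusions then fall out by elementary arithmetic: individually, $|A_k|-1 \leq \omega$, giving $|A_k|\leq \omega+1$; the number of indices $k$ with $|A_k|\geq 2$, which is exactly $\alpha$, is bounded by the sum because each such term contributes at least $1$, giving $\alpha\leq \omega$; and finally, using $|A_k|\leq 2(|A_k|-1)$ whenever $|A_k|\geq 2$,
\begin{equation*}
\beta \;=\; \sum_{k=1}^{\alpha}|A_k| \;\leq\; 2\sum_{k=1}^{\alpha}(|A_k|-1) \;\leq\; 2\omega.
\end{equation*}

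There is no real obstacle here—the entire content is the excess-bookkeeping identity plus the observation that $\cA$ has no "orphan" components. The subtlest point is the last claim, which is why one must take the radius $R$ large compared to $\ell$ (only the ambient ball $\cB$ matters for this step; the distance condition $R/2$ in the statement of Proposition~\ref{GTstructure} will be accounted for separately by comparing paths in $\cGT$ to paths in $\cA\subset \cB$).
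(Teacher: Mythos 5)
Your proof is correct. The key identity
$\text{excess}(\cB) = \text{excess}(\cT) + \text{excess}(\cA) + \mu - c$
follows from the definition \eqref{def:excess} exactly as you say, your verification that every component of $\cA$ contains some $a_j$ (so that $c$ equals the number of classes and $\mu - c = \sum_k(|A_k|-1)$) is sound, and the three conclusions do follow from $\sum_k(|A_k|-1)\leq\n$ by the elementary arithmetic you give. The paper's argument is the same Euler-characteristic bookkeeping in a slightly different dress: it works with $\chi = \#\text{CC} - \text{excess}$, removes only the $\beta$ boundary edges belonging to the nontrivial classes $A_1,\dots,A_\alpha$, and bounds the number of components of $\cB\setminus\{e_{i_1},\dots,e_{i_\beta}\}$ by $\alpha+1$ (one component for the vertex $1$ and one per nontrivial class — which implicitly uses the same ``no orphan components'' observation you make explicit). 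This yields $\beta\leq\alpha+\n$, which is literally equivalent to your $\sum_k(|A_k|-1)\leq\n$ since the singleton classes contribute zero; the paper then extracts the three bounds by separate small manipulations, whereas your full decomposition of $\cB$ into $\cT$, $\cA$, and all $\mu$ boundary edges delivers the unified inequality in one step and makes the role of $\text{excess}(\cT)$ and $\text{excess}(\cA)$ (both discarded as nonnegative) transparent. Neither approach buys anything the other lacks; yours is marginally cleaner to state, the paper's avoids having to verify the edge/vertex count of the three-way decomposition.
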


\begin{proof}
For any finite graph $\cG$, we set
\begin{align}\label{c-c}
  \chi(\cG) \deq \#\text{connected components}(\cG)-\text{excess}(\cG)
  =
  \#\text{vertices}(\cG)-\#\text{edges}(\cG),
\end{align}
where the second equality follows from the definition \eqref{def:excess} of $\text{excess}(\cG)$.
In particular, for any $e \in \cG$, we have $\chi(\cG \setminus e) = \chi(\cG) + 1$.

As a ball, $\cB$ has by definition exactly one connected component, and by assumption it has excess at most $\n$.
Thus $\chi(\cB) \geq 1-\n$.
We recall that $e_i$ is the edge on the boundary of $\cT$ containing $a_i$.
Thus the graph $\cB \setminus \{e_{i_1}, \dots, e_{i_\beta}\}$ has at most $\alpha+1$ connected components:
the component containing the vertex~$1$  and the components containing the vertices $a_i$ with $i\in A_j$ for some $j\in\qq{1,\alpha}$.
(Notice for $i\in A_j$ with $j>\alpha$, we did not remove the edge $e_{i}$. Therefore $a_i$ is still connected to $1$.)
Thus $\chi(\cB \setminus \{e_{i_1}, \dots, e_{i_\beta}\}) \leq \alpha+1$. 
It follows that
\begin{equation*}
1+\alpha\geq \chi(\cB \setminus \{e_{i_1}, \dots, e_{i_\beta}\}) =\chi(\cB)+\beta\geq 1-\n+\beta,
\end{equation*}
and thus $\beta \leq \alpha+\n$.
Since, by definition, we have $\beta=\sum_{i=1}^{\alpha}|A_i|\geq 2\alpha$,
the first two inequalities in \eqref{e:beta2n} follow.
The third inequality is trivial for $i> \alpha$, and for $i \leq \alpha$, we have
\begin{equation*}
\n+\alpha\geq \beta=\sum_{j=1}^{\alpha}|A_j|\geq |A_i|+2(\alpha-1),
\end{equation*}
which implies that $|A_i|\leq \n - \alpha + 2 \leq \n+1$ as claimed. 
\end{proof}

\begin{proof}[Proof of \eqref{fewclose}]
By definition,
any $i,j$ such that $\dist_{\cGT}(a_i, a_j)\leq R/2$ belong to the same connected component of $\cA$.
(Indeed, $a_i$ is at distance $\ell+1$ from the vertex $1$ and $R\gg\ell$, and thus $\cB_{R/2}(a_i, \cG)\subset \cB$ for any $i\in \qq{1,\mu}$.)
In particular, if the set $A_i$ containing $i$ has size $1$, then for any $j\in\qq{1,\mu}\setminus\{i\}$,
we have $\dist_{\cGT}(a_i, a_j)> R/2$.
Recalling that $\beta\leq 2\n$ is the number of $i$ for which the set $A_i$ containing it has size greater than $1$,
the claim \eqref{fewclose} follows from \eqref{e:beta2n}.
\end{proof}

\begin{proof}[Proof of \eqref{distfinitedega}]
The claim is trivial if $x\not\in \cB$, since we then have $\dist_{\cGT}(x,\{a_1,a_2,\dots, a_\mu\})\geq R - \ell > R/2$ by definition.
Thus assume that $x \in \cB$. Let $A_j$ be such that $x$ and the vertices $a_i$ with $i\in A_j$ are in the same connected component of $\cA$.
We first show that those vertices $a_p$ with $p \in A_k$ where $k\neq j$ do not contribute to \eqref{distfinitedega}.
Indeed, then $x$ and $a_p$ are in the different connected components of $\cA$.
But since $\cB_{R/2}(a_p, \cG)\subset \cB$, it then follows that $\dist_{\cGT}(x,a_p) > R/2$.
Therefore $|\{p\in \qq{1,\mu}: \dist_{\cGT}(x,a_p)\leq R/2\}|\leq |A_j|$,
and the claim \eqref{distfinitedega} follows from the third inequality in \eqref{e:beta2n}.
\end{proof}

\begin{proof}[Proof of \eqref{distfinitedegl}]
By the same proof, \eqref{distfinitedega} also holds with $\ell$ replaced by $\ell-1$,
i.e., with $\T=\bB_\ell(1,\cG)$ replaced by $\bB_{\ell-1}(1,\cG)$,
including in Lemma~\ref{lem:beta2n}. This gives
\begin{align*}
 |\{v\in \T_\ell: \dist_{\cG ^{(\bB_{\ell-1}(1,\cG))}}(x,v)\leq R/2\}|\leq \n+1.
\end{align*}
Then claim then follows since $\cG\setminus \cT\subset\cG ^{ (\bB_{\ell-1}(1,\cG))}$.
\end{proof}

\subsection{Proof of Proposition~\ref{p:deficitbound}}

\begin{proof}[Proof of Proposition~\ref{p:deficitbound}]
For the first statement,
viewing the annulus $\cA$ as a subgraph of $\cGT$,
the bound \eqref{e:beta2n} immediately implies that the sum of deficit function over
any connected component of $\cA$ satisfies $\sum g(v)\leq \max_j |A_j|\leq \n+1$.

For the second statement, let $k=|\bU|$ and write $\bU=\{u_1,u_2,\dots, u_k\}$.
Let $\bX\subset \qq{N}$ be the set of vertices of any given connected component of $\cA$.
Define
\begin{align*}
B_i:=\bX \cap \del u_i=\left\{v^i_1, v^i_2,\dots, v^i_{|B_i|}\right\},\quad i=1,2,\dots, k,
\end{align*}
where $\del u$ is the set of neighbors of the vertex $u$ in $\cG$.
Notice that $g(v)=0$ unless $v \in B_1 \cup \cdots \cup B_k$. Thus
\begin{equation}
\sum_{v \in \bX} g(v)
\leq \sum_{i=1}^k \sum_{v \in B_k} g(v)
\leq \sum_{i=1}^k |B_i|
\leq |\bU| + \sum_{i=1}^k (|B_i|-1),
\end{equation}
so that the claim follows from
\begin{align}\label{e:sumdeficit}
\sum_{i=1}^k (|B_i|-1) \leq \n.
\end{align}
To prove \eqref{e:sumdeficit}, we consider the graph
\begin{align*}
  {\cH} = \cB\setminus \bigcup_{i=1}^k\{\{u_i, v_1^i\}, \{u_i, v_2^i\}, \cdots, \{u_i, v_{|B_i|-1}^i \}\}.
\end{align*}
Note that $\cH$ is obtained from $\cB$ by removing exactly $\sum_{i=1}^k (|B_i|-1)$ edges and that $\cH$ is connected. 
Since by assumption $\cB$ has excess at most $\n$, after removing any $\n+1$ edges, it cannot be connected.
This implies \eqref{e:sumdeficit}.
\end{proof}

\section{Trees and tree extension}
\label{sec:TE}

For the infinite regular tree and for the rooted infinite regular tree with given root degree,
it is elementary to compute the Green's function explicitly,
as done in the following proposition.

\begin{proposition}\label{greentree}
Let $\cY$ be the infinite $d$-regular tree.
For all $z \in \C_+$, its Green's function is
\begin{equation} \label{e:Gtreemkm}
  G_{xy}(z)=m_{d}(z)\left(-\frac{\msc(z)}{\sqrt{d-1}}\right)^{\dist_{\cY}(x,y)}.
\end{equation}
Let $\cY_0$ be the rooted infinite $d$-regular tree with root degree $d-1$.
Its Green's function is
\begin{equation} \label{e:Gtreemsc}
  G_{xy}(z)=m_{d}(z)\left(1-\left(-\frac{\msc(z)}{\sqrt{d-1}}\right)^{2\ell(x,y)+2}\right)\left(-\frac{\msc(z)}{\sqrt{d-1}}\right)^{\dist_{\cY_0}(x,y)},
\end{equation}
where $\ell(x,y)$ is the depth of the common ancestor of the vertices $x$ and $y$ in $\cY_0$.
In particular, if $x$ is the root of $\cY_0$, then $G_{xx}(z)=\msc(z)$.
\end{proposition}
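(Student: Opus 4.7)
The plan is to reduce everything to (i) Schur complements against a single vertex, (ii) the symmetries of the two infinite trees, and (iii) the relationship between $\cY$ and $\cY_0$ obtained by deleting one vertex from $\cY$. First I would compute the diagonal entries; then derive the off-diagonal entries of $G(\cY)$ via a linear recursion arising from distance-transitivity; and finally deduce the rooted-tree formula from the $\cY$-formula by another Schur complement.

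The diagonals come immediately from self-similarity. Setting $m\deq G_{oo}(\cY_0)$ and removing the root $o$ produces $d-1$ disjoint copies of $\cY_0$ (each with a neighbor of $o$ as the new root of degree $d-1$), so the Schur complement identity (Appendix~\ref{app:Green}) gives $m=(-z-(d-1)\cdot\frac{1}{d-1}\cdot m)^{-1}=(-z-m)^{-1}$, i.e.\ the semicircle equation $m^2+zm+1=0$; selecting the root with $\im m>0$ forces $m=\msc$. Removing instead any vertex $x$ of $\cY$ yields $d$ copies of $\cY_0$, so the same argument produces $G_{xx}(\cY)=(-z-d\msc/(d-1))^{-1}=\md$ via \eqref{e:mdstieltjes}.

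For the off-diagonal entries of $\cY$, I would exploit that the isometry group of $\cY$ acts transitively on pairs of vertices at any fixed distance, so $G_{xy}(\cY)=f(k)$ with $k=\dist_\cY(x,y)$. Evaluating $(H-z)G=I$ at an off-diagonal entry, and noting that exactly one neighbor of $x$ lies at distance $k-1$ from $y$ (the next vertex on the $xy$-path) while the other $d-1$ lie at distance $k+1$, produces the recursion
\[
(d-1)f(k+1)-z\sqrt{d-1}\,f(k)+f(k-1)=0,\qquad k\geq 1,
\]
with characteristic roots $\lambda_-=-\msc/\sqrt{d-1}$ and $\lambda_+=-(\msc\sqrt{d-1})^{-1}$ (using $z=-\msc-\msc^{-1}$). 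Since $H$ is bounded self-adjoint on $\ell^2(\cY)$, the resolvent $G(z)$ is a bounded operator for $z\in\C_+$, hence $\sum_{y\neq x}|G_{xy}|^2=\sum_{k\geq 1}d(d-1)^{k-1}|f(k)|^2<\infty$; as $|\lambda_+\sqrt{d-1}|=|\msc|^{-1}>1$, the coefficient of $\lambda_+^k$ in $f$ must vanish, and the initial condition $f(0)=\md$ from the previous step yields \eqref{e:Gtreemkm}.

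Finally, I would realize $\cY_0$ as one of the $d$ connected components of $\cY\setminus\{o^*\}$, choosing $o^*$ to be the unique neighbor (in $\cY$) of the root $o$ that lies outside $\cY_0$. Then $G_{xy}(\cY_0)=G^{(o^*)}_{xy}(\cY)$ for all $x,y\in\cY_0$, and the Schur complement identity $G^{(v)}_{ij}=G_{ij}-G_{iv}G_{vj}/G_{vv}$ applied with $v=o^*$, together with \eqref{e:Gtreemkm}, gives (writing $q\deq-\msc/\sqrt{d-1}$)
\[
G_{xy}(\cY_0)=\md\, q^{\dist_{\cY_0}(x,y)}-\md\, q^{\dist_\cY(x,o^*)+\dist_\cY(o^*,y)}.
\]
Using $\dist_\cY(x,o^*)=\dist_{\cY_0}(x,o)+1$ (since every $x$-to-$o^*$ path in $\cY$ must pass through $o$) and the path-splitting identity $\dist_{\cY_0}(x,o)+\dist_{\cY_0}(o,y)=\dist_{\cY_0}(x,y)+2\ell(x,y)$ (the common segment from $o$ to the common ancestor of depth $\ell(x,y)$ is traversed twice in the first sum) converts the second exponent to $\dist_{\cY_0}(x,y)+2\ell(x,y)+2$, producing the factor $(1-q^{2\ell(x,y)+2})$ and establishing \eqref{e:Gtreemsc}; specializing to $x=y=o$ recovers $G_{oo}(\cY_0)=\md(1-q^2)=\msc$ via \eqref{e:mdstieltjes}, consistent with the direct computation. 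The one substantive obstacle is eliminating the $\lambda_+$ mode on all of $\C_+$ — not merely for large $|z|$, where the walk expansion \eqref{e:Gexpan} would converge absolutely and handle the matter — and this is precisely the point at which the $\ell^2$-boundedness of the self-adjoint resolvent must be invoked; beyond this step the argument is combinatorial bookkeeping on tree distances.
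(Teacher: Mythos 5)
Your proof is correct. The diagonal computation is the same as the paper's: a Schur complement against a single vertex plus the self-similarity of $\cY_0$ gives $m=\msc$ for the rooted tree and hence $G_{xx}(\cY)=\md$. Where you diverge is in how the remaining entries are pinned down. The paper fixes the second initial condition of the three-term recursion explicitly — it evaluates the row identity $1=\sum_y G_{xy}(H_{yx}-z\delta_{yx})=\frac{d}{\sqrt{d-1}}G_{xy}-z\md$ to get $f(1)=-\md\msc/\sqrt{d-1}$, and then propagates by induction — whereas you impose a boundary condition at infinity: the general solution $A\lambda_-^k+B\lambda_+^k$ must be square-summable against the volume growth $d(d-1)^{k-1}$, which kills the $\lambda_+$ mode since $|\sqrt{d-1}\,\lambda_+|=|\msc|^{-1}>1$ on $\C_+$, and then $f(0)=\md$ alone suffices. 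Both are valid; the paper's route is more elementary (no appeal to $\ell^2$-boundedness of the resolvent), while yours makes the selection of the decaying root conceptually transparent and is the standard way to see why the "wrong" root is excluded for all $z\in\C_+$ rather than just for large $|z|$. For the rooted-tree formula \eqref{e:Gtreemsc}, your derivation is actually more complete than what the paper records: the paper only verifies the $x=y$ root case and waves at "induction", whereas you obtain the full formula from \eqref{e:Gtreemkm} by realizing $\cY_0$ as a component of $\cY\setminus\{o^*\}$ and applying $G^{(v)}_{ij}=G_{ij}-G_{iv}G_{vj}/G_{vv}$, with the identity $\dist_{\cY_0}(x,o)+\dist_{\cY_0}(o,y)=\dist_{\cY_0}(x,y)+2\ell(x,y)$ producing the factor $1-q^{2\ell(x,y)+2}$ exactly. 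The only implicit ingredients you should be aware you are using are that $\md(z)\neq 0$ on $\C_+$ (so the division by $G_{o^*o^*}$ is legitimate; this holds since $\md$ is a Stieltjes transform) and that $|\msc(z)|<1$ strictly on $\C_+$ (stated in Section~\ref{sec:intro-TE}); neither is a gap.
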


The proof is given below.
More general results for Green's functions on regular trees are discussed e.g.\
in \cite[Section~3]{MR3055759} and references given there.

The main results of this section are the following estimates for $P_{ij}(\cG_0,z)$,
the Green's function of the tree extension $\TE(\cG_0)$ of a graph $\cG_0$,
as defined in Definition~\ref{def:treeextension}.

\begin{proposition}\label{boundPij}
Let $\n \geq 6$ and $\sqrt{d-1} \geq 2^{\n+2}$.
Let $\cG_0$ be a finite graph with vertex set $\bG_0$ and deficit function $g$.
Assume that
(\rn{1}) any connected component of $\cal G_0$ has excess at most $\omega$,
and that
(\rn{2}) the sum of deficit function over any connected component of $\cal G_0$ satisfies $\sum g(v)\leq 8\n$.
Then the following holds for all $z \in \C_+$ and all $i,j\in \bG_0$.
\begin{enumerate}
\item
The Green's function $P_{ij}(\cG_0)$ of $\TE(\cG_0)$ satisfies
\begin{equation} \label{e:boundPij}
\left|P_{ij}(\cG_0,z)\right|
\leq 2^{\n+2}|\msc(z)|q^{\dist_{\cG_0}(i,j)},
\end{equation}
and the diagonal terms satisfy the better estimate
\begin{equation} \label{e:boundPii}
\left| P_{ii}(\cG_0,z)-\md(z) \right| \leq \frac{|\msc(z)|}{4}.
\end{equation}
\item
Let $\cH_0 \subset \cG_0$ be a subgraph with vertex set $\bH_0$.
Then for any two vertices $i,j$ in $\cH_0$ such that 
$\cE_\ell(i,j) \subset \cH_0$,
the $ij$-th entries of the Green's functions of the tree extensions of $\cal G_0$ and $\cal H_0$ satisfy
\begin{align} \label{e:compatibility}
\left|P_{ij}(\cG_0, z)-P_{ij}(\cal H_0,z)\right|
\leq 2^{2\n+3}|\msc(z)|q^{\ell+1}.
\end{align}
\end{enumerate}
\end{proposition}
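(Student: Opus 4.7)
The plan is to reduce both parts of the proposition to explicit calculations on the infinite $d$-regular tree (Proposition~\ref{greentree}), via a Schur-complement reduction to a finite effective matrix on $\bG_0$ combined with a non-backtracking walk expansion whose convergence is ensured by the hypotheses $\sqrt{d-1} \geq 2^{\omega+2}$ and bounded per-component excess and deficit.

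First I would reduce the infinite tree-extension problem to a finite-dimensional one. By Proposition~\ref{greentree}, the Green's function at the root of a rooted $d$-regular tree with root degree $d-1$ equals $\msc(z)$; applying this branch by branch, the Schur complement that integrates out the attached trees at the extensible vertices of $\cG_0$ produces an effective matrix $M$ on $\bG_0$ with diagonal entries $M_{vv} = -z - c_v \msc(z)/(d-1)$ (where $c_v = d - g(v) - \deg_{\cG_0}(v)$ is the number of tree branches attached at $v$) and off-diagonal entries $M_{vw} = 1/\sqrt{d-1}$ along edges of $\cG_0$; consequently $P(\cG_0,z)|_{\bG_0 \times \bG_0} = M^{-1}$. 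Equivalently, $P_{ij}(\cG_0, z)$ admits a walk expansion on $\cG_0$ with edge weight $1/\sqrt{d-1}$ per step and diagonal vertex weight $f_{c_v} := 1/M_{vv}$ per visit, where by the identity~\eqref{e:mdstieltjes} one has $f_{d-1} = \msc$ and $f_d = \md$.

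For part~(\rn{1}), I would reorganize this formal sum as a sum over non-backtracking walks in $\cG_0$, absorbing backtracking excursions into effective self-energies by using the self-consistent equation $\msc^2 + z\msc + 1 = 0$ at each vertex. Each non-backtracking walk of length $L$ from $i$ to $j$ then contributes an amplitude bounded by $C|\msc| q^L$ with $q = |\msc|/\sqrt{d-1}$. By Proposition~\ref{prop:numberpath}, the number of such walks of length $\dist_{\cG_0}(i,j)+k-1$ is at most $2^{\omega k}$, and the hypothesis $\sqrt{d-1} \geq 2^{\omega+2}$ gives $2^{\omega} q \leq 1/4$, so the resulting geometric series converges and yields the exponential decay~\eqref{e:boundPij}. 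The diagonal estimate~\eqref{e:boundPii} follows from the same expansion: the leading (empty-walk) term equals $f_{c_i}$, which is $\md$ at non-extensible vertices (where $c_i = d$) and otherwise deviates from $\md$ by at most $O(|\msc|/d)$; the higher-order walk contributions are further suppressed by powers of $q$, and the per-component bound $\sum_v g(v) \leq 8\omega$ keeps the contributions from deficit-induced weight shifts summable.

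For part~(\rn{2}), I would apply the resolvent identity relating $M(\cG_0)$ and $M(\cH_0)$, which produces a representation of $P_{ij}(\cG_0) - P_{ij}(\cH_0)$ as a sum over walks in $\cG_0$ from $i$ to $j$ that are not entirely contained in $\cH_0$. The assumption $\cE_\ell(i,j,\cG) \subset \cH_0$ forces every such walk to have length at least $\ell + 1$. Applying the refined count~\eqref{pathN} (at most $2^{\omega(k+1)+1}$ non-backtracking walks of length $\ell + k$ not contained in $\cH_0$) and summing the geometric series against the weight $|\msc| q^{\ell+k}$ yields~\eqref{e:compatibility}. The principal technical obstacle throughout is that the naive Neumann/walk expansion does not converge absolutely when $z$ lies in the bulk of the spectrum, where $|\msc|$ is of order one; the convergence must come from resumming backtracking into tree self-energies so that each non-backtracking step carries the small factor $q$ rather than $1/|z|$. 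The large-degree assumption $\sqrt{d-1} \geq 2^{\omega+2}$ then precisely offsets the walk-counting factor $2^\omega$ arising from the excess, and this balance is ultimately what forces the form of the constants in \eqref{e:boundPij} and \eqref{e:compatibility}.
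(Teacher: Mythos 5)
Your overall architecture is, in its first half, the same as the paper's: the Schur-complement reduction to a finite effective matrix on $\bG_0$ with diagonal entries $-z-(d-g(v)-\deg_{\cG_0}(v))\msc(z)/(d-1)$ is exactly Lemma~\ref{lem:H2}, and the non-backtracking-walk expansion combined with the counting bounds of Proposition~\ref{prop:numberpath} and the smallness $2^{\n}q\leq 1/4$ is exactly how the paper proves \eqref{e:boundPij} and \eqref{e:compatibility} in the case $g\equiv 0$.

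The gap is in the case $g\not\equiv 0$ (and, for part (ii), at vertices of $\bH_0$ whose degree in $\cH_0$ is smaller than in $\cG_0$). The identity you invoke --- each non-backtracking step carrying a uniform factor $-\msc/\sqrt{d-1}$ and an overall prefactor $\md$, obtained by resumming backtracking excursions via $\msc^2+z\msc+1=0$ --- is the covering-map formula \eqref{tree-likeG}, and it is exact only when $\TE(\cG_0)$ is $d$-regular, i.e.\ only when every vertex has the full complement of forward branches so that its resummed self-energy is exactly $\msc$. At a vertex with $g(v)>0$ the self-energy is $(d-g(v)-\deg_{\cG_0}(v))\msc/(d-1)$ rather than $\msc$, so the per-visit factor becomes $\msc/(1+g(v)\msc^2/(d-1))$ and the endpoint prefactor is no longer $\md$; a direct walk expansion on the irregular weighted graph would need uniform bounds on these vertex-dependent factors and control of how often a non-backtracking walk revisits deficient vertices, none of which your one-sentence appeal to $\sum_v g(v)\leq 8\n$ supplies. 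The paper closes this step differently: changing $g$ to $0$ changes the effective matrix only by a diagonal perturbation with entries $\msc g(v)/(d-1)$, so the difference of the two Green's functions is written via the resolvent identity and one runs a bootstrap on $\Gamma=\max_{i,j}|P_{ij}|\bigl(|\msc|q^{\dist_{\cG_0}(i,j)}\bigr)^{-1}$, which closes because $\sum_v g(v)\leq 8\n$ and $\sqrt{d-1}\geq 2^{\n+2}$ make the perturbative term at most $\Gamma/2$; an analogous comparison (with a case split on whether the perturbation site lies within distance $\ell$ of the pair $i,j$) is what extends part (ii) beyond $g\equiv 0$. You would need to add this second, perturbative step for your proof to be complete.
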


Item (i) states that $P_{ij}(\cG_0)$ is bounded and has (up to constants) the same decay as the Green's function of the infinite $d$-regular tree $\cY$.
In particular, \eqref{e:boundPij} and \eqref{e:boundPii} together with \eqref{e:mdstieltjes}  imply that
\begin{equation} \label{e:boundPiimsc}
  |P_{ij}(\cG_0,z)| \leq (1+\delta_{ij}/2)|m_{sc}(z)|.
\end{equation}
 
Item (ii) states that $P_{ij}(\cG_0)$ depends only weakly on $\cG_0$.
Especially, it implies the the following principle,
which is used repeatedly throughout Sections~\ref{sec:stabilityT}--\ref{sec:improved}.

\begin{remark}[Localization principle] \label{rk:princG0}
Let $\bX$ be a (small) set of vertices in a graph $\cG$.
For vertices $i,j\in \bX$, it is often convenient to replace $P_{ij}(\cE_r(i,j,\cG))$,
namely the $ij$-th entry of the Green's function of the graph $\TE(\cE_r(i,j,\cG))$ which itself depends on $i,j$,
by $P_{ij}(\cG_0)$ of a graph $\cG_0$ which is independent of $i,j$ and contains $\cE_r(i,j,\cG)$ for $i,j \in \bX$.
In this situation, we abbreviate $P=P(\cX)$.
The estimate \eqref{e:compatibility} then implies that $P_{ij}$ and $P_{ij}(\cE_r(i,j,\cG))$ are close
in the sense
\begin{align} \label{replaceEr0}
\left|P_{ij}(\cE_{r}(i,j,\cG))-P_{ij}\right|
\leq 2^{2\n+3}|\msc|q^{r+1}
\end{align}
provided the assumptions of \eqref{e:compatibility} are obeyed. 
\end{remark}

\subsection{Proof of Proposition~\ref{greentree}}

The proof of Proposition~\ref{greentree} is a straightforward consequence of the Schur complement formula
\eqref{e:Schur1}.

\begin{proof}
  Let $\dist_{\cY}(x,y)=1$.
  The Schur complement formula implies
  \begin{equation}
    G^{(x)}_{yy} = \frac{-1}{z+(d-1)^{-1} \sum_{k,l\in \partial y \setminus \{x\}} G^{(xy)}_{kl}}
    = \frac{-1}{z+(d-1)^{-1} \sum_{k\in \partial y \setminus \{x\}} G^{(y)}_{kk}},
  \end{equation}
  where $\del y$ is the set of adjacent vertices of $y$ in $\cY$.
  By homogeneity, $G^{(x)}_{yy}$ is independent of $x$ and $y$ if $\dist_{\cY}(x,y)=1$ and therefore equal to the
  unique solution to the equation $m = -1/(z+m)$ with $\im m >0$, which is $\msc$.
  Applying the Schur complement formula again, it follows that
  \begin{equation}
    G_{xx} = \frac{-1}{z+d(d-1)^{-1}\msc} = \md.
  \end{equation}
  This proves \eqref{e:Gtreemkm} and \eqref{e:Gtreemsc} for $x=y$. The case $\dist_{\cY}(x,y)=1$ then follows, e.g., from
  \begin{equation}
    1 = \sum_{y} G_{xy} (H_{yx}-z \delta_{yx}) = \frac{d}{\sqrt{d-1}} G_{xy} - z \md,
  \end{equation}
  which, using $1+z\md + \frac{d}{d-1} \md\msc=0$, implies
  \begin{equation}
    G_{xy} = \frac{\sqrt{d-1}}{d} (1+z\md) = - \frac{\md\msc}{\sqrt{d-1}},
  \end{equation}
  as claimed. The general case is similar by induction.
\end{proof}

\subsection{Proof of Proposition~\ref{boundPij} for $g\equiv 0$}

For the proof of Proposition~\ref{greentree}, we require the notion of covering of a graph.
Given a graph $\cG$, a graph $\tcG$ together with a surjective map $\pi: \tbG \to \bG$ is a \emph{covering} of $\cG$
if for each $x\in \tbG$, the restriction of $\pi$ to the neighborhood of $x$ is a bijection onto the neighborhood of $\pi(x)$ on $\bG$.
Every $d$-regular graph is covered by the infinite $d$-regular tree $\cY$ which is its universal covering. 

The Green's functions of a graph $\cG$ and a cover $\tcG$ with covering map $\pi: \tbG \to \bG$ obey the following identity.
For each $x\in \tbG$ and $\pi(x)=i\in \bG$, we have
\begin{align}\label{defg}
G_{ij}(z)=\sum_{y:\pi(y)=j}\tilde{G}_{xy}(z),
\end{align}
if the right-hand side is summable (see Appendix~\ref{app:Green} for the elementary proof of \eqref{defg}).
In particular, if $\cal G$ is an infinite simple $d$-regular graph
and $\pi: \bY\rightarrow \bG$ its universal covering map, where $\bY$ are the vertices of $\cY$,
then by \eqref{e:Gtreemkm} and \eqref{defg},
for any vertex $x\in \bY$ such that $\pi(x)=i$, the resolvent entries of the graph $\cG$ are given by
\begin{align}
\notag G_{ij}
&=m_{d}\sum_{y:\pi(y)=j}\left(-\frac{\msc}{\sqrt{d-1}}\right)^{\dist_{\cY}(x,y)}\\
\label{tree-likeG}
&=m_{d}\sum_{k\geq \dist_{\cal G}(i,j)}|\{\text{non-backtracking paths from $i$ to $j$ of length $k$}\}|\left(-\frac{\msc}{\sqrt{d-1}}\right)^{k}.
\end{align}
For the number of non-backtracking paths, recall the estimates of Proposition~\ref{prop:numberpath}.
Using these, the proofs of \eqref{e:boundPij} and \eqref{e:compatibility} are straightforward
from \eqref{tree-likeG} if $g\equiv 0$.

\begin{proof}[Proof of \eqref{e:boundPij} for $g\equiv 0$]
For vertices $i,j$ in different connected components of $\cG_0$,
we have $P_{ij}(\cG_0)=0$ and there is nothing to prove.
Therefore, we can assume that $i$ and $j$ are in the same connected component.

Since we assume $g\equiv 0$,
the tree extension  $\cG_1=\TE(\cG_0)$ is $d$-regular, and 
\eqref{tree-likeG} implies
\begin{align}\label{eqn:formulaPij}
  P_{ij}(\cG_0,z)=\md\sum_{k\geq \dist(i,j)}|\{\text{NBW in $\cG_1$ from $i$ to $j$ of length $k$}\}|\left(-\frac{\msc}{\sqrt{d-1}}\right)^{k}
  .
\end{align}
Since $\cG_0$ has excess at most $\n$, the same is true for $\cG_1$.
By the estimates for the number of non-backtracking paths from Proposition~\ref{prop:numberpath},
the right-hand side of \eqref{tree-likeG} is summable, provided that $\sqrt{d-1}\geq 2^{\n+2}$, and
\begin{align*}
|G_{ij}|
  \leq |\md|\sum_{k\geq 1}2^{\n k}q^{\dist_{\cG_0}(i,j)+k-1}
  &=|\md|2^\n q^{\dist_{\cG_0}(i,j)}\sum_{k\geq 1}(2^{\n} q)^{k-1}\\
  &\leq |\md|2^\n q^{\dist_{\cG_0}(i,j)}\sum_{k\geq 0}4^{-k}\leq 
 2^{\n+1}|\msc|q^{\dist_{\cG_0}(i,j)}.
\end{align*}
This completes the proof if $g \equiv 0$.
\end{proof}

\begin{proof}[Proof of \eqref{e:compatibility} for $g\equiv 0$]
As in the proof of \eqref{e:boundPij}, we can assume that $i$ and $j$ are in the same connected component of $\cG_0$.
By \eqref{tree-likeG}, since all the non-backtracking paths from $i$ to $j$ of length $\leq \ell$
are contained in $\cal H_0$, we have
\begin{align*}
&P_{ij}(\cal G_0, z)-P_{ij}(\cal H_0,z)\\
&= \md\sum_{k\geq 1}|\{\text{NBW from $i$ to $j$ of length $\ell+k$, not completely in $\cal H_0$}\}|\left(-\frac{\msc}{\sqrt{d-1}}\right)^{\ell+k}
  .
\end{align*}
By \eqref{pathN}, we therefore have
\begin{align*}
\left|P_{ij}(\cal G_0, z)-P_{ij}(\cal H_0,z)\right|
&\leq |\md|\sum_{k=1}^{\infty}2^{\n(k+1)+1}q^{\ell+k}
\\
&= |\md|2^{2\n+1}q^{\ell+1}\sum_{k=1}^{\infty}\left(2^{\n}q\right)^{k-1}
\leq 2^{2\n+2}|\msc|q^{\ell+1},
\end{align*}
again provided that $\sqrt{d-1}\geq 2^{\n+2}$.
This completes the proof if $g \equiv 0$.
\end{proof}

\subsection{Proof of Proposition~\ref{boundPij} for $g \not\equiv 0$}

To extend the bounds \eqref{e:boundPij} and \eqref{e:compatibility} to $g \not\equiv 0$,
we use an alternative representation of $P_{ij}(\cG_0)$ given as follows.
In Definition~\ref{def:treeextension}, $P_{ij}(\cG_0,z)$ is defined as the Green's function
of the infinite graph obtained by attaching a $d$-regular tree at every extensible vertex of $\cG_0$.
The next lemma shows that it is equivalently given by attaching to every extensible
vertex a self-loop with $z$-dependent complex weight.
The proof of the lemma follows by application of the Schur complement formula.

\begin{lemma} \label{lem:H2}
Let $z\in \C_+$. Then for vertices $i,j\in\bG_0$,
\begin{align*}
P_{ij}(\cal G_0,z)=(H_2-z)^{-1}
\end{align*}
where $H_2$ is the normalized $z$-dependent adjacency matrix obtained by attaching to
any extensible vertex $v$ in $\cG_0$ a self-loop with complex weight $-\msc(z)(d-g(v)-\deg_{\cal G_0}(v))/\sqrt{d-1}$.
\end{lemma}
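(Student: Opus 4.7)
The plan is to apply the Schur complement formula to the (possibly infinite) matrix $H_{\TE(\cG_0)} - z$, using the block decomposition induced by partitioning the vertex set of $\TE(\cG_0)$ into $\bG_0$ and its complement. By Definition~\ref{def:treeextension}, for each extensible vertex $v \in \bG_0$ the attached rooted tree has $v$ itself as its root, and $v$ has $k_v \deq d - g(v) - \deg_{\cG_0}(v)$ children in this attached tree; each such child is then the root of an infinite rooted $d$-regular subtree of root degree $d-1$, i.e., an isomorphic copy of the tree $\cY_0$ from Proposition~\ref{greentree}. These subtrees, over all extensible vertices and all their children, are pairwise disjoint.

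Writing the corresponding block decomposition as
\begin{equation*}
H_{\TE(\cG_0)} - z = \begin{pmatrix} A & B \\ B^{\top} & D \end{pmatrix}, \qquad A = H_{\cG_0} - z,
\end{equation*}
the operator $D - z$ is block diagonal, acting as $H_{\cY_0} - z$ on each subtree, while $B$ has entry $1/\sqrt{d-1}$ at position $(v, r)$ precisely when $r$ is one of the $k_v$ children of an extensible vertex $v$, and vanishes otherwise. By Proposition~\ref{greentree} the Green's function of $\cY_0$ at its root equals $\msc(z)$, so the self-energy is diagonal on $\bG_0$ with entries
\begin{equation*}
(B D^{-1} B^{\top})_{vv} = \frac{k_v\, \msc(z)}{d-1};
\end{equation*}
off-diagonal entries vanish because the subtrees at distinct roots are disjoint.

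By the Schur complement formula applied to the above block decomposition, $P_{ij}(\cG_0, z) = (A - B D^{-1} B^{\top})^{-1}_{ij}$ for $i, j \in \bG_0$. To identify this with $(H_2 - z)^{-1}_{ij}$, observe that a self-loop at $v$ with (unnormalized) adjacency weight $-\msc(z)\, k_v / \sqrt{d-1}$ contributes exactly $-\msc(z)\, k_v / (d-1)$ to the diagonal of the normalized matrix $H_2$. Hence $H_2 - z$ coincides with $A - B D^{-1} B^{\top}$, completing the proof.

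The only subtlety I anticipate is the need to justify the Schur complement identity when $D$ is infinite-dimensional. This is minor: since $\im(D - z) \geq (\im z)\, I > 0$, the operator $(D - z)^{-1}$ is bounded on $\ell^2$, so the standard block inversion identity can be verified directly; alternatively one truncates each subtree to a finite depth and passes to the limit, using Proposition~\ref{greentree} to control the root Green's function uniformly in the truncation.
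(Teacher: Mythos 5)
Your proof is correct and follows essentially the same route as the paper: the same block decomposition of $\TE(\cG_0)$ into $\bG_0$ and the disjoint copies of $\cY_0$ rooted at the children of extensible vertices, the Schur complement formula, and the identity $G_{xx}(\cY_0,z)=\msc(z)$ from Proposition~\ref{greentree} to evaluate the diagonal self-energy $k_v\msc(z)/(d-1)$. The only difference is that you explicitly flag (and resolve) the infinite-dimensional Schur complement issue, which the paper passes over silently; note also the small notational slip where you write $D-z$ after already absorbing $-z$ into the block $D$.
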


\begin{proof}
Let $\cG_1=\TE(\cG_0)$,
and denote the normalized adjacency matrix of $\cG_0$ and $\cG_1$ by $H_0$ and $H_1$ respectively. Then
$H_1$ has the block form
\begin{align*}
 H_1=\left[\begin{array}{cc}
            H_0 & B'\\
            B & D
           \end{array}
\right]
\end{align*}
where $D$ is the normalized adjacency matrix of several copies of $\cY_0$,
i.e.\ infinite $d$-regular tree with root degree $d-1$,
and $B_{xy}$ is $1/\sqrt{d-1}$ if $y$ is an extensible vertex of $\cG_0$ and $x$ the root of
one of the former copies of the tree $\cY_0$, and $B_{xy}=0$ otherwise.
By the Schur complement formula \eqref{e:Schur}, it follows that, for any $i,j\in \bG_0$,
\begin{align*}
 G_{ij}(\cal G_1,z)=(H_1-z)^{-1}_{ij}=(H_0-z-B'(D-z)^{-1}B)^{-1}_{ij}.
\end{align*}
Since $B'(D-z)^{-1}B$ is a diagonal matrix, indexed by the extensible vertices in $\cG_0$
(which are disjoint),
and since $B$ is normalized by $1/\sqrt{d-1}$, it follows from \eqref{e:Gtreemsc} with $x=y$ that
\begin{align*}
 (B'(D-z)^{-1}B)_{vv}=\msc{\bf 1}_{v \text{ is extensible}}\frac{d-g(v)-\deg_{\cal G_0}(v)}{d-1}
  .
\end{align*}
Thus $H_2=H_0-B'(D-z)^{-1}B$ and the claim of the lemma follows.
\end{proof}

As previously, we abbreviate $\cG_1 =\TE(\cG_0)$, and denote by $\cG_2$ the finite $z$-dependent graph with complex weight obtained by
attaching  at each extensible vertex $v$ of $\cG_0$ a self-loop with weight $-\msc(z)(d-g(v)-\deg_{\cG_0}(v))/\sqrt{d-1}$.
Moreover, to extend \eqref{e:boundPij} and \eqref{e:compatibility} from $g \equiv 0$ to $g \not\equiv 0$,
we denote by $\cG_0'$ the same graph as $\cG_0$ but with deficit function $g \equiv 0$,
by $\cG_1'=\TE(\cG_0')$ its tree extension,
and by $\cG_2'$ the finite $z$-dependent graph with complex weight obtained by
attaching  at each extensible vertex $v$ of $\cal G_0'$ a self-loop with weight $-\msc(z)(d-\deg_{\cG_0}(v))/\sqrt{d-1}$. We denote the normalized adjacency matrices of $\cG_2$ and $\cG_2'$ by $H_2$ and $H_2'$ respectively. 

\begin{proof}[Proof of \eqref{e:boundPij} for $g \not\equiv 0$]
By Lemma~\ref{lem:H2} and the case $g\equiv 0$, we have
\begin{align*}
  \Gamma' := \max_{i,j\in \bG_0}|G_{ij}(\cal G_2',z)|\left(|\msc|q^{\dist_{\cG_0}(i,j)}\right)^{-1}\leq 2^{\n+1}.
\end{align*}
Our goal is to estimate
\begin{align*}
  \Gamma := \max_{i,j\in \bG_0}|G_{ij}(\cal G_2,z)|\left(|\msc|q^{\dist_{\cG_0}(i,j)}\right)^{-1}.
\end{align*}
Notice that $H_2-H_2'$ is a diagonal matrix with entries
\begin{equation} \label{e:H2diag}
  (H_2-H_2')_{vv}=\frac{\msc g(v)}{d-1},\quad v\in \bG_0,
\end{equation}
and the resolvent formula \eqref{e:resolv} implies
\begin{align}{\label{reso}}
  G(\cG_2',z)_{ij}-G(\cG_2,z)_{ij}=\sum_{v\in \bG_0}G_{iv}(\cG_2',z)(H_2-H_2')_{vv}G(\cG_2,z)_{vj}.
\end{align}
By multiplying both sides of \eqref{reso} by $(|\msc|q^{\dist_{\cG_0}(i,j)})^{-1}$, we obtain
\begin{align*}
  |G_{ij}(\cG_2,z)|\left(|\msc|q^{\dist_{\cG_0}(i,j)}\right)^{-1}
  &\leq \Gamma'+\sum_{v\in \bG_0} \Gamma'\Gamma |(H_2-H_2')_{vv}||\msc|q^{\dist_{\cG_0}(i,v)+\dist_{\cG_0}(v,j)-\dist_{\cG_0}(i,j)}\\
  &\leq \Gamma'+\frac{1}{d-1}\Gamma'\Gamma\sum_{v\in \bG_0}g(v)
    \leq 2^{\n+1}+\frac{8\n 2^{\n+1} }{d-1}\Gamma\leq 2^{\n+1}+\Gamma/2,
\end{align*}
where the first inequality uses the triangle inequality $\dist_{\cG_0}(i,v)+\dist_{\cG_0}(v,j)-\dist_{\cG_0}(i,j)\geq 0$ and $q\leq 1$,
and the second and third inequalities follow from the assumptions $\sum g(v) \leq 8\n$, $\sqrt{d-1} \geq 2^{\n+2}$, and $\n\geq 6$.
By taking the maximum on the left-hand side of the above inequality and rearranging it, we get $\Gamma\leq 2^{\n+2}$.
\end{proof}

\begin{proof}[Proof of \eqref{e:compatibility} for $g\not\equiv 0$]
The extension to the case $g\not\equiv 0$ again follows by comparing to the case $g\equiv 0$.
We define $\cH_2$ and $\cH_2'$ analogously to $\cG_2'$ and $\cG_2'$.
Our goal now is to bound
\begin{align*}
\Gamma:=\max_{i,j\in \bH_0}|G_{ij}(\cal G_2)-G_{ij}(\cal H_2)|\left(|\msc|q^{\ell(i,j)+1}\right)^{-1}.
\end{align*}
The resolvent identity \eqref{e:resolv} and \eqref{e:H2diag} imply
\begin{align}
\label{Gquan}G_{ij}(\cal G_2')-G_{ij}(\cal G_2)&=\sum_{v\in \bG_0}G_{iv}(\cal G_2')\frac{\msc g(v)}{d-1}G_{vj}(\cal G_2),\\
\label{Hquan}G_{ij}(\cal H_2')-G_{ij}(\cal H_2)&=\sum_{v\in \bH_0}G_{iv}(\cal H_2')\frac{\msc g(v)}{d-1}G_{vj}(\cal H_2).
\end{align}
For vertices $i,j\in \bH_0$, set
\begin{align*}
\ell(i,j):= \max\{\ell: \text{all paths in $\cG_0$ from $i$ to $j$ of length $\leq \ell$ are contained in $\cal H_0$}\},
\end{align*}
and given any $v\in \bG_0$, we abbreviate $\ell=\ell(i,j)$, $\ell_1=\dist_{\cG_0}(i,v)$, $\ell_2=\dist_{\cG_0}(v,j)$.
To bound $\Gamma$, we distinguish two cases:
\begin{enumerate}
\item
$\ell_1+\ell_2\geq \ell+1$.
Then already \eqref{e:boundPij} implies
\begin{align*}
\left|G_{iv}(\cal G_2')\frac{\msc g(v)}{d-1}G_{vj}(\cal G_2)\right|,\left|G_{iv}(\cal H_2')\frac{\msc g(v)}{d-1}G_{vj}(\cal H_2)\right|\leq \frac{2^{2\n+4}g(v)}{d-1}|\msc|q^{\ell+1}.
\end{align*}
\item
$\ell_1+\ell_2\leq \ell$.
Then by assumption we must have $v\in \bH_0$, and $\ell(i,v)\geq \ell-\ell_2$ and $\ell(v,j)\geq \ell-\ell_1$.
Therefore, using the case $g \equiv 0$ for $|G_{iv}(\cG_2')-G_{iv}(\cH_2')|$ and \eqref{e:boundPij} for $|G_{vj}(\cG_2)|$ and $|G_{iv}(\cH_2')|$,
\begin{align*}
&\left|G_{iv}(\cal G_2')\frac{\msc g(v)}{d-1}G_{vj}(\cal G_2)-G_{iv}(\cal H_2')\frac{\msc g(v)}{d-1}G_{vj}(\cal H_2)\right|
\\
&\leq \frac{|m_{sc}|g(v)}{d-1}
\pB{|G_{iv}(\cal G_2')-G_{iv}(\cal H_2')||G_{vj}(\cal G_2)|+
|G_{iv}(\cal H_2')||G_{vj}(\cal G_2)-G_{vj}(\cal H_2)|}
\\
&\leq \frac{2^{\n+2}(\Gamma+2^{2\n+2})g(v)}{d-1}|\msc|q^{\ell+1}.
\end{align*}
\end{enumerate}
Taking the difference of \eqref{Gquan} and \eqref{Hquan},
dividing both sides by $|\msc|q^{\ell(i,j)+1}$, and then taking the maximum over $i,j\in \bH_0$,
this leads to
\begin{align*}
\Gamma\leq 2^{2\n+2}+\frac{2^{\n+2}(\Gamma+2^{2\n+2})}{d-1}\sum_{v\in \bG_0}g(v).
\end{align*}
Since by assumptions $\sum g(v) \leq 8\n$, $\sqrt{d-1} \geq 2^{\n+2}$ and $\n\geq 6$,
again rearranging the above expression, we get $\Gamma\leq 2^{2\n+3}$. This finishes the proof.
\end{proof}

\section{Initial estimates}\label{sec:initial}

As the first step of the proof of Theorem~\ref{thm:mr},
we now show that \eqref{e:locallaw} holds whenever $|z| \geq 2d-1$.
Indeed, the following proposition states that \eqref{e:locallaw} holds
deterministically for $|z| \geq 2d-1$ under the assumption that the graph
has locally bounded excess, which is guaranteed to hold with high probability by \eqref{e:structure1}.
(Related results appear in \cite{MR3025715}.)

\begin{proposition} \label{prop:betacomp}
Let $\n\geq 6$, $\sqrt{d-1}\geq 2^{\n+2}$ and $N\geq N_0(\n, d)$ large enough.
Let $\cG$ be a $d$-regular graph on $N$ vertices, with excess at most $\n$ in any radius-$R$ neighborhood.
Then for any $z \in \C_+$ with $|z|\geq 2d-1$, and any $i,j \in \bG$, the Green's function of $\cG$ satisfies
 \begin{align}\label{largedgreen}
  \left|G_{ij}(z)-P_{ij}(\cE_r(i,j,\cal G),z)\right|\leq \frac{1}{2}|\msc|q^{r}.
 \end{align}
\end{proposition}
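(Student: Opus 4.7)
The approach is to use the non-backtracking walk representation of the Green's function of a $d$-regular graph, derived from \eqref{tree-likeG}, and compare it with the corresponding representation on the tree extension $\cH':=\TE(\cE_r(i,j,\cG))$, which is itself an infinite $d$-regular graph since $\cG$ is $d$-regular. For $|z|\geq 2d-1$ one has $|\msc(z)|\leq 2/|z|\leq 2/(2d-1)$, so $(d-1)q(z)=\sqrt{d-1}\,|\msc(z)|\leq 2\sqrt{d-1}/(2d-1)<1$; together with the hypothesis $\sqrt{d-1}\geq 2^{\omega+2}$ this gives the absolute convergence of the expansions
\begin{equation*}
G_{ij}(\cG,z)=\md\sum_{k\geq 0} N_k(i,j,\cG)\Big({-}\tfrac{\msc}{\sqrt{d-1}}\Big)^{\!k},\quad P_{ij}(\cE_r(i,j,\cG),z)=\md\sum_{k\geq 0} N_k(i,j,\cH')\Big({-}\tfrac{\msc}{\sqrt{d-1}}\Big)^{\!k},
\end{equation*}
where $N_k(i,j,\cdot)$ denotes the number of non-backtracking walks of length $k$ from $i$ to $j$ in the indicated graph.

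A non-backtracking walk in $\cH'$ from $i$ to $j$ (both in $\cE_r(i,j,\cG)$) cannot enter any of the attached tree branches: once such a walk steps from some $v\in \cE_r(i,j,\cG)$ into the root of an attached tree, every non-backtracking continuation must go strictly deeper into that tree, so the walk can never return to $\cE_r(i,j,\cG)$ and reach $j$. Hence $N_k(i,j,\cH')=N_k(i,j,\cE_r(i,j,\cG))$. Setting $D_k:=N_k(i,j,\cG)-N_k(i,j,\cE_r(i,j,\cG))\geq 0$,
\begin{equation*}
G_{ij}(\cG,z)-P_{ij}(\cE_r(i,j,\cG),z)=\md\sum_{k\geq 0}D_k\Big({-}\tfrac{\msc}{\sqrt{d-1}}\Big)^{\!k}.
\end{equation*}

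The central combinatorial input, and what I expect to be the principal obstacle, is the vanishing $D_k=0$ for every $k\leq r$, i.e., every non-backtracking walk of length at most $r$ from $i$ to $j$ in $\cG$ traverses only edges of $\cE_r(i,j,\cG)$. Intuitively, a short non-backtracking walk in a $d$-regular graph of bounded local excess is essentially a simple path with only a few cycle excursions, so every edge of the walk should lie on a genuine simple path of length at most $r$ from $i$ to $j$. The argument should proceed by lifting the walk to the universal cover (where it becomes a geodesic) and applying a loop-erasure construction, combined with the combinatorial bounds of Proposition~\ref{prop:numberpath}, to realize any prescribed edge of the walk as an edge of some simple path of length at most $r$ from $i$ to $j$.

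Granting this, for $k>r$ Proposition~\ref{prop:numberpath}(ii) applied with $\cH=\cE_r(i,j,\cG)$ and $\ell=r$ (the hypothesis $\cE_\ell(i,j,\cG)\subset \cH$ holding trivially) gives $D_{r+n}\leq 2^{\omega(n+1)+1}$ for each $n\geq 1$. Combining with the elementary estimate $|\md|\leq 2|\msc|$ from \eqref{e:mdstieltjes}, one obtains
\begin{equation*}
|G_{ij}(\cG,z)-P_{ij}(\cE_r(i,j,\cG),z)|\leq 2|\msc|\cdot 2^{\omega+1}q^r\sum_{n\geq 1}(2^\omega q)^n\leq \frac{2^{2\omega+3}q}{1-2^\omega q}\,|\msc|q^r.
\end{equation*}
The hypotheses force $q\leq 2^{-3\omega-6}$, so $2^\omega q\leq 1/2$ and $2^{2\omega+3}q/(1-2^\omega q)\leq 2^{-\omega-2}\leq 1/2$ for $\omega\geq 6$, yielding the claim. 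Finally, in the degenerate case $\dist_\cG(i,j)>r$ one has $\cE_r(i,j,\cG)=\emptyset$ and $P_{ij}=0$; here Proposition~\ref{prop:numberpath}(i) together with \eqref{tree-likeG} gives $|G_{ij}(\cG,z)|\leq 2^{\omega+2}|\msc|q^{\dist_\cG(i,j)}\leq |\msc|q^r/2$ directly, since $\dist_\cG(i,j)>r$.
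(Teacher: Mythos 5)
Your route---expanding both Green's functions in non-backtracking walks via \eqref{tree-likeG} and comparing term by term---is genuinely different from the paper's proof, which instead establishes the deterministic Combes--Thomas decay $|G_{ij}(z)|\leq (d-1)^{-\dist_{\cG}(i,j)/2}$ for $|z|\geq 2d-1$ (Lemma~\ref{greendecay}), disposes of distant pairs directly, and for nearby $i,j$ compares $G$ with $P$ on a ball of radius $r_0=O(r)$ through the Schur complement formula, using the decay to control the boundary terms. Your expansion does converge absolutely on this spectral domain since $\sqrt{d-1}\,|\msc(z)|<1$, and your identification of non-backtracking walks in $\TE(\cE_r(i,j,\cG))$ with those in $\cE_r(i,j,\cG)$ is correct. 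Moreover, the step you single out as the principal obstacle, $D_k=0$ for $k\leq r$, is not one: by \eqref{e:Erdef}, as the paper itself uses it (see the proof of \eqref{e:compatibility}), $\cE_r(i,j,\cG)$ contains every edge traversed by any walk of length at most $r$ from $i$ to $j$, so every non-backtracking walk of length $\leq r$ lies entirely inside it by definition; no loop-erasure argument is required, and indeed one based on \emph{simple} paths would be delicate.

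The genuine gap is the invocation of Proposition~\ref{prop:numberpath}. Both parts of that proposition require the ambient graph to have excess at most $\n$ \emph{globally}, whereas the full $d$-regular graph $\cG$ on $N$ vertices has excess $N(d-2)/2+1$; the hypothesis of Proposition~\ref{prop:betacomp} only bounds the excess of radius-$R$ neighborhoods. Hence the bounds $D_{r+n}\leq 2^{\n(n+1)+1}$ and, in your final paragraph, $N_k(i,j,\cG)\leq 2^{\n k}$ are justified only for walks confined to one $R$-ball, i.e.\ for lengths $k\leq R$: a walk of length $k$ from $i$ stays in $\cB_k(i,\cG)$, and for $k\leq R$ one may apply Proposition~\ref{prop:numberpath} inside $\cB_R(i,\cG)$. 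For $k>R$ you must fall back on the trivial count $N_k\leq d(d-1)^{k-1}$, so the tail of the series is of size $O\bigl(|\msc|(\sqrt{d-1}\,|\msc|)^{R}\bigr)$; showing this is $\ll |\msc|q^{r}$ uses $\sqrt{d-1}\,|\msc|\leq 1/(2\sqrt{d-1})$ together with $R\gg r$, which is exactly where $N\geq N_0(\n,d)$ enters. The same split at scale $R$ is unavoidable in the case $\dist_{\cG}(i,j)>r$: for $\dist_{\cG}(i,j)$ just above $r$ the trivial count alone gives only $|G_{ij}|\lesssim |\msc|(\sqrt{d-1}\,|\msc|)^{\dist_{\cG}(i,j)}$, which is \emph{not} bounded by $|\msc|q^{r}/2$ when $|\msc|$ is of order $1/(d-1)$, so the local excess bound is genuinely needed there, and beyond distance $R$ it is unavailable. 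With this two-scale splitting the argument closes, but as written the key combinatorial estimates are applied outside their hypotheses.
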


\subsection{Proof of Proposition~\ref{prop:betacomp}}

To prove Proposition~\ref{prop:betacomp}, we need an upper bound on the entries of the Green's function.
It can be obtained, for example, by the Combes--Thomas method \cite{MR0391792}.

\begin{lemma}\label{greendecay}
For any finite simple graph $\cal G$ with degree bounded by $d$, 
and any $z$ with $|z|\geq 2d-1$,
\begin{equation}\label{betabound}
 |G_{ij}(z)| \leq \frac{1}{d}, \qquad
 |G_{ij}(z)|\leq \frac{1}{(d-1)^{ \dist_\cal G(i,j)/2}}.
\end{equation}
\end{lemma}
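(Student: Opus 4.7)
The plan is to prove both bounds simultaneously by induction on the number of vertices $N = |\bG|$. The base case $N = 1$ is immediate from $G_{ii} = -1/z$ together with $|z| \geq 2d-1 \geq d$.

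For the inductive step, fix a vertex $i$ and apply the Schur complement formula (cf.\ Appendix~\ref{app:Green}), which gives
\begin{equation*}
  G_{ii} \;=\; \frac{-1}{z + (d-1)^{-1} \sum_{j,k \in \partial i} G^{(i)}_{jk}},
  \qquad
  G_{ij} \;=\; -\frac{G_{ii}}{\sqrt{d-1}} \sum_{k \in \partial i} G^{(i)}_{kj} \quad (j\neq i),
\end{equation*}
where $G^{(i)}$ is the Green's function of the graph $\cG^{(i)}$ on $N-1$ vertices, still of maximum degree $d$. The inductive hypothesis gives $|G^{(i)}_{jk}| \leq 1/d$, whence $|\sum_{j,k \in \partial i} G^{(i)}_{jk}| \leq |\partial i|^2 / d \leq d$. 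Since $|z| \geq 2d-1$, this yields $|G_{ii}| \leq 1/(|z|- d/(d-1)) \leq 1/d$, the last inequality reducing to the elementary algebraic check $(d-1)^2 \geq d$ (valid in the regime of the paper, where $d$ is large). Substituting back into the off-diagonal identity immediately produces $|G_{ij}| \leq (d-1)^{-1/2}/d \leq 1/d$, which closes the induction for the first bound.

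The decay bound comes out of the same Schur identities with a distance bookkeeping. For $n \deq \dist_{\cG}(i,j) \geq 1$ and any $k \in \partial i$, every path from $k$ to $j$ in $\cG^{(i)}$ combines with the edge $\{i,k\}$ into a path from $i$ to $j$ of length $1+\dist_{\cG^{(i)}}(k,j)$ in $\cG$; hence $\dist_{\cG^{(i)}}(k,j) \geq n-1$ (with $G^{(i)}_{kj} = 0$ if $k$ and $j$ lie in different components of $\cG^{(i)}$). The inductive hypothesis on $\cG^{(i)}$ then supplies $|G^{(i)}_{kj}| \leq (d-1)^{-(n-1)/2}$, and the off-diagonal Schur identity collapses the factors into
\[
  |G_{ij}| \;\leq\; \frac{1}{\sqrt{d-1}}\cdot \frac{1}{d} \cdot d \cdot (d-1)^{-(n-1)/2} \;=\; (d-1)^{-n/2},
\]
which is the claimed estimate.

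The main point to verify is that the numerical constants line up: the factor $|\partial i| \leq d$ coming from summing over neighbours cancels against the $(d-1)^{-1/2}$ normalization in $H$, and the hypothesis $|z| \geq 2d-1$ supplies exactly enough room for the $1/d$ diagonal bound to reproduce itself in the induction. As a cross-check, the decay bound can alternatively be obtained by a Combes--Thomas conjugation with $U_\mu = \mathrm{diag}(e^{-\mu\,\dist_\cG(v,i)})$ and $\mu = \tfrac{1}{2}\log(d-1)$, which yields the exponential decay in one shot with denominator $|z|-d \geq d-1$; this approach delivers only the weaker diagonal bound $1/(d-1)$, so the Schur-induction above is necessary to sharpen the constant to $1/d$.
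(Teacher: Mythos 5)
Your proof is correct, but it follows a genuinely different route from the paper. The paper proves the first bound in one line from the spectral inclusion $\mathrm{spec}(H)\subset[-d/\sqrt{d-1},\,d/\sqrt{d-1}]$, and the decay bound by a Combes--Thomas conjugation $H\mapsto MHM^{-1}$ with $M_{jj}=e^{\tau\,\dist(i,j)}$, $\tau=\tfrac12\log(d-1)$, bounding $\|MHM^{-1}\|_{2\to2}\le d$ via the $\ell^1$--$\ell^\infty$ interpolation (Schur test). You instead run a single induction on the number of vertices through the Schur complement identities $G_{ii}=-\bigl(z+(d-1)^{-1}\sum_{j,k\in\partial i}G^{(i)}_{jk}\bigr)^{-1}$ and $G_{ij}=-G_{ii}(d-1)^{-1/2}\sum_{k\in\partial i}G^{(i)}_{kj}$, which delivers both bounds at once; the bookkeeping checks out ($|\partial i|^2/d\le d$ for the diagonal, $\dist_{\cG^{(i)}}(k,j)\ge\dist_\cG(i,j)-1$ plus the factor $(d-1)^{-1/2}$ for the decay), and the arithmetic constraints $(d-1)^2\ge d$ and $\sqrt{d-1}\ge 1$ are harmless in the paper's regime. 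The paper's argument is shorter and does not need the graph to be finite in any essential way beyond well-definedness, while yours is more self-contained (it reuses only the Schur complement formula from Appendix~\ref{app:Green}) and is the same mechanism by which the paper computes the tree Green's function in Proposition~\ref{greentree}; your closing remark correctly identifies why the conjugated Combes--Thomas bound alone would only give the weaker diagonal constant $1/(d-1)$, which is precisely why the paper proves the two inequalities by two separate arguments.
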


\begin{proof}
We denote the normalized adjacency matrix of $\cG$ by $H$ (where we recall that the normalization of the entries is always by $1/\sqrt{d-1}$).
The first bound in \eqref{betabound} is immediate since
the spectrum of $H$ is contained
in $[-d/\sqrt{d-1},d/\sqrt{d-1}]$, which implies that
\begin{equation}
  |G_{ij}(z)| \leq \frac{1}{|z|-d/\sqrt{d-1}} \leq \frac{1}{d}.
\end{equation}
To show the second bound, set $\tau = \frac12 \log (d-1)$.
Fix a vertex $i$, and define the diagonal matrix $M$ by
\begin{align*}
  M_{jj}=\exp\{\tau \dist_\cal G(i,j)\}.
\end{align*}
Then we have
\begin{align*}
  G_{ij}e^{\tau \dist_\cal G(i,j)}
  =\langle \delta_j, MGM^{-1}\delta_i\rangle
  =\langle \delta_j,(MHM^{-1}-z)^{-1}\delta_i\rangle
  .
\end{align*}
The entries of the matrix $MHM^{-1}$ are given by
\begin{align*}
  (MHM^{-1})_{xy}=e^{\tau(\dist_\cal G(i,x)-\dist_\cal G(i,y))}H_{xy}.
\end{align*}
If $H_{xy}\neq 0$, then $|\dist_\cal G(i,x)-\dist_\cal G(i,y)|\leq 1$, and 
\begin{align*}
  \max_x \sum_y |(MHM^{-1})_{xy}|\leq de^\tau/\sqrt{d-1}\leq d,\\
  \max_y \sum_x |(MHM^{-1})_{xy}|\leq de^\tau/\sqrt{d-1}\leq d.
\end{align*}
Therefore $\|MHM^{-1}\|_{\infty\to \infty}$ and $\|MHM^{-1}\|_{1\to 1}$ are bounded by $d$,
and by interpolation 
\begin{align*}
 \|MHM^{-1}\|_{2\to 2} \leq \sqrt{\|MHM^{-1}\|_{1\to 1}\|MHM^{-1}\|_{\infty\to\infty}}\leq d.
\end{align*}
Therefore, the spectrum of $MHM^{-1}$ is contained in the set $\left\{z \in \C: |z|\leq d\right\}$.
In particular, for $z$ such that $|z|\geq 2d-1$, its distance to the spectrum of $MHM^{-1}$ is at least $1$,
and thus
 \begin{align*}
  |G_{ij}e^{\tau\dist_\cal G(i,j)}|=|\langle\delta_j,(MHM^{-1}-z)^{-1}\delta_i\rangle|\leq 1,
 \end{align*}
which implies \eqref{betabound}.
This completes the proof.
\end{proof}

\begin{proof}[Proof of Proposition~\ref{prop:betacomp}]
Let $r_0:=\lceil r+1-2(r+2)\log_{d-1}|\msc|\rceil=O(r)$.
Then, for vertices $i,j$ such that $\dist_\cal G(i,j)\geq r_0$, Lemma~\ref{greendecay} implies
\begin{align*}
|G_{ij}(z)|\leq \frac{1}{(d-1)^{r_0/2}}\leq |\msc|q^{r+1},
\end{align*}
and in particular \eqref{largedgreen} follows since $q \leq 1/\sqrt{d-1} \leq 1/2$
and  $P_{ij}(\cE_r(i,j,\cG))=0$.

Thus we can assume $\dist_\cG(i,j)< r_0$.
Let $\cG_0:=\cal B_{r_0+r}(i, \cG)$, 
let $\cG_1 = \TE(\cG_0)$ be the tree extension of $\cG_0$,
and let $P$ be the Green's function of $\cG_1$.
Then, by \eqref{e:compatibility}, we have
\begin{equation}  \label{e:initial-Papprox}
|P_{ij}-P_{ij}(\cE_r(i,j,\cG))|\leq 2^{2\n+3}|m_{sc}|q^{r+1}.
\end{equation}
Therefore it suffices to prove the claim with $P_{ij}(\cE_r(i,j,\cG))$ replaced by $P_{ij}$, and an additional factor $1/2$
on the right-hand side.
Let $\T_0:= \bB_{r_0}(i,\cG)$ 
and $\partial\T_0 = \{v\in \cal G: \dist_{\cal G}(v,\T_0)=1\}$.  
By the Schur complement formula \eqref{e:Schur1},
\begin{align*}
G|_{\T_0}=(H-z-B'G^{(\T_0)} B)^{-1},\\
P|_{\T_0}=(H-z-B'P^{(\T_0)}B)^{-1},
\end{align*}
where $H$ is the normalized adjacency matrix on $\T_0$ induced by $\cG$
and $B$ is the part of the adjacency matrix of the edges from $\del \T_0$ to $\T_0$.
Taking the difference of the last two equations, for any $i,j\in \T_0$,
\begin{align*}
|(G-P)_{ij}|
\leq& \sum_{x,y\in \del \T_0}|(PB')_{ix}|\pa{|G^{(\T_0)}_{xy}|+|P^{(\T_0)}_{xy}|}|(BG)_{yj}|.
\end{align*}
Since the radius-$R$ neighborhood of $i$ has excess at most $\n$, each row of $B$ contains at most $\n+1$ nonzero entries.
Therefore, by \eqref{e:boundPij}, Lemma~\ref{greendecay}, and noticing that $\dist_{\cG}(i,x)\geq r_0+1$ and $\dist_{\cG}(y,j)\geq r_0+1-\dist_{\cG}(i,j)$, we have
\begin{align*}
|(PB')_{ix}|\leq 2^{\n+2}(\n+1)q^{r_0+1}, \quad |(BG)_{yj}|\leq \frac{\n+1}{(d-1)^{(r_0+1-\dist_\cG(i,j))/2}},
\end{align*}
where we recall the definition $q=|\msc|/\sqrt{d-1}$.
Moreover, 
it follows from \eqref{fewclose}
that 
\begin{align*}
|\{x\in \del \T_0: \dist_{\cG^{(\T_0)}}(x, \del \T_0\setminus \{x\})\leq R/2\}|\leq 2\n,
\end{align*}
using that $R>2r_0$.
Therefore, by the second bound of \eqref{betabound}, $|G^{(\T_0)}_{xy}|\leq (d-1)^{-R/4}$
for all $x,y\in \del \T_0$ except for the diagonal entries and at most $4\n^2$ off-diagonal entries.
By the first bound of \eqref{betabound}, for these remaining entries we have
$|G^{(\T_0)}_{xy}| \leq 1/d$. The same bounds hold for $P^{(\T_0)}$ instead of $G^{(\T_0)}$.
As a result, we obtain
\begin{align*}
|(G-P)_{ij}|
&\leq \frac{2^{\n+2}(\n+1)^2q^{r_0+1}}{(d-1)^{(r_0-\dist_\cal G(i,j)+1)/2}} \sum_{x,y\in \del\T_0}\pa{|(G^{(\T_0)}_{xy}| + |P^{(\T_0)})_{xy}|}\\
&\leq \frac{2^{\n+3}(\n+1)^2|\msc|^{r_0+1}}{(d-1)^{r_0+1-\dist_\cal G(i,j)/2}} \left(\frac{|\del \T_0|+4\n^2}{d}+\frac{|\del\T_0|^2}{(d-1)^{R/4}}\right).
\end{align*}
Using that $|\del \T_0| \leq d(d-1)^{r_0}$, 
that $|\msc|\leq 1/d$ for $|z|\geq 2d-1$,
that $d-1\geq 2(\n+1)$,
as well as that $R>4r_0$,
the right-hand side is bounded by
\begin{align*}
|(G-P)_{ij}|
&\leq \frac{2^{\n+3}(\n+1)^2|\msc|^{r_0+1}}{(d-1)^{r_0+1-\dist_\cal G(i,j)/2}} \left(\frac{d(d-1)^{r_0}+4\n^2}{d}+\frac{d^2(d-1)^{2r_0}}{(d-1)^{R/4}}\right)\\
&\leq \frac{2^{\n+4}(\n+1)^2}{(d-1)^{r_0+2-\dist_\cal G(i,j)/2}}\leq \frac{2^{\n+2}}{(d-1)^{r_0/2}}\leq 2^{\n+2}|\msc|q^{r+1},
\end{align*}
where we used that $\dist_{\cG}(i,j)<r_0$.
Together with \eqref{e:initial-Papprox}, we conclude that
\begin{align}
\left|G_{ij}(z)-P_{ij}(\cE_r(i,j,\cal G),z)\right|
\leq
(2^{2\n+3}+2^{\n+2})|\msc|q^{r+1}
\leq
\frac{1}{2}|\msc|q^{r},
\end{align}
where the last inequality follows from $q=|\msc|/\sqrt{d-1}\leq 2^{-3(\n+2)}$,
using that $\sqrt{d-1}\geq 2^{\n+2}$.
\end{proof}

\section{Local resampling by switching}
\label{sec:switch}

In this section, we define a \emph{local resampling} of a random regular graph by using switchings.
We effectively resample the edges on the boundary of balls of radius $\ell$,
by switching them with random edges from the remainder of the graph.
This resampling generalizes the local resampling introduced in \cite{1503.08702-cpam},
where switchings were used to resample the neighbors of a vertex (corresponding to $\ell=0$).
The local resampling provides an effective access to the randomness of the random regular graph,
which is fundamental for the remainder of the paper.

\subsection{Definitions}

To introduce the local resampling, we require some definitions.

\paragraph{Graphs and edges}

We consider simple $d$-regular graphs on vertex set $\qq{N}$
and identify such graphs with their sets of edges throughout this section.
(Deficit functions do not play a role in this section.)
For any graph $\cG$, we denote the set of unoriented edges by $E$,
and the set of oriented edges by $\vec{E}:=\{(u,v),(v,u):\{u,v\}\in E\}$.
For a subset $\vec{S}\subset \vec{E}$, we denote by $S$ the set of corresponding non-oriented edges.
For a subset $S\subset E$ of edges we denote by $[S] \subset \qq{N}$ the set of vertices incident to any edge in $S$.
Moreover, for a subset $\bV\subset\qq{N}$ of vertices, we define $E|_{\bV}$ to be the subgraph of $\cG$ induced on $\bV$.

\begin{figure}[h]
\centering
\input{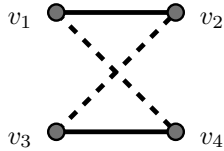}
\caption{
The switching encoded by the two directed edges $\vec S=\{(v_1, v_2), (v_3, v_4)\}$
replaces the unoriented edges $\{v_1,v_2\}, \{v_3,v_4\}$ by $\{v_1,v_4\},\{v_2,v_3\}$.
\label{fig:switching}}
\end{figure}

\paragraph{Switchings}

A (simple) switching is encoded by a pair of oriented edges $\vec S=\{(v_1, v_2), (v_3, v_4)\} \subset \vec{E}$.
We assume that the two edges are disjoint, i.e.\ that $|\{v_1,v_2,v_3,v_4\}|=4$.
Then the switching consists of
replacing the edges $\{v_1,v_2\}, \{v_3,v_4\}$ by the edges $\{v_1,v_4\},\{v_2,v_3\}$,
as illustrated in Figure~\ref{fig:switching}.
We denote the graph after the switching $\vec S$ by $T_{\vec S}(\cG)$,
and the new edges $\vec S' = \{(v_1,v_4), (v_2,v_3)\}$ by
\begin{equation}
  T(\vec S) = \vec S'.
\end{equation}
(Double switchings, which we used in \cite{1503.08702-cpam}, are not needed in this paper;
henceforth we will therefore refer to simple switchings as switchings.)

\paragraph{Resampling data}

\begin{figure}[t]
\centering
\input{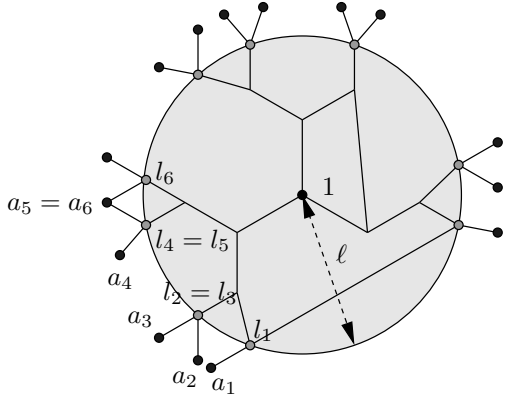}
\caption{
The figure illustrates the neighborhood $\cT = \cB_\ell(1,\cG)$ (within the shaded area)
and its edge boundary $\del_E \cT$, consisting of the edges $e_i=\{l_i,a_i\}$, $1\leq i\leq \mu$.
Our local resampling switches the switchable edges $e_i$ (corresponding to $i\in W_{\bf S}$)
with randomly chosen edges from the remainder of the graph (not shown).
Several exceptional cases can occur.
In particular, the vertices $a_i$ are not necessarily distinct (e.g., $a_5=a_6$ in the figure),
and the boundary vertices $l_i$ may have different degrees in the graph obtained by removing the set $\cT$
(e.g., $l_1$ has only one outgoing edge in the figure, while most of the other $l_i$ has two outgoing edges).
\label{fig:resample1}}
\end{figure}

Our local resampling involves a center vertex, which by symmetry we now assume to be $1$,
and a radius $\ell$.
Given a $d$-regular graph $\cG$, we abbreviate $\bT=\bB_\ell(1, \cG)$ and $\cT=\cB_{\ell}(1,\cG)$.
The edge boundary $\del_E \cT$ of $\cT$ consists of the edges in $\cG$ with one vertex in $\T$ and the other vertex in $\qq{N}\setminus\T$,
as illustrated in Figure~\ref{fig:resample1}.
Our local resampling switches the edge boundary of $\cT$ with randomly chosen edges in $\cGT$
if the switching is admissible (see below), and leaves them in place otherwise.

To be precise, given a graph $\cG$, 
we enumerate $\del_E \cT$ as $ \del_E \cT = \{e_1,e_2,\dots, e_\mu\}$,
and orient the edges $e_i$ by defining $\vec{e}_i$ to have the same vertices as $e_i$
and to be directed from a vertex $l_i \in \T$ to a vertex $a_i \in \qq{N} \setminus \T$.
The directed edges $\vec{e}_i = (l_i,a_i)$ are illustrated in Figure~\ref{fig:resample1}.
Note that $\mu$ and the edges $e_1, \dots, e_\mu$ depend on $\cG$.

Then we choose $(b_1,c_1), \dots, (b_\mu,c_\mu)$ to be independent, uniformly chosen oriented edges from the graph $\cGT$, i.e.,
the edges of $\cG$ that are not incident to $\T$,
and define 
\begin{equation}
  \vec{S}_i = \{\vec{e}_i, (b_i,c_i)\},
  \qquad
  {\bf S}=(\vec S_1, \vec S_2,\dots, \vec S_\mu).
\end{equation}
The sets $\bf S$ will be called the \emph{resampling data} for $\cG$.
By definition, the edges $e_i$ are distinct, but
the vertices $a_i$ are not necessarily distinct and neither are the vertices $l_i$.

\paragraph{Admissible switchings}

For $i\in\qq{1,\mu}$,
we define the indicator functions
\begin{align*}
I_i &\equiv I_i(\cG,{\bf S}):={\bf 1}(|[S_{i}]|=4, E|_{[S_{i}]}=S_{i}),\\
J_i &\equiv J_i(\cG,{\bf S}):={\bf 1}(|[S_i]\cap [S_j]|\leq 1 \text{ for all } j\neq i),
\end{align*}
and the set of \emph{admissible switchings}
\begin{align}\label{Wdef}
W_{\bf S}\equiv W(\cG,{\bf S}):=\{i\in \qq{1,\mu}: I_i(\cG,{\bf S}) J_i(\cG,{\bf S})=1\}.
\end{align}
The interpretation of $I_i=1$ is that the graph $E|_{[S_i]}$ is $1$-regular.
The interpretation of $J_i=1$ is that the edges of $S_i$ do not interfere with the edges of any other $S_j$.
Indeed, the condition $|[S_i] \cap [S_j]| \leq 1$ guarantees that 
the switchings encoded by $\vec S_i$ and $\vec S_j$ do not influence each other,
meaning that $T_{\vec S_i}$ and $T_{\vec S_j}$ commute. 
We say that the index $i \in \qq{1,\mu}$ is \emph{admissible} or \emph{switchable} if $i\in W_{\bf S}$.

Let $\nu:=|W_{\bf S}|$ be the number of admissible switchings and $i_1,i_2,\dots, i_{\nu}$
be an arbitrary enumeration of $W_{\bf S}$.
Then we define the switched graph by
\begin{equation} \label{e:Tdef1}
T_{\bf S}(\cG) := \left(T_{\vec S_{i_1}}\circ \cdots \circ T_{\vec S_{i_\nu}}\right)(\cG)
\end{equation}
and the switching data by
\begin{equation} \label{e:Tdef2}
  T({\bf S}) := (T_i(\vec{S_i}), \dots, T_\mu(\vec{S_\mu})), 
  \quad
  T_i(\vec{S}_i) =
  \begin{cases}
    T(\vec{S}_i) & (i \in W_{\bf S})\\
    \vec{S}_i & (i \not\in W_{\bf S}).
  \end{cases}
\end{equation}

\subsection{Reversibility}\label{sec:rev}

To make the structure more clear, we introduce an enlarged probability space.
Equivalently to the definition above, the sets $\vec{S}_i$ are uniformly distributed over 
\begin{align*}
{\sf S}_{i}(\cG)=\{\vec S\subset \vec{E}: \vec S=\{\vec e_i, \vec e\}, \text{$\vec{e}$ is not incident to $\T$}\},
\end{align*}
i.e., the set of pairs of oriented edges in $\vec{E}$ containing $\vec{e}_i$ and another oriented edge in $\cGT$.
Therefore ${\bf S}=(\vec S_1,\vec S_2,\dots, \vec S_\mu)$ is uniformly distributed over the set
${\sf S}(\cG)=\sf S_1(\cG)\times \cdots \times \sf S_\mu(\cG)$.

\begin{definition}
For any graph $\cG\in \GNd$,  denote by $\iota(\cG) = \{\cG\} \times \sf S(\cG)$ the \emph{fibre}
of local resamplings of $\cG$ (with respect to vertex $1$),
and define the enlarged probability space
\begin{align*}
\GNdp = \iota(\GNd) = \bigsqcup_{\cG\in \GNd}\iota(\cG)
\end{align*}
with the probability measure $\Pp(\cG, {\bf S}):= \P(\cG)\P_{\cG}({\bf S}) = (1/|\GNd|)(1/|\sS(\cG)|)$
for any $(\cG, {\bf S})\in  \GNdp$.
Here $\P(\cG)=1/|\GNd|$ is the uniform probability measure on $\GNd$,
and for $\cG\in\GNd$, we denote the uniform probability measure on $\sS(\cG)$ by $\P_{\cG}$.
\end{definition}

Let $\pi: \GNdp \to \GNd$, $(\cG,{\bf S}) \mapsto \cG$ be the canonical projection onto the first component.
\begin{proposition}
$\pi$ is measure preserving: $\P = \Pp \circ \pi^{-1}$.
\end{proposition}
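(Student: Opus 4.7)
The statement is essentially immediate from the way $\tilde \P$ has been set up, so my plan is just to unwind the definitions. The fibre $\pi^{-1}(\cG)$ is exactly $\{\cG\} \times \sS(\cG)$ by construction, so the pushforward measure assigns to $\cG \in \GNd$ the total $\tilde\P$-mass of this fibre.

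Concretely, I would compute, for each $\cG \in \GNd$,
\begin{equation*}
(\tilde\P \circ \pi^{-1})(\cG)
= \tilde\P\bigl(\{\cG\} \times \sS(\cG)\bigr)
= \sum_{{\bf S} \in \sS(\cG)} \P(\cG)\, \P_\cG({\bf S})
= \P(\cG) \sum_{{\bf S} \in \sS(\cG)} \P_\cG({\bf S}) = \P(\cG),
\end{equation*}
where the last equality uses that $\P_\cG$ is the uniform probability measure on $\sS(\cG)$, and hence a probability measure. Extending this identity by additivity from singletons to arbitrary subsets $\cA \subset \GNd$ (the space is finite, so no measure-theoretic subtlety arises) yields $\tilde\P(\pi^{-1}(\cA)) = \P(\cA)$, which is precisely the claim.

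There is no real obstacle here; the only thing to be careful about is that $\sS(\cG)$ genuinely is the fibre $\pi^{-1}(\cG)$, which follows directly from the disjoint union decomposition $\GNdp = \bigsqcup_{\cG \in \GNd} \iota(\cG)$ stated in the definition. Thus the proposition is a bookkeeping statement asserting that the conditional-marginal factorization $\tilde\P(\cG,{\bf S}) = \P(\cG)\P_\cG({\bf S})$ has $\P$ as its first marginal, which is automatic.
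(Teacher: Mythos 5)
Your proof is correct and follows essentially the same route as the paper's: identify the fibre $\pi^{-1}(\cG)=\iota(\cG)=\{\cG\}\times\sS(\cG)$, sum $\Pp(\cG,{\bf S})=\P(\cG)/|\sS(\cG)|$ over the fibre, and conclude since $\P_\cG$ is a probability measure. The additional remark about extending from singletons by additivity is harmless but not needed on a finite space.
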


\begin{proof}
Note that $\pi^{-1}(\cG) = \iota(\cG)$. Therefore
\begin{equation}
  \Pp(\pi^{-1}(\cG)) = \Pp(\iota(\cG)) = \sum_{{\bf S} \in \sS(\cG)} \Pp(\cG,{\bf S}) = \P(\cG) \sum_{{\bf S} \in \sS(\cG)} \frac{1}{|\sS(\cG)|} = \P(\cG),
\end{equation}
as claimed.
\end{proof}

On the enlarged probability space, we define the maps
\begin{alignat}{2}
\label{e:Ttildedef}
\tilde T &: \GNdp \to \GNdp, &\quad
\tilde T(\cal G, {\bf S}) &:= (T_{\bf S}(\cal G), T({\bf S})),
\\
\label{e:Tdef}
T &: \GNdp \to \GNd, &\quad
T(\cal G, {\bf S}) &:= \pi(\tilde T(\cG,{\bf S})) = T_{\bf S}(\cal G).
\end{alignat}
For the statement of the next proposition,
recall that $\GNd$ denotes the set of simple $d$-regular graph on $\qq{N}$.
For any finite graph $\cT$ on a subset of $\qq{N}$,
we define $\GNd(\cT):=\{\cG\in \GNd: \cB_\ell(1,\cG)=\cT\}$
to be the set of $d$-regular graphs whose radius-$\ell$ neighborhood of the vertex $i$ in $\cG$ is $\cT$.

\begin{proposition} \label{prop:reverse}
For any graph $\cT$, we have
\begin{equation}\label{e:tildeTT}
  \tilde T(\iota(\GNd(\cT))) \subset \iota(\GNd(\cT)),
\end{equation}
and $\tilde T$ is an involution: $\tilde T \circ \tilde T = \id$.
\end{proposition}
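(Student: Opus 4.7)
My plan is to verify the two assertions separately.

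\textbf{Step 1: stability of the fibre \eqref{e:tildeTT}.} Fix $(\cG, {\bf S}) \in \iota(\GNd(\cT))$. I will first check that $T_{\bf S}(\cG)$ is a simple $d$-regular graph with $\cB_\ell(1, T_{\bf S}(\cG)) = \cT$. The $d$-regularity is immediate because each simple switching preserves the degree of every vertex. Simplicity follows from the admissibility conditions built into $W_{\bf S}$: the indicator $I_i = 1$ forces the four vertices of $[S_i]$ to be distinct and rules out the new edges $\{l_i, c_i\}, \{a_i, b_i\}$ being already present in $\cG$, so no self-loops or multi-edges are created; the indicator $J_i = 1$ ensures that any two admissible switchings act on essentially disjoint edge sets and hence commute, so the composition in \eqref{e:Tdef1} is unambiguous and independent of the chosen enumeration of $W_{\bf S}$. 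The crucial observation for the neighborhood identity is that every edge added or removed by the local resampling has at least one endpoint in $\qq{N}\setminus\T$; within $\T$ only vertices in the outer shell $\T_\ell$ are touched, and their neighbors \emph{within} $\T$ are preserved. Consequently the induced subgraph on $\T$ coincides with $\cT$, every vertex of $\T$ keeps its distance $\leq \ell$ to $1$, and any path in $T_{\bf S}(\cG)$ from $1$ to a vertex $v\notin\T$ must leave $\T$ through some $l_j\in\T_\ell$, giving $\dist_{T_{\bf S}(\cG)}(1,v)\geq \ell+1$. It remains to see that $T({\bf S}) \in \sS(T_{\bf S}(\cG))$: the $i$-th boundary edge of $\cT$ in $T_{\bf S}(\cG)$ is $\{l_i, c_i\}$ when $i\in W_{\bf S}$ and $\{l_i, a_i\}$ otherwise (so the canonical enumeration transports through the switching), and the remaining oriented edge of each $T_i(\vec S_i)$ has both endpoints outside $\T$ by construction.

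\textbf{Step 2: involution.} Write $(\cG', {\bf S}') = \tilde T(\cG, {\bf S})$. I will verify that $W_{{\bf S}'}(\cG') = W_{\bf S}(\cG)$ and that $T_i(T_i(\vec S_i)) = \vec S_i$ for every $i\in \qq{1,\mu}$. For the first, the indicators $I_i, J_i$ depend only on the vertex set $[S_i]$ (unchanged by the switching) and on the presence or absence of certain edges among and incident to these four vertices; since the rule $\{v_1,v_2\},\{v_3,v_4\} \leftrightarrow \{v_1,v_4\},\{v_2,v_3\}$ is manifestly symmetric between the two sides, one has $I_i(\cG')=1$ iff $I_i(\cG)=1$, and similarly $J_i$ is preserved, so $W_{\bf S}(\cG) = W_{{\bf S}'}(\cG')$. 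For the second, unswitchable $i$ contribute trivially since $T_i(\vec S_i)=\vec S_i$ by \eqref{e:Tdef2}. For switchable $i$ the identity reduces to the observation that two consecutive applications of the switching rule, combined with the convention that the first oriented edge of each $T_i(\vec S_i)$ is the boundary edge of $\cT$ oriented outward from $\T$, return each pair of oriented edges to its initial state; this is a direct symbolic check using the explicit form of \eqref{e:Tdef2}. Together these give $T_{T({\bf S})}(T_{\bf S}(\cG))=\cG$ and $T(T({\bf S}))={\bf S}$, i.e., $\tilde T\circ\tilde T = \id$.

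\textbf{Expected obstacle.} The only delicate point is bookkeeping the orientations of the non-boundary oriented edges under the second application of $T$, so as to obtain $T({\bf S}')={\bf S}$ exactly as oriented tuples and not merely as unoriented edge pairs. This reduces to verifying that the orientation convention encoded by \eqref{e:Tdef2} is compatible with the canonical outward-pointing orientation of boundary edges of $\cT$ in both $\cG$ and $\cG'$, so that re-labelling $(v_1, v_2, v_3, v_4)$ in the switched pair is forced to be the exact inverse of the original labelling.
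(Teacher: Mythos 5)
Your proof is correct and follows essentially the same route as the paper's: the fibre stability \eqref{e:tildeTT} is by construction (you supply the details the paper leaves implicit), and the involution rests on showing that the admissible set $W_{\bf S}$ is unchanged under the resampling — precisely the paper's argument that $[T_i(S_i)]=[S_i]$ preserves $J_i$, that unswitched $S_i$ leave $E|_{[S_i]}$ untouched, and that switched $S_i$ yield an induced subgraph on $[S_i]$ that is again $1$-regular. The orientation issue you flag at the end is a genuine subtlety that the literal formula \eqref{e:Tdef2} also glosses over (applying the pairing rule "first-with-fourth, second-with-third" twice to the tuple $\{(v_1,v_4),(v_2,v_3)\}$ produces the third perfect matching $\{v_1,v_3\},\{v_2,v_4\}$ rather than the original one); it is resolved exactly as you suggest, by fixing the convention that the new boundary edge $(l_i,c_i)$ is listed first and the companion oriented edge is labelled so that the pairing applied to $T_i(\vec S_i)$ restores $\{l_i,a_i\},\{b_i,c_i\}$ — equivalently, by checking the involution at the level of the graph operation $T_{\vec S_i}$ (remove two edges, add the two cross edges), where it is immediate.
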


\begin{proof}
The first claim is obvious by construction.
To verify that $\tilde T$ is an involution,
let $(\cG, {\bf S}) \in \GNdp$ and abbreviate $(\tcG, \td {\bf S}) = \tilde T(\cG, {\bf S})$.
Then, due to \eqref{e:tildeTT}, the edge boundaries of the $\ell$-neighborhoods of $1$ have the same number of edges $\mu$
in $\tcG$ and $\cG$.
Moreover, we can choose the (arbitrary) enumeration of the boundary of the $\ell$-ball in $\tcG$ such that,
for any $i \in \qq{1,\mu}$, we have $T_i(\vec S_i) \in \sS_i(\tcG)$.
Define
\begin{align*}
\td W_{\tilde {\bf S}}\equiv W(\tcG,\td{\bf S}):=\{i\in \qq{1,\mu}: I_i(\tcG,\td{\bf S}) J_i(\tcG,\td{\bf S})=1\}.
\end{align*}
We claim that $\td W_{\tilde {\bf S}} = W_{\bf S}$.
First, by definition of switchings, we have $[T_i(S_i)]=[S_i]$ for any $i\in \qq{1,\mu}$.
Thus $J_i(\tcG, \td {\bf S})=J_i(\cG,{\bf S})$,
and it suffices to verify that $I_i(\tcG, \td{\bf S}) = I_i(\cG,{\bf S})$ also holds for all $i \in \qq{1,\mu}$.
For $i\not\in W_{\bf S}$, the switching of $S_i$ does not take place, i.e., $\tcG|_{[S_i]}=\cG|_{[S_i]}$
and therefore $I_i(\tcG, \td{\bf S})=I_i(\cG, {\bf S})$.
On the other hand, for $i\in W_{\bf S}$, the subgraph $\cG|_{[S_i]}$ is $1$-regular, i.e., $I_i(\cG, {\bf S}) =1$,
and the other $S_j$ with $j \in W_{\bf S}$ intersect $S_i$ at most at one vertex. Therefore, $\tcG|_{[S_i]}=T_{\vec S_i}\cG|_{[S_i]}$ and 
the graph $\tcG|_{[S_i]}$ is again $1$-regular, i.e., $I_i(\tcG, \td{\bf S})=1$ as needed.

In summary, we have verified the claim $\td W_{\tilde {\bf S}}=W_{\bf S}$. By definition of our switchings,
it follows that $T(\td {\bf S})=\bf S$ and $T_{\td{\bf S}}(\tcG)=\cG$.
Therefore $\tilde T$ is an involution.
\end{proof}

\begin{proposition}
$\tilde T$ and $T$ are measure preserving: $\Pp \circ \tilde T^{-1} = \Pp$ and $\Pp \circ T^{-1} = \P$.
\end{proposition}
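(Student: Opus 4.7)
The plan is to leverage the involutivity of $\tilde T$ established in Proposition~\ref{prop:reverse}, together with the invariance of the fibers $\iota(\sG_{N,d}(\cT))$ under $\tilde T$. Since $\tilde T$ is a bijection from $\GNdp$ to itself (being an involution, so that $\tilde T^{-1} = \tilde T$), in order to verify $\Pp \circ \tilde T^{-1} = \Pp$ it suffices to check the pointwise identity $\Pp(\tilde T(\cG, {\bf S})) = \Pp(\cG, {\bf S})$ for every $(\cG, {\bf S}) \in \GNdp$.

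First I would unpack the product form of $\Pp$. Writing $(\tcG, \td {\bf S}) = \tilde T(\cG, {\bf S})$, one has
\begin{equation*}
\Pp(\tcG, \td{\bf S}) = \P(\tcG)\,\P_{\tcG}(\td{\bf S}) = \frac{1}{|\GNd|}\cdot\frac{1}{|\sS(\tcG)|},
\quad
\Pp(\cG,{\bf S}) = \frac{1}{|\GNd|}\cdot\frac{1}{|\sS(\cG)|},
\end{equation*}
where I have used $\tcG \in \GNd$ (the switched graph is $d$-regular). The identity therefore reduces to showing $|\sS(\tcG)| = |\sS(\cG)|$.

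The key point is that $\tcG$ and $\cG$ share the same radius-$\ell$ neighborhood $\cT$ of vertex $1$, by \eqref{e:tildeTT}. Consequently $\bT$ and the enumeration $\{e_1, \dots, e_\mu\}$ of $\partial_E \cT$ have the same cardinality $\mu$ in both graphs. Since the total number of edges $Nd/2$ is preserved under switching and the edges inside $\cT$ together with the $\mu$ boundary edges are the same in $\cG$ and $\tcG$, the number of edges not incident to $\bT$ is also the same. Therefore $|\sS_i(\tcG)| = |\sS_i(\cG)|$ for each $i \in \qq{1,\mu}$, and hence $|\sS(\tcG)| = \prod_{i=1}^\mu |\sS_i(\tcG)| = |\sS(\cG)|$, as required.

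For the second claim, since $T = \pi \circ \tilde T$ by definition \eqref{e:Tdef}, one has $\Pp \circ T^{-1} = \Pp \circ \tilde T^{-1} \circ \pi^{-1} = \Pp \circ \pi^{-1} = \P$, invoking successively the first claim just proved and the previously established fact that $\pi$ is measure preserving. There is essentially no obstacle here; the only substantive ingredient is the cardinality identity $|\sS(\tcG)| = |\sS(\cG)|$, which follows immediately from the $\cT$-preservation in \eqref{e:tildeTT} and from $\tcG$ being $d$-regular, both of which are already secured by Proposition~\ref{prop:reverse}.
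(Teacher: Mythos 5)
Your proof is correct and follows essentially the same route as the paper's: both reduce the claim to the facts that $\tilde T$ is an involution preserving the fibres $\iota(\GNd(\cT))$ and that $|\sS(\cG)|$ depends only on $\cT=\cB_\ell(1,\cG)$, the paper phrasing this as preservation of the uniform measure on each class $\GNdp(\cT)$ while you verify the density identity pointwise. (One cosmetic remark: the boundary edges themselves are \emph{not} the same in $\cG$ and $\tcG$ --- only their number $\mu$ and the edges inside $\cT$ are preserved --- but that is all your counting actually uses.)
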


In other words, that $T$ is measure preserving means that
if $\cG$ is uniform over $\GNd$, and given $\cG$, we choose $\bf S$ uniform over $\sS(\cG)$,
then $T_{\bf S}(\cal G)$ is uniform over $\GNd$.

\begin{proof}
We decompose the enlarged probability space according to the $\ell$-neighborhood of $1$ as
\begin{equation} \label{e:GNdpdecomp}
  \GNdp = \bigcup_{\cT} \GNdp(\cT), \quad \text{where } \GNdp(\cT) = \iota(\GNd(\cT)).
\end{equation}
Notice that, given any $\cT$, the size of the set $\sS(\cG)$ is (by construction) independent of the graph $\cG \in \GNd(\cT)$. 
Therefore, given any $\cT$, the restricted measure $\Pp|{\GNdp(\cT)}$ is uniform, i.e.,
proportional to the counting measure on the finite set $\GNdp(\cT)$.
Since, by Proposition~\ref{prop:reverse},
the map $\tilde T$ is an involution on $\GNdp(\cT)$, it is in particular a bijection
and as such preserves the uniform measure $\Pp|{\GNdp(\cT)}$.
Since $\tilde T$ acts diagonally in the decomposition \eqref{e:GNdpdecomp},
this implies that the map $\tilde T$ preserves the measure $\Pp$.
Since $\P = \Pp \circ \pi^{-1}$ and $T = \pi \circ \tilde T$,
it immediately follows that also $T$ is measure preserving:
\begin{equation*}
\Pp \circ T^{-1} = \Pp \circ \tilde T^{-1} \circ \pi^{-1} = \Pp \circ \pi^{-1} = \P,
\end{equation*}
as claimed.
\end{proof}

\begin{figure}[h]
\centering\input{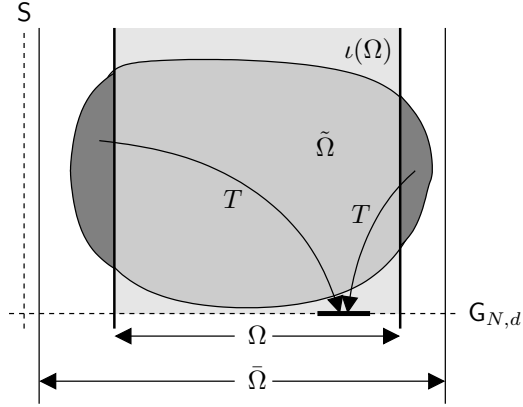}
\caption{
The figure illustrates the idea of 
Proposition~\ref{prop:resample}.
The horizontal axis represents the set of graphs $\GNd$,
and the vertical direction the fibres of possible switchings.
In particular, the sets $\Omega$, $\Omega'$, $\Omega^+$, $\bar\Omega$ are represented on the horizontal axis.
The area in medium and dark grey represents $\tilde \Omega = T^{-1}(\Omega)$.
The sets $\Omega'$ and $\Omega^+$ and their preimages can be illustrated analogously,
but for simplicity we assume for the figure that $\Omega = \Omega^+$.
The lightly shaded area bounded by the vertical bars is $\iota(\Omega)$.
In \eqref{e:Omegasplit}, we devide $\tilde\Omega \setminus \tilde\Omega'$ into the part contained in $\iota(\Omega^+)$
(the second term) and the part outside of $\iota(\Omega^+)$ (the first term).
The part inside $\iota(\Omega^+)$ is small because of assumption (iii).
To bound the part outside $\iota(\Omega^+)$, we use that $\tilde T$ is an involution.
This implies that the image under $\tilde T$ of the area in dark gray
is contained in $\iota(\Omega)$ (thus its projection to the horizontal axis lies in $\Omega$ as shown in the figure),
and not intersecting $\tilde\Omega^+$.
Its contribution is small by assumption (ii), which implies that $\iota(\Omega)$ contains most of $\tilde\Omega^+$.
\label{fig:Omega}}
\end{figure}

The following general proposition, which makes use of the involution property of $\tilde T$,
is central to our approach.
The idea of its proof is illustrated in Figure~\ref{fig:Omega}.

\begin{proposition} \label{prop:resample}
Given events $\Omega \subset \Omega^+\subset\bar\Omega \subset \GNd$ and $\Omega'\subset \bar\Omega$,
assume
\begin{enumerate}
\item
$\P(\GNd \setminus \bar\Omega)\leq q_0$,
\item
$\P_{\cG}(T_{\bf S}(\cal G) \in \bar\Omega\setminus \Omega^+) \leq q_1$ for all $\cal G \in \Omega$,
and
\item
$\P_{\cG}(T_{\bf S}(\cal G) \in \bar\Omega\setminus \Omega') \leq q_2$ for all $\cal G \in \Omega^+$.
\end{enumerate}
Then
$
  \P(\Omega \setminus (\Omega \cap \Omega')) \leq q_0+q_1+q_2
$.
\end{proposition}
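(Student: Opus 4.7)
The plan is to lift everything to the enlarged space $\GNdp$ and exploit the two structural facts that $T\colon \GNdp \to \GNd$ is measure-preserving and that $\tilde T$ is a measure-preserving involution (Proposition~\ref{prop:reverse}). Since $\Omega \setminus (\Omega \cap \Omega') = \Omega \setminus \Omega'$ and $\P = \Pp \circ T^{-1}$, I would first rewrite
\[
  \P(\Omega \setminus \Omega') = \Pp(\tilde\Omega \setminus \tilde\Omega'), \qquad \tilde\Omega \deq T^{-1}(\Omega), \quad \tilde\Omega' \deq T^{-1}(\Omega').
\]

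Next I would split $\tilde\Omega \setminus \tilde\Omega'$ according to whether the underlying graph $\cG$ lies in $\Omega^+$. On the intersection with $\iota(\Omega^+)$, a pair $(\cG,{\bf S})$ in the difference satisfies $\cG \in \Omega^+$ and $T_{\bf S}(\cG) \in \Omega \setminus \Omega' \subset \bar\Omega \setminus \Omega'$, so assumption (iii) applied fibrewise gives a contribution of at most $q_2$. For the complement, I would enlarge it to $\tilde\Omega \setminus \iota(\Omega^+)$ and apply $\tilde T$; since $\tilde T \circ \tilde T = \id$ and $\tilde T(\cG,{\bf S}) = (T_{\bf S}(\cG), T({\bf S}))$, a direct verification shows that the image equals
\[
  \hb{(\cG',{\bf S}') : \cG' \in \Omega,\ T_{{\bf S}'}(\cG') \notin \Omega^+},
\]
and $\tilde T$ preserves $\Pp$. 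I would then further decompose this image by whether $T_{{\bf S}'}(\cG') \in \bar\Omega$: assumption (ii) bounds the $\bar\Omega \setminus \Omega^+$ part by $q_1$ (integrating fibrewise over $\cG' \in \Omega$), and assumption (i) together with the measure-preservation of $T$ bounds the $\GNd \setminus \bar\Omega$ part by $q_0$. Summing the three contributions yields $q_0 + q_1 + q_2$.

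The only real idea, beyond set-theoretic bookkeeping, is the invocation of the involution $\tilde T$: it converts the event ``start in $\tilde\Omega$ but with underlying graph outside $\Omega^+$''---which is not directly controlled by the hypotheses---into the symmetric event ``start in $\Omega$ and end outside $\Omega^+$'', which is exactly what assumption (ii) controls. The role of $\Omega^+$ is precisely that of an intermediate buffer, decoupling the resampling that produces the bad event from the resampling that would correct it. I do not anticipate any technical obstacle; the principal care lies in not confusing $\iota(\Omega^+)$ (underlying graph in $\Omega^+$) with $T^{-1}(\Omega^+)$ (resampled graph in $\Omega^+$), as suggested by Figure~\ref{fig:Omega}.
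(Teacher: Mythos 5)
Your proposal is correct and follows essentially the same route as the paper: the same lift to the enlarged space, the same split of $\tilde\Omega\setminus\tilde\Omega'$ according to membership in $\iota(\Omega^+)$, and the same use of the involution $\tilde T$ to convert the uncontrolled piece into the event ``$\cG'\in\Omega$ and $T_{{\bf S}'}(\cG')\notin\Omega^+$'', which is then handled by assumptions (i) and (ii) exactly as in the paper. The only cosmetic difference is that you discard the $\setminus\tilde\Omega'$ restriction before applying $\tilde T$, whereas the paper carries it along; this changes nothing.
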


Roughly, the proposition shows that if, for most graphs $\cG \in \GNd$, an event holds  for the
switched graph $T_{\bf S}(\cG)$ with high probability under the randomness of the switching $\bf S$,
then it also holds with high probability on $\GNd$.
This enables us to condition on a (good) graph $\cG$ for much of the paper, and then only
use with the randomness of ${\bf S}$ which has a simple probabilistic structure.

More specifically,
in our application of the proposition, the set $\bar\Omega$ is a large set of regular graphs obeying 
rough a priori estimates (there are only few cycles), the set $\Omega$ is a set
of graphs for which the Green's function obeys good estimates, and the set $\Omega'$ is
a sets of graphs on which the Green's function obeys better estimates (near a given vertex).
The proposition states that if with respect to the resampling most graphs obey the better
estimates, then these estimates also hold on the original probability space with high probability.
The sets to which the proposition will be applied are further discussed in Section~\ref{sec:outline}.
The proposition will be applied in Section~\ref{sec:pfmr}.

\begin{proof}
We define $\tilde\Omega = T^{-1}(\Omega)$, $\tilde\Omega' = T^{-1}(\Omega')$ and $\tilde \Omega^+ = T^{-1}(\Omega^+)$,
and abbreviate $A \setminus B = A \setminus (A \cap B)$ for any sets $A,B$. Since
\begin{equation}
  T^{-1}(\Omega \setminus \Omega')
  = T^{-1}(\Omega) \setminus T^{-1}(\Omega') = \tilde\Omega \setminus \tilde \Omega',
\end{equation}
and since $T$ is measure preserving,
and since $\tilde T$ is a measure preserving involution, we have
\begin{align} \label{e:Omegasplit}
  \P(\Omega \setminus \Omega')
  =
  \tilde \P(\tilde\Omega \setminus \tilde \Omega')
  &=
  \tilde \P(\tilde\Omega \setminus (\tilde\Omega' \cup \iota(\Omega^+)))
  + 
  \tilde \P((\iota(\Omega^+) \cap \tilde\Omega) \setminus \tilde \Omega')
  \nonumber\\
  &\leq
  \tilde \P(\tilde\Omega \setminus (\tilde\Omega' \cup \iota(\Omega^+)))
  + 
  \tilde \P((\iota(\Omega^+) \cap T^{-1}(\bar\Omega)) \setminus \tilde \Omega')
  \nonumber\\
  &=
  \tilde \P(\hat\Omega)
  + 
  \tilde \P((\iota(\Omega^+) \cap T^{-1}(\bar\Omega)) \setminus \tilde \Omega')
  ,
\end{align}
where $\hat\Omega
= \tilde T(\tilde\Omega \setminus (\tilde\Omega' \cup \iota(\Omega^+))$.
To bound the probability of $\hat\Omega$, we make the following observations.
First,
$\hat\Omega \subset \tilde T(\tilde\Omega) \subset \iota(\Omega)$.
Second,
any element $(\cal {\hat G}, \hat {\bf S}) \in \hat \Omega$ can be written as
$\cal {\hat G} = T(\cG,{\bf S})$ for some $\cG \not \in \Omega^+$ and ${\bf S} \in \sS(\cG)$.
Since $\tilde T$ is an involution, this $(\cG, {\bf S})$ must in fact be given by
$(\cG, {\bf S})=\tilde T(\cal {\hat G}, \hat {\bf S})$.
Together this implies that $(\cal {\hat G}, \hat {\bf S})\not\in \tilde\Omega^+$,
and thus that $\hat\Omega$ has no intersection with $\tilde\Omega^+$.
As a result,
\begin{align*}
  \P(\Omega \setminus \Omega')
  &\leq
  \Pp(\iota(\Omega) \setminus \tilde \Omega^+)
  +
  \Pp((\iota(\Omega^+) \cap T^{-1}(\bar\Omega)) \setminus \tilde \Omega')\\
  &= \Pp(\iota(\Omega)\setminus  T^{-1}(\bar\Omega))
    +\Pp((\iota(\Omega) \cap T^{-1}(\bar\Omega) )\setminus \tilde \Omega^+)
    + \Pp((\iota(\Omega^+)\cap T^{-1}(\bar\Omega)) \setminus \tilde \Omega')\\
  &\leq \Pp(T^{-1}(\GNd)\setminus  T^{-1}(\bar\Omega))
    +\Pp((\iota(\Omega) \cap T^{-1}(\bar\Omega) )\setminus \tilde \Omega^+)
    + \Pp((\iota(\Omega^+)\cap T^{-1}(\bar\Omega)) \setminus \tilde \Omega')\\
  &\leq
 q_0+ q_1+q_2,
\end{align*}
where
the second inequality follows since $\GNd \supset \Omega$ and
the last inequality follows from the assumptions (\rn 1)--(\rn 3).
\end{proof}

\subsection{Estimates for local resampling}

In the following, we give some basic estimates for the local resampling.
In particular, we show that, with high probability, most edges are switchable.

\begin{proposition} \label{lem:switchable}
Let $\delta>0$.
\begin{enumerate}
\item For any $x \in \qq{N}\setminus \T$, 
\begin{equation} \label{e:approxunifbd}
  \P_{\cG}(b_i = x)=  \P(c_i = x)
  \leq \frac{2}{N}.
\end{equation}
\item
For any positive integer $\n$, we have
\begin{equation} \label{e:Wbd}
  \P_{\cG}(|W_{\bf S}|> \mu-3\n)= 1-o(N^{-\n+\delta}).
\end{equation}
\end{enumerate}
\end{proposition}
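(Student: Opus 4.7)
Claim (i) reduces to a direct count. Since $(b_i, c_i)$ is uniform over oriented edges of $\cG^{(\T)}$, the denominator $2|\cG^{(\T)}|$ satisfies $2|\cG^{(\T)}| \geq Nd - 2d|\T|$ by $d$-regularity of $\cG$; since $|\T| \leq d(d-1)^\ell$ with $\ell = O(\log\log N)$ by \eqref{e:constchoice}, this is much smaller than $N$, and so $2|\cG^{(\T)}| \geq Nd/2$ for $N$ large. The numerator $|\{\vec e \in \cG^{(\T)}: \vec e \text{ starts at } x\}| = \deg_{\cG^{(\T)}}(x) \leq d$, giving $\P_{\cG}(b_i = x) \leq 2/N$; the bound for $c_i$ is identical.

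For claim (ii), I would encode $W_{\bf S}^c$ as the vertex set of positive degree in a random multigraph $\Gamma$ on $\qq{1,\mu}$, with a self-loop at $i$ whenever the event $A_i := \{I_i(\cG, {\bf S}) = 0\}$ occurs and an edge $\{i,j\}$ whenever $B_{ij} := \{|[S_i] \cap [S_j]| \geq 2\}$ occurs. Since a multigraph with $a$ self-loops and $b$ edges covers at most $a + 2b$ vertices, the event $\{|W_{\bf S}^c| \geq 3\omega\}$ forces $a + 2b \geq 3\omega$, and in particular $a + b \geq \lceil 3\omega/2 \rceil$.

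Next I would bound the probability of a single event. The event $A_i$ requires $(b_i, c_i)$ to be incident to one of $O(d)$ forbidden vertices (namely $a_i$ itself, or a $\cG$-neighbor of $l_i$ or $a_i$), so by (i) and a union bound $\P_{\cG}(A_i) \leq Cd/N$. For $B_{ij}$, I would split according to whether $\{l_i, a_i\} \cap \{l_j, a_j\} \neq \emptyset$, which is equivalent to $l_i = l_j$ or $a_i = a_j$ (since $\T$-vertices cannot coincide with non-$\T$-vertices): the nonempty case needs one further coincidence and gives $\P_{\cG}(B_{ij}) \leq C/N$, while the disjoint case needs two and gives $\P_{\cG}(B_{ij}) \leq C/N^2$. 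Since there are at most $\mu^{a + 2b}$ configurations of shape $(a,b)$, and the joint probability of each configuration is bounded by $(Cd/N)^{a+b}$, summing yields
\begin{equation*}
\P_{\cG}(|W_{\bf S}^c| \geq 3\omega) \leq \sum_{a + 2b \geq 3\omega} \mu^{a + 2b} (Cd/N)^{a+b} \leq \sum_{s \geq \lceil 3\omega/2 \rceil} \left(\frac{Cd\mu^2}{N}\right)^s,
\end{equation*}
which is $o(N^{-\omega + \delta})$ since $\mu = (\log N)^{O(1)}$ and $\lceil 3\omega/2 \rceil \geq \omega + 1$ for all integers $\omega \geq 1$.

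The main obstacle is justifying the product bound $(Cd/N)^{a+b}$ on the joint probability of a specific configuration: events sharing an index induce dependencies, since for instance $B_{ij}$ and $B_{ik}$ both depend on $(b_i, c_i)$. I would handle this by ordering the events and revealing the random variables sequentially, using that the marginal distribution of any individual $(b_j, c_j)$ remains uniform on $\vec E(\cG^{(\T)})$ after conditioning on the other pairs, so that each successive event contributes a fresh conditional factor of $O(d/N)$ relative to the already-revealed randomness.
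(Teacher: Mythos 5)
Part (i) of your proposal is correct and is the same computation as the paper's. The issue is in part (ii).

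The gap is the product bound $(Cd/N)^{a+b}$ for the joint probability of a witness configuration; the sequential-revealing argument you sketch at the end does not repair it. The problem is that an event $B_{ij}$ need not consume any randomness of $(b_j,c_j)$ at all: if $l_i=l_j$ and $b_i=a_j$, then $[S_i]\cap[S_j]\supseteq\{l_i,a_j\}$ and $B_{ij}$ holds \emph{deterministically} given $(b_i,c_i)$, whatever $(b_j,c_j)$ is. Concretely, take three boundary edges with $l_i=l_j=l_k$ and suppose $\{a_j,a_k\}$ is an edge of $\cGT$ (nothing in the hypotheses excludes this; it only costs one unit of excess). On the event $\{b_i,c_i\}=\{a_j,a_k\}$, which has probability of order $1/(Nd)$, both $B_{ij}$ and $B_{ik}$ hold. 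Thus the two-edge star $\{i,j\},\{i,k\}$ --- with $a=0$, $b=2$, covering three vertices --- occurs with probability at least $c/(Nd)$, not $O((d/N)^2)$ as your scheme requires, and no ordering of the events can produce a second factor of $O(d/N)$ because the second event has conditional probability $1$ given the first coincidence. This is exactly the phenomenon the paper isolates in its set $A_3$ (indices knocked out ``for free'' because some other index $j$ with $l_j=l_i$ had its random edge land on $[e_i]$): the paper does not charge these indices a probability factor but instead bounds their number combinatorially by $|A_3|\le 2|A_0\cup A_1|$, so that $3\n$ lost indices still force $\n$ genuinely fresh coincidences, each of probability $O(\mu/N)$.

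Your final numerics happen to survive this particular example ($\mu^3$ choices times $O(1/N)$ is still $o(N^{-1+\delta})$ for $\n=1$, since three vertices are covered per coincidence), but the accounting ``one factor of $d/N$ per edge of the witness multigraph'' is simply not a true statement, so the proof as written does not close; repairing it means re-counting by fresh coincidences rather than by edges, which lands you essentially on the paper's $A_0,\dots,A_3$ decomposition. A secondary, inconsequential slip: even when $\{l_i,a_i\}\cap\{l_j,a_j\}=\emptyset$, one has only $\P_{\cG}(B_{ij})=O(1/N)$, not $O(1/N^2)$, because the two required common vertices can be produced by the single event that $\{b_j,c_j\}$ equals a specific edge such as $\{b_i,c_i\}$ or $\{a_i,b_i\}$; you do not use the $1/N^2$ bound, so this does not affect anything else.
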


\begin{proof}
To prove (\rn 1), we recall that, for any $i$,
the oriented edge $(b_i,c_i)$ is uniformly chosen from the oriented edges of $\cGT$.
By definition of $\T$, there are at least $Nd/2-(d+d(d-1)+\cdots d(d-1)^{\ell})$ edges in $\cGT$,
and since for any vertex $x\in \cGT$, the degree obeys $\deg_{\cGT}(x)\leq d$,
\begin{align*}
  \P_{\cG}(b_i = x)= \P_{\cG}(c_i = x)\leq \frac{d}{Nd-2(d+d(d-1)+\cdots d(d-1)^{\ell})}
  \leq \frac{2}{N}.
\end{align*}

To prove (\rn 2), we need to analyze the events $I_iJ_i=0$ more carefully. We define the disjoint sets
\begin{align*}
A_0&=\{i\in \qq{1,\mu}: I_i=0\},\\
A_1&=\{i\in \qq{1,\mu}\setminus A_0: |\{b_i,c_i\}\cap (\cup_{j \neq i} [e_{j}])|\geq 1\},\\
A_2&=\{i\in \qq{1,\mu}\setminus A_0\cup A_1: |\{b_i,c_i\}\cap (\cup_{j \neq i} \{b_j,c_j\})|\geq 1\},\\
A_3&=\{i\in \qq{1,\mu}\setminus A_0\cup A_1\cup A_2: \text{there exists $j$ such that } l_i = l_j \text{ and } |[e_i] \cap \{b_j,c_j\}|\geq 1\},
\end{align*}
and claim that
\begin{equation} \label{e:IJ0decomp}
  \qq{1,\mu} \setminus W_{\bf S} = \{ i \in \qq{1,\mu} : I_iJ_i = 0 \} \subset A_0 \cup A_1 \cup A_2 \cup A_3.
\end{equation}
Indeed, if $i \in \qq{1,\mu} \setminus W_{\bf S}$, then $I_i=0$ or $J_i=0$.
Clearly, if $I_i=0$ then $i \in A_0 \subset A_0 \cup A_1 \cup A_2 \cup A_3$.
On the other hand, if $J_i=0$, there exists some index $j\in \qq{1,\mu} \setminus\{i\}$ such that $|[S_i]\cap [S_j]|> 1$,
and there are two possibilities (recall that $e_i=\{l_i,a_i\}$ and $e_j=\{l_j,a_j\}$):
\begin{enumerate}
\item $l_i\neq l_j$. Then either $|\{b_i,c_i\}\cap \{b_j,c_j\}|\geq 1$; or $|\{b_i,c_i\}\cap [e_j]|\geq 1$ and $|[e_i]\cap \{b_j,c_j\}|\geq 1$.
\item $l_i= l_j$. Then either $|\{b_i,c_i\}\cap \{b_j,c_j\}|\geq 1$; or $|\{b_i,c_i\}\cap [e_j]|\geq 1$; or $|[e_i]\cap \{b_j,c_j\}|\geq 1$.
\end{enumerate}
Either way, $J_i=0$ implies $i \in A_1 \cup A_2 \cup A_3$, and \eqref{e:IJ0decomp} holds.
To bound the number of elements on the right-hand side of \eqref{e:IJ0decomp}, we first note that $|A_3|\leq 2|A_0\cup A_1|$.
In fact if $i\in A_3$, then there exists some $j$ such that $|[e_i] \cap \{b_j,c_j\}|\geq 1$, and thus $j\in A_0\cup A_1$.
Since any $\{b_j,c_j\}$ can intersect at most two edges $e_i$ with $l_i=l_j$,
\begin{align*}
|A_3|\leq \sum_{j\in A_0\cup A_1}|\{i\in \qq{1,\mu}: l_i=l_j \text{ and } |\{b_j,c_j\}\cap [e_i]|\geq 1 \}| \leq 2|A_0\cup A_1|.
\end{align*}
Therefore, it follows that
\begin{equation}
  |\qq{1,\mu} \setminus W_{\bf S}| = |\{i \in \qq{1,\mu}: I_iJ_i = 0\}| \leq 3|A_0 \cup A_1|+|A_2|\leq 3|A_0|+3| A_1|+|A_2|.
\end{equation}
We will show that
\begin{equation} \label{e:A1A0A2claim}
  \P(|A_0|+|A_1|+\tfrac12 | A_2| \geq \n) = o(N^{-\n+\delta}),
\end{equation}
which implies the claim since
\begin{align*}
  \P(|\qq{1,\mu} \setminus W_{\bf S}| \geq 3\n) \leq \P(3|A_0| + 3|A_1|+ |A_2| \geq 3\n)
  \leq \P(|A_0|+|A_1|+\tfrac12 |A_2|\geq \n) = o(N^{-\n+\delta}).
\end{align*}

To prove \eqref{e:A1A0A2claim}, first notice that there is a subset $A_2' \subset A_2$ with $|A_2'| \geq |A_2|/2$ such that
$i \in A_2'$ implies $|\{b_i,c_i\}\cap (\cup_{j \not\in A_2'} \{b_j,c_j\})|\geq 1$.
Hence, if $|A_0| +|A_1| + |A_2|/2 \geq \n$,
then there exist disjoint index sets
$\tilde A_0\subset A_0, \tilde A_1\subset A_1, \tilde A_2' =A_2' $ such that
$|\tilde A_0| + |\tilde A_1|+ |\tilde A_2'|=\n$ and
\begin{align}\label{e:conditionforB}
\begin{cases}
&\forall i\in \tilde A_0,\qquad I_i=0, \\
&\forall i\in \tilde A_1,\qquad |\{b_i,c_i\}\cap (\cup_{j \neq i} [e_{j}])|\geq 1,\\
&\forall i\in \tilde A_2',\qquad |\{b_i,c_i\}\cap (\cup_{j \not\in A_2'} \{b_j,c_j\})|\geq 1.
\end{cases}
\end{align}
The condition $I_i=0$ is equivalent to $\dist_{\cG}(\{a_i,l_i\}, \{b_i,c_i\})\leq 1$.
Therefore, by \eqref{e:approxunifbd},
\begin{align}\label{e:I_iest}
\P_{\cG}(I_i=0)&\leq \P_{\cG}(\dist_{\cG}(\{a_i,l_i\}, b_i)\leq 1)+\P_{\cG}(\dist_{\cG}(\{a_i,l_i\}, c_i)\leq 1)\\
&\leq \frac{4}{N}\#\{x\in \cGT: \dist_{\cG}(\{a_i,l_i\},x)\leq 1\}\leq \frac{8d}{N}.
\end{align}
Similarly, since $|\cup_{j} [e_j]| \leq 2\mu$,
\begin{equation}
\P_{\cG}(|\{b_i,c_i\}\cap (\cup_{j \neq i} [e_{j}])|\geq1) \leq \frac{8\mu}{N},
\end{equation}
and, for any $i \in \tilde A_2'$, we have
\begin{equation}
\P_{\cG}\pa{|\{b_i,c_i\}\cap (\cup_{j \not\in \tilde A_1'} \{b_j,c_j\}|\geq 1\Big|\vec S_j, j\not\in \tilde A_2'}
\leq \frac{8\mu}{N}.
\end{equation}
Finally, there are at most $ (3\mu)^{\n}$ 
disjoint sets $\tilde A_0, \tilde A_1, \tilde A_2' \subset \qq{1,\mu}$ such that $|\tilde A_0| + |\tilde A_1|+|\tilde A_2'|=\n$,
and therefore
\begin{align*}
\P_{\cG}\pB{|A_0|+|A_1|+\tfrac12 |A_2| \geq \n}
&\leq (3\mu)^{\n}\max_{\tilde A_0,\tilde A_1,\tilde A_2'}\P_{\cG}\pB{\text{the sets $\tilde A_0, \tilde A_1, \tilde A_2'$ satisfy \eqref{e:conditionforB}}}\\
&=(3\mu)^{\n}\max_{\tilde A_0,\tilde A_1,\tilde A_2'}
\prod_{i\in\tilde A_0}\P_{\cG}(I_i=0)
\prod_{i\in\tilde A_1}\P_{\cG}(|\{b_i,c_i\}\cap (\cup_{j \neq i} [e_{j}])|\geq1)\\
&\qquad \qquad\qquad\quad\;
 \prod_{i\in\tilde A_2'}\P_{\cG}\pa{|\{b_i,c_i\}\cap (\cup_{j \not\in \tilde A_2'} \{b_j,c_j\}|\geq 1\Big|\vec S_j, j\not\in \tilde A_2'}\\
&\leq (3\mu)^{\n}\max_{\tilde A_0,\tilde A_1,\tilde A_2'}
\prod_{i\in\tilde A_0}\frac{8d}{N}
\prod_{i\in\tilde A_1}\frac{8\mu}{N}
\prod_{i\in\tilde A_2'}\frac{8\mu}{N}
=o( N^{-\n+\delta}),
\end{align*}
where the maxima are over 
all disjoint sets $\tilde A_0, \tilde A_1, \tilde A_2'\subset \qq{1,\mu}$ such that $|\tilde A_0| + |\tilde A_1|+|\tilde A_2'|=\n$,
and where we used 
that the probability factorizes since the sets $\tilde A_0,\tilde A_1,\tilde A_2'$ are disjoint.
\end{proof}

\begin{remark}\label{defrmk}
Throughout Sections~\ref{sec:dist}--\ref{sec:improved},
we fix a $d$-regular graph $\cG\in \GNd$ on the vertex set $\qq{N}$,
and abbreviate its $\ell$-neighborhood of $1$ by 
\begin{equation}
\T=\bB_\ell(1, \cG), \quad  \cT=\cB_{\ell}(1,\cG). 
\end{equation}
We also write
\begin{equation}\label{def:Ti}
\T_i=\{v\in \cG: \dist_{\cG}(1,v)=i\},
\end{equation}
for the set of vertices at distance $i$ from $1$.

Further, we enumerate the boundary edges $\del_E \cT$ as $\{l_i, a_i\}$ for $i\in \qq{1,\mu}$,
where $l_i \in \T$ and $a_i \in \qq{N} \setminus \T$.
We denote the resampling data by ${\bf S}=(\vec S_1, \vec S_2,\dots, \vec S_\mu)$,
where $\vec{S}_i = \{(l_i, a_i), (b_i,c_i)\}$ for $i\in \qq{1,\mu}$, and $(b_1,c_1), \dots, (b_\mu,c_\mu)$
are chosen independently and uniformly among oriented edges from the graph $\cGT$.
We denote by ${\sf S}(\cG)$ the set of all possible switching data,
so that ${\bf S}$ is uniformly distributed on ${\sf S}$.
Given switching data $\bf S$, we denote the set of admissible switchings by $W_{\bf S}$.
Without loss of generality, we will assume for notational convenience that $W_{\bf S}=\{1,2,3,\dots,\nu\}$ where $|W_{\bf S}|=\nu\leq \mu$. 

To study the change of graphs before and after local resampling, we define the following graphs (which need not be regular).
\begin{itemizetight}
\item $\cG$ is the original unswitched graph;
\item $\cGT$ is the unswitched graph with vertices $\T$ removed;
\item $\hcGT$ is the intermediate graph obtained from $\cGT$ by removing the edges $\{b_i,c_i\}$ with $i\in W_{\bf S}$;
\item $\tcGT$ is the switched graph obtained from $\hcGT$ by adding the edges $\{a_i,b_i\}$ with $i \in W_{\bf S}$; and
\item $\tcG$ is the switched graph $T_{\bf S}(\cG)$ (including vertices $\T$).
\end{itemizetight}
Following the conventions of Section~\ref{sec:intro-TE},
the deficit functions of these graphs are given by $d-\deg$,
where $\deg$ is the degree function of the graph considered,
and we abbreviate their Green's functions by $G$, $\GT$, $\hGT$, $\tGT$, and $\tG$ respectively.
\end{remark}

\section{Graph distance between switched vertices}
\label{sec:dist}

\begin{figure}
\centering
\input{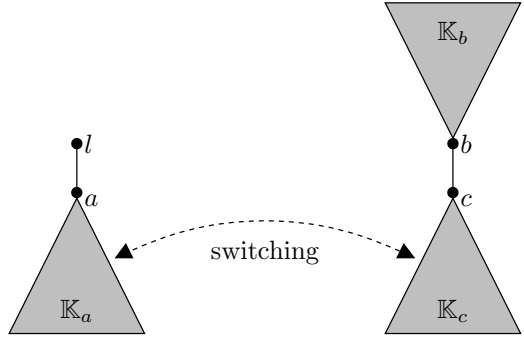}
\caption{For the vertices $x \in \{a_i,b_i,c_i\}$ participating in the switching,
we denote by $\bK_{x}$ their radius-$R/4$ neighborhoods in the unswitched graph,
with $b_i$ and $c_i$ disconnected and the set $\T$ removed.
The typical $\bK_x$ are disjoint from the other $\bK_x$,
and in the typical case, the sets $\bK_a$ and $\bK_c$ exchange their roles under switching.
\label{fig:Nx}}
\end{figure}

This section provides estimates on the distances between the vertices participating in the switching,
in the graph with vertices $\T$ removed (before and after switching).
It can be helpful to think about these estimates in terms of the sets $\bK_x \subset \qq{N} \setminus \T$
defined by
\begin{equation} \label{e:Nxdef}
\bK_{a_i} = \bB_{R/4}(a_i, \cGT),
\quad \bK_{x_i} = \bB_{R/4}(x_i, \cGT \setminus \{\{b_i,c_i\}\}),
\;\; \text{where $x_i \in \{b_i,c_i\}$},
\end{equation}
and illustrated in Figure~\ref{fig:Nx}.
In \eqref{fewclose}, \eqref{distfinitedega}, it is shown that
\begin{itemizetight}
\item{\eqref{fewclose}}
except for at most $2\n$ many, the $\bK_a$ does not intersect the other $\bK_a$.
\item{\eqref{distfinitedega}}
any $x \in \qq{N} \setminus \T$ is in at most $\n+1$ many of the sets $\bK_{a}$.
\end{itemizetight}
Roughly speaking, in this section it is shown that, for any graph $\cG \in \bar\Omega$,
the following estimates hold with high probability under $\P_{\cG}$:
\begin{itemizetight}
\item{\eqref{distfinitedegb}}
any $x \in \qq{N} \setminus \T$ is in at most $\n$ of the sets $\bK_b$;
\item{\eqref{distdega}}
any $\bK_{a}$ intersects at most $\n$ of the $\bK_{b}$;
\item{\eqref{distdegb}}
any $\bK_{b}$ intersects at most $2\n$ of the other $\bK_{a},\bK_{b}$;
\item{\eqref{treeneighbor}}
except for at most $\n$ many, the $\bK_{b}$ are trees.
\end{itemizetight}
By symmetry, the same statements hold with $b$ replaced by $c$.
More precisely, in the remainder of this section, we show that the estimates stated in the following propositions hold.

\newcommand{\sBa}{\sB_a}
\newcommand{\sBb}{\sB_b}
\newcommand{\sBc}{\sB_c}

\begin{proposition} \label{prop:distdeg}
For any graph $\cG \in \bar\Omega$ (as in Section~\ref{sec:outline-structure}),
the following holds with $\P_{\cG}$-probability at least $1-o(N^{-\omega+\delta})$:
\begin{itemize}
\item
Any vertex $x\in \qq{N}\setminus\T$ is far away from most vertices in $\{b_1, b_2,\dots, b_\mu\}$:
\begin{align}
{\label{distfinitedegb}}|\{i\in\qq{1,\mu}:\dist_{\cGT}(x,b_i)\leq R/2\}|<\n.
\end{align}
\item
Most indices $i \in \qq{1,\mu}$ are good:
\begin{align}
\label{distdega}
|\sBa| &< 3\n,&
\text{with } \sBa &=\{i\in \qq{1,\mu}: {\dist_{\cGT}(a_i, \{a_j, b_k: j\in\qq{1,\mu}\setminus \{i\}, k\in \qq{1,\mu}\})\leq R/2}\},
\\
\label{distdegb}
|\sBb| &<2\n, &
\text{with } \sBb &=\{i\in \qq{1,\mu}: {\dist_{\cGT}(b_i, \{a_j, b_k: j\in\qq{1,\mu}, k\in \qq{1,\mu}\setminus \{i\}\})\leq R/2}\},
\\
\label{treeneighbor}
|\sBc| &< \n,&
\text{with } \sBc &=\{i\in \qq{1, \mu}: \text{$\cB_R(c_i, \cGT)$ is not a tree }\}.
\end{align}
\end{itemize}
\end{proposition}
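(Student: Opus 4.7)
The plan is to establish all four estimates by union bounds over small index sets, combining three inputs: (a) the independence of the oriented edges $(b_i,c_i)_{i\in\qq{1,\mu}}$ together with the pointwise bound $\P_{\cG}(b_i=x)=\P_{\cG}(c_i=x)\leq 2/N$ from Proposition~\ref{lem:switchable}(i); (b) the deterministic structural estimates of Proposition~\ref{GTstructure} for the fixed boundary vertices $a_i$; and (c) the a priori size control $|\bB_R(v,\cGT)|\leq d(d-1)^R\leq CN^\kappa$ for every $v$, together with $|\{v\in\cGT:\cB_R(v,\cGT)\text{ is not a tree}\}|\leq N^\delta$, both consequences of $\cG\in\bar\Omega$. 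Throughout, I would use that $\mu\leq d(d-1)^\ell=(\log N)^{O(\alpha)}$ is polylogarithmic since $\ell=O(\log\log N)$, and that $\kappa<\delta/(2\omega+2)$ is very small.

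For \eqref{distfinitedegb}, I would first eliminate the quantifier over $x$ by noting that the negation of the claim is equivalent to the existence of $\omega$ indices $i_1,\ldots,i_\omega$ whose $b_{i_j}$ are pairwise within distance $R$ in $\cGT$ (a common $x$ within $R/2$ of all of them exists precisely under this condition). For a fixed such tuple of indices, the probability that $(b_{i_1},\ldots,b_{i_\omega})$ lies in the set of ordered $\omega$-tuples in $\cGT$ pairwise within $R$ is at most $(2/N)^\omega$ times the cardinality of that set, which is bounded by $N\cdot|\bB_R|^{\omega-1}$. A union bound over the $\binom{\mu}{\omega}$ choices of indices then yields the required smallness once the exponents are balanced against $\kappa<\delta/(2\omega+2)$ and $\omega\geq 8$.

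For \eqref{distdega} and \eqref{distdegb}, I would split the bad index set according to which type of vertex realises the proximity. In \eqref{distdega}, the contribution from $\{a_j\}_{j\neq i}$ is purely deterministic and bounded by $2\omega$ via \eqref{fewclose}; the remaining contribution, coming from proximity to some $b_k$, reduces to controlling $|\{i:\exists k,\,\dist_{\cGT}(a_i,b_k)\leq R/2\}|$. Here the crucial structural input is Proposition~\ref{GTstructure}\eqref{distfinitedega}: any fixed vertex has at most $\omega+1$ of the $a_i$ within its $R/2$-neighbourhood, so a single $b_k$ can witness proximity for only a bounded number of indices. Partitioning by the number of distinct witnesses $k$ and using independence of the $b_k$ across $k$ provides the bound. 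For \eqref{distdegb}, the $b_i$--$b_k$ clustering part is handled exactly as in item (i), and the $b_i$--$a_j$ part by the same mechanism with the roles of $a$ and $b$ interchanged.

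For \eqref{treeneighbor}, let $U=\{v\in\cGT:\cB_R(v,\cGT)\text{ is not a tree}\}$. Since a cycle inside $\cB_R(v,\cGT)$ is also a cycle inside $\cB_R(v,\cG)$, the second defining condition of $\bar\Omega$ gives $|U|\leq N^\delta$. Then $\sBc=\{i:c_i\in U\}$, and by the independence of the $c_i$ and Proposition~\ref{lem:switchable}(i), $\P_{\cG}(|\sBc|\geq\omega)\leq\binom{\mu}{\omega}(2|U|/N)^\omega$. A final union bound over the four events completes the proof. The principal technical obstacle throughout is exponent bookkeeping: the sizes $|\bB_R|\leq CN^\kappa$, $\mu\leq(\log N)^{O(\alpha)}$, and $|U|\leq N^\delta$ must combine, through the required number of independent coincidences ($\omega$, $2\omega$, or $3\omega$), to beat the target $o(N^{-\omega+\delta})$, a balance relying crucially on the smallness of $\kappa$ relative to $\delta/(2\omega+2)$ and on the hypothesis $\omega\geq 8$.
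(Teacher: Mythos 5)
Your toolkit (independence of the $(b_i,c_i)$, the $2/N$ pointwise bound from Proposition~\ref{lem:switchable}, the deterministic structure of the $a_i$, and $|U|\leq N^\delta$ for \eqref{treeneighbor}) is the right one, and your treatment of \eqref{treeneighbor} matches the paper's; but two of the estimates have genuine gaps. For \eqref{distfinitedegb}, your reformulation via pairwise-close $\n$-tuples loses a factor of $N$: for a fixed index set, the probability that $(b_{i_1},\dots,b_{i_\n})$ is an ordered tuple of pairwise $R$-close vertices is, as you compute, at most $(2/N)^{\n}\cdot N\cdot(2(d-1)^R)^{\n-1}$, i.e.\ of order $N^{1-\n+(\n-1)\kappa}$, and after the union bound over $\binom{\mu}{\n}$ index sets this is $N^{-\n+1+o(1)}$, which is \emph{not} $o(N^{-\n+\delta})$ since $\delta<1/\n<1$. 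You only extract $\n-1$ independent coincidences because the first $b$ is unconstrained. The paper instead fixes $x$ and obtains $\n$ independent factors of $4(d-1)^{R/2}/N$, one per index, which does give the claimed rate. (Your parenthetical "a common $x$ within $R/2$ exists precisely under this condition" is also false in the reverse direction, though only the forward containment is needed.)

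For \eqref{distdega}, the input you cite from \eqref{distfinitedega} --- each fixed vertex, in particular each $b_k$, has at most $\n+1$ of the $a_i$ in its $R/2$-neighbourhood --- points the wrong way: it permits a \emph{single} $b_k$ to witness up to $\n+1$ bad indices $i$, and the event that some $b_k$ lands within $R/2$ of some $a_i$ has probability of order $\mu^2(d-1)^{R/2}/N=N^{-1+o(1)}$, vastly larger than $N^{-\n+\delta}$. So "partitioning by the number of distinct witnesses" does not, by itself, force $\n$ independent coincidences. The missing step is the one the paper uses: restrict the random count to indices $i$ whose $a_i$ lies in a \emph{singleton} component of the annulus $\cA$ (all other indices are already among the $\leq 2\n$ covered by \eqref{fewclose}); those $a_i$ are pairwise at distance $>R$ in $\cGT$, so each $b_k$ can be $R/2$-close to at most one of them, and $\n$ bad singleton indices therefore require $\n$ distinct, hence independent, witnesses $b_{j_1},\dots,b_{j_\n}$. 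Equivalently, any two $a_i$'s sharing a witness lie in a common component of $\cA$ and are already in the deterministic bad set. A similar remark applies to \eqref{distdegb}: "exactly as in item (i)" glosses over the extraction, from $2\n$ bad indices, of a sub-family of $\n$ indices whose badness is witnessed only by data outside the sub-family (the paper's set $B''$), which is what makes the $\n$ coincidences independent there.
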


Note that $\sBa$ is the set of indices $i$ such that $\bK_{a_i}$is not disjoint from all sets other $\bK_{a}$ and $\bK_{b}$,
and that $\sBb$ is the set of indices $i$ such that $\bK_{b_i}$ is not disjoint from all other sets $\bK_{b}$ and $\bK_{a}$.

We will show that the estimates  \eqref{distdega} and \eqref{distdegb} also imply the following
estimates for the switched graph $\tcGT$.

\begin{proposition} \label{structuretGT}
Assume \eqref{distdega} and \eqref{distdegb}.
\begin{itemize}
\item
For any index $i\in \qq{1,\mu}\setminus(\sBa \cup \sBb)$,
\begin{align}\label{tcGTdist}
\dist_{\tcGT}(\{a_i,b_i\}, \{a_j,b_j: j\in \qq{1,\mu}\setminus \{i\}\})>R/2. 
\end{align}
\item
For any vertex $x\in \qq{N}\setminus \T$, 
\begin{align}\label{e:lessshortdist}
|\{i\in\qq{1,\mu}: \dist_{\tcGT}(x,\{a_i,b_i\})\leq R/4\}|\leq 5\n.
\end{align}
\end{itemize}
\end{proposition}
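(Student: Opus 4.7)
The plan is to prove part (i) first, since part (ii) will follow from it by a short triangle-inequality argument.

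For \eqref{tcGTdist}, I will fix $i \notin \sBa \cup \sBb$ and consider an arbitrary path $P$ in $\tcGT$ from $u \in \{a_i, b_i\}$ to $v \in \{a_j, b_j\}$ with $j \neq i$, aiming to show $|P| > R/2$. Since $\tcGT$ is obtained from $\cGT$ by removing the edges $\{b_k, c_k\}$ (for $k \in W_{\bf S}$) and adding the new edges $\{a_k, b_k\}$, the path $P$ decomposes into ``$\cGT$-segments'' (paths in a subgraph of $\cGT$) separated by uses of some added edges $\{a_{k_r}, b_{k_r}\}$, $r = 1, \dots, s$. Because $\cGT \setminus \{\{b_k,c_k\}: k \in W_{\bf S}\}$ is a subgraph of $\cGT$, each $\cGT$-segment has length at least the $\cGT$-distance between its endpoints.

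The hypothesis $i \notin \sBa \cup \sBb$ together with the definitions in \eqref{distdega} and \eqref{distdegb} supplies the isolation bounds $\dist_{\cGT}(a_i, y) > R/2$ for every $y \in \{a_k : k \neq i\} \cup \{b_k : k \in \qq{1,\mu}\}$ and $\dist_{\cGT}(b_i, y) > R/2$ for every $y \in \{a_k : k \in \qq{1,\mu}\} \cup \{b_k : k \neq i\}$; in particular $\dist_{\cGT}(a_i, b_i) > R/2$. A case analysis on $s$ then concludes the argument. If $s = 0$, then $P$ is a single $\cGT$-segment between $\{a_i, b_i\}$ and $\{a_j, b_j\}$, and all four pairwise $\cGT$-distances between these endpoints exceed $R/2$ by isolation. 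If $s \geq 1$, the only way the first $\cGT$-segment (from $u$ to an endpoint of $\{a_{k_1}, b_{k_1}\}$) can avoid having length $> R/2$ is $k_1 = i$ with a trivial first segment, so $P$ opens with the added edge $\{a_i, b_i\}$. After this step $P$ sits at the opposite endpoint of $\{a_i, b_i\}$, and the next $\cGT$-segment must reach either $v \in \{a_j, b_j\}$ (if $s = 1$) or $\{a_{k_2}, b_{k_2}\}$ with $k_2 \neq i$ forced by simplicity of $P$ (if $s \geq 2$); in either case the isolation bounds force its length $> R/2$.

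For \eqref{e:lessshortdist}, fix $x \in \qq{N} \setminus \T$ and partition $I_x := \{i : \dist_{\tcGT}(x, \{a_i, b_i\}) \leq R/4\}$ into $I_x \cap (\sBa \cup \sBb)$ and $I_x \setminus (\sBa \cup \sBb)$. The first piece has size at most $|\sBa| + |\sBb| \leq 5\n - 2$ by \eqref{distdega} and \eqref{distdegb}. For the second piece, if two distinct indices $i, j$ both lay in $I_x \setminus (\sBa \cup \sBb)$, the triangle inequality would give $\dist_{\tcGT}(\{a_i, b_i\}, \{a_j, b_j\}) \leq R/2$, contradicting \eqref{tcGTdist} applied with $i \notin \sBa \cup \sBb$; hence this second piece contributes at most one element, and summing yields $|I_x| \leq 5\n - 1 \leq 5\n$.

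The main technical obstacle is the case analysis in part (i): one must rule out that some clever concatenation of added edges $\{a_k, b_k\}$ with $k \neq i$ creates a shortcut of length $\leq R/2$ from $\{a_i, b_i\}$ to a different switching pair, even though adding edges can in principle reduce graph distances. The $\cGT$-isolation provided by $i \notin \sBa \cup \sBb$ is precisely what forces any $\cGT$-segment reaching a different switching site to be long, making the decomposition argument go through.
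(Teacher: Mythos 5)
Your proof is correct and follows essentially the same route as the paper: part (i) is exactly the path-decomposition justification of the paper's terse claim that \eqref{tcGTdist} ``directly follows'' from the $\cGT$-isolation supplied by $i\notin\sBa\cup\sBb$ (including the needed observation that $\dist_{\cGT}(a_i,b_i)>R/2$), and part (ii) uses the same triangle-inequality argument, merely packaged as a partition (at most $|\sBa\cup\sBb|$ bad indices plus at most one index outside $\sBa\cup\sBb$) rather than the paper's case analysis, yielding the same bound $5\n$.
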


The remainder of this section is devoted to the proofs of Propositions~\ref{prop:distdeg}--\ref{structuretGT}.

\subsection{Proof of Proposition~\ref{prop:distdeg}}

Recall that the oriented edges $(b_i,c_i)$ are independent and distributed approximately uniformly,
so that \eqref{e:approxunifbd} holds. The claims essentially follow from this.

\begin{proof}[Proof of \eqref{distfinitedegb}]
In any graph with degree bounded by $d$,
the number of vertices at distance at most $R/2$ from vertex $x$ is bounded by $1+d+d(d-1)+\cdots +d(d-1)^{R/2-1}\leq 2(d-1)^{R/2}$.
By \eqref{e:approxunifbd} therefore
\begin{equation} \label{e:distxbi}
\P_{\cG}(\dist_{\cGT}(x,b_i)\leq R/2)\leq \frac{4(d-1)^{R/2}}{N}.
\end{equation}
Since the $b_1, \dots, b_\mu$ are independent, it therefore follows that
\begin{align*}
\P\left(|\{i\in \qq{1,\mu}:\dist_{\cGT}(x,b_i)\leq R/2\}|\geq \n\right)\leq \binom{\mu}{\omega} \pbb{\frac{4(d-1)^{R/2}}{N}}^\n\ll N^{-\n+\delta},
\end{align*}
where, in the last inequality, we used that $(4(d-1)^{R/2}\mu)^\omega \leq 2^{3\n}(d-1)^{(R/2+\ell+1)\omega} \ll N^\delta$ by the choice
of parameters in Section~\ref{sec:outline}.
\end{proof}

\begin{proof}[Proof of \eqref{distdega}]
Recall the annulus $\cA$, and sets $A_1, A_2, \dots$ from Lemma~\ref{lem:beta2n}.
By \eqref{e:beta2n}, $|A_1 \cup \cdots \cup A_\alpha|\leq 2\n$,
and for any $i \in A_{\alpha+1} \cup A_{\alpha+2}\cup  \cdots$, $a_i$ is at least distance $R$ in $\cGT$ from other vertices $a_j$.
It follows that
\begin{align*}
&\P_{\cG}\left(|\{i\in \qq{1,\mu}: {\dist_{\cGT}(a_i,\{a_j, b_k: j\in\qq{1,\mu}\setminus \{i\}, k\in \qq{1,\mu}\})\}|\leq R/2}\}\geq3\n\right)\\
&\leq \P_{\cG}\left(|\{i\in A_{\alpha+1} \cup A_{\alpha+2}\cup \cdots: {\dist_{\cGT}(a_i,\{b_1,b_2,\dots, b_\mu\})\}|\leq R/2} \}\geq \n\right).
\end{align*}
By a union bound, the right-hand side is bounded by
\begin{align*}
\sum_{A',B'}\P_{\cG}\left(\dist_{\cGT}(a_{i_1}, b_{j_1})\leq R/2, \dots, \dist_{\cGT}(a_{i_{\n}}, b_{j_{\n}})\leq R/2\right),
\end{align*}
where $A'=\{{i_1},\dots, {i_{\n}}\}$, $B'=\{{j_1},\dots, {j_{\n}}\}$, and the sum over $A'$ runs through the subsets of $A_{\alpha+1}\cup A_{\alpha+2}\cup\cdots$ with $|A'|=\n$,
the sum over $B'$ runs through subsets of $\qq{1,\mu}$ with $|B'| = \n$.
Notice that if $a_k$ and $a_m$ are in different connected components of $\cA$, then $\dist_{\cGT}(a_k,b_i)\leq R/2$ and $\dist_{\cGT}(a_m,b_j)\leq R/2$
imply $b_i$ and $b_j$ are in  different connected components of $\cA$ (those of $a_k$ and $a_m$, respectively),
and in particular then $b_i\neq b_j$.
As a consequence, the indices $j_1, \dots j_\n$ must be distinct,
and in particular the random variables $b_{j_1}, \dots, b_{j_\n}$ are independent.
Thus the previous expression is bounded by 
\begin{align*}
\sum_{A',B'}\P_{\cG}\left(\dist_{\cGT}(a_{i_1}, b_{j_1})\leq R/2) \cdots \P_{\cG}(\dist_{\cGT}(a_{i_{\n}}, b_{j_{\n}})\leq R/2\right)
\leq \binom{\mu}{\n}^{2}\pbb{\frac{4(d-1)^{R/2}}{N}}^{\n}\ll N^{-\n+\delta},
\end{align*}
where we used that there are $\binom{\mu}{\n}$ choices for $A'$ and $B'$ respectively, and the estimate \eqref{e:distxbi} with $x = a_{i_1}, \dots, a_{i_{\n}}$.
\end{proof}

\begin{proof}[Proof of \eqref{distdegb}]
Similarly, to prove \eqref{distdegb}, by the union bound we have
\begin{align*}
&\P_{\cG}\left(|\{i\in \qq{1,\mu}: {\dist_{\cGT}(b_i, \{a_j, b_k: j\in\qq{1,\mu}, k\in \qq{1,\mu}\setminus\{i\}\})\leq R/2}\}|\geq 2\n\right)\\
&\leq \sum_{B'}\P_{\cG}\left(\forall i\in B', \dist_{\cGT}(b_i, \{a_j, b_k: j\in\qq{1,\mu}, k\in \qq{1,\mu}\setminus\{i\}\})\leq R/2\right),
 \end{align*}
where $B'$ runs through all subsets of $\qq{1,\mu}$ with $|B'|=2\n$.
Next, we notice that, if for all $i\in B'$, we have $\dist_{\cGT}(b_i, \{a_j, b_k: j\in\qq{1,\mu}, k\in \qq{1,\mu}\setminus\{i\}\})\leq R/2$,
then there must be subset $B''\subset B'$ with $|B''|=\n$ such that for all
$i\in B''$, we have $\dist_{\cG^{(\T)}}(b_i, \{a_j, b_k: j\in\qq{1,\mu}, k\in \qq{1,\mu}\setminus B''\})\leq R/2$.
By relabeling, without loss of generality, we assume that $B''=\{{\mu-\n+1}, {\mu-\n+2},\dots, {\mu}\}$.
Conditioned on $\vec S_1, \vec S_2,\dots, \vec S_{\mu-\n}$, we have
\begin{align*}
\P_{\cG}\left(\dist(b_i, \{a_1,a_2,\dots, a_\mu, b_1, b_2,\dots, b_{\mu-\n}\})
\leq R/2 \;\Big|\; \vec S_1, \vec S_2,\dots, \vec S_{\mu-\n}\right)\leq 8\mu (d-1)^{R/2}/N,
\end{align*}
for any $i\in \qq{\mu-\n+1,\mu}$. Therefore,
\begin{align*}
&\sum_{B'}\P_{\cG}\left(\forall i\in B', \dist_{\cGT}(b_i, \{a_j, b_k: j\in\qq{1,\mu}, k\in \qq{1,\mu}\setminus\{i\}\})\leq R/2\right)\\
&\leq \sum_{B''}\P_{\cG}\left(\forall i\in B'', \dist_{\cGT}(b_i, \{a_j, b_k: j\in\qq{1,\mu}, k\in \qq{1,\mu}\setminus B''\})\leq R/2\right)\\
&\leq {\mu\choose\n}\left(\frac{8\mu(d-1)^{R/2}}{N}\right)^{\n}\ll N^{-\n+\delta}.
\end{align*}
since there are ${\mu \choose \n}$ choices for $B''$. This completes the proof.
\end{proof}

\begin{proof}[Proof of \eqref{treeneighbor}]
By the assumption $\cG\in \bar\Omega$, all except at most $N^\delta$ many vertices have radius-$R$ tree neighborhoods.
In particular, the same holds for $\cGT$.
By \eqref{e:approxunifbd}, it follows that
\begin{align*}
\P_{\cG}\left(\text{the radius-$R$ neighborhood of $c_i$ contains cycles}\right)\leq 2N^{-1+\delta}.
\end{align*}
By the union bound, and using that the number of ways to choose $\omega+1$ elements from $\mu$ elements is bounded from above by $\mu^{\omega+1}$,
\begin{align*}
&\P_{\cG}\left(|\{i\in \qq{1,\mu}: \text{ radius-$R$ neighborhood of $c_i$ contains cycles}\}|\geq \n+1\right)\\
&\leq \mu^{\n+1}(2N^{-1+\delta})^{\n+1}\leq (2\mu)^{\n+1}N^{-\n-1+(\n+1)\delta}\ll N^{-\n+\delta},
\end{align*}
given that $\delta<1/\n$ and using that $\mu \leq 2(d-1)^{\ell+1}=(\log N)^{O(1)}$ by
the choice of parameters in Section~\ref{sec:outline}. 
\end{proof}

\subsection{Proof of Proposition~\ref{structuretGT}}

\begin{proof}[Proof of \eqref{tcGTdist}]
By the definition of the sets $\sBa$ and $\sBb$, for any $i\in \qq{1,\mu}\setminus(\sBa \cup \sBb)$, we have
\begin{align}\label{cGTdist}
\dist_{\cGT}(\{a_i,b_i\}, \{a_j,b_j: j\in \qq{1,\mu}\setminus\{i\})> R/2.
\end{align}
Since $\tcGT$ is obtained from $\cGT$ by removing the edges $\{b_i,c_i\}_{i\leq \nu}$ and adding the edges $\{a_i,b_i\}_{i\leq \nu}$,
the claim \eqref{tcGTdist} directly follows from \eqref{cGTdist}.
\end{proof}

\begin{proof}[Proof of \eqref{e:lessshortdist}]
We consider three cases.
If $\dist_{\tcGT}(x,\{a_i,b_i\})> R/4$ for all $i\in \qq{1,\mu}$, then the claim is trivial.
If $\dist_{\tcGT}(x,\{a_i,b_i\})\leq R/4$ for some $i\in \qq{1,\mu}\setminus(\sBa \cup \sBb)$, then \eqref{tcGTdist} implies that
\begin{align*}
\dist_{\tcGT}(x,\{a_j,b_j\})\geq \dist_{\tcGT}(\{a_i,a_j\},\{a_j,b_j\})-\dist_{\tcGT}(x,\{a_i,b_i\})>R/2-R/4=R/4, 
\end{align*}
for any $j\in \qq{1,\mu}\setminus\{i\}$. Thus $|\{i\in\qq{1,\mu}: \dist_{\tcGT}(x,\{a_i,b_i\})\leq R/4\}|=1 \leq 5\n$ as claimed.
In the remaining case, $\dist_{\tcGT}(x,\{a_i,b_i\})\leq R/4$ is only possible for $i\in \sBa \cup \sBb$.
Therefore $|\{i\in\qq{1,\mu}: \dist_{\tcGT}(x,\{a_i,b_i\})\leq R/4\}|\leq |\sBa\cup \sBb|\leq 5\n$ as claimed.
\end{proof}

\section{The Green's function distance and switching cells}
\label{sec:distGF}

The bounds provided in the Section~\ref{sec:dist} provide accurate control for distances at most $R/2$.
However, \emph{random} vertices are typically much further from each other,
and as mentioned in Section~\ref{sec:outline},
we require stronger upper bounds on the Green's function for such large distances.
These bounds are in fact a general consequence of the \emph{Ward identity},
\begin{equation} \label{e:Ward-bis}
  \sum_{j} |G_{ij}(z)|^2 = \frac{\im [G_{ii}(z)]}{\im [z]},
\end{equation}
which holds for the Green's function of any symmetric matrix (see \eqref{e:Ward}).
To make use of it, we introduce a much coarser measure of distance in terms of the size
of the Green's function as follows.

\subsection{Definition}\label{sec:defcells}

Given a parameter $M>0$ (ultimately chosen in \eqref{e:Mnprime} below),
we define a relation $\sim$ on $\qq{N} \setminus \T$ by
setting $x \sim y$ if and only if
\begin{equation} \label{e:simdef}
\max_{u: \dist(x,u)\leq 4r,\atop v:\dist(y,v)\leq 4r}|\GT_{uv}(z)|\geq
\frac{M}{\sqrt{N\eta}},
\end{equation}
where the distance in the maximum is with respect to the graph $\cGT$, and $\eta=\Im[z]$.
The relation $\sim$ induces a graph $\cal R$ on the vertices $\{a_1, \dots, a_\mu, b_1, \dots, b_\nu\}$.
We partition $\{a_1, \dots, a_\mu, b_1, \dots, b_\nu\}$ into its $\sim$-clusters.
More precisely, we define $\bI_1$ to be the vertex set consisting of the union of the connected components of $\cal R$
 containing any element of $\{a_1,a_2,\dots, a_\mu\}$,
 and we define $\bI_2,\dots, \bI_\kappa$ be the vertex sets of the remaining connected components of $\cal R$.

\begin{figure}[t]
\centering
\input{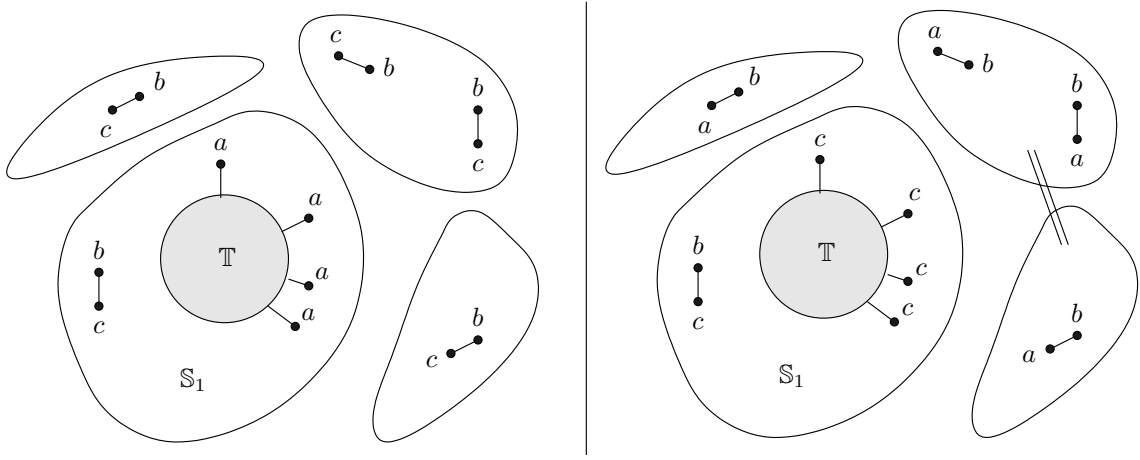}
\caption{The $\rm S$-cells are clusters of vertices that are close to one of the edges $\{b_i,c_i\}$
in the sense of the Green's function distance $\sim$.
The $\rm S$-cell $\cC_1$ contains all $a_i$ (the vertex boundary of $\cT$ in the original graph)
as well as those $b_i$ which are close to any of the $a_i$ in the sense of the Green's function distance.
Since the switching may decrease distances between vertices, the $\rm S'$-cells are defined by joining
the $\rm S$-cells which have vertices that are close to each other.
\label{fig:cells}}
\end{figure}

\begin{definition}[Cells]\label{def:cells}
$ $ 
\begin{itemize}
\item
Define sets $\cC_1, \cC_2,\dots, \cC_\kappa \subset \qq{1,N}$ called \emph{$\rm S$-cells} by
\begin{align}\label{e:defC_i}
\cC_i=\bB_{2r}(\bI_i, \cGT).
\end{align}
%
For any vertex $x\in \qq{N}\setminus\T$,
we write $x\sim \cC_i$ if there is $y\in \bI_i$ such that $x\sim y$.
\item
Define
$\cC_1', \dots, \cC_{\kappa'}'\subset \qq{1,N}$ called \emph{$\rm S'$-cells} by combining the $\rm S$-cells which are close to each other after switching:
we set $\cC_1'=\cC_1$ and join $\rm S$-cells $\cC_i$ and $\cC_j$ with $i,j>1$ if $\dist_{\tcGT}(\cC_i,\cC_j) \leq 2r$.
\end{itemize}
\end{definition}

The $\rm S$- and $\rm S'$-cells are illustrated in Figure~\ref{fig:cells}.
The $\rm S$-cells are defined in terms of the unswitched graph.
In the switching process, distances between $\rm S$-cells may decrease.
This is accounted for by the coarser $\rm S'$-cells.
For later use, we note the following elementary properties of $\rm S$-cells:
\begin{itemizetight}
\item
For any $x\in \cC_i$ and $y\in \cC_j$ such that $i \neq j$ we have $|\GT_{xy}|< M/\sqrt{N\eta}$.
\item
For any vertex $x\in \qq{N}\setminus\T$, if $x\not\sim \cC_i$, then for any $y\in \cC_i$, $|\GT_{xy}|<M/\sqrt{N\eta}$.
\item
If $b_k \in \cC_i$, then also $c_k \in \cC_i$; and, consequently, if $b_k \in \cC_i'$ then $c_k \in \cC_i'$.
\end{itemizetight}

\subsection{Estimates}

From now on, we fix the parameters $M$ and $\n'$ by
\begin{equation} \label{e:Mnprime}
M =d^{9\ell} (\log N)^{\delta}, \quad \n' = \lfloor\log N\rfloor
,
\end{equation}
where $\delta>0$ was fixed in Section~\ref{sec:outline}.
The next proposition shows that the cells do not cluster. 

\begin{proposition} \label{prop:Rdist2}
For any graph $\cG \in \Omega_1^+(z,\ell)$ (as in Section \ref{sec:outline-structure}), 
with probabililty at least $1-o(N^{-\n+\delta})$ under $\bf S$,
the following estimates hold:
\begin{itemize}
\item
Any $x \in \qq{N} \setminus \T$ is $\sim$-connected to fewer than $\n'$ of $\{b_1,b_2,\dots, b_\mu\}$,
\begin{align}\label{finitedeg}
|\{i\in \qq{1,\mu}: x\sim b_i\}|< \n'.
\end{align}
In particular, $x$ is $\sim$-connected to at most $\n'$ of the $\rm S$-cells.
\item
Except for at most $\n'$ many indices $i$,  the vertex $b_i$ is a singleton in the graph $\cR$,
and thus the $\rm S$-cell containing $b_i$ is disjoint from $\{a_j, b_k: j\in\qq{1,\mu}, k\in\qq{1,\mu}\setminus\{i\}\}$:
\begin{align}\label{deg}
|\{i\in \qq{1,\mu}: b_i\sim \{a_j, b_k: j\in\qq{1,\mu}, k\in\qq{1,\mu}\setminus\{i\}\} \}|<\n'.
\end{align}
In particular, each $\rm S$-cell contains at most $\n'$ of $\{b_1,b_2,\dots, b_\mu\}$.
\item 
Most $\rm S'$-cells are far from the other vertices participating in the switching:
\begin{align}\begin{split}\label{e:Spcellbd}
|\{i\in \qq{1,\nu}: 
& b_i\in \bS_j', \text{such that $j=1$ or}\\
&\dist(\bS_j', \{a_k, b_m, c_m: k\in\qq{1,\mu}\setminus \{i\}, m\in\qq{1,\nu}\setminus\{i\}\})\leq R/4\}|<\n'+5\n.
\end{split}\end{align}
In particular, each $\rm S'$-cell contains at most $\n'+5\n$ of $\{b_1,b_2,\dots, b_\nu\}$.  
\end{itemize}
\end{proposition}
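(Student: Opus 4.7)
The plan is to combine the Ward identity (which turns an $L^\infty$-bound into an $L^2$-bound for the Green's function) with the approximate independence and uniformity of the resampling edges $(b_i,c_i)$, then supplement with the geometric control already proved in Sections~\ref{sec:dist}--\ref{sec:defcells}.

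First I would establish a deterministic bound on the number of vertices that can be $\sim$-related to a fixed vertex. Fix $x\in\qq{N}\setminus\T$ and note that $x\sim w$ requires the existence of $u,v$ with $\dist_{\cGT}(x,u),\dist_{\cGT}(w,v)\leq 4r$ and $|\GT_{uv}|\geq M/\sqrt{N\eta}$. Since $\cG\in\Omega_1^+(z,\ell)$, the diagonal entries $|\GT_{uu}|$ are uniformly $O(1)$ (by \eqref{e:GTstab} and \eqref{e:boundPiimsc}). The Ward identity \eqref{e:Ward-bis} then gives $\sum_v |\GT_{uv}|^2 = \Im[\GT_{uu}]/\eta=O(1/\eta)$, so for each $u$ the set of $v$ with $|\GT_{uv}|\geq M/\sqrt{N\eta}$ has size $O(N/M^2)$. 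Summing over the $\leq (d-1)^{4r+1}$ choices of $u$ and enlarging to a $4r$-neighborhood in $w$, the set
\[
V_x = \{w\in\qq{N}\setminus\T : x\sim w\}
\]
has cardinality $|V_x|\leq C(d-1)^{8r+2}N/M^2$. Our choice $M=d^{9\ell}(\log N)^\delta$ and $r=2\ell+1$ is engineered so that $|V_x|/N \ll d^{-2\ell}(\log N)^{-2\delta}$.

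Next I would prove \eqref{finitedeg}. By Proposition~\ref{lem:switchable}(i), $\P_{\cG}(b_i\in V_x)\leq 2|V_x|/N=:p_x$. Since $b_1,\dots,b_\mu$ are independent, a union bound over $\binom{\mu}{\omega'}$ choices of a subset of size $\omega'=\floor{\log N}$ gives
\[
\P_{\cG}\pb{|\{i: x\sim b_i\}|\geq \omega'} \leq (\mu p_x)^{\omega'}.
\]
Plugging in $\mu\leq 2(d-1)^{\ell+1}$ and the estimate for $|V_x|$, one checks $\mu p_x$ is a negative power of $\log N$, so $(\mu p_x)^{\omega'}=o(N^{-\omega+\delta})$.

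For \eqref{deg}, I split the event $b_i\sim\{a_j,b_k:j\in\qq{1,\mu},k\neq i\}$ into $b_i\sim a_j$ for some $j$, which by the previous paragraph applied with $x=a_j$ falls in a union $V:=\bigcup_j V_{a_j}$ of size $|V|\leq \mu\cdot C(d-1)^{8r+2}N/M^2$; and $b_i\sim b_k$ for some $k\neq i$. The first part is handled as in \eqref{finitedeg} with $V_x$ replaced by $V$. For the second part, fix a subset $I=\{i_1,\dots,i_{\omega'}\}\subset\qq{1,\mu}$ and a matching $\phi:I\to\qq{1,\mu}\setminus I$, and bound the probability that $b_{i_s}\sim b_{\phi(i_s)}$ for all $s$. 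Conditioning on $b_{\phi(i_s)}$ (which determines the set $V_{b_{\phi(i_s)}}$) and using the independence of $b_{i_s}$ from this conditioning, we get the conditional probability $\leq 2|V_{b_{\phi(i_s)}}|/N$, and iterating this over $s$ one gets a product bound $\leq (2C(d-1)^{8r+2}/M^2)^{\omega'}$. Summing over the $\leq \mu^{2\omega'}$ choices of $(I,\phi)$ gives $o(N^{-\omega+\delta})$ as before.

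Finally, \eqref{e:Spcellbd} is deduced from \eqref{deg} combined with the geometric estimates of Section~\ref{sec:dist}. An index $i\in\qq{1,\nu}$ contributes to the count in \eqref{e:Spcellbd} only if either (a) $b_i$ is $\sim$-connected to another switching vertex—either $b_i\sim a_j$ (so that $b_i$ lies in $\cC_1$) or $b_i\sim b_k$ with $k\neq i$ (so that $\cC_1$ or some other $\rm S$-cell containing $b_i$ gets enlarged into a larger $\rm S'$-cell); or (b) the $\rm S'$-cell containing $b_i$ coincides with one which has a vertex within graph distance $R/4$ (in $\tcGT$) of some other $a_k,b_m,c_m$. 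The number of $i$ in category (a) is $<\omega'$ by \eqref{deg}. For category (b), observe that an $\rm S$-cell $\cC_j$ has diameter $\leq 4r\ll R/4$ in $\cGT$, so once we excise the indices counted by \eqref{e:Spcellbd} with $b_i\in\cC_1$, the remaining contribution requires some vertex within graph distance $O(r)+R/4\leq R/2$ (in $\tcGT$) of a switching vertex, and Proposition~\ref{structuretGT} \eqref{e:lessshortdist} together with \eqref{distdega}--\eqref{distdegb} bounds such indices by $5\omega$. Adding the two contributions yields the claimed bound $\omega'+5\omega$.

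The main obstacle is the precise bookkeeping in the last step, because the $\rm S'$-cells are formed by joining $\rm S$-cells that are close \emph{after} switching; one must carefully distinguish the geometric closeness (controlled by Proposition~\ref{structuretGT}) from the Green's function closeness (controlled by \eqref{deg}), and verify that the choice $M=d^{9\ell}(\log N)^\delta$ indeed gives $\mu p_x\ll 1$ with enough margin to absorb all polynomial-in-$d$ factors arising in the union bounds.
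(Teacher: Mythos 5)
Your proposal follows essentially the same route as the paper: the Ward identity together with the near-uniformity bound $\P_{\cG}(b_i=x)\leq 2/N$ yields $\P_{\cG}(b_i\sim x)=O((d-1)^{8r}/M^2)$, the estimates \eqref{finitedeg} and \eqref{deg} then follow from independence of the $(b_i,c_i)$ and union bounds, and \eqref{e:Spcellbd} is obtained by combining \eqref{deg} with the deterministic distance control encoded in $\sBa,\sBb$ from Propositions~\ref{prop:distdeg} and \ref{structuretGT}, exactly as in the paper. The one imprecision is your union bound for the event $b_i\sim b_k$: a bad index set of size $\n'$ need not admit a matching into its complement (e.g.\ two indices related only to each other), so one must first extract, as the paper does, a subset of size $\n'/2$ each of whose elements is $\sim$-related to an index outside that subset before conditioning; this halving is standard and does not affect the conclusion.
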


In the remainder of this section, we prove the above proposition.
It is essentially a straightforward
consequence of the definitions, combined with union bounds.

\subsection{Proof of Proposition~\ref{prop:Rdist2}}

The following two lemmas collect some elementary properties of the Green's function graph $\cal R$
on $\{a_1, \dots, a_\mu, b_1, \dots, b_\mu\}$ that we require.

\begin{lemma}\label{l:distdiffcC}
Let $\cG\in \Omega_1^+(z,\ell)$ (as in Section \ref{sec:outline-structure}) and $x,y\in \qq{N}\setminus\T$. Then we have $x\not\sim y$ implies $\dist_{\cGT}(x,y)>8r$.
\end{lemma}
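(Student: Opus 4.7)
I aim to prove the contrapositive: assume $\dist_{\cGT}(x,y)\leq 8r$ and deduce that $x \sim y$.

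Pick a vertex $u$ on a shortest $\cGT$-path from $x$ to $y$ close to its midpoint, so that both $\dist_{\cGT}(x,u)\leq 4r$ and $\dist_{\cGT}(y,u)\leq 4r$. Taking $u=v$ in the maximum defining $\sim$ in \eqref{e:simdef}, the task reduces to the diagonal lower bound $|\GT_{uu}(z)|\geq M/\sqrt{N\eta}$.

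To produce this lower bound, I would combine two ingredients. The hypothesis $\cG\in\Omega_1^+(z,\ell)$, via \eqref{e:defOmega+} with $i=j=u$, gives
\[
\left|\GT_{uu}(z)-P_{uu}(\cE_r(u,u,\cGT),z)\right|\leq 2^{10}|\msc(z)|q^r,
\]
while the diagonal tree-extension bound \eqref{e:boundPii} applied to $\cG_0=\cE_r(u,u,\cGT)$ yields $|P_{uu}(\cG_0,z)-\md(z)|\leq |\msc|/4$. The two hypotheses of Proposition~\ref{boundPij} need to be checked: the excess bound follows from $\cE_r(u,u,\cGT)\subset\cB_r(u,\cG)$ together with $\cG\in\bar\Omega$, while the sum-of-deficit bound $\sum g\leq 8\omega$ relies on Proposition~\ref{p:deficitbound}(i), since the only vertices of $\cGT$ with nonzero deficit are the boundary vertices $a_i$; when $u$ lies within distance $r+\ell\ll R$ of the center vertex $1$, the relevant nonzero deficit is contained in a connected component of the annulus $\cA$ and is therefore bounded by $\omega+1$ (and if $u$ is farther from $\T$ the deficit is identically zero in $\cB_r(u,\cGT)$). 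Combining these two bounds with $|\md(z)|\geq (d-1)|\msc(z)|/d$, which is immediate from \eqref{e:mdstieltjes} and $|\msc|\leq 1$, I obtain
\[
|\GT_{uu}(z)|\geq |\msc(z)|\left(\tfrac{d-1}{d}-\tfrac{1}{4}-2^{10}q^r\right)\geq \tfrac{1}{3}|\msc(z)|,
\]
where I have used $q^r\leq (d-1)^{-r/2}\leq (\log N)^{-\alpha}$ by the choice $r\geq 2\ell_*+1$ in Section~\ref{sec:outline-parameters} and $N$ large.

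The final step is a parameter chase showing $|\msc(z)|/3\geq M/\sqrt{N\eta}$ on $\cal D$. The constraint $|z\pm 2|\geq (\log N)^{1-\alpha/2}$ together with the implicit boundedness of $|z|$ from the reduction of Proposition~\ref{prop:betacomp} gives a polylogarithmic lower bound on $|\msc(z)|$; combining with $\sqrt{N\eta}\geq (\log N)^{24\alpha+1/2}$ and $M=d^{9\ell}(\log N)^\delta$, where $d^{9\ell}$ is polylogarithmic of degree $O(\alpha)$ (since $\ell\leq 2\ell_*$), the inequality holds comfortably for $\alpha>4$. The only substantive step is the verification of the deficit-function hypothesis of \eqref{e:boundPii}; the remainder is bookkeeping.
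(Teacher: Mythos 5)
Your proposal is correct and follows essentially the same route as the paper's proof: the contrapositive, a midpoint vertex $u$ with $\dist_{\cGT}(x,u),\dist_{\cGT}(y,u)\leq 4r$, and the diagonal lower bound $|\GT_{uu}|\gtrsim|\msc|\geq M/\sqrt{N\eta}$ obtained from the definition \eqref{e:defOmega+} of $\Omega_1^+(z,\ell)$ together with \eqref{e:boundPii}. The additional verifications you supply (the deficit-function hypothesis of Proposition~\ref{boundPij} via Proposition~\ref{p:deficitbound}, and the parameter chase for $|\msc|\geq M/\sqrt{N\eta}$) are details the paper leaves implicit, and they check out.
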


\begin{proof}
We show that $\dist_{\cGT}(x,y)\leq 8r$ implies $x\sim y$.
Assume that $\dist_{\cGT}(x,y) \leq 8r$.
Then there must be a vertex $u$ such that $\dist_{\cGT}(x,u) \leq 4r$ and $\dist_{\cGT}(y,u)\leq 4r$.
Moreover, by the definition \eqref{e:defOmega+} of $\Omega_1^+(z,\ell)$ and estimate \eqref{e:boundPii},
also $|\GT_{uu}(z)| \geq |\msc(z)|/2\geq M/\sqrt{N\eta}$, and thus $x \sim y$.
\end{proof}

\begin{lemma}\label{lem:Rdist1}
Let $\cG\in \Omega_1^+(z,\ell)$ (as in Section \ref{sec:outline-structure}) and $x \in \qq{N} \setminus \T$. Then
\begin{equation} \label{e:bsimx}
\P_{\cG}(b_i\sim x)\leq 16 (d-1)^{8r}/M^2.
\end{equation}
\end{lemma}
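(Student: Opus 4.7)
The proof of Lemma~\ref{lem:Rdist1} combines three ingredients: the near-uniformity of $b_i$ from Proposition~\ref{lem:switchable}(i), volume growth in $\cGT$, and the Ward identity \eqref{e:Ward-bis} applied to $G^{(\T)}$, together with the $\Omega_1^+$ diagonal bound to close it.

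\textbf{Setup and first reduction.} Unpacking \eqref{e:simdef}, the event $b_i \sim x$ holds precisely when $b_i \in \bB_{4r}(\bV, \cGT)$, where
\begin{equation*}
\bV := \{v \in \qq{N}\setminus\T : \text{there exists } u \in \bB_{4r}(x,\cGT) \text{ with } |G^{(\T)}_{uv}(z)| \geq M/\sqrt{N\eta}\}.
\end{equation*}
Since $\P_\cG(b_i = y)\leq 2/N$ for any $y\in\qq{N}\setminus\T$ by \eqref{e:approxunifbd}, a union bound gives
\begin{equation*}
\P_{\cG}(b_i \sim x) \leq \frac{2|\bB_{4r}(\bV, \cGT)|}{N} \leq \frac{4(d-1)^{4r}|\bV|}{N},
\end{equation*}
where the second step uses that in a graph of maximum degree $d$, any ball of radius $4r$ has at most $2(d-1)^{4r}$ vertices.

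\textbf{Controlling $|\bV|$ via Ward.} For any $u\in\qq{N}\setminus\T$, the Ward identity \eqref{e:Ward-bis} applied to the Hermitian operator $H^{(\T)}$ gives
\begin{equation*}
\sum_v |G^{(\T)}_{uv}(z)|^2 = \frac{\Im[G^{(\T)}_{uu}(z)]}{\eta}.
\end{equation*}
The hypothesis $\cG \in \Omega_1^+(z,\ell)$ together with the a priori bound \eqref{e:boundPiimsc} on the diagonal of the tree-extended Green's function yields
\begin{equation*}
|G^{(\T)}_{uu}(z)| \leq |P_{uu}(\cE_r(u,u,\cGT),z)| + 2^{10}|\msc|q^r \leq \tfrac{3}{2}|\msc| + 2^{10}|\msc|q^r \leq 2,
\end{equation*}
where the last inequality uses that $q$ is small and $r$ large under the parameter choices of Section~\ref{sec:outline-parameters}. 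By Markov's inequality,
\begin{equation*}
|\{v : |G^{(\T)}_{uv}(z)| \geq M/\sqrt{N\eta}\}| \leq \frac{N\eta}{M^2}\cdot\frac{\Im[G^{(\T)}_{uu}]}{\eta} \leq \frac{2N}{M^2}.
\end{equation*}
A union over the at most $2(d-1)^{4r}$ candidates $u\in \bB_{4r}(x,\cGT)$ yields $|\bV|\leq 4N(d-1)^{4r}/M^2$.

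\textbf{Combining.} Substituting back,
\begin{equation*}
\P_{\cG}(b_i \sim x) \leq \frac{4(d-1)^{4r}}{N}\cdot \frac{4N(d-1)^{4r}}{M^2} = \frac{16(d-1)^{8r}}{M^2},
\end{equation*}
as claimed. The only mildly delicate step is verifying the uniform bound $|G^{(\T)}_{uu}|\leq 2$ via the $\Omega_1^+$ hypothesis; everything else is bookkeeping. No substantial obstacle is expected.
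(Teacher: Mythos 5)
Your proof is correct and follows essentially the same route as the paper: near-uniformity of $b_i$ via \eqref{e:approxunifbd}, the Ward identity combined with the $\Omega_1^+$ diagonal bound $|G^{(\T)}_{uu}|\leq 2$ to control the level set, and volume growth of radius-$4r$ balls to absorb the two enlargements, with the constants matching exactly. No issues.
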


\begin{proof}
$\cG\in \Omega_1^+(z,\ell)$ and \eqref{e:boundPii} imply that $\im [\GT_{xx}] \leq |\GT_{xx}|\leq 2$.
Thus the Ward identity \eqref{e:Ward} implies
\begin{align}\label{e:Wardpf}
\sum_i |\GT_{xi}|^2=\Im [\GT_{xx}]/\eta\leq 2/\eta.
\end{align}
For any vertex $x\in \qq{N} \setminus \T$, set
\begin{align*}
\tilde \bV_x &:=\left\{i\in \qq{N} \setminus \T: |\GT_{xi}|\geq M/\sqrt{N\eta}\right\},
\\
\bV_x &:=\left\{i\in \qq{N} \setminus \T: \dist_{\cGT}\left(i, \bigcup_{j\in \bB_{4r}(x, \cGT)}\tilde \bV_j\right)\leq 4r \right\}.
\end{align*}
The inequality \eqref{e:Wardpf} implies $|\tilde \bV_x|\leq 2N/M^2$,
and since any vertex has at most $2(d-1)^{4r}$ vertices in its radius-$4r$ neighborhood, we also have $|\bV_x|\leq 8(d-1)^{8r}N/M^2$.
Moreover, $i\not\in \bV_x$ implies that $i\not\sim x$. Thus
\begin{equation*}
\P_{\cG}(b_i \sim x) \leq \P_{\cG}(b_i \in \bV_x) \leq \frac{2}{N} |\bV_x| \leq 16 (d-1)^{8r}/ M^2,
\end{equation*}
where the second inequality holds because $b_i$ is approximately uniform \eqref{e:approxunifbd}.
\end{proof}

\begin{proof}[Proof of \eqref{finitedeg}]
The proof is similar to that of \eqref{distfinitedegb}.
By the union bound and \eqref{e:bsimx}, we have
\begin{align*}
\P( |\{i\in \qq{1,\mu }: b_i\sim x\}|\geq \n')
\leq {\mu \choose \n'} \left(\frac{16(d-1)^{8r}}{M^2}\right)^{\n'}\leq (\log N)^{-\delta\log N}\ll N^{-\n},
\end{align*}
where, in the second inequality, we used $\binom{\mu}{\n'} \leq \mu^{\n'}$
and that
\begin{equation*}
16\mu d^{8r}/M^2 
\leq
16 d^{\ell+1}d^{17\ell} d^{-18\ell} (\log N)^{-2\delta}
\end{equation*}
since $\mu \leq d^{\ell+1}$ and by the definition of $M$ \eqref{e:Mnprime}.
\end{proof}

\begin{proof}[Proof of \eqref{deg}]
The proof is similar to that of \eqref{distdegb}.
Indeed, by the union bound and \eqref{e:bsimx},
\begin{align*}
&\P_{\cG}\left(|\{i\in \qq{1,\mu}: b_i\sim \{a_j,b_k:j\in\qq{1,\mu}, k\in \qq{1,\mu}\setminus\{i\}\} \}|\geq \n'\right)\\
&\leq {\mu \choose \n'/2}\left(\frac{16(d-1)^{8r}\mu}{M^2}\right)^{\n'/2}\leq (\log N)^{-\delta\log N/2}\ll N^{-\n},
\end{align*}
as needed.
\end{proof}

\begin{proof}[Proof of \eqref{e:Spcellbd}]
Recall the index sets $\sBa, \sBb \subset \qq{1,\mu}$ from \eqref{distdega}, \eqref{distdegb},
and let $i\not\in \sBa \cup \sBb$ be such that $b_i\not\sim \{a_k, b_m: k\in\qq{1,\mu}, m\in\qq{1,\nu}\setminus\{i\}\}$.
Denote the  $\rm S$-cell containing $b_i$ by $\cC$; then $\cC$ is not $\cC_1$ and it is disjoint from $\{b_k: k\in\qq{1,\mu}\setminus\{i\}\}$. 

By the definition of $\sBa$, $\sBb$ and since $b_j$ and $c_j$ are adjacent in $\cGT$ we have
\begin{align}\label{e:distforakbkck}
\dist_{\cGT}(\{a_i,b_i,c_i\}, \{a_k, b_m, c_m: k\in\qq{1,\mu}\setminus \{i\}, m\in\qq{1,\nu}\setminus\{i\}\})\geq R/2-2.
\end{align}
Since the graph $\tcGT$ is obtained from $\cGT$ by removing edges $\{b_j, c_j\}_{j\leq \nu}$ and adding edges $\{a_j,b_j\}_{j\leq \nu}$,
we also have
\begin{align}\label{somedist}
\dist_{\tcGT}(\{a_i,b_i,c_i\}, \{a_k, b_m, c_m: k\in\qq{1,\mu}\setminus \{i\}, m\in\qq{1,\nu}\setminus\{i\}\})\geq R/2-2.
\end{align}
Moreover, for any other $\rm S$-cell $\cC_j\neq \cC, \cC_1$, we have
\begin{align*}
\dist_{\tcGT}(\cC, \cC_j)\geq -2r+\dist_{\tcGT}(b_i, \bI_j)-2r\geq R/2-2-4r>2r,
\end{align*}
where we used \eqref{somedist}, $r\ll R$ and the definition \eqref{e:defC_i} of $\rm S$-cells, i.e. $\cC = \bB_{2r}(b_i, \cGT)$ and $\cC_j=\bB_{2r}(\bI_j, \cGT)$. Thus $\cC$ is a $\rm S'$-cell itself, and
\begin{align*}
&\dist_{\tcGT}(\cC, \{a_k, b_m, c_m: k\in\qq{1,\mu}\setminus \{i\}, m\in\qq{1,\nu}\setminus\{i\}\})\\
& \geq
\dist_{\tcGT}(b_i, \{a_k, b_m, c_m: k\in\qq{1,\mu}\setminus \{i\}, m\in\qq{1,\nu}\setminus\{i\}\})-2r
\geq R/2-2r-2> R/4,
\end{align*}
where we used $r\ll R$. Therefore, only $i\in \sBa \cup \sBb$ or $b_i\sim \{a_k, b_m: k\in\qq{1,\mu}, m\in\qq{1,\nu}\setminus\{i\}\}$ contribute to the statement \eqref{e:Spcellbd}. 
Thus, combining \eqref{deg} with the estimate $|\sBa \cup \sBb|\leq 5\n$ from \eqref{distdega}, \eqref{distdegb}, and with \eqref{deg},
the estimate \eqref{e:Spcellbd} follows.
\end{proof}

\section{Stability under removal of a neighborhood}
\label{sec:stabilityT}

The following deterministic estimate shows that
removing the neighborhood $\T$ from the graph $\cG$
has a small effect on the Green's function in the complement of $\T$.

\begin{proposition} \label{prop:stabilityGT}
Let $z \in \C_+$ and $\sqrt{d-1}\geq (\n+1)^2 2^{2\n+10}$,
and let $\cG \in \bar\Omega$ (as in Section \ref{sec:outline-structure}) be a graph such that,
for all $i,j \in \qq{N}$,
\begin{align}\label{e:asumpGT}
\left|G_{ij}-P_{ij}(\cE_r(i,j,\cG))\right|\leq |\msc|q^r.
\end{align}
Then, for all vertices $i,j\in \qq{N} \setminus \T$, we have
\begin{equation}\label{e:stabilityGT}
|\GT_{ij}-P_{ij}(\cE_r(i,j, \cGT))|
\leq 2 |\msc|q^{r}.
\end{equation}
\end{proposition}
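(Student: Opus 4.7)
The plan is to compare $\GT_{ij}$ with $P_{ij}(\cE_r(i,j,\cGT))$ by passing through an intermediate ``bridge'' subgraph $\cG_0 \subset \cG$ that contains both $\cE_r(i,j,\cG)$ and the ball $\cT$, together with all $\cG$-edges from $\T$ to its vertex boundary $\partial\T$. Since $\cG \in \bar\Omega$, Propositions~\ref{prop:structure} and \ref{p:deficitbound} ensure that every connected component of $\cG_0$ and of $\cG_0^{(\T)}$ has excess at most $\omega$ and total deficit bounded by an $\omega$-dependent constant, so Proposition~\ref{boundPij}(ii) applies and delivers the compatibility estimates
\begin{align*}
|P_{ij}(\cE_r(i,j,\cG)) - P_{ij}(\cG_0)|,\; |P_{ij}(\cG_0^{(\T)}) - P_{ij}(\cE_r(i,j,\cGT))| \;\leq\; 2^{2\omega+3}|\msc|q^{r+1}.
\end{align*}
These reduce the problem to controlling $|\GT_{ij} - P_{ij}(\cG_0^{(\T)})|$.

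The core of the argument is to apply the Schur complement formula for removing $\T$, in parallel, to $(H-z)^{-1}$ and to the Green's function of $\TE(\cG_0)$. A careful bookkeeping of the deficit function conventions of Section~\ref{sec:intro-TE} shows that $\TE(\cG_0)\setminus\T = \TE(\cG_0^{(\T)})$, so Schur-complementing $P(\cG_0)$ along $\T$ yields exactly $P(\cG_0^{(\T)})$. Moreover, the self-energy
\begin{align*}
\Sigma = B^T(H|_\T - z)^{-1}B
\end{align*}
appearing in both Schur complement identities is \emph{identical}, since it only depends on the Green's function of $\cG|_\T$ (contained in $\cG_0$, since $\cG_0 \supset \cT$) and on the adjacency entries between $\T$ and $\partial\T$ (also captured by $\cG_0$ by construction). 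Writing $\Delta_{iu} := \GT_{iu} - P_{iu}(\cG_0^{(\T)})$ and $\epsilon_{vj} := G_{vj} - P_{vj}(\cG_0)$ and subtracting the two Schur complement identities yields the self-consistent equation
\begin{align*}
\Delta_{ij} \;=\; \epsilon_{ij} \;-\; \sum_{u,v\in\partial\T\cap\T^c}\Sigma_{uv}\pa{\Delta_{iu}P_{vj}(\cG_0) + \GT_{iu}\epsilon_{vj}}.
\end{align*}

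The inhomogeneity $|\epsilon|$ is bounded by $(1+O_\omega(q))|\msc|q^r$ from the assumption \eqref{e:asumpGT} together with the first compatibility estimate. The main obstacle will be to convert this self-consistent relation into the pointwise bound $|\Delta_{ij}| \leq (1 + O_\omega(q))|\msc|q^r$, despite the fact that $\Sigma$ admits no good pointwise estimate in general (the induced Green's function on $\T$ can have very small spectral gap). The strategy is to avoid entrywise bounds on $\Sigma$ and instead estimate the double sum by exploiting that (i) Proposition~\ref{GTstructure} bounds by $O(\omega)$ the number of vertices of $\partial\T\cap\T^c$ close to any given vertex, (ii) Proposition~\ref{boundPij} together with the assumption \eqref{e:asumpGT} yields the spatial decay $|P_{vj}(\cG_0)|, |\GT_{iu}| \lesssim |\msc|q^{\dist}$ away from $\T$, and (iii) the same $\Sigma$ already controls the Schur correction for $G$ itself, whose magnitude is constrained by the assumption. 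Bootstrapping $\Delta$ through the self-consistent equation closes with a constant strictly less than $1$ in front of $\Delta$ on the right-hand side, thanks to the quantitative hypothesis $\sqrt{d-1}\geq(\omega+1)^2 2^{2\omega+10}$, which ensures each error term carries a factor small enough to absorb the $2^{2\omega+3}$ coefficients from the compatibility estimates. Combining the propagated bound on $|\Delta_{ij}|$ with the two compatibility estimates then delivers the claimed bound $|\GT_{ij} - P_{ij}(\cE_r(i,j,\cGT))| \leq 2|\msc|q^r$.
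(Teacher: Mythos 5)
Your algebraic setup breaks down exactly at the point you flag as the "main obstacle", and the workaround you sketch does not exist. The self-energy $\Sigma=B'(H|_{\T}-z)^{-1}B$ is built from the resolvent of the \emph{finite induced subgraph} $\cT$ at the spectral parameter $z$. The spectrum of $H|_{\T}$ consists of $|\T|=(\log N)^{O(1)}$ real points in the bulk, and the proposition must hold for every $z\in\cal D$, including $z$ whose real part coincides with such an eigenvalue and whose imaginary part is as small as $(\log N)^{O(1)}/N$; for such $z$ the entries of $\Sigma$ are of size $N^{1-o(1)}$ (near an eigenvalue $\lambda$ with eigenvector $v$ one has $\Sigma\approx (Bv)(Bv)^*/(\lambda-z)$, and $Bv$ does not vanish for a truncated tree). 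Your ingredient (iii) — that the assumption constrains the "Schur correction for $G$" — gives nothing usable: knowing that $(D-z-\Sigma)^{-1}=G|_{\T^c}$ has bounded entries does not bound the bilinear forms $\sum_{u,v}\Sigma_{uv}\Delta_{iu}P_{vj}(\cG_0)$ in your self-consistent equation, and ingredients (i)–(ii) only reduce the number of terms, not the size of $\Sigma$. The contraction constant multiplying $\Delta$ in your bootstrap is of order $\mu^2\max_{u,v}|\Sigma_{uv}|\,q^{O(1)}$, which is not less than $1$, so the bootstrap cannot close. (The localization/compatibility reductions at the start of your argument are fine; the failure is solely in the choice of self-energy.)

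The paper avoids this by taking the Schur complement in the opposite direction: the matrix inverted over (a subset of) $\T$ is the restriction of the \emph{full} Green's function, $G|_{\bU}$ in the first step and $G^{(\bU)}|_{\T\setminus\bU}$ in the second, whose inverse equals $H|_{\T\setminus\bU}-z-B'G^{(\T)}B$ and is therefore controlled by comparison with the well-behaved tree-extension quantity $H-z-B_1'(D_1-z)^{-1}B_1$ (see \eqref{invP}--\eqref{invG} and the bound \eqref{GTinvbd}), never by the resolvent of the induced subgraph. The paper also needs a preliminary step your scheme lacks: it first removes only the $O(\n)$ boundary vertices of $\T$ within distance $r$ of $i$ or $j$ (the set $\bU$ of \eqref{e:bSdef}, handled as a bounded-rank perturbation via Claim~\ref{l:sizeinvG}), so that in the second step every remaining vertex of $\T$ is at distance greater than $r$ from $i$ and $j$ and each of the factors $G^{(\bU)}_{ix}$, $G^{(\bU)}_{yj}$ contributes a factor $q^{r}$, which beats the $(d-1)^{\ell}$ volume of the sum over $\T$. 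To repair your argument you would have to replace $B'(H|_{\T}-z)^{-1}B$ by an object built from $G|_{\T}$ or $G^{(\T)}$ — at which point you are reconstructing the paper's proof.
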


As discussed in Section~\ref{sec:outline},
the removal of $\T$ is useful because our switchings
have a smaller effect in $\cGT$ than they do in $\cG$. Indeed, in the original graph $\cG$,
our switchings have the effect of removing two edges and adding two edges, while in $\cGT$ our switchings only remove
the edges $\{b_i,c_i\}_{i\leq \nu}$ and add the edges $\{a_i,b_i\}_{i\leq \nu}$. In the next few sections,
we therefore work with $\cGT$ and its switched version $\tcGT$,
and only return to the full graph in Section~\ref{sec:weakstab}.

The remainder of this section is devoted to the proof of the proposition.
The main ingredients are that (\rn 1) given any $i,j$,
there can only be a few vertices in $\T$ that are close to $i$ or $j$, by the
deterministic assumption on the excess of $R$-neigborhoods, 
and (\rn 2) that for all other
vertices in $\T$, the decay of the Green's function implied by \eqref{e:asumpGT}
shows that the removal of them has a small effect. 

\subsection{Step 1: Removal of vertices close to $i$ or $j$}
\label{sec:stabilityT1}
From \eqref{def:Ti},
recall that $\T_\ell=\{v\in \cG: \dist_\cG(1,v)=\ell\}$ is the set of inner vertex boundary of $\cT$.
The first step of the proof of Proposition~\ref{prop:stabilityGT}
consists of removing the vertices in $\T_\ell$ that are close to $i$ or $j$. The set of such vertices is
\begin{equation} \label{e:bSdef}
 \bU=\{v \in \T_\ell: \dist_{\cG \setminus \cT}(i,v)\leq r\}\cup \{v \in \T_\ell: \dist_{\cG \setminus \cT}(j,v)\}\leq r\}
 ,
\end{equation}
where $\cG\setminus \cT$ is obtained from $\cG$ by removing the subgraph $\cT$ induced by $\cG$ on $\T$ (but not removing $\T_\ell$).
Then $|\bU| \leq 2\n+2$ by \eqref{distfinitedegl}.
The following proposition shows that the Green's function remains to be locally approximated after removing $\bU$.

\begin{proposition}\label{stabilityGS}
Under the assumptions of Proposition~\ref{prop:stabilityGT},
for any vertex set $\bU \subset \T$ with $|\bU|\leq 2\n+2$, 
\begin{equation}\label{e:stabilityGS}
 |G^{(\bU)}_{ij}-P_{ij}(\cE_{r}(i,j, \cG^{(\bU)}))|
 \leq  3|\msc|q^{r}/2.
\end{equation}
\end{proposition}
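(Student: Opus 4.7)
The strategy is to exploit the smallness of $|\bU|\leq 2\omega+2$ via the Schur complement formula \eqref{e:Schur}, which reduces the difference $G - G^{(\bU)}$ to a finite bilinear form indexed by $\bU$, and then to match it with an analogous expression for the tree extension Green's function.

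First, I would fix a convenient ambient subgraph $\cG_0$ of $\cG$ containing $\bU$ together with the $r$-neighborhoods of $i$ and $j$ in both $\cG$ and $\cG^{(\bU)}$ (for instance, $\cG_0 := \cB_{r+2}(i,\cG)\cup\cB_{r+2}(j,\cG)\cup \bU$). Since $\cG\in\bar\Omega$, any connected component of $\cG_0$ has excess at most $\omega$, and by Proposition~\ref{p:deficitbound} its deficit sums are bounded, so Proposition~\ref{boundPij} applies uniformly to $P^0 := P(\cG_0)$. By Remark~\ref{rk:princG0}, $|P_{ij}(\cE_r(i,j,\cG))-P^0_{ij}|$ and $|P_{ij}(\cE_r(i,j,\cG^{(\bU)}))-P(\cG_0^{(\bU)})_{ij}|$ are both $\leq 2^{2\omega+3}|\msc|q^{r+1}$ (the convention on deficit functions makes $P(\cG_0^{(\bU)})$ agree with the Schur complement of $P^0$ at $\bU$).

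Next, apply Schur's formula to both Green's functions:
\begin{equation}\label{plan:schur}
G^{(\bU)}_{ij}=G_{ij}-\sum_{u,v\in\bU}G_{iu}[(G|_{\bU})^{-1}]_{uv}G_{vj},\qquad
P(\cG_0^{(\bU)})_{ij}=P^0_{ij}-\sum_{u,v\in\bU}P^0_{iu}[(P^0|_{\bU})^{-1}]_{uv}P^0_{vj}.
\end{equation}
The hypothesis \eqref{e:asumpGT} combined with Remark~\ref{rk:princG0} gives entrywise control $|G_{xy}-P^0_{xy}|\leq 2|\msc|q^r$ for all $x,y\in\{i,j\}\cup\bU$. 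The individual entries $|G_{iu}|,|G_{vj}|,|P^0_{iu}|,|P^0_{vj}|$ are bounded by $2^{\omega+2}|\msc|q^{\dist(i,u)}$ resp.\ $2^{\omega+2}|\msc|q^{\dist(j,v)}$ via \eqref{e:boundPij}, and the distances $\dist(i,u),\dist(j,v)$ together with the distance from the matched neighbor give a total $q$-power of at least $q^r$ in each bilinear term after matching through the inverse.

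The main obstacle is the third step: controlling the inverse $(G|_\bU)^{-1}-(P^0|_\bU)^{-1}$. One needs a uniform bound on $\|(P^0|_\bU)^{-1}\|$; this I would obtain by noting that $P^0|_\bU$ is a principal submatrix of the Green's function of a tree extension, whose off-diagonal entries decay like $q^{\dist}$ by \eqref{e:boundPij}, while the diagonal is close to $\md$ by \eqref{e:boundPii}. Hence $P^0|_\bU = \md\,\mathrm{Id} + E$ with $\|E\|_\infty\leq |\bU|\cdot 2^{\omega+2}q$, so under $\sqrt{d-1}\geq(\omega+1)^2 2^{2\omega+10}$ we have $\|E\|\leq 1/4$ and $\|(P^0|_\bU)^{-1}\|\leq 2/|\md|$. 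Then the identity $A^{-1}-B^{-1}=A^{-1}(B-A)B^{-1}$ combined with the entrywise bound $\|G|_\bU-P^0|_\bU\|\leq 2(2\omega+2)|\msc|q^r$ gives $\|(G|_\bU)^{-1}-(P^0|_\bU)^{-1}\|\lesssim |\msc|q^r$.

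Finally I would subtract the two identities in \eqref{plan:schur}. Each of the at most $(2\omega+2)^2$ summands contributes an error bounded by $C(\omega)|\msc|q^r\cdot q^{\dist(i,u)+\dist(j,v)}$, and summing (with room to spare by the assumed lower bound on $\sqrt{d-1}$) yields $|G^{(\bU)}_{ij}-P(\cG_0^{(\bU)})_{ij}|\leq |\msc|q^r + 2^{2\omega+3}|\msc|q^{r+1}/2$, which combined with the two Remark~\ref{rk:princG0} approximations gives \eqref{e:stabilityGS}.
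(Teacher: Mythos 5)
Your proposal follows essentially the same route as the paper's proof: localize via Remark~\ref{rk:princG0} to a vertex\-/independent tree-extension Green's function $P^0$, write both $G^{(\bU)}$ and $P(\cG_0^{(\bU)})$ as Schur complements over $\bU$ (using the deficit-function convention so that $P(\cG_0^{(\bU)})$ really is the Schur complement of $P^0$ at $\bU$), bound $(G|_{\bU})^{-1}$ and $(P^0|_{\bU})^{-1}$ by $O(1/|\msc|)$, and telescope the difference of the two bilinear forms. The paper additionally splits off the case where $i$ or $j$ is far from $\bU$, where $\cE_r(i,j,\cG)=\cE_r(i,j,\cG^{(\bU)})$ and one only needs $|G_{ij}-G^{(\bU)}_{ij}|$ to be small; your unified treatment also closes, since in that regime the factors $G_{iu}$, $G_{vj}$ are themselves $O(|\msc|q^r)$.

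Two points in your write-up need repair. First, your $\cG_0=\cB_{r+2}(i,\cG)\cup\cB_{r+2}(j,\cG)\cup\bU$ is too small: the entrywise control $|G_{uv}-P^0_{uv}|\leq 2|\msc|q^r$ for $u,v\in\bU$, which you need in order to bound $(G|_{\bU})^{-1}-(P^0|_{\bU})^{-1}$, comes from Remark~\ref{rk:princG0} and therefore requires $\cE_r(u,v,\cG)\subset\cG_0$; your $\cG_0$ contains only the vertices of $\bU$, not their neighborhoods, so this fails when $\bU$ is far from $i$ and $j$. The paper takes $\cG_0=\cB_{3r}(1,\cG)$, which contains $\cB_{2r}(\bU,\cG)$ because $\bU\subset\T=\bB_\ell(1,\cG)$ and $\ell+2r\leq 3r$; any $\cG_0$ containing $\cB_{2r}(\bU,\cG)$ together with the relevant neighborhoods of $i,j$ works. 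Second, in the perturbation $P^0|_{\bU}=\md\,\mathrm{Id}+E$ you bound $E$ only by its off-diagonal entries; by \eqref{e:boundPii} the diagonal of $E$ is only controlled by $|\msc|/4$, which is in fact the dominant contribution. One still gets $\|E\|\leq |\md|/2$ (since $|\md|\geq |\msc|/(1+(d-1)^{-1})$) and hence the desired inverse bound, but the bookkeeping must include this term. With these repairs the argument closes exactly as in the paper.
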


\begin{figure}[t]
\centering
\input{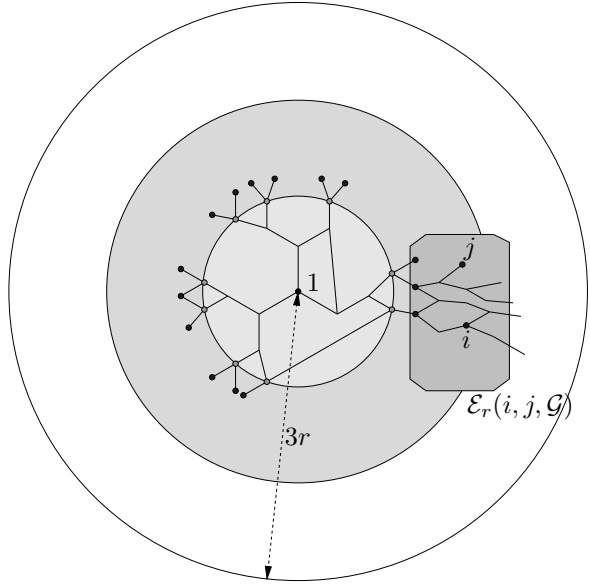}
\caption{The innermost disk shows $\T$, the second largest disk the set $\bX$,
and the outermost disk $\cG_0$. For any $i,j\in \bX$, the graph $\cE_r(i,j,\cG)$ is contained in $\cG_0$.
\label{fig:XG0}}
\end{figure}

The proof of Proposition~\ref{stabilityGS} follows a general structure that occurs repeatedly in similar
estimates throughout the paper.
\begin{enumerate}
\item
The first ingredient in this  structure, which we refer to as \emph{localization},
replaces the Green's function $P_{ij}(\cE_r(i,j,\cG))$ of the vertex-dependent graph $\cE_r(i,j,\cG)$
by the Green's function $P_{ij}=P_{ij}(\cG_0)$ of a graph $\cG_0$ that does not depend on $i,j$,
by an application of Remark~\ref{rk:princG0}. For this, among other things, we need to verify the assumptions of
Proposition~\ref{stabilityGS}.
\item
The second ingredient, which we refer to as the \emph{starting point} for the argument,
is an algebraic relation that expresses the quantity to be estimated in a convenient form. The starting point typically follows from
the Schur complement formula or the resolvent formula.
\item
The third ingredient is a collection of previously established estimates required to
estimate the expressions given by the starting point. It typically includes estimates on elements of Green's functions and graph distances.
\end{enumerate}
The actual proofs then usually follow by combination of the above ingredients. In principle, this step is straightforward,
but often several different cases need to be distinguished, which makes some of the arguments appear somewhat lengthy.

\medskip
Below we provide  the first instance of the strategy described above to prove Proposition~\ref{stabilityGS}.

\paragraph{Localization}

We approximate $P_{ij}(\cE_r(i,j,\cG))$ by a vertex independent Green's function $P_{ij}$ according to Remark~\ref{rk:princG0},
applied with $\cX=\cB_{3r}(1,\cG)$ and $\bX=\bB_{2r}(\bU, \cG)$.
We abbreviate
\begin{equation*}
\cG_1=\TE(\cG_0),
\quad P=G(\cG_1),\qquad
\cG_1^{(\bU)}=\TE(\cG_0^{(\bU)}),\quad P^{(\bU)}=G(\cG_1^{(\bU)}).
\end{equation*}

\paragraph{Verification of assumptions in Proposition \ref{boundPij}}
As subgraphs of $\cG \in \bar\Omega$, the radius-$R$ neighborhoods of $\cG_0$ and $\cG_0^{(\bU)}$ have excess at most $\n$.
By convention, the deficit function of $\cG_0$ vanishes,
on each connected component of $\cG^{(\bU)}_0$, the deficit function of $\cG^{(\bU)}$ obeys $\sum g(v) \leq \n+(2\n+2) \leq 8\n$,
by Proposition~\ref{p:deficitbound}.
Thus the assumptions for \eqref{e:compatibility} are verified for both graphs, and for any $i,j\in \bX$,
\begin{align}\label{replaceEr}
\absb{P_{ij}(\cE_{r}(i,j,\cG))-P_{ij}}
\leq 2^{2\n+3}|\msc|q^{r+1},\qquad
\absb{P_{ij}(\cE_{r}(i,j,\cG^{(\bU)}))-P_{ij}^{(\bU)}}
\leq 2^{2\n+3}|\msc|q^{r+1}
\end{align}
provided that $\sqrt{d-1}\geq 2^{\n+2}$.

\paragraph{Starting point}

To remove $\bU$, we apply the Schur complement formula \eqref{e:Schur1}:
for any $i,j\in \cG^{(\bU)}$,
\begin{align}\label{GSschur}\begin{split}
 G_{ij}-G_{ij}^{(\bU)}=&\sum_{x,y\in \bU}G_{ix}(G|_{\bU})^{-1}_{xy}G_{yj},\\
 P_{ij}-P_{ij}^{(\bU)}=&\sum_{x,y\in \bU}P_{ix}(P|_{\bU})^{-1}_{xy}P_{yj}.
 \end{split}
\end{align}
Our goal is to show that the difference $G_{ij}^{(\bU)}-P_{ij}^{(\bU)}$ is small,
by using that the difference of $G$ and $P$ is small.
As evident from the right-hand sides of \eqref{GSschur}, for this we require upper bounds on the entries
of $G$ and $(G|_{\bU})^{-1}$ (and analogously for $P$ and $(P|_{\bU})^{-1}$).

\paragraph{Green's function estimates}
By assumption \eqref{e:asumpGT} and \eqref{e:boundPij}--\eqref{e:boundPii}, we have
\begin{align} \label{Gxwbound}
\begin{cases}
|G_{xx}|\geq |\md|-|\msc|/4-|\msc|q^r \geq 3|\msc|/5,\\
|G_{xw}|\leq 2^{\n+2}|\msc|q+|\msc|q^r & (x\neq w),\\
|G_{xw}|\leq |\msc|q^r & (\dist_{\cG}(x,w)>r).
\end{cases}
\end{align}

These bounds imply the upper bounds for the entries of $(G|_{\bU})^{-1}$ stated in the following claim.
The claim essentially follows from the fact that the off-diagonal entries of $G|_{\bU}$ are much smaller than
the diagonal entries which have size roughly $\msc$.

\begin{claim}\label{l:sizeinvG}
Under the assumptions of Proposition~\ref{prop:stabilityGT},
for any $\bU \subset \T$ with $|\bU|\leq 2\n+2$,
and any $x,y \in \bU$,
\begin{equation}\label{invbound}
  |(G|_{\bU})^{-1}_{xy}| \leq 2/|\msc|,
  \quad
  |(P|_{\bU})^{-1}_{xy}| \leq 2/|\msc|.
\end{equation}
\end{claim}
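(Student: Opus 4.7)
The plan is to invert $G|_\bU$ (and similarly $P|_\bU$) via a Neumann expansion around the diagonal part. I will decompose $G|_\bU = D + E$ with $D = \diag(G_{xx})_{x\in \bU}$ and $E$ off-diagonal, write $(G|_\bU)^{-1} = D^{-1}(I+F)^{-1}$ with $F := ED^{-1}$, and show that $F$ is small enough in row-sum norm that the Neumann expansion converges rapidly and $D^{-1}$ provides an excellent approximation to $(G|_\bU)^{-1}$.

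First I would bound $D^{-1}$ and $F$ using \eqref{Gxwbound}. The diagonal bound $|G_{xx}|\geq 3|\msc|/5$ gives $|D^{-1}_{xx}|\leq 5/(3|\msc|)$, and the off-diagonal bound $|G_{xw}|\leq 2^{\n+2}|\msc|q + |\msc|q^r$ for $x\neq w$, combined with $|\bU|\leq 2\n+2$, gives
\begin{equation*}
\|F\|_\infty := \max_{x\in \bU}\sum_{w\in \bU}|F_{xw}| \leq (2\n+1)\cdot \tfrac{5}{3}\pb{2^{\n+2}q + q^r} \leq \frac{5(2\n+1)\cdot 2^{\n+3}}{3\sqrt{d-1}},
\end{equation*}
where I used $q\leq 1/\sqrt{d-1}$ and $q^r\leq q$. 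The hypothesis $\sqrt{d-1}\geq (\n+1)^2 2^{2\n+10}$ forces the right-hand side to be much smaller than $1$; a crude estimate yields $\|F\|_\infty\leq 1/50$.

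Then I would apply submultiplicativity of $\|\cdot\|_\infty$ (so that $|(F^k)_{xw}|\leq \|F\|_\infty^k$) together with the Neumann expansion $(I+F)^{-1} = I + \sum_{k\geq 1}(-F)^k$ to obtain
\begin{equation*}
|(I+F)^{-1}_{xw}|\leq \delta_{xw} + \sum_{k\geq 1}\|F\|_\infty^k \leq 1 + \frac{\|F\|_\infty}{1-\|F\|_\infty}\leq \frac{50}{49}.
\end{equation*}
Since $D^{-1}$ is diagonal, $(G|_\bU)^{-1}_{xw} = D^{-1}_{xx}(I+F)^{-1}_{xw}$, and therefore $|(G|_\bU)^{-1}_{xw}|\leq (5/(3|\msc|))\cdot(50/49) < 2/|\msc|$, as required.

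The argument for $P|_\bU$ is essentially identical, using $|P_{xx}|\geq |\md| - |\msc|/4 \geq 3|\msc|/5$ from \eqref{e:boundPii} (valid since $|\md|\geq |\msc|(d-1)/d$ by \eqref{e:mdstieltjes}, together with the lower bound on $d$) and $|P_{xw}|\leq 2^{\n+2}|\msc|q$ for $x\neq w$ from \eqref{e:boundPij}; the hypotheses of Proposition~\ref{boundPij} for $\cG_0$ are exactly those already verified in the localization preceding \eqref{replaceEr}. I do not foresee any substantive obstacle: the content is purely algebraic and the required smallness of $F$ comes directly from the strong lower bound on $d$.
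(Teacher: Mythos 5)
Your proof is correct and follows essentially the same route as the paper: both arguments exploit the diagonal dominance of $G|_{\bU}$ coming from \eqref{Gxwbound}, the only difference being that you sum the Neumann series for $(I+ED^{-1})^{-1}$ explicitly, whereas the paper closes the equivalent self-consistent inequality $\Gamma \leq \tfrac{5}{3|\msc|}+\tfrac{\Gamma}{6}$ for $\Gamma=\max_{x,y\in\bU}|(G|_{\bU})^{-1}_{xy}|$ obtained directly from $G|_{\bU}(G|_{\bU})^{-1}=I$. Your numerical estimates, including the lower bound $|P_{xx}|\geq |\md|-|\msc|/4\geq 3|\msc|/5$ used for the $P|_{\bU}$ case, all check out under the stated hypothesis on $d$.
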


\begin{proof}
By the identity $G|_{\bU}(G|_{\bU})^{-1}=I_{\bU\times\bU}$, we have
\begin{align}\label{IDGS}
\delta_{xy}=G_{xx}(G|_{\bU})^{-1}_{xy}+\sum_{w \in \bU \setminus \{x\}}G_{xw}(G|_{\bU})^{-1}_{wy}.
\end{align}
Let $\Gamma:=\max_{x,y\in \bU}|(G|_{\bU})^{-1}_{xy}|$.
Then \eqref{Gxwbound} and \eqref{IDGS} imply
\begin{align*}
|G_{xx}||(G|_{\bU})^{-1}_{xy}|\leq \delta_{xy}+\sum_{w\in \bU\setminus \{x\}}|G_{xw}|\Gamma\leq 1+(2^{\n+2}q+q^{r})|\bU||\msc|\Gamma.
\end{align*}
Taking the maximum over $x,y\in \bU$ in the equation above and using \eqref{Gxwbound} gives
\begin{align*}
\Gamma 
\leq \frac{5}{3|\msc|} + \frac{5}{3} (2^{\n+2}q+q^{r})|\bU| \Gamma
\leq \frac{5}{3|\msc|} + \frac{\Gamma}{6},
\end{align*}
provided that $\sqrt{d-1}\geq (\n+1)2^{\n+6}$. $\Gamma\leq 2/|m_{sc}|$ follows by rearranging. The same argument applies to $P|_{\bU}$,
and we obtain \eqref{invbound}.
\end{proof}

\begin{proof}[Proof of Proposition~\ref{stabilityGS}]
First consider the case that at least one of $i$ and $j$ is not in $\bX$ (i.e.\ far from $\bU$).
Then $\cE_{r}(i,j, \cG)=\cE_{r}(i,j, \cG^{(\bU)})$, and \eqref{e:stabilityGS} follows directly from
\begin{align*}
  |G_{ij}-G_{ij}^{(\bU)}|=\left|\sum_{x,y\in \bU}G_{ix}(G|_{\bU})^{-1}_{xy}G_{yj}\right|
  \leq
  (2^{\n+2}q+q^{r})q^r|\bU|^2 |\msc|^2 (2/|m_{sc}|)
  \leq
  |\msc| q^{r}/2
  ,
\end{align*}
where we used \eqref{Gxwbound}, \eqref{invbound} and that $\sqrt{d-1}\geq (\n+1)^22^{\n+7}$.

Next consider the main case $i,j \in \bX$.
By \eqref{replaceEr}, it suffices to bound the right-hand side of
\begin{align}\label{difGPS}
  |G_{ij}^{(\bU)}-P_{ij}^{(\bU)}|
  \leq  |G_{ij}-P_{ij}| + \sum_{x,y\in \bU}\left|G_{ix}(G|_{\bU})^{-1}_{xy}G_{yj}-P_{ix}(P|_{\bU})^{-1}_{xy}P_{yj}\right|,
\end{align}
which follows from taking difference of expressions in \eqref{GSschur}. 
By \eqref{e:asumpGT} and \eqref{replaceEr},
since for all vertices $i,j\in \bX$ and $x,y \in \bU \subset \bT$,
we have $\cE_{r}(i,j,\cG), \cE_{r}(i,x,\cG), \cE_{r}(y,j,\cG),\cE_{r}(x,y,\cG)\subset \cal G_0$,
\begin{align}\label{bounddiff}
  |G_{ij}-P_{ij}|, |G_{ix}-P_{ix}|, |G_{yj}-P_{yj}|, |G_{xy}-P_{xy}|\leq |\msc|q^{r}+2^{2\n+3} |\msc|q^{r+1}.
\end{align}
Together with \eqref{invbound} and the resolvent formula \eqref{e:resolv}, it follows that
\begin{equation}\label{invdiffbound}
  |(G|_{\bU})^{-1}_{xy}- (P|_{\bU})^{-1}_{xy}|
  =
  |[(G|_{\bU})^{-1}(G|_{\bU}-P|_{\bU})(P|_{\bU})^{-1}]_{xy}| \leq 4|\bU|^2 (1+2^{2\n+3}q)q^r/|\msc|
  .
\end{equation}
Using \eqref{Gxwbound},  \eqref{invbound}, \eqref{bounddiff}, and \eqref{invdiffbound},
the sum on the right-hand side of \eqref{difGPS} is bounded by
\begin{align}
\notag  &\sum_{x,y\in \bU}\Big(\left|G_{ix}-P_{ix}\right| | (G|_{\bU})^{-1}_{xy}G_{yj}|
   +|P_{ix}(P|_{\bU})^{-1}_{xy}||G_{yj}-P_{yj}|
  +\left|P_{ix}\right||(G|_{\bU})^{-1}_{xy}-(P|_\bU)^{-1}_{xy}||G_{yj}|\Big)\\
\label{differentestimate}  &
  \leq
  4|\msc|q^{r}(1+2^{2\n+3}q)\left(|\bU|^2(2^{\n+2}q+q^{r})+|\bU|^4(2^{\n+2}q+q^{r})^2\right)\leq |m_{sc}|q^r/4
\end{align}
where we used that $\sqrt{d-1}\geq (\n+1)^2 2^{2\n+10}$.
The claim follows by combining this bound for \eqref{difGPS} with \eqref{replaceEr}.
\end{proof}

\subsection{Step 2: Estimate of $\GT_{ij}$ using $G^{(\bU)}_{ij}$}
\label{sec:stabilityT2}

Next we pass from $G^{(\bU)}_{ij}$ to $G^{(\bT)}_{ij}$.
By definition of $\bU$, there are no vertices in $\bT \setminus \bU$ that are close to $i$ or $j$ in the graph $\cGT$.
Thus the step mostly follows from the decay of the Green's function together with the assumption
that there are few cycles.

\paragraph{Starting point}

Define $\cG_0=\cB_{3r}(1,\cG)$ and $\cG_1=\TE(\cG_0)$ as in Section~\ref{sec:stabilityT1}.
The normalized adjacency matrices of $\cG^{(\bU)}$ and $\cG_1^{(\bU)} = \TE(\cG_0^{(\bU)})$ have the block  matrix form
\begin{align*}
 \left[\begin{array}{cc}
        H^{(\bU)} & B'\\
        B & D
       \end{array}
\right],\quad 
\left[\begin{array}{cc}
        H^{(\bU)} & B_1'\\
        B_1 & D_1
       \end{array}
\right],
\end{align*}
where $H^{(\bU)}$ is the normalized adjacency matrix of $\cT^{(\bU)}$.
The nonzero entries of $B$ and $B_1$ occur for the indices
$(i,j) \in \{a_1,\dots, a_\mu\}\times \T_\ell\setminus \bU$ and take values $1/\sqrt{d-1}$.
Notice that $\tilde B_{ij}=(\tilde B_1)_{ij}$. 
We denote the normalized Green's functions of  $\cG^{(\bU)}$ and $\cG_1^{(\bU)}$ by $G^{(\bU)}$ and $P^{ (\bU)}$ respectively.
By the Schur complement formula \eqref{e:Schur1}, for any $i,j\in \qq{N}\setminus\T$, 
\begin{align}\label{GTGS}
 \GT_{ij}=(D-z)^{-1}_{ij}
 =G^{(\bU)}_{ij}-\sum_{x,y\in \T\setminus \bU}G^{(\bU)}_{ix}(G^{(\bU)}|_{\T\setminus\bU})^{-1}_{xy}G^{(\bU)}_{yj},
\end{align}
and also
\begin{align}
\label{invP}(P^{(\bU)}|_{\T\setminus \bU})^{-1}=&H^{(\bU)}-z-B_1'(D_1-z)^{-1}B_1,\\
\label{invG}(G^{(\bU)}|_{\T\setminus {\bU}})^{-1}=&H^{(\bU)}-z-B'(D-z)^{-1}B.
\end{align}

\begin{claim}
For any $x \in \T_\ell\setminus \bU$,
\begin{equation} \label{e:sumPTSbd}
\sum_{y\in \T\setminus \bU} |(P^{(\bU)}|_{\T\setminus \bU})^{-1}_{xy}|
\leq 2(|z|+1).
\end{equation}
\end{claim}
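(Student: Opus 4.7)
The plan is to invert via the Schur complement formula \eqref{invP}, writing $(P^{(\bU)}|_{\T\setminus\bU})^{-1}=H^{(\bU)}-z-S$ with $S:=B_1'(D_1-z)^{-1}B_1$, and to bound the row sum at $x\in\T_\ell\setminus\bU$ by analysing the contributions from $H^{(\bU)}$, the diagonal $S_{xx}$, and the off-diagonal entries $(S_{xy})_{y\neq x}$ separately. Because $B_1$ is supported on $\{a_1,\dots,a_\mu\}\times(\T_\ell\setminus\bU)$, the matrix $S$ vanishes outside $(\T_\ell\setminus\bU)^2$, so only $H^{(\bU)}$ contributes when $y\in\T\setminus\T_\ell$. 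Using $\cG\in\bar\Omega$ (so $\cB_R(1,\cG)$ has excess at most $\omega$), the vertex $x\in\T_\ell$ has $\deg_\cT(x)\leq 1+\omega$, giving $\sum_y|H^{(\bU)}_{xy}|\leq(1+\omega)/\sqrt{d-1}$.

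The main work is to estimate $S$. Expanding $S_{xy}=\frac{1}{d-1}\sum_{i\in I_x,\,j\in J_y}(D_1-z)^{-1}_{a_i,a_j}$ with $I_x:=\{i:a_i\sim x\}$ and $J_y:=\{j:a_j\sim y\}$, I identify $(D_1-z)^{-1}$ with the Green's function $P(\cG_0^{(\T)})$ of the tree extension of $\cG_0^{(\T)}$: by the conventions of Section~\ref{sec:intro-TE}, each $a_i$ acquires deficit $1$ upon removing $\T$, so by Proposition~\ref{greentree} the corresponding Green's function on the infinite rooted tree $\cY_0$ equals $\msc(z)$ at the root. Combining the compatibility estimate \eqref{e:compatibility} with Proposition~\ref{boundPij}(i) gives $(D_1-z)^{-1}_{a_i,a_i}=\msc+O(|\msc|q^{\,r})$ for tree-like $a_i$ and $|(D_1-z)^{-1}_{a_i,a_j}|\leq 2^{\omega+2}|\msc|q^{\dist(a_i,a_j)}$ for $i\neq j$. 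Together with Lemma~\ref{lem:beta2n} (bounding by $\omega$ the number of $j\neq i$ with $a_j$ within distance $R/2$ of a fixed $a_i$) and the identity $|I_x|=d-\deg_\cT(x)$, I obtain
\[
S_{xx}=\tfrac{d-\deg_\cT(x)}{d-1}\,\msc(z)+o(1),\qquad \sum_{y\neq x}|S_{xy}|=o(1),
\]
where the off-diagonal estimate also uses that the total number of $a_i$ having more than one neighbor in $\T$ is bounded by the excess $\omega$.

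Exploiting the semicircle identity $z=-\msc-1/\msc$, the diagonal piece becomes
\[
z+S_{xx}=-\tfrac{1}{\msc}-\tfrac{\deg_\cT(x)-1}{d-1}\,\msc+o(1),
\]
and with $|\msc|\leq 1$ and $1/|\msc|\leq|z|+1$ (following from $|z|\geq 1/|\msc|-|\msc|$) this gives $|z+S_{xx}|\leq(|z|+1)+\omega/(d-1)+o(1)$. Combining all three contributions yields
\[
\sum_{y\in\T\setminus\bU}|(P^{(\bU)}|_{\T\setminus\bU})^{-1}_{xy}|\leq(1+\omega)/\sqrt{d-1}+(|z|+1)+o(1)\leq 2(|z|+1)
\]
for $d$ large compared to $\omega$, as assumed. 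The main obstacle will be extracting this precise cancellation: the naive triangle bound would give $|S_{xx}|=O(|\msc|)$ and $\sum_y|S_{xy}|=O(|\msc|\omega)$, which added to $|z|$ is too crude for small $|z|$ (e.g.\ $|z|\to 0$). The sharp bound requires first identifying the asymptotic value $S_{xx}\approx\msc$ via the tree-extension/deficit framework and then invoking the semicircle equation to cancel $z$ against $\msc$.
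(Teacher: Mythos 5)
Your decomposition $(P^{(\bU)}|_{\T\setminus\bU})^{-1}=H^{(\bU)}-z-S$ with $S=B_1'(D_1-z)^{-1}B_1$ is exactly the paper's starting point, and your refined route (identify $S_{xx}\approx\frac{d-\deg_\cT(x)}{d-1}\msc$ via the rooted-tree Green's function, then cancel against $z$ using $z=-\msc-1/\msc$ to get $|z+S_{xx}|\leq 1/|\msc|+o(1)\leq |z|+1+o(1)$) is correct and in fact yields a slightly sharper constant. But the paper does something much cruder and it suffices: it bounds $|z|$ and $S$ separately by the triangle inequality, using only $|(D_1-z)^{-1}_{a_ia_j}|\leq\frac32|\msc|$ from Proposition~\ref{boundPij}, the count of at most $d-1$ boundary edges at $x$ for the diagonal $a_ia_i$ terms, and \eqref{fewclose} to reduce the off-diagonal $a_ia_j$ terms to at most $(2\n)^2$ nonzero entries — all carrying the $1/(d-1)$ normalization from $B_1$. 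This gives $|z|+\frac{\n+1}{\sqrt{d-1}}+\frac32|\msc|\bigl(1+\frac{(2\n)^2}{d-1}\bigr)\leq |z|+2\leq 2(|z|+1)$, valid uniformly down to $|z|\to 0$ because the target $2(|z|+1)\geq 2$ absorbs the $\frac32|\msc|\leq\frac32$ term. So your closing claim that the naive bound is ``too crude for small $|z|$'' is mistaken; it rests on the estimate $\sum_{y}|S_{xy}|=O(|\msc|\,\omega)$, which drops the $1/(d-1)$ prefactor — the correct naive off-diagonal bound is $O(|\msc|\,\omega^2/(d-1))=o(1)$, consistent with what you yourself assert two displays earlier. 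In short: your proof works, but the cancellation machinery (tree-extension identification of $S_{xx}$ plus the semicircle equation) is unnecessary here; the factor $2$ in the stated bound is precisely the slack that lets the paper avoid it.
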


\begin{proof}
For any $x\in \T_\ell\setminus \bU$, by \eqref{invP} we have
\begin{align}
 \notag\sum_{y\in \T\setminus \bU} |(P^{(\bU)}|_{\T\setminus \bU})^{-1}_{xy}|
  &=\sum_{y\in \T\setminus \bU}|(H^{(\bU)}-z-B_1'(D_1-z)^{-1}B_1)_{xy}|\\
  \notag &\leq \sum_{y\in \T\setminus \bU} H^{(\bU)}_{xy} +|z|  +\sum_{y\in \T\setminus \bU} |(B_1'(D_1-z)^{-1}B_1)_{xy}|\\
 \label{express01} &\leq \frac{\n+1}{\sqrt{d-1}} +|z|+\sum_{y\in \T\setminus \bU} |(B_1'(D_1-z)^{-1}B_1)_{xy}|.
\end{align} 
In the last inequality, we used that the excess of $\cT^{(\bU)}$ is at most $\n$
so that for any $x\in \T_\ell\setminus \bU$, we have $\deg_{\cT^{(\bU)}}(x)\leq \n+1$ and
thus $\sum_{y\in \T\setminus \bU} H^{(\bU)}_{xy}\leq (\n+1)/\sqrt{d-1}$.
The terms in the last sum in \eqref{express01} vanish unless $y\in \T_\ell\setminus\bU$.
Therefore the sum is bounded by
\begin{align}
\notag &\sum_{i\in \qq{1,\mu}\atop l_i=x} (B'_1)_{l_ia_i}|(D_1-z)^{-1}_{a_ia_i}|(B_1)_{a_il_i}
+ \sum_{i\neq j\in \qq{1,\mu}\atop l_i,l_j\in \T_\ell\setminus\bU} (B'_1)_{l_ia_i}|(D_1-z)^{-1}_{a_ia_j}|(B_1)_{a_jl_j}\\
\label{express15}
&\leq \frac{1}{d-1}\sum_{i\in\qq{1,\mu}} 1_{x=l_i}|(D_1-z)^{-1}_{a_ia_i}|+\frac{1}{d-1}\sum_{i\neq j\in\qq{1,\mu}} |(D_1-z)^{-1}_{a_ia_j}|
.
\end{align}
For the first sum in \eqref{express15},
the number of vertices $a_i$ adjacent to $x$ is at most $d-1$.
For the second sum, by \eqref{fewclose}, for all pairs $i\neq j$ with at most $(2\n)^2$ exceptions, $\dist_{\cGT}(a_i,a_j)>R/2$.
For these pairs, $a_i$ and $a_j$ are in different connected components of the graph $\cG_1^{(\T)}$
which means that $|(D_1-z)^{-1}_{a_ia_j}|=0$. Therefore there are at most $(2\n)^2$ non-vanishing terms
in the second sum. We use also that
\begin{equation*}
|(D_1-z)^{-1}_{a_ia_j}|=P_{a_ia_j}({\rm TE}(\cG_0^{(\T)})) \leq 3|\msc|/2,
\end{equation*}
which follows from \eqref{e:boundPiimsc}, provided that $\sqrt{d-1}\geq 2^{2\n+3}$. Therefore,
\begin{equation} \label{express02}
\eqref{express15} \leq \left(1+\frac{(2\n)^2}{d-1}\right)\max_{i,j\in\qq{1,\mu}}|(D_1-z)^{-1}_{a_ia_j}|
 \leq \frac{3}{2}|m_{sc}|\left(1+\frac{(2\n)^2}{d-1}\right),
\end{equation}
By combining \eqref{express01} and \eqref{express02}, we have shown that
\begin{align*}
\sum_{y\in \T\setminus \bU} |(P^{(\bU)}|_{\T\setminus \bU})^{-1}_{xy}|\leq  \frac{\n+1}{\sqrt{d-1}} +|z|+\frac{3}{2}|m_{sc}|\left(1+\frac{(2\n)^2}{d-1}\right)\leq 2(|z|+1),
\end{align*}
provided that $\sqrt{d-1}\geq 8(\n+1)$. This completes the proof.
\end{proof}

\begin{claim}
For any $x,y\in \T\setminus \bU$,
\begin{align}
 \label{GTdiffbd} &|G^{(\bU)}_{xy}-P^{(\bU)}_{xy}| \leq 2|\msc|q^r,\\
 \label{GTinvbd} &|G^{(\bU)}|_{\T\setminus {\bU}})_{xy}^{-1}-(P^{(\bU)}|_{\T\setminus \bU})_{xy}^{-1}| \leq 48(|z|+1)q^r,
\end{align}
\end{claim}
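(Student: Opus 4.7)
The plan is to prove the two bounds in succession, leveraging Proposition~\ref{stabilityGS} and the formulas \eqref{invP}, \eqref{invG} that were just established, together with the localization principle from Remark~\ref{rk:princG0}.

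For \eqref{GTdiffbd}, I would start by applying Proposition~\ref{stabilityGS} directly with $i=x$, $j=y$, yielding
\[
\absb{G^{(\bU)}_{xy} - P_{xy}(\cE_r(x,y,\cG^{(\bU)}))} \leq \tfrac{3}{2}|\msc|q^r.
\]
Next I would invoke the localization principle \eqref{replaceEr}: since $x,y\in\T\setminus\bU\subset \bX=\bB_{2r}(\bU,\cG)$ and $\cG_0^{(\bU)}$ satisfies the hypotheses of Proposition~\ref{boundPij} (verified in Section~\ref{sec:stabilityT1}), we have
\[
\absb{P_{xy}(\cE_r(x,y,\cG^{(\bU)})) - P^{(\bU)}_{xy}} \leq 2^{2\n+3}|\msc|q^{r+1}.
\]
The triangle inequality then yields \eqref{GTdiffbd}, provided $\sqrt{d-1}$ is large enough to absorb the second error term into $|\msc|q^r/2$.

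For \eqref{GTinvbd}, the starting point is the observation that, by \eqref{invP} and \eqref{invG} together with $B=B_1$,
\[
M^{-1} - N^{-1} = B'\qb{(D_1-z)^{-1} - (D-z)^{-1}}B,
\]
where I write $M = G^{(\bU)}|_{\T\setminus\bU}$ and $N = P^{(\bU)}|_{\T\setminus\bU}$. The crucial structural fact is that this difference is supported on $(\T_\ell\setminus\bU)\times(\T_\ell\setminus\bU)$, since $B$ has nonzero entries only for pairs $(l_i,a_i)$. I would then use the resolvent identity $M^{-1}-N^{-1} = N^{-1}(N-M)M^{-1}$, combined with \eqref{GTdiffbd} giving $|(N-M)_{uv}|\leq 2|\msc|q^r$ and \eqref{e:sumPTSbd} giving $\sum_u|N^{-1}_{xu}|\leq 2(|z|+1)$, to estimate entries of $M^{-1}-N^{-1}$.

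The main obstacle is obtaining a bound on the row sums $\sum_v |M^{-1}_{vy}|$, since the analog of \eqref{e:sumPTSbd} for $M^{-1}$ is not directly available (and a naive iteration with full size $|\T\setminus\bU|$ would give prohibitively large factors). I resolve this by exploiting the $\T_\ell\setminus\bU$-support of $M^{-1}-N^{-1}$: writing
\[
\sum_v |M^{-1}_{yv}| \leq \sum_v |N^{-1}_{yv}| + |\T_\ell\setminus\bU|\cdot \Delta, \quad \Delta \deq \max_{x,y\in\T_\ell\setminus\bU}|(M^{-1}-N^{-1})_{xy}|,
\]
and combining with the resolvent bound $\Delta \leq 4|\msc|q^r(|z|+1)\cdot(2(|z|+1)+|\T_\ell|\Delta)$, I solve this self-consistent inequality. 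Since $|\T_\ell|\leq d(d-1)^{\ell-1}$ and $q^r = q^{2\ell+1}$, the smallness condition $4|\msc|q^r(|z|+1)|\T_\ell|\leq \tfrac12$ holds by the choice of parameters, yielding $\Delta \leq 16|\msc|q^r(|z|+1)^2$.

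Finally, to reconcile the right-hand side with the claimed $48(|z|+1)q^r$, I use the uniform bound $|\msc(z)|(|z|+1)\leq 3$ for $z\in\C_+$, which follows from the defining equation $z = -(\msc + 1/\msc)$ upon noting $|z||\msc|\leq 1+|\msc|^2 \leq 2$. This converts the quadratic dependence on $|z|+1$ into a linear one, completing the proof.
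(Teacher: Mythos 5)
Your proposal is correct and follows essentially the same route as the paper: \eqref{GTdiffbd} via Proposition~\ref{stabilityGS} plus the localization estimate \eqref{replaceEr}, and \eqref{GTinvbd} via the support observation from \eqref{invP}--\eqref{invG} together with a self-consistent bound driven by \eqref{e:sumPTSbd} and the smallness of $(d-1)^\ell q^r$. Your resolvent-identity formulation $M^{-1}-N^{-1}=N^{-1}(N-M)M^{-1}$ with the perturbed column sums of $M^{-1}$ is algebraically identical to the paper's identity $\cal E+N^{-1}\cal W\cal E+N^{-1}\cal W N^{-1}=0$, and the final conversion via $|\msc(z)|(|z|+1)\leq 3$ matches the paper's use of $|z\msc|\leq 2$.
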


\begin{proof}
Define matrices $\cal W$ and $\cal E$ by
\begin{align}
 \label{GTdiff} &G^{(\bU)}|_{\T\setminus \bU}=P^{(\bU)}|_{\T\setminus \bU}+\cal W,\\
 \label{GTinv}&(G^{(\bU)}|_{\T\setminus {\bU}})^{-1}=(P^{(\bU)}|_{\T\setminus \bU})^{-1}+\cal E.
\end{align}
From \eqref{e:stabilityGS}, \eqref{replaceEr}, for any $x,y\in \T\setminus \bU$, we have $|\cal W_{xy}|\leq 2|\msc|q^{r}$. 
We claim the same estimate holds for the entries of the matrix $\cal E$. 
Notice from \eqref{invP}, \eqref{invG} that $\cal E_{xy}\neq 0$ only for $x,y\in \T_\ell\setminus {\bU}$.
Let $\Gamma:=\max_{x,y\in \T\setminus \bU}|\cal E_{xy}|=\max_{x,y\in \T_\ell\setminus \bU}|\cal E_{xy}|$.
By taking the product of \eqref{GTdiff} and \eqref{GTinv},
\begin{align}\label{selfcontrol1}
 \cal E+(P^{(\bU)}|_{\T\setminus \bU})^{-1}\cal W\cal E+(P^{(\bU)}|_{\T\setminus \bU})^{-1}\cal W (P^{(\bU)}|_{\T\setminus \bU})^{-1}=0.
\end{align}
For any $x,y\in \T\setminus\bU$, therefore
\begin{align*}
|\cal E_{xy}|
&\leq \sum_{i,j\in \T\setminus \bU} |(P^{(\bU)}|_{\T\setminus \bU})^{-1}_{xi}||\cal W_{ij}||\cal E_{jy}|+\sum_{i,j\in \T\setminus \bU} |(P^{(\bU)}|_{\T\setminus \bU})^{-1}_{xi}||\cal W_{ij}| |(P^{(\bU)}|_{\T\setminus \bU})^{-1}_{jy}|\\
&\leq |\T\setminus \bU|(2|\msc|q^r)\Gamma \sum_{i\in \T\setminus {\bU}} |(P^{(\bU)}|_{\T\setminus \bU})^{-1}_{xi}|
+(2|\msc|q^r)\sum_{i\in \T\setminus \bU} |(P^{(\bU)}|_{\T\setminus \bU})^{-1}_{xi}|\sum_{j\in \T\setminus \bU}|(P^{(\bU)}|_{\T\setminus \bU})^{-1}_{jy}|\\
&\leq  4( |z|+1)  (d-1)^{\ell}(2|\msc|q^{r})\Gamma +4( |z|+1)^2 (2|\msc|q^{r})\\
&\leq \Gamma/2+4( |z|+1)^2 (2|\msc|q^{r}).
\end{align*}
For the second inequality, we used $|\cal W_{xy}|\leq 2|\msc|q^{r}$;
for the third inequality we used $|\T\setminus \bU|\leq |\T|\leq 1+d+d(d-1)+\cdots + d(d-1)^{\ell-1}\leq 2(d-1)^{\ell}$, and \eqref{e:sumPTSbd};
for the last inequality, we used $r= 2\ell +1$, so that $(d-1)^\ell q^r\leq (d-1)^{-1/2}$ and  $|z\msc|\leq 2$.
Taking the maximum on the right-hand side of the above inequality, and rearranging, we get
\begin{align*}
 \Gamma \leq 16(|z|+1)^2|\msc|q^r \leq 48(|z|+1) q^r,
\end{align*}
as claimed.
\end{proof}

\begin{proof}[Proof of Proposition \ref{prop:stabilityGT}]
To prove the proposition, we define $\bU$ by \eqref{e:bSdef}, and show that
\begin{equation}\label{e:stabilityofGTclaim}
 |G_{ij}^{(\T)}-G_{ij}^{(\bU)}|\leq |\msc|q^{r}/4. 
\end{equation}
This implies the claim. Indeed, the definition of $\bU$ implies
that $\cE_r(i,j,\cG^{(\bU)})=\cE_r(i,j,\cG^{(\T)})$,
and therefore \eqref{e:stabilityGT} follows from \eqref{replaceEr},
\eqref{e:stabilityofGTclaim} and Proposition~\ref{stabilityGS}:
\begin{align*}
 |\GT_{ij}-P_{ij}(\cE_{r}(i,j, \cGT))|
 &\leq |\GT_{ij}-G^{(\bU)}_{ij}| +|G^{(\bU)}_{ij}-P_{ij}^{(\bU)}|+|P_{ij}(\cE_{r}(i,j, \cG^{(\bU)}))-P^{(\bU)}_{ij}|\\
 &\leq |\msc|q^{r}/4+3|\msc|q^{r}/2+2^{2\n+3}|m_{sc}|q^{r+1}\leq 2|\msc|q^r.
\end{align*}

Thus it remains to prove \eqref{e:stabilityofGTclaim}. By definition of $\bU$, we have
$\dist_{\cG^{(\bU)}}(\{i,j\}, \T\setminus\bU)>r$, and therefore Proposition~\ref{stabilityGS} implies
\begin{align}\label{GSbound}
 \max_{x\in \T\setminus\bU}\ha{|G^{(\bU)}_{ix}|,|G^{(\bU)}_{jx}|}
  \leq 3|\msc|q^r/2\leq 2|\msc|q^r.
\end{align}
Furthermore, by \eqref{GTGS},
\begin{align}\begin{split}\label{GT-GS}
|G^{(\T)}_{ij}-G^{(\bU)}_{ij}|
&\leq \sum_{x,y\in \T\setminus \bU}|G^{(\bU)}_{ix}(H^{(\bU)}-z-B'G^{(\T)}B)_{xy}G^{(\bU)}_{yj}|\\
&\leq 4|\msc|^2q^{2r}\sum_{x,y\in \T\setminus \bU} |(H^{(\bU)}-z-B_1'(D_1-z)^{-1}B_1+\cal E)_{xy}|,
\end{split}
\end{align}
with $\cal E$ as in \eqref{GTinv}. For the sum, we have
\begin{align}\begin{split}
&\sum_{x,y\in \T\setminus \bU} |(H^{(\bU)}-z-B_1'(D_1-z)^{-1}B_1+\cal E)_{xy}|\\
&\leq \sum_{x,y\in \T\setminus \bU} H^{(\bU)}_{xy}+\sum_{i,j\in\qq{1,\mu}\atop l_i,l_j\in \T_{\ell}\setminus\bU}(B_1')_{l_ia_i}|(D_1-z)^{-1}_{a_ia_j}|(B_1)_{a_jl_j} +|z||\T\setminus \bU| +|\T\setminus \bU|^2\max_{x,y\in \T\setminus\bU}|\cal E_{xy}|\\
\label{express03}&\leq \sum_{x,y\in \T\setminus \bU} H^{(\bU)}_{xy} +\frac{1}{d-1}\sum_{i,j\in\qq{1,\mu}}|(D_1-z)^{-1}_{a_ia_j}| +2(d-1)^{\ell}|z|+4(d-1)^{2\ell}\left(48(|z|+1)q^r\right),
\end{split}\end{align}
where we used $|\T\setminus \bU|\leq 2(d-1)^\ell$ and \eqref{GTinvbd}. By our assumption $\cG\in \bar\Omega$, the subgraph $\cT$ has excess at most $\n$.
Therefore the total number of edges of $\cT$ is bounded by $|\T|+\n\leq 1+d+d(d-1)+\cdots+d(d-1)^{\ell-1}+\n\leq 2(d-1)^{\ell}$, and 
\begin{align}\label{express04}
\sum_{x,y\in \T\setminus \bU} H^{(\bU)}_{xy} \leq \frac{2(d-1)^{\ell}}{\sqrt{d-1}}.
\end{align}
By the same argument as for \eqref{express02}, we get
\begin{align}\begin{split}\label{express05}
\frac{1}{d-1}\sum_{i,j\in\qq{1,\mu}}|(D_1-z)^{-1}_{a_ia_j}| 
&=\frac{1}{d-1}\sum_{i\in\qq{1,\mu}}|(D_1-z)^{-1}_{a_ia_i}| +\frac{1}{d-1}\sum_{i\neq j\in\qq{1,\mu}}|(D_1-z)^{-1}_{a_ia_j}|\\
&\leq \frac{3|\msc|}{2}\left(\frac{\mu}{d-1}+\frac{(2\n)^2}{d-1}\right) \leq \frac{3|\msc|}{2}\left(2(d-1)^{\ell}+\frac{(2\n)^2}{d-1}\right)
\end{split}\end{align}
where we used $\mu\leq 2(d-1)^{\ell+1}$. By combining \eqref{express03}--\eqref{express05}, we have
\begin{align*}
&\sum_{x,y\in \T\setminus \bU} |(H-z-B_1'(D_1-z)^{-1}B_1+\cal E)_{xy}|
\leq 2(d-1)^\ell |z|+4(d-1)^{2\ell}\left(48(|z|+1)q^r\right)+\\&+\frac{2(d-1)^{\ell}}{\sqrt{d-1}}+\frac{3|\msc|}{2}\left(2(d-1)^{\ell}+\frac{(2\n)^2}{d-1}\right) \leq 5(|z|+1)(d-1)^\ell.
\end{align*}
Combining the above estimate with \eqref{GT-GS}, and using $|z\msc|\leq 2$,
\begin{align*}
 |G^{(\T)}_{ij}-G^{(\bU)}_{ij}|\leq  4|\msc|^2q^{2r}5(|z|+1)(d-1)^\ell\leq \frac{20(|z|+1)|\msc|}{\sqrt{d-1}}|\msc|q^r\leq |\msc|q^{r}/4.
\end{align*}
This finishes the proof.
\end{proof}

\section{Stability under switching}
\label{sec:stability}

We recall the $\rm S$-cells and $\rm S'$-cells from Definition \ref{def:cells}, and the set of switching data $\sS(\cG)$ from Section \ref{sec:rev}. The results of this section are the following stability estimates.

\begin{proposition} \label{prop:stabilitytGT}
Let $z\in\C_+$, $\cG \in \bar\Omega$ (as in Section \ref{sec:outline-structure}) be a $d$-regular graph,
and $K\geq 2$ be a constant
such that, for all $i,j \in \qq{N} \setminus \T$, 
\begin{align}\label{asumpGT}
\left|\GT_{ij}-P_{ij}(\cE_r(i,j,\cGT),z)\right|\leq K|\msc|q^r.
\end{align}
Then there exists an event $F(\cG) \subset \sS(\cG)$ with $\P_{\cG}(F(\cG))= 1-o(N^{-\n+\delta})$,
explicitly defined in Section~\ref{cdFG} below, 
such that for any ${\bf S}\in F(\cG)$ such that $\tcG  = T_{\bf S}(\cG) \in \bar\Omega$ the following hold:
\begin{itemize}
\item
For $i,j \in \qq{N}\setminus\T$,
\begin{align}\label{boundhGT}
 |\hGT_{ij}-P_{ij}(\cE_{r}(i,j,\hcGT),z)|\leq 2K |\msc| q^r.
\end{align}

\item 
For (\rn 1) $i, j \in \qq{N} \setminus \T$ in different $\rm S$-cells, or (\rn 2)  $i,j \in \qq{N} \setminus\T$ such that $j\in \cC_t$ and $i\not\sim \cC_t$ for some $t$,
\begin{equation} \label{e:diffcellest}
|\hGT_{ij}|\leq \frac{2M}{\sqrt{N\eta}}.
\end{equation}

\item
For $i,j \in \qq{N}\setminus\T$,
\begin{equation} \label{e:stabilitytGT}
|\tGT_{ij}-P_{ij}(\cE_{r}(i,j,\tcGT),z)|
\leq 2^{7}K^3|\msc|q^r.
\end{equation}

\end{itemize}
For all estimates, we assume $\sqrt{d-1}\geq \max\{(\n+1)^2 2^{2\n+10}, 2^8(\n+1)K\}$, $\n'q^r\ll1$ and that $\sqrt{N\eta}\geq M(d-1)^{\ell+1}$
(where $M$ is as in \eqref{e:Mnprime}).
\end{proposition}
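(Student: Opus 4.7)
The event $F(\cG)$ will be taken to be the intersection of the high-probability events appearing in Propositions~\ref{lem:switchable}, \ref{prop:distdeg}, and \ref{prop:Rdist2}. Since the a priori hypothesis \eqref{asumpGT} (with $K$ absorbed into the definition of $M$ and the a priori diagonal bound \eqref{e:boundPii} if needed) places $\cG$ in a set comparable to $\Omega_1^+(z,\ell)$ on which those propositions are proved, a union bound yields $\P_{\cG}(F(\cG)) = 1-o(N^{-\omega+\delta})$. Throughout, we fix ${\bf S}\in F(\cG)$ with $\tcG\in\bar\Omega$ and exploit the resulting deterministic structure: most indices $k$ are switchable; the vertices $a_k,b_k,c_k$ are well-separated in $\cGT$ and $\tcGT$ up to at most $O(\omega)$ exceptions; and each $\rm S$- or $\rm S'$-cell meets $\{b_1,\ldots,b_\nu\}$ in at most $\omega'+5\omega$ points.

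The analysis proceeds in two stages, matching the decomposition $\cGT\to\hcGT\to\tcGT$. In the first stage, $\hcGT$ differs from $\cGT$ only by removal of the edges $\{b_k,c_k\}$ for $k\in W_{\bf S}$. Let $B=\bigcup_{k\in W_{\bf S}}\{b_k,c_k\}$ and let $E$ be the (normalized) matrix supported on $B\times B$ encoding the removed edges. The starting point is the Schur-complement/resolvent identity expressing $\hGT_{ij}$ for $i,j\notin B$ as $\GT_{ij}$ plus a bilinear form in $\GT_{iB}$ and $\GT_{Bj}$ involving the inverse of $I+E\,\GT|_B$. The key algebraic inputs are: (i) the diagonal of $\GT|_B$ lies within $O(|\msc|q^r)$ of $\msc$ by \eqref{asumpGT} and \eqref{e:boundPii}; (ii) for $x,y$ in distinct $\rm S$-cells, $|\GT_{xy}|<M/\sqrt{N\eta}$ by the definition \eqref{e:simdef} of cells; (iii) each $\rm S$-cell meets $B$ in boundedly many vertices by \eqref{finitedeg}--\eqref{deg}. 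Together these give $\GT|_B$ a block-diagonal-plus-small-perturbation form whose inverse is controlled by a Neumann series exactly as in Claim~\ref{l:sizeinvG}. Feeding this into the resolvent expansion---and handling the tree-extension terms $P_{ij}(\cE_r(i,j,\hcGT))$ via Remark~\ref{rk:princG0} with a common vertex-independent graph $\cG_0$ containing $\cE_r(i,j,\cGT)\cup\cE_r(i,j,\hcGT)$---gives \eqref{boundhGT}, and restricting to cross-cell pairs yields \eqref{e:diffcellest}.

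The second stage adds the edges $\{a_k,b_k\}$, $k\in W_{\bf S}$, to pass from $\hcGT$ to $\tcGT$. The essential geometric subtlety is that $a_k$ and $b_k$ typically lie in different $\rm S$-cells, so the cross-terms $\hGT_{a_k b_k}$ are not small from the tree-like picture---but \eqref{e:diffcellest}, just established, supplies precisely the $O(M/\sqrt{N\eta})$ bound on these off-block entries. The same Schur-complement framework then applies on the support $B'=\bigcup_k\{a_k,b_k\}$ now grouped by $\rm S'$-cells via \eqref{e:Spcellbd}, with $\rm S'$-cells designed so that any two distinct cells remain far apart in $\tcGT$ (so cross-cell entries of $\hGT$ stay small). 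The three powers of $K$ in the bound $2^7K^3|\msc|q^r$ arise because the constant $K$ from \eqref{asumpGT} enters once through the upgraded a priori control on $\hGT$ via \eqref{boundhGT}, once through inverting the diagonal block of $\hGT|_{B'}$, and once through the final resolvent expansion converting the inverse into an entrywise error.

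The main technical obstacle, echoing Claim~\ref{l:sizeinvG}, is the block-inversion step on $\hGT|_{B'}$: here $|B'|$ can be polylogarithmic in $N$, and $\hGT|_{B'}$ is not truly block-diagonal but only approximately so, with off-block error of size $M/\sqrt{N\eta}$. The hypothesis $\sqrt{N\eta}\geq M(d-1)^{\ell+1}$ is calibrated precisely so that the total off-block mass, after summing over at most $\mu\leq 2(d-1)^{\ell+1}$ contributing indices, stays much smaller than the diagonal entries which are of size $|\msc|$; this is exactly the condition required for the Neumann-series inversion to converge with controlled error. Once this block-diagonal inversion is in hand, the remaining estimates follow the pattern already established in Section~\ref{sec:stabilityT}, distinguishing the cases where $i,j$ are near the perturbation support from those where they are far from it and applying the decay of $\hGT$ and $\tGT$ supplied by \eqref{boundhGT} and \eqref{e:diffcellest}.
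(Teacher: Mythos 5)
Your overall architecture matches the paper's: the event $F(\cG)$ is the intersection of the switchability, graph-distance, and Green's-function-distance events from Propositions~\ref{lem:switchable}, \ref{prop:distdeg} and~\ref{prop:Rdist2}; the proof splits along the two perturbations $\cGT\to\hcGT\to\tcGT$; and the inputs (cell structure, Ward-identity smallness of cross-cell entries, the calibration $\sqrt{N\eta}\geq M(d-1)^{\ell+1}$) are the right ones. Your Woodbury-type packaging of the first stage is sound and essentially equivalent to the paper's self-consistent inequality: for the removal of the edges $\{b_k,c_k\}$, the relevant iteration matrix has row sums of order $(\n|\msc|+K\n'|\msc|q^r)/\sqrt{d-1}+(d-1)^{\ell+1/2}M/\sqrt{N\eta}\ll 1$, because distinct pairs $\{b_k,c_k\}$ lie in distinct $\rm S$-cells up to $O(\n'+\n)$ exceptions; this yields \eqref{boundhGT} and, restricted to cross-cell pairs, \eqref{e:diffcellest}.

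The gap is in the second stage. Your stated justification for the key inversion on $B'=\bigcup_k\{a_k,b_k\}$ --- that the off-block (cross-cell) mass, summed over at most $\mu$ indices, is much smaller than the diagonal --- does not address the actual obstruction there. All $\mu$ of the vertices $a_k$ lie in the \emph{single} cell $\cC_1'$, so the entries $\hGT_{a_la_m}$ enjoy no cross-cell smallness; the only available bound for the $\geq\mu-2\n$ distant pairs is $2K|\msc|q^r$ from \eqref{boundhGT}, and the corresponding row sum of the iteration matrix is of order
\[
\frac{1}{\sqrt{d-1}}\sum_{m}|\hGT_{a_la_m}|\;\lesssim\;\frac{\mu\cdot 2K|\msc|q^{r}}{\sqrt{d-1}}\;\approx\;4K|\msc|^{2\ell+2},
\]
which is of order $K\geq 2$ when $|\msc|$ is close to $1$: the matrix is neither diagonally dominant nor a small perturbation of the identity in operator norm, so a one-step Neumann series does not converge, and the condition $\sqrt{N\eta}\geq M(d-1)^{\ell+1}$ is irrelevant to this term. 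What saves the argument is an asymmetry: the large coupling only goes from indices in $\cC_1'$ to indices in the other cells, while the return coupling (rows indexed by $b_m\notin\cC_1'$) has row sums of order $\n q+K\n'q^{r+1}+(d-1)^{\ell+1/2}M/\sqrt{N\eta}\ll 1/K$. One must therefore close a two-variable system $\Gamma_1\leq\fa+\fb\Gamma_1+8K\Gamma_2$, $\Gamma_2\leq\fd+\fe\Gamma_1$ with $\fb+8K\fe<1$ (equivalently, iterate the Neumann series two steps at a time, exploiting that the large part of the iteration matrix is nilpotent). This is exactly where the hypothesis $\sqrt{d-1}\geq 2^8(\n+1)K$ --- with its factor of $K$ --- is consumed, and it is the source of the $K^3$ in \eqref{e:stabilitytGT}. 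As written, your plan omits this mechanism, and the inversion step it relies on would fail.
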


In particular, for any $\cG\in \Omega(z,\ell)$, Proposition~\ref{prop:stabilityGT} implies that
the assumptions of Proposition~\ref{prop:stabilitytGT} are satisfied with $K=2$.
Thus Propositions~\ref{prop:stabilityGT} and \ref{prop:stabilitytGT} together show that, for any graph $\cG\in \Omega(z,\ell)$,
with high probability under $\bf S$, the switched graph $\tcG$ belongs to $\Omega_1^+(z,\ell)$
(as in Section~\ref{sec:outline}).

\subsection{Definition of the event $F(\cG)$}
\label{cdFG}

We fix $M$ and $\n'$ by \eqref{e:Mnprime}.
We will prove Proposition~\ref{prop:stabilitytGT} with the set $F(\cG) \subset \sS(\cG)$
defined by the following conditions on the switching data $\bf S$:
\begin{enumerate}
\item At least $\mu-3\n$ edges are switchable, i.e.\ the event in the probability in \eqref{e:Wbd} holds:
\begin{align} \label{switchnum} 
|W_{\bf S}|\geq \mu-3\n.
\end{align}
\item
All except for $\n$ of the vertices $\{c_1,c_2,\dots, c_\mu\}$ have radius-$R$ tree neighborhoods in $\cGT$,
i.e.\ \eqref{treeneighbor} holds.
\item  
The vertices $\{a_1,\dots, a_\mu,b_1,\dots, b_\mu\}$ do not cluster in the sense of distance,
i.e.\ \eqref{fewclose}--\eqref{distfinitedegl} and \eqref{distfinitedegb}--\eqref{distdegb} hold.
\item  
The vertices $\{b_1,b_2,\dots, b_\mu\}$ do not cluster in the sense of the Green's function,
i.e.\ \eqref{finitedeg}--\eqref{e:Spcellbd} hold.
\end{enumerate}
Then, for any $\cG \in \Omega_1^+(z,\ell)$, we have 
\begin{align}\label{FGmeasure}
\P_{\cG}(F(\cG))= 1-o(N^{-\n+\delta}).
\end{align}
Indeed,
(i) follows from Proposition~\ref{lem:switchable},
(ii) follows from \eqref{treeneighbor},
(iii) follows from Propositions~\ref{GTstructure} and \ref{prop:distdeg},
and
(iv) follows from Proposition~\ref{prop:Rdist2}.

\subsection{Proof of \eqref{boundhGT}}\label{proofofboundhGT}

The proof of \eqref{boundhGT} follows the structure described below \eqref{e:stabilityGS}.
Moreover, similarly to the proof of Proposition~\ref{prop:stabilityGT},
we distinguish between vertices $i,j$ that are close to the edges that get removed in going from $\cGT$ to $\hcGT$ and vertices
that are far from these edges. We first focus on $i,j$ that are close to those edges that get removed.

\paragraph{Localization}

First, we replace $P_{ij}(\cE_r(i,j,\cG))$ by the vertex-independent Green's function $P_{ij}$,
using Remark~\ref{rk:princG0} with
\begin{equation}
\cX = \cB_{3r}(\{b_1,b_2,\dots, b_\nu\}, \cGT),
\quad
\bX = \bB_{2r}(\{b_1,b_2,\dots, b_\nu\}, \cGT).
\end{equation}
Moreover, we define
$\hcX$ to be the graph obtained by removing the edges $\{b_i,c_i\}_{i\leq \nu}$ from $\cX$.
The deficit function of $\hcX$ is defined to be the restriction of that of $\hcGT$. We abbreviate
\begin{align*}
\cG_1 = \TE(\cX),\quad P=G(\cG_1),\quad \hcG_1=\TE(\hcX),\quad  \hat{P}=G(\hcG_1).
\end{align*}
Notice that $\hcG_1$ is equivalently obtained by removing the edges $\{b_i,c_i\}_{i\leq \nu}$ from $\cG_1$.
The following properties of $\cG_0$ follow from \eqref{distdega} and \eqref{distdegb}.

\begin{claim}\label{c:componentestimate0}
Assume \eqref{distdega} and \eqref{distdegb}.
Then each connected component of either $\cG_0$ or $\hcG_0$ contains at most $5\n$ elements from $\{a_1,\dots,a_\mu, b_1,\dots, b_\nu\}$.
More precisely,
\begin{align}\label{e:componentest0}
|\{i\in \qq{1,\mu}: a_i\in \bK\}|\leq 3\n, \quad |\{i\in \qq{1,\nu}: b_i\in \bK\}|\leq 2\n,
\end{align}
where $\bK$ is the vertex set of any connected component of $\cG_0$ or $\hcG_0$.
\end{claim}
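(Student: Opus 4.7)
The proof is a direct unfolding of the definition of $\cG_0$ using the distance bounds \eqref{distdega}--\eqref{distdegb}. Throughout, it will be used that by the choice of parameters in Section~\ref{sec:outline-parameters} one has $r=O(\log\log N)$ and $R=\Omega(\log N)$, so that $6r+1<R/2$ for $N$ large enough. Recall that $\cG_0=\cB_{3r}(\{b_1,\dots,b_\nu\},\cGT)$ is the induced subgraph on the vertex set $\bigcup_{k\in\qq{1,\nu}}\bB_{3r}(b_k,\cGT)$.

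For the first bound, the plan is to prove the stronger statement $\{i\in\qq{1,\mu}: a_i\in\cG_0\}\subset\sBa$. Indeed, if $a_i\in\cG_0$ then $\dist_{\cGT}(a_i,b_k)\leq 3r<R/2$ for some $k\in\qq{1,\nu}\subset\qq{1,\mu}$, which forces $i\in\sBa$ by definition. Since $\bK\subset\cG_0$, \eqref{distdega} then yields $|\{i\in\qq{1,\mu}: a_i\in\bK\}|\leq|\sBa|<3\n$.

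For the second bound, fix a connected component $\bK$ of $\cG_0$ and set $I_\bK=\{i\in\qq{1,\nu}: b_i\in\bK\}$. The case $|I_\bK|\leq 1$ is immediate, so assume $|I_\bK|\geq 2$ and pick $i\in I_\bK$. Because $\bK$ is connected and $b_i$ lies in $\bK$ together with at least one other $b_k$, a path in $\bK$ connecting them must traverse a chain of the defining balls $\bB_{3r}(b_{j_s},\cGT)$, producing some index $j\neq i$ such that $\bB_{3r}(b_j,\cGT)$ either shares a vertex with or is joined by an edge of $\cGT$ to $\bB_{3r}(b_i,\cGT)$. In either case $\dist_{\cGT}(b_i,b_j)\leq 6r+1<R/2$, and hence $i\in\sBb$. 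Applied to every $i\in I_\bK$, this gives $I_\bK\subset\sBb$, and \eqref{distdegb} then yields $|I_\bK|\leq|\sBb|<2\n$.

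Finally, $\hcG_0$ is obtained from $\cG_0$ by merely deleting the edges $\{b_i,c_i\}_{i\leq\nu}$; edge deletions only refine the partition into connected components, so any component of $\hcG_0$ is contained in a component of $\cG_0$ and inherits both bounds. No substantial obstacle arises: the entire argument is a careful unfolding of the definitions, with the only numerical point to verify being $6r+1<R/2$, which is guaranteed by the parameter choices.
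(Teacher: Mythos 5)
Your proof is correct, and it is precisely the unfolding of the definitions that the paper's one-line proof ("follows directly from \eqref{distdega}, \eqref{distdegb} and the definitions of $\cG_0$ and $\hcG_0$") intends: $a_i\in\cG_0$ forces $i\in\sBa$, two indices $b_i,b_j$ in the same component force $i\in\sBb$ via the chain-of-balls estimate $\dist_{\cGT}(b_i,b_j)\le 6r+1<R/2$, and edge deletion only refines components. No gaps.
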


\begin{proof}
The claim follows directly from \eqref{distdega} and \eqref{distdegb} and the definitions of $\cG_0$ and $\hcG_0$.
\end{proof}

\paragraph{Verification of assumptions in Proposition \ref{boundPij}}
Since both $\cG_0$ and $\hcG_0$ are subgraphs of $\cG \in \bar\Omega$, their radius-$R$ neighborhoods have excess at most $\omega$.
Let $\bK$ be the vertex set of any connected component of $\cG_0$ or $\hcG_0$.
Since the deficit function of $\cG_0$ (respectively $\hcG_0$) is the restriction of that of $\cGT$ (respectively $\hcGT$),
any of the vertices $a_i, b_i, c_i\in \bK$ contributes $1$ to the sum of the deficit function over $\bK$.
By Claim~\ref{c:componentestimate0}, the sums of the deficit functions over any of the connected components of $\cG_0$ and $\hcG_0$
are therefore bounded by $3\n+2\times 2\n\leq 8\n$.
Thus the assumptions of \eqref{e:compatibility} are verified for both $\cG_0$ and $\hcG_0$, and for any $i,j\in \bX$,
\begin{equation}\label{e:PP}
|P_{ij}-P_{ij}(\cE_r((i,j,\cGT))|\leq 2^{2\n+3}|\msc|q^{r+1},
\end{equation}
provided that $\sqrt{d-1}\geq 2^{\n+2}$, and an analogous estimate holds for $\hat P$.
Up to a small error, we can therefore use $P$ instead of  $P(\cE_r((i,j,\cGT))$ and $\hat{P}$ instead of $P(\cE_r((i,j,\hcGT))$.

\paragraph{Starting point}

By the resolvent identity \eqref{e:resolv}, we have:
\begin{align}
 \label{hGT}\hGT-\GT&=\GT\Delta \hGT,\\
  \label{hPT}\hat{P}-P&=P\Delta \hat{P},
\end{align}
where $\Delta=\sum_{k=1}^\nu (e_{b_kc_k}+e_{c_kb_k})/\sqrt{d-1}$.
Taking the difference of \eqref{hGT} and \eqref{hPT}, we obtain
\begin{equation}\label{perturbh}
\hGT_{ij}-\hat{P}_{ij}=(\GT_{ij}-P_{ij})
+\frac{1}{\sqrt{d-1}}\sum_{(x,y)\in \vE}(\GT_{ix}-P_{ix}) \hat{P}_{yj}
+ \frac{1}{\sqrt{d-1}}\sum_{(x,y)\in \vE}\GT_{ix}(\hGT_{yj}-\hat{P}_{yj}),
\end{equation}
where the summation is over the oriented edges
\begin{align}\label{e:defB}
(x,y)\in \vE=\{(b_1,c_1),\dots, (b_\nu, c_\nu), (c_1,b_1),\dots, (c_\nu, b_\nu)\}.
\end{align}
We regard \eqref{perturbh} as an equation for $\hGT-\hat P$,
and will show that $\hGT_{ij}-\hat{P}_{ij}$ is small as a consequence of
the smallness of $\GT-P$.

\paragraph{Green's function estimates}

We first collect some estimates on Green's functions, used repeatedly:
\begin{align}
\begin{cases}
\label{GPbound} |\GT_{ij}|, |P_{ij}|, |\hat P_{ij}|\leq 2|\msc|,  & (\text{all $i,j$}),\\
|\GT_{ij}|\leq K|m_{sc}|q^r, &(\dist_{\cGT}(i,j)\geq 2r),\\
 |\GT_{ib_k}|, |\GT_{ic_k}|\leq M/\sqrt{N\eta}, & (\text{$i,b_k$ are in different $\rm S$-cells, or $i\not\sim b_k$}).
\end{cases}
\end{align}
The first estimate follows from \eqref{e:boundPiimsc}, and \eqref{asumpGT};
the second estimate follows from assumption \eqref{asumpGT}, and $P_{ij}(\cE_r(i,j,\cGT))=0$;
the last estimate holds by the definition of $\sim$ in Section~\ref{sec:defcells}.

\begin{proof}[Proof of \eqref{boundhGT} for $i,j\in \bX$]
By assumption and \eqref{e:PP}, the first term in \eqref{perturbh}
is bounded by $K |\msc| q^r+2^{2\n+3}|m_{sc}|q^{r+1}$.
For the second term on the right-hand side of \eqref{perturbh},
similarly $|\GT_{ix}-P_{ix}| \leq K |\msc| q^r+2^{2\n+3}|m_{sc}|q^{r+1}$.
Moreover, $\hat{P}_{yj}=0$ if $y$ and $j$ are in different connected components of $\hcG_0$
Thus by Claim~\ref{c:componentestimate0}, we have $\hat{P}_{yj}\neq 0$ for at most $4\n$ vertices
$y\in \{b_i,c_i: i\in \qq{1,\nu}\}$, for which 
we use $|\hat{P}_{yj}|\leq 2|\msc|$ by \eqref{GPbound}. Combining these bounds,
the second term in \eqref{perturbh} is bounded by
\begin{align}\label{e:sebound}
\frac{1}{\sqrt{d-1}}\sum_{(x,y)\in \vE}|\GT_{ix}-P_{ix}| |\hat{P}_{yj}| 
 \leq 8\n (K+2^{2\n+3}q)|m_{sc}|q^{r+1}.
\end{align}
To estimate the last term in \eqref{perturbh}, we denote
\begin{align*}
 \Gamma:=\max_{i,j\in \bX}|\hGT_{ij}-\hat{P}_{ij}|.
\end{align*}
Noticing that $\bX\subset \cup_{i=1}^{\kappa} \cC_i$,
we decompose the last sum over $\vE$ in \eqref{perturbh} according to the cases in \eqref{GPbound} as
\begin{equation*}
\sum_{\vE} \sumdots
=
\sum_{\vE_1} \sumdots
+
\sum_{\vE_2} \sumdots
+
\sum_{\vE_3} \sumdots
\end{equation*}
where here and below $\sumdots$ abbreviates the terms in the last sum in \eqref{perturbh} and
\begin{align*}
&\vE_1=\{(b_k,c_k), (c_k,b_k): \text{$i, b_k$ are in different $\rm S$-cells}\},\\
&\vE_2=\{(b_k,c_k), (c_k,b_k): \text{$i, b_k$ are in the same $\rm S$-cells, and $\dist_{\cGT}(i,b_k)> 2r$}\},\\
&\vE_3=\{(b_k,c_k), (c_k,b_k): \text{$i, b_k$ are in the same $\rm S$-cells, and $\dist_{\cGT}(i,b_k)\leq 2r$}\}.
\end{align*}
Notice that, for any $(x,y)\in \vE$, we have $|(\hGT-\hat{P})_{yj}|\leq \Gamma$ by the definition of $\Gamma$.
For $(x,y)\in \vE_1$, $|\GT_{ix}|\leq M/\sqrt{N\eta}$ by \eqref{GPbound}, and
$|\vE_1|\leq 2\nu\leq 4(d-1)^{\ell+1}$,
\begin{align*}
\sum_{\vE_1}\sumdots
\leq \frac{\Gamma}{\sqrt{d-1}}\sum_{(x,y)\in \vE_1} |\GT_{ix}| \leq \frac{4(d-1)^{\ell+1/2}M\Gamma}{\sqrt{N\eta}}.
\end{align*}
For $(x,y)\in \vE_2$, $|\GT_{ix}|\leq K|m_{sc}|q^r$ by \eqref{GPbound}, and $
|\vE_2|\leq 2\n'$ by \eqref{deg},
\begin{align*}
\sum_{\vE_2}\sumdots
\leq \frac{\Gamma}{\sqrt{d-1}}\sum_{(x,y)\in \vE_2} |\GT_{ix}| \leq \frac{2K\n'|m_{sc}|q^r\Gamma}{\sqrt{d-1}}.
\end{align*}
For $(x,y)\in \vE_3$, $|\GT_{ix}|\leq 2|m_{sc}|$ by \eqref{GPbound}, and there are at most such $2\n$ terms,
i.e.\ $|\vE_{3}|\leq 2\n$,
by \eqref{distfinitedegb},
\begin{align*}
\sum_{\vE_3}\sumdots
\leq \frac{\Gamma}{\sqrt{d-1}}\sum_{(x,y)\in \vE_3} |\GT_{ix}| \leq \frac{4\n|m_{sc}|\Gamma}{\sqrt{d-1}}.
\end{align*}
Combining the sums over $\vE_1, \vE_2, \vE_3$, we get
\begin{align*}
\frac{1}{\sqrt{d-1}}\sum_{(x,y)\in \vE}|\GT_{ix}||\hGT_{yj}-\hat{P}_{yj}|\leq \frac{\Gamma}{4},
\end{align*}
provided that $\sqrt{d-1}\geq 20\n$, $\n'q^r\ll1$ and $\sqrt{N\eta}\geq (d-1)^{\ell+1} M$. Thus \eqref{perturbh} leads to 
\begin{align*}
|\hGT_{ij}-\hat{P}_{ij}|
\leq& (1+8\n q)\left(K+2^{2\n+3}q\right)|\msc|q^r+\Gamma/4.
\end{align*}
Taking the supremum over $i,j\in \bX$, we obtain
\begin{align*}
 \Gamma\leq \frac{4}{3}(1+8\n q)\left(K+2^{2\n+3}q\right)|\msc|q^r.
\end{align*}
From this estimate, and from \eqref{e:PP} to estimate $\hat{P}_{ij} - P_{ij}(\cE_r(i,j,\hcGT))$, we find
\begin{align}\label{e:boundXX}
|\hGT_{ij}-P_{ij}(\cE_{r}(i,j,\hcGT))| 
&\leq  |\hGT_{ij}-\hat P_{ij}| +|\hat P_{ij}-P_{ij}(\cE_r((i,j,\hcGT))| \nonumber\\
&\leq \frac{4}{3}(1+8\n q)\left(K+2^{2\n+3}q\right)|\msc|q^r+2^{2\n+3}|\msc|q^{r+1}
\leq 2K |\msc| q^r,
\end{align}
provided that  $\sqrt{d-1}\geq 2^{2\n+5}$.
This concludes the proof of \eqref{boundhGT} for $i,j\in \bX$.
\end{proof}

\begin{proof}[Proof of \eqref{boundhGT} in the remaining case]
In the remaining case at least one of $i,j$ is not contained in $\bX$;
and by symmetry we can assume that $i\not\in \bX$. Then
$\cE_{r}(i,j,\cGT)=\cE_{r}(i,j,\hcGT)$ and the graphs on both sides of the equality also have the same deficit function.
It therefore suffices to show that $|\hGT_{ij}-\GT_{ij}|$ is small.
By the resolvent identity \eqref{hGT}, we have
\begin{align}\label{e:hGT2}
|\hGT_{ij}-\GT_{ij}|\leq \frac{1}{\sqrt{d-1}}\sum_{(x,y)\in \vE}|\GT_{ix}||\hGT_{yj}|
.
\end{align}
Since $i\notin \bX$, we have $\dist_{\cGT}(i,\{b_k,c_k\})\geq 2r$
and therefore, by \eqref{GPbound},
\begin{equation} \label{e:GTixbd}
  |\GT_{ix}|\leq K|\msc|q^r \quad \text{for any $x\in \{b_i,c_i: i\in \qq{1,\nu}\}$}.
\end{equation}

For the case that exactly one of $i,j$ is in $\bX$, i.e.\ $i\not\in \bX$ and $j\in \bX$,
we now decompose the set $\vE$ defined in \eqref{e:defB} as $\vE=\vE_1'\cup \vE_2'\cup \vE_3'$, where 
\begin{align}\begin{split}\label{e:decomposevecE}
\vE_1'&=\{(b_k,c_k), (c_k,b_k): \dist_{\cGT}(b_k,j)\leq 2r\},\\
\vE_2'&=\{(b_k,c_k), (c_k,b_k): i\sim b_k, \dist_{\cGT}(b_k,j)> 2r\},\\
\vE_3'&=\{(b_k,c_k), (c_k,b_k): i\not\sim b_k,\dist_{\cGT}(b_k,j)> 2r\}.
\end{split}\end{align}
For $(x,y)\in \vE_1'$, since $y,j\in \bX$, $|\hGT_{yj}|\leq |P_{yj}(\cE_r((y,j,\hcGT))|+2K|m_{sc}|q^r\leq 2|m_{sc}|$ by \eqref{e:boundXX} and \eqref{e:boundPiimsc}, and there are at most $2\n$ terms, i.e. $
|\vE_1'|\leq 2\n$ by \eqref{distfinitedegb},
\begin{align*}
\sum_{\vE_1'}\sumdots
\leq \frac{1}{\sqrt{d-1}}\sum_{(x,y)\in \vE_1'} (K|m_{sc}|q^r)|\hGT_{yj}| \leq \frac{4K\n |m_{sc}|q^r}{\sqrt{d-1}}.
\end{align*}
where now $\sumdots$ refers to the terms in the sum in \eqref{e:hGT2}.
For $(x,y)\in \vE_2'$, since $y,j\in \bX$, $|\hGT_{yj}|\leq 2K|m_{sc}|q^r$ by \eqref{e:boundXX},
and $|\vE_2'|\leq 2\n'$ by \eqref{finitedeg},
\begin{align*}\sum_{\vE_2'}\sumdots
\leq  \frac{1}{\sqrt{d-1}}\sum_{(x,y)\in \vE_2'} (K|m_{sc}|q^r)|\hGT_{yj}| \leq 
\frac{2\n'(K |m_{sc}|q^r)(2K |m_{sc}|q^r)}{\sqrt{d-1}}.
\end{align*}
For $(x,y)\in \vE_3'$, $|\GT_{ix}|\leq M/\sqrt{N\eta}$ by the definition of $\sim$, $|\hGT_{yj}|\leq 2K|m_{sc}|q^r$ by \eqref{e:boundXX}, and $|\vE_3'|\leq 2\nu\leq 4(d-1)^{\ell+1}$,  
\begin{align*}
\sum_{\vE_3'}\sumdots\leq \frac{1}{\sqrt{d-1}}\sum_{(x,y)\in \vE_3'} \frac{M}{\sqrt{N\eta}}(2K|m_{sc}|q^r) \leq 8K(d-1)^{\ell+1}q^{r+1}\frac{M}{\sqrt{N\eta}}.
\end{align*}
Combining the sums over $\vE_1', \vE_2', \vE_3'$, from \eqref{e:hGT2} we obtain
\begin{align*}
|\hGT_{ij}-\GT_{ij}|\leq K|\msc|q^{r},
\end{align*}
provided that $\sqrt{d-1}\geq 20\n$, $\n'q^r\ll1$ and $\sqrt{N\eta}\geq (d-1)^{\ell+1} M$. This concludes the proof of \eqref{boundhGT} for $i\not\in\bX$ and $j\in \bX$,
\begin{align}\label{e:boundXX2}
\begin{split}
|\hGT_{ij}-P_{ij}(\cE_{r}(i,j,\hcGT))| 
&=|\hGT_{ij}-P_{ij}(\cE_{r}(i,j,\cGT))| \\
&\leq  |\hGT_{ij}-\GT_{ij}| +|\GT_{ij}-P_{ij}(\cE_r((i,j,\cGT))|
\leq 2K |\msc| q^r.
\end{split}
\end{align}

For the case that $i,j\not \in \bX$, noticing that $\dist_{\cGT}(b_k,j)>2r$,  we decompose the set $\vE$ as $\vE= \vE_2'\cup \vE_3'$, where $\vE_2'$ and $\vE_3'$ are defined in \eqref{e:decomposevecE}.
By \eqref{e:boundXX2}, for any $(x,y)\in \vE$, $P_{yj}(\cE_r(y,j,\hcGT))=0$ and thus $|\hGT|_{yj}\leq 2K|\msc|q^r$. Then the same argument as above implies
\begin{align*}
|\hGT_{ij}-\GT_{ij}|\leq K|\msc|q^{r}. 
\end{align*}
This finishes the proof of the stability of $\hGT$. 
\end{proof}

\subsection{Proof of \eqref{e:diffcellest}}

We again follow the structure described below \eqref{e:stabilityGS},
except that no localization step is required to prove \eqref{e:diffcellest}.

\paragraph{Starting point}

Under both conditions given for \eqref{e:diffcellest},
we have $|\GT_{ij}| \leq M/\sqrt{N\eta}$ by the definition of $\sim$ as in Section \ref{sec:defcells}.
By the resolvent identity \eqref{hGT}, we therefore have
\begin{equation}
\label{hGTdiffcell}
|\hGT_{ij}|\leq \frac{M}{\sqrt{N\eta}}+\frac{1}{\sqrt{d-1}}\sum_{(x,y)\in \vE}|\GT_{ix}||\hGT_{yj}|,
\end{equation}
where $\vE$ is as in \eqref{e:defB}.
Notice that if $i,j$ are in different $\rm S$-cells, then for any $(x,y)\in \vE$, either $i,x$ are in different $\rm S$-cells,
or $y,j$ are in different $\rm S$-cells.
Similarly if $j\in \cC_t$ and $i\not\sim \cC_t$ for some $t$, then for any $(x,y)\in \vE$, either $|\GT_{ix}|\leq M/\sqrt{N\eta}$,
or the vertices $y,j$ are in different $\rm S$-cells.
The claim \eqref{boundhGT} follows by analyzing \eqref{hGTdiffcell} as an inequality for these $\hGT_{ij}$.

\paragraph{Green's function estimates}
We first collect some estimates on Green's functions of $\GT$ and $\hGT$,
which are repeatedly used in the proof: for $(x,y)\in \vE$ as in \eqref{e:defB},
\begin{align}\label{GTbound2} 
&|\GT_{ix}|\leq \begin{cases}
2|\msc|,  & (\text{all $x$}),\\
K|m_{sc}|q^r, &(\dist_{\cGT}(i,x)\geq 2r),\\
M/\sqrt{N\eta}, & (\text {$i,x$ are in different $\rm S$-cells; or $i\not\sim$ the $\rm S$-cell containing $x$}).
\end{cases}\\
\label{hGTbound2}
&|\hGT_{yj}|\leq \begin{cases}
2|\msc|,  & (\text{all $y$}),\\
2K|m_{sc}|q^r, &(\dist_{\hcGT}(y,j)\geq 2r),
\end{cases}
\end{align}
These estimates follow from \eqref{asumpGT}--\eqref{boundhGT}, together with \eqref{e:boundPiimsc} for the bound for all $x,y$, and with $P_{ix}(\cE_r(i,x,\cGT))=0$ for $\dist_{\cGT}(i,x) \geq 2r$; and  $P_{yj}(\cE_r(y,j,\hcGT))=0$ for $\dist_{\hcGT}(y,j) \geq 2r$. The last bound in \eqref{GTbound2} holds by the definition of $\sim$.

\begin{proof}[Proof of \eqref{e:diffcellest}, case (\rn 1)]
We verify \eqref{e:diffcellest} in the case that $i,j$ are in different $\rm S$-cells.
Denote
\begin{align*}
\Gamma:=\max_{t_1\neq t_2}\max_{i\in \cC_{t_1}, j\in \cC_{t_2}}|\hGT_{ij}|,
\end{align*}
and now abbreviate by $\sumdots$ the terms in the sum in \eqref{hGTdiffcell}
including the $1/\sqrt{d-1}$ prefactor.
We divide the set $\vE$ according to their relations to the cells
$\cC_{t_1}$ and $\cC_{t_2}$ as $\vE=\vE_1\cup \vE_2\cup \vE_3\cup \vE_4\cup \vE_5$, where
\begin{align*}
&\vE_1=\{(x,y)\in \cC_{t_1}: \dist_{\cGT}(i,x)\leq2r\},\\
&\vE_2=\{(x,y)\in \cC_{t_1}: \dist_{\cGT}(i,x)> 2r\},\\
&\vE_3=\{(x,y)\in \cC_{t_2}: \dist_{\cGT}(y,j)\leq 2r\},\\
&\vE_4=\{(x,y)\in \cC_{t_2}: \dist_{\cGT}(y,j)> 2r\},\\
&\vE_5=\{(x,y)\not\in \cC_{t_1}\cup \cC_{t_2}\}.
\end{align*}
For $(x,y)\in \vE_1$, $|\vE_1|\leq 2\n$ from \eqref{distfinitedegb},
i.e.\ $|\{k\in \qq{1,\nu}: \dist(i,b_k)<2r\}|\leq \n$, and $|\hGT_{yj}|\leq \Gamma$,
by the definition of $\Gamma$. Thus, by \eqref{GTbound2},
\begin{align*}
\sum_{\vE_1}\sumdots\leq \frac{4\n|\msc|\Gamma}{\sqrt{d-1}}.
\end{align*}
For $(x,y)\in \vE_2$, $|\vE_2|\leq 2\n'$ from \eqref{deg}, i.e. $|\cC_{t_1}\cap \{b_1,\dots,b_\nu\}|\leq \n'$. Thus, by \eqref{GTbound2},
\begin{align*}
\sum_{\vE_2}\sumdots\leq \frac{2K\n'|\msc|q^r\Gamma}{\sqrt{d-1}}.
\end{align*}
For $(x,y)\in \vE_3$, $|\vE_3|\leq 2\n$ from \eqref{distfinitedegb}, and by \eqref{GTbound2}--\eqref{hGTbound2}, 
\begin{align*}
\sum_{\vE_3}\sumdots\leq  \frac{M}{\sqrt{N\eta}}\frac{4\n|\msc|}{\sqrt{d-1}}
\end{align*}
For $(x,y)\in \vE_4$, $|\vE_4|\leq 2\n'$ from \eqref{deg}, and $\dist_{\hcGT}(y,j)\geq \dist_{\cGT}(y,j)>2r$.
Thus, by  \eqref{GTbound2}--\eqref{hGTbound2},
\begin{align*}
\sum_{\vE_4}\sumdots\leq  \frac{M}{\sqrt{N\eta}}\frac{4K\n'|\msc|q^r}{\sqrt{d-1}}.
\end{align*}
Finally, for $(x,y)\in \vE_5$, we use $|\vE_5|\leq 2\nu\leq 4(d-1)^{\ell+1}$ and $|\hGT_{yj}|\leq \Gamma$ which holds by the definition of $\Gamma$. Thus, by \eqref{GTbound2},
\begin{align*}
\sum_{\vE_5}\sumdots\leq   \frac{4(d-1)^{\ell+1}M\Gamma}{\sqrt{d-1}\sqrt{N\eta}}.
\end{align*}
Combining the sums over $\vE_1,\dots, \vE_5$ in \eqref{hGTdiffcell} leads to
\begin{align*}
|\hGT_{ij}|\leq \frac{M}{\sqrt{N\eta}}+\frac{\left(4\n|\msc|+2K\n'|\msc|q^r\right)\Gamma}{\sqrt{d-1}}+\frac{\left(4\n|\msc|+4K\n'|\msc|q^r\right)}{\sqrt{d-1}}\frac{M}{\sqrt{N\eta}}+\frac{4M(d-1)^{\ell+1/2}\Gamma}{\sqrt{N\eta}}.
\end{align*}
By taking the maximum over $i,j$ as in the assumption and rearranging the inequality, we get
\begin{align}\label{diffcellest}
\Gamma\leq 2M/\sqrt{N\eta},
\end{align}
provided that $\sqrt{d-1}\geq 20\n$, $\n'q^r\ll1$ and $\sqrt{N\eta}\geq M(d-1)^{\ell+1}$.
\end{proof}

\begin{proof}[Proof of \eqref{e:diffcellest}, case (\rn 2)]
For $j\in \cC_t$ and $i\not\sim \cC_t$, we now decompose the set $\vE$
according to their relations to vertex $i$ and the cell $\cC_t$
as $\vE=\vE_1'\cup \vE_2'\cup \vE_3'\cup \vE_4'\cup \vE_5'$, with
\begin{align*}
&\vE_1'=\{(b_k,c_k), (c_k,b_k): i\not\sim b_k, b_k\not\in\cC_t\},\\
&\vE_2'=\{(b_k,c_k), (c_k,b_k):i\sim b_k, \dist_{\cGT}(i,b_k)\leq 2r, b_k\not\in \cC_t\},\\
&\vE_3'=\{(b_k,c_k), (c_k,b_k):i\sim b_k, \dist_{\cGT}(i,b_k)> 2r,  b_k\not\in \cC_t\},\\
&\vE_4'=\{(b_k,c_k), (c_k,b_k): b_k\in \cC_t,\dist_{\cGT}(b_k,j)\leq 2r\},\\
&\vE_5'=\{(b_k,c_k), (c_k,b_k): b_k\in \cC_t,\dist_{\cGT}(b_k,j)> 2r\}.
\end{align*}
For $(x,y)\in \vE_1'$, $|\vE_1'|\leq 2\nu\leq 4(d-1)^{\ell+1}$, and by \eqref{diffcellest}, $|\hGT_{yj}|\leq 2M/\sqrt{N\eta}$, since $y,j$ are in different $\rm S$-cells. Thus, combining with \eqref{GTbound2},
\begin{align*}
\sum_{\vE_1'}\sumdots\leq 4(d-1)^{\ell+1}\frac{2M^2}{N\eta}\frac{1}{\sqrt{d-1}}.
\end{align*}
For $(x,y)\in \vE_2'$, $y,j$ are in different $\rm S$-cells. We have: $|\vE_2'|\leq 2\n$ from \eqref{distfinitedegb}, i.e. $|\{k\in \qq{1,\nu}: \dist(i,b_k)\leq 2r\}|\leq \n$, and  $|\hGT_{yj}|\leq 2M/\sqrt{N\eta}$ by \eqref{diffcellest}. Thus, by \eqref{GTbound2},
\begin{align*}
\sum_{\vE_2'}\sumdots\leq \frac{8\n|\msc|M}{\sqrt{d-1}\sqrt{N\eta}}.
\end{align*}
For $(x,y)\in \vE_3'$, $y,j$ are in different $\rm S$-cells. We have: $|\vE_3'|\leq 2\n'$ from \eqref{finitedeg}, and  $|\hGT_{yj}|\leq 2M/\sqrt{N\eta}$ by \eqref{diffcellest}. Thus, by \eqref{GTbound2},
\begin{align*}
\sum_{\vE_3'}\sumdots\leq \frac{4K\n'q^{r+1}M}{\sqrt{N\eta}}.
\end{align*}
For $(x,y)\in \vE_4'$, $|\vE_4'|\leq 2\n$ from \eqref{distfinitedegb}, and  \eqref{GTbound2}--\eqref{hGTbound2},
\begin{align*}
\sum_{\vE_4'}\sumdots\leq \frac{4\n|\msc|M}{\sqrt{d-1}\sqrt{N\eta}}.
\end{align*}
For $(x,y)\in \vE_5'$, $|\vE_5'|\leq 2\n'$ from \eqref{deg}, and $\dist_{\hcGT}(y,j)\geq \dist_{\cGT}(y,j)\geq 2r$, since in the graph $\cGT$, $b_k$ and $c_k$ are adjacent. Thus, combining with \eqref{GTbound2}--\eqref{hGTbound2},
\begin{align*}
\sum_{\vE_5'}\sumdots\leq \frac{4K\n'q^{r+1}M}{\sqrt{N\eta}}.
\end{align*}
Therefore, \eqref{hGTdiffcell} can be bounded by
\begin{align*}
|\hGT_{ij}|\leq \frac{M}{\sqrt{N\eta}}+\left(\frac{12\n|\msc|M}{\sqrt{d-1}\sqrt{N\eta}}+\frac{8K\n'q^{r+1}M}{\sqrt{N\eta}}+\frac{8M^2(d-1)^{\ell+1/2}}{N\eta}\right)\leq  \frac{2M}{\sqrt{N\eta}},
\end{align*}
given that $\sqrt{d-1}\geq 20\n$, $\n'q^r\ll 1$ and $\sqrt{N\eta}\geq M(d-1)^{\ell+1}$.
\end{proof}

\subsection{Proof of \eqref{e:stabilitytGT}}

As in previous arguments, we follow the structure described below \eqref{e:stabilityGS}.

\paragraph{Localization}

The switching vertices that are not on the boundary of $\T$ \emph{after} switching are
given by $\{a_1,\dots, a_{\mu},b_1,\dots, b_\nu\}$.
From Section~\ref{sec:defcells}, we recall the partition $\{\bI_1, \bI_2,\dots, \bI_\kappa\}$ of this set.
(Thus the $\bI_j$ are the connected components of the Green's function graph $\cal R$, with all connected components containing any of the vertices $a_i$ joined to $\bI_1$.)
The close vertices $\bX_1 \cup \bX_2$ and the larger subgraph $\cG_0$ are defined by
\begin{equation}\label{e:XYdef}
\cG_0
:=\cB_{3r}(\{a_1,\dots, a_{\mu},b_1,\dots, b_\nu\}, \cGT),
\quad
\bX_1:= \bB_{2r}(\bI_1, \cGT),
\quad
\bX_2 := \bB_{2r}(\bI_2 \cup \cdots \cup \bI_\kappa, \cGT).
\end{equation}
By our construction of $\rm S$-cells and $\rm S'$-cells, it follows that $\bX_1=\cC_1=\cC_1'$ and $\bX_2=\cup_{i=2}^{\kappa}\cC_i=\cup_{i=2}^{\kappa'}\cC_i'$.
By our conventions, the deficit function of the graph $\cG_0$ is the restriction of that on $\cGT$.
We define the graph $\hcG_0$ by removing edges $\{b_i,c_i\}_{i\leq \nu}$
from $\cG_0$ and $\tcG_0$ by adding edges $\{a_i,b_i\}_{i\leq \nu}$ to $\hcG_0$.
The graphs $\hcG_0$ and $\tcG_0$ are given the restricted deficit functions from $\hcGT$ and $\tcGT$ respectively. We abbreviate
\begin{align*}
\cG_1=\TE(\cG_0),\quad \hcG_1=\TE(\hcG_0),\quad \hat{P}=G(\hcG_1),\quad
\tcG_1=\TE(\tcG_0), \quad \tilde{P}=G(\tcG_1).
\end{align*}
Notice that the graph $\hcG_1$ is obtained by removing the edges $\{b_i,c_i\}_{i\leq \nu}$ from $\cG_1$,
and that the graph $\tcG_1$ is obtained by adding the edges $\{b_i,a_i\}_{i\leq \nu}$ to $\hcG_1$.
We use the following fact throughout this section.

\begin{claim}\label{c:componentestimate}
If \eqref{distdega} and \eqref{distdegb} hold,
then each connected component of $\tcG_0$ contains at most $10\n$ elements in $\{a_1,\dots,a_\mu, b_1,\dots, b_\nu\}$, i.e.
\begin{align}\label{e:componentest}
|\{i\in\qq{1,\mu}: a_i\in \bK\}|+|\{i\in \qq{1,\nu}: b_i\in \bK\}|\leq 10\n,
\end{align}
where $\bK$ is the vertex set of any connected component of $\tcG_0$.
\end{claim}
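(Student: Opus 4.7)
The plan is to split the analysis according to whether $\bK$ contains a switching vertex with a \emph{good} index, by which I mean $i\not\in \sBa\cup\sBb$. Since $|\sBa|<3\omega$ and $|\sBb|<2\omega$ by \eqref{distdega} and \eqref{distdegb}, the total number of bad indices is at most $5\omega$. In the easy case when every switching vertex in $\bK$ has a bad index, the bound $|\{i:a_i\in\bK\}|\leq|\sBa|<3\omega$ together with the analogous bound for the $b$'s immediately yields a count of at most $5\omega\leq 10\omega$. Note also that $\tcG_0$ is an induced subgraph of $\tcGT$ on the vertex set $\bB_{3r}(\{a_1,\dots,a_\mu,b_1,\dots,b_\nu\},\cGT)$ (the switch/removal operations in going from $\cGT$ to $\tcGT$ and from $\cG_0$ to $\tcG_0$ agree), so $\dist_{\tcG_0}\geq \dist_{\tcGT}$.

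The substantive case is when $\bK$ contains a good switching vertex $a_i$ (the case of $b_i$ is symmetric). I will show $\bK\subset \bB_{3r}(a_i,\cGT)\cup \bB_{3r}(b_i,\cGT)$. Since $i\not\in \sBa\cup\sBb$, both $a_i$ and $b_i$ are at $\cGT$-distance $>R/2$ from every other switching vertex, and by the parameter choices in Section~\ref{sec:outline-parameters} we have $6r\ll R/2$. Hence the two balls $\bB_{3r}(a_i,\cGT)$ and $\bB_{3r}(b_i,\cGT)$ are disjoint from $\bB_{3r}(s,\cGT)$ for every other switching vertex $s$, so their union contains no switching vertex besides $a_i,b_i$ itself. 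In particular the count contributed by $\bK$ is at most $2$.

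To establish the containment $\bK\subset \bB_{3r}(a_i,\cGT)\cup \bB_{3r}(b_i,\cGT)$, suppose for contradiction that a shortest path $a_i=v_0,v_1,\dots,v_m=v$ in $\tcG_0$ leaves this union, and let $t$ be the first index with $v_t$ outside. The edge $\{v_{t-1},v_t\}$ is either (a) an old edge inherited from $\cGT$, or (b) a new edge $\{a_k,b_k\}$ with $k\leq\nu$. In case (a), since $v_{t-1}\in \bB_{3r}(a_i,\cGT)\cup \bB_{3r}(b_i,\cGT)$, we have $v_t\in \bB_{3r+1}(\{a_i,b_i\},\cGT)$; but any such vertex lying in the vertex set of $\tcG_0$ must belong to $\bB_{3r}(s,\cGT)$ for some switching vertex $s$, and the isolation $\dist_{\cGT}(\{a_i,b_i\},s)>R/2>6r$ forces $s\in\{a_i,b_i\}$, so $v_t\in\bB_{3r}(a_i,\cGT)\cup \bB_{3r}(b_i,\cGT)$, a contradiction. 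In case (b), the vertex $v_{t-1}\in\{a_k,b_k\}$ is a switching vertex; if $k=i$ the edge is $\{a_i,b_i\}$ and $v_t\in\{a_i,b_i\}$, again a contradiction; if $k\neq i$ then $v_{t-1}\in\bB_{3r}(a_i,\cGT)\cup \bB_{3r}(b_i,\cGT)$ is a switching vertex other than $a_i,b_i$, contradicting the isolation established above.

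The main obstacle is the bookkeeping around the new edges $\{a_k,b_k\}_{k\leq\nu}$, which are exactly what makes $\tcG_0$ behave differently from $\cG_0$ and could a priori create long shortcut paths traversing many distant switching vertices. The resolution, as sketched above, is a minimal-counterexample argument: exploiting a shortcut edge $\{a_k,b_k\}$ with $k\neq i$ requires first having reached $a_k$ or $b_k$ from $a_i$, but the isolation \eqref{tcGTdist} combined with $6r\ll R/2$ prevents this before any shortcut is used.
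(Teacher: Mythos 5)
Your proof is correct and takes essentially the same route as the paper's: the paper also splits on whether $\bK$ contains $a_i$ or $b_i$ for some $i\notin\sBa\cup\sBb$, asserts in one line (citing Proposition~\ref{structuretGT}) that such a component is then disjoint from all other switching vertices, and otherwise bounds the count by $2|\sBa\cup\sBb|\leq 10\n$; your minimal-counterexample path argument is a correct filling-in of that one-line assertion. One small bookkeeping correction in your easy case: an index $i$ with $a_i\in\bK$ need not lie in $\sBa$ (it could be bad only because $i\in\sBb$), so the bound there should be $|\{i:a_i\in\bK\}|+|\{i:b_i\in\bK\}|\leq 2|\sBa\cup\sBb|\leq 10\n$ rather than $5\n$ --- which is still exactly what the claim requires.
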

\begin{proof}
\eqref{e:componentest} is a consequence of Propositions~\ref{prop:distdeg} and \ref{structuretGT}.
More precisely, if $a_i\in \bK$ or $b_i\in \bK$ for some $ i\in \qq{1,\mu}\setminus(\sBa \cup \sBb)$, then $\bK$ is disjoint from $\{a_1,a_2,\dots,a_\mu, b_1,b_2,\dots, b_\nu\}\setminus\{a_i,b_i\}$. Therefore $|\{i\in\qq{1,\mu}: a_i\in \bK\}|+|\{i\in \qq{1,\nu}: b_i\in \bK\}|\leq 2|\sBa\cup \sBb|\leq 10\n$.
\end{proof}

\paragraph{Verification of assumptions in Proposition \ref{boundPij}}

By assumption $\tcG=T_{\bf S}(\cG)\in \bar\Omega$. Since $\hcG_0$ and $\tcG_0$ can be viewed as subgraphs of $\cG$ and $\tcG$ respectively, the radius-$R$ neighborhoods of them have excess at most $\omega$. Moreover, the same argument as in Section \ref{proofofboundhGT} implies that the sum of the deficit functions on each connected component of $\hcG_0$ and that of $\tcG_0$ are bounded by $8\n$.  Therefore the assumptions for \eqref{e:compatibility} are verified for both graphs $\hcG_0$ and $\tcG_0$. Thus \eqref{e:boundPij}--\eqref{e:compatibility} hold for $\hat P$ and $\tilde P$, and as in \eqref{e:PP}, we can use $\hat P$ instead of $P(\cE_r(i,j,\hcGT))$ and $\tilde P$ instead of $P(\cE_r(i,j,\tcGT))$.

\paragraph{Starting point}
The proof is similar to that of \eqref{boundhGT}. By the resolvent identity \eqref{e:resolv}, we have
\begin{align}
\label{tGT}\tGT-\hGT&=\hGT\Delta\tGT,\\
\label{tPT}\tilde{P}-\hat P&=\hat P\Delta\tilde P.
\end{align}
where $\Delta=\sum_{k=1}^\nu (e_{b_ka_k}+e_{a_kb_k})/\sqrt{d-1}$. 
Taking difference of \eqref{tGT} and \eqref{tPT}, we have
\begin{equation}\label{perturb}
\tGT_{ij}-\tilde{P}_{ij}=(\hGT_{ij}-\hat P_{ij})
+\frac{1}{\sqrt{d-1}}\sum_{(x,y)\in \vE}(\hGT_{ix}-\hat P_{ix}) \tilde{P}_{yj}
+ \frac{1}{\sqrt{d-1}}\sum_{(x,y)\in \vE}\hGT_{ix}(\tGT_{yj}-\tilde{P}_{yj}),
\end{equation}
where the sums are over the ordered pairs
\begin{align}\label{e:defB2}
(x,y)\in \vE=\{(a_1,b_1),\dots,(a_\nu,b_\nu), (b_1,a_1),\dots, (b_\nu,a_\nu)\}.
\end{align}
We regard \eqref{perturb} as an equation for $\tGT-\tilde P$, and will show that
$\tGT-\tilde{P}$ is small, using that $\hGT-\hat P$ is small by \eqref{boundhGT}.

\paragraph{Green's function estimates}

We collect some estimates on Green's function, which are repeatedly used in the proof:
\begin{align}\label{tGPbound}
\begin{cases}
|\hGT_{ij}|, |\hat P_{ij}|, |\tilde P_{ij}|\leq 2|\msc|,&(\text{all $i,j$}),\\
|\hGT_{ij}|\leq 2K|m_{sc}|q^r, &(\dist_{\cGT}(i,j)\geq 2r),\\
|\hGT_{ib_k}|, |\hGT_{ic_k}|\leq 2M/\sqrt{N\eta}, & (\text{$i,b_k$ are in different $\rm S$-cells; or $i\not\sim b_k$}) 
\end{cases}
\end{align}
The first estimate follows from \eqref{e:boundPiimsc} and \eqref{boundhGT}; the second estimate follows from $P_{ij}(\cE_r(i,j,\hcGT))=0$ and \eqref{boundhGT}; the last estimate is from \eqref{e:diffcellest}.

\begin{proof}[Proof of \eqref{e:stabilitytGT} for $i,j\in \bX_1\cup \bX_2$]
For the second term on the right-hand side of \eqref{perturb}, it follows from \eqref{boundhGT} that $|\hGT_{ix}-\hat P_{ix}| \leq 2K |\msc| q^r+2^{2\n+3}|m_{sc}|q^{r+1}$.
Moreover, again $\tilde P_{yj} = 0$ if $y$ and $j$ are in different connected components of $\tcG_0$.
Thus, by Claim~\ref{c:componentestimate}, we have $\tilde P_{yj}\neq 0$ for at most $10\n$ vertices $y\in \{a_i: i\in \qq{1,\mu}\}\cup \{b_i: i\in \qq{1,\nu}\}$,
and for these we again have $|\tilde{P}_{yj}|\leq 2|\msc|$ by \eqref{tGPbound}.
Altogether, the second term on the right-hand of \eqref{perturb} is bounded by
\begin{align*}
 \frac{1}{\sqrt{d-1}}\sum_{(x,y)\in \vE}|\hGT_{ix}-\hat P_{ix}| |\tilde{P}_{yj}|
 \leq 20\n(2K+2^{2\n+3}q)|m_{sc}| q^{r+1}.
\end{align*}

To estimate the last term in \eqref{perturb}, we denote
\begin{align*}
 \Gamma_1:=\max_{i,j\in \bX_1}|\tilde{P}_{ij}-\tGT_{ij}|,\qquad
 \Gamma_2:=\max_{i\in \bX_2,j\in \bX_1\cup \bX_2}|\tilde{P}_{ij}-\tGT_{ij}|.
\end{align*}
Our goal is to prove that 
\begin{align}\label{Gammabound}
\Gamma_1,\Gamma_2\leq 24K^2|m_{sc}|q^{r}.
\end{align}
In the following, we first derive an estimate for $\Gamma_2$. We assume that 
$i\in \cC_t$ for some $t\neq 1$, and $j\in \bX_1\cup \bX_2$.
We decompose the set $\vE$ (as in \eqref{e:defB2}) according to their relations to the $\rm S$-cell $\cC_t$: $\vE=\vE_1\cup \vE_2\cup \vE_3$, where
\begin{align*}
\vE_1&=\{(x,y): x\not\in \cC_t\},\\
\vE_2&=\{(x,y): x\in \cC_t, \dist_{\cGT}(i,x)<2r\},\\
\vE_3&=\{(x,y): x\in \cC_t, \dist_{\cGT}(i,x)\geq 2r\}.
\end{align*}
Notice that for any $(x,y)\in \vE$, by the definition of $\Gamma_1, \Gamma_2$, we always have $|\tilde P_{yj}-\tGT_{yj}|\leq \max\{\Gamma_1,\Gamma_2\}$.
For $(x,y)\in \vE_1$, we have $|\vE_1|\leq 2\nu\leq 4(d-1)^{\ell+1}$.
Since $i,x$ are in different $\rm S$-cells, by \eqref{tGPbound}, 
\begin{align*}
\sum_{\vE_1}\sumdots\leq \frac{1}{\sqrt{d-1}}\sum_{(x,y)\in \vE_1}|\hGT_{ix}|\max\{\Gamma_1,\Gamma_2\}
\leq  \frac{4(d-1)^{\ell+1}}{\sqrt{d-1}}\frac{2M}{\sqrt{N\eta}}\max\{\Gamma_1,\Gamma_2\}.
\end{align*}
For $(x,y)\in \vE_2$, we have $|\vE_2|\leq 2\n$ by \eqref{distfinitedega} and  \eqref{distfinitedegb}.
Thus, by \eqref{tGPbound},
\begin{align*}
\sum_{\vE_2}\sumdots\leq \frac{1}{\sqrt{d-1}}\sum_{(x,y)\in \vE_2}|\hGT_{ix}|\max\{\Gamma_1,\Gamma_2\}
\leq  \frac{4\n|\msc|}{\sqrt{d-1}}\max\{\Gamma_1,\Gamma_2\}.
\end{align*}
For $(x,y)\in \vE_3$, we have $|\vE_3|\leq \n'$ from \eqref{deg}, and combined with \eqref{tGPbound},
\begin{align*}
\sum_{\vE_3}\sumdots\leq \frac{1}{\sqrt{d-1}}\sum_{(x,y)\in \vE_3}|\hGT_{ix}|\max\{\Gamma_1,\Gamma_2\}
\leq  \frac{2K\n'|m_{sc}|q^r}{\sqrt{d-1}}\max\{\Gamma_1,\Gamma_2\}.
\end{align*}
Combining the sums over $\vE_1, \vE_2, \vE_3$, for $i\in \bX_2$ and $j\in \bX_1\cup \bX_2$, \eqref{perturb} leads to
\begin{align*}
|\tGT_{ij}-\tilde P_{ij}|\leq (20\n q+1)(2K+2^{2\n+3}q)|m_{sc}| q^{r}+\frac{8(\n+1)}{\sqrt{d-1}}\max\{\Gamma_1,\Gamma_2\},
\end{align*}
given $\sqrt{N\eta}\geq M(d-1)^{\ell+1}$, $\sqrt{d-1}\geq 20\n$ and $\n'q^r\ll1$.
Moreover, taking the maximum over all $i\in \bX_2$ and $j\in \bX_1\cup \bX_2$, we get
\begin{align}\label{Gamma2bound8}
 \Gamma_2\leq (20\n q+1)(2K+2^{2\n+3}q)|m_{sc}| q^{r}+\frac{8(\n+1)}{\sqrt{d-1}}\max\{\Gamma_1,\Gamma_2\}.
\end{align}

Next, we estimate $\Gamma_1$.
To this end, we decompose the set $\vE$ (as in \eqref{e:defB2}) according to the cases in \eqref{tGPbound}
as $\vE=\vE_1'\cup \vE_2'\cup \vE_3'\cup \vE_4'$, where
\begin{align*}
\vE_1'&=\{(x,y): x\in \bX_1, \dist_{\cGT}(i,x)\leq 2r\},\\
\vE_2'&=\{(x,y): x,y\in \bX_1, \dist_{\cGT}(i,x)> 2r\},\\
\vE_3'&=\{(x,y): x\in \bX_1,y\in \bX_2, \dist_{\cGT}(i,x)> 2r\},\\
\vE_4'&=\{(x,y): x\in \bX_2\}.
\end{align*}
For $(x,y)\in \vE_1'$, $|\vE_1'|\leq 2\n$ from \eqref{distfinitedega} and  \eqref{distfinitedegb}. $|\tilde P_{yj}-\tGT_{yj}|\leq \max\{\Gamma_1,\Gamma_2\}$ by the definition of $\Gamma_1, \Gamma_2$. Therefore, by \eqref{tGPbound}, 
\begin{equation*}
\sum_{\vE_1'}\sumdots
\leq \frac{1}{\sqrt{d-1}}\sum_{(x,y)\in \vE_1'}|\hGT_{ix}|\max\{\Gamma_1,\Gamma_2\}\leq \frac{4\n|\msc|}{\sqrt{d-1}}\max\{\Gamma_1,\Gamma_2\}.
\end{equation*}
For $(x,y)\in \vE_2'$, $|\vE_2'|\leq 2\n'$ from \eqref{deg}, and  $|\tilde P_{yj}-\tGT_{yj}|\leq \Gamma_1$ from the definition of $\Gamma_1$. Thus, by \eqref{tGPbound},
\begin{equation*}
\sum_{\vE_2'}\sumdots
\leq \frac{1}{\sqrt{d-1}}\sum_{(x,y)\in \vE_2'}|\hGT_{ix}|\Gamma_1 \leq 4K\n'q^{r+1}\Gamma_1.
\end{equation*}
For $(x,y)\in \vE_3'$, we have $|\vE_3'|\leq 2\nu\leq 4(d-1)^{\ell+1}$,
and $|\tilde P_{yj}-\tGT_{yj}|\leq \Gamma_2$ from the definition of $\Gamma_2$. Thus, by \eqref{tGPbound},
\begin{equation*}
\sum_{\vE_3'} \sumdots
\leq \frac{1}{\sqrt{d-1}}\sum_{(x,y)\in \vE_3'}|\hGT_{ix}|\Gamma_2\leq 8K(d-1)^{\ell+1}q^{r+1}\Gamma_2.
\end{equation*}
For $(x,y)\in \vE_4'$,
we have $|\vE_4'|\leq \nu\leq 2(d-1)^{\ell+1}$, and $|\tilde P_{yj}-\tGT_{yj}|\leq \max\{\Gamma_1,\Gamma_2\}$ from the definition of $\Gamma_1,\Gamma_2$. Thus, by \eqref{tGPbound},
\begin{equation*}
\sum_{\vE_4'}\sumdots
\leq \frac{1}{\sqrt{d-1}}\sum_{(x,y)\in \vE_4'}|\hGT_{ix}|(\Gamma_1+\Gamma_2)\leq \frac{4(d-1)^{\ell+1}M}{\sqrt{d-1}\sqrt{N\eta}}\max\{\Gamma_1,\Gamma_2\}.
\end{equation*}
Using $r=2\ell+1$, and combining the above estimates in \eqref{perturb},
we obtain that, for all $i,j\in \bX_1$,
\begin{equation*}
|\tGT_{ij}-\tilde P_{ij}|
\leq (20\n q+1)(2K+2^{2\n+3}q)|m_{sc}| q^{r}+\frac{8(\n+1)}{\sqrt{d-1}}\Gamma_1+\left(8K+\frac{4\n+4}{\sqrt{d-1}}\right)\Gamma_2,
\end{equation*}
given $\sqrt{N\eta}\geq M(d-1)^{\ell+1}$ and $\n'q^r\ll1$.
Taking the maximum over the left-hand side, we have
\begin{equation}\label{Gamma1bound8}
 \Gamma_1\leq (20\n q+1)(2K+2^{2\n+3}q)|m_{sc}| q^{r}+\frac{8(\n+1)}{\sqrt{d-1}}\Gamma_1+\left(8K+\frac{4\n+4}{\sqrt{d-1}}\right)\Gamma_2.
\end{equation}

Finally,
the claim \eqref{Gammabound} follows by combining \eqref{Gamma2bound8} and \eqref{Gamma1bound8}, provided
that  $\sqrt{d-1}\geq \max\{(\n+1)^2 2^{2\n+10}, 2^8(\n+1)K\}$. Therefore for any $i,j\in \bX_1\cup \bX_2$, we have
\begin{align}\begin{split}\label{boundtGTij}
 |\tGT_{ij}-P_{ij}(\cE_{r}(i,j,\tGT))|&\leq   |(\tGT-\tilde P)_{ij}|+|\tilde P_{ij}-P_{ij}(\cE_{r}(i,j,\tGT))|\\
 &\leq 24K^2|\msc|q^r+2^{2\n+3}|\msc|q^{r+1}\leq (24K^2+1)|\msc|q^r,
\end{split}\end{align}
which implies the bound stated in \eqref{e:stabilitytGT}.
\end{proof}

\begin{proof}[Proof of \eqref{e:stabilitytGT} for the remaining case]
For $i\notin \bX_1\cup \bX_2$ and $j\in \bX_1\cup \bX_2$,
first note that $\cE_r(i,j,\hGT)=\cE_{r}(i,j,\tGT)$ and that both graphs have the same deficit function.
To prove \eqref{e:stabilitytGT}, we will show that $|\tGT_{ij}-\hGT_{ij}|$ is small.
To this end, we start from the resolvent identity \eqref{tGT}, which states that
\begin{equation}  \label{tGT-bis}
|\tGT_{ij}-\hGT_{ij}|\leq \frac{1}{\sqrt{d-1}}\sum_{(x,y)\in \vE}|\hGT_{ix}||\tGT_{yj}|.
\end{equation}
By the definition of the sets $\bX_1,\bX_2$, for any $(x,y)\in \vE$,
we have $\dist_{\cGT}(i,x)>2r$ and $|\hGT_{ix}|\leq 2K|\msc|q^r$ by \eqref{tGPbound}.
We simply decompose the set $\vE$ (as in \eqref{e:defB2}) according to their distance to the vertex $j$ as $\vE=\vE_1\cup \vE_2$, where 
\begin{align*}
\vE_1&=\{(x,y): \dist_{\tcGT}(y,j)<2r\},\\
\vE_2&=\{(x,y): \dist_{\tcGT}(y,j)\geq 2r\}.
\end{align*}
For $(x,y)\in \vE_1$, we have $|\vE_1|\leq 10\n$ by \eqref{e:componentest} in Claim~\ref{c:componentestimate}.
Moreover, $|\tGT_{yj}|\leq |P_{yj}(\cE_{r}(y,j,\tGT))|+(24K^2+1)|\msc|q^r\leq 2|m_{sc}|$ by \eqref{boundtGTij}.
Thus, combining with \eqref{tGPbound}, we have 
\begin{equation*}
\sum_{\vE_1}\sumdots\leq 40K\n |m_{sc}|q^{r+1},
\end{equation*}
where here $\sumdots$ denotes the terms in the sum in \eqref{tGT-bis}.
For $(x,y)\in \vE_2$, we have $|\vE_2|\leq 2\nu\leq 2(d-1)^{\ell+1}$,
and $|\tGT_{yj}|\leq (24K^2+1)|\msc|q^r$ by \eqref{boundtGTij}, since $P_{yj}(\cE_{r}(y,j,\tGT))=0$.
Thus, combining with \eqref{tGPbound},
\begin{equation*}
\sum_{\vE_2}\sumdots\leq 2K(24K^2+1)|\msc|^2q^{2r}\frac{2(d-1)^{\ell+1}}{\sqrt{d-1}}.
\end{equation*}
Combining the sums over $\vE_1,\vE_2$, we get 
\begin{equation*}
 |\tGT_{ij}-\hGT_{ij}|
 \leq 40K\n q^{r+1}+2K(24K^2+1)|\msc|^2q^{2r}\frac{2(d-1)^{\ell+1}}{\sqrt{d-1}}\leq 100K^3|m_{sc}|q^r,
\end{equation*}
provided that $\sqrt{d-1}\geq 20\n$.
Similarly, in the case $i,j\notin \bX_1\cup \bX_2$, we have
\begin{equation*}
  |\tGT_{ij}-\hGT_{ij}|
  \leq \frac{1}{\sqrt{d-1}}\sum_{(x,y)\in \vE}|\hGT_{ix}||\tGT_{yj}|
  \leq 2K(24K^2+1)|\msc|^2q^{2r}\frac{2(d-1)^{\ell+1}}{\sqrt{d-1}}\leq 100K^3 |m_{sc}|q^r.
\end{equation*}
Therefore, for $i\notin \bX_1\cup \bX_2$ and $j\in \bX_1\cup \bX_2$ or $i,j\notin \bX_1\cup \bX_2$, we obtain
\begin{align*}
|\tGT_{ij}-P_{ij}(\cE_{r}(i,j,\tGT))|
&\leq   |\tGT_{ij}-\hGT_{ij}|+|\hGT_{ij}-P_{ij}(\cE_{r}(i,j,\hGT))|\\
&\leq100K^3 |m_{sc}|q^r+2K|\msc|q^r\leq 2^7K^3|\msc|q^r.
\end{align*}
This completes the proof of \eqref{e:stabilitytGT}.
\end{proof}

\section{Improved decay in the switched graph}
\label{sec:boundarydecay}

In the graph $\tcG=T_{\bf S}(\cG)$, the edge boundary $\del_E \cT$ and the vertex boundary $\del \cT$ of $\cT$ are given by
\begin{equation}\label{defI}
  \del_E\cT=\{(l_1, \ta_1), (l_2,\ta_2),\dots, (l_\mu, \ta_\mu)\},\quad
  \bI\deq\del\cT=\{\ta_1,\ta_2, \dots, \ta_\mu\},
\end{equation}
where the vertices $\ta_i=c_i$ with $i\in\qq{1,\nu}$ are those that get switched,
and the vertices $\ta_i=a_i$ with $i\in \qq{\nu+1, \mu}$ are those for which the switching does not take place. 
Here recall from Remark~\ref{defrmk} that we assume without loss of generality that the index set of admissible switchings is $W_{\bf S}=\qq{1,\nu} \subset \qq{1,\mu}$.

The result of this section is the following proposition,
showing that (\rn{1}) between most vertices in $\bI$ the Green's function is small;
(\rn{2}) for any vertex not in $\bI$, the Green's function between it and most vertices in $\bI$ is also small.
This decay asserted by the proposition is better than that between the boundary vertices of $\cT$ which we assumed in the unswitched graph.
This improvement is crucial for the subsequent sections, in particular for the derivation of
the self-consistent equation.

\begin{proposition} \label{greendist-new}
Under the same assumptions as in Proposition \ref{prop:stabilitytGT}, let ${\bf S}\in F(\cG)$ (as in Section \ref{cdFG}) and assume that $\tcG=T_{\bf S}(\cG)\in \bar \Omega$ (as in Section \ref{sec:outline-structure}).
Then there exists $J \subset \qq{1,\nu}$ with $|J|\geq \nu-\n'-6\n$ such that,
for any $k\in J$, 
\begin{alignat}{2}
\label{e:greendistIJ}
|\tGT_{ic_k}| &\leq 2^{9}K^4|\msc|q^{2r+1} &&\quad\text{if $i=\ta_j$ for some $j\in \qq{1,\mu}\setminus J$},
\\
\label{e:greendistJJ}
|\tGT_{ic_k}| &\leq 2^{12}K^5|\msc|q^{3r+2} &&\quad \text{if $i=\ta_j$ for some $j\in J\setminus \{k\}$},
\\
\label{e:greendistNJ}
|\tGT_{ic_k}| &\leq 2^{12}K^5|\msc|q^{2r+1} &&\quad  \text{if $i\not\sim b_k$ and $\dist_{\tcGT}(i,a_k)\geq 2r$},
\end{alignat}
provided that $\sqrt{d-1}\geq \max\{(\n+1)^2 2^{2\n+10}, 2^8(\n+1)K\}$, $\n'q^r\ll1$ and $\sqrt{N\eta}q^{3r+2}\geq M$.
\end{proposition}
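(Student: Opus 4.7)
The plan is to define $J$ by removing a small set of bad indices from $\qq{1,\nu}$, and then exploit the tree structure around $c_k$ in $\tcGT$ for $k\in J$ via the Schur complement, combined with the off-cell estimate \eqref{e:diffcellest}. I take
\begin{equation*}
J := \ha{k\in\qq{1,\nu} : \cB_R(c_k,\cGT) \text{ is a tree, and } b_k\in \bS'_{j(k)} \text{ with } j(k)\neq 1 \text{ and } \bS'_{j(k)} \text{ $R/4$-isolated in $\tcGT$}}.
\end{equation*}
By \eqref{treeneighbor} and \eqref{e:Spcellbd}, $|\qq{1,\nu}\setminus J|\leq \omega+(\omega'+5\omega) = \omega'+6\omega$. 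The structural upshot for $k\in J$ is that $\bB:=\bB_r(c_k,\tcGT)$ is a subtree of the tree $\cB_R(c_k,\cGT)$ in which $c_k$ has degree $d-1$ (since $\{b_k,c_k\}$ is removed and no edge is added near $c_k$ in passing to $\tcGT$), and $\dist_{\tcGT}(c_k,v)>R/4$ for every switching vertex $v\neq c_k$.

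The starting point is the off-diagonal Schur complement identity for the partition $\tcGT=\bB\sqcup\bB^c$: for $i\in\bB^c$,
\begin{equation*}
\tGT_{c_k i} = -\frac{1}{\sqrt{d-1}}\sum_{(u,v)\in\partial_E\bB} (G_\bB)_{c_k u}\,\tGT_{vi},
\end{equation*}
where $G_\bB=(\tilde H|_\bB-z)^{-1}$ is the Green's function of the isolated ball. Since $\bB$ is a tree rooted at $c_k$, the standard finite-tree recursion $m_k=-1/(z+m_{k-1})$, which is a contraction near the fixed point $\msc$, yields $|(G_\bB)_{c_k u}|\leq 2|\msc|q^r$ for $u\in\partial\bB$, in analogy with Proposition~\ref{greentree}. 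For the bulk factor $\tGT_{vi}$, one checks in each case that $v$ and $i$ lie in distinct $\rm S'$-cells: every boundary vertex $v$ of $\bB$ is at $\tcGT$-distance $\leq r+1$ from $c_k$, hence in the $\rm S'$-cell of $c_k$; in case (i) and (ii) the hypothesis $i=\ta_j$ with $j\neq k$ together with the $R/4$-isolation of $\bS'_{j(k)}$ forces $i$ into a different $\rm S'$-cell, and in case (iii) the hypotheses $i\not\sim b_k$ and $\dist_{\tcGT}(i,a_k)\geq 2r$ do the same via \eqref{e:diffcellest}. Hence $|\tGT_{vi}|\leq 2M/\sqrt{N\eta}$ throughout.

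For cases (i) and (iii), I would then sum $|\partial_E\bB|\leq(d-1)^{r+1}$ terms with the hypothesis $M/\sqrt{N\eta}\leq q^{3r+2}$ and the identity $(d-1)^{r+1/2}q^{2r+1}=|\msc|^{2r+1}$:
\begin{equation*}
|\tGT_{c_k i}| \leq \frac{(d-1)^{r+1}}{\sqrt{d-1}} \cdot 2|\msc|q^r \cdot \frac{2M}{\sqrt{N\eta}} \leq 4|\msc|\,(d-1)^{r+1/2}q^{4r+2} \leq 4|\msc|^{2r+2}q^{2r+1} \leq 4|\msc|q^{2r+1},
\end{equation*}
which yields both bounds (with room for the prefactors $2^9K^4$ and $2^{12}K^5$). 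For case (ii), where $i=c_j$ with $j\in J\setminus\{k\}$, the plan is to iterate the same tree-localization around $c_j$, writing $\tGT_{vc_j}$ via a second Schur expansion over $\bB':=\bB_r(c_j,\tcGT)$; the inner bulk factor $\tGT_{vv'}$ again enjoys the off-cell bound since $v, v'$ lie in the two distinct isolated $\rm S'$-cells. Two tree factors of $|\msc|q^r$ plus the off-cell bound then produce the improved $q^{3r+2}$ decay.

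The main obstacle is that in each expansion the boundary of $\bB$ carries $(d-1)^{r+1}$ terms, which in case (ii) appears as $(d-1)^{2r+2}$ after the double Schur; the naive termwise triangle inequality would lose a factor of $(d-1)^{r+1}$ and ruin the desired decay. The saving mechanism is algebraic: using $q=|\msc|/\sqrt{d-1}$ and $|\msc|\leq 1$, the factor $(d-1)^{r+1/2}q^{2r+1}=|\msc|^{2r+1}$ absorbs all the multiplicity arising from a single Schur expansion, but the analogous identity for the double expansion requires case (ii)'s more careful rearrangement of the sum (organized via the tree Ward identity $\sum_{u\in\bB}|(G_\bB)_{c_ku}|^2\leq \Im (G_\bB)_{c_kc_k}/\eta$ applied to both $\bB$ and $\bB'$, combined with the off-cell bound on $\tGT_{vv'}$ and the hypothesis $\sqrt{N\eta}q^{3r+2}\geq M$). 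I expect this combinatorial bookkeeping for case (ii) to be the most technically delicate step.
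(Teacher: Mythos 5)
Your definition of $J$ matches the paper's, but the core of your argument has a circularity that breaks it. You bound the ``bulk factor'' by claiming $|\tGT_{vi}|\leq 2M/\sqrt{N\eta}$ whenever $v$ and $i$ lie in distinct $\rm S'$-cells, citing \eqref{e:diffcellest}. But \eqref{e:diffcellest} is an estimate on $\hGT$ (the intermediate graph, obtained from $\cGT$ by only \emph{removing} edges, which preserves the cell separation), not on $\tGT$. In $\tcGT$ the switching \emph{adds} the edges $\{a_l,b_l\}$, which directly connect $\cC_1'$ to the other $\rm S'$-cells --- for instance $a_k\in\cC_1'$ and $b_k$ in the cell of $c_k$ become adjacent, so $|\tGT_{a_kb_k}|$ is of order $q$, vastly larger than $M/\sqrt{N\eta}$. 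The off-cell smallness of $\tGT$ is precisely what Proposition~\ref{greendist-new} is trying to establish (and the true answer is $|\msc|q^{2r+1}$, not $M/\sqrt{N\eta}$), so assuming it for the factors $\tGT_{vi}$ assumes the conclusion. The paper instead expands $\tGT=\hGT+\hGT\Delta\tGT$ over the $O(\mu)$ added edges, uses the off-cell bounds only on the $\hGT$ factors, and closes the argument with a self-consistent two-variable system for the quantities $\Gamma_1,\Gamma_2$ of \eqref{Xcell}--\eqref{Ycell}.

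A second, independent problem is your bound $|(G_\bB)_{c_ku}|\leq 2|\msc|q^r$ for the Green's function of the \emph{isolated} finite ball $\bB=\bB_r(c_k,\tcGT)$. The truncated tree with free boundary has real eigenvalues inside $[-2,2]$, so for $z$ in the bulk with $\eta$ small its resolvent is not uniformly bounded at all --- it can blow up like $1/\eta$. The finite-tree recursion starts from $-1/z$ at the leaves, not from the stable fixed point $\msc$, and does not stay near $\msc$. This is exactly why the paper works with the tree extension $\TE$ (attaching infinite trees at the boundary) to define the reference operator $P$: the attached trees supply the self-energy $-\msc/\sqrt{d-1}$ at each boundary vertex and regularize the problem. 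Your Schur expansion over $\partial_E\bB$ with the isolated-ball Green's function therefore does not produce the factor $|\msc|q^r$ you need, and the subsequent summation over the $(d-1)^{r+1}$ boundary edges cannot be salvaged by the algebraic identity you invoke.
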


The proposition uses the randomness of the resampling
via the properties of the Green's function that are encoded by the $\rm S'$-cells.
Indeed, recall that if $c_k$ was a random index, independent of $\tcGT$ and $i$, then the
size of the right-hand sides would be of order $1/\sqrt{N\eta} \ll |\msc| q^{3r+2}$
by the Ward identity \eqref{e:Ward}.
The remainder of this section is devoted to the proof of the proposition.

\subsection{Preliminaries}\label{sec:defJ}

To prove Proposition~\ref{greendist-new},
we use the same setup as in the proof of \eqref{e:stabilitytGT}.
Thus, from \eqref{e:XYdef} and the paragraph below, recall
the sets $\bX_1,\bX_2$ and the graphs $\cG_0,\hcG_0,\tcG_0$,
and that the set $\bX_1\cup \bX_2$ is contained in $\bG_0$ (the vertex set of $\cG_0$).
We also recall the $\rm S'$-cells defined in Section~\ref{sec:defcells}.

We will prove Proposition~\ref{greendist-new}
with the set $J\subset \qq{1,\nu}$ given by the set 
of indices $k \in  \qq{1,\nu}$ such that the following conditions hold:
\begin{enumerate}
\item
$b_k, c_k\in \bX_2$ (i.e.\ the $S$-cell containing $b_k$ and $c_k$ is not $S_1$); 
\item
$\cB_R(c_k, \cGT)$ is a tree;
\item
the $\rm S'$-cell $\cC'$ containing $b_k$  and $c_k$ is not $\cC'_1$ {(as implied by (\rn 1))} and satisfies
\begin{equation}\label{e:defJ}
\dist_{\tcGT}(\cC',\{a_m: m\in\qq{1,\mu}\setminus \{k\}\}\cup\{b_m, c_m: m\in\qq{1,\nu}\setminus\{k\}\})>R/4.
\end{equation}
\end{enumerate}
By the assumption ${\bf S}\in F(\cG)$, and using the definition of $F(\cG)$ given in Section~\ref{cdFG},
note that \eqref{treeneighbor} and \eqref{e:Spcellbd} hold.
\eqref{treeneighbor} implies that condition (\rn 2) in the definition of $J$ is true for all $k\in \qq{1,\nu}$ with at most $\n$ exceptions.
\eqref{e:Spcellbd} implies condition (\rn 1),
and further that condition (\rn 3) is true for all $k\in \qq{1,\nu}$ with at most $\n'+5\n$ exceptions.
It follows that
\begin{equation*}
|J|\geq \nu-\n'-6\n,
\end{equation*}
as asserted in the statement of Proposition~\ref{greendist-new}.
With this definition of $J$,
to prove Proposition~\ref{greendist-new},
we now follow the structure described below \eqref{e:stabilityGS}
(without the localization step, which is not required here).

\paragraph{Starting point}
For the remainder of this section,
we fix $k \in J$ and denote the $\rm S'$-cell containing $c_k$ by $\cC'$.
Notice that, by the definition of $J$, the $\rm S'$-cell $\cC'$ is not $\cC_1'$, and that it is equal to the $\rm S$-cell containing $c_k$.
For any $i$ arising in the statement of Proposition~\ref{greendist-new},
we either have $i\in \bI$, in which case $i$ and $c_k$ are in different $\rm S$-cells
 (by definition of $J$, the $\rm S$-cell of $c_k$ does not contain any $\ta_j$), or 
otherwise $i\not \sim  b_k$. Noticing that $b_k$ and $c_k$ are in the same $\rm S$-cell,
in both cases, the estimate \eqref{e:diffcellest} with $j=c_k$ holds.
Therefore, since the graph $\tcGT$ is given by adding the edges $\{a_i,b_i\}_{i\leq \nu}$ to $\hcGT$,
by the resolvent formula \eqref{e:resolv},  we have
\begin{align}\label{tGijbound}
|\tGT_{ic_k}|
=
\absa{\hGT_{ic_k}+
\frac{1}{\sqrt{d-1}}\sum_{(x,y) \in \vE}\hGT_{ix}\tGT_{yc_k} }
\leq \frac{2M}{\sqrt{N\eta}}+
\frac{1}{\sqrt{d-1}}\sum_{(x,y) \in \vE}|\hGT_{ix}\tGT_{yc_k}|,
\end{align}
where the summation is over the ordered pairs
\begin{equation}
(x,y) \in \vE = \{(a_1,b_1),\dots, (a_\nu,b_\nu), (b_1,a_1),\dots, (b_\nu, a_\nu)\}.
\end{equation}
By our assumption on $\eta$, the first term on the right-hand side of \eqref{tGijbound} is
smaller than the right-hand sides of \eqref{e:greendistIJ}--\eqref{e:greendistNJ},
so we only need to estimate the sum on the right-hand side of \eqref{tGijbound}.

\paragraph{Green's function estimates}

To estimate the sum on the right-hand side of \eqref{tGijbound},
we use the following estimates on Green's functions, which hold for $(x,y) \in \vE$:
\begin{align}
  \label{e:hGTixbd}
  |\hGT_{ix}|
  &\leq
  \begin{cases}
    2|\msc| & (\text{all $x$}),\\
    2K|\msc|q^r & (\dist_{\hcGT}(i,x) \geq 2r),\\
    2M/\sqrt{N\eta} & (\text{$i$ and $x$ are in different $\rm S$-cells, or $i\not\sim$ the $\rm S$-cell containing $x$}),
  \end{cases}
  \\
  \label{e:tGTyckbd}
  |\tGT_{yc_k}|
  &\leq
  \begin{cases}
    2|\msc| & (\text{all $y$}),\\
    2^{7}K^3|\msc|q^r & (\dist_{\tcGT}(y, c_k) \geq 2r).
  \end{cases}
\end{align}
The last bound in \eqref{e:hGTixbd} holds by \eqref{e:diffcellest}.
The remaining estimates follow from Propositions~\ref{prop:stabilitytGT},
together with
\eqref{e:boundPiimsc} for the bound for all $x,y$;
with $P_{ix}(\cE_r(i,x,\hcGT))=0$ for for the bound for $\dist_{\hcGT}(i,x) \geq 2r$;
and with $P_{yc_k}(\cE_r(y,c_k,\tcGT))=0$ for the bound for $\dist_{\tcGT}(y,c_k) \geq 2r$.

\paragraph{Distance estimates}

Since the estimates \eqref{e:hGTixbd}--\eqref{e:tGTyckbd} depend on distances,
we need some estimates on distances in the graphs $\hcGT$ and $\tcGT$.
These are summarized in the following lemma.

\begin{lemma} \label{lem:tcGTcbdist}
Let $k \in J$ and $\cC'$ be the $\rm S'$-cell that contains $c_k$. Then the following estimates hold.
\begin{enumerate}
\item In the graph $\tcGT$, the vertex $c_k$ is far away from $\{a_1,\dots, a_\mu, b_1, \dots, b_\nu\}$:
\begin{equation} \label{e:yckdist}
\dist_{\tcGT}(c_k,\{a_1,\dots, a_\mu, b_1,\dots, b_\nu\}) > 2r.
\end{equation}
\item If  $\dist_{\tcGT}(i,\cC')> 2r$, then
\begin{equation} \label{e:iakdist}
\dist_{\hcGT}(i,a_k) \geq  \dist_{\tcGT}(i,a_k) \geq 2r.
\end{equation}
\item If $i \in \bX_1$ and $\dist_{\tcGT}(i,a_k) \geq 2r$, then
\begin{equation} \label{e:icCdist}
\dist_{\tcGT}(i,\cC')> 2r.
\end{equation}
\end{enumerate}
\end{lemma}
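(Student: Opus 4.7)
The proof rests on a structural reduction about $\cC'$ when $k\in J$, namely $\cC'=\bB_{2r}(b_k,\cGT)$ with $a_k\notin\cC'$, together with the $\cGT$-separation $\dist_{\cGT}(\bX_1,\cC')>4r$. Indeed, conditions (i) and (iii) of $J$ force the $\sim$-class $\bI_t$ of $b_k$ to equal $\{b_k\}$: any $a_m$ or $b_m$ with $m\ne k$ in $\bI_t$ would lie in $\cC'$ at $\tcGT$-distance $0$, violating (iii); and $a_k\in\bI_1\ne\bI_t$ since $c_k\notin\cC_1$ by (i). Hence $\cC'=\cC_t=\bB_{2r}(b_k,\cGT)$. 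Since $\bI_1\ne\bI_t$, $a_k\not\sim b_k$, and Lemma~\ref{l:distdiffcC} then gives $\dist_{\cGT}(a_k,b_k)>8r$, so $a_k\notin\cC'$. By the same reasoning any $y\in\bI_1$ satisfies $y\not\sim b_k$ and hence $\dist_{\cGT}(y,b_k)>8r$, which combined with $i\in\bX_1=\bB_{2r}(\bI_1,\cGT)$ and $v\in\cC'=\bB_{2r}(b_k,\cGT)$ yields $\dist_{\cGT}(i,v)>8r-2r-2r=4r$.

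For (i), condition (iii) of $J$ handles $u\in\{a_m,b_m,c_m:m\ne k\}$ directly. For $u=b_k$, condition (ii) says $\cB_R(c_k,\cGT)$ is a tree containing the edge $\{b_k,c_k\}$; removing this edge in $\hcGT$ disconnects $c_k$ from $b_k$ inside the ball, so any $\hcGT$-path from $c_k$ to $b_k$ must exit the ball and hence have length $\geq R+1$. For $u=a_k$, condition (i) gives $\dist_{\cGT}(c_k,a_k)>2r$, which transfers to $\hcGT$. To move the last two bounds to $\tcGT$, I take a shortest $\tcGT$-path of length $\leq 2r$ from $c_k$ to $a_k$ or $b_k$ and classify by the added edges used: no added edge contradicts the $\hcGT$ bound; an edge $\{a_j,b_j\}$ with $j\ne k$ puts a switching vertex within $2r<R/4$ of $c_k\in\cC'$, contradicting condition (iii); and the only remaining possibility $\{a_k,b_k\}$, after truncating at its occurrence, yields a shorter $\hcGT$-path contradicting one of the $\hcGT$ bounds. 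Part (ii) is then a one-line triangle inequality: since $\{a_k,b_k\}\in\tcGT$ and $b_k\in\cC'$,
\[
\dist_{\tcGT}(i,a_k)\geq\dist_{\tcGT}(i,b_k)-1\geq\dist_{\tcGT}(i,\cC')-1>2r-1,
\]
hence $\geq 2r$ by integrality; and $\dist_{\hcGT}\geq\dist_{\tcGT}$ since $\hcGT\subset\tcGT$.

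For (iii), I assume a shortest $\tcGT$-path $Q=(u_0,\ldots,u_{d^*})$ from $i$ to $v^*\in\cC'$ with $d^*=\dist_{\tcGT}(i,\cC')\leq 2r$ and $v^*$ the first vertex of $Q$ in $\cC'$, and derive a contradiction. If $Q$ uses some added edge $\{a_j,b_j\}$ with $j\ne k$, the corresponding vertex on $Q$ lies within $2r<R/4$ of $v^*\in\cC'$, violating condition (iii) of $J$. If $Q$ uses only $\{a_k,b_k\}$, let this edge be $\{u_{s-1},u_s\}$: the case $u_s=a_k$ is impossible, because then $u_{s-1}=b_k\in\cC'$ would contradict $v^*=u_{d^*}$ being the first $\cC'$-hit; and the case $u_s=b_k$ forces $s=d^*$ by the same first-hit argument, giving $u_{d^*-1}=a_k$ and $\dist_{\tcGT}(i,a_k)\leq d^*-1\leq 2r-1$, contradicting the hypothesis. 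Finally if $Q\subset\hcGT$, then $d^*\geq\dist_{\hcGT}(i,v^*)\geq\dist_{\cGT}(i,v^*)>4r$ by the separation established in the first paragraph, again contradicting $d^*\leq 2r$. The main obstacle is the preliminary structural reduction in the first paragraph, which requires combining all three defining conditions of $J$ with Lemma~\ref{l:distdiffcC} to turn the Green's-function-distance separation $b_k\not\sim\bI_1$ into the graph-distance separation needed in the $Q\subset\hcGT$ case.
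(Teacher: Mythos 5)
Your proof is correct and follows essentially the same route as the paper's: condition (iii) of $J$ disposes of all switching vertices with $m\neq k$, the tree hypothesis on $\cB_R(c_k,\cGT)$ and Lemma~\ref{l:distdiffcC} give the $\hcGT$-bounds for $b_k$ and $a_k$, the triangle inequality through the added edge $\{a_k,b_k\}$ gives (ii), and (iii) splits according to whether the geodesic uses $\{a_k,b_k\}$ or reduces to the $\cGT$-separation $\dist_{\cGT}(\bX_1,\cC')>4r$. Your opening paragraph merely spells out the structural facts ($\bI_t=\{b_k\}$, $\cC'=\bB_{2r}(b_k,\cGT)$, $a_k\notin\cC'$) that the paper asserts more tersely in Section~\ref{sec:defJ} and in its proof of (iii).
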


Notice also that, by the definition of $J$, we have $\{m\in \qq{1,\nu}: b_m\in \cC'\}=\{k\}$.

\begin{proof}
To prove (i), it follows from \eqref{e:defJ} from the definition of $J$ that 
\begin{align*}
\dist_{\tcGT}(c_k,\{a_m: m\in\qq{1,\mu}\setminus\{k\}\cup \{b_m: m\in \qq{1,\nu}\setminus\{k\}\}\}>R/4> 2r.
\end{align*}
It remains to prove $\dist_{\tcGT}(c_k,\{a_k,b_k\}\}> 2r$. Given any geodesic in $\tcGT$ from $c_k$ to $\{a_k,b_k\}$, we distinguish two cases.
In the first case that the geodesic contains any of the edges $\{a_m,b_m\}_{m\leq \nu}$,
the condition \eqref{e:defJ}
which holds by the definition of $J$, implies that its length is larger than $2r$.
In the second case that the geodesic contains none of the edges $\{a_m,b_m\}_{m\leq \nu}$,
it a path on the graph $\hcGT$.

Therefore, to prove (i), it suffices to show that \eqref{e:yckdist} holds with the graph $\tcGT$ replaced by $\hcGT$.
By the condition $b_k, c_k\in \bX_2$, and since $b_k, c_k$ are adjacent in $\cGT$, it follows from Lemma~\ref{l:distdiffcC}
that $\dist_{\cGT}(b_k,a_k)>8r$, and therefore that
\begin{equation*} 
  \dist_{\hcGT}(c_k,a_k)\geq \dist_{\cGT}(c_k,a_k)> 8r >2r.
\end{equation*}
Moreover, since  $c_k$ has radius-$R$ tree neighborhood in $\cGT$, and since in $\hcGT$ the edge $\{b_k,c_k\}$ is removed compared to $\cGT$, we have
\begin{equation*} 
  \dist_{\hcGT}(b_k,c_k)>R>2r.
\end{equation*}
This completes the proof of \eqref{e:yckdist} with $\tcGT$ replaced by $\hcGT$, and thus the proof of (i).

For (ii), since $a_k$ and $b_k\in \cC'$ are adjacent in the graph $\tcGT$, we have
\begin{align*}
\dist_{\tcGT}(i,a_k) \geq \dist_{\tcGT}(i, \cC')-1\geq 2r.
\end{align*}
The first inequality in \eqref{e:iakdist} is trivial since $\hcGT \subset \tcGT$.

To prove (iii), 
note that any geodesic from $i$ to $\cC'$ in $\tcGT$ either
contains $a_k$, or does not contain the edge $\{a_k,b_k\}$.
In the first case that the geodesic contains $a_k$, its length is at least $1+\dist_{\tcGT}(i,a_k)>2r$, as desired.
In the second case,
\begin{equation*}
\dist_{\tcGT}(i,\cC')
\geq
\dist_{\tcGT\setminus\{a_k,b_k\}}( \bX_1\cup \bX_2\setminus \cC',\cC')=\dist_{\cGT}(\bX_1\cup \bX_2\setminus \cC',\cC') > 4r
,
\end{equation*}
where the first inequality holds since $i \in \bX_1 \cup \bX_2 \setminus \cC'$,
and the last inequality follows from the definition of the $\rm S$-cells and Lemma~\ref{l:distdiffcC}.  Recall that the graph $\tcGT$ is obtained from $\cGT$ by adding the edges $\{a_m,b_m\}_{m\leq \nu}$ and removing the edges $\{b_m,c_m\}_{m\leq \nu}$. And by the definition of the set $J$, we know $\{b_k,c_k\}\subset \cC'$ and $\{b_m, c_m: m\in \qq{1,\nu}\setminus \{k\}\}\subset \bX_1\cup \bX_2\setminus\cC'$. Therefore, the graph $\tcGT\setminus\{a_k,b_k\}$ and $\cGT$ are different only on the subgraphs induced on $\cC'$ and $\bX_1\cup \bX_2\setminus \cC'$, and the equality in the above equation holds.
\end{proof}

\begin{remark}\label{r:RWrep}
Recall the random walk representation of the Green's function from Section~\ref{sec:Gexpan}.
In terms of the random walk heuristic,
together with the a priori estimates \eqref{e:hGTixbd}--\eqref{e:tGTyckbd} on the Green's function,
one can understand the bounds of Proposition~\ref{greendist-new} as follows.
For the left-hand side of \eqref{e:greendistIJ}, the walk with most weight is $i\rightarrow a_k\rightarrow b_k\rightarrow c_k$.
Since $\dist_{\tcGT}(i,a_k) \geq 2r$ and $\dist_{\tcGT}(b_k,c_k)\geq 2r$,
the walks $i\rightarrow a_k$ and $b_k\rightarrow c_k$ each contribute at least a small factor $q^r$;
the walk $a_k\rightarrow b_k$ has at least one step and thus contributes at least a factor $q$.
Therefore $|\tGT_{ic_k}|\lesssim q^r \times q\times q^r=q^{2r+1}$.
For the left-hand side of \eqref{e:greendistJJ}, the walk with most weight is $i=c_j\rightarrow b_j \rightarrow a_j\rightarrow a_k\rightarrow b_k\rightarrow c_k$,
and it therefore follows that $|\tGT_{ic_k}|\lesssim q^r\times q \times q^r\times q \times q^{r}=q^{3r+2}$.
For the left-hand side of \eqref{e:greendistNJ}, the walk with most weight is $i\rightarrow a_k\rightarrow b_k\rightarrow c_k$, and thus $|\tGT_{ic_k}|\lesssim q^{2r+1}$.  
\end{remark}

The proof of Proposition~\ref{greendist-new} essentially follows from
the heuristic described in Remark~\ref{r:RWrep}, which can be made rigorous by 
combining the estimates
on the Green's function of \eqref{e:hGTixbd}--\eqref{e:tGTyckbd}
with those on the distances stated in
Lemma~\ref{lem:tcGTcbdist}. This requires a division into a number of cases and is done carefully below.

\subsection{Proof of \eqref{e:greendistIJ}}

Let
\begin{align}
 \label{Xcell} 
 \Gamma_1&:=\max \ha{ |\tGT_{ic_k}| : i\in \bX_1 \text { such that $\dist_{\tcGT}(i,  \cC')>2r$}},
 \\
 \label{Ycell}
 \Gamma_2 &:=\max \ha{ |\tGT_{ic_k}| : i \in  \bX_2 \text{ and } i \not\in \cC'}.
\end{align}
Thus $\Gamma_1$ is the maximal size of the Green's function between $c_k$ and vertices in $\bX_1$ which is away from $\cC'$,
and $\Gamma_2$ is the maximal size of the Green's function between $c_k$ and vertices in $\bX_2$ which is in different $\rm S'$-cells from $c_k$.

\begin{proposition} \label{lem:Gamma}
\begin{align}\label{Gamma}
\max\{\Gamma_1,\Gamma_2\}
\leq 2^9K^4|\msc|q^{2r+1},
\end{align}
provided that $\sqrt{d-1}\geq \max\{(\n+1)^2 2^{2\n+10}, 2^8(\n+1)K\}$, $\n'q^r\ll1$ and $\sqrt{N\eta}q^{2r+2}\geq M$.
\end{proposition}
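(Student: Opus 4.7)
The starting point is the resolvent identity \eqref{tGijbound}
$$
|\tGT_{ic_k}|\leq \frac{2M}{\sqrt{N\eta}}+\frac{1}{\sqrt{d-1}}\sum_{(x,y)\in\vE}|\hGT_{ix}||\tGT_{yc_k}|,
$$
whose first term is harmless: the hypothesis $\sqrt{N\eta}\,q^{2r+2}\geq M$ yields $2M/\sqrt{N\eta}\leq 2q^{2r+2}\leq 2|\msc|q^{2r+1}$. My plan is to bound the double sum by $2^8K^4|\msc|q^{2r+1}+\tfrac14\max\{\Gamma_1,\Gamma_2\}$ uniformly for all admissible $i$ (either $i\in\bX_1$ with $\dist_{\tcGT}(i,\cC')>2r$ or $i\in\bX_2\setminus\cC'$), take the maximum, and rearrange the resulting self-consistent inequality. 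The tools throughout are the Green's function estimates \eqref{e:hGTixbd}--\eqref{e:tGTyckbd}, the distance estimates in Lemma~\ref{lem:tcGTcbdist}, and the structural bounds of Propositions~\ref{prop:distdeg} and \ref{prop:Rdist2}.

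I split $\vE$ into the distinguished pairs $(a_k,b_k)$, $(b_k,a_k)$ and the generic pairs $(a_m,b_m)$, $(b_m,a_m)$ with $m\neq k$. The pair $(a_k,b_k)$ produces the leading term: since $\dist_{\tcGT}(i,\cC')>2r$ in both regimes (for $\Gamma_2$ this follows from the definition of $\rm S'$-cells), Lemma~\ref{lem:tcGTcbdist}(ii) yields $\dist_{\hcGT}(i,a_k)\geq 2r$ and hence $|\hGT_{i,a_k}|\leq 2K|\msc|q^r$, while \eqref{e:yckdist} combined with \eqref{e:tGTyckbd} gives $|\tGT_{b_k,c_k}|\leq 2^7K^3|\msc|q^r$; dividing the product by $\sqrt{d-1}$ produces exactly $2^8K^4|\msc|q^{2r+1}$. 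The companion pair $(b_k,a_k)$ contributes only an error, because $b_k\in\cC'$ lies in a different $\rm S$-cell from $i$, so $|\hGT_{i,b_k}|\leq 2M/\sqrt{N\eta}$ by \eqref{e:hGTixbd}, while $\dist_{\tcGT}(a_k,c_k)\geq \dist_{\tcGT}(b_k,c_k)-1\geq 2r$ keeps $|\tGT_{a_k,c_k}|\leq 2^7K^3|\msc|q^r$, and $M/\sqrt{N\eta}\leq q^{2r+2}$ renders this term negligible.

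For the generic pairs ($m\neq k$), the defining condition \eqref{e:defJ} of $J$ is crucial: it forces $\dist_{\tcGT}(y,\cC')>R/4>2r$ for every $y\in\{a_m,b_m\}$, so $y$ lies either in $\bX_1$ with $\dist_{\tcGT}(y,\cC')>2r$ or in $\bX_2\setminus\cC'$, giving $|\tGT_{yc_k}|\leq\max\{\Gamma_1,\Gamma_2\}$ in both subcases. The factor $|\hGT_{ix}|$ is controlled by an $\rm S$-cell split: whenever $i$ and $x$ lie in different cells, \eqref{e:hGTixbd} gives $|\hGT_{ix}|\leq 2M/\sqrt{N\eta}$, and summing over $|\vE|\leq 4(d-1)^{\ell+1}$ pairs while using $M/\sqrt{N\eta}\leq q^{2r+2}$ yields only $O(q)\max\{\Gamma_1,\Gamma_2\}$. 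The delicate subcase is the $\Gamma_1$ regime of $(a_m,b_m)$, where $i\in\cC_1$ and $a_m\in\cC_1$ share the cell $\cC_1$ and the cell bound is unavailable; here I invoke the deterministic bound \eqref{distfinitedega}, which ensures that at most $\n+1$ indices $m$ satisfy $\dist_{\cGT}(i,a_m)\leq R/2$. For the remaining $m$'s the graph-distance bound $|\hGT_{i,a_m}|\leq 2K|\msc|q^r$ applies and produces an $O(K|\msc|^{2\ell+2})$ factor on $\max\{\Gamma_1,\Gamma_2\}$, while the $\leq\n+1$ exceptional $m$'s contribute $O(\n q)\max\{\Gamma_1,\Gamma_2\}$. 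The analogous $(b_m,a_m)$ analysis uses \eqref{distfinitedegb} in place of \eqref{distfinitedega}.

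Summing the four classes and using $\sqrt{d-1}\geq(\n+1)^22^{2\n+10}$ together with $\n'q^r\ll 1$ to absorb the coefficients of $\max\{\Gamma_1,\Gamma_2\}$ into $\tfrac14$, I obtain the uniform bound $|\tGT_{ic_k}|\leq (2+2^8K^4)|\msc|q^{2r+1}+\tfrac14\max\{\Gamma_1,\Gamma_2\}$. Taking the maximum over admissible $i$ and rearranging gives $\max\{\Gamma_1,\Gamma_2\}\leq\tfrac{4}{3}(2+2^8K^4)|\msc|q^{2r+1}\leq 2^9K^4|\msc|q^{2r+1}$ for $K\geq 2$, which is \eqref{Gamma}. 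The main conceptual obstacle is precisely the same-cell situation in the $\Gamma_1$ regime of the class $(a_m,b_m)$: it is the single spot where the Green's function cell-decay from \eqref{e:hGTixbd} is inoperative, and it is resolved by replacing that Ward-type bound with the purely geometric fact that boundary vertices $a_m$ cannot cluster around any single vertex, as quantified by \eqref{distfinitedega}.
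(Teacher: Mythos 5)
Your setup — the resolvent identity \eqref{tGijbound}, the Green's function bounds \eqref{e:hGTixbd}--\eqref{e:tGTyckbd}, the distance facts from Lemma~\ref{lem:tcGTcbdist}, and the extraction of the leading term $2^8K^4|\msc|q^{2r+1}$ from the pair $(a_k,b_k)$ — all matches the paper. But the final step, closing a \emph{single} self-consistent inequality of the form $|\tGT_{ic_k}|\leq \fa+\tfrac14\max\{\Gamma_1,\Gamma_2\}$, does not work, and you have in fact already computed the obstruction yourself. In the $\Gamma_1$ regime, for the generic pairs $(a_m,b_m)$ with $b_m\in\bX_2\setminus\cC'$, the source $a_m$ lies in the same $\rm S$-cell $\cC_1$ as $i$, so only the graph-distance bound $|\hGT_{ia_m}|\leq 2K|\msc|q^r$ is available for the $\leq 2(d-1)^{\ell+1}$ non-exceptional indices. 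Summing gives the coefficient
\begin{equation*}
2(d-1)^{\ell+1}\,\frac{2K|\msc|q^{r}}{\sqrt{d-1}}\;=\;4K\,|\msc|^{2\ell+2},
\end{equation*}
exactly the $O(K|\msc|^{2\ell+2})$ you name. This is \emph{not} small: on the domain $\cal D$ one has $|\msc(z)|\to 1$ in the bulk as $\eta\downarrow 0$ and $\ell=O(\log\log N)$, so $|\msc|^{2\ell+2}$ is essentially $1$, and with $K=2^{10}$ the coefficient is of order $4K=2^{12}$. No amount of largeness of $\sqrt{d-1}$ or smallness of $\n'q^r$ absorbs it into $\tfrac14$; the inequality $\max\{\Gamma_1,\Gamma_2\}\leq\fa+\fb\max\{\Gamma_1,\Gamma_2\}$ has $\fb\gg1$ and cannot be rearranged.

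The paper's resolution is to keep the system \emph{asymmetric}: in the $\Gamma_1$ inequality the offending term multiplies only $\Gamma_2$ (since $b_m\in\bX_2\setminus\cC'$ forces $|\tGT_{b_mc_k}|\leq\Gamma_2$, not merely $\max\{\Gamma_1,\Gamma_2\}$), yielding $\Gamma_1\leq\fa+\fb\Gamma_1+\fc\Gamma_2$ with $\fc\approx 4K$ large but $\fb$ small; and separately $\Gamma_2\leq\fd+\fe\Gamma_1$ where $\fe$ \emph{is} small, because for $i\in\bX_2$ every source $x\in\bX_1$ lies in a different $\rm S$-cell and the Ward-type bound $2M/\sqrt{N\eta}$ applies, while the sources in the cell of $i$ number at most $\n'+5\n$ by \eqref{e:Spcellbd}. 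Substituting the second inequality into the first gives $\fb+\fc\fe<1$ and closes the argument. To repair your proof you must track $\Gamma_1$ and $\Gamma_2$ separately and exploit this one-directional smallness; the symmetric $\max$ bookkeeping loses precisely the information that makes the iteration contract.
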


Given Proposition~\ref{lem:Gamma}, the claim \eqref{e:greendistIJ} is an immediate consequence.

\begin{proof}[Proof of \eqref{e:greendistIJ}]
It suffices to show that the left-hand side of \eqref{e:greendistIJ} is bounded by $\max\{\Gamma_1,\Gamma_2\}$.
First, if $i \in\bX_2$, then $i=c_l$ for some $l \neq k$, and by the definition of $J$, then $c_l \not\in \cC'$. 
Thus the left-hand sides of \eqref{e:greendistIJ} is bounded by $\Gamma_2$.
Second, if $i \in \bX_1$, then either $i=a_l$ or $i=c_l$ for some $l \neq k$.
In either case, by the definition of $J$, $\dist_{\tcGT}(i,\cC')>R/4-2r>2r$. Thus the left-hand side of \eqref{e:greendistIJ} is bounded by $\Gamma_1$.  
\end{proof}

\begin{proof}[Proof of Proposition~\ref{lem:Gamma}]
We first derive a bound for $\Gamma_1$. 
Let $i$ obey the conditions in the definition of $\Gamma_1$ in \eqref{Xcell}.
We divide the sum over the set $\vE$ in \eqref{tGijbound} according to the cases in \eqref{e:hGTixbd}--\eqref{e:tGTyckbd} as $\vE=\vE_1 \cup \cdots \cup \vE_5$,
where
\begin{align*}
  \vE_1 &= \{(a_k,b_k) \},
  \\
  \vE_2 &= \{(b_k,a_k) \},
  \\
  \vE_3 &= \{(b_l,a_l): l \neq k, b_l \in \bX_2 \},
  \\
  \vE_4 &= \{(a_l,b_l): l \neq k, b_l \in \bX_2\},
  \\
  \vE_5 &= \{(a_l,b_l), (b_l,a_l): l \neq k,  b_l \in \bX_1 \}.
\end{align*}
For $(a_k,b_k) \in \vE_1$,
we have $\dist_{\hcGT}(i,a_k)\geq 2r$ by \eqref{e:iakdist} 
and $\dist_{\tcGT}(b_k,c_k)\geq 2r$ by \eqref{e:yckdist}.
Thus, by \eqref{e:hGTixbd}--\eqref{e:tGTyckbd},
\begin{align*}
\sum_{\vE_1} \sumdots \leq
\frac{(2K|\msc|q^r) (2^{7}K^3|\msc|q^r)}{\sqrt{d-1}}.
\end{align*}
For $(b_k,a_k) \in \vE_2$,
we have $b_k \in \bX_2$ and $i \in \bX_1$, which implies $i$ and $b_k$ are in different $\rm S$-cells.
Thus, by \eqref{e:hGTixbd}--\eqref{e:tGTyckbd},
\begin{align*}
\sum_{\vE_2} \sumdots \leq
\frac{2M}{\sqrt{N\eta}}
\frac{2|\msc|}{\sqrt{d-1}}
\leq
\frac{4qM}{\sqrt{N\eta}}.
\end{align*} 
For $(b_l,a_l) \in \vE_3$,
we again have that $i$ and $b_l$ are in different $\rm S$-cells (since $b_l \in \bX_2$)
and $\dist_{\tcGT}(a_l,c_k)\geq 2r$ by \eqref{e:yckdist}.
Thus, by \eqref{e:hGTixbd}--\eqref{e:tGTyckbd} and $|\vE_3| \leq \mu\leq 2(d-1)^{\ell+1}$,
\begin{align*}
\sum_{\vE_3} \sumdots
\leq
\frac{M}{\sqrt{N\eta}} 2^{9}K^3 (d-1)^{\ell+1} q^{r+1}.
\end{align*} 
For $(a_l,b_l) \in \vE_4$,
there are at most $\n+1$ indices $l$ such that $\dist_{\cGT}(i,a_l)\leq \dist_{\hcGT}(i,a_l)<2r$
by \eqref{distfinitedega},
and at most $|\vE_4|\leq\mu\leq 2(d-1)^{\ell+1}$ indices such that $\dist_{\hcGT}(i,a_l)\geq 2r$.
Moreover, we have $b_l \in \bX_2$ and also $b_l \not\in \cC'$ by the definition of $J$. 
Thus, by \eqref{e:hGTixbd} and the definition of $\Gamma_2$,
\begin{equation*}
\sum_{\vE_4} \sumdots
\leq
\left((\n+1)\frac{2|\msc|}{\sqrt{d-1}}+2(d-1)^{\ell+1}\frac{2K|\msc|q^r}{\sqrt{d-1}}\right)\Gamma_2
.
\end{equation*}
For $(x,y) \in \vE_5$,
there are at most $10\n$ pairs $(x,y) \in \vE_5$ such that $\dist_{\tcGT}(i,x)<2r$ by \eqref{e:lessshortdist} in Proposition~\ref{structuretGT},
at most $2\n'$ pairs such that $\dist_{\tcGT}(i,x)\geq 2r$ since $|\bX_1 \cap \{b_1,\dots, b_\nu\}|\leq \n'$ by \eqref{deg}.
Thus, by \eqref{e:hGTixbd} and $|\tGT_{yc_k}|\leq\Gamma_1$
\begin{equation*}
\sum_{\vE_5} \sumdots \leq
\left(10\n \frac{2|\msc|}{\sqrt{d-1}}+2\n'\frac{2K|\msc|q^r}{\sqrt{d-1}}\right)\Gamma_1
.
\end{equation*}
Combining the sums over $\vE_1, \dots \vE_5$, and taking the maximum over $i$ obeying the conditions in the definition of $\Gamma_1$ in \eqref{Xcell}, we get
\begin{equation}\label{Gamma1bound}
\Gamma_1\leq \left(2+4q+2^9K^3\right)\frac{M}{\sqrt{N\eta}}+ 2^8K^4|\msc|q^{2r+1}+(20\n q+4K\n'q^{r+1})\Gamma_1+(2(\n+1) q+4K)\Gamma_2.
\end{equation}

To bound $\Gamma_2$, let $i\in \bX_2$ be as in the definition of $\Gamma_2$.
Let $\cC''$ be the $\rm S'$-cell containing $i$,
and notice that $\cC'\neq \cC'', \cC_1'$ from the definition of $\Gamma_2$. We now divide
$\vE = \vE_1' \cup \cdots \cup \vE_4'$ where
\begin{align}
  \vE_1' &= \{(x,y): x \in X_1\},\\
  \vE_2' &= \{(b_l,a_l): b_l \in  \cC'\} = \{(b_k,a_k)\},\\
  \vE_3' &= \{(b_l,a_l): b_l \in \cC''\},\\
  \vE_4' &= \{(b_l,a_l): b_l \in \bX_2 \setminus (\cC'\cup\cC'')\}.
\end{align}
For $(x,y) \in \vE_1'$, $i$ and $x$ are in different $\rm S$-cells (since $x \in \bX_1$ and $i \in \bX_2$)
and $\dist_{\tcGT}(y,c_k) > 2r$ by \eqref{e:yckdist}.
Since $|\vE_1'|\leq 2\mu\leq 4(d-1)^{\ell+1}$
\begin{equation*}
\sum_{\vE_1'} \sumdots \leq
4(d-1)^{\ell+1}\frac{2M}{\sqrt{N\eta}}\frac{2^{7}K^3|\msc|q^r}{\sqrt{d-1}}.
\end{equation*}
For $(b_k,a_k) \in \vE_2'$,  
$i$ and $b_k$ are in different $\rm S$-cells since $i\in \cC''$ and $b_k\in\cC'$ by assumption.
Moreover, we have $\dist_{\tcGT}(a_k,c_k)> 2r$ by \eqref{e:yckdist}.
Thus
\begin{equation*}
\sum_{\vE_2'} \sumdots
\leq
\frac{2M}{\sqrt{N\eta}}\frac{2^{7}K^3|\msc|q^r}{\sqrt{d-1}}.
\end{equation*}
For $(b_l,a_l) \in \vE_3'$, 
there are at most $5\n$ indices $l$ such that $\dist_{\tcGT}(i,b_l)<2r$ by \eqref{e:lessshortdist} in Proposition~\ref{structuretGT},
and at most $|\vE_3'|\leq |\{l\in\qq{1,\nu}: b_l\in \cC'' \}|\leq  \n'+5\n$ indices such that $\dist_{\tcGT}(i,b_l)\geq 2r$ by \eqref{e:Spcellbd}.
Moreover,
$|\tGT_{a_lc_k}|\leq \Gamma_1$ (since $\dist_{\tcGT}(a_l, \cC')>R/4-2r>2r$ by the definition of $J$).
Thus
\begin{equation*}
\sum_{\vE_3'} \sumdots
\leq
\left(5\n\frac{2|\msc|}{\sqrt{d-1}}+(\n'+5\n)\frac{2K|\msc|q^{r}}{\sqrt{d-1}}\right)\Gamma_1.
\end{equation*}
For $(b_l,a_l) \in \vE_4'$,
$i$ and $b_l$ are in different $\rm S$-cells; $a_l$ and $c_k$ are in different $\rm S$-cells (since $a_l\in \cC_1'$ and $c_k\in \cC'$); there are at most $|\vE'_4|\leq \mu\leq 2(d-1)^{\ell+1}$ terms.
Thus
\begin{equation*}
\sum_{\vE_4'} \sumdots
\leq
2(d-1)^{\ell+1}\frac{2M}{\sqrt{N\eta}}\frac{\Gamma_1}{\sqrt{d-1}}.
\end{equation*}
Combining the sums over $\vE_1', \dots, \vE_4'$, and taking the maximum over $i$ obeying the conditions in the definition of $\Gamma_2$, we get
\begin{equation}\label{Gamma2bound}
\Gamma_2\leq \left(2+2^{10}K^3+2^8K^3q^{r+1}\right)\frac{M}{\sqrt{N\eta}}+ \left(10\n q+2K(\n'+5\n)q^{r+1}+\frac{4(d-1)^{\ell+1/2}M}{\sqrt{N\eta}}\right)\Gamma_1.
\end{equation}

In summary, in \eqref{Gamma1bound} and \eqref{Gamma2bound}, we have shown that
\begin{equation*}
  \Gamma_1 \leq \fa  +\fb\Gamma_1 + \fc \Gamma_2,
  \qquad
  \Gamma_2 \leq \fd  + \fe\Gamma_1,
\end{equation*}
where $\fa,\fb,\fc,\fd,\fe$ are explicit constants given in \eqref{Gamma1bound} and \eqref{Gamma2bound}.
By plugging the second estimate into the first one, noticing $\fb+\fc\fe<1$, and using the explicit values of $\fa,\fb,\fc,\fd,\fe$, it follows that
\begin{equation*}
\Gamma_1 \leq (\fa+\fc\fd)/(1-(\fb+\fc\fe))\leq 2^9K^4|\msc|q^{2r+1},
\qquad
\Gamma_2 \leq \fd+\fe\Gamma_1\leq 2^9K^4|\msc|q^{2r+1},
\end{equation*}
provided that $\sqrt{d-1}\geq \max\{(\n+1)^2 2^{2\n+10}, 2^8(\n+1)K\}$, $\n'q^r\ll1$ and $\sqrt{N\eta}q^{2r+2}\geq M$.
\end{proof}

\subsection{Proofs of \eqref{e:greendistNJ} and \eqref{e:greendistJJ}}

\begin{proof}[Proof of \eqref{e:greendistNJ}]
To bound the left-hand side of \eqref{e:greendistNJ},
consider first the case that 
$i \in \bX_1 \cup \bX_2$: (\rn 1) if $i\in \bX_1$ and $\dist_{\tcGT}(i, a_k)\geq 2r$, it follows by \eqref{e:icCdist} that $\dist_{\tcGT}(i, \cC')> 2r$. Thus the left-hand side of \eqref{e:greendistNJ} is bounded by $\Gamma_1$; (\rn 2) if $i\in \bX_2$ and $i\not\sim b_k$, then $i\not \in \cC'$, and the left-hand side of \eqref{e:greendistNJ} is bounded by $\Gamma_2$.
Therefore \eqref{e:greendistNJ} follows from Proposition~\ref{lem:Gamma}.

For the remaining case $i \not\in \bX_1 \cup \bX_2$ and $i \not\sim b_k$,
we bound the sum over $\vE$ in \eqref{tGijbound}. 
By the definition of $\bX_1$ and $\bX_2$, $\dist_{\cGT}(i,\{a_1,\dots,a_\mu,b_1,\dots, b_\nu\})>2r$,
and therefore also in $\hcGT \subset \cGT$.
Thus \eqref{e:hGTixbd} implies $|\hGT_{ix}|\leq 2K|\msc|q^r$ for all $x \in \{a_1,\dots,a_\nu ,b_1, \dots, b_\nu\}$.

For $(x,y)=(a_k,b_k),(b_k,a_k)$, we have $\dist_{\tcGT}(y,c_k) > 2r$ by \eqref{e:yckdist},
and thus $|\tGT_{yc_k}|\leq2^{7}K^3|\msc|q^r$ by \eqref{e:tGTyckbd}.
The remaining $y \neq a_k,b_k$ satisfy either the condition in \eqref{Xcell} or in \eqref{Ycell}.
Therefore $|\tGT_{yc_k}|\leq \max\{\Gamma_1,\Gamma_2\}\leq \Gamma_1$,
and there are at most $2\mu \leq 2d(d-1)^{\ell}$ such terms.

In summary, we have shown
\begin{align*}
|\tGT_{ic_k}|\leq \frac{2M}{\sqrt{N\eta}}+2^9K^4|\msc|q^{2r+1}+(2Kq^{r+1})(2d(d-1)^{\ell})\Gamma_1\leq 2^{12}K^5|\msc|q^{2r+1},
\end{align*}
provided that $\sqrt{N\eta}\geq Mq^{-2r-2}$, where we used $r=2\ell+1$.\end{proof}

\begin{proof}[Proof of \eqref{e:greendistJJ}]
It remains to estimate $\tGT_{c_jc_k}$ for $j\in J \setminus \{k\}$.
As previously, we denote by $\cC'$ the $\rm S'$-cell containing $c_k$,
and now denote by $\cC''$ the $\rm S'$-cell containing $c_j$.
The estimates in Lemma~\ref{lem:tcGTcbdist} on distances from $c_k$ also apply with $c_k$ replaced by $c_j$.
Similarly to the bound of $\Gamma_2$, we use formula \eqref{tGijbound} and devide $\vE$ as $\vE=\vE_1\cup \cdots \cup \vE_5$, where 
\begin{align*}
  \vE_1 &= \{(x,y): x \in \bX_1, x \neq a_k\},\\
  \vE_2 &= \{(a_k,b_k)\},\\
  \vE_3 &= \{(b_l,a_l): b_l \in \cC'\} = \{(b_k,a_k)\},\\
  \vE_4 &= \{(b_l,a_l): b_l \in  \cC''\} = \{(b_j,a_j)\},\\
  \vE_5 &= \{(b_l,a_l): b_l \in \bX_2 \setminus (\cC'\cup\cC'')\}.
\end{align*}
Notice that for any $x\in\{a_1,\dots, a_{\nu}, b_1,\dots, b_\nu\}\setminus\{b_j\}$, by the definition of $J$, $x,c_j$ are in different $\rm S$-cells, and thus $|\hGT_{c_j x}|\leq 2M/\sqrt{N\eta}$ by \eqref{e:hGTixbd}.
Moreover, by the definition of $J$, for any $y\in\{a_m, b_m: m\in\qq{1,\nu}\setminus\{k\}\}$, we have $\dist_{\tcGT}(y, \cC')>R/4-2r>2r$ and thus $y$ satisfies the condition either in \eqref{Xcell} or in \eqref{Ycell}. 
It follows that $|\tGT_{yc_k}|\leq \max\{\Gamma_1,\Gamma_2\}\leq \Gamma_1$.
For $(x,y) \in \vE_1$.
Since $|\vE_1| \leq 2\mu\leq 4(d-1)^{\ell+1}$, it follows that
\begin{equation*}
\sum_{\vE_1}\sumdots
\leq
4(d-1)^{\ell+1}\frac{2M}{\sqrt{N\eta}}\frac{\Gamma_1}{\sqrt{d-1}}.
\end{equation*}
For $(x,y) = (a_k,b_k) \in \vE_2$. By \eqref{e:yckdist}, $\dist_{\tcGT}(b_k,c_k)>2r$, and thus $|\tGT_{b_kc_k}|\leq 2^7K^3|\msc|q^r$.
\begin{equation*}
\sum_{\vE_2}\sumdots
\leq \frac{2M}{\sqrt{N\eta}}\frac{2^7K^3|\msc|q^r}{\sqrt{d-1}}.
\end{equation*}
For $(x,y) = (b_k,a_k) \in \vE_3$. By \eqref{e:yckdist}, $\dist_{\tcGT}(a_k,c_k)>2r$, and thus $|\tGT_{a_kc_k}|\leq 2^7K^3|\msc|q^r$.
\begin{equation*}
\sum_{\vE_3}\sumdots
\leq
\frac{2M}{\sqrt{N\eta}}\frac{2^7K^3|\msc|q^r}{\sqrt{d-1}}.
\end{equation*}
For $(x,y)=(b_j,a_j) \in \vE_4$,
by \eqref{e:yckdist}  with $c_k$ replaced by $c_j$,
we have the distance estimates $$\dist_{\tcGT}(c_j, \{a_1, \dots, a_\mu,b_1, \dots, b_\nu\} > 2r.$$
In particular, $\dist_{\hcGT}(c_j,b_j)\geq \dist_{\tcGT}(c_j, b_j)>2r$, and $|\hGT_{c_j b_j}|\leq 2K|\msc|q^r$ by \eqref{e:hGTixbd}. Thus
\begin{equation*}
\sum_{\vE_4}\sumdots
\leq \frac{2K|\msc|q^{r}}{\sqrt{d-1}}\Gamma_1.
\end{equation*}
For $(x,y) \in \vE_5$,
since $|\vE_5|\leq \mu\leq 2(d-1)^{\ell+1}$, it follows that
\begin{equation*}
\sum_{\vE_5}\sumdots\leq 2(d-1)^{\ell+1}\frac{2M}{\sqrt{N\eta}}\frac{\Gamma_1}{\sqrt{d-1}}.
\end{equation*}
The above discussion combined with \eqref{Gamma} leads to the estimate
\begin{equation*}
|\tGT_{c_jc_k}|
\leq \frac{M}{\sqrt{N\eta}}\left(2+2^9K^3q^{r+1} +12(d-1)^{\ell+1/2}\Gamma_1\right)+2Kq^{r+1}\Gamma_1
\leq 2^{12}K^5|\msc|q^{3r+2},
\end{equation*}
provided that $\sqrt{N\eta}q^{3r+2}\geq M$.
\end{proof}

\section{Stability estimate for the switched graph}
\label{sec:weakstab}

\begin{proposition}\label{prop:tGweakstab}
Under the assumptions of Propositions~\ref{prop:stabilitytGT},
for ${\bf S} \in F(\cG)$ (as in Section \ref{cdFG})
such that $\tcG=T_{\bf S}(\cG)\in \bar \Omega$ (as in Section \ref{sec:outline-structure}),
the Green's function of the switched graph satisfies the weak stability estimate
that for all $i,j \in \qq{N}$,
\begin{align}\label{tGweakstab}
  |\tG_{ij}(z)|\leq |\tG_{jj}(z)|\leq 2.
\end{align}
Moreover, the off-diagonal entries of the Green's function satisfy the following improved estimates around vertex $1$.
For all vertices $x\in \qq{2, N}$,
\begin{align}\label{G1xbound}
  \left|\tG_{1x}-P_{1x}(\cE_r(1,x,\tcG))\right|\leq  (\n+1)2^{2\n+14}K^3|\msc|q^{r+1}. 
\end{align} 
For all estimates, we assume that $\sqrt{d-1}\geq \max\{(\n+1)^2 2^{2\n+10}, 2^8(\n+1)K\}$, $\n'^2q^\ell\ll1$ and $\sqrt{N\eta}q^{3r+2}\geq M$.
\end{proposition}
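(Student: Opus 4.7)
The plan is to use the Schur complement formula with respect to the neighborhood $\T$ to relate $\tG$ to $\tGT$, for which Propositions~\ref{prop:stabilitytGT} and \ref{greendist-new} already provide detailed control. I first localize by setting $\cG_0 \deq \cB_{2r}(1,\tcG)$, $P \deq G(\TE(\cG_0))$, and $P^{(\T)} \deq G(\TE(\cG_0^{(\T)}))$. Since $\tcG \in \bar\Omega$, the ball $\cG_0$ has excess at most $\omega$, so Proposition~\ref{boundPij} supplies $|P_{ij}| \leq (1+\delta_{ij}/2)|\msc|$, the decay $|P_{1l}| \leq 2^{\omega+2}|\msc|q^{\dist(1,l)}$ for $l\in\T$, and (via Remark~\ref{rk:princG0}) the compatibility $|P_{1x} - P_{1x}(\cE_r(1,x,\tcG))| \leq 2^{2\omega+3}|\msc|q^{r+1}$ for $x \in \cB_r(1,\tcG)$. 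For $x$ outside this ball, $P_{1x}(\cE_r(1,x,\tcG))=0$, so it will suffice to show $|\tG_{1x}|$ itself is of size $|\msc|q^{r+1}$.

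The central identity is the Schur complement decomposition with respect to $\T$,
\begin{equation*}
  \tG|_{\T} = (H^{\cT} - z - E)^{-1}, \quad P|_{\T} = (H^{\cT} - z - E_P)^{-1},
\end{equation*}
with $E = (d-1)^{-1}B'\tGT B$ and $E_P = (d-1)^{-1}B' P^{(\T)} B$, where $B$ is the boundary incidence matrix supported on $\bI\times\T_\ell$, together with the off-diagonal block identities $\tG_{1x} = -(d-1)^{-1/2}\sum_i \tG_{1l_i}\tGT_{\ta_i x}$ and its analogue for $P_{1x}$ in terms of $P^{(\T)}$, valid for $x \in \qq{N}\setminus\T$. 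The crucial step is to estimate $E - E_P$: its $(l,l')$ entry is a sum of $\tGT_{\ta_i\ta_{i'}} - P^{(\T)}_{\ta_i\ta_{i'}}$ over $i,i'$ with $l_i=l$, $l_{i'}=l'$. The diagonal contributions ($i=i'$) are bounded via Proposition~\ref{prop:stabilitytGT} and \eqref{e:compatibility} by $O(|\msc|q^r)$, with the diagonal of $P^{(\T)}$ close to $\msc$ by Proposition~\ref{greentree}. The off-diagonal contributions with $i,i' \in J$ (the good set of Proposition~\ref{greendist-new}, of size $\geq \nu - \omega' - 6\omega$) are bounded by $2^{12}K^5|\msc|q^{3r+2}$, while $P^{(\T)}_{\ta_i\ta_{i'}} = 0$ for the typical case of distinct tree components of $\cG_0^{(\T)}$. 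The $O(\omega+\omega')$ exceptional off-diagonal indices are treated by the generic bound from Proposition~\ref{prop:stabilitytGT}; the degree bound then gives $(E-E_P)_{ll'}$ of order at most $(\omega+1)|\msc|q^r$.

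With $\|E-E_P\|$ controlled, the resolvent identity $\tG|_\T - P|_\T = \tG|_\T(E-E_P)P|_\T$ together with the a priori boundedness of $P|_\T$ (since $H^{\cT}-z-E_P$ is close to the tree-adjacency matrix shifted by $\msc$) yields $|\tG_{1l_i} - P_{1l_i}| \lec (\omega+1)2^{2\omega+10}K^3|\msc|q^{r+\ell}$, the factor $q^{\ell}$ arising from propagation of the boundary error back through $\cT$ to vertex $1$ via the decay of $P$. The weak stability \eqref{tGweakstab} follows: for $j \in \T$, from $|\tG_{jj}|\leq |P_{jj}| + |\tG_{jj}-P_{jj}| \leq 3|\msc|/2 + o(1) \leq 2$; for $j \not\in \T$, from the dual Schur identity $\tG_{jj} = \tGT_{jj} + (d-1)^{-1}\sum_{i,i'}\tGT_{j\ta_i}(\tG|_\T)_{l_i l_{i'}}\tGT_{\ta_{i'}j}$, using $|\tGT_{jj}|\leq 2$ and decay of $\tGT$ from Propositions~\ref{prop:stabilitytGT} and \ref{greendist-new}. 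The inequality $|\tG_{ij}|\leq |\tG_{jj}|$ is standard (Appendix~\ref{app:Green}). For \eqref{G1xbound}, the case $x \in \T$ follows directly from the $\T$-block identity, while for $x \in \qq{N}\setminus\T$ one writes
\begin{equation*}
  \tG_{1x} - P_{1x} = -\frac{1}{\sqrt{d-1}}\sum_{i=1}^\mu\qa{(\tG_{1l_i} - P_{1l_i})P^{(\T)}_{\ta_i x} + \tG_{1l_i}(\tGT_{\ta_i x} - P^{(\T)}_{\ta_i x})}
\end{equation*}
and estimates each term using the $q^\ell$-decay of $\tG_{1l_i}$ and $P_{1l_i}$, the $q^r$ stability bound on $\tGT - P^{(\T)}$, and the bound $|P^{(\T)}_{\ta_i x}| \lec |\msc|$; the boundary sum size $\mu \leq 2(d-1)^{\ell+1}$ then produces the claimed constant $(\omega+1)2^{2\omega+14}K^3|\msc|q^{r+1}$.

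The main obstacle will be the delicate control of $E-E_P$: one must distinguish the good index set $J$, where Proposition~\ref{greendist-new} supplies the sharp $q^{3r+2}$ off-diagonal decay, from the $O(\omega+\omega')$ exceptional indices (handled only by the generic $q^r$ stability), and then propagate this through the resolvent identity for $\tG|_\T - P|_\T$ without losing the factor of $q$ that distinguishes \eqref{G1xbound} from the generic stability estimate of Proposition~\ref{prop:stabilitytGT}.
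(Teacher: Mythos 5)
Your overall architecture is the same as the paper's: localize with a tree extension $P$ of a ball around $1$, compare $\tG|_\T$ with $P|_\T$ via the Schur complement/resolvent identity, feed in Proposition~\ref{greendist-new} for the boundary Green's function, and propagate the boundary error to vertex $1$ through the decay of $P$ inside $\cT$. However, there is a genuine quantitative gap in how you sum the boundary errors, and it appears twice. First, for the $\T$-block you collapse the control of $E-E_P$ into a uniform entry-wise bound of order $(\n+1)|\msc|q^r$ and then invoke ``$\|E-E_P\|$ controlled.'' This is not enough: in $\bigl(\tP(E-E_P)\tP\bigr)_{1x}$ the off-diagonal part involves all $\sim\mu^2\sim(d-1)^{2\ell}$ pairs of boundary vertices, and $\sum_{l}|\tP_{1l}|\sim (d-1)^{\ell}q^{\ell}|\msc|=|\msc|^{\ell+1}(d-1)^{\ell/2}$ rather than $q^{\ell}$; multiplying by a uniform $q^r$ overshoots the target $|\msc|q^{r+1}$ by a factor of order $(|\msc|\sqrt{d-1})^{2\ell}$, which is unbounded in the bulk. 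What is actually needed is the \emph{summed} estimate $\sum_{k\neq m}|\tGT_{\ta_k\ta_m}-\tP^{(\T)}_{\ta_k\ta_m}|\lesssim \n'^2|\msc|q^r$ (the paper's \eqref{sumtGtPij}): the total off-diagonal $\ell^1$ mass is $\n'^2 q^r$, not $\mu^2 q^r$, because $\mu^2 q^{3r+2}=O(q^r)$ for the $J\times J$ pairs and only $O((\n'+\n)^2)$ pairs fall back on the generic $q^r$ bound. You list the right ingredients but the intermediate conclusion you draw from them is too weak to finish.

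Second, and more seriously, for $x\notin\T$ you estimate the term $\frac{1}{\sqrt{d-1}}\sum_i \tG_{1l_i}(\tGT_{\ta_i x}-P^{(\T)}_{\ta_i x})$ using ``the $q^r$ stability bound on $\tGT-P^{(\T)}$'' together with ``$\mu\leq 2(d-1)^{\ell+1}$.'' That product is $\lesssim (d-1)^{\ell+1/2}\,q^{\ell}\,q^{r}\,|\msc|^2$, which exceeds the claimed $|\msc|q^{r+1}$ by a factor of order $(|\msc|\sqrt{d-1})^{\ell}(d-1)$. Here you must use Proposition~\ref{greendist-new} again — specifically \eqref{e:greendistNJ}, which gives $|\tGT_{\ta_k x}|\lesssim |\msc|q^{2r+1}$ (with $P^{(\T)}_{\ta_k x}=0$) for all but $O(\n'+\n)$ indices $k$ — to obtain $\sum_k|\tGT_{\ta_k x}-P^{(\T)}_{\ta_k x}|\lesssim \n'|\msc|q^r$, so that the factor $\mu$ is absorbed by the extra decay $q^{2r+1}$ of the good terms. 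Your proposal only invokes Proposition~\ref{greendist-new} for boundary-to-boundary entries; the column sums against arbitrary vertices $x$ are the missing intermediate step (the paper's Proposition~\ref{l:smallsum}). A smaller point: the inequality $|\tG_{ij}|\leq|\tG_{jj}|$ is not a ``standard'' resolvent fact and does not appear in the appendix; it has to be extracted from the comparison $|\tG_{ij}-P_{ij}(\cE_r(i,j,\tcG))|\ll |\msc|$ together with the bounds \eqref{e:boundPij}--\eqref{e:boundPii} on $P$.
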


\subsection{Preparation of the proof}
As in \eqref{defI}, we denote by  $\del_E\cT$
the boundary edges of $\cT$ in the switched graph $\tcG$,
and the corresponding boundary vertex set by $\bI=\{\ta_1,\ta_2,\dots,\ta_\mu\}$.
Let $J$ be the index set of Proposition~\ref{greendist-new}.
Throughout the following proof,
$\Cw$ represents constants that may differ from line to line,
but depends only on the constant $K$ of \eqref{asumpGT} and the excess $\n$.
As in previous proofs, we follow the structure described below \eqref{e:stabilityGS}.

\paragraph{Localization}

To prove Proposition~\ref{improvetG}, we replace $P_{ij}(\cE_r(i,j,\tcG))$ by 
a vertex independent Green's function $P_{ij}$ according to Remark~\ref{rk:princG0}, applied with $\tcG_0=\cB_{3r}(1,\tcG)$ and $\bX=\bB_{2r}(1,\tcG)$. We abbreviate
\begin{align*}
\tcG_1 = \TE(\tcG_0),\quad \td P=G(\tcG_1),\quad \tcGT_1=\TE(\tcGT_0),\quad \td P^{(\T)}=G(\tcGT_1),
\end{align*}
Notice that $\tcGT_1$ is the same as removing the vertices $\bT$ from $\tcG_1$, and thus  $\td P^{(\T)}=G^{(\T)}(\tcG_1)$. 

\begin{claim}
Let $k\in J$ (as in Section \ref{sec:defJ}), and let $\bK$ be the connected component of $\tcG_0$ containing $\ta_k=c_k$.
Then
\begin{align}\label{e:unique}
\{m\in\qq{1,\mu}: \ta_m\in \bK\}=\{k\},
\end{align}
and
\begin{align}\label{e:diam}
\max_{i\in \bK}\dist_{\tcGT}(i,\ta_k)\leq 3r.
\end{align}
\end{claim}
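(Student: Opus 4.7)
The plan is to prove both~\eqref{e:unique} and~\eqref{e:diam} by running a breadth-first search in $\tcG_0$ starting from $c_k$ and showing that the BFS is confined to a small subtree around $c_k$ that contains no switching vertex other than $c_k$ itself. The key tools are the tree neighborhood provided by condition~(\rn 2) of $J$, the $\rm S'$-cell separation in $\tcGT$-distance provided by condition~(\rn 3), and Lemma~\ref{l:distdiffcC}, which converts $\sim$-disconnection into a $\cGT$-separation of $8r$.

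First I would show that the only switching vertices within $\cGT$-distance $3r$ of $c_k$ are $c_k$ itself and $b_k$. Condition~(\rn 1) of $J$ gives $c_k\in\bX_2$, so the $\sim$-cluster of $c_k$ differs from $\bI_1$; therefore $a_j\not\sim c_k$ for every $j$, and Lemma~\ref{l:distdiffcC} yields $\dist_{\cGT}(a_j,c_k)>8r$. Condition~(\rn 3) of $J$ implies $b_j\notin\cC'$ for $j\neq k$, whence $b_j\not\sim b_k$ and Lemma~\ref{l:distdiffcC} gives $\dist_{\cGT}(b_j,b_k)>8r$; combined with $\dist_{\cGT}(b_k,c_k)=1$ and $\dist_{\cGT}(b_j,c_j)\leq 1$, this produces $\dist_{\cGT}(b_j,c_k),\dist_{\cGT}(c_j,c_k)>8r-2>3r$ for all $j\neq k$.

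Next I would prove by induction on $p\in\{0,1,\dots,3r-1\}$ that the BFS in $\tcG_0$ rooted at $c_k$ visits at depth $p$ exactly the vertices of the $c_k$-subtree $T_{c_k}$ of $\cB_R(c_k,\cGT)\setminus\{b_k,c_k\}$ at $\cGT$-distance $p$ from $c_k$. For the inductive step, such a vertex $v$ is not a switching vertex by the previous paragraph, and $v\neq b_k$ since $b_k\notin T_{c_k}$, so its $\tcG_0$-neighbors equal its $\cGT$-neighbors lying in $\bG_0$; by the tree structure of $\cB_R(c_k,\cGT)$, each $\cGT$-neighbor of $v$ at $\cGT$-distance $p+1$ from $c_k$ lies in $T_{c_k}$ at $\cGT$-distance $p+2$ from $b_k$, and hence in $\bB_{3r}(b_k,\cGT)\subseteq\bG_0$ provided $p+2\leq 3r$. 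Conversely, the BFS cannot reach depth $3r$: a vertex $w$ at $\cGT$-distance $3r$ from $c_k$ in $T_{c_k}$ sits at $\cGT$-distance $3r+1$ from $b_k$, so $w\in\bG_0$ would require $w$ within $3r$ of some other switching vertex $x$, giving $\dist_{\cGT}(c_k,x)\leq 6r<8r$, contradicting the first paragraph. Therefore $\bK$ is contained in the set of vertices of $T_{c_k}$ within $\cGT$-distance $3r-1$ of $c_k$. This set contains no $\ta_m$ other than $c_k=\ta_k$ (since every other $\ta_m$ is either at $\cGT$-distance exceeding $3r$ from $c_k$ or equals $b_k\notin T_{c_k}$), proving~\eqref{e:unique}. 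Since the BFS delivers a $\tcG_0$-path of length at most $3r-1$ from $c_k$ to any $v\in\bK$, we obtain $\dist_{\tcGT}(v,c_k)\leq\dist_{\tcG_0}(v,c_k)\leq 3r-1\leq 3r$, which is~\eqref{e:diam}.

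The main obstacle is that condition~(\rn 3) in the definition of $J$ is phrased in terms of $\tcGT$-distance, while the tree-based BFS analysis needs $\cGT$-distance separations among the switching vertices. Lemma~\ref{l:distdiffcC} is the bridge: the $\rm S'$-cell inclusion $b_j\notin\cC'$ (coming from~(\rn 3)) gives $b_j\not\sim b_k$, which via Lemma~\ref{l:distdiffcC} upgrades to a clean $\cGT$-separation of $8r$, enabling the inductive BFS argument to close.
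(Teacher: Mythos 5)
There is a genuine gap, and it starts with the identification of the graph $\tcG_0$. This claim sits in Section~\ref{sec:weakstab}, where $\tcG_0=\cB_{3r}(1,\tcG)$ is the radius-$3r$ ball around the vertex $1$ in the \emph{switched} graph (and the component structure in question is that of $\tcGT_0$, this ball with $\T$ removed); it is not the graph $\cB_{3r}(\{a_1,\dots,a_\mu,b_1,\dots,b_\nu\},\cGT)$ of \eqref{e:XYdef} used in Sections~\ref{sec:stability}--\ref{sec:boundarydecay}. Your argument uses the latter throughout: the inductive step invokes $\bB_{3r}(b_k,\cGT)\subseteq\bG_0$, and the crucial "BFS cannot reach depth $3r$" step argues that $w\in\bG_0$ forces $w$ to be within $\cGT$-distance $3r$ of some switching vertex. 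For the Section~\ref{sec:weakstab} ball, membership only requires $\dist_{\tcG}(1,w)\leq 3r$, and such a $w$ need not be close to any $a_j$ or $b_j$ in $\cGT$ at all; so the confinement argument does not apply to the graph the claim is actually about. A secondary issue: the step "$c_k\in\bX_2$, therefore $a_j\not\sim c_k$ for every $j$" is not justified, since $c_k$ is generally not a vertex of the cluster graph $\cal R$, $\sim$ is not transitive, and membership in $\bX_2$ (a $2r$-neighborhood of $\bI_2\cup\cdots\cup\bI_\kappa$) does not preclude $a_j\sim c_k$ under the $4r$-neighborhood definition \eqref{e:simdef}.

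The intended argument is much shorter and uses condition (\rn 3) of $J$ directly rather than routing through Lemma~\ref{l:distdiffcC}: \eqref{e:defJ} already gives the separation $\dist_{\tcGT}(\ta_k,\{\ta_m:m\neq k\})>R/4\gg 6r$ in the \emph{switched} metric, which is the metric relevant to components of $\tcGT_0\subset\tcGT$. For \eqref{e:diam}, one uses the construction of $\tcG_0$ as a ball around $1$: any $i\in\bK$ has a $\tcG$-geodesic from $1$ of length at most $3r$, which exits $\T$ through some boundary vertex $\ta_m$ and stays outside $\T$ thereafter, so $\dist_{\tcGT}(i,\ta_m)\leq 3r-\ell-1$ with $\ta_m$ in the same component $\bK$; combined with the $R/4$-separation this forces $\ta_m=\ta_k$, yielding both \eqref{e:unique} and \eqref{e:diam}. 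Your detour through $\cGT$-distances and the tree structure of $\cB_R(c_k,\cGT)$ is not needed, and as written it does not close the argument for the correct graph.
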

\begin{proof}
\eqref{e:unique} follows from the condition \eqref{e:defJ} in the definition of $J$,
i.e.\ from $\dist_{\tcGT}(\ta_k, \{\ta_m: m\in\qq{1,\mu}\setminus \{k\}\})>R/4>6r$.
\eqref{e:diam} follows from \eqref{e:unique} and the construction of the subgraph $\tcG_0$.
\end{proof}

\paragraph{Verification of assumptions in Proposition \ref{boundPij}}
As subgraphs of $\tcG$, both $\tcG_0$ and $\tcGT_0$ have excess at most $\n$.
The deficit function $g$ of $\tcG_0$ vanishes.
By Proposition~\ref{GTstructure}, on each connected components of $\tcGT_0$,
the deficit function obeys $\sum g(v)\leq \n+1\leq 8\n$.
Thus the assumptions for \eqref{e:compatibility} are verified for both graphs $\tcG_0$ and $\tcGT_0$, and we have \eqref{replaceEr0}:
\begin{align}\label{tGTreplaceEr}
\left|P_{ij}(\cE_{r}(i,j,\tcG))-\td P_{ij}\right|, \left|P_{ij}(\cE_{r}(i,j,\tcGT))-\td P_{ij}^{(\T)}\right|
\leq 2^{2\n+3}|\msc|q^{r+1}
\end{align}
for $i,j\in \bX$, provided that $\sqrt{d-1}\geq 2^{\n+2}$.

\paragraph{Starting point}

The normalized adjacency matrices of $\tcG$ and $\tcG_1$ respectively have the block form
\begin{align*}
\left[
\begin{array}{cc}
H & \tilde{B'}\\
\tilde{B} & D
\end{array}
\right],\quad
\left[
\begin{array}{cc}
H & \tilde{B}_1'\\
\tilde{B}_1 & D_1
\end{array}
\right],
\end{align*}
where $H$ is the normalized adjacency matrix for $\cT$,
and $\tilde B$ (respectively $\tilde B_1$) corresponds to the edges from $\bI$ to $\T_\ell$,
where $\bI$ is the set of boundary vertices of $\cT$ in the switched graph $\tcG$ as defined in \eqref{defI}, and $\bT_\ell$ is the inner vertex boundary of $\cT$ as in \eqref{def:Ti}.
To be precise, the nonzero entries of $\tilde B$ and $\tilde B_1$ occur for the indices
$(i,j) \in \bI\times \T_\ell$ and take values $1/\sqrt{d-1}$.
Notice that $\tilde B_{ij}=(\tilde B_1)_{ij}$; in the rest of this section we will therefore not distinguish
$B$ and $\tilde B$.

By the Schur Complement formula \eqref{e:Schur}, we have
\begin{align}
  \label{G1xSchur1}
 \tG|_\T&=(H-z-\tilde{B}'\tGT\tilde{B})^{-1},\\
 \label{G1xSchur2}
 \td P|_\T&=(H-z-\tilde{B}'\tP^{(\T)}\tilde{B})^{-1},
\end{align}
and, by the resolvent identity \eqref{e:resolv}, the difference of \eqref{G1xSchur1} and \eqref{G1xSchur2} is
\begin{equation}\label{Tterm}
 \tG|_\T-\td P|_{\T}=(\tG-\tP)\tilde{B}'(\tGT-\tP^{(\T)})\tilde{B}\tP+\tP\tilde{B}'(\tGT-\tP^{(\T)})\tilde{B}\tP.
\end{equation}

In terms of the random walk heuristic described in Section~\ref{sec:Gexpan},
\eqref{Tterm} has the interpretation that only walks that exit $\T$ contribute
(see Figure~\ref{fig:Tterm}).
We will adopt suggestive terminology corresponding to the random walk picture below.
By Proposition~\ref{greendist-new}, the Green's function $\tGT$ is small between most vertices in $\bI$.
This is the main reason that the right-hand side of \eqref{Tterm} is small.
In the following, we analyze the various contributions precisely.

\begin{figure}[t]
\centering
\input{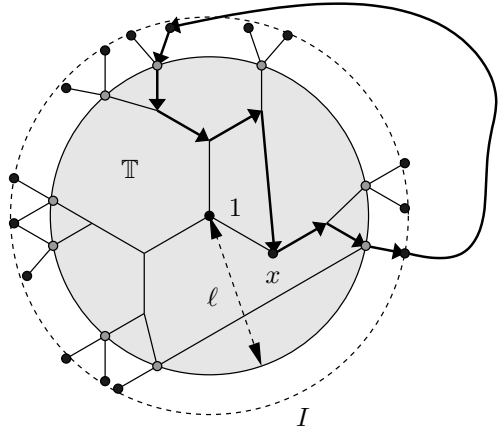}
\caption{Only walks that exit $\T$ contribute to \eqref{Tterm}.
\label{fig:Tterm}}
\end{figure}

\subsection{Boundary}
The following lemma estimates the weight of ``walks'' from $x \in \T$ to $\T_\ell$, the inner vertex boundary of $\cT$.
It depends on the distance of $x$ to the boundary, or equivalently that from $x$ to  $1$.

\begin{lemma}\label{sumP}
Assume that $\tcG_0$ has excess at most $\n$.
For vertices $x\in \T_{\ell_1}$, i.e.\ $x$ is at distance $\ell_1$ from vertex $1$, we have
\begin{align}\label{sumtleaf1}
\sum_{k\in\qq{1,\mu}}|\tP_{l_k x}|\leq (\n+1)2^{\n+3}(\ell_1+1)|\msc|^{\ell-\ell_1+1}(d-1)^{(\ell-\ell_1)/2+1}.
\end{align}
For vertices $x\in \T_{\ell_1}$ and $y\in \T_{\ell_2}$, with $\ell_1\geq \ell_2$, we have
\begin{align}\label{sumtleaf2}
\sum_{k\in\qq{1,\mu}}|\tP_{l_k x}||\tP_{l_k y}|\leq \frac{(\n+1)^2 2^{2\n+6}(\ell_1-\ell_2+2)}{(d-1)^{(\ell_1-\ell_2)/2-1}}|\msc|^{2\ell-\ell_1-\ell_2+2}.
\end{align}
\end{lemma}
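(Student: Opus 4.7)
The plan is to reduce both bounds to tree counts via Proposition~\ref{boundPij}, which applies to $\tcG_0 = \cB_{3r}(1,\tcG)$ because this ball inherits excess at most $\omega$ from $\tcG \in \bar\Omega$ and, by our conventions, carries the vanishing deficit function $g \equiv 0$. Thus part (i) of that proposition gives the pointwise estimate
\[
|\tP_{vx}| \leq 2^{\omega+2}|\msc|\,q^{\dist_{\tcG_0}(v,x)}
\]
for all vertices $v, x$. The sum over boundary edges is then reduced to a sum over the inner boundary vertices $\T_\ell$, using that each $v \in \T_\ell$ is the inner endpoint of at most $d-1$ boundary edges (since it has at least one neighbor in $\T_{\ell-1}$):
\[
\sum_{k\in\qq{1,\mu}}|\tP_{l_k x}| \leq (d-1)\sum_{v\in\T_\ell}|\tP_{vx}|,
\]
with the analogous reduction for \eqref{sumtleaf2}.

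For \eqref{sumtleaf1}, I would group $v \in \T_\ell$ by $\dist_{\tcG_0}(v,x) = \ell - \ell_1 + 2a$ with $a \in \qq{0, \ell_1}$. Passing to the universal cover (the $d$-regular tree), the number of such $v$ is bounded by $(\omega+1)(d-1)^{\ell-\ell_1+a}$, the factor $(\omega+1)$ absorbing the excess corrections guaranteed by Proposition~\ref{prop:numberpath}. The resulting sum factorizes as
\[
|\msc|^{\ell-\ell_1}(d-1)^{(\ell-\ell_1)/2}\sum_{a=0}^{\ell_1}|\msc|^{2a},
\]
using the identity $q^2(d-1) = |\msc|^2$. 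Since $|\msc|^{2a} \leq 1$ the inner sum is at most $\ell_1+1$, and reinstating the prefactors from Proposition~\ref{boundPij} and the multiplicity $(d-1)$ yields \eqref{sumtleaf1}.

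For \eqref{sumtleaf2}, I would decompose $v \in \T_\ell$ by the tree-like branching structure relative to $x$ and $y$: let $w$ denote the lowest common ancestor of $x, y$, at depth $d_w \leq \ell_2$, and split into cases based on where $v$'s path from $1$ branches off the skeleton of $x, y$: (a) at some $u$ above $w$ with depth $d_u \in \qq{0,d_w}$, contributing distances $\ell+\ell_1-2d_u$ and $\ell+\ell_2-2d_u$; (b) at some $u$ strictly between $w$ and $x$ with depth $d_u \in \qq{d_w+1,\ell_1}$, contributing $\ell+\ell_1-2d_u$ and $\ell+\ell_2-2d_w$; and (c) the symmetric situation on $y$'s side. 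In each case, the count of $v$ at branching depth $d_u$ is $(d-1)^{\ell-d_u}$ up to excess corrections of size $(\omega+1)$, and the contribution to $\sum_v |\tP_{vx}||\tP_{vy}|$ collapses to a geometric series in $q^2(d-1) = |\msc|^2 \leq 1$. Converting powers of $|\msc|$ via $|\msc|^a = q^a(d-1)^{a/2}$, Case~(b) contributes
\[
C(\ell_1-d_w)\,q^{2\ell-\ell_1+\ell_2-2d_w+2}(d-1)^{\ell-\ell_1+1},
\]
and the targeted ratio against the right-hand side of \eqref{sumtleaf2} becomes
\[
\frac{\ell_1-d_w}{(\ell_1-\ell_2+2)(d-1)}\,q^{2(\ell_2-d_w)},
\]
which is bounded by $1/(d-1)$ uniformly in the relevant parameter ranges using $q^2 \leq (d-1)^{-1}$. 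Case~(a) is even smaller, being dominated by the geometric ratio $|\msc|^{-4}(d-1) \geq 1$ so that the largest term ($d_u = d_w$) controls the sum, and Case~(c) is symmetric to Case~(b). Summing the three cases and multiplying by the prefactor $(d-1)$ from the boundary-edge multiplicity yields \eqref{sumtleaf2} with the stated $(\omega+1)^2 2^{2\omega+6}$ constant.

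The main obstacle is the case analysis for \eqref{sumtleaf2}: specifically, verifying that Case~(b) produces only the factor $(\ell_1 - \ell_2 + 2)$ rather than $(\ell_1 - d_w)$, uniformly over $d_w \in \qq{0,\ell_2}$. The point is that the extra length $\ell_2 - d_w$ of the summation range is exactly compensated by the damping $|\msc|^{2(\ell_2-d_w)} \leq (d-1)^{-(\ell_2-d_w)}$, which is available because the target carries the favorable factor $(d-1)^{1-(\ell_1-\ell_2)/2}$. The excess corrections enter as multiplicative factors $(\omega+1)$ in the tree counts and are merged into the stated prefactor via Proposition~\ref{prop:numberpath}.
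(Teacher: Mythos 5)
Your reduction of both sums to the inner boundary sphere $\T_\ell$ (with the multiplicity factor $d-1$), the pointwise input from Proposition~\ref{boundPij}, and the grouping of boundary vertices by their distance to $x$ are exactly the paper's strategy, and your computation for \eqref{sumtleaf1} reproduces the paper's almost verbatim (your parameter $a$ is the paper's $\ell_1-\ell_3$). For \eqref{sumtleaf2} you take a genuinely different route: you analyze the \emph{joint} geometry of the distances to $x$ and $y$ via the common-ancestor/branching-point decomposition, whereas the paper sidesteps the joint structure entirely by majorizing the two \emph{marginal} distance multisets separately (Lemma~\ref{l:distancextoboundary}) and then invoking the rearrangement inequality to bound $\sum_i q^{\dist(x,i)}q^{\dist(y,i)}$ by the sorted pairing of the two majorants. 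The paper's device is slicker because it only needs the single-vertex counting lemma once; your version needs a joint counting statement (how many $v\in\T_\ell$ branch off at a given depth on a given arm of the skeleton), which is more information than the paper ever establishes. Your case~(b) arithmetic does close correctly — the extra summation length $\ell_2-d_w$ is indeed absorbed by $q^{2(\ell_2-d_w)}\leq(d-1)^{-(\ell_2-d_w)}$ — so the route is viable, just heavier.

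The one substantive gap is the counting step you dismiss as ``excess corrections of size $(\omega+1)$ \ldots via Proposition~\ref{prop:numberpath}.'' That proposition counts non-backtracking walks between two \emph{fixed} vertices and does not give the statement you need, namely a bound on $|\{v\in\T_\ell:\dist_{\tcG_0}(v,x)=\ell-\ell_1+2a\}|$ in a graph with excess at most $\omega$. This is precisely the content of the paper's Lemma~\ref{l:distancextoboundary}, whose proof (Appendix~\ref{app:distxtobd}) requires a pruning argument reducing $\tcG_0$ to a core graph and the bound $|\tilde\bG_2\cap\T_k|\leq 2\n+1$ on each sphere. Relatedly, your ``lowest common ancestor'' and ``skeleton'' language presupposes unique geodesics from $1$ to $x$ and $y$; with excess up to $\omega$ these need not be unique and some distances can be shorter than the tree picture suggests, so the case decomposition itself needs the pruning lemma (or a direct substitute) to be made precise. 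With that counting input supplied, your argument goes through.
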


The proof of the lemma uses the following combinatorial estimate on the distances of a vertex $x$ to $\bT_\ell$
(which is the inner vertex boundary of $\cT$).

\begin{lemma}\label{l:distancextoboundary}
Assume that the graph $\tcG_0 = \cB_{3r}(1,\cG)$ has excess at most $\n$. Given $x \in \T_{\ell_1}$,
let $L_x$  be the multiset consisting of $2(\n+1)(d-1)^{\ell-\ell_3}$ copies of the number $q^{\ell+\ell_1-2\ell_3}$ for $\ell_3\in\qq{0,\ell_1}$,
and let $K_x$ be the multiset $K_x = \{ q^{\dist_{\tcG_0}(x,i)}:i\in \T_\ell\}$.
Then the $k$-th largest number of $K_x$ 
is smaller than or equal to the $k$-th largest number of $L_x$.
\end{lemma}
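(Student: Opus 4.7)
The plan is to establish the majorization statement in the equivalent cumulative form: for every integer $m \geq 0$,
\begin{align*}
  K_x^{\leq m} := \absa{\ha{i \in \T_\ell : \dist_{\tcG_0}(x,i) \leq m}} \leq \sum_{\substack{\ell_3 \in \qq{0, \ell_1}\\ \ell+\ell_1-2\ell_3 \leq m}} 2(\n+1)(d-1)^{\ell - \ell_3}.
\end{align*}
This is equivalent to the $k$-th largest comparison, since two finite multisets of nonnegative reals satisfy the $k$-th largest inequality iff they satisfy the above-threshold counting inequality at every threshold. To each $i$ I assign a witness via a geodesic argument. Fix any geodesic $\gamma_i$ from $x$ to $i$ in $\tcG_0$, let $v_i \in \gamma_i$ minimize $\dist_{\tcG_0}(1,\cdot)$ on $\gamma_i$, and set $\ell_3(i) := \dist_{\tcG_0}(1,v_i)$. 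Since $x \in \gamma_i$ and $\dist_{\tcG_0}(1,x)=\ell_1$, we have $\ell_3(i) \leq \ell_1$. Splitting $\gamma_i$ at $v_i$ and using the triangle inequality on each piece gives
\begin{align*}
  m \geq \dist_{\tcG_0}(x,v_i) + \dist_{\tcG_0}(v_i,i) \geq (\ell_1 - \ell_3(i)) + (\ell - \ell_3(i)) = \ell + \ell_1 - 2\ell_3(i),
\end{align*}
so $\ell_3(i) \geq \lceil(\ell+\ell_1-m)/2\rceil$. Grouping $K_x^{\leq m}$ by $\ell_3(i)$ reduces the lemma to the per-layer bound
\begin{align*}
  N_{\ell_3} := \absa{\ha{i \in \T_\ell : \ell_3(i) = \ell_3}} \leq 2(\n+1)(d-1)^{\ell - \ell_3}, \qquad \ell_3 \in \qq{0,\ell_1}.
\end{align*}

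To prove this per-layer bound, I decompose $N_{\ell_3} = \sum_{v \in W_{\ell_3}} \abs{\ha{i : v_i = v}}$ over the set of witnesses $W_{\ell_3} := \ha{v_i : i \in \T_\ell, \ell_3(i) = \ell_3}$. For each witness $v$, the sub-geodesic of $\gamma_i$ from $v$ to $i$ is a non-backtracking walk of length at least $\ell - \ell_3$ that stays in the half-space $\ha{y : \dist_{\tcG_0}(1,y) \geq \ell_3}$ (by minimality of $v_i$). In the tree case this forces $\dist_{\tcG_0}(v,i) = \ell-\ell_3$ and $i$ to be a depth-$(\ell-\ell_3)$ descendant of $v$, yielding at most $(d-1)^{\ell-\ell_3}$ admissible $i$ per witness. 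For the general bounded-excess graph, I bound the $i$-count per $v$ by $2(d-1)^{\ell-\ell_3}$, using the degree bound $d(d-1)^{\ell-\ell_3-1}$ on distinct endpoints of NBW of length $\ell-\ell_3$ from $v$ together with Proposition~\ref{prop:numberpath} to absorb longer sub-geodesics arising from cycles into the factor of $2$. Combined with a structural bound $|W_{\ell_3}| \leq \n+1$, this gives $N_{\ell_3} \leq (\n+1) \cdot 2(d-1)^{\ell-\ell_3}$, which is the desired estimate.

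The main obstacle is establishing $|W_{\ell_3}| \leq \n+1$. In the tree case $W_{\ell_3}$ has a single element, namely the unique depth-$\ell_3$ vertex on the $1$-to-$x$ geodesic. For the general case, fix a spanning tree $\tau$ of the connected component of $\tcG_0$ containing $1$; by the excess bound there are at most $\n$ edges of $\tcG_0$ outside $\tau$. I plan to define an injection from $W_{\ell_3}$, minus its tree witness, into this set of at most $\n$ extra edges: for $v \in W_{\ell_3}$ that is not the $\tau$-depth-$\ell_3$ ancestor of $x$, the realizing geodesic $\gamma_i$ must use some edge $e \notin \tau$, and I assign $v$ to the first such edge encountered along $\gamma_i$ starting from $x$. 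The delicate step, which is the principal technical challenge, is to verify injectivity: two distinct minimum-depth witnesses cannot be served by the same extra edge, since otherwise a path-concatenation inside $\tau$ together with the shared extra edge would construct a shorter path from $x$ to one of the two $i$'s, contradicting the geodesic property of $\gamma_i$ or the minimality defining $v_i$. Once this injectivity is secured, combining it with the per-witness tree fan-out bound completes the proof.
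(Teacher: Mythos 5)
Your overall architecture --- recasting the ``$k$-th largest'' comparison as a cumulative counting inequality, attaching to each $i\in\T_\ell$ a witness vertex $v$ on a geodesic from $x$, deriving $\dist_{\tcG_0}(x,i)\geq \ell+\ell_1-2\ell_3$ by splitting the geodesic at $v$, and then bounding (witnesses per layer) $\times$ (vertices $i$ per witness) --- is exactly the paper's. The difference, and the source of two genuine gaps, is your choice of witness as the \emph{level-minimizer} of $\gamma_i$, where the paper takes the endpoint of the \emph{last} horizontal or upward step of the geodesic.

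First, the per-witness bound $\#\{i:v_i=v\}\leq 2(d-1)^{\ell-\ell_3}$ is not established. With your witness, the sub-geodesic from $v$ to $i$ need not be level-monotone: it can have length $\ell-\ell_3+k$ for any $k\geq 0$ while staying above level $\ell_3$. The tools you cite do not control this: Proposition~\ref{prop:numberpath} counts non-backtracking walks between two \emph{fixed} endpoints, not the number of reachable endpoints, and the bound $d(d-1)^{m-1}$ on endpoints of NBWs of length $m$ from $v$, summed over $m\geq\ell-\ell_3$, diverges. The paper's witness makes the terminal segment strictly downward, so the fan-out is at most $(d-1)^{\ell-\ell_3}$ outright (each vertex at level $m\geq 1$ has at most $d-1$ neighbours at level $m+1$), with no factor of $2$ needed at this step. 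Second, the injectivity of your map from $W_{\ell_3}$ into the at most $\n$ non-tree edges is asserted but the sketched contradiction does not go through: two geodesics $\gamma_{i_1},\gamma_{i_2}$ can share the same first non-tree edge --- and hence, since the pre-edge segments are simple paths in $\tau$, the same initial segment --- and still have distinct last level-minimizers, namely when one of them returns to level $\ell_3$ after the shared edge; no shorter path is produced and no minimality is violated. The paper replaces this step by a pruning argument: every witness lies in the graph $\tcG_2$ obtained by iteratively deleting leaves other than $1$ and $x$, and $|\tilde{\bG}_2\cap\T_{\ell_3}|\leq 2\n+1$ follows from the Euler-characteristic edge-removal count as in \eqref{sumedge2} (the same device as Lemma~\ref{lem:beta2n}). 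To complete your proof you would need either to adopt that witness and structural bound, or to supply genuinely new arguments for both of your counting claims.
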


We postpone the proof of the lemma to Appendix~\ref{app:distxtobd}.
Given the lemma, the proof of Lemma~\ref{sumP} is completed as follows.

\begin{proof}[Proof of Lemma~\ref{sumP}]
To prove \eqref{sumtleaf1}, we use
\begin{align*}
\sum_{k\in\qq{1,\mu}}|\tP_{l_k x}|
\leq (d-1)\sum_{i\in \T_\ell}|\tP_{ix}|
\leq 2^{\n+2} (d-1)|\msc| \sum_{i\in \T_\ell} q^{\dist_{\tcG_0}(x,i)},
\end{align*}
by Proposition \ref{boundPij}.
Defining the multiset $L_x$ as in Lemma~\ref{l:distancextoboundary}, the
inequality continuous with
\begin{align*}
2^{\n+2}(d-1) |\msc| \sum_{i\in \T_\ell} q^{\dist_{\tcG_0}(x,i)}
&\leq 2^{\n+2} (d-1) |\msc| (2\n+2)\sum_{\ell_3=0}^{\ell_1}(d-1)^{\ell-\ell_3}q^{\ell+\ell_1-2\ell_3}\\
&\leq (\n+1)2^{\n+3}(\ell_1+1)|\msc|^{\ell-\ell_1+1}(d-1)^{(\ell-\ell_1)/2+1}.
\end{align*}
This finishes the proof of \eqref{sumtleaf1}. 

For the proof of \eqref{sumtleaf2}, we use
\begin{align*}
\sum_{k\in\qq{1,\mu}}|\tP_{l_k x}||\tP_{l_k y}|\leq (d-1)\sum_{i\in \T_\ell}|\tP_{ix}||\tP_{iy}|\leq 2^{2\n+4}|\msc|^2(d-1)\sum_{i\in \T_\ell}q^{\dist_{\tcG_0}(x,i)}q^{\dist_{\tcG_0}(y,i)}.
\end{align*}
We define the multisets $L_x$ and $L_y$ as in Lemma~\ref{l:distancextoboundary}. More precisely, $L_x$ consists of $2(\n+1)(d-1)^{\ell-\ell_3}$ copies of $q^{\ell+\ell_1-2\ell_3}$ for $\ell_3\in\qq{0,\ell_1}$,
and $L_y$ consists of $2(\n+1)(d-1)^{\ell-\ell_3}$ copies of $q^{\ell+\ell_2-2\ell_3}$ for $\ell_3\in\qq{0,\ell_2}$.
By the rearrangement inequality, we have
\begin{align*}
&\sum_{i\in \T_\ell}q^{\dist_{\tcG_0}(x,i)}q^{\dist_{\tcG_0}(y,i)}\\
&\leq 4(\n+1)^2 \left(\sum_{\ell_3=0}^{\ell_2} (d-1)^{\ell-\ell_3}q^{\ell+\ell_1-2\ell_3}q^{\ell+\ell_2-2\ell_3}
+\sum_{\ell_3=\ell_2+1}^{\ell_1}(d-1)^{\ell-\ell_3}q^{\ell+\ell_1-2\ell_3}q^{\ell-\ell_2}\right)\\
&\leq \frac{4(\n+1)^2 (\ell_1-\ell_2+2)}{(d-1)^{(\ell_1-\ell_2)/2}}|\msc|^{2\ell-\ell_1-\ell_2}.
\end{align*}
This finishes the proof of \eqref{sumtleaf2}.
\end{proof}

\begin{remark}
In the worst case, when $x=1$, we have
\begin{align}\label{worstsumtleaf1}
\sum_{k\in \qq{1,\mu}}|\tP_{l_k x}|\leq (\n+1)2^{\n+3}|\msc|^{\ell+1}(d-1)^{\ell/2+1}.
\end{align}
Moreover, when $x,y\in \T_{\ell_1}$, we have
\begin{align}\label{worstsumtleaf2}
\sum_{k\in\qq{1,\mu}}|\tP_{l_k x}||\tP_{l_ky}|\leq (\n+1)^2 2^{2\n+7}|\msc|^{2\ell-2\ell_1+2}(d-1).
\end{align}
These special cases will be used below.
\end{remark}

\subsection{Outside $\T$}
The following proposition shows that the weight of ``walks'' outside $\T$ is small.
It essentially follows from Proposition~\ref{greendist-new}.

\begin{proposition}\label{l:smallsum}
  Under the assumptions of Proposition~\ref{prop:tGweakstab}, 
  for any vertex $j\in \qq{N} \setminus \T$ such that $\dist_{\tcG}(1,j)\leq 2r$, we have
  \begin{align}\label{sumtGtPi}
    \sum_{k\in \qq{1,\mu}}|\tGT_{\ta_k j}-\tP^{(\T)}_{\ta_kj}|\leq \Cw \n'|\msc|q^{r}.
  \end{align}
  Moreover, for any vertex $j\in \qq{N}\setminus\T$ such that $\dist_{\tcG}(1,j)> 2r$, we have
  \begin{align}\label{sumtGti}
    \sum_{k\in \qq{1,\mu}}|\tGT_{\ta_k j}|\leq \Cw \n'|\msc|q^{r},
  \end{align}
  and
   \begin{align}\label{sumtGtPij}
    \sum_{k\neq m\in \qq{1,\mu}}|\tGT_{\ta_k\ta_m}-\tP^{(\T)}_{\ta_k\ta_m}|\leq \Cw \n'^2|\msc|q^{r},
  \end{align}
  where the constants $\Cw$ depend only on  the excess $\n$ and $K$ (from Proposition~\ref{prop:stabilitytGT}).
  For all estimates, we assume $\sqrt{d-1}\geq \max\{(\n+1)^2 2^{2\n+10}, 2^8(\n+1)K\}$, $\n'q^r\ll1$ and $\sqrt{N\eta}q^{3r+2}\geq M$.
\end{proposition}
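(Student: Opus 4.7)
The three bounds follow the general scheme used repeatedly in Sections~9--11: decompose each sum according to whether the summation index lies in the set $J$ of Proposition~\ref{greendist-new} (defined in Section~\ref{sec:defJ}) and whether the distance and Green's function-distance conditions required by \eqref{e:greendistIJ}--\eqref{e:greendistNJ} are satisfied with $i$ equal to the other vertex of the Green's function entry. For ``bad'' indices, the baseline bound $|\tGT_{xy} - \tP^{(\T)}_{xy}| \leq C_0|\msc|q^r$ valid for $x,y\in\bB_{2r}(1,\tcG)$ (obtained by combining Proposition~\ref{prop:stabilitytGT} with the compatibility estimate \eqref{tGTreplaceEr}) will suffice. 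For ``good'' indices, the improved decay of Proposition~\ref{greendist-new} gives bounds of size $|\msc|q^{2r+1}$ (single sums) or $|\msc|q^{3r+2}$ (double sum), which remain controlled even after summing over all $\mu\leq 2d(d-1)^\ell$ indices because $\mu q^{r+1}=O(1)$ under the parameter choice $r=2\ell+1$.

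To make this precise, I would first count the bad set. By construction $|\qq{1,\mu}\setminus J|\leq \n'+6\n$; for a fixed vertex $j$, \eqref{finitedeg} bounds the number of $k\in J$ with $j\sim b_k$ by $\n'$, and \eqref{e:lessshortdist} bounds those with $\dist_{\tcGT}(j,a_k)<2r$ by $5\n$; hence the total bad set has cardinality $O(\n')$. With this in hand, \eqref{sumtGti} is immediate: since $\dist_{\tcG}(1,j)>2r$ forces $\dist_{\tcGT}(\ta_k,j)>r$ for every $k$, Proposition~\ref{prop:stabilitytGT} gives $|\tGT_{\ta_k j}|\leq C|\msc|q^r$ for every $k$, and summing over the $O(\n')$ bad indices contributes $O(\n'|\msc|q^r)$; for good $k\in J$, \eqref{e:greendistNJ} upgrades this to $|\tGT_{c_k j}|\leq C|\msc|q^{2r+1}$, whose total contribution is $\mu\cdot C|\msc|q^{2r+1}=O(|\msc|q^r)$.

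For \eqref{sumtGtPi}, with $j\in \bB_{2r}(1,\tcG)$, the baseline bound handles bad indices as before. For good $k\in J$, I would combine $|\tGT_{c_k j}|\leq C|\msc|q^{2r+1}$ from \eqref{e:greendistNJ} with a direct bound on $|\tP^{(\T)}_{c_k j}|$ via Proposition~\ref{boundPij} applied in the tree extension of $\tcGT_0$, namely $|\tP^{(\T)}_{c_k j}|\leq 2^{\n+2}|\msc|q^{\dist_{\tcGT_0}(c_k,j)}$. The key geometric estimate $\dist_{\tcGT_0}(c_k,j)\geq r+1$ will follow by case analysis of paths in $\tcGT_0$: a path using the switching edge $\{a_k,b_k\}$ has length at least $1+\dist_{\tcGT}(a_k,j)\geq 2r+1$, while a path avoiding that edge lies in the unswitched graph $\cGT$ (restricted to $\cB_{3r}(1,\tcG)\setminus\T$) and has length at least $r+1$ by combining the tree structure of $\cB_R(c_k,\cGT)$ (from the definition of $J$), the Green's function-distance separation $j\not\sim b_k$ together with Lemma~\ref{l:distdiffcC}, and the isolation condition \eqref{e:defJ}. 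The good-index contribution is then $\mu\cdot C|\msc|q^{r+1}=O(|\msc|q^r)$. The double sum \eqref{sumtGtPij} is handled by the analogous three-way decomposition on each index: $O(\n'^2)$ bad-bad pairs contribute $O(\n'^2|\msc|q^r)$ via the baseline bound, $O(\mu\n')$ mixed pairs are handled using \eqref{e:greendistIJ} (yielding $O(\n'|\msc|q^r)$), and the $O(\mu^2)$ good-good pairs are controlled by \eqref{e:greendistJJ} together with the analogous distance estimate $\dist_{\tcGT_0}(c_m,c_k)\geq r+1$, which via $\mu^2 q^{3r+2}=O(q^r)$ contributes $O(|\msc|q^r)$.

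The main obstacle is the distance estimate $\dist_{\tcGT_0}(c_k,j)\geq r+1$ (and its analogue for $\dist_{\tcGT_0}(c_m,c_k)$), needed to invoke Proposition~\ref{boundPij} on the $\tP^{(\T)}$ terms. Establishing it requires carefully combining the local tree structure inherited from $k\in J$, the isolation condition \eqref{e:defJ}, the Green's function-distance separation $j\not\sim b_k$ (which via Lemma~\ref{l:distdiffcC} forces $\dist_{\cGT}(j,b_k)>8r$), and the switching structure of $\tcGT$, in order to rule out the short paths that would otherwise make $\tP^{(\T)}_{c_k j}$ too large. The remaining algebra---counting bad indices, applying \eqref{e:greendistIJ}--\eqref{e:greendistNJ}, and bookkeeping the power of $q$ against the growth of $\mu$---is routine.
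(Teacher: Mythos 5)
Your overall decomposition (good indices in $J$ versus an $O(\n')$-sized bad set) and your treatment of the Green's function terms $\tGT_{\ta_k j}$ via Proposition~\ref{greendist-new} match the paper's proof, and the arithmetic $\mu q^{2r+1}=O(q^r)$, $\mu^2 q^{3r+2}=O(q^r)$ for those terms is correct. The genuine gap is in your handling of the localized terms $\tP^{(\T)}_{\ta_k j}$ and $\tP^{(\T)}_{\ta_k\ta_m}$ for good indices. You bound these by $2^{\n+2}|\msc|q^{\dist_{\tcGT_0}(\cdot,\cdot)}$ with a claimed distance lower bound of $r+1$, and then assert that the total contribution $\mu\cdot C|\msc|q^{r+1}$ is $O(|\msc|q^r)$. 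This is false: it would require $\mu q=O(1)$, whereas $\mu q\asymp (d-1)^{\ell+1/2}|\msc|$ grows like a power of $\log N$. With your distance bound the good-index contribution to \eqref{sumtGtPi} is only $O(|\msc|)$, which is far larger than the target $O(\n'|\msc|q^r)$ since $\n'q^r\ll1$. The situation is worse for the double sum \eqref{sumtGtPij}: even the sharper (and true) distance bound $\dist_{\tcGT}(c_k,j)>2r$ would give $\mu^2 q^{2r+1}\gg q^r$, so no finite polynomial decay of $\tP^{(\T)}$ obtainable this way closes the argument there.

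The mechanism the paper uses instead is that these $\tP^{(\T)}$ terms \emph{vanish identically} for almost all good indices, because of the connected-component structure of the localized graph $\tcGT_0$: by \eqref{e:unique}--\eqref{e:diam}, the component of $\tcGT_0$ containing $\ta_k$ for $k\in J$ has diameter at most $3r$ around $\ta_k$ and contains no other $\ta_m$. Hence $\tP^{(\T)}_{\ta_k\ta_m}=0$ for all distinct $k,m\in J$, and $\tP^{(\T)}_{\ta_k j}=0$ whenever $\dist_{\tcGT}(\ta_k,j)\geq R/2$ --- a condition which, by \eqref{e:fewclose-tilde} (i.e.\ \eqref{distfinitedega} applied to $\tcG\in\bar\Omega$), fails for at most $\n+1$ indices $k$; those few exceptional indices can simply be moved into the bad set and handled by the baseline bound. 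To repair your proof you should add the condition $\dist_{\tcGT}(j,\ta_k)\geq R/2$ to your definition of a good index and replace the decay estimate on $\tP^{(\T)}$ by the exact vanishing just described; the distance estimate you single out as the main obstacle is then not needed at all.
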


\begin{claim}
Let $j \in \qq{N} \setminus \T$ be as in the statement of Proposition~\ref{l:smallsum}. Then
\begin{align}\label{e:lessshortdist-tilde}
  &|\{k\in\qq{1,\nu}: \dist_{\tcGT}(j,a_k)\leq R/4\}|\leq 5\n,\\
 \label{e:fewclose-tilde}
  &|\{k\in\qq{1,\nu}: \dist_{\tcGT}(j,\ta_k)\leq R/2\}|\leq \n+1.
\end{align}
\end{claim}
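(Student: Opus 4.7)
The plan is to derive both estimates as direct consequences of previously established structural results, applied once to the data of the unswitched resampling (via Proposition~\ref{structuretGT}) and once to the switched graph $\tcG$ itself (via Proposition~\ref{GTstructure}).

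For \eqref{e:lessshortdist-tilde}, I would simply observe that $\{a_k\}\subset\{a_k,b_k\}$ and $\qq{1,\nu}\subset\qq{1,\mu}$ yield the inclusion
\[
\{k\in\qq{1,\nu}:\dist_{\tcGT}(j,a_k)\leq R/4\}
\;\subset\;
\{i\in\qq{1,\mu}:\dist_{\tcGT}(j,\{a_i,b_i\})\leq R/4\},
\]
and then invoke \eqref{e:lessshortdist} of Proposition~\ref{structuretGT} with $x=j\in\qq{N}\setminus\T$. The hypotheses \eqref{distdega} and \eqref{distdegb} required by that proposition hold because ${\bf S}\in F(\cG)$ (condition (iii) in the definition of $F(\cG)$ in Section~\ref{cdFG}), so the right-hand set has at most $5\n$ elements, which gives \eqref{e:lessshortdist-tilde}.

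For \eqref{e:fewclose-tilde}, the key point is that the local resampling leaves the induced subgraph on $\T$ unchanged, so $\bB_\ell(1,\tcG)=\bB_\ell(1,\cG)=\T$; consequently the vertex boundary of $\cT$ inside $\tcG$ is precisely $\{\ta_1,\dots,\ta_\mu\}$, and $(\tcG)^{(\T)}=\tcGT$. Since $\tcG$ is $d$-regular, $\tcG\in\bar\Omega$ (so $\cB_R(1,\tcG)$ has excess at most $\n$), and $\ell\ll R$ by the choice of parameters in Section~\ref{sec:outline-parameters}, Proposition~\ref{GTstructure} applies verbatim with $\cG$ replaced by $\tcG$. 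Its conclusion \eqref{distfinitedega}, applied at $x=j$, reads
\[
|\{p\in\qq{1,\mu}:\dist_{\tcGT}(j,\ta_p)\leq R/2\}|\leq \n+1,
\]
and restricting the index set to $p\in\qq{1,\nu}\subset\qq{1,\mu}$ yields \eqref{e:fewclose-tilde}.

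I do not anticipate any genuine obstacle here: both estimates are one-line consequences of structural lemmas already in hand. The only point that requires care is verifying that the $\ell$-neighborhood of vertex $1$ is preserved by the switching $T_{\bf S}$, which is immediate from the definition in Section~\ref{sec:switch}, since $T_{\bf S}$ modifies only edges in $\partial_E\cT$ together with edges lying in $\cGT$, and in particular does not touch any edge internal to $\cT$.
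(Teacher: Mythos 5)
Your proof is correct and follows exactly the paper's argument: the first bound is a direct consequence of \eqref{e:lessshortdist} in Proposition~\ref{structuretGT}, and the second follows by applying \eqref{distfinitedega} of Proposition~\ref{GTstructure} to the switched graph $\tcG\in\bar\Omega$. The extra details you supply (the inclusion of index sets, the verification that the switching preserves $\cB_\ell(1,\cG)$) are exactly the steps the paper leaves implicit.
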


\begin{proof}
The first claim follows from \eqref{e:lessshortdist}.
The second one follows from \eqref{distfinitedega} by considering the graph $\tcG$, since by our assumption $\tcG\in \bar \Omega$,
the $R$-neighborhood $\cB_{R}(1, \tcG)$ has excess at most $\n$.
\end{proof}

\begin{proof}[Proof of Proposition \ref{l:smallsum}]
Recall the index set $J\subset \qq{1,\nu}$ defined previously in Section \ref{sec:defJ}.
To prove \eqref{sumtGtPi}, we decompose $\qq{1,\mu}$ according to the relations between $\{a_k, b_k, c_k\}$ and  vertex $j$ as
$\qq{1,\mu}=J_1\cup J_2$, where 
\begin{align*}
J_1=&\{k\in J: j\not\sim b_k,\text{ }\dist_{\tcGT}(j,a_k)\geq 2r, \text{ and } \dist_{\tcGT}(j,\ta_k)\geq R/2\},\\
J_2=&\qq{1,\mu}\setminus J_1.
\end{align*}
By the defining relation \eqref{switchnum} of $F(\cG)$ and Proposition \ref{greendist-new}, we have $|J|\geq \nu-\n'-6\n\geq \mu-\n'-9\n$.
Combining with \eqref{e:lessshortdist-tilde},  \eqref{e:fewclose-tilde} and \eqref{finitedeg},
which states that $|\{k\in \qq{1,\nu}: j\sim b_k\}|< \n'$, we get 
\begin{align*}
|J_1|\geq \mu-2\n'-15\n,\quad |J_2|\leq 2\n'+15\n.
\end{align*}
Bounding by the total number of terms, we also have $|J_1|\leq \mu\leq 2(d-1)^{\ell+1}$.
Now, for $k\in J_1$, we have $\ta_k=c_k$ and the conditions for \eqref{e:greendistNJ} are satisfied.
Moreover, for $k\in J_1$, we have $\dist_{\tcGT}(\ta_k,j)\geq R/2$, and therefore by \eqref{e:diam}
the vertices $\ta_k$ and $j$ are in different connected components of $\tcGT_0$;
it follows that $|\tP^{(\T)}_{\ta_kj}|=0$.
Therefore, by \eqref{e:greendistNJ},
\begin{align}\label{sumtGtPiexp1}
 \sum_{k\in J_1}|\tGT_{\ta_k j}-\tP^{(\T)}_{\ta_k j}|
 =\sum_{k\in J_1}|\tGT_{\ta_kj}|
 \leq 2(d-1)^{\ell+1}(2^{12}K^5|\msc|q^{2r+1})\leq 2^{13}K^5|m_{sc}|q^r.
\end{align}
For $k\in J_2$, by \eqref{e:stabilitytGT} and \eqref{tGTreplaceEr},
\begin{align}\label{sumtGtPiexp2}
\sum_{k\in J_2}|\tGT_{\ta_kj}-\tP^{(\T)}_{\ta_kj}|\leq (2\n'+16\n)(2^{7}K^3+2^{2\n+3}q)|\msc|q^r.
\end{align}
Then \eqref{sumtGtPi} follows by combining \eqref{sumtGtPiexp1} and \eqref{sumtGtPiexp2}.

For \eqref{sumtGti}, again, we split the sum over $J_1$ and over $J_2$ as above.
For $k\in J_1$, similarly to \eqref{sumtGtPi}, we have
\begin{align}\label{sumtGtPiexp3}
 \sum_{k\in J_1}|\tGT_{\ta_kj}|
 \leq 2(d-1)^{\ell+1}2^{12}K^5|\msc|q^{2r+1}\leq 2^{13}K^5|m_{sc}|q^r.
\end{align}
For $k\in J_2$, we note that $\dist_{\tcG}(\ta_k,j)\geq \dist_{\tcG}(1,j)-\dist_{\tcG}(1,\ta_k)\geq 2r-\ell>r$, so that $P_{\ta_kj}(\cE_r(\ta_k,j,\tcGT))=0$.  Therefore, by \eqref{e:stabilitytGT}, we have
\begin{align}\label{sumtGtPiexp4}
\sum_{k\in J_2}|\tGT_{\ta_kj}|\leq (2\n'+15\n)2^{7}K^3|\msc|q^r.
\end{align}
Again \eqref{sumtGti} follows by combining \eqref{sumtGtPiexp3} and \eqref{sumtGtPiexp4}.

For \eqref{sumtGtPij}, we split the sum over 
\begin{align*}
\{k\neq m\in \qq{1,\mu}\}=&\{k\neq m \in \qq{1,\mu}\setminus J\}\cup \{k\in \qq{1,\mu}\setminus J,  m \in J\}\\
\cup& \{k\in J,  m \in \qq{1,\mu}\setminus J\}\cup \{k\neq m\in J\}.
\end{align*}
Since $|\qq{1,\mu}\setminus J|\leq \n'+9\n$, for $k\neq m\in \qq{1,\mu}\setminus J$, by Proposition \ref{prop:stabilitytGT} and \eqref{tGTreplaceEr},
\begin{align*}
\sum_{k\neq m\in \qq{1,\mu}\setminus J}|\tGT_{\ta_k\ta_m}-\tP^{(\T)}_{\ta_k\ta_m}|\leq (\n'+9\n)^2(2^{7}K^3+2^{2\n+3}q)|\msc|q^r.
\end{align*}
For $k\in \qq{1,\mu}\setminus J,  m \in J$,  by \eqref{e:unique} $\ta_k$ and $\ta_m$ are in different connected components of $\tcGT_0$, and thus 
$|\tP_{\ta_k\ta_m}^{(\T)}|=0$. By \eqref{e:greendistIJ}, 
\begin{align*}
\sum_{k\in \qq{1,\mu}\setminus J,  m \in J}|\tGT_{\ta_k\ta_m}-\tP^{(\T)}_{\ta_k\ta_m}|
=\sum_{k\in \qq{1,\mu}\setminus J,  m \in J}|\tGT_{ij}|
\leq& (\n'+9\n)2(d-1)^{\ell+1}(2^{9}K^4|\msc|q^{2r+1})\\
\leq& (\n'+9\n)2^{10}K^4|\msc|q^{r}.
\end{align*}
The same estimate holds for $k\in J,  m \in \qq{1,\mu}\setminus J$.
For $k\neq m\in J$, the same reasoning as above gives $\tP_{\ta_k\ta_m}^{(\T)}=0$.
By \eqref{e:greendistJJ} and noticing that $|J|\leq \mu\leq 2(d-1)^{\ell+1}$, we have
\begin{align*}
\sum_{k\neq m\in J}|\tGT_{\ta_k\ta_m}-\tP^{(\T)}_{\ta_k\ta_m}|=\sum_{k\neq m\in J}|\tGT_{\ta_k\ta_m}|\leq 4(d-1)^{2\ell+2}2^{12}K^5|\msc|q^{3r+2}\leq 2^{14}K^5|\msc|q^{r}.
\end{align*}
Now \eqref{sumtGtPij} follows by combining the above four cases.
\end{proof}

\subsection{Proof of \eqref{G1xbound}}

The proof of \eqref{G1xbound} follows essentially from \eqref{Tterm}
and the fact the difference of $\tGT$ and $\tP^{(\T)}$ is small (Proposition \ref{prop:stabilitytGT}).
\begin{claim}\label{newrigid}
For all $x\in \T$,
\begin{align*}
|\tilde{G}_{1x}-\tP_{1x}|\leq (\n+1)2^{2\n+13}K^3|\msc|q^r.
\end{align*}
Moreover, for $x\in \T\setminus \{1\}$, we have the stronger estimate
\begin{align}\label{betterest}
 |\tilde{G}_{1x}-\tP_{1x}|\leq  (\n+1)2^{2\n+14}K^3|\msc|q^{r+1}.
\end{align}
\end{claim}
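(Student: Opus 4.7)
My plan is to work directly from the identity \eqref{Tterm}, evaluated at entries $(1,x)$ with $x \in \T$, and set up a self-consistent system for
$\Gamma_1 = |\tG_{11} - \tP_{11}|$ and $\Gamma_2 = \max_{x \in \T \setminus \{1\}} |\tG_{1x} - \tP_{1x}|$. Since the matrix $\tilde B$ only has nonzero entries of size $1/\sqrt{d-1}$ between $\T_\ell$ and $\bI = \{\ta_1,\ldots,\ta_\mu\}$, for any matrix $M$ indexed by $\bI$ one has the basic identity
\begin{equation*}
[\tilde B' M \tilde B]_{yz} = \frac{1}{d-1}\sum_{k:l_k=y}\sum_{m:l_m=z} M_{\ta_k\ta_m},
\end{equation*}
so that each of the two terms on the right-hand side of \eqref{Tterm} expands at $(1,x)$ as a double sum over $k,m \in \qq{1,\mu}$ of weighted products $\tP_{1l_k}(\tGT-\tP^{(\T)})_{\ta_k\ta_m}\tP_{l_mx}$ (in the second term) or the same with $\tP$ replaced by $\tG-\tP$ in the first factor (in the first term). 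I would then split each double sum into its diagonal ($k=m$) and off-diagonal ($k\neq m$) parts and estimate them separately.

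For the diagonal part of the second term, each factor $|\tGT_{\ta_k\ta_k} - \tP^{(\T)}_{\ta_k\ta_k}|$ is bounded by $\Cw|\msc|q^r$ via Proposition~\ref{prop:stabilitytGT} and \eqref{tGTreplaceEr}, and I apply Lemma~\ref{sumP}: for $x=1$ use \eqref{worstsumtleaf2} (which gives $|\msc|^{2\ell+2}(d-1)$ and combines with the $1/(d-1)$ prefactor to yield $\Cw(\n+1)^2|\msc|q^r$); for $x \in \T_{\ell_1}$ with $\ell_1 \geq 1$ use \eqref{sumtleaf2}, which produces an extra $(d-1)^{-1/2}$ and hence converts the bound to $\Cw(\n+1)^2(\ell+2)|\msc|q^{r+1}$. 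For the off-diagonal part of the second term, I use the pointwise bounds $|\tP_{1l_k}| \leq 2^{\n+2}|\msc|q^\ell$ and $|\tP_{l_mx}| \leq 2^{\n+2}|\msc|q^{\ell-\ell_1}$ from Proposition~\ref{boundPij}, pull them out of the sum, and apply \eqref{sumtGtPij} to bound $\sum_{k\neq m}|\tGT_{\ta_k\ta_m} - \tP^{(\T)}_{\ta_k\ta_m}| \leq \Cw\n'^2|\msc|q^r$. Using $q^2(d-1)=|\msc|^2$, this contribution collapses to $\Cw\n'^2|\msc|q^{r+\ell+1}$, which is negligible under the assumption $\n'^2q^\ell \ll 1$.

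The first term of \eqref{Tterm} is handled similarly, except that the left factor $(\tG-\tP)_{1y}$ forces $y \in \T_\ell$, and for $\ell \geq 1$ this gives $|(\tG-\tP)_{1y}| \leq \Gamma_2$. The remaining summand $\sum_{y \in \T_\ell}|[\tilde B'(\tGT-\tP^{(\T)})\tilde B\tP]_{yx}|$ is estimated by the same split into diagonal/off-diagonal pieces, now using \eqref{sumtleaf1} (or \eqref{worstsumtleaf1}) together with the per-row bound $\sum_m|\tGT_{\ta_k\ta_m}-\tP^{(\T)}_{\ta_k\ta_m}| \leq \Cw\n'|\msc|q^r$ from \eqref{sumtGtPi}. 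This yields a bound of the form $\Cw\n'|\msc|^2 q^{\ell+1}\Gamma_2$, which under our parameter assumptions is much smaller than $\Gamma_2$.

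Combining these estimates produces the system
\begin{align*}
\Gamma_1 &\leq (\n+1) 2^{2\n+13}K^3|\msc|q^r + A\,\Gamma_2,\\
\Gamma_2 &\leq (\n+1) 2^{2\n+14}K^3|\msc|q^{r+1} + A\,\Gamma_2,
\end{align*}
with $A = \Cw\n'|\msc|^2 q^{\ell+1} \ll 1$, so solving (using the a priori bound $\Gamma_2 \leq 4$ to justify the recursion) yields both claims. The main obstacle is purely bookkeeping: tracking the exponents of $q$ and $|\msc|$ coming out of Lemma~\ref{sumP} in the two regimes $x = 1$ and $x \in \T \setminus \{1\}$ carefully enough so that the $x\neq 1$ case genuinely gains the additional factor of $q$, and ensuring the constants $\Cw$ combine to match $(\n+1)2^{2\n+13}K^3$ and $(\n+1)2^{2\n+14}K^3$ on the right-hand side of the claim.
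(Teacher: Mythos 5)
Your proposal follows essentially the same route as the paper: the identity \eqref{Tterm}, the diagonal/off-diagonal split of the second term estimated via Lemma~\ref{sumP} and \eqref{sumtGtPij}, the first term controlled self-consistently via \eqref{sumtGtPi}, and the observation that $x\in\T_{\ell_1}$ with $\ell_1\geq 1$ gains a factor $(d-1)^{-1/2}$ through the boundary sums. The only (cosmetic) difference is that you run a two-variable system in $\Gamma_1,\Gamma_2$, whereas the paper takes a single $\Gamma_1=\max_{x\in\T}$, closes the bootstrap at order $q^r$, and then recovers the improved $q^{r+1}$ bound for $x\neq 1$ a posteriori from the $\ell_1$-dependent estimate \eqref{tG1xminusP1x}; both are correct.
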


\begin{proof}
Let $\Gamma_1=\max_{x\in \T}|\tilde{G}_{1x}-\tP_{1x}|$.
Then the first term on the right-hand side of \eqref{Tterm} is bounded by
\begin{align}\begin{split}\label{firstterm}
& |((\tG-\tP)\tilde{B}'(\tGT-\tP^{(\T)})\tilde{B}\tP)_{1x}|
  \leq \frac{\Gamma_1}{d-1}\sum_{k\in \qq{1,\mu}}\sum_{m\in \qq{1,\mu}}|\tGT_{\ta_k\ta_m}-\tP^{(\T)}_{\ta_k\ta_m}||\tP_{l_m x}|\\
 &\leq \frac{\Gamma_1(\Cw \n'q^{r+1})}{\sqrt{d-1}}\sum_{m\in\qq{1,\mu}}|\tP_{l_mx}|
 \leq  \Gamma_1(\Cw \n'q^{r+1})((\n+1)2^{\n+3}|\msc|^{\ell+1}(d-1)^{(\ell+1)/2})\\
 &\leq\Cw \n'|\msc|q^{\ell+1}\Gamma_1,
\end{split}
\end{align}
where we used \eqref{sumtGtPi} and \eqref{worstsumtleaf1}.
Next we bound the second term on the right-hand side of \eqref{Tterm}.
For, say $x\in \T_{\ell_1}$, we have
\begin{align}\begin{split}\label{sumtwo}
|(\tP\tilde{B}'(\tGT-\tP^{(\T)})\tilde{B}\tP)_{1x}|
 &\leq \frac{1}{d-1}\sum_{k\in\qq{1,\mu}}|\tP_{1l_k}||\tGT_{\ta_k\ta_k}-\tP^{(\T)}_{\ta_k\ta_k}||\tP_{l_kx}|\\
 &\qquad+\frac{1}{d-1}\sum_{k\neq m\in \qq{1,\mu}}|\tP_{1l_k}||\tGT_{\ta_k\ta_m}-\tP^{(\T)}_{\ta_k\ta_m}||\tP_{l_mx}|.
\end{split}\end{align}
By \eqref{e:boundPij} and \eqref{e:boundPiimsc}, for any $k,m\in \qq{1,\mu}$, 
\begin{align}\label{PBbound}
|\tP_{1l_k}|\leq 2^{\n+2}|\msc|q^{\ell},\quad |\tP_{l_m x}|\leq 2|\msc|.
\end{align}
We can estimate the first term in \eqref{sumtwo} in the following way:
\begin{align*}
\frac{1}{d-1}\sum_{k\in\qq{1,\mu}}|\tP_{1l_k}||\tGT_{\ta_k\ta_k}-\tP^{(\T)}_{\ta_k\ta_k}||\tP_{l_kx}|
&\leq 2^{\n+2}q^{\ell+1} (2^{7}K^3+2^{2\n+3}q)q^{r+1}\sum_{k\in\qq{1,\mu}}|\tP_{l_kx}|\\
&\leq (\n+1) 2^{2\n+5}(\ell_1+1)(2^{7}K^3+2^{2\n+3}q)q^{r+\ell_1},
\end{align*}
where in the first inequality we used \eqref{PBbound}, \eqref{e:stabilitytGT} and  \eqref{tGTreplaceEr}, in the second inequality we used estimate \eqref{sumtleaf1}. 
For the second term in \eqref{sumtwo}, we have
\begin{align*}
&\frac{1}{d-1}\sum_{k\neq m\in \qq{1,\mu}}|\tP_{1l_k}||\tGT_{\ta_k\ta_m}-\tP^{(\T)}_{\ta_k\ta_m}||\tP_{l_mx}|\\
&\leq 2^{\n+2}q^{\ell+1} \sum_{k\neq m\in\qq{1,\mu}}|\tGT_{\ta_k\ta_m}-\tP^{(\T)}_{\ta_k\ta_m}|(2q)
\leq \Cw \n'^2 q^{r+\ell+2}, 
\end{align*}
where we used \eqref{sumtGtPij} and \eqref{PBbound}.
It follows that 
\begin{align}\label{tG1xminusP1x}
|\tG_{1x}-\tP_{1x}|\leq \Cw \n'|\msc|q^{\ell+1}\Gamma_1+(\n+1) 2^{2\n+5}(\ell_1+1)(2^{7}K^3+2^{2\n+3}q)q^{r+\ell_1}+\Cw \n'^2 q^{r+\ell+2} .
\end{align}
By taking the maximum over $x\in \T$ and rearranging it,
we have $\Gamma_1\leq (\n+1)2^{2\n+13}K^3|\msc|q^r$, provided that $\n'^2q^\ell\ll 1$ and $\sqrt{d-1}\geq (\n+1)^2 2^{2\n+10}$.

For \eqref{betterest}, it follows from \eqref{tG1xminusP1x}, the estimate of $\Gamma_1$ and $\ell_1\geq 1$,
\begin{align}
\begin{split}
\label{eqn:betteresttG1xminusP1x}
 |\tG_{1x}-\tP_{1x}|
 &\leq C\n'q^{r+\ell+1}+(\n+1) 2^{2\n+6}(2^{7}K^3+2^{2\n+3}q)q^{r+1}+\Cw \n'^2 q^{r+\ell+2} \\
 &\leq (\n+1) 2^{2\n+6}(2^{7}K^3+1)q^{r+1},
\end{split}
\end{align}
provided that $\n'^2q^\ell\ll 1$ and $\sqrt{d-1}\geq (\n+1)^2 2^{2\n+10}$. 
\end{proof}

\begin{proof}[Proof of \eqref{G1xbound}]
For $x \in \T\setminus\{1\}$, the estimate \eqref{G1xbound} follows from \eqref{eqn:betteresttG1xminusP1x} and \eqref{tGTreplaceEr}:
\begin{align*}
  &\left|\tG_{1x}-P_{1x}(\cE_r(1,x,\tcG))\right|
  \leq   \left|\tG_{1x}-\tP_{1x}\right|+\left|P_{1x}(\cE_{r}(1,x,\tcG))-\tP_{1x}\right|\\
  &\leq (\n+1) 2^{2\n+6}(2^{7}K^3+1)q^{r+1}+2^{2\n+3}|\msc|q^{r+1}
  \leq (\n+1)2^{2\n+14}K^3|\msc|q^{r+1}.
\end{align*}
Thus it only remains to prove \eqref{G1xbound} for $x\not\in \T$.

For $x\in \bB_{2r}(1,\tcG)\setminus \T$, we have by the Schur complement formula \eqref{e:Schur1}:
\begin{equation*}
 \tilde{G}=-\tilde{G}\tilde{B}'\tGT,\quad
  \tP=-\tP\tilde{B}'\tP^{(\T)}.
\end{equation*}
Therefore, by taking the difference of these two equations,
\begin{equation}\label{sumtwo2}
|\tilde{G}_{1x}-\tP_{1x}|\leq \frac{1}{\sqrt{d-1}}\sum_{k\in\qq{1,\mu}}|\tG_{1l_k}||\tP^{(\T)}_{\ta_kx}-\tGT_{\ta_kx}| + \frac{1}{\sqrt{d-1}}\sum_{k\in\qq{1,\mu}}|\tG_{1l_k}-\tP_{1l_k}||\tP^{(\T)}_{\ta_kx}|.
\end{equation}
For the first term in \eqref{sumtwo2}, notice that by combining \eqref{PBbound} and \eqref{betterest}, we have
\begin{align}\label{GBbound}
|\tG_{1l_k}|
\leq |\tG_{1l_k}-\tP_{1l_k}|+|\tP_{1l_k}|
\leq 2^{\n+3}|\msc|q^{\ell}. 
\end{align}
The first term in \eqref{sumtwo2} is bounded by
\begin{align*}
\frac{1}{\sqrt{d-1}}\sum_{k\in\qq{1,\mu}}|\tG_{1l_k}||\tP^{(\T)}_{\ta_kx}-\tGT_{\ta_kx}| 
\leq \Cw q^{\ell+1} \sum_{k\in\qq{1,\mu}}|\tP^{(\T)}_{\ta_kx}-\tGT_{\ta_kx}| 
\leq \Cw \n'q^{r+\ell+1},
\end{align*}
where we used \eqref{sumtGtPi}.
For the second term in \eqref{sumtwo2}, since $\tcG\in \bar\Omega$, its radius-$R$ neighborhood of vertex $1$ has excess at most $\n$.
By \eqref{distfinitedega} there are at most $\n+1$ indices $k\in\qq{1,\mu}$, such that $\ta_k$ is in the same connected component as $x$ in the graph $\tcG_0$.
Thus, $\tP^{(\T)}_{\ta_kx}$ are zero for all $k\in \qq{1,\mu}$ except for at most $\n+1$ of them, and they are bounded  $|\tP^{(\T)}_{\ta_kx}|\leq 2|m_{sc}|$ by \eqref{e:boundPiimsc}. 
\begin{align*}
\frac{1}{\sqrt{d-1}}\sum_{k\in\qq{1,\mu}}|\tG_{1l_k}-\tP_{1l_k}||\tP^{(\T)}_{\ta_kx}|
\leq (\n+1)^2 2^{2\n+15}K^3|\msc|q^{r+2} ,
\end{align*}
where we used \eqref{betterest}. Combining the arguments above, they lead to
\begin{align*}
|\tilde{G}_{1x}-\tP_{1x}|\leq& 2^5K^3|\msc|q^{r+1}+\Cw \n'q^{r+\ell+1},
\end{align*}
given that  $\sqrt{d-1}\geq (\n+1)^2 2^{2\n+10}$. The estimate \eqref{G1xbound} for $x\in \bB_{2r}(1,\tcG)\setminus \T$ follows, provided $\n'^2q^\ell\ll1$.

For  $x\notin \bB_{2r}(1,\tcG)$, we have
\begin{align}
\label{NTterms2} |\tilde{G}_{1x}|\leq  \frac{1}{\sqrt{d-1}}\sum_{k\in\qq{1,\mu}}|\tG_{1l_k}||\tGT_{\ta_kx}| \leq 2^{\n+3}q^{\ell+1}\sum_{k\in\qq{1,\mu}}|\tGT_{\ta_kx}|\leq \Cw \n'q^{r+\ell+1}\ll q^{r+1},
\end{align}
where we used \eqref{GBbound} and \eqref{sumtGti}. This finishes the proof of \eqref{G1xbound}.
\end{proof}

\subsection{Proof of \eqref{tGweakstab}}

\begin{proof}[Proof of \eqref{tGweakstab}]
For $x,y\in \T$, we denote $\Gamma=\max_{x,y \in \T}\{|\tilde{G}_{xy}-\tP_{xy}|\}$.
Then, by the Schur complement formula \eqref{Tterm},
\begin{align}\label{GPxy}
|\tG_{xy}-\tP_{xy}|\leq |((\tG-\tP)\tilde{B}'(\tGT-\tP^{(\T)})\tilde{B}\tP)_{xy}|+|(\tP\tilde{B}'(\tGT-\tP^{(\T)})\tilde{B}\tP)_{xy}|.
 \end{align}
The estimate of the first term follows the same argument as that for \eqref{firstterm}:
\begin{align*}
|((\tG-\tP)\tilde{B}'(\tGT-\tP^{(\T)})\tilde{B}\tP)_{xy}|
\leq C\n'|\msc|q^{\ell+1}\Gamma
\leq \Gamma/2.
\end{align*}
For the second term, similarly, we have
\begin{align*}
 &\left|(\tP\tilde{B}'(\tGT-\tP^{(\T)})\tilde{B}\tP)_{xy}\right|\\
 &\leq\frac{1}{d-1} \sum_{k\in \qq{1,\mu}}|\tP_{xl_k}||\tGT_{\ta_k\ta_k}-\tP^{(\T)}_{\ta_k\ta_k}||\tP_{l_ky}|+\frac{1}{d-1}\sum_{k\neq m\in  \qq{1,\mu}}|\tP_{xl_k}||\tGT_{\ta_k\ta_m}-\tP^{(\T)}_{\ta_k\ta_m}||\tP_{l_my}|\\
 &\leq \frac{\Cw |\msc|q^{r}}{d-1} \sum_{k\in \qq{1,\mu}}|\tP_{xl_k}||\tP_{l_ky}|+(\Cw q)^2\sum_{k\neq m\in  \qq{1,\mu}}|\tGT_{\ta_k\ta_m}-\tP^{(\T)}_{\ta_k\ta_m}|\leq \Cw \n'^2 |\msc|q^r,
\end{align*}
where we bounded $|\tP_{xl_k}|, |\tP_{l_my}|\leq \Cw |\msc|$ and used the estimates \eqref{e:stabilitytGT}, \eqref{worstsumtleaf2} and \eqref{sumtGtPij}. Therefore, by taking supremum of both sides of \eqref{GPxy} and rearranging, we have $\Gamma\leq \Cw \n'^2|\msc|q^{r}$.

For $x\in \T$ and $y\in \bB_{2r}(1,\tcG)\setminus \T$, the same argument as for \eqref{sumtwo2} implies:
\begin{align*}
 |\tG_{xy}-\tP_{xy}|
 &\leq \frac{1}{\sqrt{d-1}}\sum_{k\in\qq{1,\mu}}|\tG_{xl_k}||\tP^{(\T)}_{\ta_kx}-\tGT_{\ta_kx}| + \frac{1}{\sqrt{d-1}}\sum_{k\in\qq{1,\mu}}|\tG_{xl_k}-\tP_{xl_k}||\tP^{(\T)}_{\ta_kx}|.
\\
 &\leq \Cw q \sum_{k\in \qq{1,\mu}}|\tP^{(\T)}_{\ta_ky}-\tGT_{\ta_ky}| + C\n'^2 q^{r+1}\sum_{k\in \qq{1,\mu}}|\tP^{(\T)}_{\ta_ky}| 
 \leq \Cw \n'^2|\msc|q^{r+1}.
\end{align*}
where we bounded $|\tG_{xl_k}|\leq \Cw |\msc|$, and used the estimate \eqref{sumtGtPi}, the bound for $\Gamma$ and the fact that for all $k\in\qq{1,\mu}$ with at most $\n+1$ exceptions, $\tP^{(\T)}_{\ta_ky}$ are zero.

For $x\in \T$ and $y\notin \bB_{2r}(1,\tcG)$, similarly, we have:
\begin{align*}
 |\tilde{G}_{xy}|\leq  \frac{1}{\sqrt{d-1}}\sum_{k\in\qq{1,\mu}}|\tG_{xl_k}||\tGT_{\ta_ky}| \leq  \Cw q\sum_{k\in\qq{1,\mu}}|\tGT_{\ta_ky}|\leq \Cw \n'|\msc|q^{r+1},
\end{align*}
where we bounded $|\tG_{xl_k}|\leq \Cw |\msc|$ and used the estimate \eqref{sumtGti}.

For $x,y\in \bB_{2r}(1,\tcG)/\T$, we have the Schur complement formula \eqref{e:Schur}:
\begin{align*}
 \tilde{G}=&\tGT+\tGT\tilde B\tilde G\tilde B' \tGT,\\
 \tilde{P}=&\tP^{(\T)}+\tP^{(\T)}\tilde B\tilde P\tilde B'\tP^{(\T)}.
\end{align*}
By taking the difference, 
\begin{align*}
 \tilde{G}-\tP =&\tGT-\tP^{(\T)}+(\tGT-\tP^{(\T)})\tilde B\tilde G \tilde B' \tGT\\
 +&\tilde{P}^{(\T)}\tilde B(\tilde G-\tilde{P}) \tilde B' \tGT+\tP^{(\T)}\tilde B\tP\tilde B' (\tGT-\tP^{(\T)}).
\end{align*}
Notice that $|\tGT_{xy}-\tP^{(\T)}_{xy}|\leq C|\msc|q^r$, $|\tG_{l_kl_m}|\leq \Cw |\msc|$, $|\tP_{l_kl_m}|\leq \Cw|\msc|$ and $|\tilde G_{l_kl_m}-\tilde{P}_{l_kl_m}|\leq \Cw \n'^2|\msc|q^{r}$, we have
\begin{align}\begin{split}\label{GPxyout}
& |\tilde{G}_{xy}-\tP_{xy}|\leq \Cw |\msc|q^r
 +\sum_{k,m\in\qq{1,\mu}}|\tGT_{x\ta_k}-\tP^{(\T)}_{x\ta_k}|\frac{\Cw |\msc|}{d-1}|\tGT_{\ta_my}|\\
+&\sum_{k,m\in\qq{1,\mu}}|\tP^{(\T)}_{x\ta_k}|\frac{\Cw \n'^2 |\msc|q^r}{d-1}|\tGT_{\ta_my}|
+\sum_{k,m\in\qq{1,\mu}}|\tP^{(\T)}_{x\ta_k}|\frac{\Cw |\msc|}{d-1}|\tGT_{\ta_my}-\tP^{(\T)}_{\ta_my}|.
\end{split}
\end{align}
The following estimates follow from Proposition \ref{l:smallsum}:
\begin{align*}
&\sum_{k\in\qq{1,\mu}}|P^{(\T)}_{x\ta_k}|, \sum_{m\in\qq{1,\mu}}|\tGT_{\ta_my}| \leq \Cw |\msc|,\\
&\sum_{k\in\qq{1,\mu}}|\tGT_{x\ta_k}-P^{(\T)}_{x\ta_k}|, \sum_{m\in\qq{1,\mu}}|\tGT_{\ta_my}-P^{(\T)}_{\ta_my}| \leq \Cw \n'|\msc|q^r.
\end{align*}
Therefore \eqref{GPxyout} simplifies to
\begin{align*}
 |\tilde{G}_{xy}-\tP_{xy}|\leq \Cw \n'^2|\msc|q^r.
\end{align*}
Finally, for $x\not \in \bB_{2r}(1,\tcG)$ or $y\not \in \bB_{2r}(1,\tcG)$, by symmetry we assume $x\not \in \bB_{2r}(1,\tcG)$, we have 
\begin{align*}
|(\tGT\tilde B\tilde G\tilde B' \tGT)_{xy}|\leq& \frac{1}{d-1}\sum_{k,m \in \qq{1,\mu}}|\tGT_{x\ta_k}||\tG_{l_kl_m} |\tGT_{\ta_my}|\\
\leq&  \sum_{k,m \in \qq{1,\mu}}|\tGT_{x\ta_k}|\frac{\Cw |\msc|}{d-1}|\tGT_{\ta_my}| \leq \Cw \n'|\msc|q^{r+2},
\end{align*}
where we used \eqref{sumtGti}, thus,
\begin{align*}
| \tilde{G}_{xy}-\tGT_{xy}|\leq \Cw \n'|\msc|q^{r+2}.
\end{align*}
Altogether we proved that
\begin{align*}
  \left|\tG_{ij}-\tP_{ij}(\cE_r(i,j,\tcG))\right|\leq \Cw |\msc|\n'^2q^r\ll 	q^{\ell+1}, 
\end{align*}
provided that $\n'^2q^\ell\ll1$. The weak stability estimates \eqref{weakstab} follows by combining with \eqref{e:boundPiimsc}.
This finishes the proof of \eqref{tGweakstab}.
\end{proof}

\section{Concentration in the switched graph}
\label{sec:concentration}

The result of this section is the following proposition, which shows that the average
of the Green's function of $\tcGT$ over the vertex boundary  of $\T$
concentrates under resampling of the edge boundary of $\T$.
This part is where the condition that the edge boundary contains $\gg \log N$ edges is
important.

More precisely,
recall the vertex boundary $\bI=\{\ta_1,\ta_2,\dots, \ta_\mu\}$ of $\cT$ in $\tcG$ from \eqref{defI}.
For any finite graph $\cH$ (not necessarily regular and not necessary on $N$ vertices), we define
\begin{equation} \label{e:IG}
\I(\cH, z)=\frac{1}{Nd}\sum_{(i,j)\in \vec{E}} G_{jj}^{(i)}(\cH,z),
\end{equation}
where $\vec E$ denotes the set of oriented edges of $\cH$,
and $G^{(i)}(\cH,z)$ the Green's function of the graph obtained from $\cH$ by removing the vertex $i$.
Notice that we always normalize \eqref{e:IG} by $Nd$, irregardless of the actual number of oriented edges
in $\cH$ (which can be smaller than $Nd$).

\begin{proposition}\label{tGconcentration}
Sets $\Omega_1^+(z,\ell)$ and $\bar\Omega$ are as defined in Section \ref{sec:outline-structure}. Let $z\in\C_+$ and $\cG \in \Omega_1^+(z,\ell)$.
Then there exists an event $F'(\cG) \subset F(\cG)$ (as in Section \ref{cdFG}) with probability $\P_{\cG}(F'(\cG))= 1-o(N^{-\n+\delta})$
such that for any ${\bf S}\in F'(\cG)$ with $T_{\bf S}(\cG)\in \bar\Omega$,
\begin{align}\label{eqn:tGconcentration}
\left| \frac{1}{\mu}\sum_{k=1}^\mu\left(\tG^{(\T)}_{\ta_k\ta_k}-P_{\ta_k\ta_k}(\cE_r(\ta_k,\ta_k,\tcG^{(\T )}))\right)-\left(\I(\tcG)-\msc\right)\right|\leq \frac{2(\log N)^{1/2+\delta} |\msc|q^r}{\sqrt{\mu}},
 \end{align}
provided that $\sqrt{d-1}\geq \max\{(\n+1)^2 2^{2\n+10}, 2^8(\n+1)K\}$, $\n'^2q^\ell\ll1$ and $\sqrt{N\eta}q^{3r+2}\geq M$.
\end{proposition}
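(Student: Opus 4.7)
The plan proceeds in three steps: a reduction to the good index set $J\subset\qq{1,\nu}$ of Proposition~\ref{greendist-new}, an explicit evaluation of the tree-extension term for $k\in J$, and a concentration argument for the remaining sum. For the reduction, $|\qq{1,\mu}\setminus J|\leq \n'+9\n=O(\log N)$ by Proposition~\ref{greendist-new}, and each summand in \eqref{eqn:tGconcentration} is bounded by $2^7K^3|\msc|q^r$ via Proposition~\ref{prop:stabilitytGT}, so the contribution of $k\notin J$ is $O((\log N)|\msc|q^r/\mu)$, which is dominated by the target error since $\mu\geq (d-1)^{\ell+1}\gg(\log N)^{1-2\delta}$. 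For $k\in J$, the definition of $J$ forces $\cB_R(c_k,\cGT)$ to be a tree, and since the switching removes only the edge $\{b_k,c_k\}$ incident to $c_k$ and adds no new edge at $c_k$, the ball $\cB_R(c_k,\tcGT)$ remains a tree. Hence $\cE_r(c_k,c_k,\tcGT)=\{c_k\}$ is the singleton with inherited deficit $g(c_k)=d-\deg_{\tcGT}(c_k)=1$, whose tree extension is the rooted tree $\cY_0$ with root degree $d-1$, and Proposition~\ref{greentree} gives $P_{c_k c_k}(\cE_r(c_k,c_k,\tcGT))=\msc$.

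It thus remains to show
\[
  \absBB{\frac{1}{\mu}\sum_{k\in J}(\tGT_{c_kc_k}-\msc)-(\I(\tcG)-\msc)}=O\pbb{\frac{(\log N)^{1/2+\delta}|\msc|q^r}{\sqrt{\mu}}}
\]
with probability $1-o(N^{-\n+\delta})$. I would apply McDiarmid's bounded-differences inequality to
\[
  f({\bf S}):=\frac{1}{\mu}\sum_{k\in J}\tGT_{c_k c_k}-\I(\tcG),
\]
viewed as a function of the $\mu$ independent components $\vec S_i$ of $\bf S$. Changing one $\vec S_i$ modifies $\tcGT$ in $O(1)$ edges localized near $\{a_i,b_i,c_i\}$, and by resolvent expansion each $\tGT_{c_k c_k}$ with $k\neq i$ changes by at most $C|\tGT_{c_k,x}|^2$ for some $x$ near the modification; the Ward identity $\sum_{k}|\tGT_{c_kx}|^2\leq\im[\tGT_{xx}]/\eta$ then bounds the net per-coordinate change of the first sum by $O(|\msc|q^r/\mu)$. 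A parallel estimate holds for $\I(\tcG)$, and cancellation between the two averages, together with the a priori bound $|\tGT_{c_k c_k}-\msc|=O(|\msc|q^r)$ from Proposition~\ref{prop:stabilitytGT}, keeps the per-coordinate variation of $f$ at this order. The event $F'(\cG)$ is then defined as $F(\cG)$ intersected with the resulting Hoeffding-type concentration event.

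The chief obstacle is to verify that the mean itself satisfies $|\E_{\bf S} f({\bf S})|=o(|\msc|q^r/\sqrt{\mu})$, so that McDiarmid's deviation estimate produces the claimed bound on $|f({\bf S})|$ rather than only on $|f({\bf S})-\E f|$. The key tool is the involution $\tilde T$ of Proposition~\ref{prop:reverse}: uniformity of $\bf S$ on $\sS(\cG)$ combined with the measure-preserving bijection on $\GNdp$ implies that, up to negligible corrections from the exceptional indices of $F(\cG)^c$, the endpoints $c_k$ for $k\in J$ are uniformly distributed over endpoints of oriented edges of $\tcGT$. Consequently $\E_{\bf S}[\frac{1}{\mu}\sum_{k\in J}\tGT_{c_k c_k}]$ matches the degree-weighted bulk average $\frac{1}{|\vec E(\tcGT)|}\sum_{(i,j)\in\vec E(\tcGT)}\tGT_{jj}$, which in turn equals $\I(\tcG)$ up to a boundary correction of order $\mu/(Nd)$ coming from edges incident to $\T$; this correction is controlled by Proposition~\ref{prop:stabilityGT}, completing the mean-matching argument.
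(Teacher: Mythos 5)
Your reduction to the index set $J$ and the identification $P_{\ta_k\ta_k}(\cE_r(\ta_k,\ta_k,\tcGT))=\msc$ for $k\in J$ match the paper, but the concentration step has a genuine gap. You apply McDiarmid to $f({\bf S})=\frac1\mu\sum_{k\in J}\tGT_{c_kc_k}-\I(\tcG)$, a function in which \emph{both} the graph $\tcGT$ \emph{and} the evaluation points $c_k$ (and the set $J$ itself, via the $\rm S$-cells) depend on all coordinates of $\bf S$ jointly. Your per-coordinate variation bound does not hold: changing one $\vec S_i$ perturbs $\tcGT$ by $O(1)$ edges, and the Ward identity only gives $\sum_{k}|\text{change in }\tGT_{c_kc_k}|\lesssim \eta^{-1}$, hence a per-coordinate oscillation of order $1/(\mu\eta)$ — which is far larger than the required $(\log N)^{\delta}|\msc|q^r/\mu$ throughout the regime $\eta\ll 1$ (one would need $\eta\gtrsim q^{-r}$). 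Salvaging this would require the off-diagonal cell bounds $|\tGT_{c_kx}|\leq 2M/\sqrt{N\eta}$, but those hold only on the good event and the cell decomposition itself moves when a coordinate is resampled, so a worst-case bounded-differences hypothesis is unavailable. The mean-matching step is also not valid as stated: under $\P_\cG$ the pair $(b_k,c_k)$ is uniform over oriented edges of the \emph{unswitched} graph $\cGT$, and your appeal to the involution $\tilde T$ does not yield that $c_k$ is uniform over edge-endpoints of $\tcGT$ in a way that lets you equate $\E_{\bf S}[\tGT_{c_kc_k}]$ (where graph and point are correlated) with a deterministic edge average of $\tGT$.

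The paper avoids both problems by running the concentration entirely in the unswitched graph: it sets $X_k=G^{(\T b_k)}_{c_kc_k}-P_{c_kc_k}(\cE_r(c_k,c_k,\cG^{(\T b_k)}))$, which depends on $\vec S_k$ only, so the $X_k$ are genuinely i.i.d.\ with $|X_k|\leq 2K|\msc|q^r$ and Hoeffding/Azuma gives the $|\msc|q^r(\log N)^{1/2+\delta}/\sqrt\mu$ deviation directly; the mean $\E[X_k]$ is then computed as $\I(\cGT)-\msc+O(N^{-1+\delta})$ from the exact uniformity of $(b_k,c_k)$ over $\vec E(\cGT)$. The transfer to the switched quantities is purely deterministic on the good event: Lemma~\ref{lem:Gsw} shows $|\tGT_{c_kc_k}-G^{(\T b_k)}_{c_kc_k}|=O(K^4|\msc|q^{2r})$ for $k\in J$, and Lemma~\ref{l:IGchange} shows $|\I(\cGT)-\I(\tcG)|\leq 36d^{2\ell+2}/(N\eta)$ via the Ward identity (your $\mu/(Nd)$ accounting of this correction also misses the $1/(N\eta)$ resolvent terms). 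You would need to restructure your argument along these lines — concentrate the independent unswitched variables first, then compare — rather than concentrating the switched sum directly.
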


To prove Proposition~\ref{tGconcentration}, in Lemma~\ref{lem:econcentration},
we first show a similar statement for the unswitched graph $\cGT$
in which the problem becomes a concentration problem of \emph{independent} random variables.
Then we prove Proposition~\ref{tGconcentration} by comparision,
using the estimates of Proposition~\ref{prop:stabilitytGT},
and the fact that the change from $\I(\tcG,z)$ to $\I(\cGT,z)$ is small (Lemma~\ref{l:IGchange}).
Proposition~\ref{prop:stabilitytGT}
is applicable since, by the definition of set $\Omega_1^+(z,\ell)$ in Section~\ref{sec:outline},
any graph $\cG\in \Omega_1^+(z,\ell)$ satisfies the assumptions in Proposition~\ref{prop:stabilitytGT} with $K=2^{10}$.

The following proposition is used repeatedly in this section.
It follows from exactly the same argument as Proposition~\ref{stabilityGS}, and we therefore omit the proof. 

\begin{lemma}
\label{stablermone}
Given $z\in\C_+$, a constant $K'\geq 2$, and $\cG \in \bar\Omega$.
Let $\cal H$ be one of the graphs $\cGT$, $\hcGT$, $\tcGT$ or $\tcG$, and suppose that 
 \begin{align}
  \left|G_{ij}(\cal H, z)-P_{ij}(\cE_r(i,j,\cal H),z)\right|\leq K'|\msc|q^r.
 \end{align}
Then, for any vertices $i,j$ in $\cal H^{(x)}$, we have
\begin{align}
 |G_{ij}(\cal H^{(x)},z)-P_{ij}(\cE_{r}(i,j,\cal H^{(x)}),z)|\leq 2K' |\msc| q^r, 
\end{align}
provided that $\sqrt{d-1}\geq (\n+1)^2 2^{2\n+10}$.
Here all graphs have deficit function $g=d-\deg$,
and we recall that $\cal H^{(x)}$ is the graph obtained from $\cH$ by removing the vertex $x$.
\end{lemma}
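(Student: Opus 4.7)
The plan is to adapt the argument of Proposition~\ref{stabilityGS} verbatim, specialized to the single-vertex removal $\bU = \{x\}$ and with the base graph $\cH$ in place of $\cG$. This is actually the simplest instance of that argument because $|\bU|=1$, so the matrix inversion step (Claim~\ref{l:sizeinvG}) degenerates to a scalar reciprocal and many combinatorial factors $|\bU|^k$ disappear.

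I would proceed in the four ingredients laid out below \eqref{e:stabilityGS}. For \emph{localization}, set $\cH_0 := \cB_{3r}(x,\cH)$, $\bX := \bB_{2r}(x,\cH)$, and $\cH_1 := \TE(\cH_0)$ with Green's function $P$. The hypotheses of Proposition~\ref{boundPij} are met for both $\cH_0$ and $\cH_0^{(x)}$: the excess bound is inherited from $\cG\in\bar\Omega$ (using the corresponding verifications already made in Sections~\ref{sec:stabilityT}--\ref{sec:weakstab} for each of the four choices $\cH\in\{\cGT,\hcGT,\tcGT,\tcG\}$), and the deficit sum $\sum g(v)\leq 8\omega$ on each component follows from Proposition~\ref{p:deficitbound} and Claim~\ref{c:componentestimate}. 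Remark~\ref{rk:princG0} then yields, for $i,j\in\bX$,
\begin{equation*}
|P_{ij}(\cE_r(i,j,\cH)) - P_{ij}| \leq 2^{2\omega+3}|\msc|q^{r+1},\qquad |P_{ij}(\cE_r(i,j,\cH^{(x)})) - P_{ij}^{(x)}| \leq 2^{2\omega+3}|\msc|q^{r+1}.
\end{equation*}

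For the \emph{starting point}, the Schur complement formula \eqref{e:Schur1} with a single removed vertex reads
\begin{equation*}
G_{ij}(\cH^{(x)}) = G_{ij}(\cH) - \frac{G_{ix}(\cH)\,G_{xj}(\cH)}{G_{xx}(\cH)},
\end{equation*}
with the analogous identity relating $P^{(x)}$ and $P$. For the \emph{Green's function estimates}, combining the hypothesis with \eqref{e:boundPij}--\eqref{e:boundPii} gives, exactly as in \eqref{Gxwbound}, $|G_{xx}|\geq 3|\msc|/5$ (so $|1/G_{xx}|\leq 2/|\msc|$, and similarly for $P$), $|G_{iy}|\leq (2^{\omega+2}+K')|\msc|q$ for $i\neq y$, and $|G_{iy}|\leq K'|\msc|q^r$ when $\dist_\cH(i,y)>r$; the same bounds hold for $P$.

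\emph{Assembly.} If at least one of $i,j$ lies outside $\bX$, then $\cE_r(i,j,\cH)=\cE_r(i,j,\cH^{(x)})$ with matching deficit functions, so one only needs the Schur identity: using $|G_{ix}|,|G_{xj}|\leq K'|\msc|q^r$ (since $x$ is then far from at least one of $i,j$) together with $|1/G_{xx}|\leq 2/|\msc|$ produces a bound $\lesssim (K')^2|\msc|q^{2r}$, which is $\leq \tfrac12 K'|\msc|q^r$ by the hypothesis on $\sqrt{d-1}$. In the main case $i,j\in\bX$, I would subtract the $P$- and $G$-Schur identities and estimate the difference as in \eqref{difGPS}--\eqref{differentestimate}: the resolvent identity gives $|1/G_{xx}-1/P_{xx}|\leq 4K'q^r/|\msc|\cdot(1+2^{2\omega+3}q)$, and the assumed bound on $|G_{iy}-P_{iy}|$ plus the localization error $2^{2\omega+3}|\msc|q^{r+1}$ controls each remaining piece. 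Collecting everything and absorbing constants using $\sqrt{d-1}\geq(\omega+1)^2 2^{2\omega+10}$ yields
\begin{equation*}
|G_{ij}(\cH^{(x)}) - P_{ij}^{(x)}| \leq K'|\msc|q^r\bigl(1+O((K'+2^\omega)q)\bigr) \leq \tfrac{7}{4}K'|\msc|q^r,
\end{equation*}
which combined with the second localization inequality delivers the stated $2K'|\msc|q^r$.

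\emph{Main obstacle.} There is no substantive obstacle; the argument is a strict simplification of the Proposition~\ref{stabilityGS} template. The only point requiring slight care is ensuring that the excess and deficit hypotheses of Proposition~\ref{boundPij} hold uniformly across the four possible choices of $\cH$, but this has been established in each relevant section (notably Claim~\ref{c:componentestimate0} and Claim~\ref{c:componentestimate} for $\hcGT$ and $\tcGT$, and the analogous verification for $\tcG$ in Section~\ref{sec:weakstab}), so these can be quoted rather than redone.
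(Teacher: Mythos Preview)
Your approach is exactly what the paper does: it explicitly states that this lemma ``follows from exactly the same argument as Proposition~\ref{stabilityGS}'' and omits the proof, so your specialization to $\bU=\{x\}$ is precisely the intended route. One minor slip: in the case where one of $i,j$ lies outside $\bX$, only \emph{one} of $|G_{ix}|,|G_{xj}|$ is guaranteed to be $\leq K'|\msc|q^r$ (the other is merely $O(|\msc|q)$ via the off-diagonal bound in \eqref{Gxwbound}), so the Schur remainder is $O(K'|\msc|q^{r+1})$ rather than $O((K')^2|\msc|q^{2r})$ --- but this is still $\leq \tfrac12 K'|\msc|q^r$ under the hypothesis on $\sqrt{d-1}$, and your conclusion is unaffected.
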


\subsection{Estimate for the unswitched graph}

The next lemma shows concentration of a certain average of the Green's function
in the unswitched graph.

\begin{lemma} \label{lem:econcentration}
For any $z\in \C_+$ and $\cal G\in \Omega_1^+(z,\ell)$, we define the set 
$F'(\cG) \subset F(\cG)$ (as in Section \ref{cdFG}) such that
\begin{align}
\label{econcentration}
  \left| \frac{1}{\mu}\sum_{k=1}^\mu\left(G^{(\T b_k)}_{c_kc_k}-P_{c_kc_k}(\cE_r(c_k,c_k,\cal G^{(\T b_k)}))\right)-\left(\I(\cal G^{(\T)})-\msc\right)\right|\leq \frac{(\log N)^{1/2+\delta} |\msc|q^r}{\sqrt{\mu}}.
 \end{align}
Then $\P_{\cG}(F'(\cG))= 1-o(N^{-\n+\delta})$.
\end{lemma}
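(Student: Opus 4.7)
The plan is to apply a standard concentration inequality (Hoeffding's inequality) to the independent random variables
\[
X_k \deq G^{(\T b_k)}_{c_k c_k}(\cG) - P_{c_k c_k}(\cE_r(c_k, c_k, \cG^{(\T b_k)})), \qquad k \in \qq{1,\mu}.
\]
These are independent in $k$ because the oriented edges $(b_k, c_k)$ are chosen independently and uniformly from the oriented edges of $\cGT$ (see Remark~\ref{defrmk}). They are uniformly bounded: by the definition of $\Omega_1^+(z,\ell)$, the graph $\cGT$ satisfies the hypothesis of Lemma~\ref{stablermone} with $K'=2^{10}$, so $|X_k| \leq 2^{11}|\msc|q^r$.

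First I would compute $\E_\cG[X_k]$. Using the identity $G^{(\T b)}_{cc}(\cG) = G^{(b)}_{cc}(\cGT)$ and the definition \eqref{e:IG} of $\I(\cGT)$,
\[
\E_\cG\qb{G^{(\T b_k)}_{c_k c_k}} = \frac{1}{|\vec E(\cGT)|}\sum_{(b,c)\in \vec E(\cGT)} G^{(b)}_{cc}(\cGT) = \frac{Nd}{|\vec E(\cGT)|}\,\I(\cGT).
\]
Since $|\vec E(\cGT)| = Nd - O((d-1)^\ell)$ and $|\I(\cGT)| \leq 2$ by the weak stability of $G^{(\T)}$, this differs from $\I(\cGT)$ by $O((d-1)^\ell/N)$. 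For the second term, I would argue that for each pair $(b,c) \in \vec E(\cGT)$ whose radius-$R$ neighborhood of $c$ in $\cG$ is a tree and for which $\T$ does not intersect the $r$-neighborhood of $c$, the graph $\TE(\cE_r(c,c,\cG^{(\T b)}))$ is (isomorphic to) the rooted infinite $d$-regular tree $\cY_0$ with root $c$ of degree $d-1$; then Proposition~\ref{greentree} gives $P_{cc}(\cE_r(c,c,\cG^{(\T b)})) = \msc$ exactly. The exceptional pairs—at most $N^\delta \cdot d$ with $c$ having a cycle in its $R$-neighborhood (by the definition of $\bar\Omega \supset \Omega_1^+$) and $O((d-1)^{r+\ell+1})$ with $\T$ too close to $c$—contribute at most $(5|\msc|/2) \cdot O((N^\delta + (d-1)^{r+\ell+1})/N)$ by the bound \eqref{e:boundPiimsc}. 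Combining,
\[
\absB{\E_\cG[X_k] - \pb{\I(\cGT) - \msc}} \leq \frac{C(N^\delta + (d-1)^{r+\ell+1})}{N},
\]
which for our parameters ($r,\ell = O(\log\log N)$) is much smaller than $(\log N)^{1/2+\delta}|\msc|q^r/\sqrt{\mu}$, since the latter is bounded below by a power of $\log N$.

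Next, I apply Hoeffding's inequality to $\bar X \deq \mu^{-1}\sum_{k=1}^\mu X_k$:
\[
\P_\cG\pa{\absb{\bar X - \E_\cG[X_k]} \geq t} \leq 2 \exp\pa{-\frac{\mu t^2}{2 \cdot 2^{22}|\msc|^2 q^{2r}}}.
\]
Choosing $t = \frac{1}{2}(\log N)^{1/2+\delta}|\msc|q^r/\sqrt{\mu}$, the right-hand side is at most $2\exp(-c(\log N)^{1+2\delta})$ for an absolute $c>0$, which is $o(N^{-\omega+\delta})$ for every fixed $\omega$. Defining
\[
F'(\cG) \deq F(\cG) \cap \hb{\absb{\bar X - \E_\cG[X_k]} \leq t},
\]
we have $\P_\cG(F'(\cG)) = 1 - o(N^{-\omega+\delta})$ since $\P_\cG(F(\cG)) = 1 - o(N^{-\omega+\delta})$ by \eqref{FGmeasure}. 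On $F'(\cG)$, the triangle inequality combined with the mean estimate yields \eqref{econcentration}.

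The main delicate step is the mean computation for the $P$-term, specifically verifying that for a generic pair $(b,c)$ (with $b$ a neighbor of $c$ in $\cG$ and $c$'s neighborhood locally a tree not touching $\T$) the tree extension $\TE(\cE_r(c,c,\cG^{(\T b)}))$ coincides with $\cY_0$ so that Proposition~\ref{greentree} applies. This requires unfolding the deficit-function conventions of Section~\ref{sec:intro-TE} at the vertex $c$, which has lost a neighbor $b$ upon removal of $\T \cup \{b\}$. Everything else is a direct application of Hoeffding together with the a priori bound from Lemma~\ref{stablermone}.
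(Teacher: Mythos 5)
Your proposal is correct and follows essentially the same route as the paper: the same i.i.d.\ decomposition into $X_k$, the same a priori bound $|X_k|\leq 2K|\msc|q^r$ from Lemma~\ref{stablermone}, the same mean computation (replacing $|\vec E(\cGT)|^{-1}$ by $(Nd)^{-1}$ and using that all but $O(N^\delta)$ vertices have $d$-regular tree $r$-neighborhoods in $\cGT$ so that $P_{cc}=\msc$ by Proposition~\ref{greentree}), and the same concentration step (the paper's ``Azuma for independent variables'' is Hoeffding). The step you flag as delicate is handled in the paper exactly as you describe.
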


\begin{proof}
Let
\begin{align*}
X_k=G^{(\T b_k)}_{c_kc_k}-P_{c_kc_k}(\cE_r(c_k,c_k,\cal G^{(\T b_k)})),\quad k\in\qq{1,\mu}.
\end{align*}
Conditioned on the graph $\cGT$, the random sets $\vec S_1, \vec S_2,\dots, \vec S_\mu$ are independent
and identically distributed, and thus $X_1,X_2,\dots, X_\mu$ are i.i.d random variables.
By Lemma~\ref{stablermone} and the assumption that $\cG\in \Omega_1^+(z,\ell)$,
for any $k\in \qq{1,\mu}$, we have
\begin{align*}
|X_k|\leq 2K|\msc|q^r,
\end{align*}
where  $K=2^{10}$.
By Azuma's inequality for independent random variables, it therefore follows that
\begin{align}\label{azuma}
 \P_{\cG}\left(\left| \frac{1}{\mu}\sum_{k=1}^\mu X_k-\E[X_k]\right|\geq \frac{2Kt|\msc|q^r}{\sqrt{\mu}}\right)\leq e^{-t^2/2}.
\end{align}
In the following, we still need to estimate $\E[X_k]$. Let $\vec E$ be the set of oriented edges of $\cGT$.
By definition, $\T$ is the $\ell$ neighborhood of the vertex $1$, and by the trivial bound it intersects at most
$d+d(d-1)+\cdots+d(d-1)^{\ell}  \leq 2(d-1)^{\ell+1}$ edges.
Thus $Nd-4(d-1)^{\ell+1}\leq |\vec E|\leq Nd$.
Using that, by Lemma~\ref{stablermone},
we also have $|G_{ii}^{(\T j)}-P_{ii}(\cE_{r}(i,i,\cG^{(\T j)}))|\leq 2K|\msc|q^r$,
it follows that
\begin{align}
\notag\E[X_k]
&=\frac{1}{|\vec E|} \sum_{(i,j)\in\vec E}G_{ii}^{(\T j)}-P_{ii}(\cE_{r}(i,i,\cG^{(\T j)}))\\
\notag
&=\frac{1}{Nd} \sum_{(i,j)\in\vec E}G_{ii}^{(\T j)}-P_{ii}(\cE_{r}(i,i,\cG^{(\T j)}))+O_{\leq}\left(\frac{8K|m_{sc}|q^r(d-1)^{\ell+1}}{Nd}\right)\\
\label{avIG}
&=\I(\cGT)-\frac{1}{Nd} \sum_{(i,j)\in\vec E}P_{ii}(\cE_{r}(i,i,\cG^{(\T j)}))+O_{\leq}\left(\frac{8K}{N}\right).
\end{align}
Moreover, since by assumption $\cal G\in \bar\Omega$, all except for at most $N^{\delta}$ vertices have radius-$R$ tree neighborhoods in $\cG$, and therefore
\begin{align*}
&|\{i\in \qq{1, N}\setminus \T: \cB_{r}(i, \cGT) \text{ is not a $d$-regular tree}\}|\\
&\leq |\{i\in \qq{1, N}: \cB_{r}(i, \cG) \text{ is not a tree}\}|+|\{i\in \qq{1, N}: \dist_{\cG}(i, \T)\leq r\}|
\leq N^{\delta}+2(d-1)^{r+\ell}\leq 2N^{\delta} 
.
\end{align*}
For the vertices $i$ contained in the set on the left-hand side,
we have the bound $|P_{ii}(\cE_{r}(i,i,\cG^{(\T j)}))|\leq 2|\msc|$ from \eqref{e:boundPii}.
For the other vertices $i$, whose $r$-neighborhood in $\cGT$ is a $d$-regular tree,
we have the equality $P_{ii}(\cE_{r}(i,i,\cG^{(\T j)}))=\msc$. Therefore
\begin{equation} \label{avemsc}
  \frac{1}{Nd} \sum_{(i,j)\in\vec E}P_{ii}(\cE_{r}(i,i,\cG^{(\T j)}))=\msc+O_{\leq }(8N^{-1+\delta})
  .
\end{equation}
Combining \eqref{avIG}, \eqref{avemsc}, and taking $t=(\log N)^{1/2+\delta}/(4K)$ in \eqref{azuma}, we get
\begin{align*}
 \P_{\cG}\left(\left| \frac{1}{\mu}\sum_{i=1}^\mu X_i-\left(\I(\cGT)-\msc\right)\right|\geq \frac{2Kt|\msc|q^r}{\sqrt{\mu}}+\frac{10}{N^{1-\delta}}\right)\leq e^{-(\log N)^{1+2\delta}/(32K^2)}.
\end{align*}
Since $N^{-1+\delta}\ll (\log N)^{1/2+\delta}|\msc|q^r/\sqrt{\mu}$,
it follows that \eqref{econcentration} holds with overwhelming probability,
and we can define $F'(\cG)\subset F(\cG)$ as claimed with probability
\begin{align*}
\P_{\cG}(F'(\cG))\geq \P_{\cG}(F(\cG))-e^{-(\log N)^{1+2\delta}/(32K^2)}
=1-o(N^{-\n+\delta}),
\end{align*}
where we used \eqref{FGmeasure}.
This completes the proof.
\end{proof}

\subsection{Changing $Q(\cGT)$ to $Q(\tcG)$}

The next lemma shows that we can replace $Q(\cGT)$ by $Q(\cG)$ up to a small error.
It follows from the general insensitivity of the quantity $Q$ to small changes of the graph.

\begin{lemma}\label{l:IGchange}
For $z\in \C_+$, $\cG \in \Omega_1^+(z,\ell)$ and ${\bf S} \in F(\cG)$ with $T_{\bf S}(\cG)\in \bar\Omega$,
we have
\begin{align}\label{e:IGchange}
|\I(\cGT,z)-\I(\tcG,z)|
\leq \frac{36d^{2\ell+2}}{N\eta}
,
\end{align}
provided that $\sqrt{d-1}\geq \max\{(\n+1)^2 2^{2\n+10}, 2^8(\n+1)K\}$, $\n'^2q^\ell\ll1$ and $\sqrt{N\eta}q^{3r+2}\geq M$.
\end{lemma}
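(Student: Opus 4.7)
The plan is to decompose the difference through the intermediate graph $\tcGT$:
\begin{equation*}
\I(\cGT,z) - \I(\tcG,z) = [\I(\cGT,z) - \I(\tcGT,z)] + [\I(\tcGT,z) - \I(\tcG,z)],
\end{equation*}
and bound each bracket separately. Throughout, the weak stability estimates from Propositions~\ref{prop:stabilityGT}, \ref{prop:stabilitytGT}, and \ref{prop:tGweakstab} (together with Lemma~\ref{stablermone} to pass to one-vertex-removed Green's functions) will provide uniform a priori bounds $|G^{(i)}_{jj}(\cH)|\leq C$ for the relevant graphs $\cH$.

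For the first bracket, $\cGT$ and $\tcGT$ share the vertex set $\qq{N}\setminus\T$ and differ only in the switched edges (at most $2\nu$ edges). I will split the sum defining $\I$ into a contribution from the symmetric difference of oriented edge sets---which has $O(d^{\ell+1})$ terms each of size $O(1)$, contributing $O(d^\ell/N)$---and a contribution from the common edges. For the common edges, the resolvent identity
\begin{equation*}
G^{(i)}(\tcGT) - G^{(i)}(\cGT) = -\,G^{(i)}(\cGT)\,\Delta^{(i)}\,G^{(i)}(\tcGT),
\end{equation*}
where $\Delta^{(i)}$ has nonzero entries only at the switched vertices $\{a_k,b_k,c_k\}$, allows to bound the difference entrywise. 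Summing the $(j,j)$-entries over $(i,j)$, applying Cauchy--Schwarz, and invoking the Ward identity $\sum_j|G^{(i)}(\cH)_{jp}|^2 = \im G^{(i)}(\cH)_{pp}/\eta \leq C/\eta$ gives a contribution of the desired order.

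For the second bracket, $\tcGT$ and $\tcG$ differ by the vertex set $\T$ and its incident edges. I will first separate out the $O(d^{\ell+1})$ boundary terms of $\I(\tcG)$ corresponding to edges of $\tcG$ meeting $\T$, contributing again $O(d^\ell/N)$. For the remaining common oriented edges $(i,j)\in\vec E(\tcGT)$, the block Schur complement formula applied to the partition $\T\sqcup(\qq{N}\setminus(\T\cup\{i\}))$ in $\tcG^{(i)}$ yields
\begin{equation*}
G^{(i)}_{jj}(\tcG) - G^{(i)}_{jj}(\tcGT) = \frac{1}{d-1}\sum_{k,k'}G^{(i)}(\tcGT)_{j\ta_k}\,(H|_\T - z)^{-1}_{l_k l_{k'}}\,G^{(i)}(\tcG)_{\ta_{k'}j},
\end{equation*}
summed over those $k,k'$ with $\ta_k,\ta_{k'}\neq i$. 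Summing over $(i,j)$ and applying Ward's identity separately to each of the two factors $G^{(i)}_{j\ta_k}$ and $G^{(i)}_{\ta_{k'}j}$, while bounding the middle factor $|(H|_\T-z)^{-1}_{l_kl_{k'}}|$ using an analog of Claim~\ref{l:sizeinvG} applied to the finite subgraph $\cT$, produces the claimed $O(d^{2\ell+2}/(N\eta))$ bound, with the quadratic dependence on $\mu$ arising naturally from the double sum over boundary vertex pairs.

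The main obstacle will be the bookkeeping and the uniform control of the inverse $(H|_\T-z)^{-1}$ appearing in the Schur complement correction: since $\cT$ is a finite graph whose spectrum may approach $\re[z]$, a naive bound by $1/\eta$ is too lossy. One route is to use the tree-extension framework of Section~\ref{sec:TE} in conjunction with the excess bound on $\cT$ to obtain a dimension-free estimate on the relevant entries of $(H|_\T-z)^{-1}$. An alternative is to absorb a weaker bound using the hypothesis $\sqrt{N\eta}q^{3r+2}\geq M$, which ensures the spectral parameter $z$ is well-separated from the regime where additional cancellations would be required.
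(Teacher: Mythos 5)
Your overall architecture is right and the first half is essentially the paper's argument: the paper also interpolates through the vertex-deleted switched graph (it passes $\cGT\to\hcGT\to\tcGT$ in two sub-steps rather than one, but that is cosmetic), uses the resolvent identity with the rank-$O(\nu)$ perturbation $\Delta$, Cauchy--Schwarz, and the edge-summed Ward identity of Lemma~\ref{lem:sumward} (note you need that edge-summed version, i.e.\ $\sum_{(i,j)\in\vec E}|G^{(j)}_{ix}|^2\leq 8d/\eta$, obtained via \eqref{e:Schurixj} from the usual Ward identity, not the fixed-$i$ version you quoted).

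The second bracket, however, contains a genuine gap, and it is exactly the one you flagged as ``the main obstacle.'' The Schur complement formula \eqref{e:Schur} gives
\begin{equation*}
\tG^{(j)}_{ii}-\tG^{(\T j)}_{ii}=\frac{1}{d-1}\sum_{k,m}\tG^{(\T j)}_{i\ta_k}\,\bigl(\tG^{(j)}|_{\T}\bigr)_{l_kl_m}\,\tG^{(\T j)}_{\ta_m i},
\end{equation*}
where the middle factor is the restriction of the \emph{full} Green's function of $\tcG^{(j)}$ to $\T\times\T$, i.e.\ $(H|_\T-z-\tilde B'\tG^{(\T j)}\tilde B)^{-1}$ --- not $(H|_\T-z)^{-1}$ as you wrote. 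This distinction is essential: $H|_\T$ is a fixed finite symmetric matrix with real spectrum, so $\|(H|_\T-z)^{-1}\|$ can genuinely be of order $1/\eta$ whenever $\re[z]$ is near an eigenvalue of $\cT$, and neither of your proposed repairs works. The tree-extension framework bounds a \emph{different} resolvent, namely $(H|_\T-z-B_1'(D_1-z)^{-1}B_1)^{-1}$, whose self-energy term moves the spectrum off the real axis; it says nothing about $(H|_\T-z)^{-1}$ itself. And the hypothesis $\sqrt{N\eta}\,q^{3r+2}\geq M$ cannot absorb a stray factor of $1/\eta$, since the target bound $36d^{2\ell+2}/(N\eta)$ is already linear in $1/\eta$ and $\eta$ may be as small as $(\log N)^{48\alpha+1}/N$. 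The correct fix is to keep the middle factor as $\tG^{(j)}_{l_kl_m}$ and bound it by an absolute constant ($\leq 4$) using the weak stability estimate \eqref{tGweakstab} of Proposition~\ref{prop:tGweakstab} together with \eqref{e:Schurixj}; then Cauchy--Schwarz and Lemma~\ref{lem:sumward} on the two outer factors give $8d/\eta$, the double sum over $k,m$ gives $\mu^2\leq 4(d-1)^{2\ell+2}$, and the normalization $\frac{1}{Nd(d-1)}$ yields the claimed $O(d^{2\ell+2}/(N\eta))$.
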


The proof of Lemma~\ref{l:IGchange} uses Lemma~\ref{lem:sumward} below,
which is a direct consequence of the Ward identity \eqref{e:Ward}.

\begin{lemma}\label{lem:sumward}
Given a graph $\cG$ with degree bounded by $d$.
We denote by $\vec{E}$ the set of oriented edges of $\cG$,
by $H$ its normalized adjacency matrix, and by $G=(H-z)^{-1}$ its Green's function.
Then, if for some $z\in \C_+$ and any $(i,j)\in \vec{E}$, it holds that
\begin{align}\label{weakstab}
  |G_{ij}(z)|\leq |G_{jj}(z)|\leq 2,
\end{align}
then for any vertex $x\in \cG$,
\begin{equation} \label{e:sumward}
 \sum_{(i,j)\in \vec{E}}|G_{ix}^{(j)}(z)|^2\leq \frac{8d}{\eta}.
\end{equation}
\end{lemma}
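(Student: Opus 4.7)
\medskip

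The plan is to reduce the assertion to the standard Ward identity \eqref{e:Ward} for the (full) Green's function $G$ by expressing $G^{(j)}_{ix}$ in terms of entries of $G$ via a Schur complement identity, and then to exploit the crucial hypothesis $|G_{ij}| \leq |G_{jj}|$ for adjacent pairs $(i,j)\in\vec E$.

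First, I would apply the Schur complement formula (or equivalently, the one-step resolvent identity for removing a single vertex), which yields
\begin{equation*}
G^{(j)}_{ix} = G_{ix} - \frac{G_{ij} G_{jx}}{G_{jj}}
\end{equation*}
for any $i,x \neq j$. Since by assumption $(i,j)\in \vec E$ implies $|G_{ij}(z)|\leq |G_{jj}(z)|$, the ratio $|G_{ij}/G_{jj}|$ is bounded by $1$, giving the pointwise bound $|G^{(j)}_{ix}| \leq |G_{ix}| + |G_{jx}|$. (For $i=j$ the entry is trivially zero and makes no contribution.)

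Next, squaring and using $(a+b)^2 \leq 2(a^2+b^2)$, followed by the symmetry $(i,j)\leftrightarrow (j,i)$ of the sum over oriented edges of $\cG$,
\begin{equation*}
\sum_{(i,j)\in \vec E} |G^{(j)}_{ix}|^2
\leq 2\sum_{(i,j)\in \vec E}\pa{|G_{ix}|^2+|G_{jx}|^2}
= 4\sum_{(i,j)\in \vec E} |G_{ix}|^2
= 4\sum_{i} \deg_{\cG}(i)\, |G_{ix}|^2 \leq 4d \sum_{i} |G_{ix}|^2.
\end{equation*}
Finally, the Ward identity \eqref{e:Ward} gives $\sum_i |G_{ix}|^2 = \im[G_{xx}]/\eta$, and the hypothesis $|G_{xx}|\leq 2$ bounds this by $2/\eta$. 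Combining yields the claimed bound $8d/\eta$.

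There is no substantive obstacle; the only thing to take care of is the routine Schur complement derivation and the factor tracking in the $(a+b)^2$ estimate to land exactly on the constant $8d$. The whole point of the lemma is the observation that the hypothesis $|G_{ij}|\leq |G_{jj}|$ on edges is precisely what lets one pass from the standard Ward identity for $G$ to a Ward-type identity for the vertex-removed Green's functions $G^{(j)}$, which will later be needed to absorb sums of off-diagonal Green's function entries arising from resampling.
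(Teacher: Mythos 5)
Your proof is correct and follows essentially the same route as the paper: the identity $G^{(j)}_{ix}=G_{ix}-G_{ij}G_{jx}/G_{jj}$ from \eqref{e:Schurixj}, the bound $|G_{ij}/G_{jj}|\leq 1$ on edges, the elementary inequality $(a+b)^2\leq 2(a^2+b^2)$ together with the symmetry of the oriented-edge sum, and finally the Ward identity \eqref{e:Ward} with $\im[G_{xx}]\leq 2$. The constant tracking matches the paper's exactly.
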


\begin{proof}
By the Schur complement formula \eqref{e:Schurixj} and the Ward identity \eqref{e:Ward}, we obtain
\begin{align*}
  \sum_{(i,j)\in \vec{E}}|G_{ix}^{(j)}|^2
  &=\sum_{(i,j)\in \vec{E}}\left|G_{ix}-\frac{G_{ij}G_{jx}}{G_{jj}}\right|^2
     \leq \sum_{(i,j)\in \vec{E}}2\left|G_{ix}\right|^2+2\left|\frac{G_{ij}G_{jx}}{G_{jj}}\right|^2\\
  &\leq 4\sum_{(i,j)\in \vec{E}}|G_{ix}|^2\leq 4\sum_{i}\deg_\cG(i)|G_{ix}|^2
        \leq \frac{4d \Im[G_{xx}]}{\eta}\leq \frac{8d}{\eta},
\end{align*}
as claimed.
\end{proof}

We will prove Lemma~\ref{l:IGchange} in two steps, by proving
\begin{equation}\label{GTtotGTconcentration}
|\I(\cGT)-\I(\hcGT)|
\leq \frac{d\mu}{2N\eta}, \qquad
|\I(\hcGT)-\I(\tcGT)|
\leq \frac{d\mu}{2N\eta}
,
\end{equation}
and
\begin{equation}\label{tGTtotGconcentration}
|\I(\tcGT)-\I(\tcG)|\leq  \frac{34d^{2\ell+2}}{N\eta}.
\end{equation}
Then \eqref{e:IGchange} follows by combining \eqref{GTtotGTconcentration} and \eqref{tGTtotGconcentration},
and using that $\mu\leq 2(d-1)^{\ell+1}$.
In preparation, we recall from Proposition~\ref{prop:tGweakstab} that,
for all vertices $i,j\in \qq{N}$,
\begin{equation}\label{weakstabinconcen}
  |\tG_{ij}(z)|\leq |\tG_{jj}(z)|\leq 2.
\end{equation}

\begin{proof}[Proof of \eqref{GTtotGTconcentration}]
The proofs of both estimates in \eqref{GTtotGTconcentration} are analogous, and we only prove the first one.
Denote by $\vec{E}$ the set of oriented edges of $\hcGT$,
and by $\Delta=\sum_{k=1}^\nu (e_{b_kc_k}+e_{c_kb_k})/\sqrt{d-1}$ the difference of the normalized adjacency matrices of the graphs $\hcGT$ and $\cGT$.
Then by the resolvent formula \eqref{e:resolv},
\begin{align*}
\sum_{(i,j)\in \vec{E}}|\hG^{(\T j)}_{ii}-G^{(\T j)}_{ii}|
& \leq  \sum_{x,y}\sum_{(i,j)\in \vec{E}}|\hG^{(\T j)}_{ix}\Delta_{xy}G^{(\T j)}_{yi}|\\
& \leq \sum_{x,y}\Delta_{xy}\left(\sum_{(i,j)\in \vec{E}}|\hG^{(\T j)}_{ix}|^2\sum_{(i,j)\in \vec{E}}|G^{(\T j)}_{yi}|^2\right)^{1/2}
\leq \frac{16d\mu}{\eta\sqrt{d-1}}
,
\end{align*}
where we used \eqref{e:sumward}
(and that both graphs $\cGT$ and $\hcGT$ satisfy condition \eqref{weakstab} by the definition of $\Omega_1^+(z,\ell)$ and \eqref{boundhGT}).
Therefore,
\begin{align}\begin{split}\label{hIGchange-pf}
\left|\I(\cGT)-\I(\hcGT)\right|
&\leq \frac{1}{Nd}\sum_{k\in\qq{1,\nu}}|G_{b_kb_k}^{(\T c_k)}+G_{c_kc_k}^{(\T b_k)}|
+\frac{1}{Nd}\sum_{(i,j)\in \vec{E}}|\hG^{(\T j)}_{ii}-G^{(\T j)}_{ii}|\\
&\leq \frac{4\nu|\msc|}{Nd}+\frac{16d\mu}{N\eta\sqrt{d-1}}\leq \frac{d\mu}{2N\eta},
\end{split}\end{align}
where in the estimate of the first term, we used $|G_{b_kb_k}^{(\T c_k)}|,|G_{c_kc_k}^{(\T b_k)}|\leq 2|\msc|$
which follows from combining \eqref{e:defOmega+}, Lemma~\ref{stablermone}, and \eqref{e:boundPii}.
\end{proof}

\begin{proof}[Proof of \eqref{tGTtotGconcentration}]
The normalized adjacency matrices of $\tcG$ takes the block form
\begin{align*}
\left[
\begin{array}{cc}
H & \tilde{B'}\\
\tilde{B} & D
\end{array}
\right],
\end{align*}
where $H$ is the normalized adjacency matrix for $\cT$,
and $\tilde B$ corresponds to the edges from $\bI$ to $\T_\ell$,
where $\bI$ is the set of boundary vertices of $\cT$ in the switched graph $\tcG$ as defined in \eqref{defI}.
We denote by $\vec{E}$ the set of oriented edges of $\tcGT$.
By the Schur complement formula \eqref{e:Schur}, we have
\begin{equation*}
 \sum_{(i,j)\in \vec{E}}|\tG^{(\T j)}_{ii}-\tG^{(j)}_{ii}|
 \leq  \frac{1}{d-1}\sum_{k,m\in\qq{1,\mu}}\sum_{(i,j)\in \vec{E}}|\tG^{(\T j)}_{i\ta_k}\tG^{(j)}_{l_kl_m}\tG^{(\T j)}_{\ta_mi}|.
 \end{equation*}
It follows from \eqref{weakstabinconcen} and \eqref{e:Schurixj} that $|\tG^{(j)}_{l_kl_m}|\leq 4$.
Therefore the above expression is bounded by
\begin{align*}
  \sum_{(i,j)\in \vec{E}}|\tG^{(\T j)}_{ii}-\tG^{(j)}_{ii}|
  &\leq \frac{4}{d-1}\sum_{k,m\in\qq{1,\mu}}\sum_{(i,j)\in \vec{E}}|\tG^{(\T j)}_{i\ta_k}\tG^{(\T j)}_{\ta_mi}|\\
  &\leq \frac{4}{d-1}\sum_{k,m\in\qq{1,\mu}}\left(\sum_{(i,j)\in \vec{E}}|\tG^{(\T j)}_{i\ta_k}|^2\sum_{(i,j)\in \vec{E}}|\tG^{(\T j)}_{\ta_mi}|^2\right)^{1/2}
  \leq \frac{32d\mu^2}{\eta(d-1)}\leq \frac{32d^{2\ell+2}}{\eta},
\end{align*}
where we used \eqref{e:sumward} (since $\tcGT$ satisfies condition \eqref{weakstab} thanks to the definition of $\Omega_1^+(z,\ell)$ and \eqref{e:stabilitytGT}).  Therefore, we have
\begin{align}\begin{split}\label{tGIGchange}
\left|\I(\tcGT)-\I(\tcG)\right|
&\leq \frac{1}{Nd}\sum_{\{i,j\}\text{ incident to }\T}|\tG_{ii}^{(j)}+\tG_{jj}^{(i)}|
+\frac{1}{Nd} \sum_{(i,j)\in \vec{E}}|\tG^{(\T j)}_{ii}-G^{(j)}_{ii}|\\
&\leq \frac{16(d-1)^{\ell+1}}{Nd}+\frac{32d^{2\ell+2}}{N\eta}\leq 
\frac{34d^{2\ell+2}}{N\eta},
\end{split}\end{align}
where for the first term we used $|\tG_{ii}^{(j)}|,|\tG_{jj}^{(i)}|\leq 4$ from \eqref{weakstabinconcen} and \eqref{e:Schurixj} .
\end{proof}

\subsection{Adding of switched vertices}

Recall the index set $J\subset\qq{1,\nu}$ from Proposition~\ref{greendist-new}.
In this subsection, we show that the following lemma.

\begin{lemma} \label{lem:Gsw}
For $z\in \C_+$, $\cG\in \Omega_1^+(z,\ell)$ and ${\bf S} \in F(\cG)$ with $T_{\bf S}(\cG)\in\bar\Omega$, for any $k\in J$, we have
\begin{equation}\label{diff1361}
|\hG^{(\T b_k)}_{c_k c_k}-G^{(\T b_k)}_{c_kc_k}|
\leq
16q^{2r} 
\end{equation}
and
\begin{equation}\label{diff1362}
|\tGT_{c_kc_k}-\hG^{(\T b_k)}_{c_kc_k}|
\leq
2^{10}K^4|\msc|q^{2r},
\end{equation}
where  $K=2^{10}$. For both estimates, we assume $\sqrt{d-1}\geq \max\{(\n+1)^2 2^{2\n+10}, 2^8(\n+1)K\}$, $\n'q^r\ll1$ and $\sqrt{N\eta}q^{2r}\geq M$.
\end{lemma}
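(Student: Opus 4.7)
The plan is to prove each of the two estimates by applying the appropriate resolvent identity, reducing the difference of the two Green's functions to a sum of products of off-diagonal entries, and then exploiting the definition of $J$ together with the $\rm S'$-cell structure of Section~\ref{sec:defcells} to bound each factor by $M/\sqrt{N\eta}$. The hypothesis $\sqrt{N\eta}\,q^{2r}\geq M$ converts this into $q^{2r}$, and $r=2\ell+1$ together with $\mu\leq 2d^{\ell+1}$ absorbs the number of summands.

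For \eqref{diff1361}, the graphs $\cGT^{(b_k)}$ and $\hcGT^{(b_k)}$ differ only by the edges $\{b_i,c_i\}$ with $i\in W_{\bf S}\setminus\{k\}$ (the edge $\{b_k,c_k\}$ is absent in both since $b_k$ has been removed), so the resolvent identity expresses the left-hand side as
\begin{equation*}
\frac{1}{\sqrt{d-1}}\sum_{i\in W_{\bf S}\setminus\{k\}}\bigl(\hG^{(\T b_k)}_{c_kb_i}G^{(\T b_k)}_{c_ic_k}+\hG^{(\T b_k)}_{c_kc_i}G^{(\T b_k)}_{b_ic_k}\bigr).
\end{equation*}
For $k\in J$, condition \eqref{e:defJ} places $c_k$ and each of $b_i,c_i$ with $i\neq k$ in distinct $\rm S$-cells of $\cGT$, so the definition of $\sim$ yields $|\GT_{c_kb_i}|,|\GT_{c_kc_i}|,|\GT_{b_kb_i}|,|\GT_{b_kc_i}|\leq M/\sqrt{N\eta}$. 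I would transfer these bounds to $G^{(\T b_k)}$ via the Schur complement identity \eqref{e:Schurixj}, controlling the correction $\GT_{c_kb_k}\GT_{b_kx}/\GT_{b_kb_k}$ through $|\GT_{c_kb_k}|\leq C|\msc|q$ (from \eqref{e:defOmega+} since $\dist_{\cGT}(c_k,b_k)=1$), the cell bound on $|\GT_{b_kx}|$, and $|\GT_{b_kb_k}|^{-1}\leq 2/|\msc|$ from \eqref{e:boundPii}. The analogous bound for $\hG^{(\T b_k)}$ follows in the same way from \eqref{e:diffcellest} in place of the definition of $\sim$. Summing $O(d^{\ell+1})$ terms of size $(M/\sqrt{N\eta})^2/\sqrt{d-1}\leq q^{4r}/\sqrt{d-1}$ and using $d^{\ell+1}q^{2r}/\sqrt{d-1}\leq 1$ for $r=2\ell+1$ yields the claimed $16\,q^{2r}$.

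For \eqref{diff1362}, I would decompose via the intermediate graph $\tcGT^{(b_k)}$:
\begin{equation*}
\tGT_{c_kc_k}-\hG^{(\T b_k)}_{c_kc_k}=\bigl(\tGT_{c_kc_k}-\tGT^{(b_k)}_{c_kc_k}\bigr)+\bigl(\tGT^{(b_k)}_{c_kc_k}-\hG^{(\T b_k)}_{c_kc_k}\bigr).
\end{equation*}
The first piece equals $\tGT_{c_kb_k}\tGT_{b_kc_k}/\tGT_{b_kb_k}$ by \eqref{e:Schurixj}; since \eqref{e:yckdist} gives $\dist_{\tcGT}(c_k,b_k)\geq 2r$, the estimate \eqref{e:stabilitytGT} bounds it by $O(K^6|\msc|q^{2r})$. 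For the second piece, $\tcGT^{(b_k)}$ and $\hcGT^{(b_k)}$ differ precisely by the added edges $\{a_i,b_i\}$ for $i\in W_{\bf S}\setminus\{k\}$, and the resolvent identity produces a sum of the same structure as above; the same cell-based argument, now using Proposition~\ref{greendist-new} (or \eqref{e:diffcellest} combined with Schur) to bound the off-diagonal entries, gives the claim.

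The main obstacle will be the careful bookkeeping required to transfer the off-diagonal cell bound $M/\sqrt{N\eta}$ from the three original Green's functions $\GT,\hGT,\tGT$, where it holds directly by the definition of $\sim$ or by Propositions~\ref{prop:stabilitytGT} and \ref{greendist-new}, to the variants with $b_k$ removed. Each Schur step introduces a correction requiring a small factor $|G_{b_kx}|$, which holds precisely because condition (\rn 3) in the definition of $J$ isolates the $\rm S'$-cell containing $\{b_k,c_k\}$ from all other switching vertices, placing $b_k$ in a different $\rm S$-cell from every target $x\in\{a_i,b_i,c_i\}$ with $i\neq k$. Once this transfer is established uniformly for the three graphs involved, both \eqref{diff1361} and \eqref{diff1362} follow from the same arithmetic.
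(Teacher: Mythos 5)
Your treatment of \eqref{diff1361} is essentially the paper's argument: the same resolvent identity with $\Delta=\sum_{m\neq k}(e_{b_mc_m}+e_{c_mb_m})/\sqrt{d-1}$, the same use of the $\rm S$-cell separation guaranteed by the definition of $J$ to get $M/\sqrt{N\eta}$ for every off-diagonal factor, and the same Schur step \eqref{e:Schurixj} to pass from $\GT,\hGT$ to the graphs with $b_k$ removed (the paper controls the correction by the slightly slicker observation $|\hGT_{c_kb_k}|\leq|\hGT_{b_kb_k}|$, whereas you bound numerator and denominator separately; both work). For \eqref{diff1362}, however, your decomposition through $\tGT^{(b_k)}$ differs from the paper's, which instead writes $\tGT_{c_kc_k}-\hG^{(\T b_k)}_{c_kc_k}=(\tGT_{c_kc_k}-\hGT_{c_kc_k})+(\hGT_{c_kc_k}-\hG^{(\T b_k)}_{c_kc_k})$, i.e.\ it removes the vertex $b_k$ from the \emph{hat} graph rather than from the switched graph. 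This matters quantitatively: your first piece $\tGT_{c_kb_k}\tGT_{b_kc_k}/\tGT_{b_kb_k}$ can only be bounded using $|\tGT_{c_kb_k}|\leq 2^{7}K^{3}|\msc|q^{r}$ from \eqref{e:stabilitytGT}, giving roughly $2^{15}K^{6}|\msc|q^{2r}$, which already exceeds the stated bound $2^{10}K^{4}|\msc|q^{2r}$; the paper's corresponding Schur term $\hGT_{c_kb_k}\hGT_{b_kc_k}/\hGT_{b_kb_k}$ costs only $8K^{2}|\msc|q^{2r}$ because the off-diagonal of $\hGT$ carries the prefactor $2K$ rather than $2^{7}K^{3}$. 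Your route is structurally sound and proves the estimate with a constant of order $2^{16}K^{6}$ in place of $2^{10}K^{4}$ — harmless where the lemma is used in \eqref{tGTchange}, but not literally the stated inequality; to recover the paper's constant you should take $\hGT_{c_kc_k}$ as the intermediate quantity so that every product contains at least one $\hGT$ factor.
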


To prove Lemma~\ref{lem:Gsw} we need the estimates summarized in the following lemma.

\begin{lemma}
Let $z\in \C_+$, $\cG \in \Omega_1^+(z,\ell)$, and ${\bf S} \in F(\cG)$ with $T_{\bf S}(\cG)\in\bar \Omega$.
Then for any index $k\in J$,
the vertex $c_k$ is far away from $\{a_1,\dots, a_\mu, b_1,\dots, b_\nu\}$:
\begin{align}\label{dist1}
\dist_{\hcGT}(c_k,\{a_1,\dots, a_\mu, b_1,\dots, b_\nu\})\geq\dist_{\tcGT}(c_k,\{a_1,\dots, a_\mu, b_1,\dots, b_\nu\})>2r.
\end{align}
Moreover, for any $x\in \{a_1,\dots, a_\mu, b_1,\dots, b_\nu\}$,
\begin{align}\label{greenest1}
|\hGT_{c_kx}|\leq 2K|\msc|q^r,\quad |\tGT_{c_kx}|\leq 2^7K^3|\msc|q^r, \quad |\hGT_{b_kb_k}|\geq |m_{sc}|/2,
\end{align}
where $K=2^{10}$ and we assume that $\sqrt{d-1}\geq \max\{(\n+1)^2 2^{2\n+10}, 2^8(\n+1)K\}$, $\n'q^r\ll1$ and $\sqrt{N\eta}\geq M(d-1)^{\ell+1}$.
\end{lemma}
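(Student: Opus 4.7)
My plan is to derive the three bounds in \eqref{greenest1} essentially as consequences of the stability estimates in Proposition~\ref{prop:stabilitytGT}, combined with the distance bound \eqref{dist1} and the tree-like approximations of the Green's function.

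First I would establish \eqref{dist1}. The second inequality, $\dist_{\tcGT}(c_k, \{a_1,\dots,a_\mu,b_1,\dots,b_\nu\}) > 2r$, is precisely the content of Lemma~\ref{lem:tcGTcbdist}(i), whose hypotheses are guaranteed by $k \in J$ and ${\bf S} \in F(\cG)$. The first inequality, $\dist_{\hcGT}(c_k, \cdot) \geq \dist_{\tcGT}(c_k, \cdot)$, follows by inspection of the construction: $\tcGT$ is obtained from $\hcGT$ by adding the edges $\{a_i,b_i\}_{i\leq \nu}$, and adding edges can only shorten paths.

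For the two off-diagonal estimates in \eqref{greenest1}, I would invoke Proposition~\ref{prop:stabilitytGT} with $K = 2^{10}$; its hypothesis \eqref{asumpGT} is satisfied by the very definition \eqref{e:defOmega+} of $\Omega_1^+(z,\ell)$. Given any $x \in \{a_1, \dots, a_\mu, b_1, \dots, b_\nu\}$, the bound \eqref{dist1} gives $\dist_{\hcGT}(c_k, x) > 2r$, so by definition $\cE_r(c_k, x, \hcGT)$ is empty and hence $P_{c_k x}(\cE_r(c_k, x, \hcGT)) = 0$. Then \eqref{boundhGT} immediately yields $|\hGT_{c_k x}| \leq 2K|\msc|q^r$. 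The analogous argument applied to \eqref{e:stabilitytGT}, combined with $\dist_{\tcGT}(c_k, x) > 2r$, gives $|\tGT_{c_k x}| \leq 2^7 K^3 |\msc| q^r$.

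For the diagonal lower bound, which I expect to be the delicate step, I would apply \eqref{boundhGT} at $i=j=b_k$ to obtain
\begin{equation*}
|\hGT_{b_k b_k} - P_{b_k b_k}(\cE_r(b_k, b_k, \hcGT))| \leq 2K|\msc|q^r,
\end{equation*}
and then combine this with the lower bound on the localized Green's function implied by \eqref{e:boundPii} together with \eqref{e:mdstieltjes}. Explicitly, \eqref{e:boundPii} gives $|P_{b_k b_k}(\cE_r(b_k,b_k,\hcGT))| \geq |\md| - |\msc|/4$, while \eqref{e:mdstieltjes} together with $|\msc| \leq 1$ gives $|\md| \geq |\msc|(d-1)/d$. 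Thus $|P_{b_k b_k}(\cE_r(b_k,b_k,\hcGT))| \geq (3/4 - 1/d)|\msc|$, and subtracting the stability error $2K|\msc|q^r$ leaves $|\hGT_{b_k b_k}| \geq |\msc|/2$ under the quantitative size assumptions on $d$ and $q$ imposed by the lemma. The main subtlety here is verifying that the hypotheses of Proposition~\ref{boundPij} required by \eqref{e:boundPii} (bounded excess and bounded total deficit on the relevant connected component of $\cE_r(b_k,b_k,\hcGT)$) are in force; this has already been handled implicitly inside the proof of \eqref{boundhGT} in Section~\ref{proofofboundhGT}, and follows from $\tcG \in \bar\Omega$ together with the structural bounds in $F(\cG)$.
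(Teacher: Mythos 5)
Your proposal is correct and follows essentially the same route as the paper: \eqref{dist1} is exactly \eqref{e:yckdist} plus the monotonicity of distances under edge addition, the two off-diagonal bounds come from Proposition~\ref{prop:stabilitytGT} with $K=2^{10}$ together with the vanishing of the localized Green's function at distance $>2r$, and the diagonal lower bound comes from \eqref{boundhGT} combined with \eqref{e:boundPii}. The only difference is that you spell out the arithmetic for $|\hGT_{b_kb_k}|\geq|\msc|/2$, which the paper leaves implicit.
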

\begin{proof}
\eqref{dist1} is \eqref{e:yckdist}.
The first two estimates in \eqref{greenest1} follow from \eqref{dist1} and $K=2^{10}$ in Proposition~\ref{prop:stabilitytGT}.
The last estimate in \eqref{greenest1} follows by taking $K=2^{10}$ in Proposition~\ref{prop:stabilitytGT} and \eqref{e:boundPii}.
\end{proof}

\begin{proof}[Proof of Lemma~\ref{lem:Gsw}]
Notice that for $\cG\in \Omega_1^+(z,\ell)$, the assumptions in Proposition \ref{prop:stabilitytGT} hold for $K=2^{10}$. By the resolvent identity \eqref{e:resolv},
\begin{align*}
|\hG^{(\T b_k)}_{c_k c_k}-G^{(\T b_k)}_{c_k c_k}|\leq  \sum_{x,y}|\hG^{(\T b_k)}_{c_k x}|| \Delta_{xy}||\hG^{(\T b_k)}_{y c_k}|,
  \end{align*}
 where $\Delta = \sum_{m \in \qq{1,\nu}\setminus \{k\}} (e_{c_mb_m}+e_{b_mc_m})/\sqrt{d-1}$. By our choice of the index set $J$, $\{b_k,c_k\}$ and $\{b_m,c_m:m\in\qq{1,\nu}\setminus\{k\}\}$ are in different $\rm S$-cells. Thus $|\hGT_{c_kx}|,|\hGT_{b_kx}|\leq 2M/\sqrt{N\eta}$ by \eqref{e:diffcellest}. Therefore, using \eqref{e:Schurixj}, and notice $|\hGT_{b_kb_k}|\geq |\hGT_{c_kb_k}|$, we get
\begin{align*}
|\hG_{c_kx}^{(\T b_k)}|\leq |\hG_{c_kx}^{(\T )}|+\left|\frac{\hG_{c_kb_k}^{(\T )}\hG_{b_kx}^{(\T )}}{\hG_{b_kb_k}^{(\T )}}\right|\leq |\hG_{c_kx}^{(\T )}|+|\hG_{b_kx}^{(\T )}|\leq \frac{4M}{\sqrt{N\eta}}.
\end{align*}
The same estimate holds for $|\hGT_{yc_k}|$. Thus the second term is bounded by
\begin{align*}
\sum_{x,y}|\hG^{(\T b_k)}_{c_k x}||\Delta_{xy}||\hG^{(\T b_k)}_{y c_k}|\leq 4(d-1)^{\ell+1/2}\left(\frac{2M}{\sqrt{N\eta}}\right)^2
\leq 16q^{2r},
\end{align*}
provided that $\sqrt{N\eta}\geq Mq^{-3r/2}$.
Similarly, now setting $\Delta=\sum_{m=1}^{\nu}(e_{a_mb_m}+e_{b_ma_m})/\sqrt{d-1}$, by the resolvent identity \eqref{e:resolv} and \eqref{e:Schurixj},
we have
\begin{align}
\label{eqn:termtG-hG}|\tGT_{c_kc_k}-\hG^{(\T b_k)}_{c_kc_k}|
\leq  |\tGT_{c_kc_k}-\hG^{(\T)}_{c_kc_k}|+ |\hG^{(\T b_k)}_{c_kc_k}-\hG^{(\T)}_{c_kc_k}|
\leq \sum_{x,y}|\hGT_{c_kx}||\Delta_{xy}||\tGT_{yc_k}|+ \left|\frac{\hGT_{c_kb_k}\hGT_{b_k c_k}}{\hGT_{b_kb_k}}\right|
\end{align}
For the last term in \eqref{eqn:termtG-hG}, by \eqref{greenest1}, $|\hGT_{c_kb_k}|\leq 2K|\msc|q^r$ and $|\hGT_{b_kb_k}|\geq |\msc|/2$, and therefore
\begin{align*}
\left|\frac{\hGT_{c_kb_k}\hGT_{b_k c_k}}{\hGT_{b_kb_k}}\right|\leq\frac{(2K|\msc|q^r)^2}{|\msc|/2}
=
8K^2|\msc|q^{2r}.
 \end{align*} 
For the sum  on the right-hand side of \eqref{eqn:termtG-hG}, we can split it into two, 
\begin{align*}
\sum_{x,y}|\hGT_{c_kx}||\Delta_{xy}||\tGT_{yc_k}|=|\hGT_{c_kb_k}||\Delta_{b_ka_k}||\tGT_{a_kc_k}|+\sum_{(x,y)\neq (b_k,a_k)}|\hGT_{c_kx}||\Delta_{xy}||\tGT_{yc_k}|.
\end{align*}
Again, we have $|\hGT_{c_kx}|\leq 2M/\sqrt{N\eta}$, for $x\in \{b_m:m\in\qq{1,\nu}\setminus\{k\}\}\cup \{a_m: m\in \qq{1,\nu}\}$.
Combining with \eqref{greenest1}, it follows that 
\begin{align*}
\eqref{diff1362} &\leq \frac{(2K|\msc|q^r)(2^{7}K^3|\msc|q^r)}{\sqrt{d-1}}+\frac{2M}{\sqrt{N\eta}}\frac{4(d-1)^{\ell+1}}{\sqrt{d-1}}(2^{7}K^3|\msc|q^{r})+8K^2|\msc|q^{2r}
\\
&\leq 2^{10}K^4|\msc|q^{2r},
\end{align*}
provided that $\sqrt{N\eta}q^{2r}\geq M$.
\end{proof}

\subsection{Proof of Proposition~\ref{tGconcentration}}

Finally, using the previous lemmas, we can proof Proposition~\ref{tGconcentration}.

\begin{proof}[Proof of Proposition~\ref{tGconcentration}]
For $k\in J$, the $r$-neighborhood of $c_k$ is a $d$-regular tree with root degree $d-1$ in
any of the graphs $\cG^{(\T b_k)}$, $\hcG^{(\T b_k)}$ and $\tcG^{(\T)}$; therefore, by \eqref{e:Gtreemsc},
\begin{align*}
P_{c_kc_k}\left(\cE_r\left(c_k,c_k,\cG^{(\T b_k)}\right)\right)=P_{c_kc_k}\left(\cE_r\left(c_k,c_k,\hcG^{(\T b_k)}\right)\right)
=P_{c_kc_k}\left(\cE_r\left(c_k,c_k,\tcG^{(\T )}\right)\right)=\msc
.
\end{align*}
On the other hand, for the indices $k\in \qq{1,\mu}\setminus J$, by \eqref{e:defOmega+}, Proposition~\ref{prop:stabilitytGT}, and Lemma~\ref{stablermone},
using that for $\cG\in \Omega_1^+(z,\ell)$, the assumption of Proposition~\ref{prop:stabilitytGT} holds with $K=2^{10}$,
we have
\begin{align}\begin{split}\label{eqn:somebounds}
\left|G^{(\T b_k)}_{c_kc_k}-P_{c_kc_k}\left(\cE_r\left(c_k,c_k,\cG^{(\T b_k)}\right)\right)\right|&\leq 2K|\msc|q^r,\\
\left|\hG^{(\T b_k)}_{c_kc_k}-P_{c_kc_k}\left(\cE_r\left(c_k,c_k,\hcG^{(\T b_k)}\right)\right)\right|&\leq 4K|\msc|q^r,\\
\left|\tG^{(\T )}_{\ta_k\ta_k}-P_{\ta_k\ta_k}\left(\cE_r\left(\ta_k,\ta_k,\tcG^{(\T)}\right)\right)\right|&\leq 2^{7}K^3|\msc|q^r.
\end{split}\end{align}
The above estimates \eqref{eqn:somebounds} and \eqref{diff1361} give
\begin{align}\label{hchange}
\begin{split}
&\left| \frac{1}{\mu}\sum_{k=1}^\mu\left(G^{(\T b_k)}_{c_kc_k}-P_{c_kc_k}\left(\cE_r\left(c_k,c_k,\cG^{(\T b_k)}\right)\right)\right)- \left(\hG^{(\T b_k)}_{c_kc_k}-P_{c_kc_k}\left(\cE_r\left(c_k,c_k,\hcG^{(\T b_k)}\right)\right)\right) \right|\\
&\leq \frac{6K(\mu-|J|)|\msc|q^{r}}{\mu}+\frac{1}{\mu}\sum_{k\in J}|G_{c_kc_k}^{(\T b_k)}-\hG_{c_kc_k}^{(\T b_k)}|\\
&\leq \frac{6K(\n'+9\n)|\msc|q^{r}}{\mu}+(8K^2|\msc|q^{2r} + 16q^{2r})
\leq \frac{(\log N)^{1/2+\delta}|\msc|q^r}{4\sqrt{\mu}}.
\end{split}
\end{align}
Moreover, by the above estimates \eqref{eqn:somebounds}, \eqref{diff1362} and using $\ta_k=c_k$ for  $k\in J$, we have
\begin{align}\label{tGTchange}
\begin{split}
&\left| \frac{1}{\mu}\sum_{k=1}^\mu\left(\tG^{(\T )}_{\ta_k\ta_k}-P_{\ta_k\ta_k}\left(\cE_r\left(\ta_k,\ta_k,\tcG^{(\T )}\right)\right)\right)- \left(\hG^{(\T b_k)}_{c_kc_k}-P_{c_kc_k}\left(\cE_r\left(c_k,c_k,\hcG^{(\T b_k)}\right)\right)\right) \right|\\
&\leq \frac{(4K+2^7K^3)(\mu-|J|)|\msc|q^{r}}{\mu}+\frac{1}{\mu}\sum_{k\in J}|\tG_{c_kc_k}^{(\T )}-\hG_{c_kc_k}^{(\T b_k)}|\\
&\leq \frac{(4K+2^7K^3)(\n'+9\n)|\msc|q^{r}}{\mu}+2^{10}K^4|\msc|q^{2r}\leq \frac{(\log N)^{1/2+\delta}|\msc|q^r}{4\sqrt{\mu}}.
\end{split}
\end{align}
In the above estimates we used $\ell\geq 4\log_{d-1}\log N$ by \eqref{e:constchoice} so that $\sqrt{\mu}\gg \log N=\n'$.
The left-hand side of \eqref{eqn:tGconcentration} is bounded by
\begin{align*}
|\eqref{econcentration}|+|\eqref{e:IGchange}|+|\eqref{tGTchange}|+|\eqref{econcentration}|\leq \frac{2(\log N)^{1/2+\delta} |\msc|q^r}{\sqrt{\mu}},
\end{align*}
provided that $\sqrt{N\eta}q^{3r+2}\geq M$.
\end{proof}

\section{Improved approximation in the switched graph}
\label{sec:improved}

The results of this section are the following proposition, stating
that the Green's function obeys better estimates than the original one near vertex $1$.
As in the previous sections, we write $\tcG=T_{\bf S}(\cG)$
and assume that ${\bf S}\in F'(\cG)$ (as in Lemma \ref{lem:econcentration}) is such that $\tcG=T_{\bf S}(\cG)\in \bar\Omega$ (as in Section \ref{sec:outline-structure}).
Throughout the proof, $\Cw$ represents constants depending only on the constant $K$ from \eqref{asumpGT} and the excess $\n$,
which may be different from line to line.

\begin{proposition}\label{improvetG}
Under the assumptions of Propositions~\ref{prop:stabilitytGT},
for ${\bf S} \in F'(\cG)$ such that $\tcG=T_{\bf S}\cG\in \bar\Omega$,
the Green's function of the switched graph 
satisfies the following improved estimates near vertex $1$.
\begin{enumerate}
\item[(i)]
For the vertex $x=1$,
\begin{align}\label{G11bound}
  \tG_{11}=P_{11}(\cE_r(1,1,\tcG))+ \frac{\md^2\msc^{2\ell}\mu}{(d-1)^{(\ell+1)}}(\I(\tcG)-\msc)
  +O_{\leq}\left(2^{2\n+10}K^3|\msc|q^{r+1}\right).
\end{align}
 \item[(ii)]
For all vertices $x\in \qq{2,N}$,
\begin{align}\label{G1xbound_copy}
  \left|\tG_{1x}-P_{1x}(\cE_r(1,x,\tcG))\right|\leq  (\n+1)2^{2\n+14}K^3|\msc|q^{r+1}.
\end{align} 
\end{enumerate}
Moreover, if the vertex $1$ has radius-$R$ tree neighborhood in the graph $\tcG$,
then the following stronger estimates hold.
\begin{enumerate}
\item[(i')]
For the vertex $x=1$,
\begin{align}\label{G11treebound}
  \tG_{11} = \md + \md^2\msc^{2\ell}\frac{d}{d-1}(\I(\tcG)-\msc)+O_{\leq}\left( \frac{4(\log N)^{1/2+\delta} |\msc|q^{r}}{\sqrt{d(d-1)^{\ell}}}\right).
\end{align}
\item[(ii')]
For the the average of $\tG_{1x}$ over the vertices $x$ adjacent to $1$,
\begin{align}\label{G1itreebound}
 \frac{1}{d}\sum_{1\sim x}\tG_{1x}+\frac{\md\msc}{\sqrt{d-1}}
=\frac{-\md^2\msc^{2\ell-1}(1+\msc^2)}{\sqrt{d-1}}(\I(\tcG)-\msc)+O_{\leq}\left( \frac{16(\log N)^{1/2+\delta} |\msc|q^{r+1}}{\sqrt{d(d-1)^{\ell}}}\right).
\end{align}
\end{enumerate}
For all estimates we assume that  $\sqrt{d-1}\geq \max\{(\n+1)^2 2^{2\n+10}, 2^8(\n+1)K\}$, $\n'^2q^\ell\ll1$ and $\sqrt{N\eta}q^{3r+2}\geq M$, and the global quantity $\I(\tcG)$ is as defined in \eqref{e:IG}.
\end{proposition}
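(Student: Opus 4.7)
The plan is to specialize the Schur complement identity \eqref{Tterm},
\begin{equation*}
\tG|_\T-\tilde P|_\T=(\tG-\tP)\tilde B'(\tGT-\tP^{(\T)})\tilde B\tP+\tP\tilde B'(\tGT-\tP^{(\T)})\tilde B\tP,
\end{equation*}
at the $(1,x)$ entry: with $x=1$ for (i) and (i'), averaged over $x\sim 1$ for (ii'), while (ii) is exactly \eqref{G1xbound} of Proposition~\ref{prop:tGweakstab}. The first summand was estimated in \eqref{firstterm}: combining $\max_{x,y\in\T}|\tG_{xy}-\tP_{xy}|\lec\n'^2|\msc|q^r$ from Proposition~\ref{prop:tGweakstab} with \eqref{sumtGtPij} yields a contribution $\lec\n'^3|\msc|q^{\ell+r+1}\ll|\msc|q^{r+1}$. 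The second summand expands as $\frac{1}{d-1}\sum_{k,m}\tP_{1l_k}(\tGT_{\ta_k\ta_m}-\tP^{(\T)}_{\ta_k\ta_m})\tP_{l_m x}$; the off-diagonal piece ($k\neq m$) is bounded using \eqref{sumtGtPij}, the uniform estimate $|\tP_{1l_k}|\leq 2^{\omega+2}|\msc|q^\ell$ from Proposition~\ref{boundPij}, and the hypothesis $\n'^2 q^\ell\ll 1$, giving $\lec\n'^2|\msc|q^{2\ell+r+2}\ll|\msc|q^{r+1}$.

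The main contribution is the diagonal piece $D(x):=\frac{1}{d-1}\sum_k\tP_{1l_k}\tP_{l_k x}(\tGT_{\ta_k\ta_k}-\tP^{(\T)}_{\ta_k\ta_k})$. I write
\begin{equation*}
D(x)=W(x)\cdot\frac{1}{\mu}\sum_k(\tGT_{\ta_k\ta_k}-\tP^{(\T)}_{\ta_k\ta_k})+R(x),
\end{equation*}
where $W(x):=\mu/(d-1)$ times the tree value of $\tP_{1l_k}\tP_{l_k x}$ (uniform in $k$ on a tree), and $R(x)$ is a remainder accounting for the $k$-dependence of the weights. Proposition~\ref{tGconcentration} identifies the empirical average with $\I(\tcG)-\msc$ up to $2(\log N)^{1/2+\delta}|\msc|q^r/\sqrt{\mu}$. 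For (i), the tree value of $\tP_{1l_k}^2$ is $\md^2\msc^{2\ell}/(d-1)^\ell$, yielding $W(1)=\mu\md^2\msc^{2\ell}/(d-1)^{\ell+1}$ as in \eqref{G11bound}. For (i'), the tree hypothesis forces $R(1)=0$ and $\mu=d(d-1)^\ell$, simplifying to $W(1)=\md^2\msc^{2\ell}d/(d-1)$ with only the sharper concentration error $4(\log N)^{1/2+\delta}|\msc|q^r/\sqrt{d(d-1)^\ell}$ surviving. For (ii'), the same expansion with $x\sim 1$ uses that the $(d-1)^\ell$ boundary edges $l_k$ in the subtree of $x$ satisfy $\dist_{\tcG_0}(l_k,x)=\ell-1$ while the remaining $(d-1)^{\ell+1}$ satisfy $\dist_{\tcG_0}(l_k,x)=\ell+1$. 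A direct computation via \eqref{e:Gtreemkm} gives
\begin{equation*}
\frac{1}{d-1}\sum_k\tP_{1l_k}\tP_{l_k x}=-\frac{\md^2\msc^{2\ell-1}(1+\msc^2)}{\sqrt{d-1}}\qquad\text{(for $x\sim 1$)},
\end{equation*}
independently of $x$; averaging over $x\sim 1$ and subtracting $\tP_{1x}=-\md\msc/\sqrt{d-1}$ in the tree completes (ii').

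The main obstacle is the estimate of $R(1)$ in (i). Splitting $R(1)=[\sum_k(\tP_{1l_k}^2-c_*)](\I(\tcG)-\msc)/(d-1)+\frac{1}{d-1}\sum_k(\tP_{1l_k}^2-c_*)(\xi_k-\bar\xi)$ with $c_*:=\md^2\msc^{2\ell}/(d-1)^\ell$, $\xi_k:=\tGT_{\ta_k\ta_k}-\tP^{(\T)}_{\ta_k\ta_k}$, and $\bar\xi:=\mu^{-1}\sum_k\xi_k$, the first term is controlled by \eqref{worstsumtleaf2} and the deterministic bound $|\I(\tcG)-\msc|\lec|\msc|q^r$, absorbed within $|\msc|q^{r+1}$ via $|\msc|^{2\ell+1}\sqrt{d-1}\leq\Cw$. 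The fluctuation term requires a weighted version of the Azuma estimate used in Lemma~\ref{lem:econcentration}: since the $\xi_k$ are conditionally i.i.d.\ given $\cGT$, weights $w_k:=\tP_{1l_k}^2/(d-1)$ yield $|\sum_k w_k(\xi_k-\bar\xi)|\lec(\log N)^{1/2+\delta}|\msc|q^r\sqrt{\sum_k w_k^2}$, and a direct computation using $\sum_k w_k^2\lec|\msc|^{2\ell+4}q^{2\ell}/(d-1)$ together with $(d-1)^{\ell/2}\geq(\log N)^{\alpha/2}$ absorbs the $(\log N)^{1/2+\delta}$ factor within the $|\msc|q^{r+1}$ tolerance.
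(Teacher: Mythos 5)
Your skeleton matches the paper's: start from \eqref{Tterm} at the $(1,x)$ entry, kill the $(\tG-\tP)$-term and the off-diagonal $k\neq m$ term, and feed the diagonal sum $\frac{1}{d-1}\sum_k\tP_{1l_k}^2\,\xi_k$ (with $\xi_k=\tGT_{\ta_k\ta_k}-\tP^{(\T)}_{\ta_k\ta_k}$) into Proposition~\ref{tGconcentration}; parts (ii), (i') and (ii') are handled exactly as in the paper and your tree computation for (ii') is correct. The gaps are both in your treatment of the remainder $R(1)$ for part (i). First, your bound on $\frac{1}{d-1}\sum_k|\tP_{1l_k}^2-c_*|$ does not work: \eqref{worstsumtleaf2} controls $\sum_k\tP_{1l_k}^2$ itself, with no cancellation against $\mu c_*$, so each of the two pieces is of order $|\msc|^{2\ell+2}$, and the inequality you invoke, $|\msc|^{2\ell+1}\sqrt{d-1}\leq \Cw$, is false on the relevant spectral domain (for $z$ in the bulk with small $\im z$ one has $|\msc(z)|\to 1$ while $2\ell+1=O(\log\log N)$, so the left side is of order $\sqrt{d-1}$). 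You would then only get $O(|\msc|q^{r})$ for this piece, which destroys the entire point of the proposition: the gain of one factor $q$ over the input estimate is what drives the self-consistent-equation argument in Section~\ref{sec:pfmr}. The paper obtains the factor $q$ from a genuine cancellation: by the bounded-excess assumption, all but $2\n(d-1)^{\ell}$ indices $k$ have a unique non-backtracking geodesic from $1$ to $l_k$, and for those the expansion \eqref{eqn:formulaPij} gives $\tP_{1l_k}=\md(-\msc)^{\ell}(d-1)^{-\ell/2}+O(2^{2\n}|\md|q^{\ell+1})$, i.e.\ a \emph{relative} error $O(q)$; summing the good and the few bad indices yields $\frac{1}{d-1}\sum_k|\tP_{1l_k}^2-c_*|\leq 2^{2\n+2}(dq+16\n)q^{2}=O(q)$, which is \eqref{e:sumPcoef}.

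Second, your weighted Azuma step rests on the claim that the $\xi_k$ are conditionally i.i.d.\ given $\cGT$. That is false: $\tGT_{\ta_k\ta_k}$ is an entry of the Green's function of the \emph{switched} graph $\tcGT$, which is obtained by simultaneously removing all edges $\{b_i,c_i\}$ and adding all edges $\{a_i,b_i\}$, so each $\xi_k$ depends on the entire switching data $\bf S$. The independent quantities in Lemma~\ref{lem:econcentration} are $X_k=G^{(\T b_k)}_{c_kc_k}-P_{c_kc_k}(\cE_r(c_k,c_k,\cG^{(\T b_k)}))$, computed in the \emph{unswitched} graph, and Proposition~\ref{tGconcentration} transfers concentration from the $X_k$ to the $\xi_k$ only for the unweighted average, via the per-index comparisons \eqref{diff1361}--\eqref{diff1362} which hold only for $k\in J$. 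A weighted version would require redoing that whole comparison chain. Fortunately none of this is needed: once you have the correct bound $\frac{1}{d-1}\sum_k|\tP_{1l_k}^2-c_*|=O(q)$, the entire remainder is controlled deterministically by $\max_k|\xi_k|\leq 2^{7}K^3|\msc|q^{r}+2^{2\n+3}|\msc|q^{r+1}$ (from \eqref{e:stabilitytGT} and \eqref{tGTreplaceEr}), giving $|R(1)|=O(K^3|\msc|q^{r+1})$ with no concentration input at all --- this is exactly how the paper passes from \eqref{newG11-P11} to \eqref{11term}. Concentration enters only once, through the unweighted average in Proposition~\ref{tGconcentration} applied to the main term.
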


We use the same set-up as in Section~\ref{sec:weakstab},
and notice that \eqref{G1xbound_copy} is \eqref{G1xbound}.

\subsection{Proof of \eqref{G11bound} and \eqref{G11treebound}}

By \eqref{Tterm}, we have
\begin{align}\begin{split}\label{G11-P11}
 \tilde{G}_{11}-\tP_{11}
 &= \frac{1}{d-1}\sum_{k\in\qq{1,\mu}}\tP_{1 l_k}^2(\tGT_{\ta_k\ta_k}-\tP^{(\T)}_{\ta_k\ta_k})+ \frac{1}{d-1}\sum_{k\neq m\in \qq{1,\mu}} \tG_{1 l_k}\tP_{1 l_m}(\tGT_{\ta_k\ta_m}-\tP^{(\T)}_{\ta_k\ta_m}) \\
 & +\frac{1}{d-1}\sum_{k\in \qq{1,\mu}} (\tG_{1l_k}-\tP_{1 l_k})\tP_{1 l_k}(\tGT_{\ta_k\ta_k}-\tP^{(\T)}_{\ta_k\ta_k}).
 \end{split}
\end{align}
For the last term on the right-hand side of \eqref{G11-P11}, we have
\begin{align*}
&\left|\frac{1}{d-1}\sum_{k\in \qq{1,\mu}} (\tG_{1l_k}-\tP_{1 l_k})\tP_{1 l_k}(\tGT_{\ta_k\ta_k}-\tP^{(\T)}_{\ta_k\ta_k})\right|
\leq\sum_{k\in\qq{1,\mu}}\Cw q^{r+2} q^{\ell+1}(|\msc|q^{r})
\leq \Cw |\msc|q^{r+\ell+2},
\end{align*}
where we used \eqref{betterest} for the first factor, \eqref{PBbound} for the second factor, and \eqref{e:stabilitytGT} for the last factor.
For the second term on the right-hand side of \eqref{G11-P11}, we have
\begin{align*}
  \left|\frac{1}{d-1}\sum_{k\neq m\in \qq{1,\mu}} \tG_{1 l_k}\tP_{1 l_m}(\tGT_{\ta_k\ta_m}-\tP^{(\T)}_{\ta_k\ta_m})\right|
  &\leq \Cw (q^{\ell+1})(q^{\ell+1})\sum_{k\neq m\in \qq{1,\mu}} |(\tGT_{\ta_k\ta_m}-\tP^{(\T)}_{\ta_k\ta_m})|\\
  &\leq \Cw q^{2\ell+2}(\n'^2|\msc|q^r)\leq \Cw |\msc|q^{r+\ell+2},
\end{align*}
provided that $\n'^2q^{\ell}\ll1$, where we used \eqref{GBbound} for the first factor, \eqref{PBbound} for the second factor, and \eqref{sumtGtPij} for the last factor.
Therefore \eqref{G11-P11} is bounded by
\begin{equation}\label{newG11-P11}
 \tilde{G}_{11}-\tP_{11}= \frac{1}{d-1}\sum_{k\in\qq{1,\mu}}\tP_{1 l_k}^2(\tGT_{\ta_k\ta_k}-\tP^{(\T)}_{\ta_k\ta_k})+O\left( |\msc|q^{r+\ell+2}\right),
\end{equation}
where the implicit constant depends only on the excess $\n$ and $K$ from \eqref{asumpGT}.

\begin{proof}[Proof of \eqref{G11treebound}]
If the radius-$R$ neighborhood of the vertex $1$ is a tree, then by Proposition~\ref{greentree},
\begin{align*}
 \tP_{1 l_k}^2=\frac{\md^2\msc^{2\ell}}{(d-1)^{\ell}}, \quad \tP_{11}=m_d,
\end{align*}
and 
\begin{equation}\label{tree11term}
 \tilde{G}_{11}-\md=\frac{\md^2\msc^{2\ell}}{(d-1)^{\ell+1}} \sum_{k\in\qq{1,\mu}}(\tGT_{\ta_k\ta_k}-\tP^{(\T)}_{\ta_k\ta_k})+O\left( |\msc|q^{r+\ell+2}\right).
\end{equation}

Notice that $\mu=d(d-1)^{\ell}$ under the assumption that the $R$-neighborhood is a tree.
Moreover, for all $k\in\qq{1,\mu}$,
\begin{align*}
\tP^{(\T)}_{\ta_k\ta_k}=\tP_{\ta_k\ta_k}\left(\cE_{r}\left(\ta_k,\ta_k,\tcGT\right)\right)=m_{sc},
\end{align*}
and by Proposition \ref{tGconcentration}, we can simplify \eqref{tree11term} to get
\begin{align}\label{mktree}
  \tilde{G}_{11}=\md+\frac{d}{d-1}\md^2\msc^{2\ell}(\I(\tcG)-\msc)+
  O_{\leq }\left( \frac{4(\log N)^{1/2+\delta} |\msc|q^r}{\sqrt{d(d-1)^\ell}}\right).
\end{align}
This finishes the proof of \eqref{G11treebound}.
\end{proof}

\begin{proof}[Proof of \eqref{G11bound}]
Since by assumption $\tcG\in \bar\Omega$,
the radius-$R$ neighborhood of the vertex $1$ has excess at most $\n$.
Therefore, there are at most $2\n(d-1)^{\ell}$ indices $k\in \qq{1,\mu}$ such that the non-backtracking path from $1$ to $l_k$ of length $\ell$ is not unique. Let
\begin{align*}
J'=\{k\in \qq{1,\mu}:  \text{non-backtracking path from $1$ to $l_k$ of length $\ell$ is unique} \}.
\end{align*}
For  $k\in  J'$, by \eqref{eqn:formulaPij} in the proof of Proposition~\ref{boundPij}, we have 
\begin{align*}
\left|\tP_{1 l_k}-\frac{\md (-m_{sc})^\ell}{(d-1)^{\ell/2}}\right|\leq |\md|\sum_{k\geq 2}2^{\n k}q^{\ell+k-1}\leq 2^{2\n}|\md|\frac{3}{2}q^{\ell+1},
\end{align*}
provided that  $\sqrt{d-1}\geq 2^{\n+2}$.
Therefore, for all $k\in J'$, the following estimate holds
\begin{align*}
\frac{\tP_{1 l_k}^2}{d-1}=\frac{\md^2\msc^{2\ell}}{(d-1)^{\ell+1}}+O_{\leq}\left(2^{2\n+2}q^{2\ell+3}\right),
\end{align*}
For $k\in \qq{1,\mu}\setminus J'$ by \eqref{PBbound}, we have $|\tP_{1l_k}|\leq 2^{\n+2}|\msc|q^{\ell}$. 
Notice that $|J'|\leq \mu\leq d(d-1)^\ell$ and $|\qq{1,\mu}\setminus J'|\leq 2\n (d-1)^{\ell}$, it follows that 
\begin{align}\begin{split}\label{e:sumPcoef}
&\frac{1}{d-1}\sum_{k\in\qq{1,\mu}}\left|\tP_{1 l_k}^2-\frac{\md^2 m_{sc}^{2\ell}}{(d-1)^{\ell}}\right|
=\frac{1}{d-1}\sum_{k\in \qq{1,\mu}\setminus J'}(\cdots)+\frac{1}{d-1}\sum_{k\in J'}(\cdots).
 \\
\leq& 
  2\n (d-1)^{\ell}2^{2\n+5}q^{2\ell+2}  +d(d-1)^{\ell} 2^{2\n+2}q^{2\ell+3}
  \leq 2^{2\n+2}(dq+16\n )q^2
\end{split}\end{align}
Combining \eqref{e:sumPcoef}, \eqref{e:stabilitytGT} and \eqref{tGTreplaceEr}, \eqref{newG11-P11} leads to
\begin{align}
 \tilde{G}_{11}-\tP_{11}
\notag &= \frac{\md^2\msc^{2\ell}}{(d-1)^{(\ell+1)}}\sum_{k\in\qq{1,\mu}}(\tGT_{\ta_k\ta_k}-\tP^{(\T)}_{\ta_k\ta_k}) +O_{\leq}\left(2^{2\n+2}(dq+16\n)q^2\max_{k\in \qq{1,\mu}}
  |\tGT_{\ta_k\ta_k}-\tP^{(\T)}_{\ta_k\ta_k}|
  \right)\\
\label{11term} &=\frac{\md^2\msc^{2\ell}}{(d-1)^{(\ell+1)}}\sum_{k\in\qq{1,\mu}}\left(\tGT_{\ta_k\ta_k}-\tP_{\ta_k\ta_k}(\cE_{r}(\ta_k,\ta_k,\tcGT))\right)+\cal E
 ,
\end{align}
where the error term is bounded
\begin{align*}
|\cal E|
&\leq 2^{2\n+2}(dq+16\n)q^2  (2^7K^3|\msc|q^r+2^{2\n+3}|\msc|q^{r+1})
+|\md|^2 \mu 2^{2\n+3}|\msc|q^{r+1}/(d-1)^{\ell+1}\\
&\leq 3\times2^{2\n+8}K^3|\msc|q^{r+1},
\end{align*}
provided that $\sqrt{d-1}\geq 2^6\n$.
Therefore, by Proposition \ref{tGconcentration}, we can simplify \eqref{11term} to get
\begin{align}\label{mk}
  \tilde{G}_{11}=\tP_{11}(\cE_{r}(1,1,\tcG))+ \frac{\md^2\msc^{2\ell}\mu}{(d-1)^{(\ell+1)}}(\I(\tcG)-\msc)+
  O_{\leq}\left(2^{2\n+10}K^3|\msc|q^{r+1}\right).
\end{align}
This finishes the proof of \eqref{G11bound}. 
\end{proof}

\subsection{Proof of \eqref{G1itreebound}}

\begin{proof}[Proof of \eqref{G1itreebound}]
For any vertex $x$ adjacent to $1$, by \eqref{Tterm} we have, 
\begin{align}
\begin{split}\label{G1x-P1x}
\tilde{G}_{1x}-\tP_{1x}
 &= \frac{1}{d-1}\sum_{k\in\qq{1,\mu}}\tP_{1 l_k}\tP_{xl_k}(\tGT_{\ta_k\ta_k}-\tP^{(\T)}_{\ta_k\ta_k})+ \frac{1}{d-1}\sum_{k\neq m\in \qq{1,\mu}} \tG_{1 l_k}\tP_{x l_m}(\tGT_{\ta_k\ta_m}-\tP^{(\T)}_{\ta_k\ta_m}) \\
 & +\frac{1}{d-1}\sum_{k\in \qq{1,\mu}} (\tG_{1l_k}-\tP_{1 l_k})\tP_{x l_k}(\tGT_{\ta_k\ta_k}-\tP^{(\T)}_{\ta_k\ta_k}).
\end{split}
\end{align}
For the last term on the right-hand side of \eqref{G1x-P1x},
\begin{align*}
\left|\frac{1}{d-1}\sum_{k\in \qq{1,\mu}} (\tG_{1l_k}-\tP_{1 l_k})\tP_{x l_k}(\tGT_{\ta_k\ta_k}-\tP^{(\T)}_{\ta_k\ta_k})\right|
\leq \Cw q^{r+2}q^{r+1}\sum_{k\in\qq{1,\mu}}|\tP_{xl_k}|
\leq \Cw |\msc|q^{r+\ell+3}.
\end{align*}
where in the first inequality, we used \eqref{betterest} for the first factor,
and \eqref{e:stabilitytGT} for the last factor;
in the second inequality, we used \eqref{sumtleaf1} for the case $x\in \T_1$.
For the second term on the right-hand side of \eqref{G1x-P1x}, we have
\begin{align*}
\left|\frac{1}{d-1}\sum_{k\neq m\in \qq{1,\mu}} \tG_{1 l_k}\tP_{x l_m}(\tGT_{\ta_k\ta_m}-\tP^{(\T)}_{\ta_k\ta_m})\right|
  &\leq \Cw q^{2\ell+1} \sum_{k\neq m\in \qq{1,\mu}}|\tGT_{\ta_k\ta_m}-\tP^{(\T)}_{\ta_k\ta_m}|\\
  &\leq \Cw \n'^2|\msc|q^{r+2\ell+1}
  \leq \Cw |\msc|q^{r+\ell+1},
\end{align*}
provided that $\n'^2q^{\ell}\ll 1$, where we used \eqref{sumtGtPij}. 
Therefore, they together lead to
\begin{align}\label{greendist1}
 \tilde{G}_{1x}-\tP_{1x}=\frac{1}{d-1}\sum_{k\in\qq{1,\mu}}\tP_{1 l_k}\tP_{xl_k}(\tGT_{\ta_k\ta_k}-\tP^{(\T)}_{\ta_k\ta_k})+O\left( |\msc|q^{r+\ell+1}\right),
\end{align}
where the implicit constant depends only on the excess $\n$ and $K$. Especially, if vertex $1$ has radius-$R$ neighborhood, then by Proposition \ref{greentree}
\begin{align*}
\tP_{1x}=-\frac{\md\msc}{\sqrt{d-1}},\quad \tP^{(\T)}_{\ta_k\ta_k}=m_{sc},\quad  \tP_{1l_k}=\frac{m_d(-m_{sc})^\ell}{(d-1)^{\ell/2}},\quad \tP_{xl_k}=\md\left(\frac{-m_{sc}}{\sqrt{d-1}}\right)^{\dist_{\tcG}(x,l_k)}
\end{align*}
for any index $k\in\qq{1,\mu}$. Thus averaging \eqref{greendist1} over all the vertices $x$ adjacent to $1$ (in the following, we write $x\sim 1$ when the vertex $x$ is adjacent to $1$), we get
\begin{align*}
\frac{1}{d}\sum_{x\sim 1}\tG_{1x}+\frac{\md\msc}{\sqrt{d-1}}
&=\frac{1}{d(d-1)}\sum_{k\in \qq{1,\mu}}(\tGT_{\ta_k\ta_k}-\tP^{(\T)}_{\ta_k\ta_k})\sum_{x\sim 1}\tP_{1 l_k}\tP_{xl_k}+O\left(|\msc|q^{r+\ell+1}\right)\\
&=\frac{\md(-\msc)^\ell}{d(d-1)^{\ell/2+1}}\sum_{k\in \qq{1,\mu}}(\tGT_{\ta_k\ta_k}-\tP^{(\T)}_{\ta_k\ta_k})\sum_{x\sim 1}\tP_{xl_k}+O\left(|\msc|q^{r+\ell+1}\right)\\
&=\frac{-\md^2\msc^{2\ell-1}(1+\msc^2)}{d(d-1)^{\ell+1/2}}\sum_{k\in \qq{1,\mu}}(\tGT_{\ta_k\ta_k}-\tP^{(\T)}_{\ta_k\ta_k})+O\left( |\msc|q^{r+\ell+1}\right)\\
&=\frac{-\md^2\msc^{2\ell-1}(1+\msc^2)}{\sqrt{d-1}}(\I(\tcG)-\msc)+  O_{\leq}\left( \frac{16(\log N)^{1/2+\delta} |\msc|q^{r+1}}{\sqrt{d(d-1)^{\ell}}}\right).
\end{align*}
In the third line, we used the fact that for any index $k\in\qq{1,\mu}$, among the $d$ children of vertex $1$, one of them is distance $\ell-1$ to the vertex $l_k$, and the others are distance $\ell+1$ to the vertex $l_k$. In the last line, we used Proposition \eqref{tGconcentration}, and $|\md^2\msc^{2\ell-1}(1+\msc^2)|\leq 4$. This finishes the proof of Proposition \ref{improvetG}.
\end{proof}

\section{Proof of main results}
\label{sec:pfmr}

In this section, we use the estimates established in the previous sections
to prove Theorem~\ref{thm:mr}.

\subsection{Summary of estimates}

By combining the propositions of the previous sections, we obtain the following sequence of propositions,
relating the sets
\begin{equation*}
\Omega^-(z,\ell) \subset \Omega(z,\ell) \subset  \Omega_1^+(z,\ell) \subset \bar\Omega \subset \GNd,\quad \Omega_1'(z,\ell) \subset \bar\Omega \subset \GNd,
\end{equation*}
defined in Section~\ref{sec:outline-structure}.
We also recall the parameters from Section~\ref{sec:outline-parameters}, assume that
\begin{align}\label{choosep1}  
\ell\in\qq{\ell_*,2\ell_*}, \quad r=2\ell+1,\quad
\end{align}
and \eqref{e:Mnprime}, namely that
\begin{align}\label{choosep2}  
\n'=\lfloor \log N \rfloor,\quad M=(d-1)^{9\ell}(\log N)^{\delta}.
\end{align}
Since, for $|z|\geq 2d-1$, the claim of Theorem~\ref{thm:mr} follows from Proposition~\ref{prop:betacomp},
it suffices to prove the claim of Theorem~\ref{thm:mr} on the following slightly smaller domain
\begin{equation}
\label{spectralDrecall}
\cal D^*\deq \ha{z\in \C_+: |z|\leq 2d,\quad \Im [z]\geq  \frac{(\log N)^{48\alpha+1}}{N},\quad |z\pm 2|\geq (\log N)^{-\alpha/2+1}},
\end{equation}
which is the intersection of $\cal D$ (as in \eqref{spectralD}) with $\{z\in \bC_+: |z|\leq 2d\}$.

\begin{proposition}[Initial estimates]\label{initialestimates}
Under the assumptions of Theorem \ref{thm:mr}, and the choices of parameters given in \eqref{choosep1} and \eqref{choosep2}, for $N\geq N(\n,d,\delta)$ large enough, we have
\begin{align}\label{barOmegalarge}
\P(\bar\Omega) = 1-o(N^{-\n+\delta}).
\end{align}
Moreover, for any $z\in \C_+$ such that $|z|\geq 2d-1$, we have $\bar \Omega\subset \Omega^-(z,\ell)$. 
\end{proposition}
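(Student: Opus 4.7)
The plan is to derive both statements directly from results already established earlier in the paper, so the proof amounts to checking that the hypotheses of those results match the definitions of $\bar\Omega$ and $\Omega^-(z,\ell)$ under the chosen parameters.

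First, for the probability estimate \eqref{barOmegalarge}, I would apply Proposition~\ref{prop:structure} with the constants $\delta$ and $\omega$ fixed in Section~\ref{sec:outline-parameters}. Recall that there we chose $0<\delta<1/\omega$ and $0<\kappa<\delta/(2\omega+2)$, so with $R=\lfloor \kappa \log_{d-1} N\rfloor$ the two conclusions of Proposition~\ref{prop:structure} hold simultaneously with probability $1-o(N^{-\omega+\delta})$. But these two conclusions are precisely the two defining conditions of $\bar\Omega$ listed in Section~\ref{sec:outline-structure}, so \eqref{barOmegalarge} follows immediately.

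Second, for the inclusion $\bar\Omega \subset \Omega^-(z,\ell)$ when $|z|\geq 2d-1$, I would invoke Proposition~\ref{prop:betacomp}. Any $\cG \in \bar\Omega$ is by definition a $d$-regular graph on $N$ vertices in which every radius-$R$ neighborhood has excess at most $\omega$; the assumption $\sqrt{d-1}\geq (\omega+1)2^{2\omega+45}$ from Theorem~\ref{thm:mr} is more than enough to guarantee $\omega\geq 6$ and $\sqrt{d-1}\geq 2^{\omega+2}$, which are the hypotheses of Proposition~\ref{prop:betacomp}. Its conclusion \eqref{largedgreen} is exactly the defining inequality \eqref{defOmega-} of $\Omega^-(z,\ell)$, so $\cG\in \Omega^-(z,\ell)$ for every $z\in\C_+$ with $|z|\geq 2d-1$.

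The main point to verify is simply that the parameter choices in Section~\ref{sec:outline-parameters} together with $\ell\in\qq{\ell_*,2\ell_*}$ and $r=2\ell+1$ are compatible with the hypotheses of both of the quoted propositions, and that $N$ is large enough (in particular $N\geq N_0(d,\omega,\delta)$ in Proposition~\ref{prop:structure}). There is no genuine obstacle: the statement of Proposition~\ref{initialestimates} is essentially a restatement of Propositions~\ref{prop:structure} and \ref{prop:betacomp} phrased in the notation of Section~\ref{sec:outline-structure}.
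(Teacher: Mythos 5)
Your proposal is correct and matches the paper's proof, which is exactly the two-line observation that \eqref{barOmegalarge} follows from Proposition~\ref{prop:structure} and the inclusion from Proposition~\ref{prop:betacomp}; your extra verification of the parameter hypotheses is fine (just note that $\omega\geq 6$ comes from the standing assumption $\omega\geq 8$ in Theorem~\ref{thm:mr}, not from the lower bound on $\sqrt{d-1}$).
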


\begin{proof}
The estimate \eqref{barOmegalarge} follows from Proposition~\ref{prop:structure},
and the inclusion $\bar \Omega\subset \Omega^-(z,\ell)$ from Proposition~\ref{prop:betacomp}.
\end{proof}

Given a graph $\cal G$ and a vertex $i$,
we resample the edge boundary of $\cB_\ell(i, \cal G)$ using switchings;
without loss of generality we assume $i=1$.
Denote the resampled graph by $T_{\bf S}(\cal G)$ (which depends on the choice of $i$);
$\bf S$ is the resampling data (whose distribution depends on $\cal G$).

\begin{proposition}[Stability under resampling] \label{prop:stability}
Under the assumptions of Theorem \ref{thm:mr}, and the choices of parameters given in \eqref{choosep1} and \eqref{choosep2}, for $z\in \cal D^*$, $N\geq N(\alpha,\n,d,\delta)$ large enough, and any $\cal G\in \Omega(z,\ell)$, the following holds. (\rn{1})  $\cal G\in \Omega_1^+(z,\ell)$. (\rn{2}) There exists a set $F(\cal G)\subset \sS(\cal G)$ with $\P_{\cG}(F(\cal G))= 1-o(N^{-\n+\delta})$
such that for any ${\bf S}\in F(\cal G)$ with $T_{\bf S}(\cal G)\in \bar \Omega$, we have  $T_{\bf S}(\cG) \in \Omega_1^+(z,\ell)$.
\end{proposition}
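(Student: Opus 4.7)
The plan is to deduce the proposition directly from the two main stability results established in Sections~\ref{sec:stabilityT} and \ref{sec:stability}, namely Propositions~\ref{prop:stabilityGT} and \ref{prop:stabilitytGT}. All the analytic content lies in those two propositions; what remains is careful bookkeeping of constants and verification that the parameter hypotheses hold on the spectral domain $\cal D^*$.

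For part (\rn{1}), the defining bound for $\cG\in\Omega(z,\ell)$, together with the inclusion $\Omega(z,\ell)\subset\bar\Omega$, is exactly the hypothesis of Proposition~\ref{prop:stabilityGT}. Its conclusion gives $|\GT_{ij}-P_{ij}(\cE_r(i,j,\cGT))|\leq 2|\msc|q^r$ for all $i,j\in\qq{N}\setminus\T$, which is well within the threshold $2^{10}|\msc|q^r$ required by the definition \eqref{e:defOmega+} of $\Omega_1^+(z,\ell)$. Together with $\cG\in\bar\Omega$, this yields $\cG\in\Omega_1^+(z,\ell)$.

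For part (\rn{2}), I will take $F(\cG)$ to be the set already defined in Section~\ref{cdFG}. Having established part (\rn{1}), the probability bound \eqref{FGmeasure} applies and gives $\P_{\cG}(F(\cG))=1-o(N^{-\n+\delta})$. Fix any ${\bf S}\in F(\cG)$ with $\tcG=T_{\bf S}(\cG)\in\bar\Omega$. The hypothesis \eqref{asumpGT} of Proposition~\ref{prop:stabilitytGT} is satisfied with constant $K=2$, again by Proposition~\ref{prop:stabilityGT}, and its conclusion \eqref{e:stabilitytGT} then gives $|\tGT_{ij}-P_{ij}(\cE_r(i,j,\tcGT))|\leq 2^{7}K^{3}|\msc|q^r=2^{10}|\msc|q^r$. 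Combined with $\tcG\in\bar\Omega$, this is exactly the definition of $\tcG\in\Omega_1^+(z,\ell)$. Note that the constant $2^{10}$ in \eqref{e:defOmega+} has been chosen precisely so that the removal-of-$\T$ threshold $K=2$ propagates through one switching step into $2^{7}K^{3}=2^{10}$.

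The remaining task is to verify the quantitative hypotheses of the two source propositions under the parameter choices \eqref{choosep1}, \eqref{choosep2} and $z\in\cal D^*$. The degree condition $\sqrt{d-1}\geq\max\{(\n+1)^{2}2^{2\n+10},2^{8}(\n+1)K\}$ with $K=2$ follows from the standing hypothesis $\sqrt{d-1}\geq(\n+1)2^{2\n+45}$. For the smallness $\n'q^{r}\ll 1$, one uses $q\leq 1/\sqrt{d-1}$, $r\geq 2\ell_{*}$, and $\ell_{*}=\lceil\alpha\log_{d-1}\log N\rceil$ to obtain $q^{r}\leq(\log N)^{-\alpha}$, so $\n'q^{r}\leq(\log N)^{1-\alpha}\to 0$ since $\alpha>4$. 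Finally, the spectral condition $\sqrt{N\eta}\geq M(d-1)^{\ell+1}$ reduces, via $M=(d-1)^{9\ell}(\log N)^{\delta}$ and $\ell\leq 2\ell_{*}$, to comparing the lower bound $(\log N)^{24\alpha+1/2}$ for $\sqrt{N\eta}$ (coming from $\eta\geq(\log N)^{48\alpha+1}/N$) with $(\log N)^{20\alpha+O(\delta)}$, which is comfortable for fixed $\alpha$ and large $N$. I expect no genuine obstacle here: the proposition is essentially a repackaging statement, and the only real subtlety is tracking the constants so that the stability thresholds for removal of $\T$ and for one resampling step align at the value $2^{10}$ used in the definition of $\Omega_1^+(z,\ell)$.
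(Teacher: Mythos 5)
Your proposal is correct and follows exactly the paper's own route: part (\rn{1}) is Proposition~\ref{prop:stabilityGT}, and part (\rn{2}) is Proposition~\ref{prop:stabilitytGT} applied with $K=2$ to the event $F(\cG)$ of Section~\ref{cdFG}, whose probability bound \eqref{FGmeasure} is available once (\rn{1}) places $\cG$ in $\Omega_1^+(z,\ell)$. The parameter verifications you carry out are the same ones the paper isolates in the unnumbered lemma of Section~\ref{sec:pfmr-decomp}, so there is nothing missing.
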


\begin{proof}
The first statement $\cG\in \Omega_1^+(z,\ell)$ follows from Proposition~\ref{prop:stabilityGT},
and the second statement follows from Proposition~\ref{prop:stabilitytGT} with $K=2$.
\end{proof}

\begin{proposition}[Improvement under resampling] \label{prop:stability+}
Under the assumptions of Theorem \ref{thm:mr}, and the choices of parameters given in \eqref{choosep1} and \eqref{choosep2}, for $z\in \cal D^*$, $N\geq N(\alpha,\n,d,\delta)$ large enough, and any $\cal G\in \Omega_1^+(z,\ell)$, there exists a set $F'(\cal G)\subset \sS(\cal G)$ with $\P_{\cG}(F'(\cal G))= 1-o(N^{-\n+\delta})$
such that for any ${\bf S}\in F'(\cal G)$ with $T_{\bf S}(\cal G)\in \bar \Omega$, we have  $T_{\bf S}(\cG) \in \Omega'_1(z,\ell)$.
\end{proposition}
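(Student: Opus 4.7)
The plan is to assemble Proposition~\ref{prop:stability+} from the ingredients already built up in Sections~\ref{sec:stability}--\ref{sec:improved}, applied with the parameter $K = 2^{10}$. By definition of $\Omega_1^+(z,\ell)$, the hypothesis $\cG \in \Omega_1^+(z,\ell)$ is precisely the bound \eqref{e:defOmega+}, which is the assumption \eqref{asumpGT} of Proposition~\ref{prop:stabilitytGT} with $K = 2^{10}$.

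First I would take $F'(\cG)$ to be the event constructed in Proposition~\ref{tGconcentration} (via Lemma~\ref{lem:econcentration}), which is a subset of the event $F(\cG)$ of Section~\ref{cdFG}. Since the parameters $\alpha,\omega,\delta,d,N$ satisfy the conditions required in all the propositions under the choices \eqref{choosep1}--\eqref{choosep2} and $z\in \cal D^*$ (in particular $\sqrt{N\eta}\, q^{3r+2}\geq M$ holds because $\Im z \geq (\log N)^{48\alpha+1}/N$, $q\geq 1/(\log N)^{\alpha/2}$ on $\cal D^*$, and $M=(d-1)^{9\ell}(\log N)^\delta$ with $\ell\leq 2\ell_*$), the measure estimate $\P_{\cG}(F'(\cG)) = 1-o(N^{-\omega+\delta})$ follows directly from \eqref{FGmeasure} and Lemma~\ref{lem:econcentration}.

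Next, fix ${\bf S}\in F'(\cG)\subset F(\cG)$ with $\tcG = T_{\bf S}(\cG)\in \bar\Omega$. Applying Proposition~\ref{prop:stabilitytGT} with $K=2^{10}$ yields the Green's function bounds on $\tcGT$; these in turn are exactly the hypotheses needed for Proposition~\ref{greendist-new} (improved decay between switched vertices) and for Proposition~\ref{prop:tGweakstab} (giving the weak stability \eqref{tGweakstab} and the off-diagonal estimate \eqref{G1xbound} for $\tG$). The latter already establishes the estimate \eqref{G1xbound_copy} required in the definition of $\Omega_1'(z,\ell)$.

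It remains to derive the diagonal estimates \eqref{G11bound}--\eqref{G1itreebound}. For this, I would invoke Proposition~\ref{improvetG}: its assumptions are precisely what we have established, namely the stability estimates from Propositions~\ref{prop:stabilitytGT} and \ref{prop:tGweakstab}, the improved decay from Proposition~\ref{greendist-new}, and the concentration statement \eqref{eqn:tGconcentration} (which holds on $F'(\cG)$ by construction). Concretely, \eqref{G11bound} and \eqref{G1xbound_copy} are the parts (i)--(ii) of Proposition~\ref{improvetG}, while \eqref{G11treebound} and \eqref{G1itreebound} are parts (i')--(ii') that hold when in addition $\cB_R(1,\tcG)$ is a tree. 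Together these bounds are exactly the conditions defining $\Omega_1'(z,\ell)$ (with the same constant $K=2^{10}$ that was used throughout), so $\tcG \in \Omega_1'(z,\ell)$ as claimed. No genuinely new estimate is needed: the entire content of the proposition is bookkeeping, verifying that the constant $K=2^{10}$ is propagated consistently through Propositions~\ref{prop:stabilitytGT}--\ref{improvetG} and that the parameter conditions ($\sqrt{d-1}\geq (\omega+1)2^{2\omega+45}$, $\omega'^2 q^\ell \ll 1$, $\sqrt{N\eta}\,q^{3r+2}\geq M$) are all implied by the assumptions of Theorem~\ref{thm:mr} on $\cal D^*$; the mildly delicate point is the last of these, which determines the lower endpoint of the spectral domain $\cal D^*$ through the choice $M=(d-1)^{9\ell}(\log N)^\delta$.
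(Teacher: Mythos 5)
Your proof is correct and follows exactly the route the paper takes: the paper's own proof consists of taking $F'(\cG)$ from Proposition~\ref{tGconcentration} and combining Propositions~\ref{prop:stabilitytGT} and \ref{improvetG} with $K=2^{10}$, which is precisely what you do, only in more detail. The one imprecision is the parenthetical justification of $\sqrt{N\eta}\,q^{3r+2}\geq M$ via "$q\geq (\log N)^{-\alpha/2}$": since the exponent $3r+2$ grows like $\log\log N$, that lower bound raised to this power is far too weak; one should instead use that $q=|\msc(z)|/\sqrt{d-1}$ is bounded below by a constant on $\{z:|z|\leq 2d\}$, so that $q^{3r+2}$ is only polylogarithmically small — a cosmetic fix that does not affect the argument.
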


\begin{proof}
The definition of the set $F'(\cG)$ and its properties are given in Proposition~\ref{tGconcentration}.
The final statement $T_{\bf S}(\cG)\in \Omega_1'(z,\ell)$ follows from Propositions~\ref{prop:stabilitytGT} and \ref{improvetG} by taking $K=2^{10}$.
\end{proof}

The improvement under resampling above applies to the switched graphs $T_{\bf S}(\cG)$.
However, by general properties of $T$, it implies an improvement on the original space of graphs.

\begin{proposition}[Improvement on original space] \label{prop:improvement2}
Under the assumptions of Theorem \ref{thm:mr}, and the choices of parameters given in \eqref{choosep1} and \eqref{choosep2}, for $z\in \cal D^*$, we have
\begin{equation} \label{e:Omegaimp1}
\P(\Omega(z,\ell) \setminus (\Omega(z,\ell)\cap \Omega_1'(z,\ell))) =o(N^{-\n+\delta}).
\end{equation}
\end{proposition}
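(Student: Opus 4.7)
The plan is to apply Proposition~\ref{prop:resample} directly with the choices
$$\Omega = \Omega(z,\ell), \qquad \Omega^+ = \Omega_1^+(z,\ell), \qquad \Omega' = \Omega_1'(z,\ell).$$
The inclusions $\Omega(z,\ell) \subset \Omega_1^+(z,\ell) \subset \bar\Omega$ and $\Omega_1'(z,\ell) \subset \bar\Omega$ required by the hypotheses of Proposition~\ref{prop:resample} are built into the definitions collected in Section~\ref{sec:outline-structure} (the first inclusion is in fact the content of part~(i) of Proposition~\ref{prop:stability}). With these choices the conclusion of Proposition~\ref{prop:resample} is exactly \eqref{e:Omegaimp1}, so all that remains is to verify the three quantitative hypotheses and to check that each $q_i$ is $o(N^{-\n+\delta})$.

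For hypothesis~(i), Proposition~\ref{initialestimates} gives $\P(\GNd \setminus \bar\Omega) = o(N^{-\n+\delta})$, so we can take $q_0 = o(N^{-\n+\delta})$. For hypothesis~(ii), fix $\cG \in \Omega(z,\ell)$. Part~(ii) of Proposition~\ref{prop:stability} provides a set $F(\cG) \subset \sS(\cG)$ with $\P_\cG(F(\cG)) = 1 - o(N^{-\n+\delta})$ such that for every ${\bf S} \in F(\cG)$ with $T_{\bf S}(\cG) \in \bar\Omega$ we have $T_{\bf S}(\cG) \in \Omega_1^+(z,\ell)$. Contrapositively, if $T_{\bf S}(\cG) \in \bar\Omega \setminus \Omega_1^+(z,\ell)$ then ${\bf S} \notin F(\cG)$, and hence
$$\P_\cG\!\left(T_{\bf S}(\cG) \in \bar\Omega \setminus \Omega_1^+(z,\ell)\right) \leq \P_\cG(\sS(\cG) \setminus F(\cG)) = o(N^{-\n+\delta}),$$
so $q_1 = o(N^{-\n+\delta})$. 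Hypothesis~(iii) is handled in the same way: for $\cG \in \Omega_1^+(z,\ell)$, Proposition~\ref{prop:stability+} gives $F'(\cG) \subset \sS(\cG)$ with $\P_\cG(F'(\cG)) = 1 - o(N^{-\n+\delta})$ such that ${\bf S} \in F'(\cG)$ and $T_{\bf S}(\cG) \in \bar\Omega$ together imply $T_{\bf S}(\cG) \in \Omega_1'(z,\ell)$, yielding $q_2 = o(N^{-\n+\delta})$ by the same contrapositive argument.

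Feeding these three bounds into Proposition~\ref{prop:resample} immediately gives
$$\P\!\left(\Omega(z,\ell) \setminus (\Omega(z,\ell) \cap \Omega_1'(z,\ell))\right) \leq q_0 + q_1 + q_2 = o(N^{-\n+\delta}),$$
which is \eqref{e:Omegaimp1}. The proof is essentially bookkeeping; there is no genuine obstacle because all the analytic work (the resampling identities yielding $F(\cG)$ and $F'(\cG)$, together with the involution/measure-preservation that powers Proposition~\ref{prop:resample}) has already been done in Sections~\ref{sec:switch}--\ref{sec:improved}. The only thing one must be slightly careful about is tracking which constant $K$ each of Propositions~\ref{prop:stability} and \ref{prop:stability+} uses ($K=2$ and $K=2^{10}$, respectively), but both values are absorbed in the bookkeeping and do not affect the probabilistic bound.
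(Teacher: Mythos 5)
Your proof is correct and follows exactly the paper's argument: apply Proposition~\ref{prop:resample} with $\Omega=\Omega(z,\ell)$, $\Omega^+=\Omega_1^+(z,\ell)$, $\Omega'=\Omega_1'(z,\ell)$, verifying the three hypotheses via Propositions~\ref{initialestimates}, \ref{prop:stability}, and \ref{prop:stability+}. The paper states this more tersely, but your contrapositive bookkeeping is precisely what is implicit there.
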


\begin{proof}
By Propositions~\ref{initialestimates}--\ref{prop:stability+},
the conditions of Proposition~\ref{prop:resample} are satisfied with $q_0,q_1,q_2=o(N^{-\n+\delta})$,
and $\bar\Omega$ as in Section~\ref{sec:outline}, $\Omega =\Omega(z,\ell)$, $\Omega^+ = \Omega_1^+(z,\ell)$, and $\Omega' = \Omega_1'(z,\ell)$.
Therefore, Proposition~\ref{prop:resample} implies
\begin{align*}
\P(\Omega(z,\ell) \setminus (\Omega(z,\ell)\cap \Omega_1'(z,\ell)))= o(N^{-\n+\delta}),
\end{align*}
which was the claim.
\end{proof}

Clearly,
by the same argument or by symmetry,
\eqref{e:Omegaimp1} also holds with vertex $1$ replaced by any other vertex $i \in \qq{N}$.
In particular, for any graph in the intersection of the $\Omega_i'(z,\ell)$ over $i \in \qq{N}$,
we have the following improved estimates for the entries of its Green's function.

\begin{proposition}[Self-consistent equation] \label{prop:sce1}
Under the assumptions of Theorem \ref{thm:mr}, and the choices of parameters given in \eqref{choosep1} and \eqref{choosep2}, for any $z\in \cal D^*$ (as in \eqref{spectralDrecall}) and $N\geq N(\alpha,\n,d,\delta)$ large enough, we have
\begin{align}\label{IGequation}
 \I(\cal G)-\msc= \frac{d-2}{d-1}\md\msc^{2\ell+1}(\I(\cal G)-\msc)+ O \left(\frac{(\log N)^{1/2+\delta}|\msc|q^r}{(d-1)^{(\ell+1)/2}}\right).
\end{align}
\end{proposition}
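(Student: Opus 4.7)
The strategy is to average the vertex-level improved estimates from $\bigcap_{i=1}^N\Omega_i'(z,\ell)$ against the Schur complement decomposition of $G^{(i)}_{jj}$. By Proposition~\ref{prop:improvement2} (applied with vertex $1$ replaced by each $i\in\qq{N}$ using exchangeability of the uniform measure on $\GNd$) and a union bound,
\begin{equation*}
\P\pB{\Omega(z,\ell)\setminus\bigcap_{i=1}^N\Omega_i'(z,\ell)}=o(N^{-\omega+\delta+1}),
\end{equation*}
which is absorbed into the error after a slight relaxation of $\delta$. For $\cG$ in this intersection, the starting identity is the Schur complement \eqref{e:Schurixj}, $G^{(i)}_{jj}=G_{jj}-G_{ij}^2/G_{ii}$, which after summation over oriented edges yields
\begin{equation*}
\I(\cG)=m(z)-\frac{1}{Nd}\sum_{(i,j)\in\vec{E}}\frac{G_{ij}^2}{G_{ii}}.
\end{equation*}

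I would then write $G_{ii}=\md+\delta_i$ and, for $j\sim i$, $G_{ij}=-\md\msc/\sqrt{d-1}+f_{ij}$ (deviations from the infinite-tree values), so that a first-order expansion gives
\begin{equation*}
\frac{G_{ij}^2}{G_{ii}}=\frac{\md\msc^2}{d-1}-\frac{2\msc}{\sqrt{d-1}}f_{ij}-\frac{\msc^2}{d-1}\delta_i+\text{(quadratic)}.
\end{equation*}
For each of the at least $N-N^\delta$ vertices $i$ with radius-$R$ tree neighborhood (guaranteed by $\cG\in\bar\Omega$), the refined estimates \eqref{G11treebound} and \eqref{G1itreebound} give explicit formulas for $\delta_i$ and $\frac{1}{d}\sum_{j\sim i}f_{ij}$ linear in $(\I(\cG)-\msc)$ with errors of the claimed size; the exceptional non-tree vertices are handled via the general bounds \eqref{G11bound} and \eqref{G1xbound_copy}, contributing $O(N^{-1+\delta})$. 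Substituting and using $m(z)-\md=\frac{1}{N}\sum_i\delta_i$, the zeroth-order constant cancels via the identity $\md-\md\msc^2/(d-1)=\msc$ (a reformulation of \eqref{e:mdstieltjes}), yielding
\begin{equation*}
\I(\cG)-\msc=\md^2\msc^{2\ell}\qB{\frac{d}{d-1}-\frac{2(1+\msc^2)}{d-1}+\frac{d\msc^2}{(d-1)^2}}(\I(\cG)-\msc)+\text{err}.
\end{equation*}
The bracket simplifies to $(d-2)(d-1-\msc^2)/(d-1)^2$, and applying $\md(d-1-\msc^2)=(d-1)\msc$ (again from \eqref{e:mdstieltjes}) collapses the overall coefficient to exactly $\frac{d-2}{d-1}\md\msc^{2\ell+1}$, giving \eqref{IGequation}.

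The main obstacle is the control of the quadratic-error terms from the expansion of $G_{ij}^2/G_{ii}$, especially $\frac{1}{Nd}\sum_{(i,j)}G_{ij}^2\delta_i^2/\md^3$ and $\frac{1}{Nd}\sum_{(i,j)}f_{ij}^2/\md$. The a priori bounds $|\delta_i|=O(|\msc|^{2\ell}|\I(\cG)-\msc|)+O(\text{linear err})$ and $|f_{ij}|=O(|\msc|q^{r+1})$, from \eqref{G11treebound} and \eqref{G1xbound_copy} respectively, give quadratic contributions of size $O(|\msc|^{4\ell}|\I(\cG)-\msc|/(d-1))+O(|\msc|q^{2r+2})$; the $f^2$-piece is strictly smaller than the claimed error since $q\leq 1/\sqrt{d-1}$ and $r=2\ell+1$, while the $\delta^2$-piece, being of the form $(\text{small factor})\cdot|\I(\cG)-\msc|$, is absorbed into the main equation by perturbing the coefficient $\frac{d-2}{d-1}\md\msc^{2\ell+1}$ (which remains far from $1$ since $\ell\geq\ell_*$). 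The delicate bookkeeping in absorbing these quadratic terms, together with the two consecutive applications of \eqref{e:mdstieltjes} producing the exact constant $\frac{d-2}{d-1}\md\msc^{2\ell+1}$, is the nontrivial core of the argument.
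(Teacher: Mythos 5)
Your proposal is correct and follows essentially the same route as the paper's proof: the union bound over the events $\Omega_i'(z,\ell)$, the Schur complement identity $G^{(i)}_{jj}=G_{jj}-G_{ij}^2/G_{ii}$ summed over oriented edges, linearization of $G_{ij}^2/G_{ii}$ around the tree values $\md$ and $-\md\msc/\sqrt{d-1}$, insertion of the refined estimates \eqref{G11treebound}--\eqref{G1itreebound} for tree vertices (with the $O(N^{\delta-1})$ contribution from exceptional vertices), and the collapse of the resulting coefficient to $\frac{d-2}{d-1}\md\msc^{2\ell+1}$ via \eqref{e:mdstieltjes}. The only cosmetic difference is in the quadratic error terms, which the paper bounds more simply by $O(q^{2r})$ using the a priori stability bounds $|\delta_i|,|f_{ij}|=O(|\msc|q^r)$ rather than absorbing a piece into the coefficient; both dispositions yield the stated error.
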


\begin{proof} 
As noted above, the same statement as in Proposition \ref{prop:improvement2} holds with vertex $1$ replaced by any other vertex $i \in \qq{N}$.
On the union of the $\Omega_i'(z,\ell)$, the improvement then holds for all $i$ simultaneously, and 
by a union bound
\begin{align*}
\P(\Omega(z,\ell)\setminus \cap_{i\in \qq{N}} \Omega_i'(z,\ell))\leq \sum_{i=1}^{N}\P(\Omega(z,\ell)\setminus \Omega_i'(z,\ell))=o(N^{-\n+1+\delta}).
\end{align*}

For any graph $\cal G\in \cap_{i\in \qq{N}} \Omega_i'(z,\ell)$, by the definition of $\Omega_i'(z,\ell)$ (as in Section \ref{sec:outline}), we have
\begin{align}\label{Giiequation}
  G_{ii}(\cal G,z)=P_{ii}(\cE_r(i,i,\cal G),z)+ \frac{\md^2\msc^{2\ell}\mu}{(d-1)^{(\ell+1)}}(\I(\cal G)-\msc)
  +O_{\leq}\left(2^{2\n+40}|\msc| q^{r+1}\right),
\end{align}
and, for any $j\neq i$, we have the bound
\begin{align}\label{e:estimateforoffdiag}
  \left|G_{ij}(\cal G,z)-P_{ij}(\cE_r(i,j,\cal G),z)\right|
  \leq (\n+1)2^{2\n+44}|\msc| q^{r+1}.
\end{align} 
In the following, we derive an approximate self-consistent equation for $\I(\cal G)-\msc$. Using the Green's function identity \eqref{e:Schurixj},
notice that 
\begin{align}\label{IGest}
\I(\cal G)-\msc
=\frac{1}{Nd}\sum_{(i,j)\in \vec E}(G_{jj}^{(i)}-\msc)
&=\frac{1}{Nd}\sum_{(i,j)\in \vec E} \pa{G_{jj}-\frac{G_{ij}G_{ij}}{G_{ii}}-\msc}
\nonumber\\
&=\frac{1}{N}\sum_{i}\pa{G_{ii}-\frac{1}{d}\sum_{j: j\sim i}\frac{G_{ij}G_{ij}}{G_{ii}}-\msc},
\end{align}
where $\vec E$ is the set of oriented edges of $\cG$, and where here $j\sim i$ means that the vertices $i$ and $j$ are adjacent to each other.
Since $\cal G\in \bar\Omega$, at least $N-N^\delta$ of the vertices of $\cG$ have radius-$R$ tree neighborhoods.
The contribution to $\I(\cal G)-\msc$ from those vertices which do not have radius-$R$ tree neighborhoods is $O(N^{\delta-1})$.
For any vertex $i$ that has radius-$R$ tree neighborhood, by the definition of $\Omega_i'(z,\ell)$,
\begin{align}
\label{Giiest}G_{ii}-\md&=\md^2\msc^{2\ell}\frac{d}{d-1}(\I(\cal G)-\msc)+  O\left(\frac{(\log N)^{1/2+\delta}|\msc|q^r}{(d-1)^{(\ell+1)/2}}\right),\\
\label{Gijest} \frac{1}{d}\sum_{j\sim i}G_{ij}+\frac{\md\msc}{\sqrt{d-1}}
&=\frac{-\md^2\msc^{2\ell-1}(1+\msc^2)}{\sqrt{d-1}}(\I(\cG)-\msc)+  O\left(\frac{(\log N)^{1/2+\delta}|\msc|q^{r+1}}{(d-1)^{(\ell+1)/2}}\right).
\end{align}
Also, by the stability estimate Claim \ref{newrigid}, for any vertex $i$ with radius-$R$ tree neighborhood, and vertex $j$ adjacent to $i$, 
we have $|G_{ii}-\md|=O(|\msc|q^r)$ and $|G_{ij}-\md\msc/\sqrt{d-1}|=O(|\msc|q^r)$,
where the implicit constant depends only on $\n$.
It follows that
\begin{align}\begin{split}
\frac{G_{ij}G_{ij}}{G_{ii}}-\frac{\md\msc^2}{d-1}
&=\frac{-\frac{2\md^2\msc}{\sqrt{d-1}}(G_{ij}+\frac{\md\msc}{\sqrt{d-1}})-\frac{\md^2\msc^2}{d-1}(G_{ii}-\md)+\md(G_{ij}+\frac{\md\msc}{\sqrt{d-1}})^2}{\md G_{ii}}\\
\label{Gijfirst}
&=-\frac{2\msc}{\sqrt{d-1}}(G_{ij}+\frac{\md\msc}{d-1})-\frac{\msc^2}{d-1}(G_{ii}-\md)+O(q^{2r}).
\end{split}
\end{align}
Combining \eqref{Giiest}--\eqref{Gijfirst}, we get
\begin{align*}
G_{ii}-\frac{1}{d}\sum_{j: j\sim i}\frac{G_{ij}G_{ij}}{G_{ii}}-\msc
=\frac{d-2}{d-1}\msc^{2\ell+1}\md(\I(\cal G)-\msc)+O  \left(\frac{(\log N)^{1/2+\delta}|\msc|q^r}{(d-1)^{(\ell+1)/2}}\right),
\end{align*}
for all vertices $i$ which have radius-$R$ tree neighborhoods.
Then averaging over $i\in \qq{N}$, we obtain \eqref{IGequation}, as claimed.
\end{proof}

The equation \eqref{IGequation} implies
\begin{align}\label{IGequation-bis}
 \I(\cG)-\msc= \pa{1-\frac{d-2}{d-1}\md\msc^{2\ell+1}}^{-1} O \left(\frac{(\log N)^{1/2+\delta}|\msc|q^r}{(d-1)^{(\ell+1)/2}}\right),
\end{align}
provided that the term in the first bracket does not vanish.
To use this equation to show that the left-hand side is small, we require a lower bound on the term in the first bracket
on the right-hand side.
Since $1-(d-2)\md\msc^{2\ell+1}/(d-1)$ may be zero on the spectral domain $\cal D^*$, 
such a bound only holds on an $\ell$-dependent subset of the spectral domain, which we now define.
(In Section~\ref{sec:pfmr-decomp}, we will use the flexibility in the choice of $\ell \in \qq{\ell_*,2\ell_*}$ to recover the entire spectral domain.)

First, we define the Joukowsky transform $\phi$ to be
the holomorphic bijection from the upper half unit disk $\mathbb{D}_+$ to the upper half plane $\C_+$
given by
\begin{equation*}
\phi: w \in \mathbb{D}_+  \mapsto -\left(w+w^{-1}\right)\in \C_+
  .
\end{equation*}
It is the functional inverse of $z \mapsto \msc(z)$, i.e.\ $\msc(\phi(w))=w$.
For any $\ell\in\qq{\ell_*, 2\ell_*}$ as in \eqref{e:constchoice}, we define 
the small error parameter
\begin{align}\label{e:defeps}
\epsilon_\ell:=\frac{(\log N)^{1/2+2\delta}}{(d-1)^{(\ell+1)/2}}\ll (\log N)^{1-\alpha/2},
\end{align}
as well as the sets $\tilde \Lambda_\ell\subset \mathbb D_+$ and $\Lambda_\ell \subset \C_+$ by
\begin{align}
\label{lambdadef}
\tilde \Lambda_{\ell}\deq\left\{\msc(z): z\in\C_+, \left|1-\frac{d-2}{d-1}\msc^{2\ell+1}{ (z)}\md{ (z)}\right|\geq \epsilon_\ell\right\},
\qquad
\Lambda_\ell\deq \phi(\tilde \Lambda_\ell).
\end{align}

\begin{proposition}[Self-consistent equation] \label{prop:sce}
Under the assumptions of Theorem \ref{thm:mr}, and the choices of parameters given in \eqref{choosep1} and \eqref{choosep2}, for any $z\in \cal D^*$ (as in \eqref{spectralDrecall}) and $N\geq N(\alpha,\n,d,\delta)$ large enough, we have
\begin{align*}
\P(\Omega(z,\ell)\setminus \cap_{i\in \qq{N}} \Omega_i'(z,\ell)) =o(N^{-\n+1+\delta}).
\end{align*}
Moreover, for $z\in \cal D^*\cap \Lambda_\ell$ and any $\cG \in  \bigcap_{i\in \qq{N}} \Omega_i'(z,\ell) $,
the normalized Green's function of $\cG$ satisfies, for any $i,j\in \qq{N}$,
\begin{align*}
 \left|G_{ij}(\cal G,z)-P_{ij}(\cE_r(i,j,\cal G),z)\right|
 \leq  (\n+1)2^{2\n+44}|\msc| q^{r+1},
 \end{align*} 
 where $r=2\ell+1$.
\end{proposition}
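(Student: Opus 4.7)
My plan is to combine the self-consistent equation of Proposition~\ref{prop:sce1} with the lower bound on the linear coefficient that defines $\Lambda_\ell$, and then substitute the resulting estimate on $\I(\cG) - \msc$ back into the diagonal improvement formula \eqref{Giiequation} encoded in the definition of $\Omega_i'(z,\ell)$.

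The probability bound $\P(\Omega(z,\ell) \setminus \cap_{i\in\qq{N}} \Omega_i'(z,\ell)) = o(N^{-\n+1+\delta})$ is essentially already implicit in Proposition~\ref{prop:sce1}. By exchangeability of the uniform measure on $\GNd$, the conclusion of Proposition~\ref{prop:improvement2} continues to hold with vertex $1$ replaced by any $i \in \qq{N}$, so $\P(\Omega(z,\ell) \setminus \Omega_i'(z,\ell)) = o(N^{-\n+\delta})$ uniformly in $i$. A union bound over the $N$ vertices then yields the claim, losing only the factor $N$. This step requires no restriction to $\Lambda_\ell$.

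For the Green's function estimate, I fix $\cG \in \cap_{i}\Omega_i'(z,\ell)$ and $z \in \cal D^* \cap \Lambda_\ell$. Proposition~\ref{prop:sce1} provides the self-consistent equation \eqref{IGequation} for this $\cG$, while the definition \eqref{lambdadef} of $\Lambda_\ell$ gives the deterministic lower bound $|1 - \tfrac{d-2}{d-1}\md\msc^{2\ell+1}| \geq \epsilon_\ell = (\log N)^{1/2+2\delta}/(d-1)^{(\ell+1)/2}$. Rearranging \eqref{IGequation} and dividing by this quantity, the explicit powers of $(d-1)^{(\ell+1)/2}$ from $\epsilon_\ell$ and from the error term in \eqref{IGequation} cancel, and we obtain
\begin{align*}
|\I(\cG) - \msc| \leq C\, (\log N)^{-\delta} |\msc| q^r,
\end{align*}
with $C$ depending only on $\n$. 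This gain of a factor $(\log N)^{-\delta}$ over the bound $|\msc|q^r$ implicit in $\Omega(z,\ell)$ is exactly what the restriction to $\Lambda_\ell$ buys.

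Finally, for off-diagonal entries the estimate \eqref{e:estimateforoffdiag} from the definition of $\Omega_i'(z,\ell)$ already furnishes the desired bound. For diagonal entries I substitute the improved estimate on $|\I(\cG)-\msc|$ into \eqref{Giiequation}. The prefactor is controlled by $|\md^2\msc^{2\ell}\mu/(d-1)^{\ell+1}| \leq 8|\msc|^{2\ell}\leq 8$ (using $|\md|\leq 2$, $|\msc|\leq 1$, and $\mu \leq 2(d-1)^{\ell+1}$), so the middle term of \eqref{Giiequation} is at most $C'|\msc|(\log N)^{-\delta}q^r$. Converting this into the target form $|\msc|q^{r+1} = |\msc|q^r\cdot(|\msc|/\sqrt{d-1})$ reduces to showing $(\log N)^{-\delta}\sqrt{d-1}/|\msc|$ is bounded; since on $\cal D^*$ we have $|\msc|\geq c/d$ and $(\log N)^{-\delta}d^{3/2}\to 0$, this holds for $N \geq N(\alpha,\n,d,\delta)$ large enough. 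The only genuine nuisance---rather than a conceptual difficulty---is the bookkeeping check that the combined numerical constant does not exceed $(\n+1)2^{2\n+44}$; this is a direct computation from the explicit constants in Proposition~\ref{prop:sce1} and in \eqref{Giiequation}.
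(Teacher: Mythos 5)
Your proposal is correct and follows essentially the same route as the paper: a union bound over vertices for the probability estimate, then inverting the self-consistent equation \eqref{IGequation} using the lower bound $\epsilon_\ell$ defining $\Lambda_\ell$ to get $|\I(\cG)-\msc| = O(|\msc|q^r(\log N)^{-\delta})$, and substituting back into \eqref{Giiequation} while absorbing the $(\log N)^{-\delta}$ gain into the factor $q$ for $N$ large. The constant bookkeeping you defer is exactly what the paper does, with the off-diagonal bound \eqref{e:estimateforoffdiag} dominating the final constant.
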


\begin{proof}
Let $z\in \cal D^*\cap \Lambda_\ell$. Then by the definition of the set $\tilde \Lambda_\ell$ in \eqref{lambdadef}, we have
\begin{align*}
 \left|1-\frac{d-2}{d-1}\msc^{2\ell+1}\md\right|\geq \frac{(\log N)^{1/2+2\delta}}{(d-1)^{(\ell+1)/2}},
\end{align*}
and \eqref{IGequation-bis} implies
\begin{align*}
|\I(\cal G)-\msc|=O\left(|\msc|q^r (\log N)^{-\delta}\right),
\end{align*}
where the implicit constant depends only on $\n$.
Plugging the above expression into \eqref{Giiequation}, we get 
\begin{align*} 
G_{ii}(\cal G,z)=P_{ii}(\cE_r(i,i,\cal G),z)
 +O_{\leq }\left((1+2^{2\n+40})|\msc| q^{r+1}\right),
 \end{align*}
 for $N$ large enough.  This finishes the proof of Proposition \ref{prop:sce} by combining with \eqref{e:estimateforoffdiag}.
\end{proof}

\subsection{Decomposition of the spectral domain}
\label{sec:pfmr-decomp}

The following lemma gives a precise description of the sets $\tilde\Lambda_\ell$,
stating that, except for two small regions near $\pm1$,
the half disk $\mathbb D_+$ is contained in the union of the sets $\tilde\Lambda_\ell$, i.e., in $\cup_{\ell\in\qq{\ell_*, 2\ell_*}}\tilde\Lambda_\ell$.
To be precise, we define the spectral domains
\begin{align}
\label{spectralDell}
\cal D_{\ell}
&=\{z\in \C_+: |z|\leq 2d,\quad \Im [z]\geq (d-1)^{24\ell}\log N/N ,\quad |z\pm 2|\geq 4\epsilon_{\ell}\},
\\
\label{spectraltD}
\tilde {\cal D}_{\ell}
&=\mathbb D_+\setminus\left\{w=e^{\ii\theta}r\in \mathbb D_+: |\theta|\leq \epsilon_\ell, 1-\epsilon_\ell\leq r<1\right\}.
\end{align}

\begin{lemma}{\label{domainp}}
For any $\ell\in\qq{\ell_*,2\ell_*}$, define $\tilde\Lambda_{\ell}$ as in \eqref{lambdadef}.
Then ${\mathbb D}_+\setminus\tilde\Lambda_\ell$ is contained in
\begin{multline*}
\left\{w=e^{\ii\theta}r: 0<\theta\leq \epsilon_\ell, 1-\epsilon_\ell\leq r<1\right\} 
\cup
\left\{w= e^{\pi \ii}e^{\ii \theta}r: -\epsilon_\ell\leq \theta<0, 1-\epsilon_\ell\leq r<1\right\}
\cup \\
\bigcup_{k=1}^{\ell}\left\{w=e^{\frac{k\pi \ii}{\ell+1}}e^{\ii\theta}r: |\theta|\leq \frac{2\pi}{d(\ell+1)}, 0<r< 1\right\}.
\end{multline*}
As a consequence, for any $\theta_0\in (0,\pi)$, there exists some $\ell\in \qq{\ell_*, 2\ell_*}$ such that 
\begin{align}\label{e:tDsubset}
\tilde{\cal D}_{\ell}\cap \{w=e^{\ii\theta_0}r: 0< r< 1\}\subset \tilde\Lambda_\ell.
\end{align}
\end{lemma}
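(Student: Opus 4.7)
Plan. The first step is to rewrite the quantity $1 - \tfrac{d-2}{d-1}\msc^{2\ell+1}(z)\md(z)$ as a rational function of $w := \msc(z)$. Using $\md = \msc/(1 - \msc^2/(d-1))$, it equals
\[
1 - g(w), \qquad g(w) := \frac{(d-2)\,w^{2\ell+2}}{(d-1) - w^2},
\]
so the defining condition $w \notin \tilde\Lambda_\ell$ becomes $|1 - g(w)| < \eps_\ell$. In polar coordinates $w = re^{\ii\theta}$, the task reduces to extracting constraints on $r$ and $\theta$ from the assumption that $g(w)$ lies in the disk of radius $\eps_\ell$ around $1$.

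The inequality $|g(w)-1|<\eps_\ell$ splits into an amplitude and a phase statement. For the amplitude, since $(d-1)-w^2$ has real part $\geq d-2$, one has $|g(w)| \leq r^{2\ell+2}$, so $|g(w)| > 1-\eps_\ell$ forces $r \geq 1 - \eps_\ell$. For the phase, writing
\[
\arg g(w) \equiv (2\ell+2)\theta - \arg\bigl((d-1)-w^2\bigr) \mod 2\pi,
\]
and estimating $|\arg((d-1)-w^2)| \leq \arctan(1/(d-2)) \leq 1/(d-2)$ (using that $(d-1)-w^2$ has real part $\geq d-2$), together with $|\arg g(w)| \leq 2\eps_\ell$, I would conclude that there exists $k \in \{0,1,\dots,\ell+1\}$ with
\[
\bigl|\theta - k\pi/(\ell+1)\bigr| \;\leq\; \frac{2\eps_\ell + 1/(d-2)}{2\ell+2} \;\leq\; \frac{2\pi}{d(\ell+1)},
\]
the last inequality holding because $d$ is large relative to $\eps_\ell$. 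For $k \in \{1,\dots,\ell\}$ this places $w$ directly inside the $k$-th wedge in the statement.

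The corner cases $k=0$ and $k=\ell+1$ (i.e.\ $w$ near $\pm 1$) require a sharper argument, because $2\pi/(d(\ell+1))$ is typically much larger than $\eps_\ell$. Here the plan is to Taylor expand $N(w) := (d-1)-w^2-(d-2)w^{2\ell+2}$ around $w = \pm 1$: since $N(\pm 1) = 0$, $|N'(\pm 1)| = 2 + (d-2)(2\ell+2)$, and the Taylor remainder is controlled by $|N''(w)| = O(d\ell^2)$, one gets $|N(w)| \geq \tfrac12|N'(\pm 1)|\,|w\mp 1|$ on $|w\mp 1|\leq 1/\ell$, so the hypothesis $|N(w)| < d\eps_\ell$ sharpens to $|w\mp 1| \leq 2d\eps_\ell/|N'(\pm 1)| \leq \eps_\ell$. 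A short polar computation based on $|w\mp 1|^2 = (1-r)^2 + 2r(1-\cos(\theta\mp\pi))$ then yields both $1-r\leq\eps_\ell$ and $|\theta|\leq\eps_\ell$ (resp.\ $|\theta-\pi|\leq\eps_\ell$), exactly the near-$\pm 1$ region.

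For the ``consequence'' part, the task is to choose $\ell \in \qq{\ell_*,2\ell_*}$ so that the ray $\arg w = \theta_0$ misses every wedge of level $\ell$; equivalently, $(\ell+1)\theta_0/\pi \mod 1$ lies outside the bad arc $[0,2/d]\cup[1-2/d,1]$ of length $4/d$. The $\ell_*+1$ values $(\ell+1)\theta_0/\pi \mod 1$ for $\ell\in\qq{\ell_*,2\ell_*}$ form an arithmetic progression on $\R/\Z$ with step $s := \theta_0/\pi \mod 1$: if $s > 4/d$, then consecutive terms cannot both lie in the bad arc, so at least one is good; if $s \leq 4/d$, then for $N$ large enough (using $\ell_* \to \infty$) the total displacement $\ell_* s$ already exceeds $4/d$, so the terms cannot all remain inside the bad arc. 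The main obstacle is the careful bookkeeping of constants between the two regimes, in particular ensuring that the Taylor argument near $\pm 1$ and the phase argument in the bulk both produce bounds that fit inside the specific regions stated in the lemma, since the phase estimate alone is too weak precisely at $w=\pm 1$ and must be supplemented by the Taylor estimate.
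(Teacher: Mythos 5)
Your proof of the containment statement is correct and takes a route that differs from the paper's mainly in the corner cases. For the bulk wedges, your amplitude/phase analysis of $g(w)=(d-2)w^{2\ell+2}/((d-1)-w^2)$ is essentially the paper's estimate $|1-\msc^{2\ell+2}|<2/(d-1)+2\epsilon_\ell$ in different clothing. Near $w=\pm 1$ the paper refines the same inequality to $\frac{d-2}{d-1}|1-\msc^{2\ell+2}|\leq\frac{1}{d-1}|1-\msc^2|+2\epsilon_\ell$ and exploits the two-sided polar bound, whereas you Taylor-expand the numerator $N(w)$ about $\pm1$; both work, and your version is self-contained provided you record (as you implicitly do) that the first-pass phase estimate already confines $w$ to $|w\mp1|\lesssim 1/d\ll 1/\ell$ so the quadratic remainder is dominated by the linear term.

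The argument for the consequence \eqref{e:tDsubset} has a genuine gap at the endpoints. The reduction ``miss every wedge of level $\ell$, equivalently $(\ell+1)\theta_0/\pi \bmod 1\notin[0,2/d]\cup[1-2/d,1]$'' is not an equivalence: the bad set contains wedges only for $k\in\qq{1,\ell}$, so nothing needs to be avoided at $k=0$ or $k=\ell+1$ (the near-$\pm1$ corners are excised by $\tilde{\cal D}_\ell$). This matters because your pigeonhole then fails exactly where the stronger condition is unachievable. If $\theta_0$ is very small — and in the application $\theta_0$ ranges over a lattice of spacing $\pi/N^3$, so it can be far smaller than $1/(d\ell_*)$ — then $s=\theta_0/\pi\leq 4/d$ and the total displacement $\ell_* s=\ell_*\theta_0/\pi$ does \emph{not} exceed $4/d$; the claim ``for $N$ large enough the total displacement already exceeds $4/d$'' is false since $\theta_0$ may shrink with $N$. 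Symmetrically, for $\theta_0$ close to $\pi$ one has $s\geq 1-4/d$, and there two consecutive terms of the progression \emph{can} both lie in the bad arc, so the dichotomy ``$s>4/d$ implies consecutive terms cannot both be bad'' also breaks. The repair is exactly the case split the paper makes: for $\theta_0\in(0,(1-\tfrac2d)\tfrac{\pi}{\ell_*+1})\cup((1+\tfrac2d)\tfrac{\ell_*\pi}{\ell_*+1},\pi)$ take $\ell=\ell_*$, since the ray then meets no wedge with $k\in\qq{1,\ell_*}$; for the remaining $\theta_0$ the step of the progression is bounded below by $(1-\tfrac2d)\tfrac{\pi}{\ell_*+1}$ and above by $\tfrac{\pi}{2}$ (after using the symmetry $\theta_0\mapsto\pi-\theta_0$), and only then does the total-displacement pigeonhole close. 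As written, your argument does not cover all $\theta_0\in(0,\pi)$.
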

\begin{proof}
By the definition of the set $\tilde\Lambda_\ell$, its complement is  
\begin{equation*}
\left|1-\frac{d-2}{d-1}\msc^{2\ell+1}\frac{\msc}{1-\msc^2/(d-1)}\right|< \epsilon_\ell.
\end{equation*}
This implies that
\begin{equation*}
\left|1-\frac{\msc^2}{d-1}-\frac{d-2}{d-1}\msc^{2\ell+2}\right|< 2\epsilon_\ell,
\end{equation*}
and therefore
\begin{equation}\label{mscdomain}
\left|1-\msc^{2\ell+2}\right|< \left|\frac{\msc^2}{d-1}-\frac{\msc^{2\ell+2}}{d-1}\right|+2\epsilon_\ell<\frac{2}{d-1}+2\epsilon_\ell.
\end{equation}
From direct computation, for any $0\leq r\leq 1$ and $\theta \in [-\pi, \pi]$, we have the following simple estimate:
\begin{align}
\label{eitr}\frac{1-r}{2}+\frac{\sqrt{r}|\theta|}{\pi}\leq |1-e^{\ii\theta}r|\leq (1-r) +|\theta| .
\end{align}
Therefore \eqref{mscdomain} implies
\begin{align*}
 \msc\in \bigcup_{k=0}^{\ell+1}\left\{w=e^{\frac{k\pi \ii}{\ell+1}}e^{\ii\theta}r\in \mathbb{D}_+: |\theta|\leq \frac{2\pi}{d(\ell+1)}, 0< 1-r^{2(\ell+1)}\leq \frac{5}{d}\right\}.
\end{align*}
We have better estimates if $\msc=e^{\ii\theta}r$ or $\msc=e^{\ii\pi}e^{\ii\theta}r$,
for some $|\theta|\leq \frac{2\pi}{d(\ell+1)}$ and  $0<1-r^{2(\ell+1)}\leq \frac{5}{d}$.
In this case, on the complement of $\tilde\Lambda_\ell$, 
\begin{align*}
\left|\frac{d-2}{d-1}(1-\msc^{2\ell+2})\right|
\leq \left|\frac{1-\msc^2}{d-1}\right|+ 2\epsilon_\ell.
\end{align*}
Combining the above expression with \eqref{eitr}, we get
\begin{align*}
\frac{d-2}{d-1} \left(\frac{1-r^{2(\ell+1)}}{2}+\frac{2(\ell+1)r^{(\ell+1)}|\theta|}{\pi}\right)
\leq \frac{1}{d-1}\left(1-r^2+2|\theta|\right)+2\epsilon_\ell.
\end{align*}
It follows that $|\theta|\leq \epsilon_\ell$ and $1-\epsilon_\ell\leq r<1$. This finishes the proof of the first statement.

For the second statement, if $\theta_0\in (0, (1-\frac{2}{d})\frac{\pi}{\ell_*+1})\cup ((1+\frac{2}{d})\frac{\ell_*\pi}{\ell_*+1},\pi)$, then $\{z=e^{\ii \theta_0}r: 0\leq r\leq 1\}\cap \tilde{\cal D}_{\ell_*}\subset \tilde \Lambda_{\ell_*}$. In the following we consider the case, $\theta_0\in [(1-\frac{2}{d})\frac{\pi}{\ell_*+1}, (1+\frac{2}{d})\frac{\ell_*\pi}{\ell_*+1}]$. We use the convention that $(a\mod \pi )\in [-\pi/2, \pi/2)$, for any $a\in \R$.  If we can find some $\ell\in \qq{\ell_*, 2\ell_*}$, such that $((\ell+1) \theta_0 \mod \pi)\in [-\pi/2,-\pi/8]\cup [\pi/8, \pi/2) $, then there exists some integer $k$ such that
\begin{align*}
-\frac{3\pi}{8(\ell+1)}\leq \left|\theta_0 -\frac{k\pi}{\ell+1}-\frac{\pi}{2(\ell+1)}\right|\leq \frac{3\pi}{8(\ell+1)},
\end{align*}
and thus $\{z=e^{\ii \theta_0}r: 0< r< 1\}\in \tilde\Lambda_{\ell}$. In the following we prove such $\ell$ exists. By symmetry we assume $\theta_0\in [(1-\frac{2}{d})\frac{\pi}{\ell_*+1},\frac{\pi}{2}]$. We consider the following numbers,
\begin{align}\label{sequence}
(\ell_*+1)\theta_0 \mod \pi, (\ell_*+2)\theta_0\mod \pi,\dots, (2\ell_*+1)\theta_0\mod\pi.
\end{align}
If $((\ell_*+1)\theta_0 \mod \pi ) \in [-\pi/2,-\pi/8]\cup [\pi/8, \pi/2)$, then we can take $\ell=\ell_*$. Otherwise, we assume $((\ell_*+1)\theta_0\mod \pi) \in (-\pi/8,\pi/8)$. Since $(2\ell_*+1)\theta_0-(\ell_*+1)\theta_0=\ell_*\theta_0 \geq (1-\frac{2}{d})\frac{\ell_*\pi}{\ell_*+1}\geq\frac{\pi}{2}$, the above sequence \eqref{sequence} can not all stay in the interval $(-\pi/8,\pi/8)$. Say $(\ell+1)\theta_0 \mod \pi$ is the first number  in the above sequence  which is not in $(-\pi/8,\pi/8)$. We can take this $\ell$, then $(\ell\theta_0\mod\pi)\in(-\pi/8,\pi/8)$, and $((\ell+1)\theta_0\mod \pi) \in [-\pi/2, -3\pi/8)\cup [\pi/8, \pi/2)$. This finishes the proof. 
\end{proof}

\begin{lemma}
\begin{enumerate}
\item
For the choice of parameters in \eqref{choosep1}--\eqref{choosep2}, for any $z\in \cal D_\ell$,
all of the conditions in 
Propositions~\ref{prop:betacomp}, \ref{prop:stabilityGT}, \ref{prop:stabilitytGT} and  \ref{improvetG} are satisfied
for $K \in \{2,2^{10}\}$; i.e.,
\begin{align*}
\sqrt{d-1}\geq \max\{(\n+1)^2 2^{2\n+10}, 2^8(\n+1)K\},\quad \n'^2q^\ell\ll1, \quad \sqrt{N\Im[z]}q^{3r+2}\gg M.
\end{align*}
\item For any $\ell\in \qq{\ell_*,2\ell_*}$, we have
\begin{align}\label{e:Dsub1}
\cal D^*\subset \cal D_{\ell}\subset \phi(\tilde{\cal D}_{\ell}).
\end{align}
\end{enumerate}
\end{lemma}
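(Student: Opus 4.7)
The proof of part (i) consists of verifying the three listed inequalities by direct estimation using \eqref{choosep1}--\eqref{choosep2} together with the defining bound on $\eta$ in $\cal D_\ell$. The inequality $\sqrt{d-1}\geq \max\{(\n+1)^22^{2\n+10},2^8(\n+1)K\}$ for $K\in\{2,2^{10}\}$ is immediate from the standing assumption $\sqrt{d-1}\geq (\n+1)2^{2\n+45}$ and $\n\geq 8$. For $\n'^2q^\ell\ll 1$, I combine $\n'=\lfloor\log N\rfloor$, $q\leq 1/\sqrt{d-1}$, and $\ell\geq \ell_*\geq \alpha\log_{d-1}\log N -1$ to obtain $\n'^2q^\ell\lesssim (\log N)^{2-\alpha/2}\to 0$ since $\alpha>4$. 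For $\sqrt{N\eta}q^{3r+2}\gg M$, the lower bound $\sqrt{N\eta}\geq (d-1)^{12\ell}\sqrt{\log N}$ inherited from $\cal D_\ell$, combined with the pointwise estimate $|m_{sc}(z)|\geq c_d>0$ valid for $|z|\leq 2d$ (with $c_d$ depending only on $d$), yields after the substitutions $r=2\ell+1$ and $M=(d-1)^{9\ell}(\log N)^\delta$ the key cancellation between the $(d-1)^{12\ell}$ factor from $\sqrt{N\eta}$ and the $(d-1)^{9\ell+3\ell+5/2}$ factor from $Mq^{-(3r+2)}$, reducing the check to verifying $c_d^{3r+2}(\log N)^{1/2-\delta}\gtrsim (d-1)^{5/2}$, which holds for $N$ large.

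For part (ii), I first establish $\cal D^*\subset \cal D_\ell$ by comparing the three defining inequalities term by term. The common condition $|z|\leq 2d$ is shared. The bound $\ell\leq 2\ell_*\leq 2\alpha\log_{d-1}\log N$ gives $(d-1)^{24\ell}\leq (\log N)^{48\alpha}$, so the $\eta$-threshold $(d-1)^{24\ell}\log N/N$ of $\cal D_\ell$ is bounded by the $\cal D^*$ threshold $(\log N)^{48\alpha+1}/N$. Conversely, $\ell\geq \ell_*$ gives $(d-1)^{(\ell+1)/2}\gtrsim (\log N)^{\alpha/2}/\sqrt{d-1}$, so $4\epsilon_\ell\lesssim (\log N)^{1/2+2\delta-\alpha/2}$ is dominated by the $\cal D^*$ edge threshold $(\log N)^{1-\alpha/2}$ whenever $\delta<1/4$ and $N$ is large.

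The second inclusion $\cal D_\ell\subset \phi(\tilde{\cal D}_\ell)$ is equivalent to $\cal D_\ell\cap \phi(\mathbb{D}_+\setminus \tilde{\cal D}_\ell)=\emptyset$. I will use the algebraic identity
\begin{equation*}
\phi(w)+2=-(w-1)^2/w,\qquad w\in \mathbb{D}_+,
\end{equation*}
obtained by direct expansion of $\phi(w)=-(w+1/w)$. For any $w=e^{i\theta}r$ in the removed wedge with $|\theta|\leq \epsilon_\ell$ and $1-\epsilon_\ell\leq r<1$, the triangle inequality yields $|w-1|\leq r|\theta|+(1-r)\leq 2\epsilon_\ell$ and $|w|\geq 1-\epsilon_\ell\geq 1/2$ for $N$ large, hence $|\phi(w)+2|\leq 8\epsilon_\ell^2<4\epsilon_\ell$. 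Since $\cal D_\ell$ excludes the disk $\{|z+2|<4\epsilon_\ell\}$ by its edge constraint, the image of the wedge lies outside $\cal D_\ell$, giving disjointness.

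The main obstacle is the third parameter check in part (i): tracking how $3r+2=6\ell+5$, which grows with $\ell\in \qq{\ell_*,2\ell_*}$, interacts with the $q^{3r+2}$ factor via a lower bound on $|m_{sc}|$. The balance succeeds precisely because the $\eta$-threshold $(d-1)^{24\ell}\log N/N$ in the definition of $\cal D_\ell$ was chosen so that the $(d-1)^{12\ell}$ contribution to $\sqrt{N\eta}$ exactly cancels the $(d-1)^{9\ell+(3r+2)/2}$ coming from $Mq^{-(3r+2)}$, leaving only a bounded power of $d-1$ and a polynomial-in-$\log N$ discrepancy that is reconciled by taking $N$ large; this is also the arithmetic that makes the exponent $48\alpha+1$ in $\cal D^*$ match the $\eta$-threshold of $\cal D_{2\ell_*}$, giving consistency between parts (i) and (ii).
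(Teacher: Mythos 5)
Parts (ii) and the first two conditions of part (i) are fine: your proof of $\cal D^*\subset\cal D_\ell$ and of $\cal D_\ell\subset\phi(\tilde{\cal D}_\ell)$ via the identity $\phi(w)+2=-(w-1)^2/w$ is essentially the paper's own argument (the paper just asserts $|\phi(w)+2|<4\epsilon_\ell$ on the removed wedge; your factorization makes this cleaner), and the checks of $\sqrt{d-1}\geq\max\{(\n+1)^22^{2\n+10},2^8(\n+1)K\}$ and $\n'^2q^\ell\ll1$ are correct. The paper offers no proof of part (i) at all ("straightforward to check"), so there is nothing to compare there beyond correctness.

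However, your verification of the third condition $\sqrt{N\Im[z]}\,q^{3r+2}\gg M$ contains a genuine error. Your bookkeeping of the powers of $d-1$ is right: after cancelling $(d-1)^{12\ell}$ against $(d-1)^{9\ell+3\ell+5/2}$ the check reduces to $|\msc(z)|^{3r+2}(\log N)^{1/2-\delta}\gg(d-1)^{5/2}$. But your final claim that $c_d^{3r+2}(\log N)^{1/2-\delta}\gtrsim(d-1)^{5/2}$ "holds for $N$ large" is false for a uniform constant $c_d<1$: the exponent $3r+2=6\ell+5$ grows like $\alpha\log_{d-1}\log N$, so
\begin{equation*}
c_d^{3r+2}\;=\;(\log N)^{-(6\ell+5)\log_{d-1}(1/c_d)/(\ell/\log_{d-1}\log N)}\;\leq\;(\log N)^{-6\alpha\log_{d-1}(1/c_d)+O(1)},
\end{equation*}
and since the only lower bound valid on all of $\{|z|\leq 2d\}$ is $|\msc(z)|\geq 1/(2d+1)\leq (d-1)^{-1}$, one gets $c_d^{3r+2}\leq(\log N)^{-6\alpha+O(1)}$, which decays much faster than $(\log N)^{1/2-\delta}$ grows (recall $\alpha>4$ and $\delta<1/\n\leq 1/8$). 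So the left-hand side of your reduced inequality tends to $0$, not to infinity. The point is that a $d$-dependent constant lower bound on $|\msc|$ is useless once it is raised to a power of order $\ell$; the check only closes in the regime where $|\msc(z)|^{6\ell}$ stays bounded below, i.e.\ where $|\msc(z)|\geq 1-O(1/\ell)$, and your argument does not establish (or even discuss) such a bound on $\cal D_\ell$, whose points can have $|z|$ as large as $2d$ and hence $|\msc(z)|$ of order $1/d$. As written, this step of your proof fails; you would need either a genuinely stronger lower bound on $|\msc|$ on the relevant part of the domain, or a restriction of the claim to the region where it is available.
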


\begin{proof}
It is straightforward to check that (\rn1) holds. In the following, we therefore only prove (\rn2).
For this, notice that $(d-1)^{24\ell}\leq (d-1)^{48\ell_*}\leq (\log N)^{48\alpha}$, and that in combination with \eqref{e:defeps},
it follows that $\cal D^*\subset \cal D_\ell$.
For the second inclusion in \eqref{e:Dsub1}, observe that,
for any $w=e^{\ii \theta}r$ such that $0< \theta \leq \epsilon_{\ell}$ and $1-\epsilon_{\ell} \leq r\leq 1$, we have
\begin{align*}
|\phi(w)+2|=\left|e^{\ii \theta}r+\frac{1}{e^{\ii\theta}r}-2\right|< 4\epsilon_{\ell},
\end{align*} 
and that we have similar estimates for $w=e^{\ii\pi}e^{\ii \theta}r$ with $-\epsilon_{\ell}\leq \theta\leq 0$ and $1-\epsilon_{\ell} \leq r\leq 1$. Therefore,
\begin{align*}
\{z\in \C_+: |z\pm2|\geq 4\epsilon_{\ell}\}\subset \phi (\tilde {\cal D}_{\ell}),
\end{align*}
and $\cal D_\ell\subset \phi(\tilde{\cal D}_\ell)$ follows. This finishes the proof of \eqref{e:Dsub1}.
\end{proof}

\subsection{Proof of Theorem~\ref{thm:mr}}

We define a lattice on $\mathbb{D}_+$ by
\begin{align*}
\tilde{L}:=\left\{e^{\ii \theta} r\in \mathbb{D}_+: \theta\in\frac{\pi\Z}{N^3}, r\in\frac{\Z}{N^3}\right\}.
\end{align*}
The image of $\tilde L$ under the Joukowsky transform defines a discrete approximation of $\cal D^*$ by
 \begin{align}\label{defLattice}
L:=\phi(\tilde L)\cap \cal D^*.
\end{align}
Notice that $\cal D^*$ can indeed be well approximated by $L$, in the sense that for any $z\in \cal D^*$ there is some $z'\in L$ such that $|z-z'|=(\log N)^{O(1)}/N^3$.
Therefore, by the following claim, we only need to prove Theorem~\ref{thm:mr} for $z\in L$.
The claim is a consequence of the Lipschitz property of Green's function.

\begin{claim}\label{c:LipschitzG}
For any $\ell\in\qq{\ell_*,2\ell_*}$, and $z,z'\in \cal D^*$ with $|z-z'|=(\log N)^{O(1)}/N^3$, we have
\begin{align*}
\Omega^-(z,\ell)\subset \Omega(z',\ell).
\end{align*}
\end{claim}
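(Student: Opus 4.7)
The plan is to derive the claim from the Lipschitz continuity in the spectral parameter of both sides of the defining inequality. The key observation is that on $\cal D^*$ the spectral scale is at least $(\log N)^{48\alpha+1}/N$, whereas $|z-z'|$ is only $(\log N)^{O(1)}/N^3$, so resolvent differences are bounded by $(\log N)^{O(1)}/N$. On the other hand, the bound $|\msc|q^r$ on the right-hand side is at least $(\log N)^{-O(\alpha)}$ because $r=2\ell+1=O(\log\log N)$. This gap, together with the factor $1/2$ that distinguishes $\Omega^-(z,\ell)$ from $\Omega(z',\ell)$, will easily absorb the continuity error.

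First, I will bound the Green's function difference. The resolvent identity $G(z)-G(z')=(z-z')G(z)G(z')$ combined with Cauchy--Schwarz and the Ward identity $\sum_k|G_{ik}(z)|^2=\im G_{ii}(z)/\im z\leq 1/(\im z)^2$ yields
\begin{equation*}
|G_{ij}(z)-G_{ij}(z')|\leq \frac{|z-z'|}{\im z\cdot \im z'}.
\end{equation*}
The identical argument applied to the resolvent of the bounded self-adjoint normalized adjacency matrix of the (in general infinite) graph $\TE(\cE_r(i,j,\cG))$ gives the analogous Lipschitz bound for $P_{ij}(\cE_r(i,j,\cG),z)$. Since $z,z'\in\cal D^*$, both denominators are at least $((\log N)^{48\alpha+1}/N)^2$, while the numerator is $(\log N)^{O(1)}/N^3$, so both differences are bounded by $(\log N)^{O(1)}/N$.

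Next, I will compare the thresholds $|\msc(z)|q(z)^r$ and $|\msc(z')|q(z')^r$. The Stieltjes transform $\msc$ is holomorphic and bounded on $\cal D^*$ (the condition $|z\pm 2|\geq(\log N)^{-\alpha/2+1}$ keeps $\msc$ away from $\pm 1$), hence has bounded derivative there, so these two quantities differ by $O((\log N)^{O(1)}/N^3)$. Moreover, the algebraic equation $z=-\msc-1/\msc$ combined with $|z|\leq 2d$ gives the uniform lower bound $|\msc(z)|\geq 1/(1+2d)$, and since $r\leq 4\ell_*+3=O(\log\log N)$, we obtain $q(z)^r=|\msc(z)|^r/(d-1)^{r/2}\geq (\log N)^{-O(\alpha)}$. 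Thus the Lipschitz error is smaller than $\tfrac14|\msc(z')|q(z')^r$ for $N$ large enough.

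By the triangle inequality, for any $\cG\in\Omega^-(z,\ell)$ and any $i,j\in\qq{N}$,
\begin{align*}
&|G_{ij}(z')-P_{ij}(\cE_r(i,j,\cG),z')|\\
&\leq |G_{ij}(z)-P_{ij}(\cE_r(i,j,\cG),z)|+|G_{ij}(z)-G_{ij}(z')|+|P_{ij}(\cE_r(i,j,\cG),z)-P_{ij}(\cE_r(i,j,\cG),z')|\\
&\leq \tfrac{1}{2}|\msc(z)|q(z)^r+\tfrac{1}{4}|\msc(z')|q(z')^r\leq |\msc(z')|q(z')^r,
\end{align*}
using the assumption $\cG\in\Omega^-(z,\ell)$ for the first term and the preceding two paragraphs for the remaining two. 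Since $\Omega^-(z,\ell)\subset\bar\Omega$ already ensures $\cG\in\bar\Omega$, this shows $\cG\in\Omega(z',\ell)$, which is the claim. No additional structural input beyond the resolvent identity, the Ward identity, and the definition of $\cal D^*$ enters; the argument amounts to verifying that the lattice mesh $N^{-3}$ is finer than the spectral scale by a polynomial factor, which is built into the choice of $L$ in \eqref{defLattice}.
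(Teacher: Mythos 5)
Your proof is correct and follows essentially the same route as the paper: the resolvent identity gives a Lipschitz bound of order $(\log N)^{O(1)}/N$ for both $G_{ij}$ and $P_{ij}$ (the paper bounds the sum $\sum_m|G_{im}(z)||G_{mj}(z')|$ by $N\cdot O(1)\cdot N$ using the a priori bound from $\Omega^-(z,\ell)$ and $|G_{mj}(z')|\leq 1/\eta\leq N$, where you use Cauchy--Schwarz and the Ward identity instead), and this error is absorbed by the factor-$1/2$ gap between $\Omega^-$ and $\Omega$ since $|\msc|q^r\geq(\log N)^{-O(\alpha)}$. Your explicit comparison of the thresholds $|\msc(z)|q(z)^r$ and $|\msc(z')|q(z')^r$ is a detail the paper leaves implicit, and it is handled correctly.
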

\begin{proof}
For any graph $\cG\in \Omega^-(z,\ell)$, the Green's function of its normalized adjacency matrix satisfies
\begin{align*}
|G_{ij}(z)-G_{ij}(z')|\leq |z-z'|\sum_{m=1}^N|G_{im}(z)G_{mj}(z')|\leq (\log N)^{O(1)}/N,
\end{align*}
where we used $|z-z'|=(\log N)^{O(1)}/N^3$, $|G_{im}(z)|=O(1)$ from the definition \eqref{defOmega-} of $\Omega^-(z,\ell)$ and \eqref{e:boundPiimsc},
as well as the trivial bound $|G_{mj}(z')|\leq 1/\eta\leq N$.
Moreover, the same estimate holds for $|P_{ij}(\cE_r(i,j,\cal G),z)-P_{ij}(\cE_r(i,j,\cal G),z')|$. As a result, 
\begin{align*}
  \left|G_{ij}(z')-P_{ij}(\cE_r(i,j,\cal G),z')\right|
  =\left|G_{ij}(z)-P_{ij}(\cE_r(i,j,\cal G),z)\right| +(\log N)^{O(1)}/N\leq |\msc|q^r,
\end{align*}
and the claim follows.
\end{proof}

\begin{claim}\label{c:Omegainclusion}
For any $\ell\in\qq{\ell_*,2\ell_*}$, we have
\begin{align*}
\Omega^-(z,\ell)\subset \Omega^-(z,\ell_*),
\end{align*}
provided that $\sqrt{d-1}\geq 2^{2\n+3}$.
\end{claim}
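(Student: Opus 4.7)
The plan is to reduce the claim to the triangle inequality
\begin{equation*}
|G_{ij}-P_{ij}(\cE_{r_*}(i,j,\cG))|
\leq |G_{ij}-P_{ij}(\cE_r(i,j,\cG))|
+|P_{ij}(\cE_r(i,j,\cG))-P_{ij}(\cE_{r_*}(i,j,\cG))|,
\end{equation*}
control the first term directly from the hypothesis $\cG\in\Omega^-(z,\ell)$ (which yields $\frac12|\msc|q^r$), and control the second term by applying the comparison estimate \eqref{e:compatibility} of Proposition~\ref{boundPij}. The case $r=r_*$ is trivial since $\cE_{r}(i,j,\cG)=\cE_{r_*}(i,j,\cG)$; so I only need to handle $r\geq r_*+2$.

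To apply \eqref{e:compatibility} with the ``larger'' graph $\cG_0=\cE_r(i,j,\cG)$, the ``smaller'' graph $\cH_0=\cE_{r_*}(i,j,\cG)\subset\cG_0$, and the parameter ``$\ell$'' in the proposition set to $r_*$, I first need to check the two standing assumptions and the inclusion $\cE_{r_*}(i,j,\cG_0)\subset\cH_0$. The inclusion is immediate because $\cG_0\subset\cG$ implies $\cE_{r_*}(i,j,\cG_0)\subset\cE_{r_*}(i,j,\cG)=\cH_0$. For the excess, since $\dist_\cG(i,j)\leq r$ (else $\cE_r$ is empty), $\cG_0\subset\cB_{2r}(i,\cG)\subset\cB_R(i,\cG)$ for $N$ large (using $r\leq 4\ell_*+1\ll R$), and the $\bar\Omega$-property gives excess at most $\omega$ for each component of $\cG_0$ and $\cH_0$. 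The deficit functions vanish identically, because $\cG$ is $d$-regular (hence $g\equiv 0$ on $\cG$) and by the conventions of Section~\ref{sec:intro-TE} the deficit function of any subgraph not obtained by removing vertices is just the restriction. Thus both assumptions of Proposition~\ref{boundPij} hold and \eqref{e:compatibility} gives
\begin{equation*}
|P_{ij}(\cE_r(i,j,\cG))-P_{ij}(\cE_{r_*}(i,j,\cG))|\leq 2^{2\omega+3}|\msc|q^{r_*+1}.
\end{equation*}

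Combining the two pieces, I will arrive at
\begin{equation*}
|G_{ij}-P_{ij}(\cE_{r_*}(i,j,\cG))|\leq \tfrac12|\msc|q^r+2^{2\omega+3}|\msc|q^{r_*+1}.
\end{equation*}
Since $r\geq r_*+2$ and $q\leq 1$, the first term is bounded by $\tfrac12|\msc|q^{r_*}\cdot q^2$, while the second is $2^{2\omega+3}|\msc|q^{r_*}\cdot q$; factoring out $|\msc|q^{r_*}$, the sum is at most $|\msc|q^{r_*}\cdot q\cdot(2^{2\omega+3}+q/2)$, which is $\leq \tfrac12|\msc|q^{r_*}$ once $q\leq |\msc|/\sqrt{d-1}$ is sufficiently small relative to $2^{-2\omega}$; this is the role of the hypothesis $\sqrt{d-1}\geq 2^{2\omega+3}$ (with a mild adjustment of the constant if one wishes to keep the factor $\tfrac12$ sharp).

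There is no real obstacle here: the argument is a one-step application of the localization principle of Remark~\ref{rk:princG0} in the form \eqref{e:compatibility}, combined with the triangle inequality. The only subtlety worth writing out is the verification that the radius-$2r$ neighborhoods containing $\cE_r(i,j,\cG)$ lie inside the radius-$R$ neighborhoods on which the $\bar\Omega$-excess bound applies, so that \eqref{e:compatibility} is available uniformly in $\ell\in\qq{\ell_*,2\ell_*}$.
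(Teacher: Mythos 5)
Your proposal is correct and follows essentially the same route as the paper: triangle inequality on $|G_{ij}-P_{ij}(\cE_{r_*})|$, the hypothesis $\cG\in\Omega^-(z,\ell)$ for the first term, and the comparison estimate \eqref{e:compatibility} of Proposition~\ref{boundPij} for the difference $|P_{ij}(\cE_{r}(i,j,\cG))-P_{ij}(\cE_{r_*}(i,j,\cG))|$, yielding the same bound $\tfrac12|\msc|q^{r}+2^{2\n+3}|\msc|q^{r_*+1}\leq\tfrac12|\msc|q^{r_*}$. Your remark about the constant in the final numerical step is fair, and the paper's own write-up shares the same mild looseness.
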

\begin{proof}
Let $\cG\in \Omega^-(z,\ell)$. Then, by \eqref{e:compatibility} in Proposition~\ref{boundPij}, 
\begin{align*}
 &\left|G_{ij}(z)-P_{ij}(\cE_{r_*}(i,j,\cal G),z)\right|\\
 &\leq \left|G_{ij}(z)-P_{ij}(\cE_{r}(i,j,\cal G),z)\right|
 +\left|P_{ij}(\cE_{r_*}(i,j,\cal G),z)-P_{ij}(\cE_{r}(i,j,\cal G),z)\right|\\
 &\leq \frac{1}{2}|\msc|q^{r}+\delta_{r\neq r_*}2^{2\n+3}|\msc|q^{2\ell_*+2}\leq \frac{1}{2}|\msc|q^{r_*},
 \end{align*}
 provided that $\sqrt{d-1}\geq 2^{2\n+3}$. Therefore, $\cG\in \Omega^{-}(z,\ell_*)$, as claimed.
\end{proof}

\begin{proof}[Proof of Theorem \ref{thm:mr}]
For any $\theta_0\in \frac{\pi \Z}{N^3}\cap (0,\pi)$, the Joukowsky transform $\phi$ sends the ray $\{w=e^{\ii \theta_0}r: 0<r<1\}$ to a branch of some hyperbola.
With $\theta_0$ fixed, we consider the set
\begin{align*}
\left\{r\in \frac{\Z}{N^3}: \phi(e^{\ii \theta_0}r)\in \cal D^*\right\}=\left\{\frac{k_0}{N^3}, \frac{k_0+1}{N^3},\frac{k_0+2}{N^3},\dots, \frac{k_1}{N^3}\right\},
\end{align*}
for some $0<k_0\leq k_1<N^3$, and denote
\begin{align*}
z_k=\phi\left(\frac{e^{\ii \theta_0}k}{N^3}\right), \quad 1\leq k\leq N^3.
\end{align*}
One can check that
$k_0/N^3\geq 1/(3d)$, $|z_{k_0}|\geq 2d-1$, and 
$|z_{k+1}-z_k|\leq 10d^2/N^3$ for $k_0\leq k\leq k_1.$

By Proposition \ref{domainp}, there exists some $\ell\in\qq{\ell_*, 2\ell_*}$ such that ${\tilde {\cal D}_{\ell}}\cap \{e^{\ii \theta_0} r: 0< r< 1\}\subset\tilde \Lambda_\ell$. Therefore, combining with \eqref{e:Dsub1}, we know that $z_{k_0}, z_{k_0+1},\dots, z_{k_1}\in \Lambda_\ell$.
By Proposition \ref{initialestimates}, $\bar\Omega\subset \Omega^-(z_{k_0},\ell)$, and 
\begin{align}\label{initialloss}
\P(\Omega^-(z_{k_0},\ell))= 1-o(N^{-\n+\delta}).
\end{align}

For any $k_0\leq k\leq k_1-1$, it follows from Claim \ref{c:LipschitzG} that
\begin{align}
\label{O-O}\Omega^-(z_k,\ell)\subset \Omega(z_{k+1},\ell).
\end{align}
By Proposition \ref{prop:sce}, we have
\begin{align}
\label{O/O}\P(\Omega(z_{k+1},\ell)\setminus \cap_{i\in \qq{N}} \Omega_i'(z_{k+1},\ell)) =o(N^{-\n+1+\delta}),
\end{align}
and 
\begin{align}\label{OinO}
\cap_{i\in \qq{N}} \Omega_i'(z_{k+1},\ell) \subset \Omega^-(z_{k+1},\ell),
\end{align}
provided that $\sqrt{d-1}\geq (\n+1)2^{2\omega+45}$. It follows from combining \eqref{O-O}--\eqref{OinO} that
\begin{align}\label{losslittle}
\P(\Omega^-(z_{k},\ell)\setminus \Omega^-(z_{k+1},\ell))=o(N^{-\n+1+\delta}).
\end{align}

By definition, on the set $\cap_{k=k_0}^{k_1}\Omega^-(z_k,\ell)$, we have
\begin{align*}
  \left|G_{ij}(z)-P_{ij}(\cE_{r}(i,j,\cal G),z)\right|\leq \frac{1}{2}|\msc|q^{r}, 
\end{align*}
for any $z=z_{k_0}, z_{k_0+1},\dots, z_{k_1}$.
Moreover, combining \eqref{initialloss} and \eqref{losslittle}, the above holds with high probability,
\begin{align}\label{e:hp1}
\P(\cap_{k=k_0}^{k_1}\Omega^-(z_k,\ell))= 1-(k_1-k_0+1)o(N^{-\n+1+\delta})=1-o(N^{-\n+4+\delta}).
\end{align}
Combining with Claim~\ref{c:Omegainclusion}, the estimate \eqref{e:hp1} implies
\begin{align*}
\P(\cap_{k=k_0}^{k_1}\Omega^-(z_k,\ell_*))= 1-o(N^{-\n+4+\delta}).
\end{align*}

The above argument is independent of $\theta_0\in \frac{\Z}{N^3}\cap (0,\pi)$.
Thus, by a union bound, with probability at least $1-o(N^{-\n+7+\delta})$, uniformly in $z\in L$, we have
\begin{align}\label{finalestimate}
  \left|G_{ij}(z)-P_{ij}(\cE_{r_*}(i,j,\cal G),z)\right|\leq \frac{1}{2}|\msc|q^{r_*}.
\end{align}
Since for any $z\in \cal D^*$, there is some $z'\in L$ such that $|z-z'|=(\log N)^{O(1)}/N^3$,
the Lipschitz property of Green's function, Claim \ref{c:LipschitzG},
implies that the above estimate \eqref{finalestimate} holds uniformly for $z\in \cal D^*$,
with possibly a slightly larger constant:
\begin{align}\label{finalestimates}
  \left|G_{ij}(z)-P_{ij}(\cE_{r_*}(i,j,\cal G),z)\right|\leq |\msc|q^{r_*}.
\end{align}
This is  \eqref{e:locallaw}, and thus the proof of Theorem~\ref{thm:mr} is complete.
\end{proof}

\appendix
\section{Combinatorial estimates for random regular graphs}

\subsection{Proof of Proposition~\ref{prop:structure}}
\label{app:structure}

\begin{proof}[Proof of \eqref{e:structure1}]
For $\omega=1$, a proof of the statement is given in \cite[Lemma~2.1]{MR2667423} or \cite[Lemma~7]{Bord15},
for example.
The more general statement follows from the same proof. More precisely, in \cite[(2.4)]{MR2667423}, it is shown that
for any $i \in \qq{N}$, 
the excess $X_i$ in $\cB_R(i,\cG)$ is stochastically dominated by a binomial random variable with
$n=d(d-1)^R$ trials and success probability $p=d(d-1)^{R-1}/N$.
It follows that
\begin{equation*}
  \P(X_i > \n) = O\pbb{\binom{d(d-1)^R}{\n+1} \pa{\frac{d(d-1)^{R-1}}{N}}^{\omega+1}}
  = O(N^{-\omega-1} (d-1)^{2R (\omega+1)})
  = O(N^{-\omega-1+2\kappa (\omega+1)}).
\end{equation*}
By a union bound, and using $\kappa < \delta/(2\omega+2)$, therefore
\begin{align*}
  \P(X_i > \n \text{ for some $i \in \qq{N}$}) = O(N^{-\omega+2\kappa(\omega+1)}) =
  o(N^{-\omega+\delta}),
\end{align*}
as claimed.
\end{proof}

\begin{proof}[Proof of \eqref{e:structure2}]
The claim follows from \cite[Theorem 4]{MR2097332}, for example.
Indeed,  if $\cB_R(i,\cG)$ is not a tree, then some edge in $\cB_R(i,\cG)$ must lie on a cycle of length at most $k=2R$,
and any edge that lies on such a cycle is in $\cB_R(j,\cG)$ for at most $2(d-1)^R$ vertices $j \in \qq{N}$. Thus
\begin{equation}
  |\{i \in \qq{N}: \text{$\cB_R(i,\cG)$ is not a tree}\}| \leq 2(d-1)^R X = 2N^\kappa X
\end{equation}
where $X$ is the number of edges in $\cG$ which lie on cycles of length at most $k$.
With $k=2R$ and $A\geq 2$ in \cite[Theorem 4]{MR2097332}, we obtain
\begin{equation}
  \P(X=M)
  \leq (e^{5(A-1)}A^{-5A})^{(d-1)^k} \leq 
  e^{-c(d-1)^k}
  = e^{-cN^{2\kappa}}
\end{equation}
if $M=20Ak(d-1)^k$, where $c$ is some universal constant.
Let $M_0=40 k (d-1)^{k} \leq 80 R (d-1)^{2R} \leq 80RN^{2\kappa}$.
By a union bound, then
\begin{equation}
  \P(X \geq M_0)
  \leq N e^{-cN^{2\kappa}}
  \leq e^{-cN^{2\kappa}/2}.
\end{equation}
Thus, with probability $1-e^{-cN^{2\kappa}/2}$, and using $\kappa < \delta/(2\n+2)\leq  \delta/4$, $R=\lfloor \kappa \log_{d-1} N\rfloor\ll N^\kappa$, we have
\begin{equation}
  |\{i \in \qq{N}: \text{$\cB_R(i,\cG)$ is not a tree}\}| \leq 2 N^\kappa X\leq 2N^\kappa M_0 \leq 160RN^{3\kappa} \leq N^\delta,
\end{equation}
which is better than claimed.
\end{proof}

\subsection{Proof of Proposition~\ref{prop:numberpath}}
\label{app:numberpath}

\begin{proof}[Proof of \eqref{pathnum}]
We fix vertices $i,j$ and an integer $k$.
Given a graph $\cG$,
we denote by $t_k(\cG)$ the total number of non-backtracking paths from $i$ to $j$ of length less than $\dist_{\cG}(i,j)+k$.
We modify the graph $\cG$ in the three steps
such that, in each step, $t_k$ does not decrease,
and the excess remains the same.
Then it suffices to prove \eqref{pathnum} for the final graph.
 
\smallskip\noindent
\emph{Step 1.}
Given an edge $e = \{x,y\} \in \cG$ that is not a self-loop and not on a geodesic from $i$ to $j$,
we shrink the edge $e$ to a point (remove $e$ and identify its incident vertices), and so obtain a new graph $\cG'$. 
There is a bijection between the oriented edges of the graph $\cG\setminus\{e\}$ and those of the graph $\cG'$.

Now we show that the total number of non-backtracking paths from $i$ to $j$ of length less than
$\dist_{\cG}(i,j)+k=\dist_{\cG'}(i,j)+k$ in $\cG'$ is at least $t_k$.
Let $(\vec e_1, \vec e_2,\vec e_3, \dots)$ be any non-backtracking path from $i$ to $j$ in the graph $\cG$
that is not a geodesic.
If some $\vec e_{\beta}$ is $(x,y)$ or $(y,x)$, we remove it from the path and view the remaining part 
as a path from $i$ to $j$ in the graph $\cG'$. In this way we get a shorter path from $i$ to $j$ in  $\cG'$.
The new path is still non-backtracking, and we can recover the original path in $\cG$ from the new path in $\cG'$ since $x\neq y$.
Therefore the total number of non-backtracking paths from $i$ to $j$ of length less than
$\dist_{\cG}(i,j)+k=\dist_{\cG'}(i,j)+k$ in $\cG'$ is at least $t_k$.

We repeat this procedure with edges $e$ (not on a geodesic) chosen arbitrarily as long as possible.
This creates a new graph $\cal G_1$ (which may depend on the choice of edges in the steps) with vertex set $\bG_1$.
By construction, the edges in $\cG_1$ are either self-loops or on geodesics from $i$ to $j$.
Thus the vertex set of $\cG_1$ decomposes into
\begin{align}\label{decomp}
  \bG_1=\bV_0\cup \bV_1\cup \cdots \cup \bV_{\dist_{\cG_1}(i,j)},
  \quad \text{where $\bV_m:=\{v\in \bG_1: \dist_{\cG_1}(i,v)=m\}$,}
\end{align}
or equivalently, $\bV_{\dist_{\cG_1}(i,j)-m}:=\{v\in \bG_1: \dist_{\cG_1}(v,j)=m\}$.
In particular, $\bV_0=\{i\}$ and $\bV_{\dist_{\cG_1}(i,j)}=\{j\}$.
Any edge in $\cG_1$ is either a self-loop or has
one vertex in $\bV_m$ and the other vertex in $\bV_{m+1}$, for some $m\in\qq{0,\dist_{\cG_1}(i,j)-1}$. 
The excess of $\cG_1$ is $\omega$.
 
\smallskip\noindent
\emph{Step 2.}
Given two edges $e=\{v_m,v_{m+1}\}$ and $e'=\{v_m,v_{m+1}'\}$ with $v_m\in \bV_m$ and $v_{m+1}\neq v_{m+1}'\in \bV_{m+1}$,
we remove the edge $e'$ and identify $v_{m+1}'$ with $v_{m+1}$, thus creating a new graph $\cG'_1$.
Again there is a bijection between the oriented edges of the graph $\cG_1\setminus\{e'\}$ and those of the graph $\cG'_1$.

Now we show that the total number of non-backtracking paths from $i$ to $j$ of length less than
$\dist_{\cG}(i,j)+k=\dist_{\cG'_1}(i,j)+k$ in $\cG_1'$ is at least $t_k$.
Let $(\vec e_1, \vec e_2,\vec e_3,\dots)$ be any non-backtracking path from $i$ to $j$ in the graph $\cG_1$.
If $\vec e_{\beta}=(v_{m+1}',v_{m})$ and $\vec e_{\beta+1}\neq (v_m,v_{m+1})$, we replace $\vec e_{\beta}$ by $(v_{m+1},v_m)$; if $\vec e_{\beta}=(v_{m+1}',v_{m})$ and $\vec e_{\beta+1}= (v_m,v_{m+1})$, we remove both $\vec e_{\beta}$ and $\vec e_{\beta+1}$; if $\vec e_{\beta}=(v_{m},v_{m+1}')$ and $\vec e_{\beta-1}\neq (v_{m+1},v_{m})$, we replace $\vec e_{\beta}$ by $(v_m,v_{m+1})$; if $\vec e_{\beta}=(v_{m},v_{m+1}')$ and $\vec e_{\beta-1}= (v_{m+1},v_{m})$, we remove both $\vec e_{\beta}$ and $\vec e_{\beta-1}$.
Then we view the remaining part  as a path from $i$ to $j$ in the graph $\cG_1'$,
whose length is at most as long as that of the original path.
The new path is still non-backtracking, we can recover the original path in $\cG_1$ from the new path in $\cG'_1$ since $v_{m+1}\neq v_{m+1}'$.
Therefore the total number of non-backtracking paths from $i$ to $j$ of length less than $\dist_{\cG}(i,j)+k=\dist_{\cG'_1}(i,j)+k$ in $\cG'_1$ is at least $t_k$.

For any $m\in \qq{0,\dist_{\cG_1}(i,j)-2}$, if in the new graph $|\{v:\dist_{\cG_1}(i,v)=m+1\}|\geq 2$,
we can repeat the above process to reduce it by one.
We repeat this procedure as long as possible, choosing at every step edges $e$ and $e'$ arbitrarily
such that the conditions are satisfied.
Finally, we obtain a graph $\cG_2$ (which again is not unique)
that has exactly $\dist_{\cG_2}(i,j)+1$ vertices, $\{v_0=i,v_1,v_2,\dots,v_{\dist_{\cG_2}(i,j)}=j\}$,
such that $\dist_{\cal G_2}(i,v_m)=m$ for $m\in\qq{0,\dist_{\cG_2(i,j)}}$.
The excess of $\cG_2$ is $\n$.

\smallskip\noindent
\emph{Step 3.}
In the final step, given any edge $e$ from $v_m$ to $v_{m+1}$, if it is the
only edge from $v_m$ to $v_{m+1}$, we shrink it to a point.
This preserves non-backtracking paths, and it reduces the distance between $i$ and $j$ by one.
By shrinking all edges of multiplicity one, we obtain a graph $\cal G_3$.
The number of non-backtracking paths from $i$ to $j$ of length less than $\dist_{\cal G_3}(i,j)+k$ is at least $t_k$, and the excess of $\cG_3$ is $\n$.

\smallskip\noindent
\emph{Final step.}
To bound the number of non-backtracking paths from $i$ to $j$ in $\cG$, it suffices to
estimate the number of non-backtracking paths from $i$ to $j$ in the graph $\cG_3$.
Let $\ell=\dist_{\cal G_3}(i,j)$,
$s$ be the total number of self-loops in $\cal G_3$,
$w_m+1$ the multiplicity of the edge $\{v_{m-1},v_m\}$, for $m\in\qq{1, \ell}$,
and set $w=\max_{1\leq m\leq \ell} w_m$.
Since $\cal G_3$ has excess $\n$,
$s+\sum_{m=1}^{\ell} w_m=\n$. The maximum degree of the graph $\cal G_3$ is bounded by $2s+2w+2$. 
Now any non-backtracking path from $i$ to $j$ of length $\ell+k$
necessarily contains the edges $(v_0,v_1),(v_1,v_2),\dots, (v_{\ell-1},v_{\ell})$,
and for each of them there are $w_1+1,w_2+2,\dots, w_\ell+1$ choices respectively.
For other steps, there are at most $2s+1+2w$ choices. The total number of such paths is bounded by 
\begin{align}\label{edgebound}
  {\ell+k\choose \ell}(2s+1+2w)^k\prod_{m=1}^{\ell}(w_m+1),
\end{align}
under the condition $s+\sum_{m=1}^{\ell} w_m=\n$.
Note that \eqref{edgebound} increases if we decrease $s$ by $1$ and increase some $w_m$ in such a way that $w$ increases by $1$.
Therefore \eqref{edgebound} achieves its maximum at $s=0$. We denote  
\begin{align*}
  a_k:= {\ell+k\choose \ell}(1+2w)^k\prod_{m=1}^{\ell}(w_m+1),
\end{align*}
Since $1+n \leq 2^n$ for any $n\in\N_0$ and $\sum_{m=1}^{\ell} w_m= \n$, we then have $a_0\leq \prod_{m=1}^{\ell}(w_m+1)\leq 2^\n$.
For $a_k$ with $k\geq 1$, notice that $\n = \sum_{m=1}^\ell w_m \geq w + (\ell-1)$ so that $w\leq \n-(\ell-1)$, and thus
  \begin{align*}
  a_k\leq \frac{\ell+k}{k}(1+2w)a_{k-1}
  \leq (\ell+1)(2\n-2\ell+3)a_{k-1}
  \leq \frac{(2\n+5)^2}{8}a_{k-1}\leq (2^\n-1) a_{k-1},
  \end{align*}
  given that $\n\geq 6$.
  Therefore 
  \begin{align*}
   t_k\leq a_0+a_1+\cdots +a_{k-1}
   \leq 2^{\n}\left(1+(2^\n-1)+\cdots (2^\n-1)^{k-1}\right) 
 \leq 2^{\n k}.
  \end{align*}
  This finishes the proof. 
\end{proof}

\begin{proof}[Proof of \eqref{pathN}]
Let $\bH$ be the vertex set of $\cH$, $\n_0$ be the excess of the subgraph $\cal H$,  and $\bar\cH$ the subgraph induced by $\cG$ on $\bH$.
If $\dist_{\cG}(i,j)\geq \ell+1$, then \eqref{pathnum} implies
\begin{align*}
&\#\{\text{non-backtracking paths from $i$ to $j$ of length $\ell+k$, not completely in $\cal H$}\}\\
&\leq \#\{\text{non-backtracking paths from $i$ to $j$ of length $\ell+k$}\}
\leq 2^{\n k},
\end{align*}
and the claim \eqref{pathN} follows.
Therefore,
in the following, assume that $\dist_{\cG}(i,j)\leq \ell$, and also that $\cH,\cG$ are connected
(otherwise, we can replace $\cH$ by its connected component containing $i$ and $j$,
and $\cG$ by its connected component containing $\cH$).
For any non-backtracking path from $i$ to $j$ which is not completely contained in $\cH$,
let $\vec e$ be the first edge in the path which does not belong to $\cH$.
There are three possibilities for such edge $e$:
(\rn{1}) $e\in \bar\cH$. We denote the set of such edges by $E_1$.
(\rn{2}) If we remove $e$ from $\cG$, then $\cG\setminus\{e\}$ breaks into two connected components.
It is necessary that the component not containing $i,j$ contains cycles. We denote the set of such edges by $E_2$.
(\rn{3}) $e\not\in \bar\cH$, and if we remove $e$ from $\cG$, $\cG\setminus\{e\}$ is still connected. We denote the set of such edges by $E_3$. 

We consider the graph $\cG\setminus\{E_1\cup E_2\cup E_3\}$, from $\cG$ by removing edges $E_1\cup E_2\cup E_3$. It consists some many connected components, one corresponds to the graph $\cH$, others are in one-to-one correspondence with the connected components of $\cG\setminus \bar\cH$, the graph from removing $\cH$ from $\cG$. Notice from the definition of these edge sets $E_1, E_2, E_3$, each connected component of $\cG\setminus \bar \cH$ contains exactly one edges in $E_2$ or at least two edges in $E_3$.  Therefore, $\cG\setminus\{E_1\cup E_2\cup E_3\}$ has at most $1+|E_2|+|E_3|/2$ connected components, where $1$ represents the component $\cH$. For the excess of $\cG\setminus\{E_1\cup E_2\cup E_3\}$, since its subgraph $\cH$ has excess $\n_0$, and each new components, due to removing of edges in $E_2$, has excess at least $1$, $\cG\setminus\{E_1\cup E_2\cup E_3\}$ has excess at least $\n_0+|E_2|$. 

\begin{claim}
\begin{align}\label{sumedge}
2|E_1|+|E_2|+|E_3|\leq 2(\n-\n_0)
\end{align}
\end{claim}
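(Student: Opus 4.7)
The plan is to combine the two structural facts already extracted in the paragraph immediately preceding the claim —  namely, that $\cK := \cG\setminus\{E_1\cup E_2\cup E_3\}$ has at most $1+|E_2|+|E_3|/2$ connected components, and excess at least $\omega_0+|E_2|$  — with the elementary identity \eqref{c-c}, which states that for any finite graph
\[
\#\text{vertices}(\cK) - \#\text{edges}(\cK) = \#\text{connected components}(\cK) - \text{excess}(\cK).
\]
This turns \eqref{sumedge} into a one-line consequence of the two bounds above and the assumed connectedness of $\cG$.

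First, I would compute the left-hand side of the identity directly. Since removing edges does not change the vertex set, $\#\text{vertices}(\cK) = \#\text{vertices}(\cG)$ and $\#\text{edges}(\cK) = \#\text{edges}(\cG) - |E_1| - |E_2| - |E_3|$. By the reduction to the connected case made just before the claim, $\cG$ is connected, so $\text{excess}(\cG) = \#\text{edges}(\cG) - \#\text{vertices}(\cG) + 1 = \omega$, and therefore
\[
\#\text{vertices}(\cK) - \#\text{edges}(\cK) = 1 - \omega + |E_1| + |E_2| + |E_3|.
\]
Next, applying the two structural bounds to the right-hand side of the identity,
\[
\#\text{connected components}(\cK) - \text{excess}(\cK) \leq (1 + |E_2| + |E_3|/2) - (\omega_0 + |E_2|) = 1 - \omega_0 + |E_3|/2.
\]
Equating both expressions through \eqref{c-c} and rearranging gives $|E_1| + |E_2| + |E_3|/2 \leq \omega - \omega_0$, which upon multiplication by $2$ is exactly \eqref{sumedge} (and is in fact slightly stronger, reflecting that the definitions of $E_1, E_2, E_3$ leave some slack).

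The real content of the argument lies not in this algebraic manipulation but in the two structural inequalities, both of which are already justified in the text: each of the $|E_2|$ disconnecting edges spawns a new component containing a cycle (hence contributes $+1$ to the excess), while each connected component of $\cG\setminus\bar\cH$ arising from non-disconnecting boundary edges must be exited and reentered by a path from $i$ to $j$, so contains at least two edges of $E_3$. Once these two facts are accepted, the present proof is purely an arithmetic consequence of \eqref{c-c}, and no further obstacle arises.
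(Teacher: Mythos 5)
Your proposal is correct and follows essentially the same route as the paper: both arguments evaluate $\chi(\cG\setminus(E_1\cup E_2\cup E_3))$ two ways, once via $\chi(\cG)+|E_1|+|E_2|+|E_3|\geq 1-\n+|E_1|+|E_2|+|E_3|$ and once via the component/excess bounds, and both land on the slightly stronger $|E_1|+|E_2|+|E_3|/2\leq \n-\n_0$. The only nitpick is that $\cG$ is assumed to have excess \emph{at most} $\n$, not exactly $\n$, so your first display should be a $\geq$ rather than an equality; the inequality goes the right way and nothing else changes.
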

\begin{proof}
To prove \eqref{sumedge}, for any finite graph $\cal X$, set
\begin{align}\label{apc-c}
  \chi(\cal X) = \#\text{connected components}(\cX)-\text{excess}(\cX).
\end{align}
By the definition of excess, $\chi(\cal X)= \#\text{vertices}(\cX)- \#\text{edges}(\cX)$, 
we have $\chi(\cal X \setminus e) = \chi(\cal X) + 1$ for any graph $\cal X$ and any edge $e$ in $\cal X$.
Since the graph $\cG$ is connected and has excess at most $\n$, it follows that $\chi(\cG)\geq 1-\n$.
Thus if we remove $E_1\cup E_2\cup E_3$ from $\cG$, the remaining graph has excess at least $\n_0+|E_2|$ and at most $1+|E_2|+|E_3|/2$ connected components. Therefore
\begin{align*}
1+|E_2|+|E_3|/2-|E_2|-\n_0\geq \chi(\cG\setminus(E_1\cup E_2\cup E_3))\geq 1-\n+|E_1|+|E_2|+|E_3|,
\end{align*}
and thus $ |E_1|+|E_2|+|E_3|/2\leq \n-\n_0$. \eqref{sumedge} follows.
\end{proof}

In the following we count the number of length $\ell+k$ non-backtracking paths from $i$ to $j$, containing $\vec e=(i_1,j_1)$ as the first edge not in $\cal H$, i.e., $\{i_1,j_1\}\in E_1\cup E_2\cup E_3$. Let $\dist_{\cG}(i,i_1)=\ell_1$ and $\dist_{\cG}(j_1,j)=\ell_2$. Since $\{i_1,j_1\}$ is not in $\cal H$, it is necessary that $\ell_1+\ell_2\geq \ell$. Thus, $\vec e$ must be the $\ell_1+1, \ell_1+2,\dots$, or $(\ell_1+k)$-th step in the path. The total number of such non-backtracking paths is bounded by
\begin{align*}
& \sum_{k_1=1}^{k}\#\{\text{non-backtracking paths from $i$ to $i_1$ of length $\ell_1+k_1-1$, in $\cal H$}\}\\
& \qquad \times \#\{\text{non-backtracking paths from $j_1$ to $j$ of length $\ell+k-\ell_1-k_1$, in $\cal G$}\}\\
&\leq\sum_{k_1=1}^{k} 2^{\n_0k_1}2^{\n(k-k_1+1)}\leq 2^{\n(k+1)}\sum_{k_1=1}^k 2^{(\n_0-\n)k_1}.
\end{align*}
Since by \eqref{sumedge}, $2|E_1|+|E_2|+|E_3|\leq 2(\n-\n_0)$, there are at most $2(\n-\n_0)$ choices for the oriented edge $\vec e$, the total number of such non-backtracking paths is bounded by
\begin{align*}
&\#\{\text{non-backtracking paths from $i$ to $j$ of length $\ell+k$, not completely in $\cal H$}\}\\
& \leq 2(\n-\n_0)2^{\n(k+1)}\sum_{k_1=1}^k 2^{(\n_0-\n)k_1}\leq 2^{\n(k+1)+1}.
\end{align*}
This completes the proof.
\end{proof}

\subsection{Proof of Lemma~\ref{l:distancextoboundary}}
\label{app:distxtobd}

 To understand the distances $\dist_{\tcG}(x,i)$ for all $i\in \T_\ell$, we need some more notations.
A {\it simple pruning}\cite[Definition 4.4]{MR2437174} is the operation of removing one leaf and its incident edge from a graph.
By repeating pruning on the graph $\tcG_0$, we get a graph $\tcG_2$ with vertex set $\tilde{\bG}_2$, such that it contains at most two leave vertices: $1$ and $x$.
\begin{claim}
\begin{align}\label{sumedge2}
|\tilde{\bG}_2\cap \T_k|\leq 2\n+1,\quad 0\leq k\leq \ell.
\end{align}
\end{claim}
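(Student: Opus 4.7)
The plan is as follows. First, I would exploit the structural properties of $\tcG_2$ inherited from the pruning procedure: since simple pruning preserves excess, $\tcG_2$ has excess at most $\omega$, and by construction every vertex has degree $\geq 2$ except possibly the two distinguished vertices $1$ and $x$. Moreover, since the switchings do not affect the $\ell$-neighborhood of $1$ in $\cG$, for every $v\in\tilde\bG_2\cap\T_k$ with $0\leq k\leq\ell$ we have $\dist_{\tcG}(1,v)=\dist_\cG(1,v)=k$, so that adjacent vertices of $\tcG_2$ (being adjacent in $\tcG$) have $\cG$-levels differing by at most one. Without loss of generality assume $\tcG_2$ is connected; the case $k=0$ is trivial since $S_0\subseteq\{1\}$.

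Next, denote by $a_k$ the number of $\tcG_2$-edges between $\T_{k-1}$ and $\T_k$ and by $b_k$ the within-level edges at $\T_k$. A degree sum over $S_k:=\tilde\bG_2\cap\T_k$ gives the key inequality
\begin{equation*}
a_k + a_{k+1} + 2b_k \;=\; \sum_{v\in S_k}\deg_{\tcG_2}(v) \;\geq\; 2|S_k| - 1_{x\in S_k},
\qquad\text{so}\qquad
|S_k|\;\leq\; b_k + \tfrac12\bigl(a_k+a_{k+1}+1_{x\in S_k}\bigr).
\end{equation*}
The total of within-level edges is controlled by the excess: any BFS spanning tree of $\tcG_0$ rooted at $1$ uses only edges between consecutive $\cG$-levels, so within-level edges are non-tree edges and number at most $\mathrm{excess}(\tcG_0)\leq\omega$; in particular $b_k\leq\omega$.

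Finally, to bound the cross-level counts $a_k$ and $a_{k+1}$, I would study the upper induced subgraph $H=\tcG_2[V_{\geq k}]$ and its components. In each component $K$, a per-component degree count using $\deg\geq 2$ except at $x$ yields $a_k(K)+2\omega_K\geq 2-1_{x\in K}$. Summing over components and combining with the identity $a_k=C_{\geq k}+\omega-\omega_L-\omega_H$ (obtained by decomposing $|E(\tcG_2)|=|V|-1+\omega$ along the cut between $V_{<k}$ and $V_{\geq k}$) gives $C_{\geq k}\leq 2\omega+1$ and a corresponding bound on $a_k$. Substituting the resulting estimates for $a_k$ and $a_{k+1}$ into the key inequality and carefully tracking the single unit of slack produced by the distinguished vertex $x$ then yields $|S_k|\leq 2\omega+1$. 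The main obstacle is this last piece of arithmetic bookkeeping: one must show that the ``budget'' of excess together with the one extra unit from $x$ exactly suffices to tighten the rough estimate $|S_k|\leq b_k+\tfrac12(a_k+a_{k+1}+1)$ down to $2\omega+1$. That this bound is tight — and therefore that no cruder argument can work — is witnessed by the extremal graph consisting of $\omega$ triangles through vertex $1$ together with a pendant leaf $x$ attached at $1$, for which $|S_1|=2\omega+1$ is achieved exactly.
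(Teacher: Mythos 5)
Your key inequality $|S_k|\le b_k+\tfrac12\bigl(a_k+a_{k+1}+\mathbf{1}_{x\in S_k}\bigr)$ is correct, as are the bound $b_k\le\n$ and the Euler-characteristic/component analysis of the upper level sets; in spirit this is the same excess-accounting as the paper's proof (which removes one chosen downward edge per vertex of $S_k$ and counts components via $\chi=\#\text{components}-\text{excess}$). But the final combination is a genuine gap, not bookkeeping. The best your component analysis can deliver for the cuts \emph{separately} is $a_k\le 2\n+1$ and $a_{k+1}\le 2\n+1$ — each of these bounds consumes the entire excess budget on its own — and substituting them together with $b_k\le\n$ into the key inequality yields only $|S_k|\le \n+\tfrac12(2\n+1+2\n+1+1)$, i.e.\ roughly $3\n$, not $2\n+1$. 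Nothing in your sketch forces $a_{k+1}$ to be small when $b_k$ and $a_k$ are both large. The joint bound you would actually need, $2b_k+a_k+a_{k+1}\le 4\n+2-\mathbf{1}_{x\in S_k}$, is exactly an upper bound on $\sum_{v\in S_k}\deg_{\tcG_2}(v)$, and degree counting cannot supply it because the degree sequence is blind to the level structure: the global estimate $\sum_v(\deg v-2)^+\le 2\,\mathrm{excess}$ only gives $\sum_{v\in S_k}\deg v\le 2|S_k|+2\n$, which is circular. So the step you call ``arithmetic bookkeeping'' is in fact the whole difficulty, and the route you describe would fail.

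The good news is that your ingredients reassemble into a correct (and arguably cleaner) proof. Since iterated leaf-pruning produces the \emph{maximal} subgraph in which every vertex outside $\{1,x\}$ has degree at least $2$, every $v\in\tilde{\bG}_2\cap\T_k$ with $k\ge1$ retains a $\tcG_2$-neighbour in $\T_{k-1}$: adjoin to $\tcG_2$ the edge from $v$ to a down-neighbour $u$ in $\tcG_0$ together with a level-monotone path from $u$ to $1$, and note that the enlarged subgraph still has the degree property, hence is contained in $\tcG_2$. Consequently $|S_k|\le a_k$ directly, and your component analysis — the identity $a_k=C_L+C_{\ge k}-1+\n'-\n_L-\n_H$ together with the per-component lower bounds $c_i\ge\max\{1,\,2-2\n_{K_i}-\mathbf{1}_{x\in K_i}\}$ — gives $a_k\le 2\n+1$, finishing the proof without the degree-sum inequality or any bound on $b_k$ and $a_{k+1}$. (The same down-neighbour fact is what justifies your unproven assumption that $V_{<k}$ induces a connected subgraph, which you used implicitly when taking $C_L=1$ in that identity.)
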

\begin{proof}
For $k=0$, \eqref{sumedge2} holds trivially, $|\tilde{\bG}_2\cap \T_0|=1\leq 2\n+1$. For $k\geq 1$, say $\tilde{\bG}_2\cap \T_k=\{v_1,v_2,\dots, v_m\}$.
By our construction of $\tcG_2$, there are vertices $v_1',v_2',\dots,v_m'\in \T_{k-1}$
such that the edges $\{v_1',v_1\}, \{v_2',v_2\},\dots, \{v_m',v_m\}\in \tcG_2$.
For any $i\in\qq{1,m}$, if we remove the edge $\{v_i',v_i\}$ from $\tcG_2$, the graph $\tcG_2$ will either still be connected;
or it will break into two connected components, one contains vertex $1$,
and the other contains vertex $x$ or some cycles.
Let $m_1, m_2$  the number of edges in the first case and second case respectively. If we remove all edges $\{v_1',v_1\}, \{v_2',v_2\},\dots, \{v_m',v_m\}$, there will be at most $1+m_1/2+m_2$ connected components, and at least excess $m_2-{\bf 1}_{m_2>0}$ left. Notice that the graph $\tcG_2$ is connected and has excess at most $\n$. Recall the function $\chi$ as in \eqref{apc-c}, we have
\begin{align*}
1+m_2+m_1/2-(m_2-{\bf 1}_{m_2>0})\geq \chi(\tcG_2\setminus\{\{v_1',v_1\},\dots, \{v_m',v_m\}\})\geq 1-\n+m_1+m_2.
\end{align*}
Therefore $m_1+m_2\leq 2\n+1$, and the claim follows.
\end{proof}

With the above preparations, we can prove Lemma~\ref{l:distancextoboundary} as follows.

\begin{proof}[Proof of Lemma~\ref{l:distancextoboundary}]
Fix a geodesic $\cal P$ (viewed as a sequence of oriented edges) in $\tcG_0$ from vertex $x$ to vertex $i\in \T_\ell$, there are three possibilities for its step $(v',v)$: (\rn{1}) the edge is downward, i.e. $\dist_{\tcG_0}(1,v)=\dist_{\tcG_0}(1,v')+1$;  (\rn{2}) the edge is horizontal, i.e. $\dist_{\tcG_0}(1,v)=\dist_{\tcG_0}(1,v')$, in this case $v\in \tilde{\bG}_2$; (\rn{3}) the edge is upward, i.e. $\dist_{\tcG_0}(1,v)=\dist_{\tcG_0}(1,v')-1$, in this case $v\in \tilde{\bG}_2$. We denote $(v',v)$ the last step in $\cal P$, which is horizontal or upward. Then $v\in \tilde{\bG}_2$ and we say the vertex $i$ is associated with the vertex $v$ (which may not be unique). By our choice of $(v',v)$, the steps from $v$ to $i$ in $\cal P$ are all downward,  thus $v \in \T$. Moreover we have the estimate: for any vertex $i\in \T_\ell$ associated with $v$
\begin{align*}
\dist_{\tcG_0}(x,i)
\geq& \left|\dist_{\tcG_0}(1,x)-\dist_{\tcG_0}(1,v)\right|+\dist_{\tcG_0}(v,i)\\
=&\left|\dist_{\tcG_0}(1,x)-\dist_{\tcG_0}(1,v)\right|+\left|\ell-\dist_{\tcG_0}(1,v)\right|.
\end{align*} 
Especially, if $v\in \T_{\ell_3}$, i.e. $v$ is distance $\ell_3$ from vertex $1$, the above relation simplifies to
\begin{align*}
\dist_{\tcG_0}(x,i)\geq |\ell_1-\ell_3|+(\ell-\ell_3),
\end{align*}
and by noticing $q<1$, we have
\begin{align}\label{qdistancebound}
q^{\dist_{\tcG_0}(x,i)}\leq \left\{
\begin{array}{cc}
q^{\ell+\ell_1-2\ell_3},& \text{ if }\ell_3\leq \ell_1,\\
q^{\ell-\ell_1}, & \text { if }\ell_3\geq \ell_1.
\end{array}
\right.
\end{align}
In this way, each vertex $i\in \T_\ell$ is associated with some vertex $v\in \tilde{\bG}_2$. If $v\in \tilde{\bG}_2\cap \T_{\ell_3}$,
the total number of vertices in $\T_\ell$ associated with $v$ is at most $(d-1)^{\ell-\ell_3}$, since they are all distance $\ell-\ell_3$ away from $v$. The total number of vertices $i\in \T_\ell$ associated with some $v\in \tilde{\bG}_2\cap \{\T_{\ell_1}\cup \T_{\ell_1+1}\cdots\cup \T_{\ell}\}$ is bounded by $(2\n+1)(1+(d-1)+\cdots+(d-1)^{\ell_1})\leq 2(\n+1)(d-1)^{\ell_1}$, provided that $d\geq 2\n+3$.  Notice that we have the decomposition
\begin{align*}
\{q^{\dist_{\tcG_0}(x,i)}:i\in \T_\ell\}=\cup_{\ell_3\in \qq{0,\ell_1-1}}\{q^{\dist_{\tcG_0}(x,i)}:i\in \T_\ell, \text{$i$ is associated with some $v\in \tilde{\bG}_2\cap \T_{\ell_3}$}\}\\
\cup\{q^{\dist_{\tcG_0}(x,i)}:i\in \T_\ell, \text{$i$ is associated with some $v\in \tilde{\bG}_2\cap \{\T_{\ell_1}\cup \T_{\ell_1+1}\cdots\cup \T_{\ell}\}$}\}.
\end{align*}
Lemma \ref{l:distancextoboundary} follows by combining with \eqref{qdistancebound} and \eqref{sumedge2}.
\end{proof}

\section{Properties of the Green's functions}
\label{app:Green}

Throughout this paper, we repeatedly use some (well-known) identities for Green's functions,
which we collect in this appendix.

\subsection{Resolvent identity}

The following well-known identity is referred as resolvent identity:
for two invertible matrices $A$ and $B$ of the same size, we have
\begin{equation} \label{e:resolv}
  A^{-1} - B^{-1} = A^{-1}(B-A)B^{-1}=B^{-1}(B-A)A^{-1}.
\end{equation}

\subsection{Schur complement formula}

Given an $N\times N$ matrix $M$ and an index set $\T \subset \qq{N}$, recall that we denote by
$M|_\T$ the $\T \times \T$-matrix obtained by restricting $M$ to $\T$,
and that by $M^{(\T)} = M|_{\qq{N}\setminus \T}$ we denote the matrix obtained by removing
the rows and columns with indices in $\T$.
Thus, for any $\T \subset \qq{N}$,
any symmetric matrix $H$ can be written (up to rearrangement of indices) in the block form
\begin{equation}
  H = \begin{bmatrix} A& B'\\ B &D  \end{bmatrix},
\end{equation}
with $A=H|_{\T}$ and $D=H^{(\T)}$.
The Schur complement formula asserts that, for any $z\in \C_+$,
\begin{equation} \label{e:Schur}
 G=(H-z)^{-1}= \begin{bmatrix}
   (A-B'\GT B)^{-1} & -(A-B'\GT B)^{-1}B'\GT \\
   -\GT B(A-B'\GT B)^{-1} & \GT+\GT B(A-B'\GT B)^{-1}B'\GT 
 \end{bmatrix},
\end{equation}
where $\GT=(D-z)^{-1}$.
Throughout the paper, we often use the following special cases of \eqref{e:Schur}:
\begin{align} \begin{split}\label{e:Schur1}
  G|_{\T} &= (A-B'\GT B)^{-1},\\
  G|_{\T^c}-G^{(\T)}&=G|_{\T^c\T}(G|_{\T})^{-1}G|_{\T\T^c},\\
  G|_{\T\T^c}&=-G|_{\T}B'\GT,
  \end{split}
\end{align}
as well as the special case
\begin{equation} \label{e:Schurixj}
G_{ij}^{(k)} = G_{ij}-\frac{G_{ik}G_{kj}}{G_{kk}}.
\end{equation}

\subsection{Ward identity}

For any symmetric $N\times N$ matrix $H$, its Green's function $G(z)=(H-z)^{-1}$ satisfies
the \emph{Ward identity}
\begin{equation} \label{e:Ward}
  \sum_{j=1}^{N} |G_{ij}(z)|^2= \frac{\im G_{jj}(z)}{\eta},
\end{equation}
where $\eta=\Im [z]$. This identity follows from \eqref{e:resolv} with $A = H-z$ and $B=(H- z)^*$.
In particular, \eqref{e:Ward} provides a bound for the sum $\sum_{j=1}^{N} |G_{ij}(z)|^2$
in terms of the diagonal of the Green's function.
For an explanation why this algebraic identity has the interpretation of a Ward, see e.g.\ \cite[p.147]{MR3204347}.

\subsection{Covering map}

For any vertex $i$, the vector $(G_{i1}, G_{i2}, G_{i3}, \dots)\in \ell^2(\qq{N})$ is uniquely determined by the following relations:
\begin{align}\label{relation}
\begin{split}
1+zG_{ii}=\frac{1}{\sqrt{d-1}}\sum_{k:i\sim k}G_{ik}\\
zG_{ij}=\frac{1}{\sqrt{d-1}}\sum_{k:j\sim k}G_{ik}
\end{split}
\end{align}
where $l \sim k$ denotes that $l$ and $k$ are adjacent in $\cG$, i.e., that $A_{kl}=1$.

\begin{lemma}
Given a covering $\pi:\tcG \to \cG$ of graphs,
denote the Green's function of $\tcG$ by $\tG$ and that of $\cG$ by $G$. Then
for all vertices $i,j$ in $\cG$, the Green's functions obey
\begin{align}\label{defg-bis}
G_{ij}=\sum_{y:\pi(y)=j}\tilde{G}_{xy}.
\end{align}
\end{lemma}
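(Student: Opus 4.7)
The plan is to exploit the defining linear relations \eqref{relation} for the Green's function and show that both sides of \eqref{defg-bis} satisfy the same system, then invoke uniqueness. Fix any vertex $x \in \tbG$ with $\pi(x) = i$, and define
\begin{equation*}
F_j \deq \sum_{y \st \pi(y) = j} \tG_{xy}, \qquad j \in \bG,
\end{equation*}
assuming (as in the hypothesis of the lemma) that the series converges absolutely.

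The key algebraic step is to verify that $(F_j)_{j \in \bG}$ satisfies the same equations \eqref{relation} that characterize $(G_{ij})_{j \in \bG}$. For each $y$ with $\pi(y) = j$, the Green's function of $\tcG$ satisfies
\begin{equation*}
z \tG_{xy} + \delta_{xy} = \frac{1}{\sqrt{d-1}} \sum_{w \st y \sim w \text{ in } \tcG} \tG_{xw}.
\end{equation*}
Because $\pi$ is a covering map, its restriction to the neighborhood of $y$ is a bijection onto the neighborhood of $j = \pi(y)$ in $\cG$, so the sum on the right reindexes as $\sum_{k \st k \sim j} \tG_{x,w(y,k)}$, where $w(y,k)$ is the unique neighbor of $y$ in $\tbG$ with $\pi(w(y,k)) = k$. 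Summing over $y \in \pi^{-1}(j)$ and using Fubini, the right-hand side becomes $(d-1)^{-1/2} \sum_{k \st k \sim j} F_k$. On the left, the inhomogeneous term $\sum_{y \in \pi^{-1}(j)} \delta_{xy}$ equals $1$ if $j = i$ and $0$ otherwise. Thus
\begin{equation*}
z F_j + \delta_{ij} = \frac{1}{\sqrt{d-1}} \sum_{k \st k \sim j} F_k,
\end{equation*}
which is exactly the system \eqref{relation} satisfied by $(G_{ij})_{j \in \bG}$.

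It remains to conclude uniqueness, which on an infinite graph requires a decay condition on the solution. The natural choice is $\ell^2(\bG)$: the sequence $(G_{ij})_{j}$ lies in $\ell^2$ by the Ward identity \eqref{e:Ward}, and $(F_j)_j$ lies in $\ell^2$ by Cauchy--Schwarz and the hypothesis that the defining sum is absolutely convergent (combined with the Ward identity for $\tG$ applied to the row $x$). Subtracting the two solutions of the same inhomogeneous system \eqref{relation} gives an $\ell^2$ solution of $(H - z)u = 0$, which must vanish since $z \in \C_+$ lies in the resolvent set of the self-adjoint operator $H$ on $\ell^2(\bG)$. This forces $F_j = G_{ij}$ for all $j$, proving \eqref{defg-bis}.

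The main obstacle is the convergence/uniqueness step when $\tcG$ is infinite: absolute summability of $\sum_y \tG_{xy}$ is not automatic for all $z \in \C_+$ (it is guaranteed only via the stated hypothesis), and one has to be careful that the swap of summation in the neighbor sum is legitimate. Once absolute convergence is in hand, the algebraic verification is routine and the self-adjointness of $H$ on $\ell^2$ supplies uniqueness; the covering property is used exactly once, to match neighborhoods bijectively.
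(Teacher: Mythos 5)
Your proof is correct and follows essentially the same route as the paper: define $F_j=\sum_{y:\pi(y)=j}\tG_{xy}$, use the covering property to reindex the neighbor sum and verify that $(F_j)_j$ satisfies the characterizing relations \eqref{relation}, then conclude by uniqueness of the $\ell^2$ solution. You are somewhat more explicit than the paper about the uniqueness step (self-adjointness of $H$ on $\ell^2$ and $z\in\C_+$ lying in its resolvent set) and about justifying the interchange of sums under the absolute-convergence hypothesis, both of which the paper leaves implicit.
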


\begin{proof}[Proof of \eqref{defg-bis}]
We give the proof for simple graphs $\cG, \tcG$.
(The statement also holds for graphs with self-loops and multiple edges
if $\sum_{k:i\sim k}$ is interpreted as the sum of all the oriented edges $(i,k)$;
especially, a self-loop should be counted twice.)
Clearly, $\tG$ satisfies the relations \eqref{relation} with $G$ replaced by $\tG$.
For any fixed $x\in \tcG$ such that $\pi(x)=i$, we can define:
\begin{align}\label{defgcopy}
G_{ij}=\sum_{y:\pi(y)=j}\tilde{G}_{xy},
\end{align}
if the right-hand side is summable.
Assuming that for any $j$ the right-hand side of \eqref{defgcopy} is well defined,
we verify that $(G_{ij})_j$ satisfies the relation \eqref{relation}, and thus that it gives the Green's function of $H$.
Indeed,
\begin{align*}
1+zG_{ii}&=1+z\sum_{y:\pi(y)=i}\tilde G_{xy}=1+z\tilde G_{xx}+z\sum_{y:\pi(y)=i,y\neq x}\tilde G_{xy}\\
&=\frac{1}{\sqrt{d-1}}\sum_{w:w\sim x}\tilde G_{xw}+\frac{1}{\sqrt{d-1}}\sum_{y:\pi(y)=i,y\neq x}\sum_{w:w\sim y}\tilde G_{xw}
=\frac{1}{\sqrt{d-1}}\sum_{y:\pi(y)=i}\sum_{w:w\sim y}\tilde G_{xw}.
\end{align*}
Since there is no self-loop and multi-edge in our graph $\cal G$, for any $y_1\neq y_2$ with $\pi(y_1)=\pi(y_2)=i$ and $w_1\sim y_1$ and $w_2\sim y_2$, it is necessary that $w_1\neq w_2$. Therefore:
\begin{equation*}
\frac{1}{\sqrt{d-1}}\sum_{y:\pi(y)=i}\sum_{w:w\sim y}\tilde G_{xw}
=\frac{1}{\sqrt{d-1}}\sum_{k:i\sim k}\sum_{w:\pi(w)=k}\tilde G_{xw}
=\frac{1}{\sqrt{d-1}}\sum_{k:i\sim k}G_{ik}.
\end{equation*}
Similarly, for the second relation \eqref{relation},
\begin{equation*}
zG_{ij}=z\sum_{y:\pi(y)=j}\tilde G_{xy}
=\frac{1}{\sqrt{d-1}}\sum_{y:\pi(y)=j}\sum_{w:w\sim y}\tilde G_{xw}
=\frac{1}{\sqrt{d-1}}\sum_{k:j\sim k}\sum_{w:\pi(w)=k}\tilde G_{xw}=\frac{1}{\sqrt{d-1}}\sum_{k:j\sim k}G_{ik},
\end{equation*}
as needed.
\end{proof}

\bibliography{all}
\bibliographystyle{plain}

\end{document}